\newtheorem{theorem}{Theorem}
\newtheorem{corollary}{Corollary}
\newtheorem{lemma}{Lemma}
\newtheorem{prop}{Proposition}
\newtheorem{definition}{Definition}[section]
\newtheorem{example}{Example}
\newtheorem{remark}{\textbf{Remark}}
\newcommand{\mathleft}{\@fleqntrue\@mathmargin0pt}
\newcommand{\mathcenter}{\@fleqnfalse}
\newcommand{\indep}{\rotatebox[origin=c]{90}{$\models$}}
\newcommand{\iid}{i.i.d.{\ }}
\title{Robust Estimation via Robust Optimization}
\author{}
\date{}
\titlespacing*{\paragraph}{0pt}{3.25ex plus 1ex minus .2ex}{1.5ex plus .2ex}
\renewcommand{\theparagraph}{\thesubsubsection.\arabic{paragraph}} % Numbering like 1.1.1.1
\begin{document}

\begin{center}
{\bf{\LARGE{On the Benefits of Accelerated Optimization in Robust and Private Estimation}}} \\
\vspace*{.25in}
\begin{tabular}{ccc}
{\large{Laurentiu Marchis}} & \hspace*{.5in}  & {\large{Po-Ling Loh}}\\
{\large{\texttt{lam223@cam.ac.uk}}} & \hspace*{.5in} & {\large{\texttt{pll28@cam.ac.uk}}}
\end{tabular}
\begin{center}
Statistical Laboratory \\
Department of Pure Mathematics and Mathematical Statistics \\
University of Cambridge
\end{center}
\vspace*{.2in}
June 2025
\vspace*{.2in}
\end{center}

\begin{abstract}
We study the advantages of accelerated gradient methods, specifically based on the Frank-Wolfe method and projected gradient descent, for privacy and heavy-tailed robustness. Our approaches are as follows: For the Frank-Wolfe method, our technique is based on a tailored learning rate and a uniform lower bound on the gradient of the $\ell_2$-norm over the constraint set. For accelerating projected gradient descent, we use the popular variant based on Nesterov's momentum, and we optimize our objective over $\mathbb{R}^p$. These accelerations reduce iteration complexity, translating into stronger statistical guarantees for empirical and population risk minimization. Our analysis covers three settings: non-random data, random model-free data, and parametric models (linear regression and generalized linear models). Methodologically, we approach both privacy and robustness based on noisy gradients. We ensure differential privacy via the Gaussian mechanism and advanced composition, and we achieve heavy-tailed robustness using a geometric median-of-means estimator, which also sharpens the dependency on the dimension of the covariates. Finally, we compare our rates to existing bounds and identify scenarios where our methods attain optimal convergence.
\end{abstract}

%\tableofcontents

\section{Introduction}\label{sec:Introduction}

The study of differential privacy and robustness for statistical estimation and machine learning has recently attracted considerable attention, both individually and in combination. One approach to achieving privacy is output perturbation, where calibrated noise is added to the output of an estimation procedure \cite{(NEAR), BOLT_ON_PRIV, EFF_PRIV_SMOOTH}. Another key approach is gradient perturbation, where noise is added to gradients during an iterative algorithm such as gradient descent. Using composition theorems, this method produces private outputs, where each step is itself private. Talwar et al.~\cite{NOPL} proposed such an approach within the framework of the Frank-Wolfe algorithm, and analyzed convex, Lipschitz losses optimized over a convex polytope. For the Lasso, Talwar et al.\
%consider $|y_i|, ||x_i||_{\infty} \leq 1$ and optimize over $\mathbb{B}_1(1)$. They
achieved a rate of $\widetilde{O}\left(\frac{1}{(\epsilon n)^{2/3}}\right)$, which is optimal up to logarithmic factors. They also generalized their analysis to consider $L_2$-Lipschitz losses optimized over arbitrary convex sets of finite diameter.
%in the $\ell_2$-norm over convex sets $\mathcal{C}$ of finite diameter, yielding a rate of $\widetilde{O}\left(\frac{\Gamma_{\mathcal{L}}^{1/3}(L_2G_{\mathcal{C}})^{2/3}}{(\epsilon n)^{2/3}}\right)$, when $0 < \epsilon \lesssim 1$, and where $G_{\mathcal{C}}$ is the Gaussian curvature of $\mathcal{C}$ and $\Gamma_{\mathcal{L}}$ is the curvature constant of $\mathcal{L}$.

The work of Talwar et al.\ raises questions regarding faster rates of convergence under a different geometry of the constraint set $\mathcal{C}$. An important technique which we leverage in this regard is acceleration. Section \ref{sec:{Private ERM for Distribution-Free Data: Upper and Lower Bounds}}  investigates ridge regression, taking into account the strong convexity of $\mathcal{C}$. By incorporating a relaxed and accelerated Frank-Wolfe method introduced based on \cite{ACCFW}, we show that better rates can be achieved with an appropriate learning rate, assuming a lower bound on the $\ell_2$-norm of the empirical risk gradient. We show how to establish such a bound, with high probability, under a parametric linear model. In the regime where $p \asymp m^2$ and $n \asymp \frac{m^3}{\log(m)}$, our results demonstrate the optimality of the upper bound. Using a lower bound construction inspired by \cite{NOPL}, we also show that the data conditions match those required for a lower bound on the $\ell_2$-norm of the of the empirical risk gradient. Notably, our accelerated method significantly improves performance by reducing noise requirements and lowering the iteration count $T$ from polynomial to logarithmic.

\begin{table}[!htbp]
    \centering
    \small  % Reduce text size
    \setlength{\tabcolsep}{2.5pt}  % Reduce spacing between columns
    \renewcommand{\arraystretch}{1.2} % Increase row height for better readability
    \begin{tabular}{|c|c|c|c|}
        \hline
        \textbf{Setting and What We Are Bounding} & \textbf{FW} & \textbf{ACCFW} & \textbf{ACCFW Optimal?}\\
        \hline
        \makecell{Squared Loss, $|y_i|, ||x_i||_\infty \leq 1$, \\ Under additional assumptions \\ on the data for ACCFW, Privacy, \\ $\mathbb{E}\left[\mathcal{L}(\theta_T, \mathcal{D}_n) - \mathop{\min}\limits_{\theta \in \mathcal{C}}\mathcal{L}(\theta, \mathcal{D}_n)\right]$} &  
        \makecell{\\$\left(\frac{(\sqrt{p} + pD)D^2p}{n\epsilon}\right)^{2/3}$, \\ \\ $T \asymp \left(\frac{n\epsilon D}{1 + 2\sqrt{p}D}\right)^{2/3}$, \\ in \cite{NOPL}} & 
        \makecell{\\$\frac{(\sqrt{p} + pD)D\sqrt{p}}{n\epsilon}$, \\ \\ $T \asymp \log(n)$, \\ in Theorem \ref{theorem:UPRidge}} & \makecell{Yes, for \\ $n \asymp \frac{p^{3/2}}{\log(p)}$, \\ in Theorem \ref{theorem:LBRidge}} \\
        \hline
        \makecell{GLM, $|y_i|, ||x_i||_2 \lesssim 1$, \\ $D \uparrow ||\theta^*||_2$, $p \asymp 1$, Privacy, \\ $\mathbb{E}\left[\mathcal{L}(\theta_T, \mathcal{D}_n) - \mathop{\min}\limits_{\theta \in \mathcal{B}}\mathcal{L}(\theta, \mathcal{D}_n)\right]$ \\ for FW, $\mathcal{B} = \mathbb{B}_2(||\theta^*||_2)$,\\ $\mathcal{L}(\theta_T, \mathcal{D}_n) - \mathop{\min}\limits_{\theta \in \mathcal{B}}\mathcal{L}(\theta, \mathcal{D}_n)$, \\ w.h.p.\ for ACCFW} & 
        \makecell{\\$\frac{1}{(n\epsilon)^{2/3}}$, \\ \\ $T \asymp \left(n\epsilon\right)^{2/3}$, \\ in \cite{NOPL}} & 
        \makecell{\\$\frac{1}{n^{4/5}\epsilon}$, \\ \\ $T \asymp n^{2/5}\log(n)$, \\ in Theorem \ref{theorem:ERM_UP_C_Inc}} & 
        No\\
        \hline
        \makecell{GLM, $|y_i|, ||x_i||_2 \lesssim 1$, $p \asymp 1$, \\ Privacy, $||\theta_T - \theta^*||_2$, w.h.p.} & \makecell{\\$\frac{1}{n^{1/2}} + \frac{1}{(n\epsilon)^{1/3}}$, \\ \\ $T \asymp (n\epsilon)^{2/3}$, \\ in Proposition \ref{prop:Iter_Rate_Inc_C_Non}} & 
        \makecell{\\$\frac{1}{n^{1/2}} + \frac{1}{n^{2/5}\epsilon^{1/2}}$, \\ \\ $T \asymp n^{2/5}\log(n)$, \\ in Theorem \ref{theorem:Iter_Rate_Inc_C}} & 
        \makecell{Yes, \cite{Cai_Cost_GLM}, in \\ the dominant \\ (statistical error) \\ term}\\
        \hline
        \makecell{GLM, $|y_i|, ||x_i||_2 \lesssim 1$, \\ $||\theta^*||_2 - D, p \asymp 1$, Privacy, \\ $\mathbb{E}\left[\mathcal{L}(\theta_T, \mathcal{D}_n) - \mathop{\min}\limits_{\theta \in \mathcal{C}}\mathcal{L}(\theta, \mathcal{D}_n)\right]$} & 
        \makecell{\\$\frac{1}{(n\epsilon)^{2/3}}$, \\ \\ $T \asymp \left(n\epsilon\right)^{2/3}$, \\ in \cite{NOPL}} & 
        \makecell{\\$\frac{1}{n\epsilon}$, \\ \\ $T \asymp \log(n)$, \\in Theorem \ref{theorem:ERM_GLM_UP}} & 
        Unknown \\
        \hline
        \makecell{Linear Regression, $\lambda_{\min}(\Sigma) > 0$, \\ $p \asymp 1$, Heavy-Tailed Robustness, \\ $||\theta_T - \theta^*||_2$, w.h.p.} & \makecell{\\$\frac{1}{n^{1/6}}$, \\ \\ $T = n^{1/3}$, \\in Theorem \ref{theorem:HT_NonAccFW}} & \makecell{\\$\frac{1}{n^{1/5}}$, \\ \\ $T \asymp n^{1/5}\log(n)$, \\ in Theorem \ref{theorem:ACCFWROB}} & No\\
        \hline
        \makecell{Ridge Regression, $\lambda_{\min}(\Sigma) = 0$, \\ $p \asymp 1$, Heavy-Tailed Robustness, \\ $||\theta_T - \theta^*||_2$, w.h.p.} & \makecell{\\$\frac{1}{n^{1/9}} + c_{\mathcal{K}}$, \\ \\ $T = n^{1/3}$, \\ in Theorem \ref{theorem:NonACC_lambda=0}} & \makecell{\\$\frac{1}{c_{\mathcal{K}}^{1/4}n^{1/4}} + c_{\mathcal{K}}$ \\ $+ c_{\mathcal{K}}^{1/2}$, \\ \\ $T \asymp \log(n)/c_{\mathcal{K}}^2$, \\ in Theorem \ref{theorem:ACCFW_lambda=0}} & No \\ 
        \hline
    \end{tabular}
    \caption{\textbf{Frank-Wolfe vs.\ Accelerated Frank-Wolfe}, Constraint set $\mathcal{C} = \mathbb{B}_2(D)$, $\epsilon \lesssim 1$, $c_{\mathcal{K}} = \left\|[P^T\theta^*]_{[(m+1):p]}\right\|_2$}
    \label{tab1}
\end{table}

\begin{table}[!htbp]
    \centering
    \small  % Reduce text size
    \setlength{\tabcolsep}{4pt}  % Reduce spacing between columns
    \renewcommand{\arraystretch}{1.2} % Increase row height for better readability
    \begin{tabular}{|c|c|c|c|}
        \hline
        \textbf{Setting and What We Are Bounding} & \textbf{GD} & \textbf{AGD} & \textbf{GD/AGD Optimal?} \\
        \hline
        \makecell{Convex, Smooth, Lipschitz Loss, $p \asymp 1$, \\ Privacy, $\mathcal{R}(\theta_T) - \mathop{\min}\limits_{\theta \in \mathbb{R}^p}\mathcal{R}(\theta)$} & 
        \makecell{\\$\frac{1}{n^{1/5}} + \frac{1}{n^{1/2}\epsilon}$, \\ \\ $T = n^{1/5}$, \\ in Theorem \ref{theorem:Priv_HT_GD_Smooth}} & 
        \makecell{\\$\frac{1}{n^{2/5}} + \frac{1}{n\epsilon^2}$, \\ \\ $T = n^{1/5}$, \\ in Theorem \ref{theorem:Priv_HT_AGD_Smooth}} &
        No/No \\
        \hline
        \makecell{Linear Regression, Squared Loss, \\ Optimization over $\mathbb{R}^p$, \\ Smooth and Strongly Convex Risk, \\ $||\theta_0 - \theta^*|| \lesssim \sqrt{p}$, Heavy-Tailed \\ Robustness, $||\theta_T - \theta^*||_2$, w.h.p.} & 
        \makecell{$\sqrt{\frac{p}{n}}$, \\ \\ $T \asymp \log(n)$, \\ in \cite{RE}} & 
        \makecell{\\$\sqrt{\frac{p}{n}}$, \\ \\ $T \asymp \log(n)$, \\ for $\frac{pT\log(T)}{n} \asymp 1$, \\ in Theorem \ref{theorem:htAGD}} & \makecell{Yes/Yes,\\  minimax rate for \\ linear regression \\ in \cite{Stats_Info_Duchi}} \\
        \hline
    \end{tabular}
    \caption{\textbf{Gradient Descent vs.\ Nesterov's AGD}, $\epsilon \lesssim 1$}
    \label{tab2}
\end{table}

Accelerating gradient methods is beneficial from the following perspective: Using an accelerated method leads to a smaller variance of noise required for privacy and a smaller number of iterations $T$, resulting in better statistical performance overall. We take this idea further in Section \ref{sec:Private Biased Parameter Estimation in GLMs: Upper and Lower Bounds for Excess Risk}, where we study parametric generalized linear models. Applying the general result in \cite{NOPL} over an $\ell_2$-ball that contains the true parameter $\theta^*$ yields a rate of $\widetilde{O}\left(\frac{1}{(n\epsilon)^{2/3}}\right)$, when $0 < \epsilon \lesssim 1$. In contrast, our accelerated Frank-Wolfe method applied to an $\ell_2$-ball that grows toward $\theta^*$ as $n \rightarrow \infty$ achieves a smaller error of $\widetilde{O}\left(\frac{1}{n^{4/5}\epsilon}\right)$, when $0 < \epsilon \lesssim 1$. 

Another aspect of regression based on perturbed gradients that has been studied extensively is robustness. This includes robustness to both outliers and heavy-tailed data. Instead of using a robust loss~\cite{Huber1, Huber2, Hampel}, one can use robust gradient estimators while optimizing non-robust loss functions. This idea has gained traction from Diakonikolas et al.~\cite{CompIntr} and Balakrishnan et al.~\cite{Balakrish}. Heavy tails present a challenge in estimation and regression, as explored by \cite{LUGOSI_MEAN_ESTIM_HT, PENSIA_ROB_HT, Minsker}. Prasad et al.~\cite{RE} extend the spectral algorithm of Lai et al.~\cite{Vpala} for Huber contamination and use the $G_{MOM}$ estimator for heavy-tailed robustness as gradient estimators in projected gradient descent. Their results yield a high-probability upper bound on the $\ell_2$-error of iterates. In our study of linear models, we assume the noise has a finite second moment and the covariates only have few finite moments.
%In many real-world applications, the data are known to have a heavy-tailed distribution. For instance, in risk management, extreme events such as market crashes or large losses are better modeled with heavy-tailed distributions. Also, metrics such as number of followers, likes, or interactions often follow power-law distributions in the case of social networks.

In Section \ref{sec:Biased Parameter Estimation HT}, we demonstrate the benefits of the accelerated Frank–Wolfe method for heavy‐tailed linear models. Using the $G_{\mathrm{MOM}}$ estimator \cite{RE}, our accelerated scheme contracts gradient noise over $T$ iterations and yields tighter bounds on $||\theta_T-\theta^*||_2$.
%This parallels the behavior in the context of privacy, where an accelerated approach reduces the iteration count and leads to better statistical performance.
When the covariance $\Sigma$ of the covariates is well‐conditioned ($\lambda_{\min}(\Sigma)>0$; Section \ref{sec:Well_Cond_Sec_FW}), our accelerated rate $\widetilde{O}\left((1 + \sigma_2)^{1/2}/n^{1/5}\right)$ improves on the one based on classical Frank–Wolfe, i.e., $\widetilde{O}\left((1 + \sigma_2)^{1/2}/n^{1/6}\right)$. In the ill‐conditioned case ($\lambda_{\min}(\Sigma)=0$; Section \ref{sec:Ill_Cond_Sec_FW}), acceleration achieves a rate of $\widetilde{O}\left((1 + \sigma_2)^{1/2}\frac{1}{c_{\mathcal{K}}^{1/4}n^{1/4}} + c_{\mathcal{K}} + c_{\mathcal{K}}^{1/2}\right)$, while the classical approach gives $\widetilde{O}\left((1 + \sigma_2)^{1/2}\frac{1}{n^{1/9}} + c_{\mathcal{K}}\right)$. Here, $c_{\mathcal K}$ vanishes as the problem becomes well‐conditioned, so acceleration trades off an extra vanishing bias for substantially faster convergence in $n$.

The idea of acceleration in optimization was popularized by
%and its stochastic variant have been adapted for differential privacy by adding noise to each gradient step, yielding private estimators of minimizers \cite{Song_DP_GD_2013, Bassily_SGD_l2, SCO_Bassily_2019, SCO_Bassily_2022}. In non‑private optimization,
Nesterov’s accelerated method \cite{Nester}, which often outperforms projected gradient descent, and recent work has begun to explore its private analog. Xu el al.~\cite{Xu_ADMM_Nester_2021} studied accelerated updates in an ADMM setting for smooth convex losses with non-random input data, but gave guarantees only for standard projected gradient descent. Kuru et al.~\cite{Kuru_Nester_DP_2022} provided utility bounds for both vanilla and accelerated gradient methods on strongly convex losses. In Section \ref{sec:Smooth_Priv_HT_GD_AGD}, we analyze smooth risk functions optimized over $\mathbb{R}^p$, with random data, and show that differentially private Nesterov acceleration achieves excess risk $\widetilde{O}\left(\frac{1}{n^{2/5}} + \frac{1}{n\epsilon^2}\right)$, improving over the rate $\widetilde{O}\left(\frac{1}{n^{1/5}} + \frac{1}{n^{1/2}\epsilon}\right)$ achieved by projected gradient descent. Feldman et al.~\cite{SCO_Feldman_2020} obtain the optimal rate of $\frac{1}{n^{1/2}} + \frac{1}{n\epsilon}$, for optimization over $\mathbb{R}^p$, using a localization-based SGD approach.

With the effect of accelerated Frank-Wolfe on heavy-tailed robustness in mind, we conduct a similar analysis based on Nesterov's acceleration in Section \ref{sec:Strongly Convex Risks}. Building upon \cite{RE}, we establish a convergence result regarding Nesterov's momentum (cf.\ Theorem \ref{theorem:AGD}), for smooth and strongly convex risks. We then apply this result to linear regression with squared error loss using the $G_{MOM}$ estimator. The conclusion for strongly convex risks is that acceleration is less impactful, improving on the iteration count only up to constant factors. This stems from the fact that, for smooth and strongly convex functions, projected gradient descent and Nesterov's momentum both give exponential convergence rates in the iteration count. Additionally, \cite{RE} perform an analysis in the Huber $\epsilon$-contamination model; in Appendix \ref{sec:Huber epsilon-Contamination Robustness}, we analyze the performance of Theorem \ref{theorem:AGD} for Nesterov's method in the Huber model. 

We also mention other related work on gradient perturbation, notably private SGD, where there has been an extensive line of work concerning bounds on excess empirical risk \cite{SGD_DP_2013_Upd, Bassily_SGD_l2, Is_Inter_Nec_DP_ERM, DP_ERM_Faster, Priv_SGD_l1_Geom}, either with high probability or in expectation. Some authors focus on computational efficiency, as opposed to achieving tighter upper bounds on the excess empirical risk~\cite{Towards_Prac_DP_ERM, BOLT_ON_PRIV}. Other authors target the excess risk directly \cite{BasEtal19, SCO_Feldman_2020, BasEtal21}. For the excess empirical risk, Bassily et al.~\cite{Bassily_SGD_l2} consider private SGD for a convex, differentiable, $L_2$-Lipschitz loss, optimized over a convex, bounded set $\mathcal{C}$. For an iteration count of $T = n^2$, they obtain an upper bound of $\widetilde{O}\left(\frac{L_2||\mathcal{C}||_2\sqrt{p}}{n\epsilon}\right)$ on the expected excess empirical risk. Later, \cite{DP_ERM_Faster} improved the iteration count for convex, differentiable, smooth, $L_2$-Lipschitz regularized losses, optimized over $\mathbb{R}^p$. In Appendix~\ref{AppSGD}, we compare our results from Sections \ref{sec:{Private ERM for Distribution-Free Data: Upper and Lower Bounds}} and \ref{sec:Private Biased Parameter Estimation in GLMs: Upper and Lower Bounds for Excess Risk} to private SGD. We also mention the line of work~\cite{BasEtal19, BasEtal21}, which has a similar flavor to our paper, in that it analyzes private SGD under different $\ell_p$-geometries of the constraint set, and seeks to explore methods that can achieve more efficient convergence by leveraging geometry.

From a practical standpoint, our work is relevant to domains such as financial modeling, where heavy-tailed distributions better capture market shocks, or in medical imaging, where it is desirable to be robust to random artifacts and non-Gaussian distributions. It is worth noting that our approach differs from much of the existing differential privacy literature, which avoids parametric assumptions and does not aim to recover a true parameter $\theta^*$. Instead, we combine parametric modeling with privacy and robustness to heavy tails, addressing gaps in prior works such as~\cite{NOPL, BOLT_ON_PRIV, EFF_PRIV_SMOOTH}, while extending results such as \cite{RE}. Hence, our work enhances the study of heavy-tailed robust and differentially private regression by making use---on one hand---of accelerated gradient methods---and on the other hand---of parametric modeling perspectives, enabling more structured, targeted optimization procedures.

%%%%%

\section{Preliminaries}\label{sec:Notation and Preliminaries}

We introduce the required background material that will be central to our derivations in this paper. For a detailed presentation of notation, see Appendix \ref{sec:Notation}.

\subsection{Preliminaries on Optimization} \label{sec:Preliminaries on Optimization}

In this section, we introduce the fundamental aspects of our analysis. We start by presenting 
the general convex optimization settings before introducing differential privacy. For a differentiable function $F$, we denote its gradient by $\nabla F$ and its Hessian by $\nabla^2 F$. For more preliminary aspects related to smooth and strongly convex functions, see Appendix \ref{sec:Appendix_Optim_Prelim}. We shall make use of the notion of a strongly convex set, and because of its crucial importance in our work, we define it below:

\begin{definition}[Strongly Convex Set]
We say that a convex set $\mathcal{C} \subseteq \mathbb{R}^p$ is $\alpha_{\mathcal{C}}$-strongly convex if for any $x, y \in \mathcal{C}$, any $\gamma \in [0, 1]$, and any $z \in \mathbb{R}^p$ such
that $||z||_2 = 1$, we have
\begin{align*}
\gamma x + (1 - \gamma)y + \gamma(1 - \gamma)\frac{\alpha_{\mathcal{C}}}{2}||x - y||_2^2z \in \mathcal{C}.
\end{align*}
\end{definition}

Geometrically, the definition above says that $\mathcal{C}$ contains a ball of radius $\gamma(1 - \gamma)\frac{\alpha_{\mathcal{C}}}{2}||x - y||_2^2$ centered at $\gamma x + (1 - \gamma)y$. In particular, $\ell_2$-balls are strongly convex (cf.\ Lemma~\ref{lemma:STR_CONV_BALL}).

%%%%%

\subsubsection{Projected Gradient Descent} \label{sec:Projected Gradient Descent}

We can now introduce our main gradient optimization methods. For a convex set $\mathcal{C} \subseteq \mathbb{R}^p$ and a strictly convex, differentiable function $F : \mathbb{R}^p \rightarrow \mathbb{R}$, for an initial point $x_0 \in \mathcal{C}$ and stepsize $\eta$, consider the updates
\begin{align}
\label{eq:Proj_GD_Rule}
x_{t + 1} = \mathcal{P}_{\mathcal{C}}(x_t - \eta \nabla F(x_t)),
\end{align}
where $\mathcal{P}_{\mathcal{C}}$ is the projection operator in the $\ell_2$-norm onto our constraint set $\mathcal{C}$. Under strong convexity and smoothness, we can guarantee sub-exponential convergence in the iteration count $t$ for $||x_t - x_{*}||_2^2$, where $x_{*} \in \mathop{\arg\min}\limits_{x \in \mathcal{C}}F(x)$ (see Lemma \ref{lemma:Optimiz_Proj_GD_Conv} in Appendix \ref{sec:Appendix_GD}).

%%%%%

\subsubsection{Nesterov's Accelerated Gradient Descent (AGD)} \label{sec:Nesterov's Accelerated Gradient Descent (AGD)}

The next gradient method provides faster convergence rates than projected gradient descent. The idea is to take into account the previous two terms when moving to the $(t + 1)^{\text{th}}$ term in order to generate a type of momentum: For a strictly convex differentiable function $F : \mathbb{R}^p \rightarrow \mathbb{R}$, starting at some $(x_0, x_1)$ with $\eta, \lambda > 0$, and assuming that optimization occurs over $\mathcal{C} = \mathbb{R}^p$, define the iterates
\begin{align}
\label{eq:Nester_Rule}
x_{t + 1} = x_t - \eta\nabla F(x_t + \lambda(x_t - x_{t - 1})) + \lambda(x_t - x_{t - 1}).
\end{align}
Rates of convergence are provided in Lemma \ref{lemma:Optimiz_Nester_Conv} in Appendix \ref{sec:Appendix_AGD}. %We do not give a proof of this result since our proof for the robust Nesterov's AGD later will follow similar steps from \cite{Recht}.

%%%%%

\subsubsection{The Frank-Wolfe Method} \label{sec:The Frank-Wolfe Method}

Finally, consider a convex, differentiable $F : \mathbb{R}^p \rightarrow \mathbb{R}$ that we wish to minimize over a compact, convex set $\mathcal{C}$. The algorithm runs the following for a learning rate $\eta 
 > 0$, starting at some $x_0 \in \mathcal{C}$:
\begin{align}
\label{eq:FW_Rule}
v_t = \mathop{\arg \min}\limits_{v \in \mathcal{C}} \nabla F(x_t)^Tv, \qquad x_{t + 1} = (1 - \eta)x_t + \eta v_t.
\end{align}
One can show a sub-linear convergence result under $\tau_u$-smoothness \cite{Recht, FWS}, with the learning rate varying with the number of iterations (cf.\ Lemma \ref{lemma:Optimiz_FW_Conv} in Appendix \ref{sec:Appendix_FW}). 

To allow for the noise introduced in the study of privacy, \cite{NOPL} considers a relaxed version of the classical Frank-Wolfe algorithm with varying learning rate $\frac{2}{t + 2}$ from \cite{Rev_FW_Prof_Free_Jaggi}. Instead of asking for $v_t$ to be precisely the minimizer of a the linear function $v^T\nabla F(x_t)$ over $\mathcal{C}$, they only ask for $v_t^T\nabla F(x_t)$ to be less than $\mathop{\min}\limits_{v \in \mathcal{C}}v^T\nabla F(x_t)$ plus some non-negative error term. The convergence rate is still linear in $t$ (cf.\ Lemma \ref{lemma:NonAccRel_FW}) in Appendix \ref{sec:Appendix_FW}).

If we optimize over a compact, strongly convex set $\mathcal{C}$ and the $\ell_2$-norm of the gradient is bounded below over $\mathcal{C}$, we can perform a similar relaxation and obtain approximate exponential convergence \cite{ACCFW}. Our accelerated, relaxed Frank-Wolfe algorithm is provided in Algorithm \ref{alg:ReAccFW}. We call it \emph{accelerated} since the convergence rate is exponential, and \emph{relaxed}, since we only require the linear objective $v^T\nabla F(x_t)$ at each step $t$, evaluated at $v_t$, to be close to $\mathop{\min}\limits_{v \in \mathcal{C}}v^T\nabla F(x_t)$.
The following convergence guaranteed is proved in Appendix \ref{sec:Appendix_FW}:

\begin{algorithm}
\caption{Relaxed and Accelerated Frank-Wolfe}
\label{alg:ReAccFW}
\begin{algorithmic}[1]
\Function{ReAccFW}{$r$, $\tau_u$, $\Delta$, $\alpha_{\mathcal{C}}$, $T$, compact and $\alpha_{\mathcal{C}}$-strongly convex set $\mathcal{C}$, $\eta = \min\left\{1, \frac{\alpha_{\mathcal{C}} r}{4\tau_u}\right\}$}
    \For{$t = 0$ to $T - 1$}
        \State Find $v_t \in \mathcal{C}$ s.t. $v_t^T\nabla F(x_t) \leq \mathop{\min}\limits_{v \in \mathcal{C}}v^T\nabla F(x_t) + \Delta$.
        \State $x_{t + 1} = (1 - \eta)x_t + \eta v_t$.
    \EndFor
    \State \textbf{return} $x_T$.
\EndFunction
\end{algorithmic}
\end{algorithm}

\begin{theorem}
\label{theorem:ACCFWV3}
Let $\mathcal{C} \subseteq \mathbb{R}^p$ be a compact, $\alpha_{\mathcal{C}}$-strongly convex set, and let $F: \mathbb{R}^p \rightarrow \mathbb{R}$ be a convex, differentiable, $\tau_u$-smooth function such that $0 < r \leq ||\nabla F(x)||_2$ for all $x \in \mathcal{C}$. Suppose $x_{*} \in \mathop{\arg\min}\limits_{x \in \mathcal{C}}F(x)$. Then Algorithm \ref{alg:ReAccFW} returns a sequence $x_t$ such that
\begin{align*}
F(x_t) - F(x_{*}) \leq c^t\left(F(x_0) - F(x_{*})\right) + \frac{3\Delta\eta}{2(1 - c)}, \quad \forall t \ge 0, \text{ with } c = \max\left\{\frac{1}{2}, 1 - \frac{\alpha_{\mathcal{C}} r}{8\tau_u}\right\}.
\end{align*}
%for all $t \geq 0$, with $c = \max\left\{\frac{1}{2}, 1 - \frac{\alpha_{\mathcal{C}} r}{8\tau_u}\right\}$.
\end{theorem}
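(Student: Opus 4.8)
The plan is to establish a one-step recursion for the suboptimality gap $h_t := F(x_t) - F(x_{*})$ and then unroll it. First I would use $\tau_u$-smoothness of $F$ together with the update rule $x_{t+1} = (1-\eta)x_t + \eta v_t = x_t + \eta(v_t - x_t)$ to write
\begin{align*}
F(x_{t+1}) \le F(x_t) + \eta \nabla F(x_t)^T(v_t - x_t) + \frac{\tau_u \eta^2}{2}\|v_t - x_t\|_2^2.
\end{align*}
The relaxation in Algorithm~\ref{alg:ReAccFW} gives $v_t^T\nabla F(x_t) \le v^T\nabla F(x_t) + \Delta$ for every $v \in \mathcal{C}$, so the linear term is controlled by $\min_{v\in\mathcal{C}}\nabla F(x_t)^T(v - x_t) + \Delta$; convexity then bounds $\min_{v\in\mathcal{C}}\nabla F(x_t)^T(v-x_t) \le \nabla F(x_t)^T(x_{*} - x_t) \le -(F(x_t) - F(x_{*})) = -h_t$.

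The key new ingredient, and what I expect to be the main obstacle, is extracting an extra contractive factor from the strong convexity of $\mathcal{C}$ combined with the lower bound $\|\nabla F(x_t)\|_2 \ge r$. Here I would invoke the defining property of an $\alpha_{\mathcal{C}}$-strongly convex set: for the points $x_t, v_t \in \mathcal{C}$ and the unit vector $z = -\nabla F(x_t)/\|\nabla F(x_t)\|_2$, the midpoint-type point $\tfrac12 x_t + \tfrac12 v_t + \tfrac{\alpha_{\mathcal{C}}}{8}\|x_t - v_t\|_2^2\, z$ lies in $\mathcal{C}$, so it is a feasible competitor in the linear minimization. Plugging it in and using the definition of $v_t$ (up to the slack $\Delta$) yields
\begin{align*}
\nabla F(x_t)^T v_t \le \nabla F(x_t)^T\Big(\tfrac12 x_t + \tfrac12 v_t\Big) - \frac{\alpha_{\mathcal{C}}}{8}\|x_t - v_t\|_2^2\,\|\nabla F(x_t)\|_2 + \Delta,
\end{align*}
which rearranges to $\nabla F(x_t)^T(v_t - x_t) \le -\tfrac{\alpha_{\mathcal{C}} r}{4}\|x_t - v_t\|_2^2 + 2\Delta$ after using $\|\nabla F(x_t)\|_2 \ge r$. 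This is the crucial estimate that converts the geometry of $\mathcal{C}$ into a negative quadratic term in $\|x_t - v_t\|_2^2$ that can cancel the smoothness term $\tfrac{\tau_u\eta^2}{2}\|v_t - x_t\|_2^2$; the choice $\eta = \min\{1, \alpha_{\mathcal{C}} r/(4\tau_u)\}$ is exactly what makes $\tfrac{\tau_u\eta^2}{2} \le \tfrac{\alpha_{\mathcal{C}} r \eta}{8}$, so that after multiplying the quadratic bound by $\eta$ the two quadratic terms combine to something nonpositive (at most $-\tfrac{\alpha_{\mathcal{C}} r\eta}{8}\|x_t-v_t\|_2^2 \le 0$), leaving only the linear and $\Delta$ contributions.

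Combining the two displays, I would obtain the recursion
\begin{align*}
h_{t+1} \le h_t - \eta\, \min\{h_t,\, \tfrac{\alpha_{\mathcal{C}} r}{4}\|x_t-v_t\|_2^2\} + O(\Delta\eta),
\end{align*}
and the case analysis on which of the two terms in the min is active is what produces the factor $c = \max\{\tfrac12, 1 - \tfrac{\alpha_{\mathcal{C}} r}{8\tau_u}\}$: in the regime where the quadratic term dominates one gets a geometric decrease governed by $\eta$ (hence $1 - \eta \le \max\{\tfrac12, 1-\tfrac{\alpha_{\mathcal{C}} r}{4\tau_u}\}$-type bounds), and in the other regime one shows $h_{t+1} \le (1-\eta/2)h_t + O(\Delta\eta)$ or similar, so that in all cases $h_{t+1} \le c\, h_t + c'\Delta\eta$ with $c$ as stated and $c' \le 3/2$. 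Finally I would unroll $h_{t+1} \le c\, h_t + \tfrac{3\Delta\eta}{2}$ to get $h_t \le c^t h_0 + \tfrac{3\Delta\eta}{2}\sum_{j=0}^{t-1} c^j \le c^t h_0 + \tfrac{3\Delta\eta}{2(1-c)}$, which is the claimed bound. The main delicacy is bookkeeping the constants in the two-regime argument so that the slack terms all collapse into the single clean coefficient $\tfrac{3}{2}$ and the contraction factor comes out exactly as $c = \max\{\tfrac12, 1-\tfrac{\alpha_{\mathcal{C}} r}{8\tau_u}\}$; I would handle this by being slightly lossy (e.g. bounding $1-\eta$ and $1-\eta/2$ uniformly) rather than optimizing.
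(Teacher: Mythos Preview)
Your proposal is correct and uses essentially the same ingredients as the paper: smoothness for the descent inequality, the relaxed optimality of $v_t$ plus convexity for the bound $\nabla F(x_t)^T(v_t-x_t)\le -h_t+\Delta$, and the strong convexity of $\mathcal{C}$ applied at the midpoint of $x_t,v_t$ in the direction $-\nabla F(x_t)/\|\nabla F(x_t)\|_2$ to get the quadratic bound $\nabla F(x_t)^T(v_t-x_t)\le -\tfrac{\alpha_{\mathcal C}r}{4}\|v_t-x_t\|_2^2+2\Delta$.

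The only mechanical difference is in how the two bounds are combined. The paper simply averages them to obtain the single inequality $\nabla F(x_t)^T(v_t-x_t)\le -\tfrac{h_t}{2}-\tfrac{\alpha_{\mathcal C}}{8}\|v_t-x_t\|_2^2\|\nabla F(x_t)\|_2+\tfrac{3\Delta}{2}$, after which the quadratic term absorbs the smoothness term directly and no case analysis is needed; the factor $c=\max\{\tfrac12,1-\tfrac{\alpha_{\mathcal C}r}{8\tau_u}\}$ arises purely from $1-\eta/2$ and the definition of $\eta$. Your route via a $\min$ and a two-regime split also works and, if done carefully (using $\tfrac{\alpha_{\mathcal C}r}{4}\|v_t-x_t\|_2^2\gtrless h_t+\Delta$ as the split), gives the identical recursion $h_{t+1}\le(1-\eta/2)h_t+\tfrac{3\Delta\eta}{2}$ in both cases. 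Just be aware that the intermediate display $h_{t+1}\le h_t-\eta\min\{h_t,\tfrac{\alpha_{\mathcal C}r}{4}\|x_t-v_t\|_2^2\}+O(\Delta\eta)$ is not literally what one obtains (the smoothness quadratic is still present before either bound is used to absorb it), and the $\max$ in $c$ comes from the two possible values of $\eta$, not from the case analysis. The paper's averaging trick avoids both of these bookkeeping wrinkles.
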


Notice that the upper bound in Theorem~\ref{theorem:ACCFWV3} consists of a term that converges exponentially to $0$ with the number of iterations and an error term involving $\Delta$. The fact that the error is linear in $\Delta$ will be important later when we apply Algorithm \ref{alg:ReAccFW} in various statistical settings.
Further note that we have the crucial assumption that the norm of the gradient is bounded below by a positive quantity. Hence, any point that sets the gradient to 0 must lie outside $\mathcal{C}$. We mention this to preview our later results where we will optimize losses over sets that do not contain the true parameter of our model, such as Theorem \ref{theorem:ACCFWROB} in Section \ref{sec:Biased Parameter Estimation HT}.

%%%%%

\subsection{Preliminaries on Differential Privacy} 

In what follows, we will consider random estimators that we denote by $\hat{\theta}$. We assume $\hat{\theta} : \mathcal{E}^n \rightarrow \mathbb{R}^p$, where $\mathcal{E}$ is some input space that we will specify depending on the problem setting. Recall the classical notion of $(\epsilon, \delta)$-differential privacy, which will be denoted by $(\epsilon, \delta)$-DP:
\begin{definition}
\label{definition:DiffP}
A randomized algorithm/mechanism $\hat{\theta}$ satisfies $(\epsilon, \delta)$-differential privacy, for $\epsilon > 0$ and $\delta \geq 0$, if for all pairs of datasets $X$ and $X'$ differing in one element and for all $S$ in the range of $\hat{\theta}$, we have  $\mathbb{P}\left(\hat{\theta}(X) \in S \right) \leq e^\epsilon \mathbb{P}\left(\hat{\theta}(X') \in S \right) + \delta$.
\end{definition}
We present further technical preliminary aspects in Appendix \ref{sec:Appendix_DP_Prelim}.

%%%%%

\subsection{The General Statistical Settings and Models} \label{sec:The General Statistical Settings and Models}

In Sections \ref{sec:Private Biased Parameter Estimation in GLMs: Upper and Lower Bounds for Excess Risk}, \ref{sec:Biased Parameter Estimation HT}, and \ref{sec:Strongly Convex Risks}, we consider parametric models $\{P_{\theta} \ | \ \theta \in \Theta\}$, where data $\mathcal{D}_n = \{z_1, \dots, z_n \} \subseteq \mathcal{E}^n$ are drawn i.i.d.\ from a distribution $P_{\theta^*}$. We assume the existence of a true parameter $\theta^* \in \Theta =  \mathbb{R}^p$. To measure the error produced by some optimization procedure, we use a loss function $\mathcal{L}: \mathbb{R}^p \times \mathcal{E} \rightarrow \mathbb{R}$, which we will specify depending on the application. In some cases, we also look at the corresponding population-level risk $\mathcal{R(\theta)} = \mathbb{E}_{z \sim P_{\theta^*}}[\mathcal{L}(\theta, z)]$. Throughout the analysis, we take $\mathcal{L}$ and $\mathcal{R}$ to be convex and differentiable over $\mathbb{R}^p$. Crucially, we will optimize over some convex set $\mathcal{C} \subseteq \mathbb{R}^p$, and we let $\theta_{*} = \mathop{\arg\min}\limits_{\theta \in \mathcal{C}}\mathcal{R}(\theta)$. We will make it clear when $\theta^*$ is the minimizer over $\mathcal{C}$. As we will see, for all models and risks we use, the true parameter $\theta^*$ will be the global minimizer over $\mathbb{R}^p$ and $\nabla\mathcal{R}(\theta^*) = 0$, apart from the setting of linear regression with $\ell_2$-regularized squared error loss.

In this paper, we will use three metrics to measure the performance of our methods. We will start by using the \emph{excess empirical risk} $\mathcal{L}(\theta_T, \mathcal{D}_n) - \mathop{\min}\limits_{\theta \in \mathcal{C}}\mathcal{L}(\theta, \mathcal{D}_n)$, where $\mathcal{L}(\theta, \mathcal{D}_n) := \frac{1}{n}\sum_{i = 1}^n\mathcal{L}(\theta, z_i)$. This will be relevant in Section \ref{sec:{Private ERM for Distribution-Free Data: Upper and Lower Bounds}}, where we do not assume the data to be random. We will also carry this metric over to Section \ref{sec:Private Biased Parameter Estimation in GLMs: Upper and Lower Bounds for Excess Risk}, where we study the benefits of acceleration for the purpose of privacy, in the Frank-Wolfe method, for generalized linear models. For parametric models, we will also use the $\ell_2$-distance $||\theta_T - \theta^*||_2$ between our estimate and the true parameter. Lastly, in Section \ref{sec:Smooth_Priv_HT_GD_AGD}, where the data are random, but we have no parametric model, we will use the \emph{excess risk} $\mathcal{R}(\theta_T) - \mathop{\min}\limits_{\theta \in \mathcal{C}}\mathcal{R}(\theta)$.

%%%%%

\subsubsection{Linear Regression}
\label{sec:Linear Regression}

In this setting, we have $\{z_i\}_{i = 1}^n = \{(x_i, y_i)\}_{i = 1}^n$ \iid from a distribution $P_{\theta^*}$. Assume the sample $(x, y) \sim P_{\theta^*}$ follows the model $y = x^T\theta^* + w$, where $x \indep w$, $\mathbb{E}[x] = 0$, and $\mathbb{E}[w] = 0$. Let $\Sigma := \mathbb{E}[xx^T]$ and $\sigma_2^2 := \mathbb{E}[w^2]$. We also assume throughout that $\sigma_2^2 < \infty$, and $\lambda_{\min}(\Sigma)$ and $\lambda_{\max}(\Sigma)$ are absolute constants. Now we present the different loss and population risk functions that we shall use.

%\subsubsection{Squared Error Loss}
\begin{example} [Squared error loss]
\label{sec:LR_Squared_Loss}
Assume $\Sigma \succ 0$ and consider the squared error loss
\begin{align*}
\mathcal{L}(\theta, (x, y)) = \frac{1}{2}(y - x^T\theta)^2.
\end{align*}
Then $\nabla \mathcal{L}(\theta, (x, y)) = (x^T\theta - y)x$, and the risk is $\mathcal{R}(\theta) = \mathbb{E}_{(x, y) \sim P_{\theta^*}}[\mathcal{L}(\theta, (x, y))] = \frac{1}{2}(\theta^* - \theta)^T\Sigma(\theta^* - \theta) + \frac{\sigma_2^2}{2}$, so $\nabla \mathcal{R}(\theta) = \Sigma(\theta - \theta^*)$ and $\nabla^2 \mathcal{R}(\theta) = \Sigma$.
Note that if $\theta^* \in \mathcal{C}$, the population risk is minimized at $\theta^*$.
%, i.e., $\theta_{*} = \theta^*$ and $\nabla\mathcal{R}(\theta^*) = 0$.
Clearly, we can take
%By looking at the Hessian $\nabla^2 \mathcal{R}(\theta) = \Sigma$, we have
%\begin{align*}
%\tau_l \leq \lambda_{\min}(\Sigma) \leq %\lambda_{\max}(\Sigma) \leq \tau_u.
%\end{align*}
%So, we take
$\tau_u = \lambda_{\max}(\Sigma)$ and $\tau_l = \lambda_{\min}(\Sigma)$.
\end{example}

%%%%%

\begin{example} [$\ell_2$-regularized squared error loss (ridge regression)]
\label{sec:LR_l2_Reg}

Suppose $x$ has bounded $4^{\text{th}}$ moments. Consider the SVD $\Sigma = PSP^T$, and suppose
%where $P$ is orthogonal. $S$ is a diagonal matrix with with the eigenvalues of $\Sigma$ on the diagonal such that $\lambda_{\max}(\Sigma) = S_{11} \geq \dots \geq S_{pp} = \lambda_{\min}(\Sigma) \geq 0$. We assume that
$\Sigma$ has $m \in \{1, \dots, p\}$ non-zero eigenvalues. 
We wish to estimate $\theta^*$, and to guarantee strong convexity, we optimize the regularized risk $\mathcal{R}_{\gamma_{\mathcal{C}}}(\theta) = \mathbb{E}[(y - x^T\theta)^2] + \frac{\gamma_{\mathcal{C}}}{2}||\theta||_2^2$, for some penalty $\gamma_{\mathcal{C}} > 0$, i.e., ridge regression. Observe that optimizing the regularized risk over $\mathbb{R}^p$ is the same as optimizing it over the constraint set $\mathcal{C} = \mathbb{B}_2(D)$ when $D \geq ||(\Sigma + \gamma_{\mathcal{C}} I_p)^{-1}\Sigma\theta^*||_2$, so $\theta_* = (\Sigma + \gamma_{\mathcal{C}}I_p)^{-1}\Sigma\theta^* \in \mathcal{C}$. Accordingly, we consider the squared error loss
\begin{align*}
\mathcal{L}_{\gamma_{\mathcal{C}}}(\theta, (x, y)) = \frac{1}{2}(y - x^T\theta)^2 + \frac{\gamma_{\mathcal{C}}}{2}||\theta||_2^2.
\end{align*}
%Hence, $\nabla \mathcal{L}_{\gamma_{\mathcal{C}}}(\theta, (x, y)) = (x^T\theta - y)x + \gamma_{\mathcal{C}}\theta$, with $\theta \in \mathcal{C}$. We deliberately write the subscript $\gamma_{\mathcal{C}}$ to explicitly differentiate between the reqularized and non-regularized version. For the risk we have
%with corresponding risk
%\begin{align*}
%\mathcal{R}_{\gamma_{\mathcal{C}}}(\theta) = \mathbb{E}_{(x, y) \sim P_{\theta^*}}[\mathcal{L}_{\gamma_{\mathcal{C}}}(\theta, (x, y))] = \frac{1}{2}(\theta^* - \theta)^T\Sigma(\theta^* - \theta) + \frac{\gamma_{\mathcal{C}}}{2}||\theta||_2^2 + \frac{1}{2}\sigma_2^2,
%\end{align*}
%where we used the independence of $x$ from $w$, as before. Thus,
Note that $\nabla \mathcal{R}_{\gamma_{\mathcal{C}}}(\theta) = \Sigma(\theta - \theta^*) + \gamma_{\mathcal{C}}\theta$ and $
\nabla^2 \mathcal{R}_{\gamma_{\mathcal{C}}}(\theta) = \Sigma + \gamma_{\mathcal{C}}I_p$.
%Since this objective is convex, the minimum over $\mathcal{C}$ is at $\theta_{*} = (\Sigma + \gamma_{\mathcal{C}} I)^{-1}\Sigma\theta^*$, so that $\nabla\mathcal{R}_{\gamma_{\mathcal{C}}}(\theta_{*}) = 0$, as mentioned, and by looking at the Hessian $\nabla^2 \mathcal{R}_{\gamma_{\mathcal{C}}}(\theta) = \Sigma + \gamma_{\mathcal{C}}I_p$, we require $\tau_l \leq \lambda_{\min}(\Sigma) + \gamma_{\mathcal{C}} \leq \lambda_{\max}(\Sigma) + \gamma_{\mathcal{C}} \leq \tau_u$. So,
By examining the Hessian, it is clear that we can take $\tau_u = \lambda_{\max}(\Sigma) + \gamma_{\mathcal{C}}$ and $\tau_l = \lambda_{\min}(\Sigma) + \gamma_{\mathcal{C}}$. 
%Also, note that $\theta^*$ could lie in $\mathcal{C}$, but nevertheless, $\nabla\mathcal{R}_{\gamma_{\mathcal{C}}}(\theta_{*}) = 0$ and $\theta_* \in \mathcal{C}$, so $\theta_*$ is the minimizer of $\mathcal{R}_{\gamma_{\mathcal{C}}}$ over $\mathcal{C}$.
\end{example}

\begin{remark}
Let $\left\|[P^T\theta^*]_{[1:m]}\right\|_2 \in \mathbb{R}^m$ be the vector obtained from $P^T\theta^*$ with the first $m$ entries. As $\gamma_C \rightarrow 0$, we have $||\theta_*||_2 \rightarrow \left\|[P^T\theta^*]_{[1:m]}\right\|_2$. Also, as $\gamma_C \rightarrow \infty$, we have $||\theta_*||_2 \rightarrow 0$. Hence, minimizing the penalized objective for some $\gamma_{\mathcal{C}} > 0$ is equivalent to optimizing $\mathbb{E}[(y - x^T\theta)^2]$ over an $\ell_2$-ball $\mathcal{V}$ centered at $0$, such that $\mathcal{V} \subset \mathbb{B}_2\left(\left\|[P^T\theta^*]_{[1:m]}\right\|_2\right)$. Note that as $\gamma_{\mathcal{C}} \rightarrow 0$, the radius of $\mathcal{V}$ increases to $\left\|[P^T\theta^*]_{[1:m]}\right\|_2$. If $m = p$, we are in the well-conditioned setting, and the radius of $\mathcal{V}$ approaches $||\theta^*||_2$ as $\gamma_{\mathcal{C}} \rightarrow 0$.
\end{remark}

%%%%%

\subsubsection{Generalized Linear Models (GLMs)}\label{sec:Generalized Linear Models (GLM)}

We will treat the linear regression model separately from general GLMs, since we will assume that $w$ in the linear model has a heavy-tailed distribution, so it does not necessarily fit in the general GLM as part of an exponential family. We will assume we have enough regularity to swap gradients in $\theta$ and expectations.

As in the case of linear regression, we have $\{z_i\}_{i = 1}^n = \{(x_i, y_i)\}_{i = 1}^n$ with $(x_i, y_i) \in \mathbb{R}^p \times \mathbb{R}$ \iid from $P_{\theta^*}$, but now $P_{\theta^*}$ links $y$ and $x$ in a conditional way:
\begin{align*}
P_{\theta^*}(y|x) \propto \exp\left(\frac{yx^T\theta^* - \Phi(x^T\theta^*)}{c(\sigma)} \right),
\end{align*}
with $c(\sigma)$ a known scale parameter and $\Phi : \mathbb{R} \rightarrow \mathbb{R}$ a known link function such that:
\begin{align*}
&|\Phi'(t)| \leq K_{\Phi'}, |\Phi''(t)| \leq K_{\Phi''}, \Phi''(t) > 0, \Phi''(t) = \Phi''(-t), \ \forall t \in \mathbb{R},\\
& \Phi'' \text{ is non-increasing on } \ [0, \infty),
\end{align*}
for absolute constants $K_{\Phi'}$ and $K_{\Phi''}$. Assume $\mathbb{E}[x] = 0$, $\mathbb{E}[xx^T] = \Sigma \succ 0$, and $\lambda_{\min}(\Sigma)$ and $\lambda_{\max}(\Sigma)$ are absolute constants. Since we know the conditional distribution of $y$ given $x$, we will use the negative log-likelihood loss
$\mathcal{L}(\theta, (x, y)) = -yx^T\theta + \Phi(x^T\theta)$, so $\nabla \mathcal{L}(\theta, (x, y)) = \left(\Phi'(x^T\theta) - y\right)x$. We do not have a closed-form expression for the risk, but by classical GLM theory, we have $\mathbb{E}[y|x] = \Phi^{'}(x^T\theta^*)$, so $\mathbb{E}[yx] = \mathbb{E}\left[\mathbb{E}[y|x]x\right] = \mathbb{E}[\Phi^{'}(x^T\theta^*)x]$. Thus, we have
\begin{align*}
\mathcal{R}(\theta) = -\theta^T\mathbb{E}[\Phi^{'}(x^T\theta^*)x] + \mathbb{E}[\Phi(x^T\theta)] = \mathbb{E}_{x}[\Phi(x^T\theta) - \Phi^{'}(x^T\theta^*)x^T\theta].
\end{align*}
By swapping expectations and gradients, we have
\begin{align}
\label{eq:GLMder}
\nabla \mathcal{R}(\theta) = \mathbb{E}_{x}[(\Phi^{'}(x^T\theta) - \Phi^{'}(x^T\theta^*))x], \qquad
\nabla^2 \mathcal{R}(\theta) = \mathbb{E}_{x}[\Phi''(x^T\theta)xx^T].
\end{align}
The following lemma regarding smoothness and strong convexity is proved in Appendix \ref{sec:Proof_GLM_prelim_lemma}:
\begin{lemma}
\label{lemma:GLM_prelim_lem}
Let $K_B > 0$ and consider a GLM. Then $\mathcal{R}$ is $K_{\Phi''}\lambda_{\max}(\Sigma)$-smooth over $\mathbb{R}^p$. Moreover, if $||x||_2 \leq L_x$ and $||\theta||_2 \leq K_B$ for all $\theta \in \mathcal{C}$, then $\mathcal{R}$ is $\Phi''(L_xK_B)\lambda_{\min}(\Sigma)$-strongly convex over $\mathcal{C}$. Finally, if $\theta^* \in \mathcal{C}$, then $\mathcal{R}$ is minimized at $\theta^*$, with $\nabla\mathcal{R}(\theta^*) = 0$.
\end{lemma}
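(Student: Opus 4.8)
The plan is to read off smoothness and strong convexity directly from the Hessian formula in \eqref{eq:GLMder}, namely $\nabla^2\mathcal{R}(\theta) = \mathbb{E}_x[\Phi''(x^T\theta)xx^T]$, and then handle the minimizer claim via first-order optimality. For smoothness, I would bound the Hessian in the positive semidefinite order: since $0 < \Phi''(t) \le K_{\Phi''}$ for all $t$, for any unit vector $u$ we have $u^T\nabla^2\mathcal{R}(\theta)u = \mathbb{E}_x[\Phi''(x^T\theta)(u^Tx)^2] \le K_{\Phi''}\mathbb{E}_x[(u^Tx)^2] = K_{\Phi''}\,u^T\Sigma u \le K_{\Phi''}\lambda_{\max}(\Sigma)$. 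Hence $\nabla^2\mathcal{R}(\theta) \preceq K_{\Phi''}\lambda_{\max}(\Sigma)I_p$ uniformly in $\theta \in \mathbb{R}^p$, which is exactly $K_{\Phi''}\lambda_{\max}(\Sigma)$-smoothness (using the standard equivalence between a Hessian upper bound and smoothness for twice-differentiable convex functions, e.g.\ the material in Appendix \ref{sec:Appendix_Optim_Prelim}).

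For strong convexity, I would argue the matching lower bound on the Hessian, but now only over $\mathcal{C}$, where the extra hypotheses $\|x\|_2 \le L_x$ and $\|\theta\|_2 \le K_B$ are in force. The key point is that $|x^T\theta| \le \|x\|_2\|\theta\|_2 \le L_xK_B$, so by the assumed symmetry $\Phi''(t)=\Phi''(-t)$ together with $\Phi''$ being non-increasing on $[0,\infty)$, we get $\Phi''(x^T\theta) \ge \Phi''(L_xK_B)$ pointwise. Therefore, for any unit vector $u$, $u^T\nabla^2\mathcal{R}(\theta)u = \mathbb{E}_x[\Phi''(x^T\theta)(u^Tx)^2] \ge \Phi''(L_xK_B)\,u^T\Sigma u \ge \Phi''(L_xK_B)\lambda_{\min}(\Sigma)$, giving $\nabla^2\mathcal{R}(\theta)\succeq \Phi''(L_xK_B)\lambda_{\min}(\Sigma)I_p$ for all $\theta\in\mathcal{C}$, i.e.\ $\Phi''(L_xK_B)\lambda_{\min}(\Sigma)$-strong convexity of $\mathcal{R}$ on $\mathcal{C}$. (Here I am using $\Sigma \succ 0$ with $\lambda_{\min}(\Sigma)>0$ an absolute constant, as assumed in the GLM setup, and that the support bound $\|x\|_2 \le L_x$ holds $P_{\theta^*}$-almost surely so the pointwise inequality survives the expectation.)

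For the final claim, if $\theta^* \in \mathcal{C}$, then from \eqref{eq:GLMder} we have $\nabla\mathcal{R}(\theta^*) = \mathbb{E}_x[(\Phi'(x^T\theta^*) - \Phi'(x^T\theta^*))x] = 0$; since $\mathcal{R}$ is convex over $\mathbb{R}^p$, any stationary point is a global minimizer, so $\theta^*$ minimizes $\mathcal{R}$ over $\mathbb{R}^p$ and a fortiori over $\mathcal{C}$. I do not anticipate a genuine obstacle here — the only mild subtlety is making sure the differentiation-under-the-expectation used to obtain \eqref{eq:GLMder} is justified (the excerpt already grants ``enough regularity to swap gradients in $\theta$ and expectations''), and being careful that the strong-convexity bound is only claimed on $\mathcal{C}$, not on all of $\mathbb{R}^p$, since without the constraint $\|\theta\|_2\le K_B$ the quantity $\inf_t \Phi''(t)$ may be zero. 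So the ``hard part'' is essentially bookkeeping: correctly tracking where each hypothesis ($K_{\Phi''}$ bound, symmetry plus monotonicity of $\Phi''$, boundedness of $x$ and $\theta$) is used, rather than any substantive estimate.
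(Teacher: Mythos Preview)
Your proposal is correct and follows essentially the same approach as the paper: bound the Hessian $\nabla^2\mathcal{R}(\theta)=\mathbb{E}_x[\Phi''(x^T\theta)xx^T]$ above via $\Phi''\le K_{\Phi''}$ for smoothness, below via $|x^T\theta|\le L_xK_B$ together with the symmetry and monotonicity of $\Phi''$ for strong convexity, and read off $\nabla\mathcal{R}(\theta^*)=0$ from \eqref{eq:GLMder} plus convexity for the minimizer. The paper's proof is line-for-line the same argument.
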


Observe that logistic regression is a particular case of a GLM with $\Phi(t) = \log(1 + e^t)$ for all $t \in \mathbb{R}$, $K_y = 1$, $K_{\Phi'} = 1$, and $K_{\Phi''} = \frac{1}{4}$.

%%%%%

\section{Accelerating Frank-Wolfe}
\label{sec:Acc_FW_Method_All}

We aim to demonstrate the benefits of accelerated methods for privacy, particularly in the context of the Frank-Wolfe method. In Section \ref{sec:{Private ERM for Distribution-Free Data: Upper and Lower Bounds}}, we study empirical risk minimization (ERM) with deterministic data. Under the constraint of privacy, our goal is to obtain faster convergence guarantees on the expected excess empirical risk, and smaller iteration counts. We will focus on the accelerated Frank-Wolfe method described in Algorithm \ref{alg:ReAccFW}, and we will take Algorithm \ref{alg:PrivNon-accERM} from \cite{NOPL} as a baseline for comparison.

In Section \ref{sec:Private Biased Parameter Estimation in GLMs: Upper and Lower Bounds for Excess Risk}, we consider a GLM. The baseline for comparison is Algorithm \ref{alg:PrivNon-accERM}. We will first analyze a strategy of increasing the constraint set toward the true parameter $\theta^*$ as $n \rightarrow \infty$, so our estimator is consistent. For completeness, we also consider a regime where the constraint set is fixed with $n$. The measure of our performance will  again be excess empirical risk. We will further derive a bound on the $\ell_2$-error of the estimated parameter.
%We do this for the case when our estimator is consistent, i.e., in the case when we increase the set over which we optimize the loss towards the true parameter $\theta^*$.

Motivated by the positive effect of acceleration in the context of privacy, in Section \ref{sec:Biased Parameter Estimation HT}, we study applications of the accelerated Frank-Wolfe method to heavy-tailed robustness. We focus on a parametric linear model, both in a well conditioned ($\lambda_{\min}(\Sigma) > 0$) and ill-conditioned ($\lambda_{\min}(\Sigma) = 0$) setting, as introduced in Examples \ref{sec:LR_Squared_Loss} and \ref{sec:LR_l2_Reg}. Several authors \cite{RE, CompIntr, Balakrish} considered noisy gradient methods, which can be seen as applications of robust mean estimators, to obtain robust estimators for various learning problems, such as estimation in parametric models \cite{DP_ROB_HIGH_DIM, NO_ROB_LR}. We will first establish a setup from \cite{RE} based on gradient estimators, and then present our results for estimating $\theta^*$ using variants of Frank-Wolfe.

%%%%%

\subsection{Private ERM for Distribution-Free Data}
\label{sec:{Private ERM for Distribution-Free Data: Upper and Lower Bounds}}

Common approaches to private ERM include output perturbation \cite{EFF_PRIV_SCO, (NEAR), BOLT_ON_PRIV, EFF_PRIV_SMOOTH} and noisy gradient descent \cite{NO_ROB_LR, NOPL}. Our central motivation is the paper \cite{NOPL}, where noisy gradients are incorporated into the classical Frank-Wolfe algorithm to obtain bounds on the expected excess empirical risk, when optimization occurs over a polytope. They specialize this result for the Lasso problem, and provide a lower bound result to show near-optimality of their method. They then present a similar noisy Frank-Wolfe algorithm, i.e., Algorithm~\ref{alg:PrivNon-accERM}, for a general convex set $\mathcal{C}$ of finite diameter and for $L_2$-Lipschitz losses in the $\ell_2$-norm.

\begin{algorithm}
\caption{$\mathcal{A}_{\text{Noise-FW(Gen-convex)}}$: Differentially Private Frank-Wolfe Algorithm (General Convex
Case)}
\label{alg:PrivNon-accERM}
\begin{algorithmic}[1]
\Function{$\mathcal{A}_{\text{Noise-FW(Gen-convex)}}$}{$\mathcal{D}_n = \{z_1, \ldots, z_n\}$, loss function $\mathcal{L}(\theta, \mathcal{D}_n) = \frac{1}{n}\sum_{i = 1}^n\mathcal{L}(\theta, z_i)$, Lipschitz constant $L_2$, $\epsilon, \delta, T$, constraint set $\mathcal{C}$}
\State Choose $\theta_0 \in \mathcal{C} \subseteq \mathbb{R}^p$ arbitrary
    \For{$t = 0$ to $T - 1$}
    \State $v_t = \mathop{\arg \min}\limits_{v \in \mathcal{C}} (\nabla \mathcal{L}(\theta_t, \mathcal{D}_n) + \xi_t)^Tv$, \ with \ $\xi_t \overset{\iid}{\sim} N\left(0, \frac{32L_2^2T\log^2(n/\delta)}{n^2\epsilon^2}I_p\right)$.
        \State $\theta_{t + 1} = (1 - \eta_t)\theta_t + \eta_t v_t$, \ with $\eta_t = \frac{2}{t + 2}$.
    \EndFor
    \State \textbf{return} $\theta_T$.
\EndFunction
\end{algorithmic}
\end{algorithm}

An easy application of Corollary~\ref{cor:ADV_COMP_0.9} shows that Algorithm~\ref{alg:PrivNon-accERM} is $(\epsilon, \delta)$-DP for $\epsilon \in (0, 0.9]$ and $\delta \in (0, 1)$. For a convex, bounded set $\mathcal{C}$, \cite{NOPL} derive an upper bound on the expected excess empirical risk of $\widetilde{O}\left(\frac{\Gamma_{\mathcal{L}}^{1/3}(L_2G_{\mathcal{C}})^{2/3}}{(n\epsilon)^{2/3}}\right)$, where $G_{\mathcal{C}} = \mathbb{E}\left[\mathop{\sup}\limits_{\theta \in \mathcal{C}}\theta^T b\right]$, with $b \sim N(0, I_p)$, is the Gaussian width of $\mathcal{C}$, and $\Gamma_{\mathcal{L}}$ is the curvature constant of $\mathcal{L}(\theta, z_1)$ (cf.\ Lemma \ref{lemma:NOPLconvDPFW} in Appendix \ref{sec:Appendix_FW} for more details).
This result takes the geometry of the set $\mathcal{C}$ into account only through $\Gamma_\mathcal{L}$ and $G_{\mathcal{C}}$. However, the proof of the utility guarantee does not rely on any particularities of the geometry of $\mathcal{C}$. Hence, Lemma~\ref{lemma:NOPLconvDPFW} could be sub-optimal in situations where one deals with $\ell_2$-norms, i.e., when dealing with $\ell_2$-balls centered at $0$.
%and using a mean squared error loss, as in the case of the ridge regression problem.
For $\mathcal{D}_n = \{(x_i, y_i)\}_{i = 1}^n$, we define the mean squared error loss $\mathcal{L}(\theta, \mathcal{D}_n) := \frac{1}{2n} \sum_{i=1}^n (y_i - x_i^T \theta)^2$. 

We address this sub-optimality via acceleration, in Algorithm \ref{alg:PrivFWERM} of Section \ref{sec:The Upper Bound}. This algorithm is similar to Algorithm \ref{alg:PrivNon-accERM}, but uses a learning rate derived from Algorithm \ref{alg:ReAccFW}.
%As shown in Section \ref{sec:The Frank-Wolfe Method}, Algorithm \ref{alg:ReAccFW} accounts for strong convexity of the underlying set and acts as an accelerated, relaxed version of the classical Frank-Wolfe method with a varying rate $\frac{2}{t + 2}$, given a positive lower bound on the gradient's $\ell_2$-norm. We then use concentration inequalities to transition to Algorithm \ref{alg:ReAccFW} and apply Theorem \ref{theorem:ACCFWV3}.
We then set the number of iterations $T$ based on $n$, with the intuition that Algorithm \ref{alg:PrivFWERM} should outperform Algorithm \ref{alg:PrivNon-accERM} in terms of the rates with $n$, $p$, and $T$. 
The iteration count $T$ is crucial: In Section \ref{sec:The Lower Bound}, we show the optimality of our upper bound in Theorem \ref{theorem:UPRidge} via a lower bound with rate $\frac{1}{n^{2/3}}$, assuming $p \asymp m^2$, $n \asymp \frac{m^3}{\log(m)}$, and $\mathcal{C} = \mathbb{B}_2(D)$, with $D \asymp \frac{1}{\sqrt{p}}$, as $m \rightarrow \infty$ (the same assumptions on $n$ and $p$ are used in \cite{NOPL} to prove the lower bound result for the Lasso analysis). Algorithm \ref{alg:PrivFWERM} achieves its utility guarantee with $T \asymp \log(n)$, while Lemma \ref{lemma:NOPLconvDPFW} requires $T = \widetilde{\Theta}(n^{4/9})$, under the same scaling of $n, p$, and $D$. Thus, our method attains the optimal $\frac{1}{n^{2/3}}$ rate (up to logarithmic factors) with only logarithmically many iterations.
%Moreover, since the Gaussian noise variance in both algorithms scales with $T$, the fewer iterations in Algorithm \ref{alg:PrivFWERM} result in lower noise variance and, consequently, better statistical performance than Algorithm \ref{alg:PrivNon-accERM}. 

We now discuss our approach in detail. We start with the upper bound in Section \ref{sec:The Upper Bound} and then move to the lower bound in Section \ref{sec:The Lower Bound}. Before stating the upper bound, we introduce our accelerated noisy Frank-Wolfe algorithm along with its privacy guarantee. The upper bound result is established without assumptions on $n$, $p$, or the radius $D$ of the $\ell_2$-ball $\mathcal{C}$ centered at $0$, using the squared error loss, and assuming $|y_i|, ||x_i||_\infty \leq 1$, for all $i \in [n]$. It also requires a lower bound on the $\ell_2$-norm of the empirical risk gradient, consistent with the form of the data in the lower bound result in Section \ref{sec:The Lower Bound}. To strengthen the upper bound result, we will show that the assumptions on the data can be satisfied with high probability, under a specific model. We focus on the scaling with both $n$ and $p$.

%%%%%

\subsubsection{Upper Bound}
\label{sec:The Upper Bound}

We now state our accelerated noisy Frank-Wolfe algorithm, which differs from Algorithm \ref{alg:PrivNon-accERM} in the choice of learning rate. The following privacy guarantee is proved in Appendix \ref{AppLemACC}:

\begin{algorithm}
\caption{Private Frank-Wolfe for ERM}
\label{alg:PrivFWERM}
\begin{algorithmic}[1]
\Function{PrivFWERM}{$\mathcal{D}_n = \{z_i\}_{i = 1}^n$, loss function $\mathcal{L}(\theta, \mathcal{D}_n) = \frac{\sum_{i = 1}^n\mathcal{L}(\theta, z_i)}{n}$, Lipschitz constant $L_2$, $\beta_{\mathcal{L}}$, $\alpha_{\mathcal{C}}$, $r$, $T$, $\epsilon$, $\delta$, constraint set $\mathcal{C}$}
    \State Choose $\theta_0 \in \mathcal{C}$ arbitrary
    \For{$t = 0$ to $T - 1$}
    \State $v_t = \mathop{\arg \min}\limits_{v \in \mathcal{C}} (\nabla \mathcal{L}(\theta_t, \mathcal{D}_n) + \xi_t)^Tv$, \ with \ $\xi_t \overset{\iid}{\sim} N\left(0, \frac{64L_2^2 T \log\left(\frac{5T}{2\delta}\right)\log\left(\frac{2}{\delta}\right)}{n^2\epsilon^2}I_p\right)$.
        \State $\theta_{t + 1} = (1 - \eta)\theta_t + \eta v_t$, where $\eta = \min\left\{1, \frac{\alpha_{\mathcal{C}}r}{4\beta_{\mathcal{L}}}\right\}$
    \EndFor
    \State \textbf{return} $\theta_T$.
\EndFunction
\end{algorithmic}
\end{algorithm}

\begin{lemma}
\label{lemma:ACC_PRIV_FW_STEP}
Algorithm \ref{alg:PrivFWERM} is $\left(\frac{\epsilon}{2} + \frac{\sqrt{T}\epsilon}{2\sqrt{2\log(2/\delta)}}(e^{\epsilon/2\sqrt{2T\log(2/\delta)}} - 1), \delta\right)$-DP, for $\delta \in (0, 1)$, $\epsilon < 2\sqrt{2T\log(2/\delta)}$, and $\delta < 2T$. If in addition $\epsilon \leq 0.9$, then $\theta_T$ is $(\epsilon, \delta)$-DP.
\end{lemma}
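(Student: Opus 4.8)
The plan is to reduce the privacy analysis of Algorithm~\ref{alg:PrivFWERM} to a composition of $T$ Gaussian mechanisms, one per iteration, and then apply an advanced composition theorem. First I would observe that at iteration $t$, the only data-dependent quantity released (before post-processing via the $\arg\min$ and the convex combination update) is the noisy gradient $\nabla\mathcal{L}(\theta_t,\mathcal{D}_n) + \xi_t$. Since $\mathcal{L}$ is $L_2$-Lipschitz in the $\ell_2$-norm, each per-coordinate summand $\nabla\mathcal{L}(\theta_t,z_i)$ has $\ell_2$-norm at most $L_2$, so the empirical gradient $\frac{1}{n}\sum_i \nabla\mathcal{L}(\theta_t,z_i)$ has $\ell_2$-sensitivity at most $\frac{2L_2}{n}$ under change of one data point (the standard $2L_2/n$ bound, or $L_2/n$ depending on convention; I would fix the convention to match the noise scale). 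With noise variance $\sigma^2 = \frac{64 L_2^2 T \log(5T/2\delta)\log(2/\delta)}{n^2\epsilon^2}$ per coordinate, the Gaussian mechanism guarantees that releasing the noisy gradient at step $t$ is $(\epsilon_0,\delta_0)$-DP for suitable $\epsilon_0,\delta_0$; concretely I would aim for $\epsilon_0 = \frac{\epsilon}{2\sqrt{2T\log(2/\delta)}}$ and $\delta_0$ a corresponding slice of $\delta$, chosen so that the constant $64$ in the variance is exactly what the Gaussian-mechanism bound demands. The iterate $\theta_{t+1}$ is a deterministic function of the released noisy gradient and $\theta_t$, hence by the post-processing property the whole iteration is $(\epsilon_0,\delta_0)$-DP conditioned on $\theta_t$; adaptivity across iterations is handled because advanced composition allows the mechanisms to be chosen adaptively.

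Next I would invoke the advanced composition theorem (the precise form cited earlier as Corollary~\ref{cor:ADV_COMP_0.9}, and the underlying general statement) for the $T$-fold adaptive composition of $(\epsilon_0,\delta_0)$-DP mechanisms. The clean version I would use states that the composition is $\left(\epsilon_0\sqrt{2T\log(1/\delta')} + T\epsilon_0(e^{\epsilon_0}-1),\ T\delta_0 + \delta'\right)$-DP for any $\delta' > 0$. Plugging in $\epsilon_0 = \frac{\epsilon}{2\sqrt{2T\log(2/\delta)}}$ and choosing $\delta' = \delta/2$ makes the first term $\epsilon_0\sqrt{2T\log(2/\delta)} = \frac{\epsilon}{2}$, and the second term becomes $T\epsilon_0(e^{\epsilon_0}-1) = \frac{\sqrt{T}\epsilon}{2\sqrt{2\log(2/\delta)}}\left(e^{\epsilon/2\sqrt{2T\log(2/\delta)}}-1\right)$, matching exactly the first claimed privacy parameter. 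For the $\delta$ bookkeeping I would set $\delta_0$ so that $T\delta_0 + \delta/2 \le \delta$, i.e.\ $\delta_0 = \delta/(2T)$ (which is where the hypothesis $\delta < 2T$, equivalently $\delta_0 < 1$, and the $\log(5T/2\delta)$ factor in the variance enter — the $5/2$ slack absorbs the constants relating $\delta_0$ to the Gaussian-mechanism requirement $\sigma^2 \ge 2\log(1.25/\delta_0)\cdot(\Delta_2)^2/\epsilon_0^2$). The side conditions $\epsilon < 2\sqrt{2T\log(2/\delta)}$ ensure $\epsilon_0 < 1$ so that the advanced-composition regime is valid.

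For the second sentence of the lemma I would show that when $\epsilon \le 0.9$ the complicated first parameter is at most $\epsilon$. Using $e^u - 1 \le u + u^2$ for $u = \frac{\epsilon}{2\sqrt{2T\log(2/\delta)}} \le 1$ (which holds under the stated side condition, and in fact $u \le 0.45/\sqrt{T\log(2/\delta)}$ is small), the second term is bounded by $\frac{\sqrt{T}\epsilon}{2\sqrt{2\log(2/\delta)}}\cdot\frac{\epsilon}{2\sqrt{2T\log(2/\delta)}}(1+u) = \frac{\epsilon^2}{8\log(2/\delta)}(1+u) \le \frac{\epsilon}{2}$ once $\epsilon \le 0.9$ and $\log(2/\delta) \ge 1$ (i.e.\ $\delta \le 2/e$, which is implied in the regime of interest; otherwise one absorbs it into the constants). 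Adding the $\frac{\epsilon}{2}$ from the first term gives a total of at most $\epsilon$, so $\theta_T$ is $(\epsilon,\delta)$-DP. The main obstacle I anticipate is purely bookkeeping: tracking the exact constants so that the variance $\frac{64 L_2^2 T\log(5T/2\delta)\log(2/\delta)}{n^2\epsilon^2}$ is precisely reconciled with the per-step Gaussian-mechanism requirement and the $\delta$-splitting $T\delta_0 + \delta' = \delta$, and verifying the elementary inequality in the $\epsilon \le 0.9$ step with the stated slack — there is no conceptual difficulty beyond correctly instantiating advanced composition for adaptively chosen mechanisms.
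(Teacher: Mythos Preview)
Your proposal is correct and takes essentially the same approach as the paper: bound the $\ell_2$-sensitivity of the empirical gradient by $2L_2/n$, apply the Gaussian mechanism (Lemma~\ref{lemma:mGDP}) to make each step $\bigl(\tfrac{\epsilon}{2\sqrt{2T\log(2/\delta)}},\tfrac{\delta}{2T}\bigr)$-DP, then invoke the advanced composition lemma (Lemma~\ref{lemma:compGDP}) to obtain the stated total privacy parameters. The only difference is in the second claim: for $\epsilon\le 0.9$ the paper simply cites Corollary~\ref{cor:ADV_COMP_0.9} rather than verifying the elementary inequality directly, which also sidesteps the caveat you flagged about $\log(2/\delta)\ge 1$.
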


\paragraph{Distribution-Free Result}

We will use Algorithm \ref{alg:PrivFWERM} to design a mechanism $\hat{\theta}$ that is differentially private and achieves the rate $\frac{(\sqrt{p} + p||\mathcal{C}||_2)||\mathcal{C}||_2\sqrt{p}}{n\epsilon}$, up to logarithmic factors. As mentioned, we will impose some conditions on the data and later explain how the data in the lower bound argument (Theorem \ref{theorem:LBRidge} in Section \ref{sec:The Lower Bound}) satisfy the conditions. The proof of the following result can be found in Appendix \ref{AppThmUPR}:

\begin{theorem}
\label{theorem:UPRidge}
Let $S_1 > 0$ be an absolute constant. Let $\mathcal{E} = \mathbb{B}_{\infty}(1) \times [-1, 1]$ and $\mathcal{C} = \mathbb{B}_2\left(D\right)$, with $D > 0$ and $\alpha_{\mathcal{C}} = \frac{1}{D}$. Let $\mathcal{L}$ be the mean squared error loss, and $\beta_{\mathcal{L}} = \frac{1}{n}\left\|\sum_{i = 1}^nx_ix_i^T\right\|_2$. Then for any dataset $\mathcal{D}_n = \{(x_i, y_i)\}_{i = 1}^n$ such that $|y_i| \leq 1$, $||x_i||_{\infty} \leq 1$, and $\mathop{\inf}\limits_{\theta \in \mathcal{C}}\frac{\alpha_{\mathcal{C}}||\nabla \mathcal{L}(\theta, \mathcal{D}_n)||_2}{\beta_{\mathcal{L}}} \geq S_1$, Algorithm \ref{alg:PrivFWERM} with $0 < \epsilon \le 0.9$, $\delta \in (0, 1)$, $L_2 \leq \sqrt{p} + pD$, $r = \frac{S_1\beta_{\mathcal{L}}}{\alpha_{\mathcal{C}}}$,
%$\eta =
%\min\left\{1, \frac{\alpha_{\mathcal{C}}r}{4\beta_{\mathcal{L}}}\right\} =
%\min\left\{1, \frac{S_1}{4}\right\}$,
and $T \asymp \log n$
%$T = \log_{1/c}\left(n\right) \asymp \log(n)$, where $c = \max\left\{\frac{1}{2}, 1 - \frac{S_1}{8}\right\}$,
returns $\theta_T$ which is $(\epsilon, \delta)$-DP and satisfies
\begin{align*}
\mathbb{E}\left[\mathcal{L}(\theta_T, \mathcal{D}_n) - \mathop{\min}\limits_{\theta \in \mathcal{C}}\mathcal{L}(\theta, \mathcal{D}_n)\right] \lesssim \frac{(\sqrt{p} + p||\mathcal{C}||_2)||\mathcal{C}||_2\sqrt{p}\log^{3/2}(n)\log(\log(n)/\delta)}{n\epsilon}.
\end{align*}
\end{theorem}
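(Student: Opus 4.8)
The plan is to apply the convergence guarantee of Theorem~\ref{theorem:ACCFWV3} to the noisy Frank-Wolfe iterates of Algorithm~\ref{alg:PrivFWERM}, treating the injected Gaussian noise $\xi_t$ as the source of the relaxation error $\Delta$, and then take expectations. First I would verify that the hypotheses of Theorem~\ref{theorem:ACCFWV3} are met: the loss $\mathcal{L}(\cdot,\mathcal{D}_n)$ is convex and differentiable; it is $\beta_{\mathcal{L}}$-smooth, since its Hessian is $\frac{1}{n}\sum_i x_i x_i^T$ with operator norm $\beta_{\mathcal{L}}$; the constraint set $\mathcal{C}=\mathbb{B}_2(D)$ is compact and $\alpha_{\mathcal{C}}=\tfrac1D$-strongly convex by Lemma~\ref{lemma:STR_CONV_BALL}; and the lower bound $\|\nabla\mathcal{L}(\theta,\mathcal{D}_n)\|_2 \ge r := \frac{S_1\beta_{\mathcal{L}}}{\alpha_{\mathcal{C}}}$ holds over $\mathcal{C}$ by the assumption $\inf_{\theta\in\mathcal{C}}\frac{\alpha_{\mathcal{C}}\|\nabla\mathcal{L}(\theta,\mathcal{D}_n)\|_2}{\beta_{\mathcal{L}}}\ge S_1$. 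With $\eta = \min\{1,\frac{\alpha_{\mathcal{C}} r}{4\beta_{\mathcal{L}}}\}$ matching the algorithm's learning rate and $c = \max\{\tfrac12, 1-\tfrac{\alpha_{\mathcal{C}} r}{8\beta_{\mathcal{L}}}\}$, the theorem yields $F(\theta_T)-F(\theta_*) \le c^T(F(\theta_0)-F(\theta_*)) + \frac{3\Delta\eta}{2(1-c)}$ for whatever per-step error $\Delta$ the noisy linear-minimization step incurs.

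The key step is to control $\Delta$ in terms of $\xi_t$. At step $t$, $v_t$ minimizes $(\nabla\mathcal{L}(\theta_t,\mathcal{D}_n)+\xi_t)^T v$ over $\mathcal{C}$, so for the true minimizer $v_t^\star$ of $\nabla\mathcal{L}(\theta_t,\mathcal{D}_n)^T v$ we get $v_t^T\nabla\mathcal{L}(\theta_t,\mathcal{D}_n) \le (v_t^\star)^T\nabla\mathcal{L}(\theta_t,\mathcal{D}_n) + (v_t^\star - v_t)^T\xi_t \le \min_{v\in\mathcal{C}} v^T\nabla\mathcal{L}(\theta_t,\mathcal{D}_n) + 2D\|\xi_t\|_2$, using $\|v_t^\star-v_t\|_2 \le 2D$. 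So $\Delta_t \le 2D\|\xi_t\|_2$, and after running the proof of Theorem~\ref{theorem:ACCFWV3} with a per-step $\Delta_t$ (or just bounding $\Delta \le 2D\max_t\|\xi_t\|_2$ and then using the exponential weighting), taking expectations gives a bound of the form $\mathbb{E}[F(\theta_T)-F(\theta_*)] \lesssim c^T(F(\theta_0)-F(\theta_*)) + \frac{D\,\eta}{1-c}\,\mathbb{E}\|\xi_t\|_2$. Since $\xi_t\sim N(0,\sigma^2 I_p)$ with $\sigma^2 = \frac{64 L_2^2 T\log(5T/2\delta)\log(2/\delta)}{n^2\epsilon^2}$, we have $\mathbb{E}\|\xi_t\|_2 \le \sigma\sqrt{p}$. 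Plugging in $L_2 \le \sqrt{p}+pD$, $T\asymp\log n$, and noting $\frac{\eta}{1-c}$ is an absolute constant (because $c = \max\{1/2, 1-\alpha_{\mathcal{C}} r/8\beta_{\mathcal{L}}\}$ and $\eta = \alpha_{\mathcal{C}} r/4\beta_{\mathcal{L}}$ in the nontrivial regime, so $\eta/(1-c) \le 2$ whenever $c>1/2$ and is bounded by $2\eta \le 2$ otherwise), the second term becomes $\lesssim D\sigma\sqrt p \asymp \frac{D(\sqrt p + pD)\sqrt p \sqrt{\log n}\sqrt{\log(\log n/\delta)}\sqrt{\log(2/\delta)}}{n\epsilon}$; for the first (exponential) term, since $F(\theta_0)-F(\theta_*) \le L_2 \cdot 2D \lesssim (\sqrt p + pD)D$ and $c\le 1-\kappa$ for some absolute $\kappa>0$ (here $S_1$ controls $\alpha_{\mathcal{C}} r/\beta_{\mathcal{L}}=S_1$, so $c\le\max\{1/2,1-S_1/8\}$), choosing $T\asymp\log n$ with a large enough constant makes $c^T(F(\theta_0)-F(\theta_*)) \lesssim \frac{(\sqrt p + pD)D}{n}$, which is dominated by the noise term. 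Collecting the logarithmic factors into $\log^{3/2}(n)\log(\log(n)/\delta)$ yields the claimed rate, and the $(\epsilon,\delta)$-DP claim is exactly Lemma~\ref{lemma:ACC_PRIV_FW_STEP} with $\epsilon\le0.9$.

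The main obstacle I anticipate is bookkeeping the interaction between the randomized relaxation error and the exponential contraction cleanly: the error term in Theorem~\ref{theorem:ACCFWV3} is stated for a fixed $\Delta$, whereas here each $\Delta_t = 2D\|\xi_t\|_2$ is random and varies with $t$, so I need to either re-run the recursion $F(\theta_{t+1})-F(\theta_*) \le c(F(\theta_t)-F(\theta_*)) + \frac{3\eta}{2}\Delta_t$ directly (which unrolls to $c^T(F(\theta_0)-F(\theta_*)) + \frac{3\eta}{2}\sum_{s=0}^{T-1} c^{T-1-s}\Delta_s$) and then take expectations using $\mathbb{E}\Delta_s = 2D\,\mathbb{E}\|\xi_s\|_2 \le 2D\sigma\sqrt p$, giving $\frac{3\eta}{2}\cdot\frac{2D\sigma\sqrt p}{1-c}$. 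The only subtlety is that $\xi_t$ at step $t$ is independent of $\theta_t$ (since $\theta_t$ depends only on $\xi_0,\dots,\xi_{t-1}$), which justifies $\mathbb{E}[\Delta_t] \le 2D\sigma\sqrt{p}$ without conditioning complications; everything else is the routine substitution of parameters tracked above.
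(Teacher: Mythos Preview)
Your approach is correct and in fact more direct than the paper's. The paper does not take expectations of the recursion directly; instead it first establishes a uniform high-probability bound on all $\|\xi_t\|_2$ simultaneously via the sub-Gaussian vector concentration Lemma~\ref{lemma:CIGV} (this is where the $\log(4^pT/\zeta)$ factor enters), works on that good event to invoke Theorem~\ref{theorem:ACCFWV3} with a deterministic $\Delta$, obtains a bound of the form $A + B\sqrt{\log(C/\zeta)}$ valid with probability $1-\zeta$, and only then passes to expectation using Lemma~\ref{lemma:HP_TO_EXP}. Your route---unroll the one-step recursion $h_{t+1}\le c h_t + \tfrac{3\eta}{2}\Delta_t$ (which holds pathwise from the proof of Theorem~\ref{theorem:ACCFWV3}) and apply linearity of expectation with $\mathbb{E}\Delta_t \le 2D\,\mathbb{E}\|\xi_t\|_2 \le 2D\sigma\sqrt{p}$---bypasses both auxiliary lemmas and actually yields a slightly sharper logarithmic dependence (you get roughly $\sqrt{\log n}\sqrt{\log(\log n/\delta)}\sqrt{\log(1/\delta)}$ rather than the paper's $\log^{3/2}(n)\log(\log n/\delta)$). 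Two minor remarks: the independence observation you flag as the ``only subtlety'' is not needed, since $\Delta_t=2D\|\xi_t\|_2$ depends only on $\xi_t$ and not on $\theta_t$; and the paper's detour through high probability does have the side benefit of producing the intermediate high-probability statement~\eqref{eq:HP_ERM_UP_NO_DISTR_GENERAL}, which may be of independent interest even though it is not part of the theorem's claim.
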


\begin{remark}
\label{remark:UP_Bridge_DP_Dist_Free}
%Firstly, we want to focus on the privacy aspect of $\theta_T$ in Theorem \ref{theorem:UPRidge}. Assume $\epsilon \leq 0.9$, besides $\delta \in (0, 1)$. Then, $\epsilon < 2\sqrt{2T\log(2/\delta)}$ and $\delta < 2T$, since $T \asymp \log(n)$. Hence, by Lemma \ref{lemma:ACC_PRIV_FW_STEP}, $\theta_T$ is $(\epsilon, \delta)$-DP.

%The smaller number of iterations compared to Algorithm \ref{alg:PrivNon-accERM} not only reduces the variance of the noise required for privacy, but it also leads to an overall better statistical performance in terms of the upper bound on the expected excess empirical risk.

Theorem \ref{theorem:UPRidge} only assumes $S_1, D > 0$. In Section \ref{sec:The Lower Bound}, to show that the data constructed for the lower bound satisfy the conditions in Theorem \ref{theorem:UPRidge}, we will further constrain the parameters to $D = \frac{\alpha_1}{\sqrt{p}}$, $0 < \alpha_1 < \frac{\sqrt{1 - \tau}}{1 + \tau}$, $0 < S_1 \leq \frac{\sqrt{1 - \tau} - (1 + \tau)\alpha_1}{\alpha_1(1 + \tau)}$, and $\tau = 0.001$.
%We will make this precise when we state the corresponding results.
\end{remark}

\begin{remark}
\label{remark:compare_dist_free_ACCFW}
We can compare the results in Lemma \ref{lemma:NOPLconvDPFW} and Theorem \ref{theorem:UPRidge}: We may bound
\begin{align*}
%\label{eq:L_2ERM_NO_DIST}
L_2 \leq ||yx - xx^T\theta||_2 \leq ||x||_2 + ||xx^T||_2||\theta||_2 \le \sqrt{p}||x||_\infty + ||x||_2^2 \|\theta\|_2  \lesssim \sqrt{p} + p||\mathcal{C}||_2,
\end{align*}
for arbitrary $|y|, ||x||_\infty \leq 1$.
%This bound on $L_2$ is tight, as one can see in inequality~\eqref{eq:L_2ERM_NO_DIST}.
In the context of Lemma~\ref{lemma:NOPLconvDPFW}, we have
\begin{align*}
G_{\mathcal{C}} = D\mathbb{E}\left[||b||_2\right] \asymp ||\mathcal{C}||_2\sqrt{p}, \qquad
\Gamma_{\mathcal{L}} \lesssim \mathop{\sup}\limits_{\theta \in \mathcal{C}}||x_1^T\theta||_2^2 \asymp p||\mathcal{C}||_2^2.
\end{align*}
(This bound is tight, as implied by \cite{NOPL}.) Hence, the upper bound in Lemma~\ref{lemma:NOPLconvDPFW} is $\widetilde{O}\left(\left(\frac{(\sqrt{p} + p||\mathcal{C}||_2)||\mathcal{C}||_2^2p}{n\epsilon}\right)^{2/3}\right)$. If $p, ||\mathcal{C}||_2 \asymp 1$, the rate in Theorem \ref{theorem:UPRidge} is improved to $\widetilde{O}\left(\frac{1}{n\epsilon}\right)$. Also, for $\epsilon \asymp 1, p \asymp m^2, n \asymp \frac{m^3}{\log(m)}$, and $D \asymp \frac{1}{\sqrt{p}}$, we prove the optimality of Theorem~\ref{theorem:UPRidge} in Theorem~\ref{theorem:LBRidge}, which shows that the expected empirical risk is $\widetilde{\Omega}\left(\frac{1}{n^{2/3}}\right)$. Under these conditions, the bound in
\cite{NOPL} becomes $\widetilde{O}\left(\frac{1}{m^{4/3}}\right) = \widetilde{O}\left(\frac{1}{n^{4/9}}\right)$, which is sub-optimal.
\end{remark}

%%%%%

\paragraph{Probabilistic Data}

We finish by analyzing the conditions on the dataset in Theorem \ref{theorem:UPRidge}. We will impose a linear model to prove the lower bound $\mathop{\inf}\limits_{\theta \in \mathcal{C}}\frac{\alpha_{\mathcal{C}}||\nabla \mathcal{L}(\theta, \mathcal{D}_n)||_2}{\beta_{\mathcal{L}}} \ge S_1$ with high probability. The proof of the following result can be found in Appendix \ref{AppPropERM}:

\begin{prop}
\label{prop:achievab_UP_ERM}
Let $c_1 > 1$ and $c_2 > \frac{5}{4}$ be absolute constants, and consider a regime where $n, p \rightarrow \infty$. Let $0 < C_1 \leq C_2 \leq 1$ and $S_1 > 0$ be absolute constants, and let $\mathcal{L}$ be the mean squared error loss. Suppose data $\mathcal{D}_n = \{(x_i, y_i)\}_{i = 1}^n$ are drawn \iid from the model
\begin{align}
\label{eq:model_distr_free_data}
&y = x^T\theta^* + w^{(p)}, \; |y| \leq 1, \; ||x||_{\infty} \leq 1, \; x \indep w^{(p)}, \notag\\
&\mathbb{E}[x] = 0, \; \Sigma = \mathbb{E}[xx^T], \; C_1 \leq \lambda_{\min}(\Sigma) \leq \lambda_{\max}(\Sigma) \leq C_2, \notag\\
&\left|w^{(p)}\right| \leq 1 + \sqrt{p}K_1(p), \; \mathbb{E}\left[w^{(p)}\right] = 0, \; w^{(p)} \in \mathcal{G}\left(\sigma^2(p)\right), \notag\\
&(2S_1(2C_2/C_1 + 1) + 1)D(p) \leq ||\theta^*||_2 \leq K_1(p),
\end{align}
where $K_1(p), D(p) \rightarrow 0$ as $p \rightarrow \infty$, and $\sigma^2(p) > 0$ for all $p \in \mathbb{N}$. Let $\mathcal{C} = \mathbb{B}_2\left(D(p)\right)$, with $\alpha_{\mathcal{C}} = \frac{1}{D(p)}$. Let $\beta_{\mathcal{L}} = \frac{1}{n}\left\|\sum_{i = 1}^nx_ix_i^T\right\|_2$. Then, for $p \geq \left(\frac{\sqrt{2}}{S_1\sqrt{2C_2 + C_1}}\right)^8$ and $n = \widetilde{\Omega}\left(\max\left\{\frac{p^{c_2}\sigma^2(p)}{D^2(p)}, p^{c_1}\right\}\right)$, with probability at least $1 - 2pe^{\frac{-nC_1^2}{8p\left(C_2+ C_1/3\right)}} - 2pe^{-\frac{nD^2(p)}{2p^{5/4}\sigma^2(p)}}$, we have
$\mathop{\inf}\limits_{\theta \in \mathcal{C}}\frac{\alpha_{\mathcal{C}}||\nabla \mathcal{L}(\theta, \mathcal{D}_n)||_2}{\beta_{\mathcal{L}}} \geq S_1$. Moreover, the conditions \eqref{eq:model_distr_free_data} can be satisfied if $w^{(p)}$ follows a truncated $N(0, \sigma^2(p))$ in the interval $[-1 - \sqrt{p}K_1(p), 1 + \sqrt{p}K_1(p)]$.
\end{prop}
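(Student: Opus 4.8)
\noindent The plan is to reduce the uniform lower bound on $\|\nabla\mathcal{L}(\theta,\mathcal{D}_n)\|_2$ over $\mathcal{C}$ to two concentration statements: one controlling the spectrum of the empirical second--moment matrix, and one controlling the noise--covariate cross term. Write $\widehat{\Sigma}=\frac1n\sum_{i=1}^n x_ix_i^T$. Under the model $y_i=x_i^T\theta^*+w_i^{(p)}$, the mean squared error gradient is $\nabla\mathcal{L}(\theta,\mathcal{D}_n)=\widehat{\Sigma}(\theta-\theta^*)-\frac1n\sum_i w_i^{(p)}x_i$, so by the reverse triangle inequality and $\|\widehat{\Sigma}v\|_2\ge\lambda_{\min}(\widehat{\Sigma})\|v\|_2$, for \emph{every} $\theta\in\mathcal{C}=\mathbb{B}_2(D(p))$,
\[
\|\nabla\mathcal{L}(\theta,\mathcal{D}_n)\|_2\ \ge\ \lambda_{\min}(\widehat{\Sigma})\big(\|\theta^*\|_2-D(p)\big)-\Big\|\tfrac1n\textstyle\sum_{i}w_i^{(p)}x_i\Big\|_2 ,
\]
which is already uniform in $\theta$. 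Since $\beta_{\mathcal{L}}=\|\widehat{\Sigma}\|_2=\lambda_{\max}(\widehat{\Sigma})$ and $\alpha_{\mathcal{C}}=1/D(p)$, it then suffices to show the right-hand side is at least $S_1 D(p)\,\lambda_{\max}(\widehat{\Sigma})$.

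\noindent For the two concentration ingredients: first, apply matrix Bernstein to $\widehat{\Sigma}-\Sigma=\frac1n\sum_i(x_ix_i^T-\Sigma)$, using $\|x_ix_i^T\|_2=\|x_i\|_2^2\le p$ (from $\|x_i\|_\infty\le1$) and $\big\|\mathbb{E}[(xx^T-\Sigma)^2]\big\|_2\le\big\|\mathbb{E}[\|x\|_2^2\,xx^T]\big\|_2\le p\lambda_{\max}(\Sigma)\le pC_2$ (both sides PSD); with deviation $C_1/2$ this yields $\lambda_{\min}(\widehat{\Sigma})\ge C_1/2$ and $\lambda_{\max}(\widehat{\Sigma})\le C_2+C_1/2$ with probability at least $1-2pe^{-nC_1^2/(8p(C_2+C_1/3))}$. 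Second, condition on $\{x_i\}$; since $w_i^{(p)}\indep x_i$, $w_i^{(p)}\in\mathcal{G}(\sigma^2(p))$, and $|x_{ij}|\le1$, each coordinate $\frac1n\sum_i w_i^{(p)}x_{ij}$ is sub-Gaussian with variance proxy $\le\sigma^2(p)/n$, so $\mathbb{P}(|\frac1n\sum_i w_i^{(p)}x_{ij}|>t)\le 2e^{-nt^2/(2\sigma^2(p))}$; a union bound over the $p$ coordinates together with $\|\cdot\|_2\le\sqrt p\,\|\cdot\|_\infty$ gives $\|\frac1n\sum_i w_i^{(p)}x_i\|_2\le\sqrt p\,t$ with probability at least $1-2pe^{-nt^2/(2\sigma^2(p))}$, and choosing $t=D(p)/p^{5/8}$ bounds the noise term by $D(p)p^{-1/8}$ and produces exactly the second failure term $2pe^{-nD^2(p)/(2p^{5/4}\sigma^2(p))}$ in the statement.

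\noindent To combine, on the intersection of the two events use the hypothesis $(2S_1(2C_2/C_1+1)+1)D(p)\le\|\theta^*\|_2$ together with $\lambda_{\min}(\widehat{\Sigma})\ge C_1/2$ to get $\lambda_{\min}(\widehat{\Sigma})(\|\theta^*\|_2-D(p))\ge\frac{C_1}{2}\cdot 2S_1\frac{2C_2+C_1}{C_1}D(p)=S_1(2C_2+C_1)D(p)$; subtract the noise bound and compare with $S_1D(p)\lambda_{\max}(\widehat{\Sigma})\le S_1D(p)\tfrac{2C_2+C_1}{2}$, so that the leftover slack $\tfrac{S_1(2C_2+C_1)}{2}D(p)$ needs only to exceed $D(p)p^{-1/8}$, which holds once $p$ clears the stated threshold $(\sqrt 2/(S_1\sqrt{2C_2+C_1}))^8$ (after the precise constant accounting), while $c_1>1$, $c_2>5/4$ and the stated lower bound on $n$ force both failure probabilities to vanish as $p\to\infty$; this gives $\inf_{\theta\in\mathcal{C}}\alpha_{\mathcal{C}}\|\nabla\mathcal{L}(\theta,\mathcal{D}_n)\|_2/\beta_{\mathcal{L}}\ge S_1$. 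For the final sentence, take $w^{(p)}$ to be $N(0,\sigma^2(p))$ conditioned to the prescribed symmetric interval (chosen so that $|x^T\theta^*|+|w^{(p)}|\le 1$, using $|x^T\theta^*|\le\sqrt p\,\|\theta^*\|_2\le\sqrt p\,K_1(p)$): zero mean and the range bound hold by symmetry, $x\indep w^{(p)}$ is immediate, and sub-Gaussianity with proxy $\sigma^2(p)$ follows from the standard fact that restricting a centered Gaussian to a symmetric interval cannot enlarge its MGF, since the Gaussian measure of an interval of fixed length is maximized when the interval is centered.

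\noindent\textbf{Main obstacle.} The delicate part is the constant bookkeeping in the combination step: the gap condition on $\|\theta^*\|_2$, the two-sided control of the spectrum of $\widehat{\Sigma}$, and the noise bound must interlock so that the uniform gradient lower bound actually beats $S_1 D(p)\beta_{\mathcal{L}}$, and the scaling choices (the exponent $p^{5/4}$, the noise radius of order $D(p)p^{-5/8}$, and the threshold on $p$) have to be tuned so that all three pieces are simultaneously of the right order. Everything else — the exact gradient decomposition, matrix Bernstein, and the sub-Gaussian union bound — is routine.
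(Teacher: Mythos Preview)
Your proposal is correct and follows essentially the same architecture as the paper's proof: matrix Bernstein at level $C_1/2$ to pin down $\lambda_{\min}(\widehat\Sigma)\ge C_1/2$ and $\lambda_{\max}(\widehat\Sigma)\le(2C_2+C_1)/2$, a sub-Gaussian union bound with threshold $D(p)/p^{5/8}$ for the noise contribution, and then the same algebraic combination using the gap hypothesis on $\|\theta^*\|_2$ to land at $2S_1-O(p^{-1/8})\ge S_1$.

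The one step where you diverge from the paper is the noise term. The paper bounds $\|\tfrac1n\mathbb X^T\mathbb W^{(p)}\|_2$ by $\|\mathbb X/\sqrt n\|_2\cdot\|\mathbb W^{(p)}\|_2/\sqrt n\le\sqrt{\beta_{\mathcal L}}\cdot\sqrt p\,\|\mathbb W^{(p)}\|_\infty/\sqrt n$ and then controls $\|\mathbb W^{(p)}\|_\infty$ directly; you instead condition on $\{x_i\}$ and bound each of the $p$ coordinates of the cross term $\tfrac1n\sum_i w_i^{(p)}x_i$ via sub-Gaussianity. Both routes produce the same failure probability $2pe^{-nD^2(p)/(2p^{5/4}\sigma^2(p))}$, and your version is arguably the cleaner of the two since the union bound is naturally over $p$ coordinates. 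This also explains why your constant in the $p$-threshold does not match the stated one exactly: the paper's $\sqrt{2}/\sqrt{2C_2+C_1}$ factor comes from the extra $\sqrt{\beta_{\mathcal L}}$ that appears when routing through $\|\mathbb X\|_2$. One small slip: your parenthetical that the truncation interval is ``chosen so that $|x^T\theta^*|+|w^{(p)}|\le1$'' has the sign wrong (that would require $|w^{(p)}|\le 1-\sqrt p\,K_1(p)$, not $1+\sqrt p\,K_1(p)$); the paper, like you, only verifies the three $w^{(p)}$-specific conditions and does not separately check $|y|\le1$ from the truncation alone.
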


\subsubsection{Lower Bound}
\label{sec:The Lower Bound}

In this section, we treat $\epsilon$ as an absolute constant and again focus on the mean squared error loss optimized over some set $\mathcal{C}$, with data from $\mathcal{E} = \mathbb{B}_{\infty}(1)\times [-1, 1]$. We will assume $\mathcal{C} \supseteq \left\{-\frac{\alpha_2}{p}, \frac{\alpha_2}{p}\right\}^p$ and choose $\alpha_2$ appropriately. Our arguments will follow the fingerprinting method from \cite{NOPL}.
%We will also show that the constructed dataset satisfies the lower bound condition on the $\ell_2$-norm of the empirical risk gradient in Theorem \ref{theorem:UPRidge}, where we take $D \asymp \frac{1}{\sqrt{p}}$, and
%\paragraph{Main result}
%Let us now proceed and state the result from \cite{NOPL}, regarding differentially private estimators taking as input matrices with consensus columns.
The following theorem is proved in Appendix \ref{AppThmLBRidge}. The proof is a modification of a result in \cite{NOPL}, the key difference being the introduction of the term $\alpha_2$. The dimensions of the construction are as follows: For a sufficiently large positive integer $m$, we take $p = 1000m^2$ and $n = w + 0.001wp$, where $w = \frac{m}{\log(m)}$.

\begin{theorem}
\label{theorem:LBRidge}
Let $\alpha_2 \in (0.993, 1)$, and let $p$ be sufficiently large and $n$ be chosen appropriately. Let $\mathcal{C} \subseteq \mathbb{R}^p$ be such that $\left\{-\frac{\alpha_2}{p}, \frac{\alpha_2}{p}\right\}^p \subseteq \mathcal{C}$, and let $\mathcal{L}$ be the mean squared error loss. For any $(\epsilon, \delta)$-DP algorithm $\hat{\theta}$, where $\epsilon = 0.1$ and $\delta = o(1/n^2)$, there exists $\mathcal{D}_n = \{(x_i, y_i)\}_{i = 1}^n$, with $||x_i||_{\infty} \leq 1$ and $|y_i| \leq 1$, such that 
\begin{align*}
\mathbb{E}\left[\mathcal{L}(\hat{\theta}(\mathcal{D}_n), \mathcal{D}_n) - \mathop{\min}\limits_{\theta \in \mathcal{C}}\mathcal{L}(\theta, \mathcal{D}_n)\right] = \widetilde{\Omega}\left(\frac{1}{n^{2/3}}\right).
\end{align*}
\end{theorem}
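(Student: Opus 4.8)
The plan is to adapt the fingerprinting-code lower bound of \cite{NOPL} for private ERM, tracking carefully how the scaling parameter $\alpha_2$ enters. First I would set up the hard instance: since $\left\{-\frac{\alpha_2}{p}, \frac{\alpha_2}{p}\right\}^p \subseteq \mathcal{C}$, I can restrict attention to target parameters $\theta \in \left\{-\frac{\alpha_2}{p}, \frac{\alpha_2}{p}\right\}^p$, so each coordinate $\theta_j$ is a scaled Rademacher sign. The data $(x_i, y_i)$ will be generated so that $x_i$ has i.i.d.\ coordinates in $\{-1, 1\}$ (hence $\|x_i\|_\infty \le 1$) and $y_i$ is a rounded/clipped version of $\langle x_i, \theta \rangle$ plus noise, arranged so that $|y_i| \le 1$ and so that the empirical loss $\mathcal{L}(\theta, \mathcal{D}_n)$ behaves like a quadratic whose minimizer over $\mathcal{C}$ is close to the planted $\theta$. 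The excess empirical risk $\mathcal{L}(\hat\theta(\mathcal{D}_n), \mathcal{D}_n) - \min_{\theta \in \mathcal{C}} \mathcal{L}(\theta, \mathcal{D}_n)$ is then lower bounded, up to the conditioning of $\frac{1}{n}\sum_i x_i x_i^T$ (which concentrates around $I_p$), by a constant times $\|\hat\theta(\mathcal{D}_n) - \theta_{\mathcal{D}_n}\|_2^2$ where $\theta_{\mathcal{D}_n}$ is the empirical minimizer; so it suffices to lower bound this $\ell_2$ estimation error in expectation.

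Next I would invoke the fingerprinting lemma: for each coordinate $j$, consider the correlation statistic between the $j$-th coordinate of the data and the output, and use the standard argument that either (i) a non-private, accurate algorithm must have large expected correlation summed over coordinates $j \in [p]$, forcing $p \cdot (\text{something})$ to be large, or (ii) $(\epsilon,\delta)$-differential privacy with $\epsilon = 0.1$ and $\delta = o(1/n^2)$ forces each coordinate's correlation to be $O(1)$ after removing one data point, so the total is $O(n)$. Balancing these two bounds against the accuracy assumption gives the constraint relating $n$, $p$, and the achievable error. With $p = 1000 m^2$, $w = m/\log m$, and $n = w + 0.001 w p = w(1 + 0.001 p) \asymp w m^2 = m^3/\log m$, the fingerprinting argument yields that any $(\epsilon,\delta)$-DP algorithm has expected squared error $\widetilde\Omega(p / n^2) \cdot (\text{scale})^2$; with the coordinate scale $\frac{\alpha_2}{p}$ and the factor $\alpha_2 \in (0.993, 1)$ being an absolute constant, this works out to $\widetilde\Omega(1/n^{2/3})$ for the excess empirical risk after plugging in $p \asymp n^{2/3} \log^{2/3} n$ (from $n \asymp m^3/\log m$ and $p \asymp m^2$, one checks $p \asymp n^{2/3}\,\mathrm{polylog}(n)$, hence $p/n^2 \cdot 1 \asymp n^{-4/3}\,\mathrm{polylog}$, and multiplying by the $\ell_2$-to-loss conversion and keeping track of the $\alpha_2^2/p^2$ scale versus the number of coordinates gives the stated $n^{-2/3}$ rate).

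The main obstacle, and the one place I would spend the most care, is the bookkeeping of the scale $\alpha_2/p$ through both the accuracy requirement and the fingerprinting correlation bounds, and verifying that the data construction simultaneously respects $|y_i| \le 1$ and $\|x_i\|_\infty \le 1$ while still making $\min_{\theta \in \mathcal{C}} \mathcal{L}(\theta, \mathcal{D}_n)$ a genuine quadratic lower bound on $\ell_2$ error. Concretely, one has to pick the clipping/rounding of $y_i$ so that the empirical risk Hessian $\frac{1}{n}\sum_i x_i x_i^T$ stays well-conditioned on the event of high probability (a standard matrix-concentration bound for $\pm 1$ vectors, valid since $n \gg p$), and so that the empirical minimizer over $\mathcal{C}$ coincides with — or is within the required tolerance of — the planted parameter. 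Once the construction from \cite{NOPL} is imported with the single modification of rescaling the coordinate alphabet from $\{-1/p, 1/p\}$ to $\{-\alpha_2/p, \alpha_2/p\}$, the rest is a matter of propagating the constant $\alpha_2 \in (0.993, 1)$ through the inequalities, which changes nothing qualitatively; I expect the chosen range for $\alpha_2$ is exactly what is needed to keep $|y_i| \le 1$ after the sign-rounding step. I would therefore present the argument as: (1) hard instance and reduction to $\ell_2$ error; (2) conditioning of the empirical Hessian; (3) the fingerprinting dichotomy; (4) arithmetic with $p = 1000 m^2$, $n = w(1 + 0.001 p)$ to extract $\widetilde\Omega(n^{-2/3})$.
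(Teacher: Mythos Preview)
Your proposal departs from the paper's argument in essentially every structural choice, and as written it has real gaps. The paper does \emph{not} use the correlation-based fingerprinting lemma you describe; it uses the combinatorial fingerprinting-\emph{code} construction (Lemma~\ref{lemma:FingX}): a deterministic matrix $X\in\{-1,1\}^{(w+1)\times p}$ with at least $(1-\tau)p$ consensus columns in each $X_{(-i)}$, together with the guarantee that any algorithm recovering $3p/4$ of those signs with probability $\ge 2/3$ cannot be $(\epsilon,\delta)$-DP. The datasets $D^{(i)}$ have no planted regression parameter and no noise: they consist of the $w$ rows of $X_{(-i)}$ with label $y=1$, plus $k=\tau w p$ rows of a matrix $Z$ with $Z^TZ=kI_p$ and label $y=0$. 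The $Z$-block makes the (un-normalized) loss equal $\sum_{j=1}^w(1-\theta^T x_{(-i)}^j)^2+k\|\theta\|_2^2$, so there is no random design, no clipping of $y_i$, and no matrix-concentration step. The range $\alpha_2\in(0.993,1)$ is not what keeps $|y_i|\le 1$ (the $y_i$ are already in $\{0,1\}$); it is exactly the condition making $1.1\tau\alpha_2^2 > (1-\alpha_2+2\tau\alpha_2)^2+\tau\alpha_2^2$ hold, which is needed in the combinatorial step below.

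The paper also does not reduce excess risk to $\ell_2$ error via Hessian conditioning. Instead, a direct combinatorial argument (Lemma~\ref{lemma:ERM_Dist_Free_LB_Helper}) shows that $\mathcal{L}(\theta,D^{(i)})<1.1\tau\alpha_2^2 w$ forces $\mathrm{sgn}(\theta)$ to agree with at least $3p/4$ consensus columns. The contradiction is then: if expected excess risk were $\le cw$ for every $i$, Markov gives loss below threshold with probability $\ge 2/3$, hence sign recovery, hence a violation of privacy via the fingerprinting-code property. Plugging $w=m/\log m$, $n\asymp m^3/\log m$ and normalizing by $2n$ yields $\widetilde\Omega(n^{-2/3})$. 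Your alternative route via the score-attack fingerprinting lemma and an i.i.d.\ Rademacher design might be workable in principle, but the sketch you give does not close: the data model ``$y_i$ a rounded version of $\langle x_i,\theta\rangle$ plus noise'' is never specified, the reduction to $\|\hat\theta-\theta_{\mathcal{D}_n}\|_2^2$ ignores that the empirical minimizer need not lie in $\mathcal{C}$, and the final arithmetic (``$\widetilde\Omega(p/n^2)\cdot(\text{scale})^2$'') is not carried through to $n^{-2/3}$ and does not obviously produce that rate.
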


\subsubsection{Minimax Optimality}

%The lower bound in Theorem~\ref{theorem:LBRidge} requires $\epsilon =0.1$ and $\delta = o(1/n^2)$. By Theorem \ref{theorem:UPRidge}, $\theta_T$ is $(\epsilon, \delta)$-DP, provided $\epsilon \leq 0.9$ and $\delta \in (0, 1)$. Also, for $\epsilon$ an absolute constant and $\delta \asymp \frac{1}{n^{\omega_1}} = o(1/n^2)$, the upper bound in Theorem~\ref{theorem:UPRidge} changes up to logarithmic factors only.
%Thus, for $\epsilon = 0.1$ and for $\delta \asymp \frac{1}{n^{\omega_1}}$, the privacy parameters in Theorem~\ref{theorem:UPRidge} and Theorem~\ref{theorem:LBRidge} match.

Consider the statement of Theorem~\ref{theorem:LBRidge} with $\mathcal{C} = \mathbb{B}_2\left(\frac{\alpha_2}{\sqrt{p}}\right)$. In Remark \ref{remark:UP_Bridge_DP_Dist_Free}, we explained that we would impose further restrictions on $\mathcal{C}$ and $S_1$ in order to prove that the dataset in Theorem~\ref{theorem:LBRidge} satisfies the conditions in Theorem~\ref{theorem:UPRidge}, in order to reconcile the bounds.
%Specifically, we ask for $\mathcal{C} = \mathbb{B}_2\left(\frac{\alpha_1}{\sqrt{p}}\right)$, $\alpha_1 \in \left(0, \frac{\sqrt{1 - \tau}}{1 + \tau}\right)$ and $S_1 \in \left(0, \frac{\sqrt{1 - \tau} - (1 + \tau)\alpha_1}{\alpha_1(1 + \tau)}\right]$. Now,
%In Theorem~\ref{theorem:LBRidge}, we had $\alpha_2 \in (0.993, 1)$. Notice that $0.993 < \frac{\sqrt{1 - \tau}}{1 + \tau} $, so once we show that the dataset in Theorem~\ref{theorem:LBRidge} satisfies the conditions on the dataset in Theorem~\ref{theorem:UPRidge} with $D = \frac{\alpha_1}{\sqrt{p}}$, $\alpha_1 \in \left(0, \frac{\sqrt{1 - \tau}}{1 + \tau}\right)$ and $S_1 \in \left(0, \frac{\sqrt{1 - \tau} - (1 + \tau)\alpha_1}{\alpha_1(1 + \tau)}\right]$, we can fully match the upper and lower bounds.
%To be more specific, we need $S_1 \in \left(0, \frac{\sqrt{1 - \tau} - (1 + \tau)\alpha_1}{\alpha_1(1 + \tau)}\right]$ in Proposition~\ref{prop:MatchData} below, in order to establish this match, and we need $\alpha_1 \in \left(0, \frac{\sqrt{1 - \tau}}{1 + \tau}\right)$ in order to have $\frac{\sqrt{1 - \tau} - (1 + \tau)\alpha_1}{\alpha_1(1 + \tau)} > 0$. 
%\end{remark}
%Let us now present the result that explains why the dataset in Theorem~\ref{theorem:LBRidge} is a particular instance of the dataset in Theorem~\ref{theorem:UPRidge}, provided $\mathcal{C} = \mathbb{B}_2\left(\frac{\alpha_1}{\sqrt{p}}\right)$, $\alpha_1 \in \left(0, \frac{\sqrt{1 - \tau}}{1 + \tau}\right)$ and $S_1 \in \left(0, \frac{\sqrt{1 - \tau} - (1 + \tau)\alpha_1}{\alpha_1(1 + \tau)}\right]$. 
The proof of the following result is in Appendix \ref{AppPropMatch}. For a matrix $X$, denote by $X_{(-i)}$ the matrix obtained by removing the $i^{\text{th}}$ row of $X$. Call a column of a matrix a \emph{consensus column} if all entries are the same.

\begin{prop}
\label{prop:MatchData}
Let $m \in \mathbb{N}$, $\tau = 0.001$, $p = 1000m^2$, $w = \frac{m}{\log(m)}$, $k = \tau w p$, and $n = w + k$. Let $X \in \{-1, 1\}^{(w + 1) \times p}$ be such that for each $i \in [1, w + 1]$, there are at least $(1 - \tau)p$ consensus columns in each $X_{(-i)}$. Let $Z \in \{-1, 1\}^{k \times p}$ be such that $Z^TZ = kI_p$. Denote the $j^{\text{th}}$ row of $Z$ by $z_j$.
Consider the dataset $\mathcal{D}_n = \{(x_j, y_j)\}_{j = 1}^n = \{(x^j_{(-i)}, 1)\}_{j = 1}^w \cup \{(z_j, 0)\}_{j = 1}^k$, where $x^j_{(-i)}$ is the $j^{\text{th}}$ row of $X_{(-i)}$. Let $\mathcal{L}$ be the mean squared error loss and let $\mathcal{C} = \mathbb{B}_2\left(\frac{\alpha_1}{\sqrt{p}}\right)$, with $0 < \alpha_1 < \frac{\sqrt{1 - \tau}}{1 + \tau}$ and $\alpha_{\mathcal{C}} = \frac{\sqrt{p}}{\alpha_1}$. Let $\beta_{\mathcal{L}} = \frac{1}{n}\left\|\sum_{j = 1}^nx_jx_j^T\right\|_2$. Let $S_1 \in \left(0, \frac{\sqrt{1 - \tau} - (1 + \tau)\alpha_1}{\alpha_1(1 + \tau)}\right]$. Then
\begin{align}
\label{eq:MatchEq}
|y_j|, ||x_j||_{\infty} \leq 1, \ \forall j \in [n], \qquad \mathop{\inf}\limits_{\theta \in \mathcal{C}}\frac{\alpha_{\mathcal{C}}||\nabla \mathcal{L}(\theta, \mathcal{D}_n)||_2}{\beta_{\mathcal{L}}} \geq S_1.
\end{align}
\end{prop}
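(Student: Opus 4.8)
The plan is to verify the two claims in \eqref{eq:MatchEq} separately: first the elementary bound $|y_j|, \|x_j\|_\infty \le 1$, and then the harder gradient lower bound $\inf_{\theta \in \mathcal{C}} \frac{\alpha_{\mathcal{C}}\|\nabla\mathcal{L}(\theta,\mathcal{D}_n)\|_2}{\beta_{\mathcal{L}}} \ge S_1$. The first claim is immediate: every $x_j$ is a row of some $X_{(-i)}$ or one of the $z_j$, hence lies in $\{-1,1\}^p \subseteq \mathbb{B}_\infty(1)$, and every $y_j \in \{0,1\} \subseteq [-1,1]$. For the gradient bound, I would first compute $\nabla\mathcal{L}(\theta,\mathcal{D}_n) = \frac{1}{n}\sum_{j=1}^n (x_j^T\theta - y_j)x_j$. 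Splitting the sum over the two groups of data points, and using $Z^TZ = kI_p$ so that $\sum_{j=1}^k (z_j^T\theta)z_j = k\theta$ and $\sum_{j=1}^k z_j = 0$ (the latter follows from $Z^TZ = kI_p$ only if we know column sums vanish — I should double-check this, but $y_j = 0$ on this group kills that term anyway), the contribution of the $Z$-block is simply $\frac{k}{n}\theta$. For the $X$-block, $\sum_{j=1}^w (x^j_{(-i)})^T\theta \cdot x^j_{(-i)} - \sum_{j=1}^w x^j_{(-i)}$; here I would use that at least $(1-\tau)p$ columns of $X_{(-i)}$ are consensus columns, so those coordinates of $\sum_j x^j_{(-i)}$ have magnitude exactly $w$, giving a large-norm vector pointing in a fixed direction.

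The key quantitative step is to lower bound $\|\nabla\mathcal{L}(\theta,\mathcal{D}_n)\|_2$ uniformly over $\theta \in \mathbb{B}_2(\alpha_1/\sqrt{p})$. Writing $\nabla\mathcal{L}(\theta,\mathcal{D}_n) = \frac{1}{n}\big(\sum_{j=1}^w (x^j_{(-i)})^T\theta \cdot x^j_{(-i)} - \sum_{j=1}^w x^j_{(-i)}\big) + \frac{k}{n}\theta$, I would bound from below by the norm of the dominant constant piece $-\frac{1}{n}\sum_{j=1}^w x^j_{(-i)}$ minus the perturbations. The constant piece has squared norm at least $\frac{1}{n^2}\cdot (1-\tau)p \cdot w^2$ (from the consensus columns), i.e. norm at least $\frac{w\sqrt{(1-\tau)p}}{n}$. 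The perturbation terms: $\|\frac{k}{n}\theta\|_2 \le \frac{k}{n}\cdot\frac{\alpha_1}{\sqrt p}$, and $\|\frac{1}{n}\sum_{j=1}^w (x^j_{(-i)})^T\theta \cdot x^j_{(-i)}\|_2 \le \frac{1}{n}\sum_{j=1}^w |(x^j_{(-i)})^T\theta| \cdot \|x^j_{(-i)}\|_2 \le \frac{w}{n}\cdot \sqrt p \|\theta\|_2 \cdot \sqrt p = \frac{w p \|\theta\|_2}{n} \le \frac{w p}{n}\cdot\frac{\alpha_1}{\sqrt p} = \frac{w\sqrt p\,\alpha_1}{n}$ (using Cauchy–Schwarz $|x^T\theta| \le \|x\|_2\|\theta\|_2 \le \sqrt p\|\theta\|_2$). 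So the gradient norm is at least $\frac{\sqrt p}{n}\big(w\sqrt{1-\tau} - w\alpha_1 - \frac{k\alpha_1}{p}\big)$. Recalling $n = w + k$ and $k = \tau w p$, we get $\frac{k}{p} = \tau w$, so the bracket is $w(\sqrt{1-\tau} - \alpha_1 - \tau\alpha_1) = w(\sqrt{1-\tau} - (1+\tau)\alpha_1)$, which is positive precisely under the hypothesis $\alpha_1 < \frac{\sqrt{1-\tau}}{1+\tau}$.

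Next I would handle $\beta_{\mathcal{L}} = \frac{1}{n}\|\sum_{j=1}^n x_j x_j^T\|_2$. Upper bounding: $\|\sum_{j=1}^w x^j_{(-i)}(x^j_{(-i)})^T\|_2 \le \sum_{j=1}^w \|x^j_{(-i)}\|_2^2 = wp$, and $\|\sum_{j=1}^k z_j z_j^T\|_2 = \|Z^TZ\|_2 = k$ wait — that should be $\|\sum_j z_j z_j^T\|_2$ where $\sum_j z_j z_j^T = Z^T Z$ only after identifying rows and columns appropriately; in any case $\sum_{j=1}^k z_j z_j^T = Z^TZ = kI_p$ has operator norm $k$. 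So $\beta_{\mathcal{L}} \le \frac{wp + k}{n} = \frac{wp + \tau w p}{w + \tau w p} = \frac{(1+\tau)wp}{w(1+\tau p)} = \frac{(1+\tau)p}{1+\tau p} \le \frac{(1+\tau)}{\tau}$ roughly — but I should keep the exact form. Combining with $\alpha_{\mathcal{C}} = \frac{\sqrt p}{\alpha_1}$, I obtain $\frac{\alpha_{\mathcal{C}}\|\nabla\mathcal{L}\|_2}{\beta_{\mathcal{L}}} \ge \frac{\sqrt p}{\alpha_1}\cdot \frac{\sqrt p\, w(\sqrt{1-\tau}-(1+\tau)\alpha_1)}{n} \cdot \frac{n(1+\tau p)}{(1+\tau)wp} = \frac{(\sqrt{1-\tau}-(1+\tau)\alpha_1)(1+\tau p)}{\alpha_1(1+\tau)p}$. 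Since $\frac{1+\tau p}{p} \ge \tau$, wait I want this $\ge \frac{\sqrt{1-\tau}-(1+\tau)\alpha_1}{\alpha_1(1+\tau)}$, which requires $\frac{1+\tau p}{p} \ge 1$, i.e. $1 + \tau p \ge p$, false for large $p$. So the clean bound is $\frac{(\sqrt{1-\tau}-(1+\tau)\alpha_1)}{\alpha_1(1+\tau)}\cdot\frac{1+\tau p}{p}$ and since $\frac{1+\tau p}{p} \to \tau$, for large $p$ this is roughly $\tau$ times the target — so I must be more careful and perhaps the intended hypothesis $S_1 \le \frac{\sqrt{1-\tau}-(1+\tau)\alpha_1}{\alpha_1(1+\tau)}$ already absorbs the $\frac{1+\tau p}{p}$ factor, or more likely my crude bound $\|\sum z_j z_j^T\|_2 = k$ should be paired differently. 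The main obstacle will be getting this constant-tracking exactly right: I expect I need the sharper observation that since $y_j = 0$ on the $Z$-block, the $\frac{k}{n}\theta$ term and the full $Z$-block contribution to $\beta_{\mathcal{L}}$ interact favorably, and that the hypothesis on $S_1$ is calibrated to exactly this chain of inequalities. I would finish by unwinding the definitions with $\tau = 0.001$ fixed, checking the bracket stays positive, and concluding $\inf_{\theta\in\mathcal{C}}\frac{\alpha_{\mathcal{C}}\|\nabla\mathcal{L}(\theta,\mathcal{D}_n)\|_2}{\beta_{\mathcal{L}}} \ge S_1$.
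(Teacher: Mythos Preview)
Your approach is essentially the same as the paper's, and all of your intermediate bounds are correct: $\|\sum_j y_j x_j\|_2 \ge w\sqrt{(1-\tau)p}$ from the consensus columns, $\|\sum_j x_j x_j^T\|_2 \le wp + k = (1+\tau)wp$, and hence $\|\nabla\mathcal{L}(\theta,\mathcal{D}_n)\|_2 \ge \frac{\sqrt{p}\,w\big(\sqrt{1-\tau}-(1+\tau)\alpha_1\big)}{n}$ for $\theta\in\mathcal{C}$. The perceived shortfall at the end is purely an algebra slip, not a genuine gap.

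Specifically, from your own computation $\beta_{\mathcal{L}} \le \frac{(1+\tau)wp}{n}$, so $\frac{1}{\beta_{\mathcal{L}}} \ge \frac{n}{(1+\tau)wp}$. This equals $\frac{1+\tau p}{(1+\tau)p}$ because $n = w(1+\tau p)$; these are the \emph{same} number. In your final chain you wrote the third factor as $\frac{n(1+\tau p)}{(1+\tau)wp}$, which double-counts $n/w = 1+\tau p$. With the correct factor $\frac{n}{(1+\tau)wp}$, everything cancels cleanly:
\[
\frac{\alpha_{\mathcal{C}}\|\nabla\mathcal{L}(\theta,\mathcal{D}_n)\|_2}{\beta_{\mathcal{L}}}
\;\ge\; \frac{\sqrt{p}}{\alpha_1}\cdot\frac{\sqrt{p}\,w\big(\sqrt{1-\tau}-(1+\tau)\alpha_1\big)}{n}\cdot\frac{n}{(1+\tau)wp}
\;=\; \frac{\sqrt{1-\tau}-(1+\tau)\alpha_1}{\alpha_1(1+\tau)} \;\ge\; S_1,
\]
exactly as the hypothesis on $S_1$ requires. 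There is no missing factor of $\frac{1+\tau p}{p}$ and no need for any ``sharper observation'' about how the $Z$-block interacts with $\beta_{\mathcal{L}}$. (The paper's write-up reaches the same line by bounding $\|\sum_j y_j x_j - \sum_j x_j x_j^T\theta\|_2$ directly via the operator-norm bound on $\sum_j x_j x_j^T$, rather than splitting the $X$- and $Z$-blocks in the gradient, but the resulting inequalities are identical to yours.)
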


To summarize, if we choose $\alpha \in \left(0.993, \frac{\sqrt{1 - \tau}}{1 + \tau}\right)$, and since $\frac{\sqrt{1 - \tau}}{1 + \tau} \approx 0.9985$, Algorithm \ref{alg:PrivFWERM} for the ridge regression problem with $\mathcal{C} = \mathbb{B}_2\left(\frac{\alpha}{\sqrt{p}}\right)$ is nearly optimal up to logarithmic factors. More specifically, for any $\alpha \in \left(0.993, \frac{\sqrt{1 - \tau}}{1 + \tau}\right)$ and $S_1 \in \left(0, \frac{\sqrt{1 - \tau} - (1 + \tau)\alpha}{\alpha(1 + \tau)}\right]$
and the choice of $(n,p)$ appearing in Proposition~\ref{prop:MatchData}, define
%$p = 1000m^2$, $w = \frac{m}{\log(m)}$, $k = \tau w p$, $n = w + k$, $\mathcal{C} = \mathbb{B}_2\left(\frac{\alpha}{\sqrt{p}}\right)$,
the class of datasets
$$\mathcal{S}_n^\alpha = \left\{\mathcal{D}_n = \{(x_i, y_i)\}_{i = 1}^n: 
|y_i|, ||x_i||_{\infty} \leq 1, \ \forall i \in [n]  \mbox{ and } \mathop{\inf}\limits_{\theta \in \mathcal{C}}\left\|\nabla\mathcal{L}(\theta, \mathcal{D}_n)\right\|_2 \geq \frac{\alpha\beta_{\mathcal{L}}S_1}{\sqrt{p}}\right\},$$
with $\beta_{\mathcal{L}} = \frac{1}{n}\left\|\sum_{i = 1}^nx_ix_i^T\right\|_2$.
Taking $\Theta_{\epsilon, \delta, \mathcal{C}}$ to be the collection of all $(\epsilon, \delta)$-DP mechanisms with output in $\mathcal{C}$, with $\epsilon = 0.1$, $\delta \asymp \frac{1}{n^{\omega_1}}$, and $\omega_1 > 2$ an absolute constant, we have
\begin{align*}
\mathop{\inf}\limits_{\hat{\theta} \in \Theta_{\epsilon, \delta, \mathcal{C}}} \mathop{\sup}_{\substack{\mathcal{D}_n \in \mathcal{S}_n^\alpha}} \mathbb{E}\left[\mathcal{L}(\hat{\theta}, \mathcal{D}_n) - \mathop{\min}\limits_{\theta \in \mathcal{C}}\mathcal{L}(\theta, \mathcal{D}_n)\right] = \widetilde{\Theta}\left(\frac{1}{n^{2/3}}\right).
\end{align*} 
This follows directly from Theorem~\ref{theorem:UPRidge}, Theorem~\ref{theorem:LBRidge}, and Proposition~\ref{prop:MatchData}. 

Thus, we saw that by a careful choice of learning rate in the noisy Frank-Wolfe algorithm, we obtained a utility guarantee that is nearly optimal in certain cases and requires significantly fewer iterations than Algorithm \ref{alg:PrivNon-accERM}. This was facilitated by leveraging the strong convexity of $\mathcal{C}$ and a lower bound on the $\ell_2$-norm of the gradient of the empirical risk. 

%%%%%

\subsection{Private Estimation in GLMs}
\label{sec:Private Biased Parameter Estimation in GLMs: Upper and Lower Bounds for Excess Risk}

Continuing the study of ERM, we aim to use the accelerated Frank-Wolfe method (Algorithm \ref{alg:PrivFWERM}) to estimate the true parameter $\theta^*$ in a GLM. This builds on the idea in Section \ref{sec:The Upper Bound} that allowed us to obtain a high-probability statement regarding the conditions on the data in Theorem~\ref{theorem:UPRidge}, under a parametric model. Throughout this section, we will assume the data are generated from a GLM. We once again take Algorithm \ref{alg:PrivNon-accERM} as a baseline for comparison. Our goal is to showcase the advantage of acceleration during iterative optimization. Our methods will again rely on bringing Algorithm \ref{alg:PrivFWERM} in a form where we can use Theorem \ref{theorem:ACCFWV3} with high probability. However, since that result requires a lower bound on the $\ell_2$-norm of the gradient of the empirical risk, we will need to optimize over an $\ell_2$-ball $\mathcal{C}$ such that $\theta^* \notin \mathcal{C}$. To make the estimator consistent, we will allow $\mathcal{C}$ to increase toward $\theta^*$ as $n \rightarrow \infty$ in Section \ref{sec:Increasing C}. In Section \ref{sec:Fixed C and the Lower Bound}, we derive a complementary upper bound on the excess empirical risk, under the assumption that $\mathcal{C}$ is fixed.

Throughout this section, we will work with bounded covariates and responses. The loss will be the negative log likelihood (cf.\ Section \ref{sec:Generalized Linear Models (GLM)}). We first state a general theorem based on the accelerated Frank-Wolfe method for the upper bound and then specialize it to different sets $\mathcal{C}$. %As mentioned, for the case where $\mathcal{C}$ increases with $n$, the hope is to perform better than Lemma \ref{lemma:NOPLconvDPFW} in terms of the rate with $n$ since we are using an accelerated Frank-Wolfe version, although for all fixed $n$ we are not converging to $\theta^*$. But in the limit with $n$, the set $\mathcal{C}$ increases toward $\theta^*$. Additionally, similar to what we have done in Section \ref{sec:The Upper Bound}, the proof of the utility of our method takes the geometry of $\mathcal{C}$ into consideration, being a strongly convex set. This is due to application of Theorem \ref{theorem:ACCFWV3}, that will occur once we will start working on an event with high probability.
This time, we will consider the scaling of our bounds with $n$ only. Hence, quantities involving $p$, $c(\sigma)$ (as in Section \ref{sec:Generalized Linear Models (GLM)}), and $||\theta^*||_2$ will be treated as absolute constants.

The main message is that acceleration is again beneficial in terms of the number of iterations $T$ and the upper bound on the excess empirical risk. As we will see in Theorem~\ref{theorem:ERM_UP_C_Inc}, we can set $T \asymp n^{2/5}\log(n)$ in Algorithm \ref{alg:PrivFWERM}. In contrast, Algorithm \ref{alg:PrivNon-accERM} requires $T \asymp n^{2/3}$ (cf.\ Lemma~\ref{lemma:NOPLconvDPFW}). Moreover, Algorithm \ref{alg:PrivFWERM} yields an upper bound of $\frac{1}{n^{4/5}}$ (up to logarithmic factors) on the excess empirical risk for GLMs (cf.\ Remark~\ref{remark:Best_q_ERM}), in contrast to the rate of $\frac{1}{n^{2/3}}$ for Algorithm~\ref{alg:PrivNon-accERM} (cf.\ Lemma~\ref{lemma:NOPLconvDPFW}). This stems from the fact that the variance of the Gaussian noise added scales with $T$, so the smaller number of iterations results in a smaller variance of the noisy gradients, in turn producing better statistical performance.

%%%%%

\subsubsection{General Upper Bound}

We begin by providing a general upper bound, proved in Appendix \ref{AppThmERM}. The parameter $q < \frac{1}{2}$ will be optimized in Section \ref{sec:Increasing C}.

\begin{theorem}
\label{theorem:ERM_GLM_UP_General}
Let $\mathcal{E} = \mathbb{B}_2(L_x)\times [-K_y, K_y]$, with $K_y, L_x \asymp 1$. Suppose $\mathcal{C} = \mathbb{B}_{2}\left(D\right)$, with $D > 0$. Assume $\epsilon > 0$ and $\delta \in (0, 1)$. Suppose $\theta^* \in \mathbb{R}^p \setminus \mathcal{C}$. Set $\alpha_{\mathcal{C}} = \frac{1}{D}$. Consider the GLM setting from Section \ref{sec:Generalized Linear Models (GLM)}, with $|y_i| \leq K_y$ and $||x_i||_2 \leq L_x$, for all $i \in [n]$. Let $\zeta \in (0, 1)$ and $\beta_{\mathcal{L}} = K_{\Phi''}L_x^2$. Let $L_2 = (K_{\Phi'} + K_y)L_x$ and $q < \frac{1}{2}$. Then there are absolute constants $C'_1$ and $C_1$ such that for $n \geq C'_1$ and
\begin{align*}
0 < r \leq \frac{\Phi''(L_x||\theta^*||_2)\lambda_{\min}(\Sigma)}{2}(||\theta^*||_2 - D) - \sqrt{\frac{C_1\log(2/\zeta)}{n}} - \frac{1}{n^q},
\end{align*}
Algorithm \ref{alg:PrivFWERM} with $T = \log_{1/c}(n)$, where $c = \max\left\{\frac{1}{2}, 1 - \frac{\alpha_{\mathcal{C}}r}{8K_{\Phi''}L_x^2}\right\}$, returns $\theta_T$ which is $(\epsilon, \delta)$-DP, and with probability at least $1 - \zeta$, we have
\begin{align*}
\mathcal{L}(\theta_T, \mathcal{D}_n) - \mathop{\min}\limits_{\theta \in \mathcal{C}}\mathcal{L}(\theta, \mathcal{D}_n) \lesssim \frac{1}{n} + \frac{\eta\log\left(\log_{1/c}(n)/\delta\right)\sqrt{\log_{1/c}(n)\log\left(\log_{1/c}(n)/\zeta\right)}}{(1 - c)n\epsilon}.
\end{align*} 
\end{theorem}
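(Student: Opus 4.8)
The plan is to derive the bound as a consequence of Theorem~\ref{theorem:ACCFWV3} applied to the empirical loss $\mathcal{L}(\cdot, \mathcal{D}_n)$, combined with the privacy guarantee of Lemma~\ref{lemma:ACC_PRIV_FW_STEP} and a union bound controlling the stochastic error terms. First I would verify the structural hypotheses needed to invoke Theorem~\ref{theorem:ACCFWV3}: convexity and $\beta_{\mathcal{L}}$-smoothness of the empirical risk follow from $\beta_{\mathcal{L}} = K_{\Phi''}L_x^2$ together with the bound $|\Phi''| \le K_{\Phi''}$ and $\|x_i\|_2 \le L_x$; strong convexity of $\mathcal{C} = \mathbb{B}_2(D)$ with parameter $\alpha_{\mathcal{C}} = 1/D$ follows from Lemma~\ref{lemma:STR_CONV_BALL}. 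The crucial step is establishing the uniform lower bound $\|\nabla \mathcal{L}(\theta, \mathcal{D}_n)\|_2 \ge r$ over $\theta \in \mathcal{C}$ with probability at least $1 - \zeta$. For this I would start from $\nabla \mathcal{R}(\theta) = \mathbb{E}_x[(\Phi'(x^T\theta) - \Phi'(x^T\theta^*))x]$; since $\Phi''$ is bounded below by $\Phi''(L_x\|\theta^*\|_2)$ on the relevant range (using monotonicity of $\Phi''$ on $[0,\infty)$ and evenness), a mean-value argument gives $\|\nabla\mathcal{R}(\theta)\|_2 \gtrsim \Phi''(L_x\|\theta^*\|_2)\lambda_{\min}(\Sigma)\,\mathrm{dist}(\theta, \theta^*) \ge \frac{\Phi''(L_x\|\theta^*\|_2)\lambda_{\min}(\Sigma)}{2}(\|\theta^*\|_2 - D)$ for $\theta \in \mathcal{C}$, since $\theta^* \notin \mathcal{C}$ forces $\|\theta - \theta^*\|_2 \ge \|\theta^*\|_2 - D$. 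Then a uniform concentration bound for $\sup_{\theta \in \mathcal{C}}\|\nabla\mathcal{L}(\theta,\mathcal{D}_n) - \nabla\mathcal{R}(\theta)\|_2$ — of order $\sqrt{C_1\log(2/\zeta)/n}$ plus the discretization slack $n^{-q}$ arising from an $\epsilon$-net over $\mathcal{C}$ — transfers the population lower bound to the empirical gradient, yielding exactly the stated admissible range for $r$.

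Next I would handle the optimization error. Running Algorithm~\ref{alg:PrivFWERM} is exactly running \textsc{ReAccFW} on $\mathcal{L}(\cdot,\mathcal{D}_n)$ with a per-step linear-minimization error coming from the injected Gaussian noise $\xi_t \sim N(0, \sigma_\xi^2 I_p)$: replacing the exact minimizer of $v^T\nabla\mathcal{L}(\theta_t,\mathcal{D}_n)$ by the minimizer of $v^T(\nabla\mathcal{L}(\theta_t,\mathcal{D}_n) + \xi_t)$ incurs an error $\Delta_t \le 2\|\mathcal{C}\|_2\|\xi_t\|_2 = 2D\|\xi_t\|_2$ in the linear objective. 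With $\sigma_\xi^2 \asymp \frac{L_2^2 T \log(5T/2\delta)\log(2/\delta)}{n^2\epsilon^2}$ and $T = \log_{1/c}(n)$, a Gaussian-norm tail bound gives $\|\xi_t\|_2 \lesssim \sigma_\xi(\sqrt{p} + \sqrt{\log(T/\zeta)})$ simultaneously over all $t < T$ with probability $1 - \zeta$; absorbing $p$ into constants, this is $\Delta := \max_t \Delta_t \lesssim \frac{L_2\sqrt{T\log(5T/2\delta)\log(2/\delta)\log(T/\zeta)}}{n\epsilon}$. Plugging $\Delta$ into Theorem~\ref{theorem:ACCFWV3} with step size $\eta = \min\{1, \alpha_{\mathcal{C}}r/(4\beta_{\mathcal{L}})\}$ gives
\begin{align*}
\mathcal{L}(\theta_T,\mathcal{D}_n) - \min_{\theta\in\mathcal{C}}\mathcal{L}(\theta,\mathcal{D}_n) \le c^T\bigl(\mathcal{L}(\theta_0,\mathcal{D}_n) - \min_{\theta\in\mathcal{C}}\mathcal{L}(\theta,\mathcal{D}_n)\bigr) + \frac{3\Delta\eta}{2(1-c)}.
\end{align*}
With $T = \log_{1/c}(n)$ the contraction term is $\le \frac{1}{n}(\mathcal{L}(\theta_0,\mathcal{D}_n) - \min\mathcal{L})$, and the initial excess empirical risk is bounded by a constant (bounded covariates and responses, bounded $\mathcal{C}$, Lipschitz loss), so this contributes $O(1/n)$. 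The second term is $\frac{3\Delta\eta}{2(1-c)}$, and substituting the bound on $\Delta$ and rewriting $T$ as $\log_{1/c}(n)$ produces precisely the claimed $\frac{\eta\log(\log_{1/c}(n)/\delta)\sqrt{\log_{1/c}(n)\log(\log_{1/c}(n)/\zeta)}}{(1-c)n\epsilon}$ term.

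Finally, privacy is immediate: the noise scale in Algorithm~\ref{alg:PrivFWERM} matches the hypothesis of Lemma~\ref{lemma:ACC_PRIV_FW_STEP} once one checks the per-step $\ell_2$-sensitivity of $\nabla\mathcal{L}(\theta_t,\mathcal{D}_n)$ is at most $2L_2/n$ using $\|\nabla\mathcal{L}(\theta,(x,y))\|_2 = \|(\Phi'(x^T\theta) - y)x\|_2 \le (K_{\Phi'} + K_y)L_x = L_2$; since $\epsilon \le 0.9 < 2\sqrt{2T\log(2/\delta)}$ and $\delta \in (0,1)$, the lemma gives $(\epsilon,\delta)$-DP. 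The two probability-$\zeta$ events (empirical gradient lower bound, and noise-norm control) are combined by a union bound and the constant $\zeta$ is rescaled. I expect the main obstacle to be the uniform lower bound on $\|\nabla\mathcal{L}(\theta,\mathcal{D}_n)\|_2$ over all of $\mathcal{C}$: one must carefully pair the mean-value/strong-convexity argument at the population level with a net argument whose granularity $n^{-q}$ is small enough not to destroy the lower bound (hence the requirement that $\frac{\Phi''(L_x\|\theta^*\|_2)\lambda_{\min}(\Sigma)}{2}(\|\theta^*\|_2 - D)$ strictly dominates the slack terms), and to get the dimension dependence to collapse under the $p \asymp 1$ convention; the rest is bookkeeping on top of Theorem~\ref{theorem:ACCFWV3}.
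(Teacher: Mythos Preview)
Your proposal is correct and follows essentially the same route as the paper: verify Lipschitz and smoothness of $\mathcal{L}(\cdot,\mathcal{D}_n)$, lower-bound $\|\nabla\mathcal{R}(\theta)\|_2$ on $\mathcal{C}$ via strong convexity of $\mathcal{R}$ on $\mathbb{B}_2(\|\theta^*\|_2)$, transfer this to the empirical gradient by sub-Gaussian concentration plus a covering of $\mathcal{C}$ (the $n^{-q}$ slack arises exactly from the $1/t^p$ covering-number factor), control $\|\xi_t\|_2$ uniformly in $t$ via Lemma~\ref{lemma:CIGV}, feed the resulting $\Delta$ into Theorem~\ref{theorem:ACCFWV3}, and invoke Lemma~\ref{lemma:ACC_PRIV_FW_STEP} for privacy. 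One small caveat: the theorem as stated only assumes $\epsilon > 0$, not $\epsilon \le 0.9$, so Lemma~\ref{lemma:ACC_PRIV_FW_STEP} delivers only the weaker $\bigl(\tfrac{\epsilon}{2} + \tfrac{\sqrt{T}\epsilon}{2\sqrt{2\log(2/\delta)}}(e^{\epsilon/2\sqrt{2T\log(2/\delta)}}-1),\delta\bigr)$-DP guarantee in general; the paper handles this the same way and simply does not re-verify $\epsilon\le 0.9$ inside this proof.
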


%Note that in Theorem~\ref{theorem:ERM_GLM_UP_General}, we did not specifically ask for $\epsilon \leq 0.9$. We tried to make the result as general as possible and we will specialize the privacy parameters in what is to come.

%Now we can analyze the upper bound when $\mathcal{C}$ increases towards $\theta^*$. We do it in the next section and Section \ref{sec:Fixed C and the Lower Bound} is dedicated to the setting when $\mathcal{C}$ is fixed and we also have a lower bound result there.

%%%%%

\subsubsection{Accelerated Frank-Wolfe with Increasing $\mathcal{C}$}\label{sec:Increasing C}

In this section, we increase the constraint set $\mathcal{C} = \mathbb{B}_2(D)$ in such a way that $\|\theta^*\|_2 - D \asymp \frac{1}{n^{2/5}}$.
%$$\frac{4}{\Phi''(L_x||\theta^*||_2)\lambda_{\min}(\Sigma)n^{2/5}} \le ||\theta^*||_2 - D \lesssim \frac{1}{n^{2/5}},$$
The proof of the following result (see Appendix \ref{AppThmERM2}) relies on Theorem~\ref{theorem:ERM_GLM_UP_General}, with $q = \frac{2}{5}$:

\begin{theorem}
\label{theorem:ERM_UP_C_Inc}
Let $\mathcal{E} = \mathbb{B}_2(L_x)\times [-K_y, K_y]$, with $K_y, L_x \asymp 1$. Suppose $\mathcal{C} = \mathbb{B}_{2}\left(D\right)$, with $||\theta^*||_2 - D \lesssim \frac{1}{n^{2/5}}$. Set $\alpha_{\mathcal{C}} = \frac{1}{D}$ and let $0 < \epsilon \le 0.9$ and $\delta \in (0, 1)$. Consider the GLM setting from Section \ref{sec:Generalized Linear Models (GLM)}, with $|y_i| \leq K_y$ and $||x_i||_2 \leq L_x$, for all $i \in [n]$. Let $\zeta \in (0, 1/3)$, $L_2 = (K_{\Phi'} + K_y)L_x$ and $\beta_{\mathcal{L}} = K_{\Phi''}L_x^2$. Then there are absolute constants $C'_1, C_1, C_2, C_3$, $N_\zeta, T_\zeta > 0$ such that for $n > \max\left\{C_2\log^5(2/\zeta), N_\zeta, C'_1\right\}$, $D \leq ||\theta^*||_2 - \frac{C_3}{n^{2/5}}$, and $r = \frac{1}{n^{2/5}} - \sqrt{\frac{C_1\log(2/\zeta)}{n}}$, Algorithm \ref{alg:PrivFWERM} with
$T = \widetilde{\Theta}\left(n^{2/5}\right)$
%$T = \log_{1/c}(n) = \widetilde{\Theta}\left(n^{2/5}\right)$, where $c = \max\left\{\frac{1}{2}, 1 - \frac{\alpha_{\mathcal{C}}r}{8K_{\Phi''}L_x^2}\right\}$,
returns $\theta_T$ which is $(\epsilon, \delta)$-DP, and with probability at least $1 - 3\zeta$, we have
\begin{align}
\label{eq:Upper_ERM_GLM_Increasing_C}
\mathcal{L}(\theta_T, \mathcal{D}_n) - \mathop{\min}\limits_{\theta \in \mathbb{B}_2(||\theta^*||_2)}\mathcal{L}(\theta, \mathcal{D}_n) \lesssim \frac{T_\zeta\log(n/\delta)\sqrt{\log(n)\log(n/\zeta)}}{n^{4/5}\epsilon}.
\end{align} 
\end{theorem}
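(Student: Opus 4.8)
The plan is to instantiate Theorem~\ref{theorem:ERM_GLM_UP_General} with $q=\tfrac{2}{5}$, clean up the constants under the scaling $\|\theta^*\|_2-D\asymp n^{-2/5}$, and then bridge from $\min_{\theta\in\mathcal{C}}\mathcal{L}(\theta,\mathcal{D}_n)$ to $\min_{\theta\in\mathbb{B}_2(\|\theta^*\|_2)}\mathcal{L}(\theta,\mathcal{D}_n)$. Concretely, take $r=\frac{1}{n^{2/5}}-\sqrt{\frac{C_1\log(2/\zeta)}{n}}$ with $C_1$ as in Theorem~\ref{theorem:ERM_GLM_UP_General}. For $n\gtrsim\log^5(2/\zeta)$ one has $\sqrt{\frac{C_1\log(2/\zeta)}{n}}\le\frac{1}{2n^{2/5}}$, so $r\in\big[\tfrac12 n^{-2/5},n^{-2/5}\big]$ and in particular $r>0$ --- this is where the threshold $C_2\log^5(2/\zeta)$ enters. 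Rearranging the upper-bound condition on $r$ in Theorem~\ref{theorem:ERM_GLM_UP_General} shows it suffices that $\|\theta^*\|_2-D\ge\frac{4}{\Phi''(L_x\|\theta^*\|_2)\lambda_{\min}(\Sigma)}n^{-2/5}$, i.e.\ the hypothesis $D\le\|\theta^*\|_2-C_3n^{-2/5}$ with $C_3:=\frac{4}{\Phi''(L_x\|\theta^*\|_2)\lambda_{\min}(\Sigma)}$ (absolute, since $L_x,\|\theta^*\|_2\asymp1$); this also forces $\theta^*\notin\mathcal{C}$. With $\epsilon\le0.9$, Lemma~\ref{lemma:ACC_PRIV_FW_STEP} delivers $(\epsilon,\delta)$-DP, and Theorem~\ref{theorem:ERM_GLM_UP_General} yields, with probability at least $1-\zeta$,
\begin{align*}
\mathcal{L}(\theta_T,\mathcal{D}_n)-\min_{\theta\in\mathcal{C}}\mathcal{L}(\theta,\mathcal{D}_n)\lesssim\frac{1}{n}+\frac{\eta\,\log\!\big(\log_{1/c}(n)/\delta\big)\sqrt{\log_{1/c}(n)\,\log\!\big(\log_{1/c}(n)/\zeta\big)}}{(1-c)\,n\epsilon}.
\end{align*}

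Next I would simplify the constants. Because $D\to\|\theta^*\|_2\asymp1$, we have $\alpha_{\mathcal{C}}=1/D\asymp1$; and since $r\asymp n^{-2/5}\to0$, for $n$ past a threshold $N_\zeta$ the extrema defining $c,\eta$ are interior, so $c=1-\frac{\alpha_{\mathcal{C}}r}{8\beta_{\mathcal{L}}}$ and $\eta=\frac{\alpha_{\mathcal{C}}r}{4\beta_{\mathcal{L}}}$, giving $\frac{\eta}{1-c}=2$ and $1-c\asymp n^{-2/5}$. Hence $T=\log_{1/c}(n)=\frac{\log n}{-\log(1-(1-c))}\asymp\frac{\log n}{1-c}\asymp n^{2/5}\log n=\widetilde{\Theta}(n^{2/5})$, as claimed. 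Since $\log_{1/c}(n)\le n$, the nested logarithms in the display are $\lesssim\log(n/\delta)$ and $\lesssim\log(n/\zeta)$, while $\sqrt{\log_{1/c}(n)}\asymp n^{1/5}\sqrt{\log n}$; substituting everything yields $\mathcal{L}(\theta_T,\mathcal{D}_n)-\min_{\theta\in\mathcal{C}}\mathcal{L}(\theta,\mathcal{D}_n)\lesssim\frac{1}{n}+\frac{\log(n/\delta)\sqrt{\log(n)\log(n/\zeta)}}{n^{4/5}\epsilon}$.

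It then remains to bound the approximation gap $g:=\min_{\theta\in\mathcal{C}}\mathcal{L}(\theta,\mathcal{D}_n)-\min_{\theta\in\mathbb{B}_2(\|\theta^*\|_2)}\mathcal{L}(\theta,\mathcal{D}_n)\ge0$ (nonnegative since $\mathcal{C}\subseteq\mathbb{B}_2(\|\theta^*\|_2)$). Put $\rho:=\|\theta^*\|_2-D\lesssim n^{-2/5}$, $\theta^\sharp:=\tfrac{D}{\|\theta^*\|_2}\theta^*\in\mathcal{C}$ (so $\|\theta^\sharp-\theta^*\|_2=\rho$), and $\hat\theta_B:=\mathop{\arg\min}\limits_{\theta\in\mathbb{B}_2(\|\theta^*\|_2)}\mathcal{L}(\theta,\mathcal{D}_n)$, so that
\begin{align*}
g\le\mathcal{L}(\theta^\sharp,\mathcal{D}_n)-\mathcal{L}(\hat\theta_B,\mathcal{D}_n)=\underbrace{\big(\mathcal{L}(\theta^\sharp,\mathcal{D}_n)-\mathcal{L}(\theta^*,\mathcal{D}_n)\big)}_{(\mathrm{I})}+\underbrace{\big(\mathcal{L}(\theta^*,\mathcal{D}_n)-\mathcal{L}(\hat\theta_B,\mathcal{D}_n)\big)}_{(\mathrm{II})}.
\end{align*}
The empirical risk is $\beta_{\mathcal{L}}$-smooth ($\nabla^2\mathcal{L}(\theta,\mathcal{D}_n)=\frac{1}{n}\sum_i\Phi''(x_i^T\theta)x_ix_i^T\preceq K_{\Phi''}L_x^2I$), so $(\mathrm{I})\le\|\nabla\mathcal{L}(\theta^*,\mathcal{D}_n)\|_2\rho+\tfrac{\beta_{\mathcal{L}}}{2}\rho^2$. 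For $(\mathrm{II})$, on the event (probability $\ge1-\zeta$ by matrix Chernoff, using $p\asymp1$ and $\|x_i\|_2\le L_x$) that $\frac{1}{n}\sum_i x_ix_i^T\succeq\tfrac12\lambda_{\min}(\Sigma)I$, the empirical risk is $\tau_l$-strongly convex on $\mathbb{B}_2(\|\theta^*\|_2)$ with $\tau_l\asymp\Phi''(L_x\|\theta^*\|_2)\lambda_{\min}(\Sigma)$ (since $\Phi''(x_i^T\theta)\ge\Phi''(L_x\|\theta^*\|_2)$ there), whence $(\mathrm{II})\le\frac{\|\nabla\mathcal{L}(\theta^*,\mathcal{D}_n)\|_2^2}{2\tau_l}$; and $\nabla\mathcal{L}(\theta^*,\mathcal{D}_n)=\frac{1}{n}\sum_i(\Phi'(x_i^T\theta^*)-y_i)x_i$ is an average of mean-zero bounded vectors, so $\|\nabla\mathcal{L}(\theta^*,\mathcal{D}_n)\|_2\lesssim\sqrt{\log(1/\zeta)/n}$ with probability $\ge1-\zeta$. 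Combining, $g\lesssim\sqrt{\log(1/\zeta)/n}\cdot n^{-2/5}+n^{-4/5}+\log(1/\zeta)/n\lesssim n^{-4/5}$ up to $\zeta$-dependent constants, which is dominated by $\frac{\log(n/\delta)\sqrt{\log(n)\log(n/\zeta)}}{n^{4/5}\epsilon}$ since $\epsilon\le0.9<1$. Adding $g$ to the bound of the previous paragraph, folding the $\zeta$-dependent constants into $T_\zeta$ and the thresholds into $N_\zeta$, and union-bounding over the three high-probability events gives \eqref{eq:Upper_ERM_GLM_Increasing_C} with probability at least $1-3\zeta$.

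I expect step $(\mathrm{II})$ to be the main obstacle: obtaining an $o(n^{-4/5})$ bound there genuinely requires high-probability strong convexity of the empirical risk, since the bare convexity estimate $\mathcal{L}(\theta^*,\mathcal{D}_n)-\mathcal{L}(\hat\theta_B,\mathcal{D}_n)\le\|\nabla\mathcal{L}(\theta^*,\mathcal{D}_n)\|_2\,\|\theta^*-\hat\theta_B\|_2$ only gives $O(n^{-1/2})$, which is too large; this must be paired with the sharp $O(\sqrt{\log(1/\zeta)/n})$ control of $\|\nabla\mathcal{L}(\theta^*,\mathcal{D}_n)\|_2$. The remaining parts --- verifying the $r$-condition and simplifying $c,\eta,T$ --- are routine.
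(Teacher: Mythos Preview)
Your proposal is correct. The first half---instantiating Theorem~\ref{theorem:ERM_GLM_UP_General} with $q=2/5$, checking the $r$-condition via $C_3=\frac{4}{\Phi''(L_x\|\theta^*\|_2)\lambda_{\min}(\Sigma)}$, and simplifying $c,\eta,T$---matches the paper exactly. The bridging step, however, is genuinely different.

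The paper bounds $g=\min_{\theta\in\mathcal{C}}\mathcal{L}(\theta,\mathcal{D}_n)-\mathcal{L}(\hat\theta_B,\mathcal{D}_n)$ by an Intermediate Value Theorem argument: writing $\mathcal{A}(\lambda)=\mathcal{L}(\lambda\hat\theta_B,\mathcal{D}_n)$, it finds $\lambda_n\in[0,1]$ with $\mathcal{A}(\lambda_n)=\min_{\mathcal{C}}\mathcal{L}$, then argues by case analysis that $\|\lambda_n\hat\theta_B-\hat\theta_B\|_2\lesssim n^{-2/5}$, and finally controls $\|\nabla\mathcal{L}(\hat\theta_B,\mathcal{D}_n)\|_2$ via $\|\hat\theta_B-\theta^*\|_2+\|\nabla\mathcal{L}(\theta^*,\mathcal{D}_n)\|_2$. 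To bound $\|\hat\theta_B-\theta^*\|_2$, the paper invokes Lemma~\ref{lemma:M_Estim_Conv} (van der Vaart's $M$-estimator rate), which is where the non-explicit $T_\zeta,N_\zeta$ enter. Your route instead anchors at the fixed point $\theta^\sharp=\frac{D}{\|\theta^*\|_2}\theta^*$, splitting $g\le(\mathrm{I})+(\mathrm{II})$ and handling $(\mathrm{II})$ by the strong-convexity inequality $f(\theta^*)-f(\hat\theta_B)\le\|\nabla f(\theta^*)\|_2^2/(2\tau_l)$, which is valid since both points lie in $\mathbb{B}_2(\|\theta^*\|_2)$ where the empirical risk is strongly convex on the matrix-concentration event. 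This is more elementary---it replaces the $M$-estimator machinery by a one-line quadratic maximization---and yields explicit $\zeta$-dependence rather than abstract $T_\zeta,N_\zeta$. Your self-identified concern about $(\mathrm{II})$ is exactly right: bare convexity would give only $O(n^{-1/2})$, which is too crude, so the strong-convexity event is essential; but you have it, so there is no gap. Both approaches ultimately produce the same $n^{-4/5}$ rate for $g$, and the paper's extra generality (carrying an arbitrary $q<1/2$ and then optimizing) is not needed once the target exponent is known.
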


\begin{remark}
\label{remark:Best_q_ERM}
Lemma~\ref{lemma:NOPLconvDPFW} provides a bound in expectation, whereas Theorem~\ref{theorem:ERM_UP_C_Inc} provides a high-probability bound. We cannot use Lemma \ref{lemma:HP_TO_EXP} because the lower bound on $n$ in Theorem~\ref{theorem:ERM_UP_C_Inc} depends on $\zeta$.
%If we picked a particular $\zeta$ in terms of $n$, to get an upper bound on the expectation, by conditioning on the $good$ event with probability at least $1 - 3\zeta$ in Theorem~\ref{theorem:ERM_UP_C_Inc}. Since we do not know $T_\zeta$ and $N_\zeta$ explicitly, we cannot know how $T_\zeta$ and $N_\zeta$ will scale with $n$, so we cannot pass to a result in expectation in this fashion.
Ignoring the mismatch, we compare the convergence rates: The exponent of $\epsilon$ in Lemma~\ref{lemma:NOPLconvDPFW} is better, i.e., $\frac{2}{3}$, as opposed to $1$ in Theorem \ref{theorem:ERM_UP_C_Inc}. If we treat $\epsilon$ as an absolute constant and focus on the dependence of the rates on $n$, we indeed improve over the rate of $\frac{1}{n^{2/3}}$ obtained using Lemma~\ref{lemma:NOPLconvDPFW} from Talwar et al.~\cite{NOPL}.
On the other hand, note that in Lemma~\ref{lemma:NOPLconvDPFW}, there are no distributional assumptions on the data, whereas we assume a GLM in Theorem~\ref{theorem:ERM_UP_C_Inc}. Assuming such a model allows us to use an accelerated version of the Frank-Wolfe method. Additionally, we are able to leverage the strong convexity of the $\ell_2$-ball $\mathcal{C}$, while Lemma~\ref{lemma:NOPLconvDPFW} only assumes that the underlying set $\mathcal{C}$ is convex and bounded. %In summary, under a GLM and for $\mathcal{C}$ being an $\ell_2$-ball, we obtain a faster convergence rate of the excess empirical risk.
\end{remark}

%\begin{remark}
%We now reiterate more formally what we explained at the start of Section \ref{sec:Private Biased Parameter Estimation in GLMs: Upper and Lower Bounds for Excess Risk} regarding the number of iterations $T$. As we saw in the proof of Theorem \ref{theorem:ERM_UP_C_Inc}, $c \asymp 1 - \frac{1}{n^q}$. Thus, since $T = \log_{1/c}(n)$, we have that $T \asymp n^q\log(n)$. By choosing $q = \frac{2}{5}$ as explained in Remark \ref{remark:Best_q_ERM}, so that we optimize over $q < \frac{1}{2}$ and so that we get rate better than $\frac{1}{n^{2/3}}$ up to logarithmic factors, $T \asymp n^{2/5}\log(n)$, that is $T = \widetilde{\Theta}\left(n^{2/5}\right)$. In Lemma \ref{lemma:NOPLconvDPFW}, $T \asymp n^{2/3}$. So, in Theorem \ref{theorem:ERM_UP_C_Inc} and Remark \ref{remark:Best_q_ERM}, we get a smaller number of iterations asymptotically with $n$ that is required to obtain the concentration result on the excess empirical risk, i.e. \eqref{eq:Upper_ERM_GLM_Increasing_C}. Specifically, this is because $n^{2/3} \geq n^{2/5}\log(n)$ for $n$ larger than an absolute constant. So, we are able to get a better performance than the result in Lemma \ref{lemma:NOPLconvDPFW} in terms of the number of iterations required in the gradient method.
%\end{remark}

Moreover, we can further derive a bound on the parameter error from Theorem~\ref{theorem:ERM_UP_C_Inc}. This leads to the following result, proved in Appendix \ref{AppThmIterRate}:

\begin{theorem}
\label{theorem:Iter_Rate_Inc_C}
Consider the setup from Theorem \ref{theorem:ERM_UP_C_Inc} and suppose also that $\zeta \in (0, 1/4)$ and $n > C_4 \log(2p/\zeta)$.
%Then there are absolute constants $C'_1$, $C_1$, $C_2$, $C_3$, $C_4$, $T_\zeta, N_\zeta > 0$ such that for $n > \max\left\{C_2\log^5(2/\zeta), C_4\log(2p/\zeta), N_\zeta, C'_1\right\}$, $D \leq ||\theta^*||_2 - \frac{C_3}{n^{2/5}}$, and
%\begin{align*}
%r = \frac{1}{n^{2/5}} - %\sqrt{\frac{C_1\log(2/\zeta)}{n}},
%\end{align*}
With probability at least $1 - 4\zeta$, Algorithm \ref{alg:PrivFWERM} with $T = \widetilde{\Theta}\left(n^{2/5}\right)$ returns $\theta_T$ satisfying
%$T = \log_{1/c}(n) = \widetilde{\Theta}\left(n^{2/5}\right)$, where $c = \max\left\{\frac{1}{2}, 1 - \frac{\alpha_{\mathcal{C}}r}{8K_{\Phi''}L_x^2}\right\}$, we have
\begin{align*}
||\theta_T - \theta^*||_2 \lesssim \frac{T_\zeta\log(n)}{\sqrt{n}} + \frac{T_\zeta^{1/2}\log^{1/2}(n/\delta)\log^{1/4}(n)\log^{1/4}(n/\zeta)}{n^{2/5}\epsilon^{1/2}}.
\end{align*}
\end{theorem}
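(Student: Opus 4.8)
The plan is to convert the excess-empirical-risk bound from Theorem~\ref{theorem:ERM_UP_C_Inc} into a parameter-error bound by exploiting strong convexity of the empirical risk near $\theta^*$, together with a bound on the bias $\|\theta_* - \theta^*\|_2$ coming from the fact that $\mathcal{C} = \mathbb{B}_2(D)$ does not contain $\theta^*$. Write $\theta_* = \arg\min_{\theta \in \mathcal{C}} \mathcal{L}(\theta, \mathcal{D}_n)$ and decompose
\begin{align*}
\|\theta_T - \theta^*\|_2 \leq \|\theta_T - \theta_*\|_2 + \|\theta_* - \theta^*\|_2.
\end{align*}
For the first term, I would show that on a high-probability event the empirical risk $\mathcal{L}(\cdot, \mathcal{D}_n)$ is $\mu$-strongly convex on a neighborhood containing $\mathcal{C}$ for some absolute constant $\mu > 0$; this follows from Lemma~\ref{lemma:GLM_prelim_lem} applied at the population level ($\mathcal{R}$ is $\Phi''(L_x K_B)\lambda_{\min}(\Sigma)$-strongly convex) plus a uniform concentration of the empirical Hessian $\frac{1}{n}\sum_i \Phi''(x_i^T\theta) x_i x_i^T$ around $\nabla^2\mathcal{R}(\theta)$, valid for $n \gtrsim \log(2p/\zeta)$ — this is exactly where the extra hypothesis $n > C_4\log(2p/\zeta)$ and the extra $\zeta$-budget (raising $1-3\zeta$ to $1-4\zeta$) enter. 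Strong convexity together with $\nabla\mathcal{L}(\theta_*, \mathcal{D}_n)^T(\theta_T - \theta_*) \geq 0$ (first-order optimality of $\theta_*$ over the convex set $\mathcal{C}$, using $\theta_T \in \mathcal{C}$) gives the standard quadratic-growth inequality $\frac{\mu}{2}\|\theta_T - \theta_*\|_2^2 \leq \mathcal{L}(\theta_T, \mathcal{D}_n) - \mathcal{L}(\theta_*, \mathcal{D}_n)$, so $\|\theta_T - \theta_*\|_2 \lesssim \sqrt{\mathcal{L}(\theta_T, \mathcal{D}_n) - \min_{\theta\in\mathcal{C}}\mathcal{L}(\theta,\mathcal{D}_n)}$.

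Next I would handle the relationship between $\min_{\theta \in \mathcal{C}}\mathcal{L}$ and $\min_{\theta \in \mathbb{B}_2(\|\theta^*\|_2)}\mathcal{L}$, since Theorem~\ref{theorem:ERM_UP_C_Inc} is stated with the larger ball in the comparator. Because $\|\theta^*\|_2 - D \lesssim n^{-2/5}$ and $\mathcal{L}(\cdot,\mathcal{D}_n)$ is smooth and Lipschitz on this bounded region, shrinking the feasible ball from radius $\|\theta^*\|_2$ to radius $D$ changes the optimal value by at most $O(n^{-2/5})$ — in fact a cleaner route is to bound directly $\mathcal{L}(\theta_T,\mathcal{D}_n) - \mathcal{L}(\theta_*,\mathcal{D}_n) \leq \mathcal{L}(\theta_T,\mathcal{D}_n) - \min_{\theta\in\mathbb{B}_2(\|\theta^*\|_2)}\mathcal{L}(\theta,\mathcal{D}_n)$, which is immediate since $\mathbb{B}_2(D) \subseteq \mathbb{B}_2(\|\theta^*\|_2)$. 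Taking square roots of the right-hand side of \eqref{eq:Upper_ERM_GLM_Increasing_C} yields
\begin{align*}
\|\theta_T - \theta_*\|_2 \lesssim \frac{T_\zeta^{1/2}\log^{1/2}(n/\delta)\log^{1/4}(n)\log^{1/4}(n/\zeta)}{n^{2/5}\epsilon^{1/2}},
\end{align*}
which is the second term in the claimed bound (up to adjusting $T_\zeta$).

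For the bias term $\|\theta_* - \theta^*\|_2$, I would argue as follows: $\theta^*$ is the unique minimizer of the population risk $\mathcal{R}$ over $\mathbb{R}^p$ with $\nabla\mathcal{R}(\theta^*)=0$, and $\mathcal{R}$ is strongly convex, so $\|\theta_* - \theta^*\|_2 \lesssim \|\nabla\mathcal{R}(\theta_*)\|_2$. Writing $\theta_*^{\mathcal{R}} = \arg\min_{\theta\in\mathcal{C}}\mathcal{R}(\theta)$, a projection/first-order-optimality argument plus $\|\theta^*\|_2 - D \lesssim n^{-2/5}$ gives $\|\theta_*^{\mathcal{R}} - \theta^*\|_2 \lesssim n^{-2/5}$ (the constrained population minimizer is essentially the radial projection of $\theta^*$ onto the ball). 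Then I would pass from $\theta_*^{\mathcal{R}}$ to the empirical constrained minimizer $\theta_*$ using uniform concentration of $\nabla\mathcal{L}(\cdot,\mathcal{D}_n)$ around $\nabla\mathcal{R}(\cdot)$ over $\mathcal{C}$, which contributes the $O(\log(n)/\sqrt{n})$ statistical term — bounded-covariate GLM gradients are sub-Gaussian, so a covering argument over the bounded set $\mathcal{C}$ gives deviation $O(\sqrt{\log(p/\zeta)/n})$, again needing $n > C_4\log(2p/\zeta)$. Combining, $\|\theta_* - \theta^*\|_2 \lesssim n^{-2/5} + T_\zeta\log(n)/\sqrt{n}$; the first summand is dominated by the second term of the final bound already obtained, and the second summand is the first term $\frac{T_\zeta\log(n)}{\sqrt n}$ in the statement.

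The main obstacle I anticipate is making the two "empirical vs.\ population" transfers quantitatively clean while keeping all the logarithmic and $T_\zeta$ factors consistent with the stated bound: one needs uniform-over-$\mathcal{C}$ control of both the empirical Hessian (for strong convexity of $\mathcal{L}$, hence for the quadratic-growth step) and the empirical gradient (for the bias term), each with the right dependence on $p$, $\zeta$, and $n$, and one must verify that the $n^{-2/5}$ bias contributions are genuinely absorbed into the already-present $n^{-2/5}/\epsilon^{1/2}$ and $\log(n)/\sqrt n$ terms rather than producing a stray $n^{-2/5}$ without the $\epsilon^{-1/2}$ factor (which, since $\epsilon \leq 0.9$, is harmless — $n^{-2/5} \leq n^{-2/5}\epsilon^{-1/2}$ up to a constant). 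Everything else is a routine combination of strong convexity, first-order optimality on a convex constraint set, and standard concentration for bounded GLM data.
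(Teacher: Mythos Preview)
Your approach is correct and close in spirit to the paper's, but differs in one structural choice that makes the paper's argument shorter. You decompose through $\theta_* := \arg\min_{\theta\in\mathcal{C}}\mathcal{L}(\theta,\mathcal{D}_n)$ and then handle the bias $\|\theta_*-\theta^*\|_2$ via the population constrained minimizer $\theta_*^{\mathcal{R}}$ plus uniform gradient concentration. The paper instead decomposes through $\theta_{B,n} := \arg\min_{\theta\in\mathbb{B}_2(\|\theta^*\|_2)}\mathcal{L}(\theta,\mathcal{D}_n)$, the empirical minimizer over the \emph{larger} ball. The point is that the bound $\|\theta_{B,n}-\theta^*\|_2 \le T_\zeta\log(n)/\sqrt{n}$ was already established inside the proof of Theorem~\ref{theorem:ERM_UP_C_Inc} via the $M$-estimation rate (Lemma~\ref{lemma:M_Estim_Conv}), and is part of the event $\Omega''$ carrying the $1-3\zeta$ probability there. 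So the paper's proof of Theorem~\ref{theorem:Iter_Rate_Inc_C} is essentially: add the single new event $\Omega_6$ (matrix concentration via Lemma~\ref{lemma:Concxx^T}, giving empirical strong convexity on $\mathbb{B}_2(\|\theta^*\|_2)$ and using the hypothesis $n>C_4\log(2p/\zeta)$), deduce $\|\theta_T-\theta_{B,n}\|_2^2\lesssim \mathcal{L}(\theta_T,\mathcal{D}_n)-\mathcal{L}(\theta_{B,n},\mathcal{D}_n)$, and triangulate. This avoids your bias step $\theta_*\to\theta_*^{\mathcal{R}}\to\theta^*$ and the attendant $n^{-2/5}$ leftover you have to absorb. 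Your route works (the uniform gradient deviation event is already part of $\Omega''$, so your budget accounting is fine), and arguably yields a first term without the $T_\zeta$ factor; the paper's route is just a cleaner reuse of what was proved upstream.
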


\begin{remark}
\label{remark:Compare_Iter_Inc_C_to_Cost_Priv}
The rate for $||\theta_T - \theta^*||_2$ in Theorem \ref{theorem:Iter_Rate_Inc_C} is $\widetilde{O}\left(\frac{1}{\sqrt{n}} + \frac{1}{n^{2/5}\sqrt{\epsilon}}\right)$. In \cite{Cai_Cost_GLM}, the minimax rate in terms of $n$ and $0 < \epsilon \lesssim 1$ is suggested to be $\frac{1}{\sqrt{n}} + \frac{1}{n\epsilon}$, provided $||\theta_T - \theta^*||_2$ stays bounded for all $n$ large enough, and optimization occurs over the whole of $\mathbb{R}^p$. There is a small discrepancy in \cite{Cai_Cost_GLM} for the upper and lower bounds: Their upper bound holds with probability at least $1 - c_1e^{-c_2n} - c_1e^{-c_2p} - c_1e^{-c_2\log(n)}$, for absolute constants $c_1, c_2 > 0$, while the lower bound is for the expected error. Disregarding these differences and treating $\epsilon$ as a constant, this leads to a rate of $\frac{1}{\sqrt{n}}$, up to log factors, which is achieved by Theorem \ref{theorem:Iter_Rate_Inc_C}. 

Note that if we want to beat the cost of privacy term in \cite{Cai_Cost_GLM}, we need to pick $\epsilon < \widetilde{O}\left(\frac{1}{n^{6/5}}\right)$. However, the upper bounds on $||\theta_T - \theta^*||_2$ in Theorem \ref{theorem:Iter_Rate_Inc_C} and \cite{Cai_Cost_GLM} blow up to infinity as $n \rightarrow \infty$ and are therefore not useful. %Moreover, this does not contradict the minimax optimality intuitively. If we assume that $||\theta_T - \theta^*||_2$ stays bounded, with $\theta_T$ as in Theorem \ref{theorem:Iter_Rate_Inc_C}, then the minimax lower bound from \cite{Cai_Cost_GLM} for $\theta_T$ becomes $\Omega\left(n^{1/5}\right)$, contradicting the boundedness on $||\theta_T - \theta^*||_2$. This is an intuitive statement, because again, one has to take into account the fact that the lower bound in \cite{Cai_Cost_GLM} is on the expected error and that the optimization occurs over $\mathbb{R}^p$. Hence,
It remains an open question to write the upper bound for an expected value, or the lower bound on an event with high probability.

On the other hand, note that Theorem \ref{theorem:Iter_Rate_Inc_C} holds with probability at least $1-4\zeta$ for any $\zeta\in(0,1/4)$ fixed at the beginning, while the probabilistic guarantee in \cite{Cai_Cost_GLM} cannot be made arbitrarily close to 1, regardless of $n$, if $p$ is fixed. Moreover, \cite{Cai_Cost_GLM} requires the initialization $\theta_0$ to lie in an $\ell_2$-ball of radius 3 around the minimizer of $\mathcal{L}(\cdot, \mathcal{D}_n)$, whereas Theorem \ref{theorem:Iter_Rate_Inc_C} holds for any $\theta_0 \in \mathcal{C}$ (we may thus choose $\theta_0 = 0$).
%This initialization issue appears to stem from optimizing over $\mathbb{R}^p$, whereas our analysis is over a bounded set. 
\end{remark}

Similar to the result of Theorem \ref{theorem:Iter_Rate_Inc_C}, we can derive a bound on the iterates for the non-accelerated Frank-Wolfe method, using the version of Lemma \ref{lemma:NOPLconvDPFW} from \cite{NOPL}, with high probability instead of expectation. The proof of the following result is in Appendix \ref{AppPropIter}:

\begin{prop}
\label{prop:Iter_Rate_Inc_C_Non}
Let $\mathcal{E} = \mathbb{B}_2(L_x)\times [-K_y, K_y]$, with $K_y, L_x \asymp 1$. Let $0 < \epsilon \lesssim 1$ and $\delta \in (0, 1)$. Consider the GLM setting from Section \ref{sec:Generalized Linear Models (GLM)}, with $|y_i| \leq K_y$ and $||x_i||_2 \leq L_x$, for all $i \in [n]$. Let $\zeta \in (0, 1/3)$, $L_2 = (K_{\Phi'} + K_y)L_x$ and $\beta_{\mathcal{L}} = K_{\Phi''}L_x^2$. Then there are absolute constants $C_1, C_2$, $T_\zeta, N_\zeta > 0$ such that for $n > \max\left\{C_1\log(2p/\zeta), C_2, N_\zeta\right\}$, Algorithm \ref{alg:PrivNon-accERM} with $T \asymp (n\epsilon)^{2/3}$ returns $\theta_T$ which is $(\epsilon, \delta)$-DP, and with probability at least $1 - 3\zeta$, we have 
\begin{align*}
||\theta_T - \theta^*||_2 = \widetilde{O}\left(\frac{T_\zeta}{\sqrt{n}} + \frac{\log^{1/2}(n\epsilon/\zeta)}{(n\epsilon)^{1/3}}\right).
\end{align*}
\end{prop}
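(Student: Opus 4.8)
The plan is to run the non-accelerated noisy Frank–Wolfe algorithm (Algorithm~\ref{alg:PrivNon-accERM}) over the fixed $\ell_2$-ball $\mathcal{C} = \mathbb{B}_2(\|\theta^*\|_2)$, so that $\theta^* \in \mathcal{C}$ and $\theta^*$ is the global minimizer of the population risk. First I would invoke the high-probability version of Lemma~\ref{lemma:NOPLconvDPFW} (the utility bound for Algorithm~\ref{alg:PrivNon-accERM} from \cite{NOPL}) to control the \emph{excess empirical risk} $\mathcal{L}(\theta_T,\mathcal{D}_n) - \min_{\theta\in\mathcal{C}}\mathcal{L}(\theta,\mathcal{D}_n)$: with $T \asymp (n\epsilon)^{2/3}$ and the Gaussian-width and curvature quantities for the $\ell_2$-ball ($G_\mathcal{C}\asymp\sqrt p$, $\Gamma_\mathcal{L}\asymp 1$ under $p\asymp 1$, $|y_i|,\|x_i\|_2\lesssim 1$), this gives a bound of order $\widetilde O\big((n\epsilon)^{-2/3}\big)$ on the excess empirical risk, holding with probability $\geq 1-\zeta$. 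The privacy claim is immediate from the $(\epsilon,\delta)$-DP guarantee of Algorithm~\ref{alg:PrivNon-accERM} (via Corollary~\ref{cor:ADV_COMP_0.9}) for $0<\epsilon\lesssim 1$.

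Next I would convert the excess \emph{empirical} risk bound into a parameter-error bound. Since $\mathcal{L}(\cdot,\mathcal{D}_n)$ need not be strongly convex (its Hessian is $\frac1n\sum \Phi''(x_i^T\theta)x_ix_i^T$, which is only $\succeq 0$ in general), I would instead pass through the \emph{population} risk $\mathcal{R}$, which is $\Phi''(L_x K_B)\lambda_{\min}(\Sigma)$-strongly convex on $\mathcal{C}$ by Lemma~\ref{lemma:GLM_prelim_lem}, with $\nabla\mathcal{R}(\theta^*)=0$. Strong convexity then yields $\|\theta_T-\theta^*\|_2^2 \lesssim \mathcal{R}(\theta_T) - \mathcal{R}(\theta^*)$. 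The remaining task is to bound $\mathcal{R}(\theta_T) - \mathcal{R}(\theta^*)$ by (i) the excess empirical risk just controlled, plus (ii) a uniform deviation term $\sup_{\theta\in\mathcal{C}}|\mathcal{R}(\theta) - \mathcal{L}(\theta,\mathcal{D}_n) - (\mathcal{R}(\theta^*) - \mathcal{L}(\theta^*,\mathcal{D}_n))|$, plus the fact that $\mathcal{L}(\theta^*,\mathcal{D}_n) - \min_{\theta\in\mathcal C}\mathcal{L}(\theta,\mathcal{D}_n)\ge 0$. For (ii) I would use a concentration/empirical-process argument over the compact ball $\mathcal{C}$ with $p\asymp 1$: bound the empirical risk gradient deviation $\sup_{\theta\in\mathcal{C}}\|\nabla\mathcal{L}(\theta,\mathcal{D}_n) - \nabla\mathcal{R}(\theta)\|_2$ via a covering-number/Hoeffding argument using $\|\nabla\mathcal{L}(\theta,(x,y))\|_2 = \|(\Phi'(x^T\theta)-y)x\|_2 \le (K_{\Phi'}+K_y)L_x$ bounded, and integrate along the segment from $\theta^*$ to $\theta_T$; this is where the $n > C_1\log(2p/\zeta)$ condition and the term $\widetilde O(1/\sqrt n)$ enter, holding with probability $\ge 1-\zeta$. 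Combining with a union bound over the (at most three) high-probability events gives probability $\ge 1-3\zeta$ and
\[
\|\theta_T-\theta^*\|_2 \lesssim \sqrt{\widetilde O\!\big((n\epsilon)^{-2/3}\big) + \widetilde O(n^{-1/2})} = \widetilde O\!\Big(\tfrac{1}{\sqrt n} + \tfrac{1}{(n\epsilon)^{1/3}}\Big),
\]
matching the claimed rate (the $\sqrt{\cdot}$ distributes over the sum up to constants).

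I expect the main obstacle to be step (ii): since $\mathcal{L}(\cdot,\mathcal{D}_n)$ lacks strong convexity, the passage from function-value suboptimality to parameter error is not direct, and one must carefully route through $\mathcal{R}$ while ensuring all cross terms ($\mathcal{L}$ vs.\ $\mathcal{R}$ evaluated at the \emph{random} iterate $\theta_T$) are handled by a \emph{uniform} deviation bound rather than a pointwise one — the iterate $\theta_T$ depends on the data and on the privacy noise, so we cannot condition it away. The compactness of $\mathcal{C}$ and $p\asymp 1$ make the covering argument routine, but getting the logarithmic factors and the $\zeta$-dependence of $N_\zeta$, $T_\zeta$ to line up with the statement requires care. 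A secondary subtlety is that the high-probability form of Lemma~\ref{lemma:NOPLconvDPFW} must be invoked with constants absorbed into $T_\zeta$, consistent with how Theorem~\ref{theorem:Iter_Rate_Inc_C} was proved for the accelerated version.
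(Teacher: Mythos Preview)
Your overall strategy---route the excess empirical risk through the population risk $\mathcal{R}$ and exploit its strong convexity (Lemma~\ref{lemma:GLM_prelim_lem})---is a legitimate alternative to what the paper does, but the final displayed step is incorrect as written. From
\[
\|\theta_T-\theta^*\|_2^2 \;\lesssim\; \mathcal{R}(\theta_T)-\mathcal{R}(\theta^*) \;\lesssim\; \widetilde O\!\big((n\epsilon)^{-2/3}\big) \;+\; \widetilde O(n^{-1/2})
\]
you do \emph{not} get $\|\theta_T-\theta^*\|_2 = \widetilde O\big(n^{-1/2}+(n\epsilon)^{-1/3}\big)$: taking square roots gives $\widetilde O\big(n^{-1/4}+(n\epsilon)^{-1/3}\big)$, which is strictly weaker in the statistical term. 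The fix is already implicit in your own description: the mean-value step along the segment gives $|U_n(\theta_T)-U_n(\theta^*)| \le \sup_{\theta\in\mathcal C}\|\nabla U_n(\theta)\|_2\cdot\|\theta_T-\theta^*\|_2$, so the deviation term carries a factor $\|\theta_T-\theta^*\|_2$ rather than being an additive $\widetilde O(n^{-1/2})$. Keeping that factor yields the self-bounding inequality $u^2 \lesssim (n\epsilon)^{-2/3} + n^{-1/2}\,u$ with $u=\|\theta_T-\theta^*\|_2$, whose solution is $u \lesssim n^{-1/2}+(n\epsilon)^{-1/3}$, the claimed rate. You should make this quadratic step explicit.

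For comparison, the paper takes a different route that avoids the population risk entirely: it first shows that the \emph{empirical} loss $\mathcal{L}(\cdot,\mathcal{D}_n)$ is itself strongly convex on $\mathbb{B}_2(\|\theta^*\|_2)$ with high probability, via matrix concentration of $\tfrac{1}{n}\sum_i x_ix_i^T$ around $\Sigma$ (this is where the condition $n>C_1\log(2p/\zeta)$ actually enters). Empirical strong convexity converts the excess empirical risk directly into $\|\theta_T-\theta_{B,n}\|_2 = \widetilde O\big((n\epsilon)^{-1/3}\big)$, where $\theta_{B,n}$ is the empirical minimizer over the ball; then $\|\theta_{B,n}-\theta^*\|_2 = O_{\mathbb P}\big(\log(n)/\sqrt n\big)$ is obtained from M-estimator convergence (Lemma~\ref{lemma:M_Estim_Conv}), and the triangle inequality closes the argument. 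The constants $T_\zeta,N_\zeta$ in the statement arise precisely from this $O_{\mathbb P}$ step, not from the Frank--Wolfe bound as you suggest; your route, once corrected, would instead produce explicit $\sqrt{\log(1/\zeta)}$-type dependence, which is fine but does not literally match the form of the statement.
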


\begin{remark}
%Note that the rate for $||\theta_T - \theta^*||$ in Proposition \ref{prop:Iter_Rate_Inc_C_Non} is $\widetilde{O}\left(\frac{1}{\sqrt{n}} + \frac{1}{(n\epsilon)^{1/3}}\right)$, using $T \asymp (n\epsilon)^{2/3}$ iterations.
As in Remark \ref{remark:Compare_Iter_Inc_C_to_Cost_Priv}, the results of \cite{Cai_Cost_GLM} suggest that the statistical rate of $\frac{1}{\sqrt{n}}$ appearing in Proposition~\ref{prop:Iter_Rate_Inc_C_Non} is optimal, whereas the cost of privacy term $\frac{1}{(n\epsilon)^{1/3}}$ is not. Note that the benefit of acceleration can be observed in the iteration count ($T = \widetilde{\Theta}\left(n^{2/5}\right)$ in Theorem \ref{theorem:Iter_Rate_Inc_C} vs.\ $T \asymp (n\epsilon)^{2/3}$ in Proposition \ref{prop:Iter_Rate_Inc_C_Non}) and in the cost of privacy term ($\frac{1}{n^{2/5}\epsilon^{1/2}}$ in Theorem \ref{theorem:Iter_Rate_Inc_C} vs.\ $\frac{1}{(n\epsilon)^{1/3}}$ in Proposition \ref{prop:Iter_Rate_Inc_C_Non}). Thus, as before, acceleration is useful for the reduction of the iteration count and the lower variance of the noise required for privacy.
%These, consequently, provide better statistical guarantees for estimation, while the privacy guarantee is the same as in the non-accelerated setting.
\end{remark}

%%%%%

\subsubsection{Accelerated Frank-Wolfe with Fixed $\mathcal{C}$}
\label{sec:Fixed C and the Lower Bound}

We now consider the setting where the radius of $\mathcal{C}$ is independent of $n$. Rather than targeting $\theta^*$, we will seek to bound the excess empirical risk. The proof of the following result is provided in Appendix \ref{ThmERMGLM}:

\begin{theorem}
\label{theorem:ERM_GLM_UP}
Let $\mathcal{E} = \mathbb{B}_2(L_x)\times [-K_y, K_y]$, with $K_y, L_x \asymp 1$. Suppose $\mathcal{C} = \mathbb{B}_{2}\left(D\right)$, with $||\theta^*||_2 - D \asymp 1$. Suppose $\theta^* \in \mathbb{R}^p \setminus \mathcal{C}$. Set $\alpha_{\mathcal{C}} = \frac{1}{D}$ so that $\mathcal{C}$ is $\alpha_{\mathcal{C}}$-strongly convex. Consider the GLM setting from Section \ref{sec:Generalized Linear Models (GLM)}, with $|y_i| \leq K_y$ and $||x_i||_2 \leq L_x$, for all $i \in [n]$. Let $0 < \epsilon \le 0.9$ and $\delta \in (0, 1)$. Let $L_2 = (K_{\Phi'} + K_y)L_x$ and $\beta_{\mathcal{L}} = K_{\Phi''}L_x^2$. Then there are absolute constants $C'_1, C_1$, $C_2 > 0$, such that, for $n > \max\left\{\left(\frac{\sqrt{C_1\log(2n)} + 1}{C_2}\right)^{4}, C'_1\right\}$ and $r \in\left(\frac{C_2}{2}, C_2\right]$, Algorithm \ref{alg:PrivFWERM} with $T \asymp \log n$
%$T = \log_{1/c}(n) \asymp \log(n)$
returns $\theta_T$ which is $(\epsilon, \delta$)-DP and satisfies
\begin{align*}
\mathbb{E}\left[\mathcal{L}(\theta_T, \mathcal{D}_n) - \mathop{\min}\limits_{\theta \in \mathcal{C}}\mathcal{L}(\theta, \mathcal{D}_n)\right] \lesssim \frac{\log\left(\log(n)/\delta\right)\sqrt{\log(n)\log\left(n\log(n)\right)}}{n\epsilon}.
\end{align*}
\end{theorem}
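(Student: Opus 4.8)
The plan is to obtain Theorem~\ref{theorem:ERM_GLM_UP} as a corollary of the general high-probability bound in Theorem~\ref{theorem:ERM_GLM_UP_General}, followed by a high-probability-to-expectation conversion. The feature of the fixed-$\mathcal{C}$ regime that makes this work -- and that is unavailable in the increasing-$\mathcal{C}$ regime of Theorem~\ref{theorem:ERM_UP_C_Inc} -- is that, since $\|\theta^*\|_2 - D \asymp 1$ while all the GLM constants ($K_{\Phi'}$, $K_{\Phi''}$, $\lambda_{\min}(\Sigma)$, $\lambda_{\max}(\Sigma)$, $K_y$, $L_x$) are absolute, the quantity $\rho := \frac{\Phi''(L_x\|\theta^*\|_2)\lambda_{\min}(\Sigma)}{2}(\|\theta^*\|_2 - D)$ governing the admissible gradient lower bound in Theorem~\ref{theorem:ERM_GLM_UP_General} is itself a positive absolute constant. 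Accordingly, I would take $r$ to be an absolute constant (this is the role of the interval $(C_2/2, C_2]$ in the statement) and fix a small exponent $q \in (0, 1/2)$, with $r$ chosen to sit below $\rho$ with a fixed margin, so that the contraction factor $c = \max\{\tfrac12,\, 1 - \alpha_{\mathcal{C}} r/(8K_{\Phi''}L_x^2)\}$ lands in a fixed subinterval of $[\tfrac12, 1)$, making $\eta = \min\{1,\, \alpha_{\mathcal{C}} r/(4K_{\Phi''}L_x^2)\}$ and $\eta/(1-c)$ absolute constants and $T = \log_{1/c}(n) \asymp \log n$.

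The first step is to check admissibility of this choice in Theorem~\ref{theorem:ERM_GLM_UP_General} simultaneously for every confidence level $\zeta$ down to $\zeta \asymp 1/n$: since $\log(2/\zeta) \le \log(2n)$ over that range, it suffices that $\sqrt{C_1\log(2n)/n} + n^{-q} \le \rho - r$, which holds once $n$ exceeds a threshold depending only on $C_1$, $q$, and $\rho$ -- crucially, not on $\zeta$ -- and this is precisely the lower bound on $n$ in the statement. The $(\epsilon,\delta)$-DP guarantee is inherited from Lemma~\ref{lemma:ACC_PRIV_FW_STEP}, whose hypotheses ($\epsilon \le 0.9$, $\delta \in (0,1)$, $\epsilon < 2\sqrt{2T\log(2/\delta)}$, $\delta < 2T$) hold since $T \gtrsim 1$.

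Next I would instantiate Theorem~\ref{theorem:ERM_GLM_UP_General} at $\zeta \asymp 1/n$. Using $\log_{1/c}(n) \asymp \log n$, this turns $\log(\log_{1/c}(n)/\zeta)$ into $\asymp \log(n\log n)$ and $\log(\log_{1/c}(n)/\delta)$ into $\asymp \log(\log(n)/\delta)$, so that with probability at least $1 - \zeta$,
\begin{align*}
\mathcal{L}(\theta_T, \mathcal{D}_n) - \mathop{\min}\limits_{\theta \in \mathcal{C}}\mathcal{L}(\theta, \mathcal{D}_n) \lesssim \frac{1}{n} + \frac{\log(\log(n)/\delta)\sqrt{\log(n)\log(n\log(n))}}{n\epsilon},
\end{align*}
with the additive $1/n$ dominated by the second term since $\epsilon \le 0.9$ and the logarithmic factors exceed one. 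This already has the form of the claimed bound.

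It remains to pass from high probability to expectation, which I would do via Lemma~\ref{lemma:HP_TO_EXP} -- applicable here, unlike in Theorem~\ref{theorem:ERM_UP_C_Inc}, exactly because the lower bound on $n$ does not depend on $\zeta$. The only input needed is a crude uniform upper bound on the excess empirical risk: the iterates of Algorithm~\ref{alg:PrivFWERM} stay in the convex set $\mathcal{C} = \mathbb{B}_2(D)$ and the minimizer lies in $\mathcal{C}$, while for any $\theta \in \mathcal{C}$ and $(x,y) \in \mathcal{E}$ we have $|\mathcal{L}(\theta,(x,y))| = |{-}yx^T\theta + \Phi(x^T\theta)| \le K_yL_xD + |\Phi(0)| + K_{\Phi'}L_xD$, an absolute constant, so the excess empirical risk never exceeds some absolute constant $M$. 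Splitting $\mathbb{E}[\,\cdot\,]$ over the success event and its complement (of probability at most $\zeta \asymp 1/n$) then bounds the expectation by the high-probability bound plus $M\zeta$, and $M\zeta \lesssim 1/n$ is absorbed as before. I expect the only genuinely delicate point to be the admissibility check in the first step -- certifying that a single absolute-constant value of $r$ satisfies the hypothesis of Theorem~\ref{theorem:ERM_GLM_UP_General} uniformly over the range of $\zeta$ required by the conversion, with an $n$-threshold free of $\zeta$; the remainder is bookkeeping with the logarithmic factors and the uniform boundedness of the GLM loss.
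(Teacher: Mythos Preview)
Your proposal is correct and follows the paper's proof essentially step for step: both invoke Theorem~\ref{theorem:ERM_GLM_UP_General} with $r$ an absolute constant (the paper takes $q = 1/4$ and $C_2 = \rho/2$ in your notation), verify the admissibility of $r$ for $\zeta$ down to $1/n$ via a $\zeta$-free threshold on $n$, instantiate the high-probability bound at $\zeta = 1/n$, and convert to expectation by splitting over the success event and bounding the excess risk uniformly on the complement (the paper uses $L_2\|\mathcal{C}\|_2$ via the Lipschitz property, you via direct boundedness of the loss---equivalent here).

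One small correction: the conversion you describe is \emph{not} Lemma~\ref{lemma:HP_TO_EXP}. That lemma integrates the tail bound over all $\zeta \in (0,1)$ and needs the high-probability statement to hold uniformly in $\zeta$, which fails here since the $n$-threshold from Theorem~\ref{theorem:ERM_GLM_UP_General} blows up as $\zeta \to 0$. What you actually spell out---fix $\zeta \asymp 1/n$, split the expectation, and use the uniform bound on the complement---is a different (and here, the correct) argument, and it is exactly what the paper does.
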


\begin{remark}
\label{remark:More_General_C_ERM}
Since Theorem \ref{theorem:ERM_GLM_UP_General} does not target the true parameter $\theta^*$, the corresponding bias will not decrease to $0$ as $n \rightarrow \infty$. However, if we knew $||\theta^*||_2$, we could choose $\mathcal{C} = \mathbb{B}_2(D)$ with $D$ arbitrarily close to $||\theta^*||_2$.
Additionally, we can compare this result to the non-accelerated Frank-Wolfe result (cf.\ Lemma \ref{lemma:NOPLconvDPFW}). Compared to their rate of $\widetilde{O}\left(\frac{1}{(n\epsilon)^{2/3}}\right)$, with iteration count $T = \widetilde{O}\left((n\epsilon)^{2/3}\right)$, our  result in Theorem \ref{theorem:ERM_GLM_UP_General} achieves a rate $\widetilde{O}\left(\frac{1}{n\epsilon}\right)$ with iteration count $T \asymp \log(n)$. Hence, the accelerated Frank-Wolfe approach produces both a better rate and better iteration complexity. Both the non-accelerated and accelerated methods use all $n$ gradients of the data at each iteration step of the Frank-Wolfe procedure, so acceleration provides a better gradient complexity, as well.
\end{remark}

\subsection{Heavy-Tailed Robust Estimation in Linear Models}\label{sec:Biased Parameter Estimation HT}

We now shift from privacy to robustness. We examine the linear model from Section \ref{sec:Linear Regression}. Our method is heavy-tailed robust because we only assume that $\mathbb{E}[w^2] < \infty$ and $x$ has bounded fourth moments. The strong convexity of the squared error risk is guaranteed if $\lambda_{\min}(\Sigma) > 0$. To improve performance in ill-conditioned settings, we incorporate acceleration and introduce a regularizer to ensure strong convexity. This is analogous to the method in Section \ref{sec:Private Biased Parameter Estimation in GLMs: Upper and Lower Bounds for Excess Risk} of optimizing over an expanding $\ell_2$-ball centered at $0$. Hence, we split the analysis in two sections: Section \ref{sec:Well_Cond_Sec_FW} for the well-conditioned case and Section \ref{sec:Ill_Cond_Sec_FW} for the ill-conditioned scenario.
For the purpose of this section, we treat $p$ and $||\theta^*||_2$ as absolute constants. 
In the ill-conditioned case, we will assume that $\Sigma$ has $m$ non-zero eigenvalues.
For completeness, we will also analyze the alternative approach of projected gradient descent in Appendix \ref{sec:Projected Gradient Descent for l=0}.

%%%%%

\subsubsection{Approximate Gradient Estimators}

Let us now discuss the setup, based on \cite{RE}. Robust gradient estimators naturally trade off with accurately estimating $\theta_{*}$; the iterates may not converge exactly to $\theta_{*}$ as iterations increase. We aim to control the deviation of the estimators from the true gradients, and the next definition makes this precise:

\begin{definition}[Adapted from \cite{RE}]
\label{def:ge}
For \iid samples $\mathcal{D}_n = \{z_i\}_{i = 1}^n$ and a differentiable risk $\mathcal{R}(\theta)$ minimized at $\theta_*$, a function $g(\theta, \mathcal{D}_n, \zeta)$ is a \emph{gradient estimator} if there are functions $\alpha$ and $\beta$ such that at any fixed $\theta \in \mathcal{C}$, with probability at least $1 - \zeta$, we have
\begin{align*}
||g(\theta, \mathcal{D}_n, \zeta) - \nabla\mathcal{R}(\theta)||_2 \leq \alpha(n, \zeta)||\theta - \theta_*||_2 + \beta(n, \zeta).
\end{align*}
If $\mathcal{R}(\theta)$ is $\tau_l$-strongly convex and $\alpha(n, \zeta) < \frac{\tau_l}{2}$, we call $g$ \emph{stable}.
\end{definition}

%Notice that we control the error produced by our gradient estimator in terms of the proximity to the minimizer $\theta_*$, characterized by $\alpha$ and $\beta$. Also, \cite{RE} introduce the definition of a gradient estimator using $\theta^*$ directly since in their analysis, the minimizer is always $\theta^*$. In the setting of the $\ell_2$- regularized linear regression from Example~\ref{sec:LR_l2_Reg}, $\theta^*$ is not the minimizer. We now have a regularity condition:

%\begin{definition}
%\label{def:stab}
%For \iid samples $\mathcal{D}_n = \{z_i\}_{i = 1}^n$ and a differentiable risk function $\mathcal{R}(\theta)$ , a gradient estimator $g(\theta, \mathcal{D}_n, \zeta)$ as per Definition \ref{def:ge} is called \emph{stable} if 
%\end{definition}

%Note that \cite{RE} only asks for $\alpha(n, \zeta) < \tau_l$, but as we will see, our condition will allow us to control some bounds better, and asking for $\tau_l/2$ instead of $\tau_l$ will not change the scaling with $n$ and $p$. 

As in \cite{RE}, we consider a geometric median of means ($G_{MOM}$) gradient estimator, described in Algorithm~\ref{alg:HTGE}. We could, in principle, look at a noisy version of this, so that we can achieve privacy, as well:
%However, for the purpose of this section only, we look at it with the absence of the Gaussian noise required for privacy. This is because we wish to use the squared error loss, so that we can obtain a closed-form solution of the optimization of the $\ell_2$-regularized version. Since the loss is not Lipschitz, we cannot use the full private version in Algorithm \ref{alg:HTGE}. If we wanted to look at privacy in this case as well, we would need a Lipschitz loss, such as a Huber loss, but then we do not have a closed-form solution of the optimization of the $\ell_2$-regularized version necessarily, as needed in our analysis later on in this section.
We could use a Lipschitz loss (such as a Huber loss, cf.\ Appendix \ref{Appendix A}) and a noisy version of the $G_{MOM}$ estimator to simultaneously obtain privacy and robustness. However, we focus only on heavy-tailed robustness for simplicity. Since the approach from \cite{RE} will also be used in the later sections, we combine all the gradient methods in one algorithm (Algorithm \ref{alg:RobPGDNFW} in Appendix \ref{Appendix A}) in the cases where our optimization occurs over $\mathcal{C} = \mathbb{R}^p$ or over a compact, convex $\mathcal{C} \subseteq \mathbb{R}^p$.
Instead of the choice $b = 1 + \lfloor 3.5 \log(1/\zeta)\rfloor$ in \cite{RE}, we use the bucket choice from \cite{Minsker} in Algorithm~\ref{alg:HTGE}. A key detail missing in \cite{RE} is the condition on $\zeta$: to ensure $b \leq n/2$ in the heavy-tailed case, $\zeta$ must be chosen accordingly, giving a lower bound on $n$ in terms of $\log(1/\zeta)$. In line with Algorithm \ref{alg:HTGE} that outputs a geometric median, \cite{Lera} examine mean estimators that concentrate exponentially around the true mean for distributions with bounded $2^{\text{nd}}$ moments. 
A theoretical guarantee for Algorithm~\ref{alg:HTGE} is provided in Lemma \ref{lemma:htlemma} in Appendix \ref{AppLemHT}.

%Let us now present the $G_{MOM}$ algorithm.

\begin{algorithm}
\caption{Heavy-Tailed Gradient Estimator}
\label{alg:HTGE}
\begin{algorithmic}[1]
\Function{HTGE}{$S = \{\nabla \mathcal{L}(\theta; z_i)\}_{i=1}^n$, $n$, $\zeta$, $\psi(x) = (1 - x)\log\left(\frac{1 - x}{0.9}\right) + x \log\left(\frac{x}{0.1}\right)$, for $x \in (0, 1)$}
    \State Define number of buckets $b = \left\lfloor \frac{\log(1/\zeta)}{\psi(7/18)} \right\rfloor + 1 \leq 1 + \left\lfloor 3.5 \log(1/\zeta) \right\rfloor$.
    \State Partition $S$ into $b$ blocks $B_1, \ldots, B_b$ each of size $\left\lfloor \frac{n}{b} \right\rfloor$.
    \For{$i = 1$ to $b$}
        \State $\widehat{\mu}_i = \frac{1}{|B_i|} \sum_{s \in B_i} s$.
    \EndFor
    \State Let $\widehat{\mu} = \mathop{\arg \min\limits_{\mu}} \sum_{i=1}^{b} \| \mu - \widehat{\mu}_i \|_2$.
    \State \textbf{return} $\widehat{\mu}$.
\EndFunction
\end{algorithmic}
\end{algorithm}

%Notice that Algorithm~\ref{alg:HTGE} is applicable to heavy-tailed data, in the sense that it only assumes that the second moment of the gradient is finite. 

To simplify analysis, we split the data into $T$ chunks. This is because the high-probability concentration result of the $G_{MOM}$ estimator will assume a fixed $\theta \in \mathcal{C}$, and when applied in our noisy gradient algorithm, we use independence between the randomness in $\theta_t$ and that of the gradient estimator to analyze $\theta_{t + 1}$. Denote $\widetilde{n} := \left\lfloor n/T \right\rfloor$ and $\widetilde{\zeta} := \left\lfloor \zeta/T \right\rfloor$.
For the remainder of this section, we consider the linear model introduced in Section \ref{sec:Linear Regression}. We suppress the dependency on $p$, $\lambda_{\max}(\Sigma), \lambda_{\min}(\Sigma)$, and $||\theta^*||_2$.

\subsubsection{The Well-Conditioned Case}\label{sec:Well_Cond_Sec_FW}

In this section, we assume $\lambda_{\min}(\Sigma) > 0$. We will use Algorithm \ref{alg:HTGE} to construct robust gradient estimators. The corresponding functions $\alpha$ and $\beta$ will be identified using Lemma \ref{lemma:geglmht}, which is taken from \cite{RE} (see Appendix \ref{Appendix A} for the statement of the lemma).
We will consider both the non-accelerated and accelerated Frank-Wolfe methods. The idea for the non-accelerated version is to bring Algorithm \ref{alg:RobPGDNFW} for the Frank-Wolfe method into the relaxed version of Lemma \ref{lemma:NonAccRel_FW}. For the accelerated version, we bring Algorithm \ref{alg:RobPGDNFW} for the Frank-Wolfe method into the relaxed version of Algorithm \ref{alg:ReAccFW}.

\paragraph{Frank-Wolfe}

We begin by analyzing the non-accelerated version. The proof of the following result can be found in Appendix \ref{sec:ThmNonACCFWl>0}:

\begin{theorem}
\label{theorem:HT_NonAccFW}
Consider the linear regression with squared error loss model from Example~\ref{sec:LR_Squared_Loss}. Let $\mathcal{C} \subseteq \mathbb{R}^p$ be convex and compact, such that $\theta^* \in \mathcal{C}$ and $||\mathcal{C}||_2 \lesssim 1$. Let $\zeta \in (0, 1)$. Then Algorithm \ref{alg:RobPGDNFW} for the Frank-Wolfe method with variable learning rate $\eta_t = \frac{2}{2 + t}$, and using Algorithm \ref{alg:HTGE} as gradient estimator, returns iterates $\{\theta_t\}_{t = 1}^T$ such that with probability at least $1 - \zeta$, for $T = n^{1/3}$, we have
\begin{align}
\label{eq:NotAccFW_HT}
||\theta_T - \theta^*||_2 \lesssim \frac{(1 + \sigma_2)^{1/2}\log^{1/4}(n/\zeta)}{n^{1/6}},
\end{align}
where $\widetilde{n} \geq 2b$, with $b$ as in Algorithm \ref{alg:HTGE}.
\end{theorem}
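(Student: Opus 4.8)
The plan is to combine the heavy-tailed concentration of the $G_{\mathrm{MOM}}$ gradient estimator (Algorithm~\ref{alg:HTGE}) with the relaxed, inexact-gradient form of Frank--Wolfe (Lemma~\ref{lemma:NonAccRel_FW}), and then convert the resulting excess-risk bound into a parameter-error bound using strong convexity of the squared-error risk. Throughout, $p$, $\lambda_{\min}(\Sigma)$, $\lambda_{\max}(\Sigma)$, and $\|\theta^*\|_2$ are absolute constants, so $\tau_u = \lambda_{\max}(\Sigma) \asymp 1$ and $\mathrm{diam}(\mathcal{C}) \le 2\|\mathcal{C}\|_2 \lesssim 1$.

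\emph{Gradient estimator and data splitting.} With $T = n^{1/3}$, partition $\mathcal{D}_n$ into $T$ chunks of size $\widetilde{n} = \lfloor n/T\rfloor \asymp n^{2/3}$, and at iteration $t$ set $g_t$ to be the output of Algorithm~\ref{alg:HTGE} applied to $\{\nabla\mathcal{L}(\theta_t; z_i)\}_i$ over the $t$-th chunk, with confidence $\widetilde{\zeta} = \zeta/T$. For the linear model with squared-error loss, $\nabla\mathcal{L}(\theta,(x,y)) = xx^T(\theta - \theta^*) - wx$ has mean $\nabla\mathcal{R}(\theta) = \Sigma(\theta - \theta^*)$; Lemma~\ref{lemma:geglmht} (from \cite{RE}) then supplies $\alpha(\widetilde{n},\widetilde{\zeta})$ and $\beta(\widetilde{n},\widetilde{\zeta})$, each of order $(1+\sigma_2)\sqrt{\log(1/\widetilde{\zeta})/\widetilde{n}}$, such that at any \emph{fixed} $\theta \in \mathcal{C}$, with probability at least $1-\widetilde{\zeta}$,
\[
\|g_t - \nabla\mathcal{R}(\theta)\|_2 \le \alpha(\widetilde{n},\widetilde{\zeta})\,\|\theta-\theta^*\|_2 + \beta(\widetilde{n},\widetilde{\zeta}).
\]
Since chunk $t$ is independent of $\theta_t$ (a function of chunks $0,\dots,t-1$ only), conditioning on the realized $\theta_t$ and integrating shows this inequality holds for the random iterate $\theta_t$ with probability at least $1-\widetilde{\zeta}$; a union bound over $t=0,\dots,T-1$ produces an event of probability at least $1-T\widetilde{\zeta}=1-\zeta$ on which it holds at every step. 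The side condition $\widetilde{n}\ge 2b$ with $b\asymp\log(1/\widetilde\zeta)\asymp\log(n/\zeta)$ required for Algorithm~\ref{alg:HTGE} is exactly the stated hypothesis and is consistent with $\widetilde n\asymp n^{2/3}$ for $n$ large.

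\emph{Reduction to relaxed Frank--Wolfe.} On this event, write $v_t=\arg\min_{v\in\mathcal{C}}g_t^Tv$ and $v_t^\star=\arg\min_{v\in\mathcal{C}}\nabla\mathcal{R}(\theta_t)^Tv$. A two-sided comparison gives
\[
v_t^T\nabla\mathcal{R}(\theta_t) \le \min_{v\in\mathcal{C}}v^T\nabla\mathcal{R}(\theta_t) + \Delta_t ,
\]
where $\Delta_t \le \|v_t-v_t^\star\|_2\,\|g_t-\nabla\mathcal{R}(\theta_t)\|_2 \le 2\|\mathcal{C}\|_2\big(\alpha\|\theta_t-\theta^*\|_2+\beta\big) \lesssim \alpha+\beta =: \Delta$, using $\|\theta_t-\theta^*\|_2\le\mathrm{diam}(\mathcal{C})\lesssim 1$. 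Hence Algorithm~\ref{alg:RobPGDNFW} for Frank--Wolfe with $\eta_t=2/(t+2)$ is an instance of the relaxed scheme of Lemma~\ref{lemma:NonAccRel_FW} with uniform slack $\Delta$, which yields
\[
\mathcal{R}(\theta_T) - \min_{\theta\in\mathcal{C}}\mathcal{R}(\theta) \lesssim \frac{\tau_u\,\mathrm{diam}(\mathcal{C})^2}{T} + \Delta \lesssim \frac{1}{T} + (1+\sigma_2)\sqrt{\frac{\log(T/\zeta)}{\widetilde{n}}} .
\]
The point is that the accumulated slack $\sum_t\eta_t\Delta\prod_{s>t}(1-\eta_s)$ stays $O(\Delta)$, since $\prod_{s=t+1}^T(1-\eta_s)\asymp (t/T)^2$.

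\emph{Excess risk to parameter error, and optimizing $T$.} Since $\theta^*\in\mathcal{C}$, the minimum of $\mathcal{R}$ over $\mathcal{C}$ is $\mathcal{R}(\theta^*)$, and $\mathcal{R}(\theta_T)-\mathcal{R}(\theta^*)=\tfrac12(\theta_T-\theta^*)^T\Sigma(\theta_T-\theta^*)\ge\tfrac{\lambda_{\min}(\Sigma)}{2}\|\theta_T-\theta^*\|_2^2$, so
\[
\|\theta_T-\theta^*\|_2^2 \lesssim \frac{1}{T} + (1+\sigma_2)\sqrt{\frac{\log(T/\zeta)}{\widetilde{n}}} .
\]
Plugging in $T=n^{1/3}$ and $\widetilde{n}\asymp n^{2/3}$ balances the two terms at order $(1+\sigma_2)\log^{1/2}(n/\zeta)/n^{1/3}$ (using $\log(n^{1/3}/\zeta)\le\log(n/\zeta)$), and taking square roots gives the claimed $\|\theta_T-\theta^*\|_2 \lesssim (1+\sigma_2)^{1/2}\log^{1/4}(n/\zeta)/n^{1/6}$. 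I expect the main obstacle to be the first step: extracting the precise form of $\alpha$ and $\beta$ from Lemma~\ref{lemma:geglmht} (which is what controls the $(1+\sigma_2)^{1/2}$ prefactor and the $1/4$ exponent on the logarithm), and making the fixed-$\theta$-to-random-$\theta_t$ transfer fully rigorous through the chunking and union bound.
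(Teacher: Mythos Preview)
Your proposal is correct and follows essentially the same route as the paper: invoke Lemma~\ref{lemma:geglmht} for the $G_{\mathrm{MOM}}$ estimator, union-bound over the $T$ chunks, reduce to the relaxed Frank--Wolfe scheme, apply Lemma~\ref{lemma:NonAccRel_FW}, and then use $\lambda_{\min}(\Sigma)$-strong convexity to pass from excess risk to $\|\theta_T-\theta^*\|_2$. The only cosmetic difference is that the paper fits the constant slack $\|\mathcal{C}\|_2(\alpha\|\mathcal{C}\|_2+\beta)$ into Lemma~\ref{lemma:NonAccRel_FW} by rewriting it as $\tfrac{1}{2}\mu_t\Gamma_{\mathcal{R}}\cdot\frac{\|\mathcal{C}\|_2(\alpha\|\mathcal{C}\|_2+\beta)(T+2)}{\Gamma_{\mathcal{R}}}$ (so that the lemma's $\Delta$ absorbs the $(T+2)$ factor and cancels it against the $1/(T+2)$ in the conclusion), whereas you justify the same $O(\Delta)$ contribution directly via the telescoping-product identity $\prod_{s=t+1}^T(1-\eta_s)=\tfrac{(t+1)(t+2)}{(T+1)(T+2)}$; both yield $\mathcal{R}(\theta_T)-\mathcal{R}(\theta^*)\lesssim \tfrac{1}{T}+(1+\sigma_2)\sqrt{T\log(T/\zeta)/n}$ and hence the claimed $n^{-1/6}$ rate at $T=n^{1/3}$.
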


\begin{remark}
We can comment on the choice of $T$ in Theorem \ref{theorem:HT_NonAccFW}. By looking at the proof, in order to minimize $\frac{1}{T} + (1 + \sigma_2)\sqrt{\frac{T\log(T/\zeta)}{n}}$ over $T > 0$, we take $T = n^{1/3}$.
\end{remark}

\paragraph{Accelerated Frank-Wolfe}

We now move on to the accelerated version, where we aim to use Algorithm \ref{alg:ReAccFW}. To do so, we need to make sure the $\ell_2$-norm of the gradient of the squared error risk is bounded away from $0$ and the constraint set is strongly convex. Hence, we use the same strategy that we employed in Section \ref{sec:Increasing C}: We optimize over $\mathbb{B}_2(D)$, which increases toward $\theta^*$ as $n \rightarrow \infty$. More specifically, we will have $||\theta^*||_2 - D \asymp \frac{1}{n^{1/5}}$. The proof of the next theorem is provided in Appendix \ref{sec:ThmACCFWl>0}:

\begin{theorem}
\label{theorem:ACCFWROB}
Let $C_1 > 0$ be an absolute constant. Let $\zeta \in (0, 1)$. Consider the linear regression with squared error loss model from Example~\ref{sec:LR_Squared_Loss}. Let $\mathcal{C} = \mathbb{B}_2(D)$, where $||\theta^*||_2 - D \lesssim\frac{1}{n^{1/5}}$ and $D \leq ||\theta^*||_2 - \frac{C_1}{n^{1/5}}$. Then Algorithm \ref{alg:RobPGDNFW} for the Frank-Wolfe method with $\theta_0 \in \mathcal{C}$, $\eta = \min\left\{1, \frac{\alpha_{\mathcal{C}} u}{4\lambda_{\max}(\Sigma)}\right\}$, $\alpha_{\mathcal{C}} = \frac{1}{D}$, $\frac{1}{n^{1/5}} \lesssim u \leq \frac{C_1\lambda_{\min}(\Sigma)}{n^{1/5}}$, and using Algorithm \ref{alg:HTGE} as gradient estimator returns iterates $\{\theta_t\}_{t = 1}^T$ such that with probability at least $1 - \zeta$, for $T = \log_{1/c}\left(n^{2/5}\right) \asymp n^{1/5}\log(n)$, we have
\begin{align}
\label{eq:ACCFWHTrate}
||\theta_T - \theta^*||_2 \lesssim \frac{(1 + \sigma_2)^{1/2}\log^{1/4}(n)\log^{1/4}\left(n\log(n)/\zeta\right)}{n^{1/5}},
\end{align}
with $c = \max\left\{\frac{1}{2}, 1 - \frac{\alpha_{\mathcal{C}} u}{8\lambda_{\max}(\Sigma)}\right\}$, where $\widetilde{n} \geq 2b$, with $b$ is as in Algorithm \ref{alg:HTGE}.
\end{theorem}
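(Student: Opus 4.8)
The plan is to run Algorithm~\ref{alg:RobPGDNFW} (Frank--Wolfe mode) on the \emph{population} risk $F=\mathcal{R}$ of Example~\ref{sec:LR_Squared_Loss}, to interpret it as the relaxed accelerated Frank--Wolfe scheme of Algorithm~\ref{alg:ReAccFW} run with a data-dependent oracle error, and then to invoke Theorem~\ref{theorem:ACCFWV3}. First I would record the facts needed to check its hypotheses. Here $\nabla\mathcal{R}(\theta)=\Sigma(\theta-\theta^*)$, so $\mathcal{R}$ is convex, differentiable and $\lambda_{\max}(\Sigma)$-smooth; the ball $\mathcal{C}=\mathbb{B}_2(D)$ is $\alpha_{\mathcal{C}}$-strongly convex with $\alpha_{\mathcal{C}}=1/D$ by Lemma~\ref{lemma:STR_CONV_BALL}; for every $\theta\in\mathcal{C}$ one has the uniform lower bound $\|\nabla\mathcal{R}(\theta)\|_2\ge\lambda_{\min}(\Sigma)\|\theta-\theta^*\|_2\ge\lambda_{\min}(\Sigma)(\|\theta^*\|_2-D)\ge \lambda_{\min}(\Sigma)C_1/n^{1/5}\ge u$, so the hypothesis $0<r\le\|\nabla F(x)\|_2$ holds with $r=u$; and the constrained minimizer $\theta_*=\arg\min_{\mathcal{C}}\mathcal{R}$ lies on $\partial\mathcal{C}$ and satisfies $\|\theta_*-\theta^*\|_2\lesssim\|\theta^*\|_2-D\lesssim n^{-1/5}$ (compare $\mathcal{R}(\theta_*)$ with $\mathcal{R}$ evaluated at $D\theta^*/\|\theta^*\|_2\in\mathcal{C}$ and use strong convexity and smoothness of $\mathcal{R}$). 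Hence Theorem~\ref{theorem:ACCFWV3} applies with $\tau_u=\lambda_{\max}(\Sigma)$, $r=u$, stepsize $\eta=\min\{1,\alpha_{\mathcal{C}}u/(4\lambda_{\max}(\Sigma))\}$, and contraction factor $c=\max\{1/2,1-\alpha_{\mathcal{C}}u/(8\lambda_{\max}(\Sigma))\}$; since $u\asymp n^{-1/5}$ and $D\asymp 1$, both $\eta$ and $1-c$ are $\asymp n^{-1/5}$, and in particular $\eta/(1-c)=O(1)$.

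Next I would handle the gradient oracle. As set up at the start of this section, split $\mathcal{D}_n$ into $T$ chunks of size $\widetilde n=\lfloor n/T\rfloor$ and put $\widetilde\zeta=\zeta/T$, running step $t$ of Frank--Wolfe using only the $t$-th chunk; then $\theta_t$ is independent of that chunk, so the fixed-$\theta$ guarantee of Lemma~\ref{lemma:geglmht} (Definition~\ref{def:ge}) applies conditionally on $\theta_t$. On the event $E_t$ of probability $\ge 1-\widetilde\zeta$ that it holds, the $G_{MOM}$ output $g_t$ satisfies $\|g_t-\nabla\mathcal{R}(\theta_t)\|_2\le\alpha(\widetilde n,\widetilde\zeta)\|\theta_t-\theta_*\|_2+\beta(\widetilde n,\widetilde\zeta)$. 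Since the relaxed step takes $v_t=\arg\min_{v\in\mathcal{C}}v^Tg_t$, a short Cauchy--Schwarz argument using $\|v\|_2\le D$ on $\mathcal{C}$ gives $v_t^T\nabla\mathcal{R}(\theta_t)\le\min_{v\in\mathcal{C}}v^T\nabla\mathcal{R}(\theta_t)+2D\|g_t-\nabla\mathcal{R}(\theta_t)\|_2$, i.e.\ Algorithm~\ref{alg:RobPGDNFW} realizes Algorithm~\ref{alg:ReAccFW} with oracle error $\Delta_t\le 2D\|g_t-\nabla\mathcal{R}(\theta_t)\|_2$. Bounding $\|\theta_t-\theta_*\|_2\le 2D\lesssim 1$ and using the forms $\alpha(\widetilde n,\widetilde\zeta)\asymp\sqrt{\log(1/\widetilde\zeta)/\widetilde n}$, $\beta(\widetilde n,\widetilde\zeta)\asymp\sigma_2\sqrt{\log(1/\widetilde\zeta)/\widetilde n}$ from \cite{RE} (constants and $p$ suppressed), on $E:=\bigcap_{t=0}^{T-1}E_t$, which has probability $\ge 1-T\widetilde\zeta=1-\zeta$ by a union bound, every $\Delta_t$ is at most $\Delta:=c_0(1+\sigma_2)\sqrt{\log(1/\widetilde\zeta)/\widetilde n}$ for an absolute constant $c_0$.

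On $E$, Theorem~\ref{theorem:ACCFWV3} with this $\Delta$ yields $\mathcal{R}(\theta_T)-\mathcal{R}(\theta_*)\le c^T\big(\mathcal{R}(\theta_0)-\mathcal{R}(\theta_*)\big)+\tfrac{3\eta\Delta}{2(1-c)}$, and $\mathcal{R}(\theta_0)-\mathcal{R}(\theta_*)\le\tfrac{\lambda_{\max}(\Sigma)}{2}\|\theta_0-\theta^*\|_2^2\lesssim 1$ since $\theta_0\in\mathcal{C}$. Taking $T=\log_{1/c}(n^{2/5})$ makes $c^T=n^{-2/5}$; because $1-c\asymp n^{-1/5}$ this gives $T\asymp n^{1/5}\log n$, hence $\widetilde n=\lfloor n/T\rfloor\asymp n^{4/5}/\log n$ and $\log(1/\widetilde\zeta)=\log(T/\zeta)\asymp\log(n\log(n)/\zeta)$. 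Substituting and using $\eta/(1-c)=O(1)$,
\[
\mathcal{R}(\theta_T)-\mathcal{R}(\theta_*)\ \lesssim\ \frac{1}{n^{2/5}}+(1+\sigma_2)\,\frac{\sqrt{\log(n)\,\log(n\log(n)/\zeta)}}{n^{2/5}}.
\]
Finally, $\mathcal{R}$ is $\lambda_{\min}(\Sigma)$-strongly convex and $\theta_*$ is first-order optimal over $\mathcal{C}$, so $\nabla\mathcal{R}(\theta_*)^T(\theta_T-\theta_*)\ge 0$ and therefore $\mathcal{R}(\theta_T)-\mathcal{R}(\theta_*)\ge\tfrac{\lambda_{\min}(\Sigma)}{2}\|\theta_T-\theta_*\|_2^2$; taking square roots, adding $\|\theta_*-\theta^*\|_2\lesssim n^{-1/5}$ via the triangle inequality, and absorbing this into the leading term gives \eqref{eq:ACCFWHTrate}. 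Along the way one verifies that $\tfrac{1}{n^{1/5}}\lesssim u\le\tfrac{C_1\lambda_{\min}(\Sigma)}{n^{1/5}}$ describes a nonempty range for all large $n$, and that $\widetilde n\ge 2b$ with $b$ as in Algorithm~\ref{alg:HTGE} holds once $n$ is large relative to $\log(1/\zeta)$.

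\textbf{Main obstacle.} The delicate point is the randomness bookkeeping: Theorem~\ref{theorem:ACCFWV3} is a deterministic statement assuming the oracle error is at most a \emph{fixed} $\Delta$ at every step, whereas here $\Delta_t$ is random and the $G_{MOM}$ concentration of Lemma~\ref{lemma:geglmht} is valid only at a \emph{fixed} $\theta$. Reconciling these forces the chunking (so that $\theta_t$ is independent of the chunk used at step $t$), a careful conditional application of Lemma~\ref{lemma:geglmht}, and a union bound over all $T$ steps --- and then $T$ must be tuned to balance the contraction term $c^T$ against the residual $\tfrac{3\eta\Delta}{2(1-c)}$, which is exactly what pins down $T\asymp n^{1/5}\log n$ and the $n^{2/5}$ scaling appearing in both terms. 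A secondary nuisance is keeping the $\sigma_2$-dependence linear inside the excess-risk bound (so that it becomes $(1+\sigma_2)^{1/2}$ after the final square root) and checking the bucket-count constraint $\widetilde n\ge 2b$, which limits how small $\zeta$ may be taken.
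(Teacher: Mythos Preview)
Your proposal is correct and follows essentially the same approach as the paper: verify the hypotheses of Theorem~\ref{theorem:ACCFWV3} for $F=\mathcal{R}$ on $\mathcal{C}$, cast Algorithm~\ref{alg:RobPGDNFW} as Algorithm~\ref{alg:ReAccFW} with a uniform oracle error $\Delta$ obtained via chunking plus a union bound, and then balance $c^T$ against the residual to get the $n^{-2/5}$ excess risk and hence the $n^{-1/5}$ parameter error.

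The only organizational difference is in the last conversion step. The paper bounds $\mathcal{R}(\theta_*)-\mathcal{R}(\theta^*)\lesssim n^{-2/5}$ via an Intermediate Value Theorem argument (finding $\lambda_*\in[0,1]$ with $\mathcal{R}(\lambda_*\theta^*)=\mathcal{R}(\theta_*)$ and then using smoothness), passes to $\mathcal{R}(\theta_T)-\mathcal{R}(\theta^*)$, and applies strong convexity at $\theta^*$ where $\nabla\mathcal{R}(\theta^*)=0$. You instead apply strong convexity at $\theta_*$ using first-order optimality over $\mathcal{C}$ to control $\|\theta_T-\theta_*\|_2$, and then add $\|\theta_*-\theta^*\|_2\lesssim n^{-1/5}$ via the triangle inequality, where you obtain this last bound by directly comparing $\mathcal{R}(\theta_*)$ with $\mathcal{R}(D\theta^*/\|\theta^*\|_2)$. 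Your route is arguably cleaner, avoiding the IVT detour, but both are the same idea and yield the same bound. One small notational slip: Lemma~\ref{lemma:geglmht} gives the error in terms of $\|\theta-\theta^*\|_2$, not $\|\theta-\theta_*\|_2$; since $\theta_t\in\mathcal{C}$ and $\|\theta^*\|_2\asymp 1$, both quantities are $\lesssim 1$ and the argument is unaffected.
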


\paragraph{Comparisons}

We compare Theorems \ref{theorem:HT_NonAccFW} and \ref{theorem:ACCFWROB}, emphasizing the benefits of acceleration in the Frank-Wolfe method. We see that the accelerated Frank-Wolfe approach in \eqref{eq:ACCFWHTrate} is better, having a rate of $\frac{1}{n^{1/5}}$, compared to $\frac{1}{n^{1/6}}$. Everything here is up to logarithmic factors. Notice also that both upper bounds have the same dependency on the variance of the noise $w$, namely $(1 + \sigma_2)^{1/2}$. On the other hand, the non-accelerated version is more general regarding the constraint set $\mathcal{C}$ and does not necessarily ask for the boundary of $\mathcal{C}$ to be close to $\theta^*$. 

We can also compare iteration counts. The accelerated version is faster, since it requires $T \asymp n^{1/5}\log(n)$ iterations, as opposed to $T = n^{1/3}$ for the non-accelerated approach. Hence, we can see a similar conclusion to the one in the context of privacy: Using an accelerated method leads to a smaller iteration count, which in turn leads to better statistical performance. The parallel we can draw between the privacy and robustness analyses is the fact that, in both cases, we are optimizing using noisy versions of gradients of certain objectives. The noise in both cases is more volatile as the iteration count increases. Hence, the benefit of a smaller iteration count becomes apparent in both private and robust optimization.

\subsubsection{The Ill-Conditioned Case}\label{sec:Ill_Cond_Sec_FW}

In this section, we wish to learn the true parameter $\theta^*$ when $\lambda_{\min}(\Sigma) = 0$.
%Our goal will be to minimize the regularized squared error risk (ridge regression) using the non-accelerated Frank-Wolfe method, and the accelerated version. 
We will construct a gradient estimator based on Algorithm \ref{alg:HTGE}. We will also identify the functions $\alpha$ and $\beta$ used in Definition \ref{def:ge}. We will keep track of $\sigma_2$ and $\gamma_{\mathcal{C}}$. As discussed in Example~\ref{sec:LR_l2_Reg}, we will minimize the regularized squared error risk $\mathcal{R}_{\gamma_{\mathcal{C}}}$ over an $\ell_2$-ball $\mathcal{C} = \mathbb{B}_2(D)$, with $D \geq ||(\Sigma + \gamma_{\mathcal{C}} I_p)^{-1}\Sigma\theta^*||_2$, and for some $\gamma_{\mathcal{C}} > 0$.
%The general idea is to find the best $\gamma_C$ for each method.
%So we fix the $\ell_2$-ball $\mathcal{C}$ and we will determine how $\gamma_{\mathcal{C}}$ should scale with $n$.
%Note that $\mathcal{R}_{\gamma_{\mathcal{C}}}(\theta) = \mathcal{R}(\theta) + \frac{\gamma_{\mathcal{C}}}{2}||\theta||_2^2$, where the non-regularized risk is $\mathcal{R}(\theta) = \frac{1}{2}(\theta - \theta^*)^T\Sigma(\theta - \theta^*) + \frac{\sigma_2^2}{2}$ for all $\theta \in \mathcal{C}$. Finally,
We take $D = \frac{||\mathcal{C}||_2}{2}$ to be an absolute constant.
Let us first construct an appropriate gradient estimator. The following lemma is proved in Appendix \ref{AppLemGeri}:

\begin{lemma}
\label{lemma:geri}
Consider the linear regression with $\ell_2$-regularized squared error loss model defined in Section \ref{sec:Linear Regression}, with i.i.d.\ samples $\mathcal{D}_n = \{z_i\}_{i = 1}^n = \{(x_i, y_i)\}_{i = 1}^n$ from a heavy-tailed distribution. Then Algorithm \ref{alg:HTGE} returns, for $\theta \in \mathcal{C}$ fixed, a gradient estimator $g$ such that
\begin{align*}
||g(\theta; \mathcal{D}_n, \widetilde{\zeta}) - \nabla\mathcal{R}_{\gamma_{\mathcal{C}}}(\theta)||_2  \lesssim \sqrt{\frac{\log(1/\widetilde{\zeta})}{\widetilde{n}}}||\theta - \theta_*||_2 + \sqrt{\frac{\left(\sigma_2^2 + \frac{\gamma_{\mathcal{C}}^2}{(\lambda_{\min}(\Sigma) + \gamma_{\mathcal{C}})^2}\right)\log(1/\widetilde{\zeta})}{\widetilde{n}}},
\end{align*}
with probability at least $1 - \widetilde{\zeta}$`, and $b \leq \widetilde{n}/2$, with $b$ as in Algorithm \ref{alg:HTGE}.
\end{lemma}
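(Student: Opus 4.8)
The plan is to view the $G_{\mathrm{MOM}}$ gradient estimator as the geometric median-of-means \emph{mean} estimator of Algorithm~\ref{alg:HTGE} applied to the i.i.d.\ random vectors $X_i := \nabla\mathcal{L}_{\gamma_{\mathcal{C}}}(\theta, z_i) = (x_i^T\theta - y_i)x_i + \gamma_{\mathcal{C}}\theta$, whose common mean at the fixed point $\theta \in \mathcal{C}$ is $\mathbb{E}[X_1] = \Sigma(\theta - \theta^*) + \gamma_{\mathcal{C}}\theta = \nabla\mathcal{R}_{\gamma_{\mathcal{C}}}(\theta)$. The general concentration guarantee for Algorithm~\ref{alg:HTGE} (Lemma~\ref{lemma:htlemma}), run on a size-$\widetilde{n}$ chunk with the bucket count $b \asymp \log(1/\widetilde{\zeta})$ prescribed by the algorithm and under the hypothesis $b \le \widetilde{n}/2$ (so the partition into blocks is valid in the heavy-tailed regime), then yields, with probability at least $1 - \widetilde{\zeta}$,
\begin{align*}
\left\|g(\theta; \mathcal{D}_n, \widetilde{\zeta}) - \nabla\mathcal{R}_{\gamma_{\mathcal{C}}}(\theta)\right\|_2 \lesssim \sqrt{\frac{\mathrm{tr}\!\left(\mathrm{Cov}(X_1)\right)\log(1/\widetilde{\zeta})}{\widetilde{n}}}.
\end{align*}
Thus the entire lemma reduces to showing $\mathrm{tr}(\mathrm{Cov}(X_1)) = \mathbb{E}\|X_1 - \mathbb{E}X_1\|_2^2 \lesssim \|\theta - \theta_*\|_2^2 + \sigma_2^2 + \gamma_{\mathcal{C}}^2/(\lambda_{\min}(\Sigma) + \gamma_{\mathcal{C}})^2$, after which $\sqrt{a + b} \le \sqrt{a} + \sqrt{b}$ separates out the two stated terms.

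For the variance bound I would substitute $y_1 = x_1^T\theta^* + w_1$; the deterministic regularizer term $\gamma_{\mathcal{C}}\theta$ cancels, leaving
\begin{align*}
X_1 - \mathbb{E}X_1 = (x_1 x_1^T - \Sigma)(\theta - \theta^*) - w_1 x_1.
\end{align*}
Since $x_1$ and $w_1$ are independent with $\mathbb{E}[w_1] = 0$, the cross term vanishes in $\mathbb{E}\|\cdot\|_2^2$, so $\mathrm{tr}(\mathrm{Cov}(X_1)) = \mathbb{E}\|(x_1 x_1^T - \Sigma)(\theta - \theta^*)\|_2^2 + \sigma_2^2\,\mathrm{tr}(\Sigma)$. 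The second summand is $\lesssim \sigma_2^2$, since $\lambda_{\max}(\Sigma)$ and $p$ are treated as absolute constants. For the first, drop the centering ($\mathbb{E}\|Y - \mathbb{E}Y\|_2^2 \le \mathbb{E}\|Y\|_2^2$) and apply Cauchy--Schwarz with $v := \theta - \theta^*$: $\mathbb{E}[(x_1^Tv)^2\|x_1\|_2^2] \le \left(\mathbb{E}[(x_1^Tv)^4]\right)^{1/2}\left(\mathbb{E}\|x_1\|_2^4\right)^{1/2} \lesssim \|v\|_2^2$, using the bounded fourth moments of $x$ (and $p$ constant). Hence $\mathrm{tr}(\mathrm{Cov}(X_1)) \lesssim \|\theta - \theta^*\|_2^2 + \sigma_2^2$.

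It then remains to trade $\theta^*$ for $\theta_*$. From $\theta_* = (\Sigma + \gamma_{\mathcal{C}}I_p)^{-1}\Sigma\theta^*$ one gets $\theta^* - \theta_* = \gamma_{\mathcal{C}}(\Sigma + \gamma_{\mathcal{C}}I_p)^{-1}\theta^*$, so $\|\theta^* - \theta_*\|_2 \le \frac{\gamma_{\mathcal{C}}}{\lambda_{\min}(\Sigma) + \gamma_{\mathcal{C}}}\|\theta^*\|_2 \lesssim \frac{\gamma_{\mathcal{C}}}{\lambda_{\min}(\Sigma) + \gamma_{\mathcal{C}}}$; the triangle inequality and $(a + b)^2 \le 2a^2 + 2b^2$ then give $\|\theta - \theta^*\|_2^2 \lesssim \|\theta - \theta_*\|_2^2 + \gamma_{\mathcal{C}}^2/(\lambda_{\min}(\Sigma) + \gamma_{\mathcal{C}})^2$. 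Plugging this into the displayed concentration bound and splitting the square root reproduces exactly the claimed inequality.

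I expect the difficulty here to be bookkeeping rather than conceptual: carefully controlling $\mathbb{E}[(x_1^Tv)^2\|x_1\|_2^2]$ through the fourth-moment assumption while keeping straight which quantities are suppressed as absolute constants ($p$, $\lambda_{\min}(\Sigma)$, $\lambda_{\max}(\Sigma)$, $\|\theta^*\|_2$) and which are retained ($\sigma_2$, $\gamma_{\mathcal{C}}$), and verifying that the hypotheses of Lemma~\ref{lemma:htlemma} — in particular the bucket condition $b \le \widetilde{n}/2$ — are exactly what is needed so that the concentration statement is quoted correctly at the fixed $\theta$.
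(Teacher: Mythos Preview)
Your proposal is correct and follows essentially the same approach as the paper: invoke Lemma~\ref{lemma:htlemma}, compute $X_1 - \mathbb{E}X_1 = (x_1x_1^T - \Sigma)(\theta - \theta^*) - w_1x_1$, control the resulting second moment via Cauchy--Schwarz and the bounded fourth-moment assumption, and then trade $\theta^*$ for $\theta_*$ using $\|I_p - (\Sigma + \gamma_{\mathcal{C}}I_p)^{-1}\Sigma\|_2 = \gamma_{\mathcal{C}}/(\lambda_{\min}(\Sigma) + \gamma_{\mathcal{C}})$. The only cosmetic difference is that the paper first bounds $\mathrm{tr}(\mathrm{Cov}) \le p\,\|\mathrm{Cov}\|_2$ and then controls the operator norm (Lemma~\ref{lemma:cov_ri}), whereas you bound the trace $\mathbb{E}\|X_1 - \mathbb{E}X_1\|_2^2$ directly; since $p$ is treated as a constant, the two routes are equivalent.
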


%Now that we have the exact construction of our gradient estimator based on Algorithm \ref{alg:HTGE} and the linear regression with $\ell_2$-regularized squared error loss model defined in Example~\ref{sec:LR_l2_Reg},
We now discuss the two Frank-Wolfe variants. For the non-accelerated version, the goal will be to bring Algorithm \ref{alg:RobPGDNFW} for the Frank-Wolfe method into the relaxed version of Lemma \ref{lemma:NonAccRel_FW}. Similarly, for the accelerated version, we aim to bring Algorithm \ref{alg:RobPGDNFW} for the Frank-Wolfe method into the relaxed version of Algorithm \ref{alg:ReAccFW}.

%%%%%

\paragraph{Frank-Wolfe}

We begin with the non-accelerated Frank-Wolfe method, and we also keep track of the dependency on $\left\|[P^T\theta^*]_{[(m+1):p]}\right\|_2$, the only term that vanishes when $m = p$. The proof is in Appendix \ref{ThmNonACC0}.
%Again, we will pick $\gamma_{\mathcal{C}}$ accordingly for both the non-accelerated and accelerated Frank-Wolfe methods. One other adjustment for the accelerated version only will be to further decrease the set over which we optimize $\mathcal{R}_{\gamma_{\mathcal{C}}}$.

\begin{theorem}
\label{theorem:NonACC_lambda=0}
Consider the linear regression with $\ell_2$-regularized squared error loss model from Section \ref{sec:Linear Regression}, with $\frac{1}{n^{1/9}} \lesssim \gamma_{\mathcal{C}} \rightarrow 0$ as $n \rightarrow \infty$, and suppose we optimize over $\mathcal{C} = \mathbb{B}_2\left(D\right)$, with $D \geq ||(\Sigma + \gamma_{\mathcal{C}}I_p)^{-1}\Sigma\theta^*||_2$. Assume that the top $m$ eigenvalues of $\Sigma$ are positive, with $0 < m < p$. Let $[P^T\theta^*]_{[(m + 1):p]}$ be the vector in $\mathbb{R}^{p-m}$ containing the bottom $p-m$ entries of $P^T\theta^*$. Let $\zeta \in (0, 1)$. Then Algorithm \ref{alg:RobPGDNFW} for the Frank-Wolfe method with learning rate $\eta_t = \frac{2}{2 + t}$, using Algorithm \ref{alg:HTGE} as gradient estimator, returns iterates $\{\theta_t\}_{t = 1}^T$ such that with probability at least $1 - \zeta$, for $T = n^{1/3}$, we have
\begin{align*}
||\theta_T - \theta^*||_2 \lesssim \frac{(1 + \sigma_2)^{1/2}\log^{1/4}(n/\zeta)}{n^{1/9}} + \left\|[P^T\theta^*]_{[(m+1):p]}\right\|_2,
\end{align*}
if $\widetilde{n} \geq 2b$, with $b$ as in Algorithm \ref{alg:HTGE}.
\end{theorem}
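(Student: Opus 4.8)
The plan is to run the relaxed non-accelerated Frank--Wolfe convergence guarantee (Lemma \ref{lemma:NonAccRel_FW}) on the regularized empirical objective, using the $G_{MOM}$ gradient estimator of Algorithm \ref{alg:HTGE} to control the per-step error term $\Delta_t$. First I would split the $n$ samples into $T$ chunks of size $\widetilde n = \lfloor n/T\rfloor$ with failure budget $\widetilde\zeta = \zeta/T$ per chunk, so that at iteration $t$ the randomness in $\theta_t$ is independent of the fresh chunk feeding the gradient estimator; then Lemma \ref{lemma:geri} applies at the (random but chunk-independent) point $\theta_t$, giving with probability $1-\widetilde\zeta$ a bound $\|g(\theta_t;\mathcal D_{n,t},\widetilde\zeta)-\nabla\mathcal R_{\gamma_{\mathcal C}}(\theta_t)\|_2 \lesssim \sqrt{\tfrac{\log(1/\widetilde\zeta)}{\widetilde n}}\|\theta_t-\theta_*\|_2 + \sqrt{\tfrac{(\sigma_2^2 + \gamma_{\mathcal C}^2/(\lambda_{\min}(\Sigma)+\gamma_{\mathcal C})^2)\log(1/\widetilde\zeta)}{\widetilde n}}$. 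Note $\lambda_{\min}(\Sigma) = 0$ here, so the second term is governed by $\sigma_2^2 + 1 \asymp (1+\sigma_2)^2$. Union-bounding over $t = 0,\dots,T-1$ costs a factor $T$ inside the log and gives the event on which all chunk estimates are good simultaneously, with total probability $\ge 1-\zeta$.

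Next I would feed this into the relaxed Frank--Wolfe rate. The linear subproblem at step $t$ is solved with the noisy gradient $g_t$ rather than $\nabla\mathcal L_{\gamma_{\mathcal C}}(\theta_t,\mathcal D_{n,t})$; since $v_t$ minimizes $v^Tg_t$ over $\mathcal C$, one has $v_t^T\nabla\mathcal L_{\gamma_{\mathcal C}}(\theta_t) \le \min_{v\in\mathcal C}v^T\nabla\mathcal L_{\gamma_{\mathcal C}}(\theta_t) + 2\,\mathrm{diam}(\mathcal C)\,\|g_t - \nabla\mathcal L_{\gamma_{\mathcal C}}(\theta_t)\|_2$, i.e. $\Delta_t \lesssim \|g_t - \nabla\mathcal L_{\gamma_{\mathcal C}}(\theta_t)\|_2$ since $\mathrm{diam}(\mathcal C) = 2D \asymp 1$. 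I also need to pass from the empirical gradient $\nabla\mathcal L_{\gamma_{\mathcal C}}(\theta_t,\mathcal D_{n,t})$ on the chunk to the population gradient $\nabla\mathcal R_{\gamma_{\mathcal C}}(\theta_t)$ — but that is exactly what Lemma \ref{lemma:geri} already absorbs, since $g$ is compared directly to $\nabla\mathcal R_{\gamma_{\mathcal C}}$. Then Lemma \ref{lemma:NonAccRel_FW} with $\eta_t = \tfrac{2}{2+t}$ and $\tau_u = \lambda_{\max}(\Sigma)+\gamma_{\mathcal C} \asymp 1$ yields, for the regularized empirical objective (or I can work at the population level using strong convexity $\tau_l = \gamma_{\mathcal C}$), a bound of the form $\mathcal R_{\gamma_{\mathcal C}}(\theta_T) - \mathcal R_{\gamma_{\mathcal C}}(\theta_*) \lesssim \tfrac{1}{T} + \max_t\Delta_t \lesssim \tfrac{1}{T} + (1+\sigma_2)\sqrt{\tfrac{T\log(T/\zeta)}{n}}$, where I have dropped the $\alpha$-term contribution after noting $\|\theta_t-\theta_*\|_2 \le \mathrm{diam}(\mathcal C) \lesssim 1$. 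Choosing $T = n^{1/3}$ balances $\tfrac1T$ against $\sqrt{T/n}$ up to logs, giving an excess-risk bound of order $(1+\sigma_2)n^{-1/3}\log^{1/2}(n/\zeta)$ (modulo the exact log exponent).

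To convert the excess regularized-risk bound into the $\ell_2$-parameter bound, I would use strong convexity of $\mathcal R_{\gamma_{\mathcal C}}$ (parameter $\gamma_{\mathcal C}$, since $\lambda_{\min}(\Sigma)=0$): $\tfrac{\gamma_{\mathcal C}}{2}\|\theta_T - \theta_*\|_2^2 \le \mathcal R_{\gamma_{\mathcal C}}(\theta_T) - \mathcal R_{\gamma_{\mathcal C}}(\theta_*)$, so $\|\theta_T-\theta_*\|_2 \lesssim \gamma_{\mathcal C}^{-1/2}\big(\tfrac1T + (1+\sigma_2)\sqrt{T\log(T/\zeta)/n}\big)^{1/2}$. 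With $\gamma_{\mathcal C} \gtrsim n^{-1/9}$ and $T = n^{1/3}$ the bracket is $\asymp n^{-1/3}$ up to logs, so $\|\theta_T-\theta_*\|_2 \lesssim (1+\sigma_2)^{1/2}n^{1/18}\cdot n^{-1/6}\log^{1/4}(n/\zeta) = (1+\sigma_2)^{1/2}n^{-1/9}\log^{1/4}(n/\zeta)$. Finally I would bound $\|\theta_* - \theta^*\|_2$: from $\theta_* = (\Sigma+\gamma_{\mathcal C}I_p)^{-1}\Sigma\theta^*$, a spectral decomposition $\Sigma = PSP^T$ gives $\|\theta_* - \theta^*\|_2 = \|(I - (\Sigma+\gamma_{\mathcal C}I_p)^{-1}\Sigma)\theta^*\|_2 = \|\gamma_{\mathcal C}(\Sigma+\gamma_{\mathcal C}I_p)^{-1}\theta^*\|_2$, whose top-$m$ block is $O(\gamma_{\mathcal C})$ (vanishing) and whose bottom $p-m$ block is exactly $\|[P^T\theta^*]_{[(m+1):p]}\|_2$ since there $(\Sigma+\gamma_{\mathcal C}I_p)^{-1} = \gamma_{\mathcal C}^{-1}I$. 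A triangle inequality $\|\theta_T-\theta^*\|_2 \le \|\theta_T-\theta_*\|_2 + \|\theta_*-\theta^*\|_2$, with $\gamma_{\mathcal C}\to 0$ absorbed into the dominant $n^{-1/9}$ term, delivers the stated bound. The main obstacle is bookkeeping the chunking/independence argument cleanly — ensuring Lemma \ref{lemma:geri}'s fixed-$\theta$ hypothesis is legitimately invoked at the random iterate $\theta_t$ via chunk independence, and tracking how the relaxed FW recursion (Lemma \ref{lemma:NonAccRel_FW}) tolerates the $\alpha(n,\zeta)\|\theta_t-\theta_*\|_2$ term without needing stability (here $\alpha(n,\zeta)$ need not be below $\tau_l/2 = \gamma_{\mathcal C}/2$, so I must instead crudely bound $\|\theta_t - \theta_*\|_2$ by the diameter rather than rely on a contraction).
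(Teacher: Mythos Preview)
Your proposal is correct and follows essentially the same approach as the paper's proof: apply Lemma \ref{lemma:geri} on each of the $T$ chunks, union-bound, feed the resulting slack $\|\mathcal{C}\|_2(2\alpha D + \beta)$ into Lemma \ref{lemma:NonAccRel_FW} at the population level to obtain $\mathcal{R}_{\gamma_{\mathcal{C}}}(\theta_T) - \mathcal{R}_{\gamma_{\mathcal{C}}}(\theta_*) \lesssim \tfrac{1}{T} + (1+\sigma_2)\sqrt{T\log(T/\zeta)/n}$, convert via $\gamma_{\mathcal{C}}$-strong convexity, and finish with the spectral bound $\|\theta_* - \theta^*\|_2^2 \lesssim m\gamma_{\mathcal{C}}^2 + \|[P^T\theta^*]_{[(m+1):p]}\|_2^2$ plus a triangle inequality. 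Your observation that $\|\theta_t - \theta_*\|_2$ is controlled crudely by the diameter (rather than via stability $\alpha < \tau_l/2$) is exactly the point, and the paper handles it the same way; the only cosmetic difference is that the paper works directly with the population risk $\mathcal{R}_{\gamma_{\mathcal{C}}}$ throughout rather than briefly mentioning the empirical objective.
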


\begin{remark}
The choice $T = n^{1/3}$ is not arbitrary in Theorem \ref{theorem:NonACC_lambda=0}: As the proof reveals, this is the best $T$ we can choose in order to minimize $\frac{1}{T} + (1 + \sigma_2)\sqrt{\frac{T\log(T/\zeta)}{n}}$ over $T > 0$.

%Note that in Theorem \ref{theorem:NonACC_lambda=0}, we ask for $m < p$. This aspect is incorporated in the proof from the very beginning, in particular when we write the explicit form the the gradient estimator (see Appendix \ref{sec:Proofs of the Main Results_ACC_FW}).
\end{remark}

%%%%%

\paragraph{Accelerated Frank-Wolfe}

Now we can move on to optimizing $\mathcal{R}_{\gamma_{\mathcal{C}}}$ via the accelerated Frank-Wolfe method. The difference compared to projected gradient descent (as one can see in Appendix \ref{sec:Projected Gradient Descent for l=0}) and the non-accelerated Frank-Wolfe method cases is that we optimize over a fixed $\ell_2$-ball $\mathcal{K} \subsetneq \mathbb{B}_{2}\left(||(\Sigma + \gamma_{\mathcal{C}}I_p)^{-1}\Sigma\theta^*||_2\right)$. By choosing $\mathcal{K}$ appropriately, we can achieve a better rate of $\frac{1}{n^{1/4}}$, plus an error term given by $\left\|[P^T\theta^*]_{[(m + 1):p]}\right\|_2 + \left\|[P^T\theta^*]_{[(m + 1):p]}\right\|_2^{1/2}$.
%If we optimize over $\mathcal{C} = \mathbb{B}_{2}\left(D\right)$, with $D \geq ||(\Sigma + \gamma_{\mathcal{C}}I_p)^{-1}\Sigma\theta^*||_2$, then $\mathop{\inf}\limits_{\theta \in \mathcal{C}}||\nabla \mathcal{R}_{\gamma_{\mathcal{C}}}(\theta)||_2 = 0$. %This is because $(\Sigma + \gamma_{\mathcal{C}}I_p)^{-1}\Sigma\theta^* \in \mathcal{C}$ and $||\nabla \mathcal{R}_{\gamma_{\mathcal{C}}}\left((\Sigma + \gamma_{\mathcal{C}}I_p)^{-1}\Sigma\theta^*\right)||_2 = 0$.
%Hence, we choose the $\ell_2$-ball $\mathcal{K}$ over which we optimize to be strictly contained in  $\mathbb{B}_{2}\left(||(\Sigma + \gamma_{\mathcal{C}}I_p)^{-1}\Sigma\theta^*||_2\right)$.
%Also, we choose the ball $\mathcal{K}$ so that the additional error we produce in estimating $||\theta_T - \theta^*||_2$ scales up to absolute constants like $\left\|[P^T\theta^*]_{[(m + 1):p]}\right\|_2^{1/2}$.
The proof of the following result is provided in Appendix \ref{AppThmACCFW}:

\begin{theorem}
\label{theorem:ACCFW_lambda=0}
Let $C_1 > 1$ be an absolute constant. Let $\zeta \in (0, 1)$. Consider the linear regression with $\ell_2$-regularized squared error loss model from Section \ref{sec:Linear Regression}. Assume that the top $m$ eigenvalues of $\Sigma$ are positive with $0 < m < p$. Let $[P^T\theta^*]_{[(m + 1):p]}$ be the vector in $\mathbb{R}^{p - m}$ containing the bottom $p - m$ entries of $P^T\theta^*$ and let $c_{\mathcal{K}} = \left\|[P^T\theta^*]_{[(m + 1):p]}\right\|_2$. We optimize over $\mathcal{K} = \mathbb{B}_2\left(K\right)$, with $\left\|\left(\Sigma + C_1c_{\mathcal{K}}I_p\right)^{-1}\Sigma\theta^*\right\|_2 \leq K \leq \left\|\left(\Sigma + c_{\mathcal{K}}I_p\right)^{-1}\Sigma\theta^*\right\|_2$. Also, assume $\widetilde{n} \geq 2b$, with $b$ as in Algorithm \ref{alg:HTGE} and $\gamma_{\mathcal{C}} \in\left[\frac{c_{\mathcal{K}}}{4}, \frac{c_{\mathcal{K}}}{2}\right]$. Then Algorithm \ref{alg:RobPGDNFW} for the Frank-Wolfe method, with $\theta_0 \in \mathcal{K}$, $\eta = \min\left\{1, \frac{\alpha_{\mathcal{K}} u}{4(\lambda_{\max}(\Sigma) + \gamma_{\mathcal{C}})}\right\}$, $\alpha_{\mathcal{K}} = \frac{1}{K}$, $\gamma_{\mathcal{C}}c_{\mathcal{K}} \lesssim u \leq \gamma_{\mathcal{C}}\frac{S_{mm}^2\left\|[P^T\theta^*]_{[1:m]}\right\|_2c_{\mathcal{K}}}{2(S_{mm} + c_{\mathcal{K}})^3}$, and using Algorithm \ref{alg:HTGE} as gradient estimator, returns iterates $\{\theta_t\}_{t = 1}^T$ such that with probability at least $1 - \zeta$, for $T \asymp \log(n)/c_{\mathcal{K}}^2$,
%$T = \log_{1/c}(n) \asymp \log(n)/c_{\mathcal{K}}^2$, where $c = \max\left\{\frac{1}{2}, 1 - \frac{\alpha_{\mathcal{K}} u}{8(\lambda_{\max}(\Sigma) + \gamma_{\mathcal{C}})}\right\}$,
we have
\begin{align*}
||\theta_T - \theta^*||_2 \lesssim \frac{(1 + \sigma_2)^{1/2}\log^{1/4}(n)\log^{1/4}\left(n/\zeta\right)}{\left\|[P^T\theta^*]_{[(m+1):p]}\right\|_2^{1/4}n^{1/4}} + \left\|[P^T\theta^*]_{[(m+1):p]}\right\|_2 + \left\|[P^T\theta^*]_{[(m+1):p]}\right\|_2^{1/2}.
\end{align*}
\end{theorem}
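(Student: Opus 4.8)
The plan is to recast Algorithm~\ref{alg:RobPGDNFW} (in its Frank--Wolfe mode) as an instance of the relaxed and accelerated scheme of Algorithm~\ref{alg:ReAccFW}, with $F=\mathcal{R}_{\gamma_{\mathcal{C}}}$ and the $G_{\mathrm{MOM}}$ oracle of Algorithm~\ref{alg:HTGE} supplying inexact gradients, invoke Theorem~\ref{theorem:ACCFWV3}, and finally convert the excess-risk bound into an $\ell_2$-bound on $\theta_T-\theta^*$ by splitting $\mathbb{R}^p$ into the well-conditioned and the flat eigenspaces of $\Sigma$.

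First I would check the hypotheses of Theorem~\ref{theorem:ACCFWV3}. Since $\mathcal{R}_{\gamma_{\mathcal{C}}}(\theta)=\tfrac12(\theta-\theta^*)^\top\Sigma(\theta-\theta^*)+\tfrac{\gamma_{\mathcal{C}}}{2}\|\theta\|_2^2+\text{const}$ is quadratic, it is $(\lambda_{\max}(\Sigma)+\gamma_{\mathcal{C}})$-smooth and $\gamma_{\mathcal{C}}$-strongly convex over $\mathbb{R}^p$, with unconstrained minimiser $\theta_*^{\gamma}=(\Sigma+\gamma_{\mathcal{C}}I_p)^{-1}\Sigma\theta^*$. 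Because $\gamma_{\mathcal{C}}\le c_{\mathcal{K}}/2<C_1c_{\mathcal{K}}$ and $\gamma\mapsto\|(\Sigma+\gamma I_p)^{-1}\Sigma\theta^*\|_2$ is decreasing, the radius satisfies $K\le\|(\Sigma+c_{\mathcal{K}}I_p)^{-1}\Sigma\theta^*\|_2<\|\theta_*^{\gamma}\|_2$, so $\theta_*^{\gamma}\notin\mathcal{K}$ and, by strong convexity, $\|\nabla\mathcal{R}_{\gamma_{\mathcal{C}}}(\theta)\|_2\ge\gamma_{\mathcal{C}}(\|\theta_*^{\gamma}\|_2-K)$ for all $\theta\in\mathcal{K}$; expanding $\|\theta_*^{\gamma}\|_2^2-K^2$ in the eigenbasis of $\Sigma$ shows this gap is $\gtrsim c_{\mathcal{K}}$, so the stated range for $u$ makes $r=u$ a legitimate uniform lower bound on $\|\nabla\mathcal{R}_{\gamma_{\mathcal{C}}}\|_2$ over $\mathcal{K}$ and forces $\eta<1$. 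For the oracle I would use Lemma~\ref{lemma:geri}: with $\lambda_{\min}(\Sigma)=0$ its variance proxy collapses to $\sigma_2^2+1$, and on the bounded set $\mathcal{K}$ the multiplicative term $\alpha(\widetilde n,\widetilde\zeta)\|\theta-\theta_*^{\gamma}\|_2$ is absorbed into the additive one, giving $\|g(\theta;\mathcal{D},\widetilde\zeta)-\nabla\mathcal{R}_{\gamma_{\mathcal{C}}}(\theta)\|_2\lesssim(1+\sigma_2)\sqrt{\log(1/\widetilde\zeta)/\widetilde n}$ at each fixed $\theta$. Splitting $\mathcal{D}_n$ into $T$ chunks of size $\widetilde n=\lfloor n/T\rfloor$ with $\widetilde\zeta=\zeta/T$ makes $\theta_t$ independent of the chunk used at step $t$; a union bound gives the estimate at all $T$ steps with probability $\ge1-\zeta$. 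Since $v_t=\arg\min_{v\in\mathcal{K}}g_t^\top v$ obeys $v_t^\top\nabla\mathcal{R}_{\gamma_{\mathcal{C}}}(\theta_t)\le\min_{v\in\mathcal{K}}v^\top\nabla\mathcal{R}_{\gamma_{\mathcal{C}}}(\theta_t)+(v_t^\ast-v_t)^\top(g_t-\nabla\mathcal{R}_{\gamma_{\mathcal{C}}}(\theta_t))$, Algorithm~\ref{alg:ReAccFW} is being run with relaxation parameter $\Delta\lesssim\mathrm{diam}(\mathcal{K})\,(1+\sigma_2)\sqrt{T\log(T/\zeta)/n}$. Now Theorem~\ref{theorem:ACCFWV3} with $r=u$ yields $\mathcal{R}_{\gamma_{\mathcal{C}}}(\theta_T)-\min_{\mathcal{K}}\mathcal{R}_{\gamma_{\mathcal{C}}}\le c^T(\mathcal{R}_{\gamma_{\mathcal{C}}}(\theta_0)-\min_{\mathcal{K}}\mathcal{R}_{\gamma_{\mathcal{C}}})+\tfrac{3\Delta\eta}{2(1-c)}$; since $1-c=\eta/2$ when $\eta<1$, the error term is $3\Delta$ and $\log(1/c)\asymp\eta$, so choosing $T\asymp\log(n)/c_{\mathcal{K}}^2$ makes $c^T=n^{-\Omega(1)}$ negligible beside $\Delta$, and the excess risk is $\lesssim\Delta$ (the hypothesis $\widetilde n\ge 2b$ is exactly what makes $G_{\mathrm{MOM}}$ well defined on each chunk).

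The crux is the final conversion to $\|\theta_T-\theta^*\|_2$, which I would carry out by decomposing $\mathbb{R}^p=V_{\mathrm{top}}\oplus V_{\mathrm{bot}}$, with $V_{\mathrm{top}}$ spanned by the $m$ eigenvectors of $\Sigma$ with positive eigenvalue. On $V_{\mathrm{bot}}$, $x$ has zero variance, so $P_{\mathrm{bot}}x_i=0$ almost surely; hence every empirical gradient, and therefore $g_t$, equals $\gamma_{\mathcal{C}}\theta_t$ on $V_{\mathrm{bot}}$ with no estimation error, the Frank--Wolfe update contracts $\|P_{\mathrm{bot}}\theta_t\|_2$ geometrically (by a factor $\le 1-\eta$ per step), so $\|P_{\mathrm{bot}}\theta_T\|_2\le(1-\eta)^T\|P_{\mathrm{bot}}\theta_0\|_2=n^{-\Omega(1)}$, and $\|P_{\mathrm{bot}}(\theta_T-\theta^*)\|_2\le\|P_{\mathrm{bot}}\theta^*\|_2+o(1)=c_{\mathcal{K}}+o(1)$. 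On $V_{\mathrm{top}}$, $\mathcal{R}_{\gamma_{\mathcal{C}}}$ restricts to a quadratic with Hessian $\Sigma|_{V_{\mathrm{top}}}+\gamma_{\mathcal{C}}I$ whose eigenvalues are all $\asymp1$, and both $\theta_*^{\gamma}$ and the constrained minimiser $\theta_*^{\mathcal{K}}$ lie in $V_{\mathrm{top}}$; constant strong convexity therefore upgrades the excess-risk bound to $\|P_{\mathrm{top}}\theta_T-\theta_*^{\mathcal{K}}\|_2\lesssim\sqrt{\Delta}$. An elementary SVD computation bounds the remaining bias, $\|\theta_*^{\mathcal{K}}-P_{\mathrm{top}}\theta^*\|_2\lesssim\gamma_{\mathcal{C}}\asymp c_{\mathcal{K}}$, and relating $\mathcal{R}$ to $\mathcal{R}_{\gamma_{\mathcal{C}}}$ (the $\tfrac{\gamma_{\mathcal{C}}}{2}\|\theta^*\|_2^2$ discrepancy adds $O(\gamma_{\mathcal{C}})$ to the un-regularised excess risk, hence $O(c_{\mathcal{K}}^{1/2})$ after a square root) produces the $c_{\mathcal{K}}^{1/2}$ summand. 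Combining the two subspaces and substituting $\Delta\asymp(1+\sigma_2)\sqrt{T\log(T/\zeta)/n}$ with $T$ as above gives $\|\theta_T-\theta^*\|_2\lesssim\sqrt{\Delta}+c_{\mathcal{K}}+c_{\mathcal{K}}^{1/2}$, which is the asserted rate.

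I expect this last step to be the main obstacle. The naive route — applying $\gamma_{\mathcal{C}}$-strong convexity of $\mathcal{R}_{\gamma_{\mathcal{C}}}$ over all of $\mathbb{R}^p$ — loses powers of $c_{\mathcal{K}}$, so one must genuinely use both structural facts separately: that the flat directions carry no estimation noise, so they contribute only the irreducible bias $c_{\mathcal{K}}$, and that the curved directions are well conditioned, so there the excess risk converts to $\ell_2$ error with an $O(1)$ constant rather than a $1/\gamma_{\mathcal{C}}$ one. Getting the bias bookkeeping right — the interplay between the ridge penalty $\gamma_{\mathcal{C}}\asymp c_{\mathcal{K}}$, the gradient gap $\|\theta_*^{\gamma}\|_2-K\asymp c_{\mathcal{K}}$, the learning rate $\eta$ and the chunk size $\widetilde n=\lfloor n/T\rfloor$ with $T\asymp\log(n)/c_{\mathcal{K}}^2$ — is where the accounting is most delicate, and it is also what pins down the precise exponent of $c_{\mathcal{K}}$ in the statistical term.
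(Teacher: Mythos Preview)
Your reduction to Theorem~\ref{theorem:ACCFWV3} is essentially what the paper does: the lower bound on $\|\nabla\mathcal{R}_{\gamma_{\mathcal{C}}}\|_2$ over $\mathcal{K}$ is obtained (via the Mean Value Theorem applied to $f(z)=\|(S+zI_p)^{-1}SP^T\theta^*\|_2$, bounding $|f'(z)|$ from below), the relaxation slack $\Delta$ is set up the same way, and $T=\log_{1/c}(n)\asymp\log(n)/(\gamma_{\mathcal{C}}c_{\mathcal{K}})\asymp\log(n)/c_{\mathcal{K}}^2$ falls out of $\eta\asymp\gamma_{\mathcal{C}}c_{\mathcal{K}}$.

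The divergence is entirely in the final conversion, and here you have misjudged the paper's strategy: it uses precisely the ``naive route'' you dismiss. After Theorem~\ref{theorem:ACCFWV3} gives $\mathcal{R}_{\gamma_{\mathcal{C}}}(\theta_T)-\mathcal{R}_{\gamma_{\mathcal{C}}}(\theta_{*,\mathcal{K}})\lesssim\Delta$, the paper applies the Intermediate Value Theorem to $\lambda\mapsto\mathcal{R}_{\gamma_{\mathcal{C}}}(\lambda\theta_*)$ to locate $\lambda_*\in[0,1]$ with $\mathcal{R}_{\gamma_{\mathcal{C}}}(\lambda_*\theta_*)=\mathcal{R}_{\gamma_{\mathcal{C}}}(\theta_{*,\mathcal{K}})$, argues $\lambda_*\theta_*$ lies on or outside $\partial\mathcal{K}$ so that $\|\lambda_*\theta_*-\theta_*\|_2\le\|\theta_*\|_2-K\lesssim c_{\mathcal{K}}$ (again by the MVT on $f$), and uses smoothness to get $\mathcal{R}_{\gamma_{\mathcal{C}}}(\theta_{*,\mathcal{K}})-\mathcal{R}_{\gamma_{\mathcal{C}}}(\theta_*)\lesssim c_{\mathcal{K}}^2$. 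Then the \emph{global} $\gamma_{\mathcal{C}}$-strong convexity (with $\nabla\mathcal{R}_{\gamma_{\mathcal{C}}}(\theta_*)=0$) gives $\|\theta_T-\theta_*\|_2$, and one triangles with $\|\theta_*-\theta^*\|_2\lesssim\gamma_{\mathcal{C}}+c_{\mathcal{K}}$. The $c_{\mathcal{K}}^{1/2}$ term you were hunting for is exactly $c_{\mathcal{K}}/\gamma_{\mathcal{C}}^{1/2}\asymp c_{\mathcal{K}}^{1/2}$, i.e.\ the bias $c_{\mathcal{K}}^2$ in excess risk divided by the weak strong-convexity constant $\gamma_{\mathcal{C}}$ and square-rooted; your attribution of it to a discrepancy between $\mathcal{R}$ and $\mathcal{R}_{\gamma_{\mathcal{C}}}$ does not fit either argument.

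Your $V_{\mathrm{top}}\oplus V_{\mathrm{bot}}$ decomposition is a genuinely different idea. If pushed through carefully (checking that the geometric median preserves the common $V_{\mathrm{bot}}$-component of the block means, and that $\|g_t\|_2$ stays bounded away from zero so the $V_{\mathrm{bot}}$-contraction does not overshoot), it would trade the $c_{\mathcal{K}}^{1/2}$ bias term for a worse $c_{\mathcal{K}}$-exponent in the statistical term---roughly $\frac{(1+\sigma_2)^{1/2}}{c_{\mathcal{K}}^{1/2}n^{1/4}}+c_{\mathcal{K}}$ rather than the stated bound. As a proof of the theorem \emph{as stated}, though, the paper's direct route is both simpler and what produces the claimed exponents.
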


\begin{remark}
The bound in Theorem \ref{theorem:ACCFW_lambda=0} holds provided $\gamma_{\mathcal{C}} \in \left[\frac{c_{\mathcal{K}}}{4}, \frac{c_{\mathcal{K}}}{2}\right]$. This constant scaling of $\gamma_{\mathcal{C}}$ with $n$ is the best we can do with our analysis: in inequality~\eqref{eq:HT_ACCFWl=0_best_gc} in the proof of Theorem \ref{theorem:ACCFW_lambda=0}, if $\gamma_{\mathcal{C}}$ goes to $0$ or $\infty$ as $n \rightarrow \infty$, the upper bound tends to infinity. Hence, in order to control the error term that does not depend on $n$ and obtain a bound as small as possible in terms of $c_{\mathcal{K}} = \left\|[P^T\theta^*]_{[(m+1):p]}\right\|_2$, we choose $\gamma_{\mathcal{C}} \gtrsim c_{\mathcal{K}}$ such that $\gamma_{\mathcal{C}} \leq \frac{c_{\mathcal{K}}}{2}$.

From a practical standpoint, we need to optimize over a ball $\mathcal{K} = \mathbb{B}_2(K)$, with $\left\|\left(\Sigma + C_1c_{\mathcal{K}}I_p\right)^{-1}\Sigma\theta^*\right\|_2 \leq K \leq \left\|\left(\Sigma + c_{\mathcal{K}}I_p\right)^{-1}\Sigma\theta^*\right\|_2$, so we do not need to know $||\theta^*||_2$ or $\Sigma$ precisely. Moreover, from the hypothesis of Theorem \ref{theorem:ACCFW_lambda=0}, we do not need to know these parameters explicitly in order to choose $\alpha_{\mathcal{K}} = \frac{1}{K}$ and $u$.

%Similar to Theorem \ref{theorem:NonACC_lambda=0}, we cannot take $m = p$ in the statement of Theorem \ref{theorem:ACCFW_lambda=0}. The fact that $m < p$ is part of the proof Theorem \ref{theorem:ACCFW_lambda=0}, specifically when we apply Lemma \ref{lemma:geri} (see Appendix \ref{sec:Proofs of the Main Results_ACC_FW}).
\end{remark}

\paragraph{Comparisons}

We now compare the results of Theorems \ref{theorem:NonACC_lambda=0} and \ref{theorem:ACCFW_lambda=0}. Regarding the bounds on $||\theta_T - \theta^*||_2$, for each of the two results, we use a regularized risk and pick the penalty $\gamma_{\mathcal{C}}$ so that the bound on $||\theta_T - \theta^*||_2$ is a tight as possible. For Theorem \ref{theorem:NonACC_lambda=0}, we picked $\frac{1}{n^{1/9}} \lesssim \gamma_{\mathcal{C}} \rightarrow 0$; for Theorem \ref{theorem:ACCFW_lambda=0}, we picked $\gamma_{\mathcal{C}} \in \left[\frac{c_{\mathcal{K}}}{4}, \frac{c_{\mathcal{K}}}{2}\right]$. Also, we compare the upper bounds on $||\theta_T - \theta^*||_2$ up to logarithmic factors.

All these bounds have an error term involving $\left\|[P^T\theta^*]_{[(m+1):p]}\right\|_2$. In each result, we suppressed the dependence on other constants, such as $m$ or $\left\|[P^T\theta^*]_{[1:m]}\right\|_2$. This is because the only constant that vanishes once $m = p$ is $\left\|[P^T\theta^*]_{[(m+1):p]}\right\|_2$. In Theorem \ref{theorem:NonACC_lambda=0}, the bound is of the form $\widetilde{O}\left(\frac{1}{n^{1/9}}\right) + \left\|[P^T\theta^*]_{[(m+1):p]}\right\|_2$, and in Theorem \ref{theorem:ACCFW_lambda=0}, the bound is of the form 
\begin{align*}
\widetilde{O}\left(\frac{1}{\left\|[P^T\theta^*]_{[(m+1):p]}\right\|_2^{1/4}n^{1/4}}\right) + \left\|[P^T\theta^*]_{[(m+1):p]}\right\|_2 + \left\|[P^T\theta^*]_{[(m+1):p]}\right\|_2^{1/2}.
\end{align*}
If $\left\|[P^T\theta^*]_{[(m+1):p]}\right\|_2 \geq 1$, the bound in Theorem \ref{theorem:ACCFW_lambda=0} becomes $\widetilde{O}\left(\frac{1}{n^{1/4}}\right) + \left\|[P^T\theta^*]_{[(m+1):p]}\right\|_2$. In this case, the result in Theorem \ref{theorem:ACCFW_lambda=0} is tighter in terms of the rate with $n$ and the constant $\left\|[P^T\theta^*]_{[(m+1):p]}\right\|_2$. One intuition why Theorem \ref{theorem:NonACC_lambda=0} performs worse is because the non-accelerated Frank-Wolfe method does not take into account the nature of the constraint set. Moreover, notice the strategy used in Theorem \ref{theorem:ACCFW_lambda=0}: We did not optimize over the $\ell_2$-ball $\mathcal{C}$ with radius $||(\Sigma + \gamma_{\mathcal{C}}I_p)^{-1}\Sigma\theta^*||_2$, but over a ball of constant radius. The reason is because, in the case when $\lambda_{\min}(\Sigma) = 0$, we produce a constant error in the upper bound anyway, so we decided to pick the constant radius of the ball over which we optimize such that the additional constant  error incurred scales roughly like $\left\|[P^T\theta^*]_{[(m+1):p]}\right\|_2$. 

Observe that if instead $\left\|[P^T\theta^*]_{[(m+1):p]}\right\|_2 < 1$, the bound in Theorem \ref{theorem:ACCFW_lambda=0} becomes $\widetilde{O}\left(\frac{1}{\left\|[P^T\theta^*]_{[(m+1):p]}\right\|_2^{1/4}n^{1/4}}\right) + \left\|[P^T\theta^*]_{[(m+1):p]}\right\|_2^{1/2}$. Regarding the quantities of interest, we obtain the best rate with $n$ again, but a slightly higher term of $c_{\mathcal{K}}^{1/2} = \left\|[P^T\theta^*]_{[(m+1):p]}\right\|_2^{1/2}$, compared to $c_{\mathcal{K}}$, in the bound based on Theorem \ref{theorem:NonACC_lambda=0}. Note that the error $\left\|[P^T\theta^*]_{[(m+1):p]}\right\|_2$ can indeed be small even if $m$ is not close to $p$: If $P = I_p$ and $||\theta^*_{[(m+1):p]}||_2 \lesssim \frac{1}{p}$, the error becomes small. This also matches intuition, since these are the parameters corresponding to covariates that are constant almost surely, and their signal is low.

Additionally, we can compare the iteration counts: In Theorem \ref{theorem:NonACC_lambda=0}, we have $T = n^{1/3}$, whereas Theorem \ref{theorem:ACCFW_lambda=0}, requires $T = \log(n)/c_{\mathcal{K}}^2$. Since both methods use the full batch of data at each iteration to compute gradients, accelerated Frank-Wolfe is much more computationally efficient, at the cost of an additional $c_{\mathcal{K}}^{1/2}$ error term in the bound on $||\theta_T - \theta^*||_2$.

%This concludes the analysis of the approaches based on the Frank-Wolfe method. We saw the benefits of acceleration both in terms of privacy and heavy-tailed robustness. These benefits reflected themselves not only in the convergence rates on the excess empirical risk or $||\theta_T - \theta^*||_2$, but also in the variance of the noise required for privacy and and the iteration count $T$. 

%%%%%

\section{Accelerating Classical Gradient Descent}
\label{sec:Unbiased Parameter Estimation}

In this section, we complement our results by studying the benefits of acceleration in classical gradient descent, using Nesterov's accelerated gradient descent (AGD). In Section \ref{sec:Smooth_Priv_HT_GD_AGD}, we study risk functions coming from convex, smooth, Lipschitz losses, optimized over $\mathbb{R}^p$. We compare classical gradient descent with Nesterov's AGD, with modifications providing both heavy-tailed robustness and privacy. Our arguments will be based on inexact gradient analysis (cf.\ Appendix \ref{sec:Proofs of the Auxiliary Results_Nester}); in particular, our approach will be based on the setting of gradient estimators introduced in Section \ref{sec:Biased Parameter Estimation HT}.

In Section \ref{sec:Strongly Convex Risks}, we focus on heavy-tailed robustness only, and strongly convex risks. We derive bounds on $||\theta_T - \theta^*||_2$ directly and compare classical projected gradient descent with Nesterov's AGD. As we will see, for strongly convex risks, acceleration will have less significant effects in terms of the rates with $n$ and $p$. For simplicity, we consider the linear regression with squared error loss model from Section \ref{sec:Linear Regression}.

To place our results in context, recall from classical optimization theory that for smooth functions, projected gradient descent converges at rate $O\left(1/T\right)$, while Nesterov's AGD converges at rate $O\left(1/T^2\right)$ \cite{Nester}. This leads, as we present in Section \ref{sec:Smooth_Priv_HT_GD_AGD}, to an overall better performance, rate-wise with $n$, by Nesterov's AGD. On the other hand, for strongly convex functions, both classical projected gradient descent and Nesterov's AGD converge exponentially with $T$ (cf.\ Appendices \ref{sec:Appendix_GD} and \ref{sec:Appendix_AGD}). The improvement is in the base of the exponent, which is smaller for Nesterov's method. In the context of linear regression with squared error risk (see Example \ref{sec:LR_Squared_Loss}), however, this changes the iteration count $T$ up to an absolute constant only. This then leads to a similar performance of projected gradient decent and Nesterov's AGD, rate-wise with $n$, as one can see in Section \ref{sec:Comparisons in the Heavy-Tailed Setting}.

%%%%%

\subsection{Model-Free, Private Estimation for Smooth Risks}
\label{sec:Smooth_Priv_HT_GD_AGD}

Throughout this section, we only track the dependence on $n$, and we treat $p$ as a constant. We assume the data $\mathcal{D}_n = \{z_i\}_{i = 1}^n \subseteq \mathcal{E}^n$ are i.i.d.\ from an arbitrary distribution $P$. We work with a loss function $\mathcal{L} : \mathbb{R}^p \times \mathcal{E} \rightarrow \mathbb{R}$ that is convex and $L_2$-Lipschitz over $\mathbb{R}^p$, for all $z \in \mathcal{E}$. We assume that the population-level risk $\mathcal{R(\theta)} = \mathbb{E}_{z \sim P}[\mathcal{L}(\theta, z)]$ is $\tau_u$-smooth over $\mathbb{R}^p$. Additionally, we assume the existence of a minimizer $\theta_* \in \mathop{\arg\min}\limits_{\theta \in \mathbb{R}^p}\mathcal{R}(\theta)$. We treat $\tau_u$ and $L_2$ as absolute constants.

We will follow the approach based on gradient estimators as in Definition \ref{def:ge}. We view the gradient methods in the sense of Algorithm \ref{alg:RobPGDNFW} for Projected GD and Nesterov's AGD when $\mathcal{C} = \mathbb{R}^p$. For our gradient estimator, we use the sample average of the loss gradients plus Gaussian noise to ensure privacy. Such an approach requires no assumptions on the moments of the distribution, hence is robust to heavy tails. Due to the Lipschitz loss, there is no need to use a $G_{MOM}$ estimator, as we did in Section \ref{sec:Biased Parameter Estimation HT}, and it is enough to consider the gradient average in illustrating the benefit of Nesterov's acceleration in differential privacy.

In Lemma \ref{lemma:Avg_Grad_Priv_GD_vs_AGD} of Appendix \ref{sec:Proofs of the Auxiliary Results_Nester}, we establish high-probability concentration of the noisy gradient average around the true gradients $\nabla\mathcal{R}(\theta)$, for any fixed $\theta \in \mathbb{R}^p$. This allows us to cast both gradient descent and Nesterov’s AGD in inexact forms, so we can directly use the results from Appendix \ref{sec:Proofs of the Auxiliary Results_Nester}. Let us now present the convergence rates on $\mathcal{R}(\theta_T) - \mathcal{R}(\theta_*)$.
%where $\theta_T$ is obtained, once using classical gradient descent, and then using Nesterov's acceleration. We do this using Lemmas \ref{lemma:Inexact_GD_lemma} and \ref{lemma:Inexact_AGD_lemma} from Appendix \ref{sec:Inexact Convergence Rates for Gradient Descent and Nesterov's Accelerated Method}.
%Also, recall from Section \ref{sec:Biased Parameter Estimation HT} the notation $\widetilde{n} = \left\lfloor\frac{n}{T}\right\rfloor$ and $\widetilde{\zeta} = \left\lfloor\frac{\zeta}{T}\right\rfloor$.
The proof of the following result for projected gradient descent is provided in Appendix \ref{AppThmPriv}:

\begin{theorem}
\label{theorem:Priv_HT_GD_Smooth}
Let $T = n^{1/5}$, $0 < \epsilon \leq 0.9$, and $\delta \in (0, 1)$. Consider i.i.d.\ data $\mathcal{D}_n = \{z_i\}_{i = 1}^n$ from some distribution $P$. Let $\mathcal{L} : \mathbb{R}^p \times \mathcal{E} \rightarrow \mathbb{R}$ be convex and $L_2$-Lipschitz in $\theta$, for all $z \in \mathcal{E}$. Consider the corresponding risk $\mathcal{R(\theta)} = \mathbb{E}_{z \sim P}[\mathcal{L}(\theta, z)]$, and let $\theta_* \in \mathop{\arg\min}\limits_{\theta \in \mathbb{R}^p}\mathcal{R}(\theta)$. Assume $\mathcal{R}$ is $\tau_u$-smooth over $\mathbb{R}^p$, and that $\tau_u, L_2 \asymp 1$. Let $\zeta \in (0, 1)$. Split the data into $T$ subsets $\{Z_t\}_{t=1}^T$ of size $\widetilde{n}$ and take $\left\{\xi^{(t)}\right\}_{t = 1}^T \overset{\iid}{\sim} N\left(0, \frac{64L_2^2T\log(5T/2
\delta)\log(2/\delta)}{\widetilde{n}^2 \epsilon^2}I_p\right)$. For $\widetilde{n} > 8\log(4/\widetilde{\zeta})$, Algorithm \ref{alg:RobPGDNFW} for projected gradient descent over $\mathbb{R}^p$ initialized at $\theta_0 \in \mathbb{R}^p$, with $\eta = \frac{1}{\tau_u}$ and using $g(\cdot; Z_t, \widetilde{\zeta}) = \frac{1}{\widetilde{n}}\sum_{i \in Z_t}\nabla\mathcal{L}(\cdot, z_i) + \xi^{(t)}$ as gradient estimator at step $t \in [T]$, returns $(\epsilon, \delta)$-DP iterates $\{\theta_t\}_{t = 1}^T$ such that with probability at least $1 - \zeta$, we have
\begin{align*}
\mathcal{R}(\theta_T) - \mathcal{R}(\theta_*) \lesssim \frac{\sqrt{\log(n/\zeta)}}{n^{1/5}} + \frac{\log(n/\delta)\sqrt{\log(n/\zeta)}}{n^{1/2}\epsilon}.
\end{align*}
%Moreover, $\theta_T$ is $(\epsilon, \delta)$-DP.
\end{theorem}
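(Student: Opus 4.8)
The plan is to assemble the result from three ingredients: (i) privacy of each round's Gaussian-perturbed gradient average, composed over the $T$ rounds; (ii) a high-probability bound, \emph{uniform over the $T$ iterations}, on the gradient-estimation error at the (random) iterates; and (iii) the convergence guarantee for inexact gradient descent on a convex $\tau_u$-smooth objective. The rate then drops out by substituting $T = n^{1/5}$ and $\widetilde n = \lfloor n/T\rfloor \asymp n^{4/5}$. For privacy, $L_2$-Lipschitzness of $\mathcal{L}$ forces $\|\nabla\mathcal{L}(\cdot,z_i)\|_2 \le L_2$, so $\theta \mapsto \frac{1}{\widetilde n}\sum_{i\in Z_t}\nabla\mathcal{L}(\theta,z_i)$ has $\ell_2$-sensitivity at most $2L_2/\widetilde n$ in one record of $Z_t$; since the chunks $Z_1,\dots,Z_T$ are disjoint, two neighbouring datasets differ in exactly one chunk, hence in exactly one of the $T$ Gaussian releases, and with the prescribed variance $\frac{64L_2^2 T\log(5T/2\delta)\log(2/\delta)}{\widetilde n^2\epsilon^2}$ the Gaussian mechanism together with the composition argument already used for Algorithm~\ref{alg:PrivFWERM} (Lemma~\ref{lemma:ACC_PRIV_FW_STEP}, with $\widetilde n$ replacing $n$) gives $(\epsilon,\delta)$-DP of $\{\theta_t\}_{t=1}^T$ for $0 < \epsilon \le 0.9$, $\delta\in(0,1)$.

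For the gradient control, Lemma~\ref{lemma:Avg_Grad_Priv_GD_vs_AGD} gives, at any \emph{fixed} $\theta\in\mathbb{R}^p$ and with probability at least $1-\widetilde\zeta$,
\begin{align*}
\left\|g(\theta;Z_t,\widetilde\zeta)-\nabla\mathcal{R}(\theta)\right\|_2 \;\lesssim\; \sqrt{\frac{\log(1/\widetilde\zeta)}{\widetilde n}} \;+\; \frac{\sqrt{T\log(5T/2\delta)\log(2/\delta)\log(1/\widetilde\zeta)}}{\widetilde n\,\epsilon}\;=:\;\beta,
\end{align*}
the first term being the fluctuation of an average of $\widetilde n$ i.i.d.\ vectors of norm $\le L_2$ (crucially with \emph{no} $\|\theta-\theta_*\|_2$-proportional term, i.e.\ the $\alpha$ of Definition~\ref{def:ge} is $0$, precisely because the loss gradients are uniformly bounded) and the second bounding $\|\xi^{(t)}\|_2$. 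The structural point is that $\theta_t$ depends only on $Z_1,\dots,Z_{t-1}$ and $\xi^{(1)},\dots,\xi^{(t-1)}$, hence is independent of $(Z_t,\xi^{(t)})$: conditioning on $\theta_t$ lets us apply the fixed-point bound at $\theta=\theta_t$, and a union bound over $t\in[T]$ with $\widetilde\zeta = \zeta/T$ (legitimate since $\widetilde n > 8\log(4/\widetilde\zeta)$, so the estimator is well defined at every round) shows that, with probability at least $1-\zeta$, $\|e_t\|_2 := \|g(\theta_t;Z_t,\widetilde\zeta)-\nabla\mathcal{R}(\theta_t)\|_2 \le \beta$ for all $t\in[T]$ simultaneously.

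On that event the recursion is exactly inexact gradient descent, $\theta_{t+1} = \theta_t - \eta(\nabla\mathcal{R}(\theta_t)+e_t)$ with $\eta = 1/\tau_u$ and $\|e_t\|_2\le\beta$, so the inexact-descent convergence bound for convex $\tau_u$-smooth functions from Appendix~\ref{sec:Proofs of the Auxiliary Results_Nester} gives
\begin{align*}
\mathcal{R}(\theta_T)-\mathcal{R}(\theta_*) \;\lesssim\; \frac{\tau_u\|\theta_0-\theta_*\|_2^2}{T} \;+\; \beta\,\|\theta_0-\theta_*\|_2 \;+\; \frac{T\beta^2}{\tau_u},
\end{align*}
after also checking that the iterates stay in a bounded neighbourhood of $\theta_*$. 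Since $p\asymp 1$, the quantities $\tau_u$, $L_2$, $\|\theta_0-\theta_*\|_2$ are $O(1)$; substituting $T=n^{1/5}$, $\widetilde n\asymp n^{4/5}$, $\widetilde\zeta=\zeta/T$ and simplifying $\log(5T/2\delta)\log(2/\delta)\lesssim\log^2(n/\delta)$ and $\log(1/\widetilde\zeta)\lesssim\log(n/\zeta)$, the $1/T$ term is $\asymp n^{-1/5}$, the data-sampling part of $\beta$ is $\widetilde{O}(n^{-2/5})$, the Gaussian part of $\beta$ is $\widetilde{O}\!\left(\frac{\log(n/\delta)\sqrt{\log(n/\zeta)}}{n^{7/10}\epsilon}\right)$, and the $T\beta^2$ term is of strictly smaller order; all of these are bounded by $\frac{\sqrt{\log(n/\zeta)}}{n^{1/5}} + \frac{\log(n/\delta)\sqrt{\log(n/\zeta)}}{n^{1/2}\epsilon}$, which is the claim.

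The only genuinely delicate point is transferring the \emph{pointwise} estimator guarantee of Lemma~\ref{lemma:Avg_Grad_Priv_GD_vs_AGD} to the \emph{data-dependent} iterates $\theta_t$: this is exactly why the sample must be partitioned into $T$ independent chunks consumed one per round, and it is this partition that produces the $1/\widetilde n = T/n$ scaling in $\beta$ that must be balanced against the $O(1/T)$ optimization error (and against the $\sqrt T$ growth of the privacy noise) when fixing $T$. Everything else — the union bound, the smoothness/convexity bookkeeping in the inexact-descent step, the boundedness of the iterates, and the tracking of logarithmic factors — is routine.
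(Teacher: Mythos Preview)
Your plan and decomposition match the paper's proof. Two corrections.

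First, the remark that disjoint chunks mean ``exactly one of the $T$ Gaussian releases'' changes under a single-record perturbation overlooks adaptivity: changing a point in $Z_1$ shifts $\theta_1,\theta_2,\dots$ and hence the \emph{query} at every later round, not just the round that touches $Z_1$. You correctly fall back on the composition argument (Lemma~\ref{lemma:ACC_PRIV_FW_STEP}/Lemma~\ref{lemma:compGDP}), which is also what the paper does, so the privacy conclusion is fine; just drop the ``only one release'' sentence.

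Second, and more substantively, the inexact-descent bound you display is not what Lemma~\ref{lemma:Inexact_GD_lemma} gives. With $\|e_t\|_2\le\beta$ one has $a_T\le T\beta/\tau_u$ and $\sum_{i=2}^T(i-1)\|e_{i-1}\|_2\lesssim T^2\beta$, so the lemma yields
\[
\mathcal{R}(\theta_T)-\mathcal{R}(\theta_*)\;\lesssim\;\frac{1}{T}+T\beta+T^2\beta^2,
\]
which is exactly inequality~\eqref{eq:Smooth_GD_Best_T} in the paper. Your display has $\beta$ and $T\beta^2$, a factor of $T$ too small in each error term; ``checking that the iterates stay bounded'' only controls the factor $(2a_T+\|\theta_0-\theta_*\|_2)$ in the lemma, not the term $\sum_{i}(i-1)\|e_{i-1}\|_2\asymp T^2\beta$, so after dividing by $T$ a $T\beta$ contribution remains regardless. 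This matters because with $T=n^{1/5}$ and $\beta\asymp \sqrt{\log(n/\zeta)}\,n^{-2/5}+\log(n/\delta)\sqrt{\log(n/\zeta)}\,n^{-7/10}\epsilon^{-1}$ it is precisely $T\beta$ that produces both rates in the statement, $n^{-1/5}$ and $n^{-1/2}\epsilon^{-1}$; the $1/T$ term alone only gives the first. Your too-optimistic intermediate bound happens to be dominated by the correct one, so the theorem still follows, but the displayed inequality is not what the cited lemma delivers.
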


\begin{remark}
\label{RemChooseT}
Note that the choice of $T$ in Theorem \ref{theorem:Priv_HT_GD_Smooth} is not arbitrary, and is obtained by minimizing the excess risk upper bound over $q$ for $T = n^q$ (cf.\ inequality~\eqref{eq:Smooth_GD_Best_T}).
%In \eqref{eq:Smooth_GD_Best_T}, in order of minimize the upper bound on $\mathcal{R}(\theta_T) - \mathcal{R}(\theta_*)$ in terms of $T$, we need $T$ to scale like $n^q$, for some $q > 0$. Then, by looking at \eqref{eq:Smooth_GD_Best_T}, the best $q > 0$ is found by looking at the intersections of the lines $q, 1 - 3q, 2 - 5q, \frac{1 - 3q}{2}$ and $\frac{2 - 5q}{2}$, and we get $q = \frac{1}{5}$.
\end{remark}

We now present a result based on Nesterov's acceleration. The proof is in Appendix \ref{AppThmPrivSmooth}:

\begin{theorem}
\label{theorem:Priv_HT_AGD_Smooth}
Consider the setup in Theorem \ref{theorem:Priv_HT_GD_Smooth}. For $\widetilde{n} > 8\log(4/\widetilde{\zeta})$, Algorithm \ref{alg:RobPGDNFW} for Nesterov's AGD initialized at $\theta_0, \theta_1 \in \mathbb{R}^p$, with $\eta = \frac{1}{\tau_u}$, varying learning rate at the $t^{\text{th}}$ step $\lambda = \frac{t - 1}{t + 2}$, and using $g(\cdot; Z_t, \widetilde{\zeta}) = \frac{1}{\widetilde{n}}\sum_{i \in Z_t}\nabla\mathcal{L}(\cdot, z_i) + \xi^{(t)}$ as gradient estimator at step $t \in [T]$, returns $(\epsilon, \delta)$-DP iterates $\{\theta_t\}_{t = 1}^T$ such that with probability at least $1 - \zeta$, we have
\begin{align*}
\mathcal{R}(\theta_T) - \mathcal{R}(\theta_*) \lesssim \frac{\log(n/\zeta)}{n^{2/5}} + \frac{\log(n/\zeta)\log^2(n/\delta)}{n\epsilon^2}.
\end{align*}
\end{theorem}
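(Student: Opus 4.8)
The plan is to mirror the proof of Theorem \ref{theorem:Priv_HT_GD_Smooth}, but replace the projected-gradient inexact convergence lemma with its Nesterov analogue (Lemma \ref{lemma:Optimiz_Nester_Conv}, or the inexact version of it in Appendix \ref{sec:Proofs of the Auxiliary Results_Nester}, cf.\ Theorem \ref{theorem:AGD}). First I would invoke Lemma \ref{lemma:Avg_Grad_Priv_GD_vs_AGD}: on a single data chunk $Z_t$ of size $\widetilde n$, the noisy gradient estimator $g(\cdot;Z_t,\widetilde\zeta)=\frac1{\widetilde n}\sum_{i\in Z_t}\nabla\mathcal L(\cdot,z_i)+\xi^{(t)}$ deviates from $\nabla\mathcal R(\theta)$, at any fixed $\theta$, by at most some quantity $e_t$ which, with probability $1-\widetilde\zeta$, is of order $\sqrt{\log(1/\widetilde\zeta)/\widetilde n}$ from the sampling error plus $\sqrt{p\log(T/\delta)}\cdot\frac{L_2\sqrt{T\log(5T/2\delta)\log(2/\delta)}}{\widetilde n\epsilon}$ from the Gaussian noise needed for privacy (the $\sqrt p$ factor absorbed into constants). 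Union-bounding over the $T$ chunks and using independence of $\theta_t$ from chunk $t$'s randomness, I get a uniform inexact-gradient bound holding with probability $1-\zeta$, where $\zeta = T\widetilde\zeta$. The privacy claim $(\epsilon,\delta)$-DP follows exactly as in Theorem \ref{theorem:Priv_HT_GD_Smooth}, by advanced composition over $T$ steps with the per-step Gaussian mechanism calibrated to the $L_2$-sensitivity $2L_2/\widetilde n$ (each chunk being used once, so no extra composition across chunks beyond what the noise scaling already accounts for).

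Next I would feed the per-step error bound into the inexact Nesterov convergence guarantee. For a $\tau_u$-smooth convex $\mathcal R$ optimized over $\mathbb R^p$ with step $\eta=1/\tau_u$ and momentum parameter $\lambda=\frac{t-1}{t+2}$, the exact method gives $\mathcal R(\theta_T)-\mathcal R(\theta_*)=O(\tau_u\|\theta_0-\theta_*\|_2^2/T^2)$. With gradient errors bounded by $\bar e$ at every step, the standard inexact-accelerated analysis (accumulation of errors is the price of acceleration) yields something of the form
\begin{align*}
\mathcal R(\theta_T)-\mathcal R(\theta_*)\lesssim \frac{\|\theta_0-\theta_*\|_2^2}{T^2}+T\|\theta_0-\theta_*\|_2\,\bar e + T^2\bar e^2,
\end{align*}
i.e.\ the error term scales like $T$ (or $T^2$) rather than being dampened, which is exactly why Nesterov is more fragile to noise than plain GD. Treating $\|\theta_0-\theta_*\|_2$ as a constant and writing $\bar e^2 \asymp \frac1{\widetilde n}+\frac{T\log(1/\delta)\log(T/\delta)}{\widetilde n^2\epsilon^2}$ with $\widetilde n\asymp n/T$, I would substitute $T=n^{1/5}$ and simplify: the optimization term $1/T^2=n^{-2/5}$; the sampling-error contribution $T\cdot\bar e_{\mathrm{stat}}=T\sqrt{T/n}=T^{3/2}/n^{1/2}=n^{-1/5}$ at first glance, but carrying the $T^2\bar e^2$ term it becomes $T^2\cdot T/n = T^3/n = n^{-2/5}$, matching; the privacy term $T^2\bar e_{\mathrm{priv}}^2 \asymp T^2\cdot\frac{T^2\log(1/\delta)\log(T/\delta)}{n^2\epsilon^2}=\frac{T^4\log^2(n/\delta)}{n^2\epsilon^2}=\frac{n^{4/5}\log^2(n/\delta)}{n^2\epsilon^2}=\frac{\log^2(n/\delta)}{n^{6/5}\epsilon^2}$ — hmm, that is smaller than the claimed $\frac{\log(n/\zeta)\log^2(n/\delta)}{n\epsilon^2}$, so I'd need to recheck whether the dominant privacy contribution is the linear-in-$\bar e$ cross term $T\bar e_{\mathrm{priv}}\asymp T\cdot\frac{\sqrt T\log(n/\delta)}{n\epsilon}$ versus the quadratic one, and pick up the right power; the stated bound $\frac1{n\epsilon^2}$ suggests the relevant term is $T^3\bar e_{\mathrm{priv,1-step-variance}}$ with the noise variance per step $\asymp \frac{T\log^2(n/\delta)}{\widetilde n^2\epsilon^2}$, giving $T^3\cdot\frac{T^3\log^2(n/\delta)}{n^2\epsilon^2}$... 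I would carefully reconcile the exponents so the final bound reads
\begin{align*}
\mathcal R(\theta_T)-\mathcal R(\theta_*)\lesssim \frac{\log(n/\zeta)}{n^{2/5}}+\frac{\log(n/\zeta)\log^2(n/\delta)}{n\epsilon^2},
\end{align*}
absorbing $\log(1/\widetilde\zeta)=\log(T/\zeta)\asymp\log(n/\zeta)$ factors as needed.

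The main obstacle I anticipate is bookkeeping the error-accumulation in the inexact Nesterov recursion correctly — unlike projected GD, where an $O(1/T)$ rate and a mildly amplified noise term combine cleanly, Nesterov's momentum compounds gradient errors multiplicatively in $T$, and getting the precise power of $T$ (and hence of $n$, after $\widetilde n = n/T$) in both the statistical and the privacy terms requires care. A secondary subtlety is that the momentum parameter $\lambda=\frac{t-1}{t+2}$ varies with $t$, so the clean telescoping one gets with constant momentum does not directly apply; I would rely on the estimate sequence / potential-function argument underlying Theorem \ref{theorem:AGD} in Appendix \ref{sec:Proofs of the Auxiliary Results_Nester}, which is already set up to handle inexact gradients with this schedule, and simply plug in $\bar e$. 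The privacy verification itself is routine given Lemma \ref{lemma:ACC_PRIV_FW_STEP}-style advanced-composition arguments and is identical in structure to Theorem \ref{theorem:Priv_HT_GD_Smooth}.
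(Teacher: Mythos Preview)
Your approach is essentially the same as the paper's, and the privacy argument is identical. Two concrete fixes will make the plan go through cleanly.

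First, the inexact-AGD tool you want is not Theorem~\ref{theorem:AGD} (that result is for the strongly convex regime, with constant momentum $\lambda=\frac{\sqrt{\tau_u}-\sqrt{\tau_l}}{\sqrt{\tau_u}+\sqrt{\tau_l}}$, and its conclusion is an exponential rate plus an additive error term). For the smooth-only setting with the varying schedule $\lambda=\frac{t-1}{t+2}$, the paper invokes Lemma~\ref{lemma:Inexact_AGD_lemma} (the Schmidt et al.\ inexact accelerated bound), which gives
\[
\mathcal R(\theta_T)-\mathcal R(\theta_*)\le \frac{2\tau_u}{(T+1)^2}\Bigl(\|\theta_0-\theta_*\|_2+2\sum_{i=1}^T i\,\tfrac{\|e_{i-1}\|_2}{\tau_u}\Bigr)^2.
\]
With a uniform error bound $\bar e$, the sum is $\asymp T^2\bar e$, so after squaring and dividing by $T^2$ you get $\frac{1}{T^2}+\bar e+T^2\bar e^2$; your cross term $T\bar e$ is off by a factor of $T$, but this is harmless since $\bar e$ is dominated anyway.

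Second, your arithmetic slip in the privacy term is just a dropped factor of $T$ when substituting $\widetilde n=n/T$. Your own expression $\bar e_{\mathrm{priv}}^2\asymp \frac{T\log^2(T/\delta)}{\widetilde n^{\,2}\epsilon^2}$ becomes $\frac{T^3\log^2(n/\delta)}{n^2\epsilon^2}$, not $\frac{T^2}{n^2\epsilon^2}$; hence $T^2\bar e_{\mathrm{priv}}^2\asymp \frac{T^5\log^2(n/\delta)}{n^2\epsilon^2}$, and with $T=n^{1/5}$ this is exactly $\frac{\log^2(n/\delta)}{n\epsilon^2}$ (times the $\log(n/\zeta)$ from the concentration bound), matching the statement.
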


\begin{remark}
As in Remark~\ref{RemChooseT}, the choice of $T$ comes from optimizing the function $T = n^q$ over $q$ (cf.\ inequality~\eqref{eq:Smooth_AGD_Best_T}).
%Note that the choice of $T$ in Theorem \ref{theorem:Priv_HT_AGD_Smooth} is not arbitrary. In \eqref{eq:Smooth_AGD_Best_T}, in order of minimize the upper bound on $\mathcal{R}(\theta_T) - \mathcal{R}(\theta_*)$ in terms of $T$, we need $T$ to scale like $n^q$, for some $q > 0$. Then, by looking at \eqref{eq:Smooth_GD_Best_T}, the best $q > 0$ is found by looking at the intersections of the lines $q, 1 - 3q$ and $2 - 5q$, and we get $q = \frac{1}{5}$.
\end{remark}

%We can now compare the performance of the two algorithms in Theorems \ref{theorem:Priv_HT_GD_Smooth} and \ref{theorem:Priv_HT_AGD_Smooth}.

\begin{remark}
\label{remark:compare_AGD_GD_Smooth}
We can see the benefits of acceleration in the context of classical gradient descent, when using convex and smooth losses. The rate in Theorem \ref{theorem:Priv_HT_GD_Smooth} is $\widetilde{O}\left(\frac{1}{n^{1/5}} + \frac{1}{n^{1/2}\epsilon}\right)$, while the one in Theorem \ref{theorem:Priv_HT_AGD_Smooth} is $\widetilde{O}\left(\frac{1}{n^{2/5}} + \frac{1}{n\epsilon^2}\right)$. Note also that both algorithms have the same gradient complexity since both use $T = n^{1/5}$ iterations and the same sample splitting procedure to compute the gradient estimators at each step. 

Both approaches are private and robust to heavy tails, since they do not make any moment assumptions on the data distribution $P$. The Lipschitz assumption plays a crucial role, as we can see in Lemma \ref{lemma:Avg_Grad_Priv_GD_vs_AGD} of Appendix \ref{sec:Proofs of the Auxiliary Results_Nester}. However, we do not have to assume any finite moments of the distribution $P$, since the Lipschitz property of the gradients takes care of this.
\end{remark}

%%%%%

\subsection{Strongly Convex Risks and Heavy-Tailed Robustness}\label{sec:Strongly Convex Risks}

In this section, we examine the case where the risk is also strongly convex, and we track the scaling with both $n$ and $p$. Specifically, we consider the linear regression with squared error loss model introduced in Section \ref{sec:Linear Regression}. To ensure strong convexity, we assume $\lambda_{\min}(\Sigma) > 0$. We consider heavy-tailed robustness only, since our method to achieve privacy would require Lipschitz gradients, which is not satisfied for the squared error loss with unbounded data. As we will see, the benefits in this case will not be as significant as in Section \ref{sec:Smooth_Priv_HT_GD_AGD}, since in the strongly convex case, the decay of both classical gradient descent (see Appendix \ref{sec:Appendix_GD}) and Nesterov's AGD (see Appendix \ref{sec:Appendix_AGD}) is exponential in the iteration count $T$. The improvement of Nesterov's method is a smaller constant under the exponent $T$ in the exponentially decaying term.
%If we choose $T \asymp \log(n)$, acceleration will result in a smaller constant in front of the $\log(n)$ term, but the rate with $n$ would be the same. 

%\begin{remark}
%The setting of gradient estimators was introduced in \cite{RE}, which looks both at Huber $\epsilon$-contamination and heavy-tailed robustness and employs the classical projected gradient descent method for risk minimization. For completeness, we analyze the Huber $\epsilon$-contamination model \cite{Huber1, Huber2} in Appendix \ref{sec:Huber epsilon-Contamination Robustness}.
%\end{remark}

We now present the main result that allows us to obtain the desired approximate convergence rates for an arbitrary smooth, strongly convex risk $\mathcal{R}$.
%As we will see, acceleration will manifest itself in the form of a smaller constant under the exponent in the exponentially decaying term with the number of iterations, compared to Lemma \ref{lemma:GD}.
%The convergence rate will be slightly worse than the one for the projected gradient method at the beginning of the proof, but using the steps of the proof of the accelerated gradient method in \cite{Recht}, with some adjustments, we will obtain better rates than for projected gradient descent, though under more restrictions on the smoothness and strong convexity parameters.
The proof, which roughly follows the analysis in \cite{Recht}, can be found in Appendix \ref{SecThmAGD}.
In what follows, we define
\begin{align*}
f_1(x) := \frac{(x + 1)\sqrt{1 - 1/\sqrt{x}} - x + 1}{2}, \qquad
f_2(x) := \frac{1 - 2(x - 1)\frac{\sqrt{x} - 1}{\sqrt{x} + 1}}{2\left(1 + 2\frac{\sqrt{x} - 1}{\sqrt{x} + 1}\right)}.
\end{align*}

\begin{theorem}
\label{theorem:AGD}
Let $\mathcal{C} = \mathbb{R}^p$, so that $\theta^* \in \mathcal{C}$. Let $\zeta \in (0, 1)$. Suppose $1 < \frac{\tau_u}{\tau_l} < 1.76$.
Given a gradient estimator $g$ with $f_1\left(\frac{\tau_u}{\tau_l}\right) < \frac{\alpha}{\tau_l} < f_2\left(\frac{\tau_u}{\tau_l}\right)$, Algorithm \ref{alg:RobPGDNFW} for Nesterov's method initialized at $\theta_0, \theta_1 \in \mathcal{C}$, with $\eta = \frac{1}{\tau_u}$ and $\lambda = \frac{\sqrt{\tau_u} - \sqrt{\tau_l}}{\sqrt{\tau_u} + \sqrt{\tau_l}}$, returns iterates $\{\theta_t\}_{t = 1}^T$ such that with probability at least $1 - \zeta$, we have 
\begin{align*}
||\theta_t - \theta^*||_2 \leq \sqrt{\frac{2}{\tau_l}\left(\mathcal{R}(\theta_0) - \mathcal{R}(\theta^*)\right) + ||\theta_0 - \theta^*||_2^2}\left(1 - \sqrt{\frac{\tau_l}{\tau_u}}\right)^{t/2} + \left(\frac{\tau_u}{\tau_l}\right)^{1/4}\sqrt{\frac{R}{\tau_l}},
\end{align*}
with $R = O\left(\alpha(\widetilde{n}, \widetilde{\zeta})^2\right)$ if $\tau_u, \tau_l, \sigma \asymp 1$.
\end{theorem}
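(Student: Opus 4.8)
The plan is to split the argument into (i) a high-probability control of the per-step gradient errors, obtained from the sample-splitting plus a union bound, and (ii) a deterministic convergence analysis of Nesterov's recursion with inexact gradients. For (i): since $\mathcal{C}=\mathbb{R}^p$ and $\mathcal{R}$ is smooth and strongly convex, it is minimized at $\theta^*$ with $\nabla\mathcal{R}(\theta^*)=0$, so $\theta_*=\theta^*$ in Definition~\ref{def:ge}. At step $t$, Algorithm~\ref{alg:RobPGDNFW} for Nesterov's method evaluates the gradient estimator on the fresh chunk $Z_t$ at the look-ahead point $y_{t}=\theta_t+\lambda(\theta_t-\theta_{t-1})$, which is a deterministic function of $\theta_0,\theta_1$ and the earlier chunks $Z_1,\dots,Z_{t-1}$, hence independent of $Z_t$. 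Thus Definition~\ref{def:ge} applies conditionally on $y_t$: with probability at least $1-\widetilde{\zeta}$,
\[
\|e_t\|_2:=\bigl\|g(y_t;Z_t,\widetilde{\zeta})-\nabla\mathcal{R}(y_t)\bigr\|_2\le \alpha(\widetilde{n},\widetilde{\zeta})\,\|y_t-\theta^*\|_2+\beta(\widetilde{n},\widetilde{\zeta}).
\]
A union bound over $t\in[T]$, with $\widetilde{\zeta}$ as in the section preamble so that the total failure probability is at most $\zeta$, places us on an event of probability at least $1-\zeta$ on which this holds simultaneously for all $t$; condition on this event henceforth.

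For (ii), the core is a deterministic inexact-gradient version of the Nesterov convergence lemma (Lemma~\ref{lemma:Optimiz_Nester_Conv}), following the estimate-sequence/Lyapunov analysis of~\cite{Recht}. Introduce the potential $V_t:=\mathcal{R}(\theta_t)-\mathcal{R}(\theta^*)+\tfrac{\tau_l}{2}\|\theta_t-\theta^*\|_2^2$ (or the standard variant built from the auxiliary momentum sequence). With $\eta=1/\tau_u$ and $\lambda=\tfrac{\sqrt{\tau_u}-\sqrt{\tau_l}}{\sqrt{\tau_u}+\sqrt{\tau_l}}$, the exact recursion contracts $V_t$ by $1-\sqrt{\tau_l/\tau_u}$ per step; propagating the perturbation $e_{t-1}$ through the two-term momentum update adds terms that are linear and quadratic in $\|e_{t-1}\|_2$. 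Using strong convexity in the form $\tfrac{\tau_l}{2}\|\theta_s-\theta^*\|_2^2\le \mathcal{R}(\theta_s)-\mathcal{R}(\theta^*)\le V_s$ (so $\|\theta_s-\theta^*\|_2\le\sqrt{2V_s/\tau_l}$), the triangle-inequality bound $\|y_{t-1}-\theta^*\|_2\lesssim\sqrt{V_{t-1}/\tau_l}+\sqrt{V_{t-2}/\tau_l}$, the event from step (i) to write $\|e_{t-1}\|_2\le\alpha\bigl(\sqrt{2V_{t-1}/\tau_l}+\sqrt{2V_{t-2}/\tau_l}\bigr)+\beta$, and then Young's inequality, one obtains a one-step recursion
\[
V_t\;\le\;\rho\!\left(\tfrac{\tau_u}{\tau_l},\tfrac{\alpha}{\tau_l}\right)\max\{V_{t-1},V_{t-2}\}\;+\;C\,\beta(\widetilde{n},\widetilde{\zeta})^2,
\]
where $\rho$ depends only on $\tau_u/\tau_l$ and $\alpha/\tau_l$, and $C$ is an absolute constant times a function of the problem constants. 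The hypotheses $1<\tau_u/\tau_l<1.76$ and $f_1(\tau_u/\tau_l)<\alpha/\tau_l<f_2(\tau_u/\tau_l)$ are exactly the conditions under which the quadratic inequality defining $\rho$ admits a solution with $\rho\le 1-\sqrt{\tau_l/\tau_u}$; the window $(f_1(\tau_u/\tau_l),f_2(\tau_u/\tau_l))$ is nonempty only for $\tau_u/\tau_l<1.76$, since that is where $f_1$ and $f_2$ cross.

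It then remains to unroll $V_t\le(1-\sqrt{\tau_l/\tau_u})\max\{V_{t-1},V_{t-2}\}+C\beta^2$ (taking $\theta_1=\theta_0$, so $V_1\le V_0$, or otherwise absorbing $V_1$) and sum the resulting geometric series to get $V_T\le(1-\sqrt{\tau_l/\tau_u})^{T}V_0+\tfrac{C\beta(\widetilde{n},\widetilde{\zeta})^2}{1-\sqrt{\tau_l/\tau_u}}$. Strong convexity gives $\|\theta_T-\theta^*\|_2\le\sqrt{2V_T/\tau_l}\le\sqrt{2/\tau_l}\bigl((1-\sqrt{\tau_l/\tau_u})^{T/2}\sqrt{V_0}+\sqrt{C\beta^2/(1-\sqrt{\tau_l/\tau_u})}\bigr)$. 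Here $\tfrac{2}{\tau_l}V_0=\tfrac{2}{\tau_l}(\mathcal{R}(\theta_0)-\mathcal{R}(\theta^*))+\|\theta_0-\theta^*\|_2^2$, and since $1-(1-\sqrt{\tau_l/\tau_u})=\sqrt{\tau_l/\tau_u}$ we have $(1-\sqrt{\tau_l/\tau_u})^{-1/2}=(\tau_u/\tau_l)^{1/4}$; setting $R:=2C\beta(\widetilde{n},\widetilde{\zeta})^2$ reproduces the stated bound exactly. In the regime $\tau_u,\tau_l,\sigma\asymp 1$, the $G_{MOM}$ estimator satisfies $\beta\asymp\sigma\,\alpha\asymp\alpha$ (cf.\ the form of Lemma~\ref{lemma:geri}), so $R=O(\alpha(\widetilde{n},\widetilde{\zeta})^2)$, as claimed. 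The main obstacle is step (ii): Nesterov's iteration is not a contraction in any obvious metric, so the inexact analysis must be carried through a Lyapunov/estimate-sequence function, and the gradient error enters at the look-ahead point $y_{t-1}$, coupling two consecutive iterates. Tracking this coupling and extracting the precise admissible window for $\alpha/\tau_l$ — equivalently, showing the effective contraction remains $1-\sqrt{\tau_l/\tau_u}$ only when $\tau_u/\tau_l<1.76$ — is the delicate part; the union bound and the geometric-series bookkeeping are routine.
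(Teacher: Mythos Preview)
Your step (i) is fine and matches the paper. The gap is in (ii), and it is structural: the paper's argument is genuinely \emph{two-stage}, and the conditions $f_1<\alpha/\tau_l<f_2$ do not play the role you assign them. First, from $\|e_t\|_2\le\alpha\|y_t-\theta^*\|_2+\beta$ together with Lemma~\ref{lemma:auxilGD} (applied with the artificial smoothness constant $L_u=2\tau_u-\tau_l$, so that $\eta=1/\tau_u=2/(L_u+\tau_l)$), the paper derives a second-order linear recursion
\[
\|\theta_{t+1}-\theta^*\|_2\le(1+\lambda)k\,\|\theta_t-\theta^*\|_2+\lambda k\,\|\theta_{t-1}-\theta^*\|_2+\eta\beta,\qquad k=\tfrac{L_u-\tau_l+2\alpha}{L_u+\tau_l}.
\]
The upper bound $\alpha/\tau_l<f_2(\tau_u/\tau_l)$ is exactly the condition making the larger characteristic root $<1$; solving the recursion (Lemma~\ref{lemma:9}) gives a \emph{uniform-in-$t$} bound on $\|\theta_t-\theta^*\|_2$. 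The \emph{lower} bound $\alpha/\tau_l>f_1(\tau_u/\tau_l)$ is then used to show $k\lesssim\alpha$, which sharpens this uniform bound to $\|\theta_t-\theta^*\|_2\le C_3\alpha+\eta\beta/(1-(1+2\lambda)k)=O(\alpha)$. Only after this does the Lyapunov analysis run, with the momentum-aware potential $V_t=\mathcal{R}(\theta_t)-\mathcal{R}(\theta^*)+\tfrac{\tau_u}{2}\|\widetilde\theta_t-\rho^2\widetilde\theta_{t-1}\|_2^2$ (not your single-iterate version with $\tau_l$), yielding $V_{t+1}\le\rho^2V_t+R_t$ with $\rho^2=1-\sqrt{\tau_l/\tau_u}$ \emph{exactly}. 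The remainder $R_t$ contains cross terms like $\|e_t\|_2\cdot\|y_t-\theta^*\|_2$, which are bounded \emph{uniformly} by plugging in the first-stage $O(\alpha)$ iterate bound; this is what produces $R=O(\alpha^2)$ (not $2C\beta^2$ --- $\alpha$ enters the product directly).

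Your one-stage plan of absorbing the $\alpha$-part of $\|e_t\|$ into the contraction factor via Young's inequality does not deliver the stated rate. Absorption of terms like $\alpha\|y_t-\theta^*\|_2^2$ can only \emph{increase} the effective contraction beyond $1-\sqrt{\tau_l/\tau_u}$; your claim that the $(f_1,f_2)$ window is ``exactly the condition under which $\rho\le 1-\sqrt{\tau_l/\tau_u}$'' is backwards. More tellingly, in an absorption scheme a \emph{lower} bound on $\alpha/\tau_l$ has no role --- smaller $\alpha$ can only help --- whereas in the paper it is precisely what forces the preliminary iterate bound to scale like $\alpha$ rather than a constant, and hence what makes $R=O(\alpha^2)$ instead of $O(1)$. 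Without that first-stage crude bound, the cross terms in $R_t$ cannot be controlled by $C\beta^2$ and the argument does not close.
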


%Let us take a step back and analyze what we have achieved. Under the assumption that $\frac{\tau_u}{\tau_l}$ is bounded above by roughly $1.76$,
In other words, the bound on $||\theta_t - \theta^*||_2$ takes the form of a constant multiplied by an exponential term plus a constant error, i.e., independent of $t$.

\begin{remark}
\label{remark:Nester_Iter_LB}
Note that for $x \geq 1$ we have $f_2(x) \leq \frac{1}{2}$, so because $\tau_u > \tau_l$, the gradient estimator is stable.
Similar to the case of projected gradient descent, the first term in the upper bound in Theorem \ref{theorem:AGD} is decreasing in $t$ and the second is increasing, so for a fixed $n$ and probability $\zeta$, we run Nesterov's AGD to make the first term is smaller than the second, leading to the choice
$$T \ge \log_{\left(1 - \sqrt{\frac{\tau_l}{\tau_u}}\right)^{-1/2}}\left(\left(\frac{\tau_l}{\tau_u}\right)^{1/4}\sqrt{\frac{\tau_l}{R}}\sqrt{\frac{2}{\tau_l}\left(\mathcal{R}(\theta_0) - \mathcal{R}(\theta^*)\right) + ||\theta_0 - \theta^*||_2^2}\right).
$$
%with $\rho = \sqrt{1 - \sqrt{\frac{\tau_l}{\tau_u}}}$.
\end{remark}

\begin{remark}
\label{RemFaster}
A straightforward calculation shows that since $f_1\left(\frac{\tau_u}{\tau_l}\right) < \frac{\alpha}{\tau_l}$, the convergence rate of robust Nesterov's AGD is faster than the convergence rate of robust projected gradient descent in the sense that the base of the exponent is smaller.
%Since $f_1\left(\frac{\tau_u}{\tau_l}\right) < \frac{\alpha}{\tau_l}$, we have
%\begin{align}
%\label{eq:faster_rates}
%\frac{(\frac{\tau_u}{\tau_l} + 1)\sqrt{1 - \sqrt{\frac{\tau_l}{\tau_u}}} - \frac{\tau_u}{\tau_l} + 1}{2} < \frac{\alpha}{\tau_l} \iff \sqrt{1 - \sqrt{\frac{\tau_l}{\tau_u}}} < \frac{2\frac{\alpha}{\tau_l} + \frac{\tau_u}{\tau_l} - 1}{\frac{\tau_u}{\tau_l} + 1},
%\end{align}
%and the RHS of the last inequality is the exponential term for the projected gradient descent method.
\end{remark}

%Now we discuss heavy-tailed robustness. Once again, for the purpose of gradient estimators, we consider Algorithm \ref{alg:HTGE}, as we did in Section \ref{sec:Biased Parameter Estimation HT}. We focus again on the linear regression case, although as in \cite{RE}, we could study the general GLM case too, since the concentration result of the output of the $G_{MOM}$ algorithm in Lemma \ref{lemma:htlemma} does not require bounded $4^{th}$ moments.
%and bound on the covariance for GLMs in \cite{RE} is correct.
%However, for our purpose of comparing gradient methods and for understanding the scaling with $n$ and $p$, we choose to focus on the linear regression case.
%In Lemma \ref{lemma:geglmht} of Appendix \ref{Appendix A}, we present a result from \cite{RE} which gives the form of $\alpha$ and $\beta$ for our gradient estimator based on Algorithm \ref{alg:HTGE}, in the heavy-tailed setting.

%%%%%

\subsubsection{Example: Linear Regression}

We now present applications of projected gradient descent and Nesterov's AGD to heavy-tailed linear regression. Note that as in \cite{RE}, we could also study general GLMs.
%since the concentration result of the output of the $G_{MOM}$ algorithm in Lemma \ref{lemma:htlemma} does not require bounded $4^{\text{th}}$ moments.
%In Lemma \ref{lemma:geglmht} of Appendix \ref{Appendix A}, we present a result from \cite{RE} which gives the form of $\alpha$ and $\beta$ for our gradient estimator based on Algorithm \ref{alg:HTGE}, in the heavy-tailed setting.
%\begin{remark}
%Note again that \cite{RE} asks for $\frac{p \log(1/\widetilde{\zeta})}{\tau_l^2}$ since they need $\alpha < \tau_l$. Since we ask for $\alpha < \tau_l/2$, we only affect the lower bound on $\widetilde{n}$ by a factor of $4$, so up to absolute constants, nothing changes.
%\end{remark}
The proof of the following result is in Appendix \ref{AppThmHTAGD}:

\begin{theorem}
\label{theorem:htAGD}
Let $\mathcal{C} = \mathbb{R}^p$, so $\theta^* \in \mathcal{C}$. Let $\zeta \in (0, 1)$. Consider the linear regression with squared error loss model from Example~\ref{sec:LR_Squared_Loss} under the heavy-tailed setting. Suppose $1 < \frac{\tau_u}{\tau_l} < 1.76$. Then there is an absolute constant $C_1 > 0$, such that if 
\begin{align*}
\left(\frac{C_1}{\tau_l f_2\left(\frac{\tau_u}{\tau_l}\right)}\right)^2p \log(1/\widetilde{\zeta}) < \widetilde{n} < \left(\frac{C_1}{\tau_l f_1\left(\frac{\tau_u}{\tau_l}\right)}\right)^2p \log(1/\widetilde{\zeta}),
\end{align*}
Algorithm \ref{alg:RobPGDNFW} for Nesterov's AGD initialized at $\theta_0, \theta_1 \in \mathcal{C}$, with $\eta = \frac{2}{\tau_u}$ and $\lambda = \frac{\sqrt{\tau_u} - \sqrt{\tau_l}}{\sqrt{\tau_u} + \sqrt{\tau_l}}$ and using Algorithm \ref{alg:HTGE} as gradient estimator, with $\alpha(\widetilde{n}, \widetilde{\zeta}) = C_1\sqrt{\frac{p \log(1/\widetilde{\zeta})}{\widetilde{n}}}$, returns iterates $\{\theta_t\}_{t = 1}^T$ such that with probability at least $1 - \zeta$, with $\widetilde{\zeta}$ such that $b \leq \widetilde{n}/2$, we have
\begin{align}
\label{eq:ht_Nester}
||\theta_t - \theta^*||_2 \leq \sqrt{\frac{2}{\tau_l}\left(\mathcal{R}(\theta_0) - \mathcal{R}(\theta^*)\right) + ||\theta_0 - \theta^*||_2^2}\left(1 - \sqrt{\frac{\tau_l}{\tau_u}}\right)^{t/2} + \left(\frac{\tau_u}{\tau_l}\right)^{1/4}\sqrt{\frac{R}{\tau_l}}.
\end{align}
Here, $R = O\left(\alpha(\widetilde{n}, \widetilde{\zeta})^2\right)$ if $\sigma \asymp 1$.
\end{theorem}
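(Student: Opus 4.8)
The plan is to obtain Theorem~\ref{theorem:htAGD} as a direct instantiation of the general convergence result Theorem~\ref{theorem:AGD}, so that the work reduces to three checks: (i) identifying the smoothness and strong-convexity constants, (ii) exhibiting a gradient estimator in the sense of Definition~\ref{def:ge} with the claimed $\alpha$, and (iii) verifying that the hypothesis $f_1(\tau_u/\tau_l) < \alpha/\tau_l < f_2(\tau_u/\tau_l)$ of Theorem~\ref{theorem:AGD} is equivalent to the stated two-sided bound on $\widetilde n$. For the first, by Example~\ref{sec:LR_Squared_Loss} the squared-error risk has constant Hessian $\nabla^2\mathcal{R}(\theta) = \Sigma$, so $\mathcal{R}$ is $\tau_u$-smooth and $\tau_l$-strongly convex with $\tau_u = \lambda_{\max}(\Sigma)$ and $\tau_l = \lambda_{\min}(\Sigma) > 0$, and $\theta^*$ is the unique minimizer over $\mathcal{C} = \mathbb{R}^p$ with $\nabla\mathcal{R}(\theta^*) = 0$. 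The assumption $1 < \tau_u/\tau_l < 1.76$ is exactly what makes the interval $\left(f_1(\tau_u/\tau_l), f_2(\tau_u/\tau_l)\right)$ nonempty, as required.

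Second, I would invoke the $G_{\mathrm{MOM}}$ guarantee for linear regression (Lemma~\ref{lemma:geglmht}, from \cite{RE}; cf.\ also Lemma~\ref{lemma:htlemma}). Writing $\nabla\mathcal{L}(\theta, (x,y)) = \big(x^T(\theta - \theta^*) - w\big)x$, its covariance is controlled by the fourth moments of $x$ and by $\sigma_2^2 = \mathbb{E}[w^2]$, and the median-of-means concentration (with $b \asymp \log(1/\widetilde\zeta)$ buckets, which forces $\widetilde n \geq 2b$, i.e.\ the stated constraint on $\widetilde\zeta$) yields, for each fixed $\theta$, a bound $\|g(\theta; \mathcal{D}_n, \widetilde\zeta) - \nabla\mathcal{R}(\theta)\|_2 \leq \alpha(\widetilde n, \widetilde\zeta)\|\theta - \theta^*\|_2 + \beta(\widetilde n, \widetilde\zeta)$ with $\alpha(\widetilde n, \widetilde\zeta) = C_1\sqrt{p\log(1/\widetilde\zeta)/\widetilde n}$ and $\beta(\widetilde n, \widetilde\zeta) = O\big((1 + \sigma_2)\sqrt{p\log(1/\widetilde\zeta)/\widetilde n}\big)$. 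Since this concentration is only for a fixed $\theta$, I would split the data into $T$ chunks of size $\widetilde n$ and use independence of the randomness in $\theta_t$ from chunk $t$'s samples, then union bound over the $T$ steps with parameter $\widetilde\zeta = \lfloor\zeta/T\rfloor$, so the final event holds with probability at least $1 - \zeta$.

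Third, substituting $\alpha = C_1\sqrt{p\log(1/\widetilde\zeta)/\widetilde n}$ into $f_1(\tau_u/\tau_l) < \alpha/\tau_l < f_2(\tau_u/\tau_l)$ and solving for $\widetilde n$ gives precisely
\[
\left(\frac{C_1}{\tau_l f_2\!\left(\tau_u/\tau_l\right)}\right)^{2} p\log(1/\widetilde\zeta) \;<\; \widetilde n \;<\; \left(\frac{C_1}{\tau_l f_1\!\left(\tau_u/\tau_l\right)}\right)^{2} p\log(1/\widetilde\zeta),
\]
which is the hypothesis of the theorem; moreover $\alpha/\tau_l < f_2 \leq 1/2$ (Remark~\ref{remark:Nester_Iter_LB}) makes $g$ stable. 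With $\eta$ and $\lambda$ set as in Theorem~\ref{theorem:AGD} (here $\eta = 2/\tau_u$, reflecting the normalization of the squared loss, and $\lambda = (\sqrt{\tau_u} - \sqrt{\tau_l})/(\sqrt{\tau_u} + \sqrt{\tau_l})$), Theorem~\ref{theorem:AGD} applies and yields \eqref{eq:ht_Nester} with $R = O(\alpha(\widetilde n, \widetilde\zeta)^2)$ when $\sigma_2 \asymp 1$ (more generally $R = O\big((1 + \sigma_2)^2\alpha^2\big)$, tracking the $\beta$ term into the error).

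The main obstacle I anticipate is the second step: establishing the $G_{\mathrm{MOM}}$ deviation bound with the correct $\alpha$ and $\beta$ for the heavy-tailed squared loss. This requires carefully bounding the trace and operator norm of the covariance of $\big(x^T(\theta - \theta^*) - w\big)x$ under only bounded fourth moments of $x$ and finite $\sigma_2^2$, and then feeding this into the Minsker-type median-of-means concentration; the delicate point is keeping the contribution proportional to $\|\theta - \theta^*\|_2$ cleanly separated from the constant term $\beta$, which is what makes $g$ a genuine gradient estimator in the sense of Definition~\ref{def:ge} rather than merely a uniform error bound. Everything after that—the data-splitting union bound and the algebraic inversion of the $f_1, f_2$ inequalities—is routine bookkeeping.
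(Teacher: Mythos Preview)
Your proposal is correct and follows essentially the same route as the paper: invoke Lemma~\ref{lemma:geglmht} to obtain the $G_{\mathrm{MOM}}$ gradient estimator with $\alpha(\widetilde n,\widetilde\zeta)\asymp\sqrt{p\log(1/\widetilde\zeta)/\widetilde n}$, observe that the two-sided bound on $\widetilde n$ is exactly the inversion of $f_1(\tau_u/\tau_l)<\alpha/\tau_l<f_2(\tau_u/\tau_l)$, and then apply Theorem~\ref{theorem:AGD}. Your write-up is simply more explicit than the paper's three-line proof; in particular, the ``main obstacle'' you flag is handled in the paper by citing Lemma~\ref{lemma:geglmht} from \cite{RE} as a black box, so no new covariance analysis is required here.
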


%%%%%

\subsubsection{Comparisons}
\label{sec:Comparisons in the Heavy-Tailed Setting}

Assume $\sigma_2 \asymp 1$. In the heavy-tailed setting, the error for projected gradient descent (Lemma \ref{lemma:GLMhtGD} in Appendix~\ref{sec:Proofs of the Main Results_Nester}) scales as $O(\alpha(\widetilde{n}, \widetilde{\zeta}))$, since $\tau_u, \tau_l \asymp 1$. In particular, we need $\widetilde{n} \gtrsim p \log(1/\widetilde{\zeta})$. 
%so that $\frac{\widetilde{n}}{p}$ behaves like $\widetilde{\Omega}\left(\log(1/\widetilde{\zeta})\right)$. 
Nesterov’s method (Theorem \ref{theorem:htAGD}) converges faster in the exponentially decaying term (see Remark \ref{RemFaster}), but since $\tau_u, \tau_l \asymp 1$, the requirement $p \log(1/\widetilde{\zeta}) \lesssim \widetilde{n} \lesssim p \log(1/\widetilde{\zeta})$ forces the error term $\sqrt{R} \asymp \alpha(\widetilde{n}, \widetilde{\zeta})$ to remain bounded away from zero as $n, p \rightarrow \infty$. Hence, Nesterov's AGD yields faster rates with $t$, while keeping the error term asymptotically the same as with projected gradient descent.

We can choose $T$ to balance the exponentially decaying term and error. Since $k = \frac{\tau_u - \tau_l + 2\alpha(\widetilde{n}, \widetilde{\zeta})}{\tau_u + \tau_l} < \frac{\tau_u}{\tau_u + \tau_l} < 1$, setting $T = \log_{\frac{\tau_u + \tau_l}{\tau_u}}(\sqrt{n})$ in inequality \eqref{eq:GLMhtGD} yields  
\begin{align}
\label{eq:ht_gdd}
||\theta_T - \theta^*||_2 \lesssim \frac{||\theta_0 - \theta^*||_2}{\sqrt{n}} + \sqrt{\frac{p \log\left(n\right)\log\left(\log\left(n\right)/\zeta\right)}{n}}.
\end{align}
Note that due to stability, we can bound $k$ and $\frac{1}{1 - k}$ above by absolute constants, so $T$ can be chosen independently of $\alpha(\widetilde{n}, \widetilde{\zeta})$ to make $T \asymp \log(n)$.  

For Nesterov's AGD, by $\tau_u$-smoothness and $\nabla\mathcal{R}(\theta^*) = 0$, we have $\mathcal{R}(\theta_0) - \mathcal{R}(\theta^*) \lesssim ||\theta_0 - \theta^*||_2^2$, so taking $T = \frac{2\log(\sqrt{n})}{\log\left(\frac{1}{1 - \sqrt{\frac{\tau_l}{\tau_u}}}\right)}$ in inequality \eqref{eq:ht_Nester} results in 
\begin{align}
\label{eq:ht_Nesterr}
||\theta_t - \theta^*||_2 \lesssim \frac{||\theta_0 - \theta^*||_2}{\sqrt{n}} + \sqrt{\frac{p\log(n)\log(\log(n))}{n}}.
\end{align}
Therefore, when $p$ and $n$ grow together such that $\frac{p\log(n)\log(\log(n))}{n} \asymp 1$, both methods behave similarly in terms of statistical error, while Nesterov's AGD requires fewer iterations.
%since $\frac{\tau_u}{\tau_u + \tau_l} > \left(1 - \sqrt{\frac{\tau_l}{\tau_u}}\right)^{1/2}$.

Finally, if $||\theta_0 - \theta^*||_2$ is an absolute constant in $n$ and $p$, the bounds \eqref{eq:ht_gdd} and \eqref{eq:ht_Nesterr} include terms decaying as $\frac{1}{\sqrt{n}}$, plus non-decaying error terms, whereas if $||\theta_0 - \theta^*||_2 \lesssim \sqrt{p}$, the overall rate becomes $\sqrt{\frac{p}{n}}$, which is minimax optimal for $w \sim N\left(0, \sigma_2^2\right)$ (see \cite{Stats_Info_Duchi}).

%%%%%

\section{Simulations}
\label{SecSims}

In this section, we report the results of simulations on synthetic data. Brief descriptions are provided in the figure captions, with more details in Appendix~\ref{AppSims}.

\begin{figure}[htbp]
  \centering
  % ----------------- Row 1 -----------------
  \begin{minipage}[b]{0.45\linewidth}
    \centering
    \includegraphics[width=\linewidth]{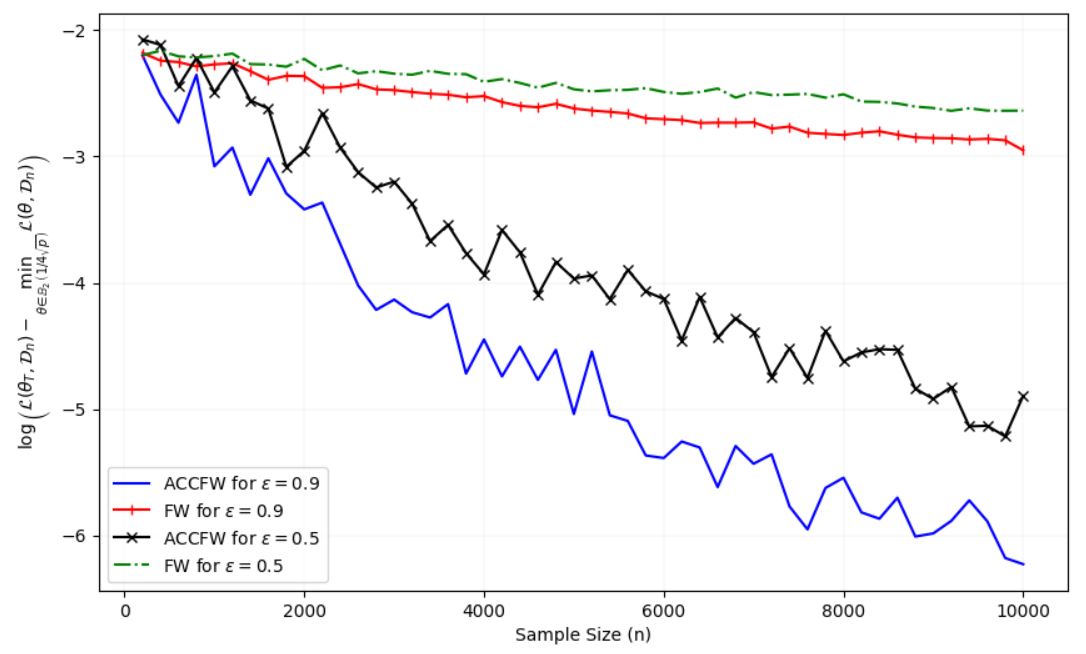}
    \caption{We compare Theorem \ref{theorem:UPRidge} with Lemma \ref{lemma:NOPLconvDPFW}, using Algorithms \ref{alg:PrivFWERM} and \ref{alg:PrivNon-accERM}. The plot shows the log excess mean squared error loss vs.\ $n$. In line with Remark \ref{remark:compare_dist_free_ACCFW}, Algorithm \ref{alg:PrivFWERM} outperforms Algorithm \ref{alg:PrivNon-accERM}, and larger $\epsilon$ leads to faster convergence.}
    \label{fig:plot1}
  \end{minipage}\hfill
  \begin{minipage}[b]{0.45\linewidth}
    \centering
    \includegraphics[width=\linewidth]{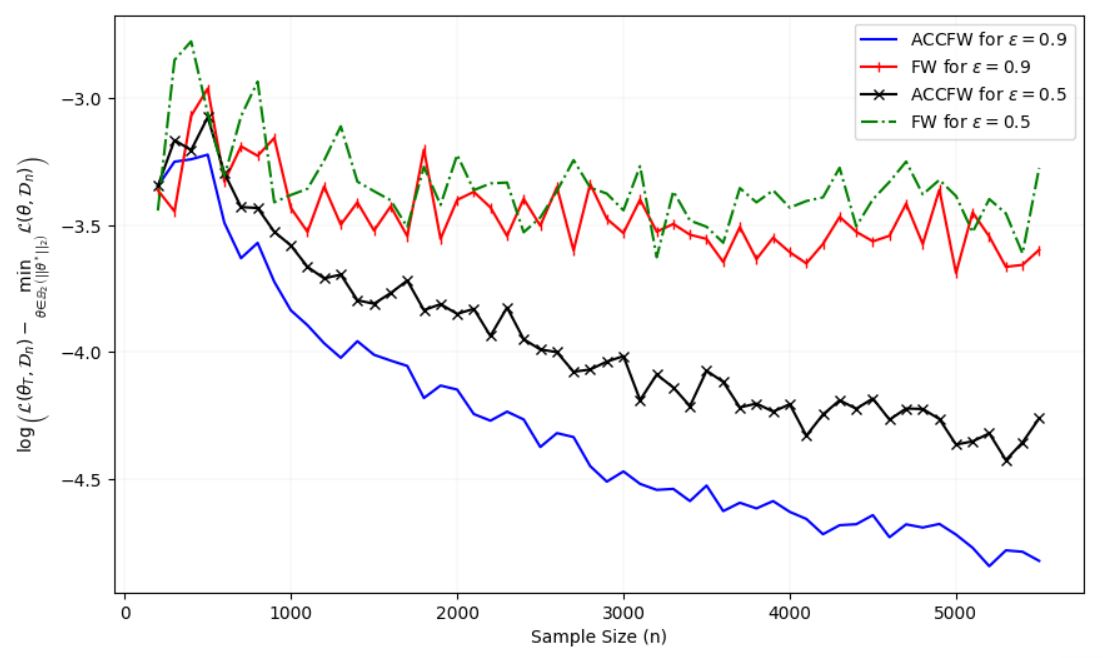}
    \caption{We compare Theorem \ref{theorem:ERM_UP_C_Inc} with Lemma \ref{lemma:NOPLconvDPFW}, using Algorithms \ref{alg:PrivFWERM} and \ref{alg:PrivNon-accERM}. The plot shows the log excess empirical risk vs.\ $n$. We can see that Algorithm \ref{alg:PrivFWERM} does better than Algorithm \ref{alg:PrivNon-accERM} (cf.\ Remark \ref{remark:Best_q_ERM}), and larger $\epsilon$ leads to faster convergence.}
    \label{fig:plot2}
  \end{minipage}\hfill
  
  \vspace{1em}
  
   % ----------------- Row 2 -----------------
  
  \begin{minipage}[b]{0.45\linewidth}
    \centering
    \includegraphics[width=\linewidth]{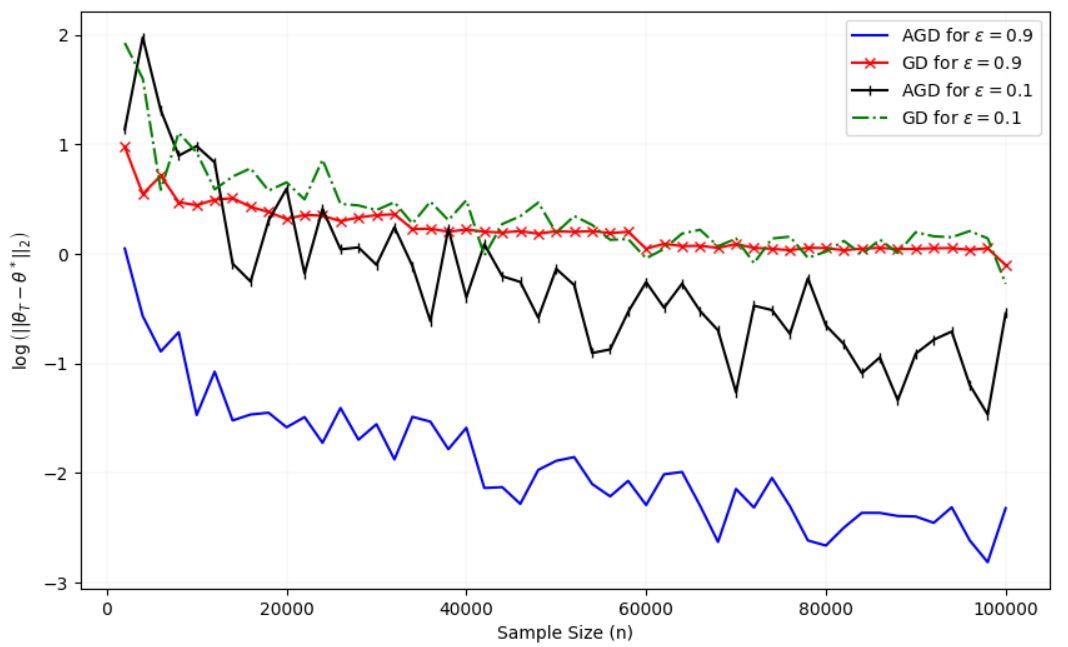}
    \caption{We compare Nesterov’s AGD (Theorem \ref{theorem:Priv_HT_AGD_Smooth}) with projected GD (Theorem \ref{theorem:Priv_HT_GD_Smooth}) using Algorithm \ref{alg:RobPGDNFW} and the pseudo-Huber loss (with $q = \frac{1}{5}$, see Appendix \ref{Appendix A}). The plot displays $\log(\|\theta_T-\theta^*\|_2)$ vs.\ $n$. Nesterov’s AGD outperforms projected GD (cf.\ Remark \ref{remark:compare_AGD_GD_Smooth}), and larger $\epsilon$ accelerates convergence. By the smoothness of the risk (cf.\ Lemma \ref{lemma:pHu_prelim}), we can further deduce a bound on $\mathcal{R}(\theta_T)-\mathcal{R}(\theta^*)$.}
    \label{fig:plot3}
  \end{minipage}\hfill
  \begin{minipage}[b]{0.45\linewidth}
    \centering
    \includegraphics[width=\linewidth]{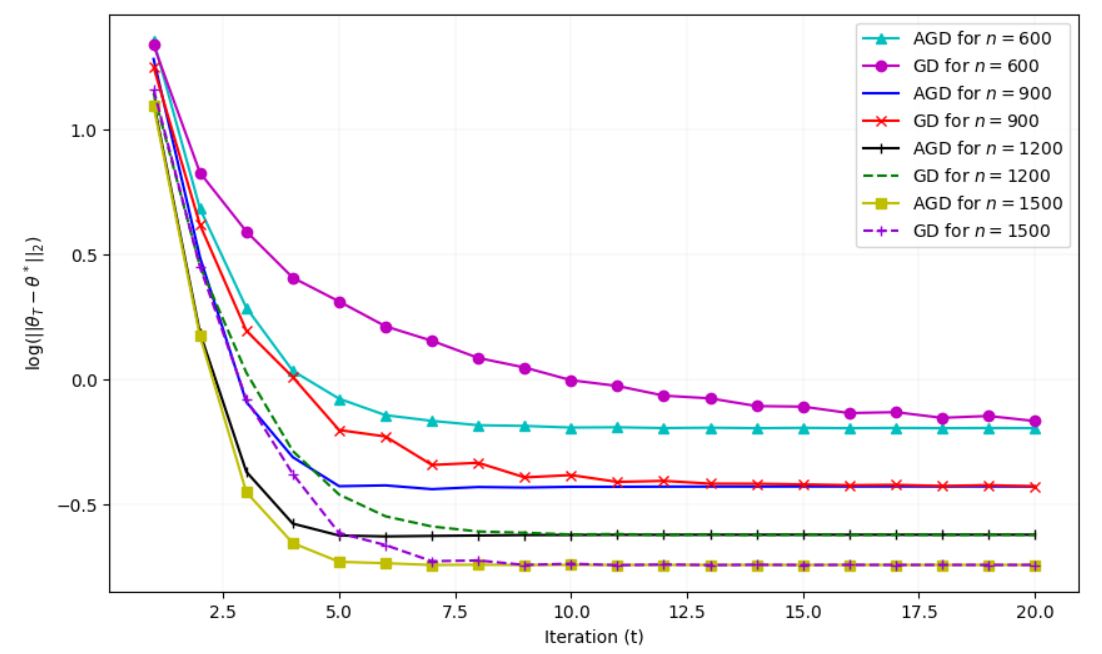}
    \caption{We compare Nesterov’s AGD (Theorem \ref{theorem:htAGD}) with projected GD (Lemma \ref{lemma:GLMhtGD}) using Algorithm \ref{alg:RobPGDNFW} and the squared error loss. The plot shows $\log(\|\theta_t-\theta^*\|_2)$ vs.\ $t$. We can see a faster convergence of Nesterov's AGD in the exponentially decaying term with $t$ (cf.\ Remark~\ref{RemFaster}), while a larger $n$ leads to a smaller error term, in line with the results of Theorem \ref{theorem:htAGD} and Lemma \ref{lemma:GLMhtGD}.}
    \label{fig:plot4}
  \end{minipage}\hfill 
  
\end{figure}
\begin{figure}[htbp]
  \centering
 
  \begin{minipage}[b]{0.45\linewidth}
    \centering
    \includegraphics[width=\linewidth]{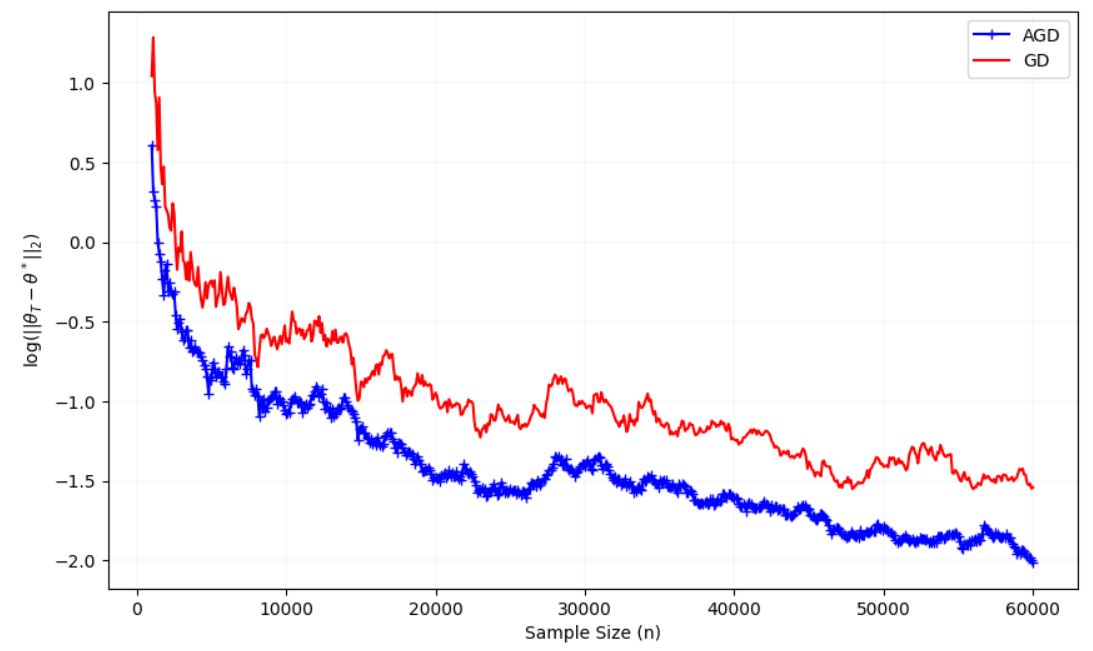}
    \caption{We compare Nesterov’s AGD (Theorem \ref{theorem:htAGD}) to projected GD (Lemma \ref{lemma:GLMhtGD}). We plot $\log||\theta_T-\theta^*||_2$ vs.\ $n$. The results show that Nesterov’s AGD yields a slight improvement, supporting the prediction that AGD’s advantage is up to an absolute constant.}
    \label{fig:plot5}
  \end{minipage}\hfill
  \begin{minipage}[b]{0.45\linewidth}
    \centering
    \includegraphics[width=\linewidth]{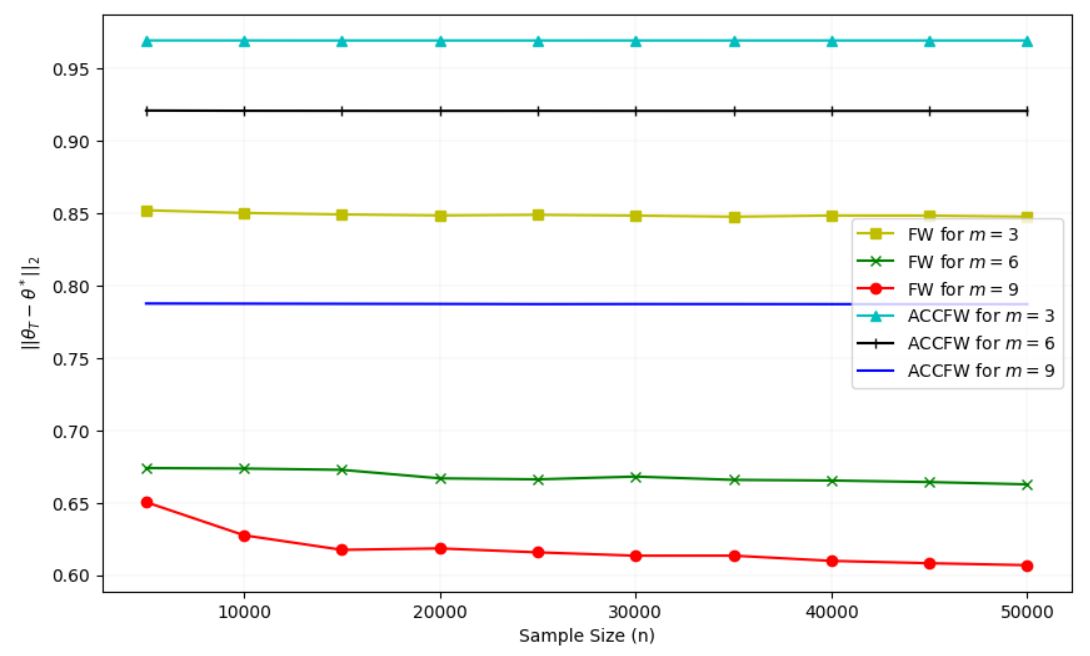}
    \caption{We compare Theorem \ref{theorem:ACCFW_lambda=0} with Theorem \ref{theorem:NonACC_lambda=0}, using Algorithms \ref{alg:PrivFWERM} and \ref{alg:PrivNon-accERM}. The plot shows $\|\theta_T-\theta^*\|_2$ vs.\ $n$. As predicted by Theorems \ref{theorem:ACCFW_lambda=0} and \ref{theorem:NonACC_lambda=0}, the error plateaus at non-zero levels due to the $c_{\mathcal{K}}$ term. The non-accelerated version converges more slowly but ultimately incurs less error, while the accelerated version reaches its plateau faster.}
    \label{fig:plot6}
  \end{minipage}
  \vspace{1em}  % Vertical space between rows
  
  \begin{minipage}[b]{0.45\linewidth}
    \centering
    \includegraphics[width=\linewidth]{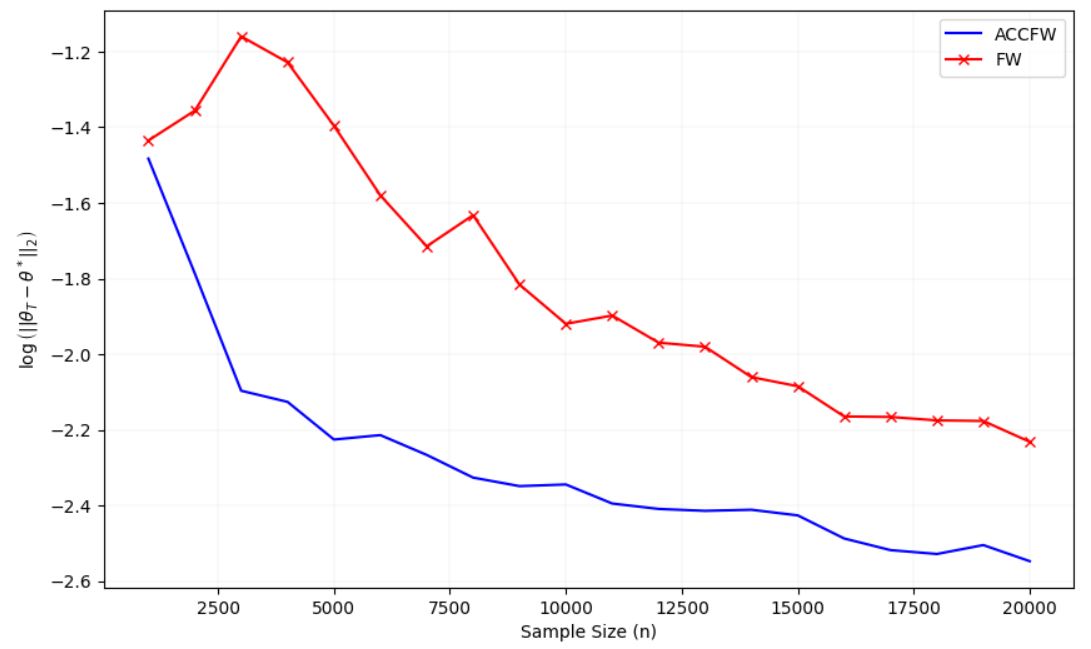}
    \caption{We compare Nesterov’s AGD (Theorem \ref{theorem:htAGD}) to projected GD (Lemma \ref{lemma:GLMhtGD}). The plot shows $\log(\|\theta_T-\theta^*\|_2)$ versus $n$. We observe that Algorithm \ref{alg:PrivFWERM} outperforms Algorithm \ref{alg:PrivNon-accERM}.}
    \label{fig:plot_l_bigger_0}
  \end{minipage}\hfill
\end{figure}
%%%%%

\section{Conclusion}

We have demonstrated that accelerating the Frank-Wolfe method and classical gradient descent can guarantee better statistical convergence rates, under differential privacy or heavy-tailed robustness. 
With appropriate assumptions and a careful choice of learning rate, we improved on the private Frank-Wolfe approach from Talwar et al.~\cite{NOPL} and proved minimax optimality for particular choices of $n, p$, and $\mathcal{C}$. We then analyzed our methods in the context of parameter estimation in GLMs. For heavy-tailed robustness, we considered the linear regression model, and showed that our accelerated method converges faster when the population covariance $\Sigma$ is well-conditioned. When $\lambda_{\min}(\Sigma)=0$, it trades a faster rate with $n$ for a small extra error term $c_\mathcal{K}^{1/2}$, which vanishes as conditioning improves.

On the other hand, our analysis of accelerated Frank-Wolfe crucially requires a lower bound on the $\ell_2$-norm of the gradient. It is an open question whether similar performance could be guaranteed without this assumption. \cite{ACCFW} considers strongly convex sets and strongly convex functions, but with a learning rate that depends on the input data: For the purpose of privacy, one would also need to add noise to the learning rate, making the analysis more complex. It would also be interesting to study the optimality of our accelerated algorithm for more general choices of $n$, $p$, and $\mathcal{C}$ than in Section \ref{sec:The Lower Bound}. Moreover, one could also try to derive a lower bound that explicitly includes the dependency on $||\mathcal{C}||_2$. 

Throughout Sections \ref{sec:Private Biased Parameter Estimation in GLMs: Upper and Lower Bounds for Excess Risk} and \ref{sec:Biased Parameter Estimation HT}, we focused on the scaling with $n$, but an analysis that tracks the presence of $p$ is encouraged. Likewise, handling GLMs with unbounded $y$ and $x$, as well as more general $\Phi$, remains open. In our framework, having $D \uparrow ||\theta^*||_2$ forces $T$ to scale polynomially in $n$. Finding an approach that allows $D \uparrow ||\theta^*||_2$ while keeping $T \asymp \log(n)$ could recover the $\frac{1}{n\epsilon}$ rate from Theorem \ref{theorem:ERM_GLM_UP} (and of the SGD method in Appendix \ref{sec:Comparison_SGD_GLM}). The study in Section  \ref{sec:Private Biased Parameter Estimation in GLMs: Upper and Lower Bounds for Excess Risk} relied on Lipschitz losses, but other methods, such as gradient clipping \cite{Deep_Learn_DP_Clip}, could also be studied. Section \ref{sec:Biased Parameter Estimation HT} focused on linear regression, and one could analyze other parametric models. The anticipated difficulty lies in the derivation of the $\alpha$ and $\beta$ functions, and explicit expressions for minimizers or $\ell_2$-regularized risks. Moreover, in Appendix \ref{sec:The Case when l>0}, we derive the minimax optimal rate using projected gradient descent, in the context of Section \ref{sec:Well_Cond_Sec_FW}; matching this rate using a Frank-Wolfe variant would be interesting.
%Additionally, our acceleration in Section \ref{sec:Ill_Cond_Sec_FW} trades off a better rate with $n$ for an extra error term. This arises due to the required lower bound on the gradient $\ell_2$-norm. An accelerated Frank-Wolfe approach which avoids this assumption, could circumvent this trade-off.

Turning to Nesterov's AGD, we showed that for smooth risks and model-free random data, a faster convergence rate can be achieved through acceleration. This echoes the quadratic convergence of Nesterov's AGD in $T$, compared to the linear rate for projected gradient descent \cite{Nester}. Regarding heavy-tailed robustness and strongly-convex risks, we examined the linear regression model, where Nesterov's AGD was less impactful on the rate with $n$ and $p$.
Note that our study of Nesterov's AGD relies on optimization over $\mathbb{R}^p$. It would be interesting to study analogous constrained optimization methods, potentially using proximal methods \cite{Beck_Proximal_2009}. Regarding Section \ref{sec:Smooth_Priv_HT_GD_AGD}, one might carry out our derivations by keeping track of $p$, as well. The performance of a stochastic variant of Nesterov's AGD could also be compared to the optimal localized-based SGD approach from \cite{SCO_Feldman_2020}.
Furthermore, the approach from Section \ref{sec:Strongly Convex Risks} imposed some constraints on $\tau_u$, $\tau_l$ and $R$. The constraint on $R$ led to the requirement $\frac{p\log(n)\log(\log(n))}{n} \asymp 1$. Hence, an approach that avoids these constraints is encouraged.

A growing body of research simultaneously tackles private and heavy-tailed robust estimation  \cite{DP_ROB_HIGH_DIM, NO_ROB_LR, Kamath_Priv_HT, ROB_LR_OPT_POLY}. Given our current work, focusing on a linear regression model with $||x||_2 \lesssim 1$ and $\lambda_{\min}(\Sigma) \asymp \lambda_{\max}(\Sigma) \asymp \frac{1}{p}$, one could add Gaussian noise to a $G_{MOM}$ estimator using gradients of the pseudo-Huber loss (cf.\ Appendix \ref{Appendix A}). The $\alpha$ and $\beta$ functions can be computed as in Lemma \ref{lemma:htlemma}, accounting for a cost of privacy term. A private estimator $\theta_T$ can be obtained using Lemma \ref{lemma:GD}.
%(the smoothness and strong convexity of the risk follow from Lemma \ref{lemma:pHu_prelim}).
The resulting rate on $||\theta_T - \theta^*||_2$ would be $\widetilde{O}\left(\frac{p}{\sqrt{n}} + \frac{p\sqrt{p}}{n\epsilon}\right)$. Its minimax optimality could then be derived by bounding the statistical error using KL-divergence (cf. Appendix \ref{sec:Notation}) and an application of the local Fano's method \cite{Prob_Stats_Chen, Stats_Info_Duchi}, combined with score attack arguments from \cite{Cost_Priv_Lin_Reg}. Under strong convexity of the risk, Nesterov's AGD can similarly be seen to improve the performance rate up to absolute constants.

Note that our analyses regarding accelerated gradient methods relied heavily on $\ell_2$-norms. Hence, analogous derivations for $\ell_q$-norms, with $q \in [1, \infty]\setminus\{2\}$, in the spirit of \cite{BasEtal21}, are encouraged.
Finally, it is still an open question to us how one can carry out the privacy and robustness analyses using more modern gradient variants, and with provable guarantees. In particular, one could look into adapting methods such as AdaGrad \cite{AdaGrad_Duchi_2011}, RMSprop \cite{RMSprop_2012}, or Adam \cite{Adam_2104} to incorporate privacy or heavy-tailed robustness.

%%%%%

\appendix

%%%%%

\section{Preliminaries}\label{sec:Appendix_Prelim}

In this appendix, we present a more detailed version of the material introduced in Section \ref{sec:Notation and Preliminaries}. We start by defining several important terms that we will use in our analysis in Appendix \ref{sec:Notation}. In Appendix \ref{sec:Appendix_Optim_Prelim}, we introduce more background material on the theory of optimization, and we give precise theoretical guarantees for the optimization methods introduced in Section \ref{sec:Preliminaries on Optimization}.

%%%%%

\subsection{Notation}\label{sec:Notation}
Throughout the paper, the abbreviation ``w.h.p." stands for ``with high probability."

We define the ball centered at $0$ of radius $r > 0$ in $\mathbb{R}^p$, $p \in \mathbb{N}$, with respect to the norm $||\cdot||$ (e.g. $\ell_1$, $\ell_2$, $\ell_{\infty}$ etc.) as $\mathbb{B}_{||\cdot||}(r) = \left\{x \in \mathbb{R}^p | \  ||x|| \leq r\right\}$.

For a set $\mathcal{C} \subseteq \mathbb{R}^p$, for some $p \geq 1$, we denote its diameter by $||\mathcal{C}||_2 = \mathop{\sup}\limits_{x, y \in \mathcal{C}}||x - y||_2$. Note that we shall talk about the diameter of a set in the sense of the $\ell_2$-norm.

In our analysis, we will work with datasets of the form $\mathcal{D}_n = \{(x_i, y_i)\}_{i = 1}^n$, with $x_i \in \mathbb{R}^p$ and $y_i \in \mathbb{R}$ for all $i \in [n] = \left\{1, \dots, n\right\}$. We will care primarily about the dependency on $n$ and sometimes we will also care about the dependency on $p$. In every section, we specify what we care about, and everything else will be treated as an absolute constant. We have the following definition:
\begin{definition}
Let $f$ and $g$ be two functions taking as input $m = (m_1, \dots, m_k)^T \in \mathbb{N}^{k}$, with $k \in \mathbb{N}$, and taking values in $[0, \infty)$. We only care about the dependence on $m$ and assume that $k$ is an absolute constant. 
\begin{enumerate}[label=(\roman*)]
\item We say $f(m) \lesssim g(m)$ (equivalently, $f(m) = O(g(m))$ and $g(m) = \Omega(f(m))$) if there are absolute constants $K > 0$ and $M = (M_1, \dots, M_k)^T \in (0, \infty)^k$ such that $f(m) \leq Kg(m)$ for all $m$ such that $m_i > M_i$, for all $i \in [k]$. Similarly, we say $f(m) \asymp g(m)$ if there are absolute constants $K > 0$ and $M = (M_1, \dots, M_k)^T \in (0, \infty)^k$ such that $f(m) = Kg(m)$ for all $m$ such that $m_i > M_i$, for all $i \in [k]$.
\item We say $f(m) = \Theta(g(m))$ if $f(m) = O(g(m))$ and $f(m) = \Omega(g(m))$.
\item We say $f(m) = \widetilde{O}(g(m))$ if $f(m) = O(g(m))$ up to logarithmic factors. Similarly, we define $\widetilde{\Omega}$ and $\widetilde{\Theta}$.
\end{enumerate}
\end{definition}
Note that when we say $f(m) \asymp 1$, we mean that $f(m)$ is a positive absolute constant in $m$ for $m_i > M_i$ for all $i \in [k]$, for some absolute constants $\{M_i\}_{i = 1}^k$. Similarly, we interpret $f(m) \lesssim 1$ as $f(m) \lesssim g(m)$ and $g(m) \asymp 1$.

For two probability density functions $p$ and $q$ supported on some domain $\mathcal{D}$, the $\mathrm{KL}$-divergence between $p$ and $q$ is defined as
\begin{align*}
D(p||q) = \int_{\mathcal{D}}p(x)\log\left(\frac{p(x)}{q(x)}\right) \, dx.
\end{align*}

Let us now introduce some notation from linear algebra. For a matrix $A \in \mathbb{R}^{m \times m}$, with $m \in \mathbb{N}$, we denote its largest and smallest eigenvalues by $\lambda_{\max}(A)$ and $\lambda_{\min}(A)$, respectively. Additionally, for a matrix $B \in \mathbb{R}^{m \times k}$, with $k, m \in \mathbb{N}$, we denote its operator norm, i.e., its highest singular value, by $||B||_2$. If $m = k$ and $B$ is real, symmetric, and positive semi-definite, then $||B||_2 = \lambda_{\max}(B)$. Also, we denote the identity matrix of size $p$ by $I_p$.

Finally, we state the following definition:
\begin{definition}
\label{def:bd_moments}
Given a random vector $x \in \mathbb{R}^p$ with $\mathbb{E}[x] = \mu$, we say it has bounded $2k^{\text{th}}$ moments if there exists an absolute constant $\widetilde{C}_{2k}$ such that for any $||v||_2 = 1$, we have
\begin{align*}
\mathbb{E}\left[((x - \mu)^Tv)^{2k}\right] \leq \widetilde{C}_{2k}\left(\mathbb{E}\left[((x - \mu)^Tv)^{2}\right]\right)^k.
\end{align*}
\end{definition}
This assumption is a technical one that will allow us to establish bounds on the expectation of even powers by bounding expectations of a square that will usually reduce itself to a term involving the $2$-norm of a covariance matrix, i.e., its highest eigenvalue.
%This will mainly show up in the case of the Huber $\epsilon$-contamination setting.

\subsection{Background on Optimization Theory}\label{sec:Appendix_Optim_Prelim}

\begin{definition}[Smoothness and Strong Convexity]
Let $\mathcal{C} \subseteq \mathbb{R}^p$ be convex and let $F : \mathbb{R}^p \rightarrow \mathbb{R}$ be a differentiable and convex function. We say $F$ is $\tau_u$-smooth over $\mathcal{C}$, for $\tau_u > 0$, if
\begin{align*}
F(x) - F(y) - \nabla F(y)^T(x - y) \leq \frac{\tau_u}{2}||x - y||_2^2, \quad \forall x, y \in \mathcal{C}.
\end{align*}
Additionally, we say that $F$ is $\tau_l$-strongly convex over $\mathcal{C}$, for $\tau_l > 0$, if
\begin{align*}
\frac{\tau_l}{2}||x - y||_2^2 \leq F(x) - F(y) - \nabla F(y)^T(x -y), \quad \forall x, y \in \mathcal{C}.
\end{align*}
\end{definition}

Note that if $F$ is twice continuously differentiable, then $F$ is $\tau_u$-smooth if and only if $\nabla^2 F(x) \preceq \tau_u I_p$ for all $x \in \mathcal{C}$, and it is $\tau_l$-strongly convex if and only if $\nabla^2F(x) \succeq \tau_l I_p$ for all $x \in \mathcal{C}$. Moreover, we have a useful lemma regarding smooth and strongly convex functions:
\begin{lemma}[Lemma $3.11$ in \cite{lemma311}]
\label{lemma:auxilGD}
Let $\mathcal{C} \subseteq \mathbb{R}^p$ be convex. For $F : \mathbb{R}^p \rightarrow \mathbb{R}$ a differentiable function that is $\tau_l$-strongly convex and $\tau_u$-smooth over $\mathcal{C}$, we have for all $x, y \in \mathcal{C}$ that
\begin{align*}
(\nabla F(x) - \nabla F(y))^T(x - y) \geq \frac{\tau_l\tau_u}{\tau_l + \tau_u}||x - y||_2^2 + \frac{1}{\tau_l + \tau_u}||\nabla F(x) - \nabla F(y)||_2^2.
\end{align*}
\end{lemma}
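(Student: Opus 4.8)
The plan is to reduce the claim to the standard ``co-coercivity'' inequality for convex, smooth functions. First I would dispose of the degenerate case $\tau_u = \tau_l$ separately: then $\phi := F - \tfrac{\tau_l}{2}||\cdot||_2^2$ is simultaneously convex and $0$-smooth over $\mathcal{C}$, which forces $\nabla F(x) - \nabla F(y) = \tau_l(x - y)$ for all $x,y \in \mathcal{C}$, and the asserted inequality holds with equality. So from now on assume $\tau_u > \tau_l$, and set $\phi := F - \tfrac{\tau_l}{2}||\cdot||_2^2$, so that $\nabla\phi(x) = \nabla F(x) - \tau_l x$. Using the defining inequalities of strong convexity and smoothness, $\tau_l$-strong convexity of $F$ over $\mathcal{C}$ becomes convexity of $\phi$ over $\mathcal{C}$, and $\tau_u$-smoothness of $F$ becomes $(\tau_u - \tau_l)$-smoothness of $\phi$ over $\mathcal{C}$.

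The key step is to establish, for all $x,y \in \mathcal{C}$, the co-coercivity bound
\[
(\nabla\phi(x) - \nabla\phi(y))^T(x - y) \;\ge\; \frac{1}{\tau_u - \tau_l}\,||\nabla\phi(x) - \nabla\phi(y)||_2^2 .
\]
To prove this I would fix $x$ and consider $\phi_x(z) := \phi(z) - \nabla\phi(x)^Tz$, which is convex and $(\tau_u - \tau_l)$-smooth over $\mathcal{C}$ with $\nabla\phi_x(x) = 0$, so $x$ minimizes $\phi_x$ over $\mathcal{C}$. Applying the descent lemma (a consequence of $(\tau_u-\tau_l)$-smoothness) at $z = y$ with a gradient step of size $\tfrac{1}{\tau_u - \tau_l}$ gives $\phi_x(x) \le \phi_x(y) - \tfrac{1}{2(\tau_u - \tau_l)}||\nabla\phi_x(y)||_2^2$, i.e. $\phi(x) - \phi(y) - \nabla\phi(x)^T(x - y) \le -\tfrac{1}{2(\tau_u - \tau_l)}||\nabla\phi(x) - \nabla\phi(y)||_2^2$. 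Swapping the roles of $x$ and $y$ and adding the two inequalities yields the displayed bound. Finally I would substitute $\nabla\phi(x) - \nabla\phi(y) = \nabla F(x) - \nabla F(y) - \tau_l(x - y)$ into it, expand the square, multiply through by $\tau_u - \tau_l > 0$, and collect terms; writing $a = (\nabla F(x) - \nabla F(y))^T(x-y)$, $b = ||\nabla F(x) - \nabla F(y)||_2^2$, and $c = ||x - y||_2^2$, the cross terms combine to give $(\tau_u + \tau_l)\,a \ge b + \tau_u\tau_l\,c$, which is exactly the claimed inequality after dividing by $\tau_u + \tau_l$.

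The one delicate point — and the main thing to be careful about — is the descent-lemma step: the auxiliary point $y - \tfrac{1}{\tau_u - \tau_l}\nabla\phi_x(y)$ at which we evaluate $\phi_x$ need not lie in $\mathcal{C}$, so the argument as written is clean when $\mathcal{C} = \mathbb{R}^p$ (the case relevant to every application of this lemma in the present paper) and in general requires an extension or approximation argument to make the gradient step legitimate; this is precisely the bookkeeping carried out in \cite{lemma311}, which we simply cite rather than reproduce.
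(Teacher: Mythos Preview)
The paper does not supply its own proof of this lemma; it is stated in the preliminaries and attributed directly to Lemma~3.11 of the cited reference. Your sketch is precisely the standard argument from that reference: subtract the strongly convex quadratic to obtain a convex, $(\tau_u-\tau_l)$-smooth function $\phi$, establish co-coercivity of $\nabla\phi$ via the descent lemma, then substitute back and collect terms. The algebra you indicate is correct, and you rightly flag the one genuine subtlety---that the auxiliary descent point may leave $\mathcal{C}$ when $\mathcal{C}\subsetneq\mathbb{R}^p$---which is exactly why the paper cites rather than reproduces the proof, and why the lemma is only ever invoked here with $\mathcal{C}=\mathbb{R}^p$. One very minor quibble: in your degenerate case $\tau_u=\tau_l$, concluding $\nabla F(x)-\nabla F(y)=\tau_l(x-y)$ from $0$-smoothness plus convexity of $\phi$ over $\mathcal{C}$ requires $\mathcal{C}$ to have nonempty interior (otherwise the gradients could differ in directions orthogonal to the affine hull of $\mathcal{C}$); but this is harmless for the same reason.
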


\begin{lemma}[Corollary $1$ in \cite{ACCFW}]
\label{lemma:STR_CONV_BALL}
The $\ell_2$-ball of radius $r$ centered at $0$ in $\mathbb{R}^p$, denoted by $\mathbb{B}_2(r)$, is $\frac{1}{r}$-strongly convex.
\end{lemma}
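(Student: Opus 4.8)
The plan is to verify the defining inequality of a strongly convex set directly for $\mathcal{C} = \mathbb{B}_2(r)$ with $\alpha_{\mathcal{C}} = \frac1r$. Fix $x, y \in \mathbb{B}_2(r)$, $\gamma \in [0,1]$, and a unit vector $z \in \mathbb{R}^p$. Write $w := \gamma x + (1-\gamma)y + \gamma(1-\gamma)\frac{1}{2r}\|x-y\|_2^2\, z$; I must show $\|w\|_2 \le r$. The natural route is to bound $\|w\|_2^2$ by expanding the square. First I would control $\|\gamma x + (1-\gamma)y\|_2^2$: by convexity of $\|\cdot\|_2^2$ this is at most $\gamma\|x\|_2^2 + (1-\gamma)\|y\|_2^2 - \gamma(1-\gamma)\|x-y\|_2^2 \le r^2 - \gamma(1-\gamma)\|x-y\|_2^2$, using $\|x\|_2, \|y\|_2 \le r$. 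This is the key algebraic identity (the parallelogram-type law), and it produces exactly the negative term $-\gamma(1-\gamma)\|x-y\|_2^2$ that must absorb the perturbation along $z$.

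Next I would expand $\|w\|_2^2 = \|\gamma x + (1-\gamma)y\|_2^2 + 2\gamma(1-\gamma)\frac{1}{2r}\|x-y\|_2^2\,\langle \gamma x + (1-\gamma)y, z\rangle + \gamma^2(1-\gamma)^2\frac{1}{4r^2}\|x-y\|_2^4$. For the cross term I would use Cauchy–Schwarz with $\|z\|_2 = 1$ and $\|\gamma x + (1-\gamma)y\|_2 \le r$ (again from convexity of the norm), giving an upper bound $\gamma(1-\gamma)\|x-y\|_2^2$ on the cross term. Collecting the three pieces and using $\gamma(1-\gamma) \le \frac14$ to bound $\gamma^2(1-\gamma)^2\frac{\|x-y\|_2^4}{4r^2}$, I would check that the sum telescopes to at most $r^2$: the $-\gamma(1-\gamma)\|x-y\|_2^2$ from the first term cancels the $+\gamma(1-\gamma)\|x-y\|_2^2$ from the cross term, leaving $r^2$ plus a manifestly nonpositive (or small enough) remainder. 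A clean alternative that I would actually prefer is to instead upper bound the cross term by $\gamma(1-\gamma)\|x-y\|_2^2 \cdot \frac{\|\gamma x+(1-\gamma)y\|_2}{r}$ and carry $\|\gamma x+(1-\gamma)y\|_2^2 \le r^2 - \gamma(1-\gamma)\|x-y\|_2^2 =: r^2 - s$ symbolically, so that $\|w\|_2^2 \le (r^2 - s) + \frac{s}{r}\sqrt{r^2-s} + \frac{s^2}{4r^2}$; setting $t = \sqrt{r^2 - s} \in [0,r]$ this is $t^2 + \frac{s t}{r} + \frac{s^2}{4r^2} = \left(t + \frac{s}{2r}\right)^2 \le \left(r\right)^2$ since $t + \frac{s}{2r} = \sqrt{r^2-s} + \frac{s}{2r} \le r$ (the last inequality because $\sqrt{r^2-s} \le r - \frac{s}{2r}$, valid as both sides are nonnegative when $s \le r^2$, which holds since $s = \gamma(1-\gamma)\|x-y\|_2^2 \le \frac14(2r)^2 = r^2$). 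This makes $\|w\|_2 \le r$, i.e.\ $w \in \mathbb{B}_2(r)$.

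The main obstacle is purely bookkeeping: making sure the perturbation coefficient $\gamma(1-\gamma)\frac{1}{2r}\|x-y\|_2^2$ is small enough that, after the worst-case Cauchy–Schwarz bound on the cross term, everything still fits inside radius $r$; the completing-the-square reformulation above is what makes this transparent and avoids case analysis. One should also note the degenerate case $x = y$ (then $w = x$, trivially in the ball) and that $s \le r^2$ must be checked to justify $\sqrt{r^2 - s}$ being real, both of which are immediate. No deep input is needed beyond convexity of $\|\cdot\|_2$ and $\|\cdot\|_2^2$ and the bound $\gamma(1-\gamma)\le \tfrac14$.
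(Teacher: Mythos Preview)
Your proof is correct. The paper does not actually prove this lemma; it simply records it as Corollary~1 of \cite{ACCFW} and uses it as a black box. Your argument is a clean, self-contained direct verification of the defining property: the parallelogram identity gives $\|\gamma x+(1-\gamma)y\|_2^2 \le r^2 - s$ with $s=\gamma(1-\gamma)\|x-y\|_2^2$, Cauchy--Schwarz controls the cross term, and completing the square yields $\|w\|_2 \le \sqrt{r^2-s}+\tfrac{s}{2r} \le r$, the last step being equivalent to $0 \le \tfrac{s^2}{4r^2}$. The check $s \le r^2$ (needed for $\sqrt{r^2-s}$ to be real and for both sides of the squared inequality to be nonnegative) is exactly the bound $\gamma(1-\gamma)\|x-y\|_2^2 \le \tfrac14(2r)^2$, which you handle. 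So you supply strictly more than the paper does here.
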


%%%%%

\subsubsection{Background on Projected Gradient Descent}\label{sec:Appendix_GD}

Let us present a convergence guarantee regarding projected gradient descent. Under the strong convexity and smoothness assumptions, we can guarantee the following result:

\begin{lemma}[\cite{Nester}]
\label{lemma:Optimiz_Proj_GD_Conv}
Let $\mathcal{C} \subseteq \mathbb{R}^p$. Let $F : \mathbb{R}^p \rightarrow \mathbb{R}$ be a differentiable function that is $\tau_l$-strongly convex and $\tau_u$-smooth over $\mathcal{C}$. If $\eta = \frac{2}{\tau_l +  \tau_u}$ and $x_{*} \in \mathop{\arg\min}\limits_{x \in \mathcal{C}}F(x)$ is such that $\nabla F(x_{*}) = 0$, the projected gradient descent method in \eqref{eq:Proj_GD_Rule} generates a sequence $\{x_t\}_{t \geq 1}$ such that 
\begin{align*}
||x_t - x_{*}||_2^2 \leq \left(\frac{\tau_u - \tau_l}{\tau_u + \tau_l}\right)^{2t}||x_0 - x_{*}||_2^2, \quad \forall t.
\end{align*}
\end{lemma}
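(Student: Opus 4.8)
The plan is to show the contraction $\|x_{t+1} - x_*\|_2^2 \le \left(\frac{\tau_u - \tau_l}{\tau_u + \tau_l}\right)^2 \|x_t - x_*\|_2^2$ for each step and then iterate. First I would use the nonexpansiveness of the Euclidean projection onto the convex set $\mathcal{C}$: since $x_* \in \mathcal{C}$ and $\mathcal{P}_{\mathcal{C}}$ is $1$-Lipschitz, and because $\nabla F(x_*) = 0$ gives $x_* = \mathcal{P}_{\mathcal{C}}(x_* - \eta\nabla F(x_*))$, we have
\begin{align*}
\|x_{t+1} - x_*\|_2^2 = \|\mathcal{P}_{\mathcal{C}}(x_t - \eta\nabla F(x_t)) - \mathcal{P}_{\mathcal{C}}(x_* - \eta\nabla F(x_*))\|_2^2 \le \|(x_t - x_*) - \eta(\nabla F(x_t) - \nabla F(x_*))\|_2^2.
\end{align*}

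Next I would expand the right-hand side as
$\|x_t - x_*\|_2^2 - 2\eta(\nabla F(x_t) - \nabla F(x_*))^T(x_t - x_*) + \eta^2\|\nabla F(x_t) - \nabla F(x_*)\|_2^2$, and then invoke Lemma \ref{lemma:auxilGD} (the co-coercivity-type bound for $\tau_l$-strongly convex, $\tau_u$-smooth functions) to lower-bound the inner-product term by $\frac{\tau_l\tau_u}{\tau_l+\tau_u}\|x_t-x_*\|_2^2 + \frac{1}{\tau_l+\tau_u}\|\nabla F(x_t)-\nabla F(x_*)\|_2^2$. Substituting $\eta = \frac{2}{\tau_l+\tau_u}$, the coefficient of $\|\nabla F(x_t)-\nabla F(x_*)\|_2^2$ becomes $\eta^2 - \frac{2\eta}{\tau_l+\tau_u} = \frac{4}{(\tau_l+\tau_u)^2} - \frac{4}{(\tau_l+\tau_u)^2} = 0$, so that term vanishes entirely. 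What remains is $\left(1 - \frac{2\eta\tau_l\tau_u}{\tau_l+\tau_u}\right)\|x_t-x_*\|_2^2 = \left(1 - \frac{4\tau_l\tau_u}{(\tau_l+\tau_u)^2}\right)\|x_t-x_*\|_2^2$, and a short algebraic simplification shows $1 - \frac{4\tau_l\tau_u}{(\tau_l+\tau_u)^2} = \frac{(\tau_u-\tau_l)^2}{(\tau_u+\tau_l)^2}$. Iterating this one-step bound from $t$ down to $0$ yields the claim.

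The only mild subtlety — not really an obstacle — is that Lemma \ref{lemma:auxilGD} applies to points in $\mathcal{C}$, so I should note that $x_t \in \mathcal{C}$ for all $t$ (immediate since each $x_t$ is an output of $\mathcal{P}_{\mathcal{C}}$, and $x_0 \in \mathcal{C}$ by assumption) and $x_* \in \mathcal{C}$ by definition. I would also remark that the hypothesis $\nabla F(x_*) = 0$ is what lets us write $x_*$ as a fixed point of the projected-gradient map; without it one would instead use the variational inequality characterizing $\mathcal{P}_{\mathcal{C}}$, which is why the statement includes this assumption. Everything else is routine expansion and the choice of step size $\eta$ is exactly tuned to kill the gradient-norm term.
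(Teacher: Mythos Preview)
Your proposal is correct and follows exactly the standard argument the paper has in mind: the paper does not give a self-contained proof of this lemma (it is cited from \cite{Nester}), but the identical expansion---drop the projection via nonexpansiveness, expand the square, apply Lemma~\ref{lemma:auxilGD}, and choose $\eta = \frac{2}{\tau_l+\tau_u}$ so the gradient-norm coefficient vanishes---is reproduced verbatim inside the proof of Theorem~\ref{theorem:AGD} (with $L_u$ in place of $\tau_u$). Your handling of the projection step and the remark about $x_t \in \mathcal{C}$ are both fine.
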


This is the key lemma that \cite{RE} relies on for their proof regarding the convergence rates for their robust gradient estimator in Lemma \ref{lemma:GD}, and it is also a proof we take inspiration from for proving the convergence rates for the robust AGD method in Theorem \ref{theorem:AGD}.

\subsubsection{Background on Nesterov's AGD}\label{sec:Appendix_AGD}

We present a more detailed analysis of Nesterov's AGD. Under the strong convexity and smoothness assumptions, we have the following guarantee for Nesterov's AGD:

\begin{lemma}[\cite{Recht}]
\label{lemma:Optimiz_Nester_Conv}
Let $F : \mathbb{R}^p \rightarrow \mathbb{R}$ be a differentiable function that is $\tau_l$-strongly convex and $\tau_u$-smooth over $\mathbb{R}^p$.
If $x_{*} \in \mathop{\arg\min}\limits_{x \in \mathbb{R}^p}F(x)$ is such that $\nabla F(x_{*}) = 0$, with $\eta = \frac{1}{\tau_u}$ and $\lambda = \frac{\sqrt{\tau_u} - \sqrt{\tau_l}}{\sqrt{\tau_u} + \sqrt{\tau_l}}$, then Nesterov's accelerated gradient method in \eqref{eq:Nester_Rule} generates a sequence $\{x_t\}_{t \geq 2}$ such that
\begin{align*}
||x_t - x_{*}||_2^2 \leq \left(1 - \sqrt{\frac{\tau_l}{\tau_u}}\right)^t\frac{2}{\tau_l}\left(F(x_0) - F(x_{*}) + \frac{\tau_l}{2}||x_0 - x_{*}||_2^2\right), \quad \forall t.
\end{align*}
\end{lemma}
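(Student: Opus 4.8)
\textbf{Plan for proving Lemma~\ref{lemma:Optimiz_Nester_Conv}.} The plan is to follow the classical estimate-sequence argument for Nesterov's accelerated gradient method, adapted to the constant-momentum form in \eqref{eq:Nester_Rule}. First I would introduce the standard auxiliary sequence: define $v_t := x_t + \frac{1}{\sqrt{\tau_l/\tau_u}}(x_t - x_{t-1})$ (equivalently track the ``extrapolated'' point $y_t = x_t + \lambda(x_t - x_{t-1})$ that already appears in the update), together with the Lyapunov function
\begin{align*}
\Phi_t := F(x_t) - F(x_*) + \frac{\tau_l}{2}\|v_t - x_*\|_2^2.
\end{align*}
The goal is to show the one-step contraction $\Phi_{t+1} \le \bigl(1 - \sqrt{\tau_l/\tau_u}\bigr)\Phi_t$, from which the claimed bound follows by iterating and then dropping the nonnegative $\frac{\tau_l}{2}\|v_t - x_*\|_2^2$ term on the left while bounding $\|x_t - x_*\|_2^2 \le \frac{2}{\tau_l}\Phi_t$ on the right (using $\tau_l$-strong convexity, $F(x_t) - F(x_*) \ge \frac{\tau_l}{2}\|x_t - x_*\|_2^2$, noting $\nabla F(x_*) = 0$).

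The key steps, in order, are: (i) Use $\tau_u$-smoothness at the extrapolation point $y_t$ and the gradient-step relation $x_{t+1} = y_t - \frac{1}{\tau_u}\nabla F(y_t)$ to get the descent inequality $F(x_{t+1}) \le F(y_t) - \frac{1}{2\tau_u}\|\nabla F(y_t)\|_2^2$. (ii) Use $\tau_l$-strong convexity of $F$ at $y_t$ to lower bound $F(y_t)$ in terms of $F(x_t)$, $F(x_*)$, $\nabla F(y_t)$, and the relevant displacement vectors; this is where the convex combination of the ``progress toward $x_*$'' and ``progress toward $x_t$'' inequalities is taken, with weights dictated by $\sqrt{\tau_l/\tau_u}$. (iii) Expand $\frac{\tau_l}{2}\|v_{t+1} - x_*\|_2^2$ using the recursion for $v_{t+1}$ in terms of $v_t$, $y_t$, and $\nabla F(y_t)$, and complete the square so that the cross terms cancel against those produced in step (ii). (iv) Choose $\eta = 1/\tau_u$ and $\lambda = \frac{\sqrt{\tau_u}-\sqrt{\tau_l}}{\sqrt{\tau_u}+\sqrt{\tau_l}}$ precisely so that the $\|\nabla F(y_t)\|_2^2$ terms have the right sign and the geometric factor works out to exactly $1 - \sqrt{\tau_l/\tau_u}$. (v) Conclude by induction on $t$ and translate back to $\|x_t - x_*\|_2^2$.

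\textbf{Main obstacle.} The delicate part is step (iii)--(iv): matching the algebra so that the momentum parameter $\lambda$, the extrapolation weight, and the strong-convexity/smoothness weights all align to produce a clean contraction with no leftover positive terms. In particular one must verify that the identity $x_{t+1} - y_t = x_{t+1} - x_t - \lambda(x_t - x_{t-1})$ combined with the definition of $v_t$ yields a consistent recursion $v_{t+1} = v_t + \frac{1}{\sqrt{\tau_l/\tau_u}}(x_{t+1} - y_t)$ (up to the exact constants), and that the gradient coefficient $\frac{1}{\tau_u}$ is the unique choice making the quadratic-in-$\nabla F(y_t)$ terms combine to something nonpositive. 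This is a known computation (it is exactly the content of the cited reference \cite{Recht}), so rather than reproducing every line I would cite \cite{Recht} for the estimate-sequence contraction and only highlight that, because $\nabla F(x_*) = 0$, the bound specializes to the stated form with the initial Lyapunov value $\Phi_0 = F(x_0) - F(x_*) + \frac{\tau_l}{2}\|x_0 - x_*\|_2^2$ (taking $x_1 = x_0$, so $v_0 = x_0$). The remaining routine checks---nonnegativity of dropped terms, the passage from $\Phi_t$ to $\|x_t - x_*\|_2^2$---are immediate from strong convexity.
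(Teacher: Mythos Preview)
The paper does not actually prove this lemma: it is stated as a background result and attributed directly to \cite{Recht}, with no proof given. So there is nothing in the paper to compare your argument against line by line. Your outline is the standard Lyapunov/estimate-sequence argument for constant-momentum Nesterov, and it is correct; indeed you yourself note that it ``is exactly the content of the cited reference.''

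It is worth pointing out that the paper does carry out a closely related computation in the proof of Theorem~\ref{theorem:AGD} (the robust version with gradient errors $e_t$). There the Lyapunov function is written as $V_t = \mathcal{R}(\theta_t) - \mathcal{R}(\theta^*) + \frac{\tau_u}{2}\|\widetilde{\theta}_t - \rho^2\widetilde{\theta}_{t-1}\|_2^2$ with $\rho^2 = 1 - \sqrt{\tau_l/\tau_u}$, and the argument shows $V_{t+1} \le \rho^2 V_t + R_t$. Setting $e_t \equiv 0$ collapses $R_t$ to a nonpositive term, recovering exactly the contraction you are after. Your parametrization via $v_t = x_t + (\tau_u/\tau_l)^{1/2}(x_t - x_{t-1})$ is an equivalent way of writing the same potential; the paper's form makes the $\rho^2$-contraction slightly more transparent because the quadratic piece is already written in the ``momentum'' coordinates $\widetilde{\theta}_t - \rho^2\widetilde{\theta}_{t-1}$, so the algebraic cancellation in your steps (iii)--(iv) is somewhat shorter there.
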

If $\frac{\tau_u}{\tau_l}$ is large enough, in this case larger than the second largest point $x'' \in (11, 12)$ (see Figure \ref{fig:my_label}) that solves $1 - \sqrt{\frac{\tau_l}{\tau_u}} = \left(\frac{\frac{\tau_u}{\tau_l} - 1}{\frac{\tau_u}{\tau_l} + 1}\right)^2$ as a function of $\frac{\tau_u}{\tau_l} \geq 1$, we achieve a faster convergence rate than in Lemma \ref{lemma:Optimiz_Proj_GD_Conv}. Now notice that when $\frac{\tau_u}{\tau_l} < x''$, the rate  in the bound on the error for projected gradient descent is faster and our intuition is that we should achieve a better convergence with Nesterov's AGD if the problem is better conditioned, i.e., if the condition number $\frac{\tau_u}{\tau_l}$ is close to $1$. Furthermore, it is also interesting to note that if we drop the strong convexity assumption, then Nesterov's method converges quadratically in $t$, while projected gradient descent converges linearly.

\begin{figure}
\centering\includegraphics[width=0.7\linewidth]{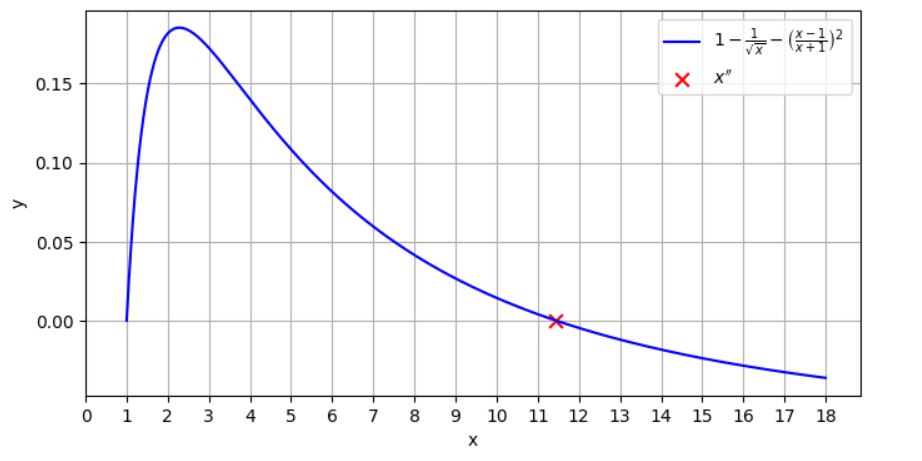}
\caption{$1 - \frac{1}{\sqrt{x}} - \left(\frac{x - 1}{x + 1}\right)^2$ for $x \geq 1$ and $x'' \in (11, 12)$}
\label{fig:my_label}
\end{figure}

\subsubsection{Background on the Frank-Wolfe Method}\label{sec:Appendix_FW}

Let us present a sub-linear convergence guarantee for the Frank-Wolfe method, when we deal with smooth functions:

\begin{lemma}[\cite{Recht}, \cite{FWS}]
\label{lemma:Optimiz_FW_Conv}
Let $\mathcal{C} \subseteq \mathbb{R}^p$ be compact and convex. Let $F : \mathbb{R}^p \rightarrow \mathbb{R}$ be a differentiable function that is $\tau_u$-smooth over $\mathcal{C}$. For $x_{*} \in \mathop{\arg\min}\limits_{x \in \mathcal{C}}F(x)$, with $\nabla F(x_{*}) = 0$, the iterates in the Frank-Wolfe algorithm in \eqref{eq:FW_Rule}, with varying learning rate $\eta_t = \frac{2}{2 + t}$, satisfy
\begin{align*}
F(x_t) - F(x_{*}) \leq \frac{2\tau_u ||\mathcal{C}||_2^2}{t + 2}, \quad \forall t.
\end{align*}
\end{lemma}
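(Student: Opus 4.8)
The plan is the standard ``descent lemma plus induction'' argument for Frank--Wolfe. Write $h_t := F(x_t) - F(x_{*})$ for the optimality gap. First I would apply $\tau_u$-smoothness of $F$ along the segment from $x_t$ to $x_{t+1} = x_t + \eta_t(v_t - x_t)$ (recall \eqref{eq:FW_Rule}) to get
\[
F(x_{t+1}) \le F(x_t) + \eta_t \nabla F(x_t)^T(v_t - x_t) + \frac{\tau_u \eta_t^2}{2}\|v_t - x_t\|_2^2 .
\]
Next I would bound the two new terms: since $v_t$ minimizes $v \mapsto \nabla F(x_t)^T v$ over $\mathcal{C}$ and $x_{*} \in \mathcal{C}$, convexity of $F$ gives $\nabla F(x_t)^T(v_t - x_t) \le \nabla F(x_t)^T(x_{*} - x_t) \le F(x_{*}) - F(x_t) = -h_t$; and $\|v_t - x_t\|_2 \le \|\mathcal{C}\|_2$ as both points lie in $\mathcal{C}$. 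This yields the recursion
\[
h_{t+1} \le (1 - \eta_t)\,h_t + \frac{\tau_u \|\mathcal{C}\|_2^2}{2}\,\eta_t^2 .
\]

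Then I would substitute $\eta_t = \frac{2}{t+2}$ and prove $h_t \le \frac{2\tau_u\|\mathcal{C}\|_2^2}{t+2}$ by induction on $t$. For the base case $t = 1$, the choice $\eta_0 = 1$ kills the $h_0$ term, so the recursion gives $h_1 \le \frac{\tau_u\|\mathcal{C}\|_2^2}{2} \le \frac{2\tau_u\|\mathcal{C}\|_2^2}{3}$. (One may instead anchor at $t=0$ by using $\nabla F(x_{*}) = 0$ and $\tau_u$-smoothness at the optimum to get $h_0 \le \frac{\tau_u}{2}\|x_0 - x_{*}\|_2^2 \le \frac{\tau_u\|\mathcal{C}\|_2^2}{2} \le \tau_u\|\mathcal{C}\|_2^2$.) For the inductive step, write $C := 2\tau_u\|\mathcal{C}\|_2^2$ and assume $h_t \le \frac{C}{t+2}$; then
\[
h_{t+1} \le \Bigl(1 - \tfrac{2}{t+2}\Bigr)\frac{C}{t+2} + \frac{C}{(t+2)^2} = \frac{(t+1)C}{(t+2)^2},
\]
and the desired bound $\frac{(t+1)C}{(t+2)^2} \le \frac{C}{t+3}$ follows from $(t+1)(t+3) = (t+2)^2 - 1 \le (t+2)^2$.

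The argument is entirely elementary, so there is essentially no substantive obstacle beyond bookkeeping. The only points requiring a little care are the choice of induction anchor --- either start at $t=1$ (where the $\eta_0 = 1$ step conveniently eliminates $h_0$) or invoke $\nabla F(x_{*}) = 0$ together with smoothness at the optimum --- and the one-line algebraic check $(t+1)(t+3) \le (t+2)^2$ used to push the induction forward. Note that the hypothesis $\nabla F(x_{*}) = 0$ is not strictly needed if one anchors at $t=1$; it appears in the statement only for uniformity with the other optimization lemmas in this appendix.
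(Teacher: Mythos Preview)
Your proof is correct and is precisely the standard ``smoothness + linear oracle + induction'' argument for Frank--Wolfe. Note, however, that the paper does not actually supply its own proof of this lemma: it is stated with citations to \cite{Recht} and \cite{FWS} and used as background, so there is nothing to compare against beyond observing that your argument is the classical one those references contain.
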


If we impose $\tau_l$-strong convexity, using the definition of $\tau_l$-strong convexity, we obtain
\begin{align*}
||x_t - x_{*}||_2^2 \leq \frac{2}{\tau_l} (F(x_t) - F(x_{*})) \leq \frac{4\tau_u||\mathcal{C}||_2^2}{\tau_l(t + 2)}, \quad \forall t.
\end{align*}
It is interesting to note the linear convergence rate here, which also matches how projected gradient descent converges if we do not ask for strong convexity, whereas Nesterov's method converges quadratically in the absence of strong convexity \cite{Nester}.

Let us now present the proof of Theorem \ref{theorem:ACCFWV3}. Note that the idea is inspired by Lemma \ref{lemma:NonAccRel_FW}, while the proof is inspired by \cite{ACCFW}.

\begin{proof}
Define $h_t := F(x_t) - F(x_{*})$ for all $t$. By the minimality of $v_t$, we have
\begin{align}
\label{EqnFBd}
(v_t - x_t)^T\nabla F(x_t) \leq (x_{*} - x_t)^T \nabla F(x_t) + \Delta \leq -h_t + \Delta,
\end{align}
where we used the convexity of $F$ in the second inequality. Now set $c_t = \frac{1}{2}(x_t + v_t)$ and $w_t \in \mathop{\arg\min}\limits_{||w||_2 \leq 1}w^T\nabla F(x_t)$. We have $w_t^T\nabla F(x_t) = -||\nabla F(x_t)||_2$. By the $\alpha_{\mathcal{C}}$-strong convexity of $\mathcal{C}$, we then have 
$$\widetilde{v}_t := c_t + \frac{\alpha_{\mathcal{C}}}{8}||v_t - x_t||_2^2w_t \in \mathcal{C}.$$
Again using the minimality of $v_t$, and applying inequality~\eqref{EqnFBd}, we then obtain
\begin{align}
\label{eq:dif1V3}
(v_t - x_t)^T \nabla F(x_t) &\leq (\widetilde{v}_t - x_t)^T \nabla F(x_t) + \Delta \notag\\
&= \frac{1}{2}(v_t - x_t)^T \nabla F(x_t) + \frac{\alpha_{\mathcal{C}}}{8}||v_t - x_t||_2^2w_t^T \nabla F(x_t) + \Delta \notag\\
&\leq -\frac{h_t}{2} + \frac{3}{2}\Delta - \frac{\alpha_{\mathcal{C}}||v_t - x_t||_2^2}{8}||\nabla F(x_t)||_2.
\end{align}

Using the $\tau_u$-smoothness of $F$ and the definition of $x_{t + 1}$, we also have
\begin{align*}
F(x_{t + 1}) \leq F(x_t) + \eta(v _t - x_t)^T \nabla F(x_t) + \frac{\tau_u}{2}\eta^2||v_t - x_t||_2^2,
\end{align*}
and by subtracting $F(x_{*})$ from both sides, we obtain
\begin{align*}
h_{t + 1} \leq h_t + \eta(v _t - x_t)^T \nabla F(x_t) + \frac{\tau_u}{2}\eta^2||v_t - x_t||_2^2.
\end{align*}
Combined with inequality~\eqref{eq:dif1V3}, we then obtain
\begin{align*}
h_{t + 1} &\leq h_t\left(1 - \frac{\eta}{2}\right) -\eta\frac{\alpha_{\mathcal{C}}||v_t - x_t||_2^2}{8}||\nabla F(x_t)||_2 + \frac{\tau_u}{2}\eta^2||v_t - x_t||_2^2 + \frac{3}{2}\Delta\eta\\
&= h_t\left(1 - \frac{\eta}{2}\right) + \frac{||v_t - x_t||_2^2}{2}\left(\eta^2\tau_u - \eta\frac{\alpha_{\mathcal{C}}||\nabla F(x_t)||_2}{4}\right) + \frac{3}{2}\Delta\eta\\
&\leq h_t\left(1 - \frac{\eta}{2}\right) + \frac{||v_t - x_t||_2^2}{2}\eta\tau_u\left(\eta - \frac{\alpha_{\mathcal{C}} r}{4\tau_u}\right) + \frac{3}{2}\Delta\eta.
\end{align*}
If $\frac{\alpha_{\mathcal{C}} r}{4} \geq \tau_u$, then $\eta = 1$; otherwise, we have $\eta = \frac{\alpha_{\mathcal{C}} r}{4\tau_u}$. Hence, we have
\begin{align*}
h_{t + 1} & \leq h_t\max\left\{\frac{1}{2}, 1 - \frac{\alpha_{\mathcal{C}} r}{8\tau_u}\right\} + \frac{3}{2}\Delta\min\left\{1, \frac{\alpha_{\mathcal{C}} r}{4\tau_u}\right\} \\
&\leq h_t\max\left\{\frac{1}{2}, 1 - \frac{\alpha_{\mathcal{C}} r}{8\tau_u}\right\} + \frac{3}{2}\Delta \eta\\
&= ch_t + \frac{3}{2}\Delta \eta.
\end{align*}
Since $c < 1$, we may iterate to obtain
\begin{align*}
h_t \leq c^th_0 + \frac{3\Delta\eta}{2(1 - c)},
\end{align*}
which completes the proof.
\end{proof}

Let us now present Lemma \ref{lemma:NonAccRel_FW}, for the relaxed version of the classical non-accelerated Frank-Wolfe method.
\begin{lemma}[\cite{NOPL}, \cite{Rev_FW_Prof_Free_Jaggi}]
\label{lemma:NonAccRel_FW}
Let $F : \mathbb{R}^p \rightarrow \mathbb{R}$ be convex and differentiable. Let $\mathcal{C} \subseteq \mathbb{R}^p$ be convex and compact. Let $\Delta > 0$ be fixed, let $x_1 \in \mathcal{C}$, and let $T > 0$. Suppose $\{v_t\}_{t = 1}^T$ is a sequence of vectors from $\mathcal{C}$, with $x_{t + 1} = (1 - \mu_t)x_t + \mu_t v_t$, such that for all $t \in [T]$, we have $v_t^T\nabla F(x_t) \leq \mathop{\min}\limits_{v \in \mathcal{C}}v^T\nabla F(x_t) + \frac{\Delta\mu_t\Gamma_{F}}{2}$. Here, $\mu_t = \frac{2}{t + 2}$ and
\begin{align*}
\Gamma_F = \mathop{\sup}\limits_{\substack{x, y \in \mathcal{C}, \gamma \in (0, 1], \\ z = x + \gamma(y - x)}}\frac{2}{\gamma^2}\left(F(z) - F(x) - (z - x)^T\nabla F(x)\right),
\end{align*}
i.e., $\Gamma_F$ is the curvature constant of $F$. Then
\begin{align*}
F(x_T) - \mathop{\min}\limits_{x \in \mathcal{C}}F(x) \leq \frac{2\Gamma_F}{T + 2}(1 + \Delta).
\end{align*}
\end{lemma}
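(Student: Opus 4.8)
The plan is to track the optimality gap $h_t := F(x_t) - \min_{x \in \mathcal{C}}F(x)$ (the minimum is attained since $F$ is continuous and $\mathcal{C}$ is compact) and to show that the schedule $\mu_t = 2/(t+2)$ forces $h_t$ to decay like $\Gamma_F(1+\Delta)/t$. Fix $x_* \in \arg\min_{x\in\mathcal{C}}F(x)$ and abbreviate $C := \Gamma_F(1+\Delta)$.

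First, I would derive a one-step progress inequality from the curvature constant. Since $x_{t+1} = x_t + \mu_t(v_t - x_t)$ with $x_t, v_t \in \mathcal{C}$ and $\mu_t \in (0,1]$, applying the definition of $\Gamma_F$ with $x = x_t$, $y = v_t$, $\gamma = \mu_t$, $z = x_{t+1}$ gives $F(x_{t+1}) \le F(x_t) + \mu_t (v_t - x_t)^T\nabla F(x_t) + \tfrac{\mu_t^2}{2}\Gamma_F$, and subtracting $F(x_*)$ yields $h_{t+1} \le h_t + \mu_t (v_t - x_t)^T\nabla F(x_t) + \tfrac{\mu_t^2}{2}\Gamma_F$. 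Next, I would bound the linear term using the approximate linear-oracle hypothesis together with convexity of $F$. Because $x_* \in \mathcal{C}$, the hypothesis gives $v_t^T\nabla F(x_t) \le \min_{v \in \mathcal{C}} v^T\nabla F(x_t) + \tfrac{\Delta \mu_t \Gamma_F}{2} \le x_*^T\nabla F(x_t) + \tfrac{\Delta \mu_t \Gamma_F}{2}$, hence $(v_t - x_t)^T\nabla F(x_t) \le (x_* - x_t)^T\nabla F(x_t) + \tfrac{\Delta \mu_t \Gamma_F}{2} \le F(x_*) - F(x_t) + \tfrac{\Delta \mu_t \Gamma_F}{2} = -h_t + \tfrac{\Delta \mu_t \Gamma_F}{2}$, the middle step being first-order convexity. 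Substituting into the progress inequality and collecting the $\mu_t^2$ terms gives the clean recursion $h_{t+1} \le (1-\mu_t) h_t + \tfrac{\mu_t^2}{2}\Gamma_F(1+\Delta) = (1-\mu_t) h_t + \tfrac{\mu_t^2}{2}C$.

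Finally, I would plug in $\mu_t = 2/(t+2)$ and prove $h_t \le \tfrac{2C}{t+2}$ by induction on $t$: the inductive step, after substituting the recursion, reduces to the elementary inequality $(t+1)(t+3) \le (t+2)^2$, which always holds. The base case is handled by the standard Frank--Wolfe convention in which the very first iterate is produced by a full step ($\mu = 1$), so that the recursion immediately gives $h \le C/2 \le \tfrac{2C}{3}$; equivalently, one starts the induction from the first index for which the bound is verified directly. Evaluating at $t = T$ gives $F(x_T) - \min_{x\in\mathcal{C}}F(x) = h_T \le \tfrac{2C}{T+2} = \tfrac{2\Gamma_F(1+\Delta)}{T+2}$.

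The argument is essentially routine Frank--Wolfe bookkeeping (following \cite{Rev_FW_Prof_Free_Jaggi, NOPL}); the only points that require care are (i) keeping the factor $\mu_t$ inside the oracle error $\tfrac{\Delta\mu_t\Gamma_F}{2}$ so that, once the one-step progress bound multiplies it by another $\mu_t$, it merges cleanly with the curvature term into $\tfrac{\mu_t^2}{2}\Gamma_F(1+\Delta)$, and (ii) the base case of the induction, which relies on the initial step size being $1$. Neither is a genuine obstacle, so I would not expect any serious difficulty here.
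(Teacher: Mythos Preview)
Your proposal is correct and is exactly the standard argument from the cited references \cite{Rev_FW_Prof_Free_Jaggi, NOPL}. The paper itself does not supply a proof of this lemma but merely quotes it as a known result; your derivation---curvature bound, approximate-oracle inequality combined with first-order convexity to obtain the recursion $h_{t+1}\le(1-\mu_t)h_t+\tfrac{\mu_t^2}{2}\Gamma_F(1+\Delta)$, and then the induction with $\mu_t=2/(t+2)$ reducing to $(t+1)(t+3)\le(t+2)^2$---is precisely the canonical one.
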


We now state Lemma \ref{lemma:NOPLconvDPFW}.

\begin{lemma}[Theorem B.2 in Talwar et al.~\cite{NOPL}]
\label{lemma:NOPLconvDPFW}
Let $L_2$ be as in Algorithm \ref{alg:PrivNon-accERM}, and assume $0 < \epsilon \lesssim 1$. Let $G_{\mathcal{C}} = \mathbb{E}\left[\mathop{\sup}\limits_{\theta \in \mathcal{C}}\theta^T b\right]$, with $b \sim N(0, I_p)$, be the Gaussian width of $\mathcal{C}$, and let 
\begin{align}
\label{eq:curv_const_Talwar_DP}
\Gamma_{\mathcal{L}} = \mathop{\sup}\limits_{\substack{x, y \in \mathcal{C}, \gamma \in (0, 1], \\ a = x + \gamma(y - x)}}\frac{2}{\gamma^2}\left(\mathcal{L}(a, z_1) - \mathcal{L}(x, z_1) - (a - x)^T\nabla \mathcal{L}(x, z_1)\right)
\end{align}
be the curvature constant of $\mathcal{L}(\theta, z_1)$. Setting $T = \left(\frac{n\epsilon\Gamma_{\mathcal{L}}}{L_2G_{\mathcal{C}}}\right)^{2/3}$, Algorithm \ref{alg:PrivNon-accERM} returns $\theta_T$ such that
\begin{align*}
\mathbb{E}\left[\mathcal{L}(\theta_T, \mathcal{D}_n) - \mathop{\min}\limits_{\theta \in \mathcal{C}}\mathcal{L}(\theta, \mathcal{D}_n)\right] = O\left(\frac{\Gamma_{\mathcal{L}}^{1/3}(L_2G_{\mathcal{C}})^{2/3}\log^2(n/\delta)}{(n\epsilon)^{2/3}}\right).
\end{align*}
\end{lemma}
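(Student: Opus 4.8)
The plan is to prove the claimed utility bound by a noisy Frank--Wolfe descent argument carried out in expectation; the privacy part is not in the conclusion of the lemma and follows from Corollary~\ref{cor:ADV_COMP_0.9} as already noted (this is where $0<\epsilon\lesssim 1$ is used). Write $f := \mathcal{L}(\cdot, \mathcal{D}_n)$, let $\theta^* \in \arg\min_{\theta\in\mathcal{C}}f(\theta)$ (attained since $\mathcal{C}$ is compact), and set $h_t := f(\theta_t) - f(\theta^*)$. First I would record the Frank--Wolfe step inequality: since $\theta_{t+1} = (1-\eta_t)\theta_t + \eta_t v_t = \theta_t + \eta_t(v_t - \theta_t)$ with $\eta_t = \frac{2}{t+2}\in(0,1]$, the definition of the curvature constant gives $f(\theta_{t+1}) \leq f(\theta_t) + \eta_t(v_t - \theta_t)^T\nabla f(\theta_t) + \frac{\eta_t^2}{2}\Gamma_f$, where $\Gamma_f \leq \Gamma_{\mathcal{L}}$ because the curvature constant is subadditive under averaging and, for i.i.d.\ data, $\Gamma_{\mathcal{L}}$ bounds the curvature constant of each summand $\mathcal{L}(\cdot,z_i)$.

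Next I would exploit the noisy linear-optimization step. Since $v_t = \arg\min_{v\in\mathcal{C}}(\nabla f(\theta_t) + \xi_t)^Tv$ and $\theta^*\in\mathcal{C}$, we have $(\nabla f(\theta_t)+\xi_t)^Tv_t \leq (\nabla f(\theta_t)+\xi_t)^T\theta^*$, hence
$\nabla f(\theta_t)^T(v_t - \theta_t) \leq \nabla f(\theta_t)^T(\theta^* - \theta_t) + \xi_t^T(\theta^* - v_t) \leq -h_t + \sup_{u,w\in\mathcal{C}}\xi_t^T(u - w)$, using convexity of $f$ in the last step. Substituting into the curvature inequality and subtracting $f(\theta^*)$,
\begin{align*}
h_{t+1} \leq (1-\eta_t)h_t + \eta_t\,\sup_{u,w\in\mathcal{C}}\xi_t^T(u-w) + \tfrac{\eta_t^2}{2}\Gamma_{\mathcal{L}}.
\end{align*}
Taking expectations and writing $\xi_t = \sigma_n b_t$ with $b_t\sim N(0,I_p)$ and $\sigma_n = \sqrt{32}\,L_2\sqrt{T}\log(n/\delta)/(n\epsilon)$, I would bound $\mathbb{E}\big[\sup_{u,w\in\mathcal{C}}\xi_t^T(u-w)\big] = \sigma_n\big(\mathbb{E}[\sup_{u\in\mathcal{C}}u^Tb_t] + \mathbb{E}[\sup_{w\in\mathcal{C}}w^T(-b_t)]\big) = 2\sigma_n G_{\mathcal{C}}$, using $-b_t \stackrel{d}{=} b_t$. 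This yields the scalar recursion $\mathbb{E}[h_{t+1}] \leq (1-\eta_t)\mathbb{E}[h_t] + 2\eta_t\sigma_n G_{\mathcal{C}} + \tfrac{\eta_t^2}{2}\Gamma_{\mathcal{L}}$.

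To solve it I would shift by the drift point: with $a_t := \mathbb{E}[h_t] - 2\sigma_n G_{\mathcal{C}}$ one gets $a_{t+1}\leq(1-\eta_t)a_t + \tfrac{\eta_t^2}{2}\Gamma_{\mathcal{L}}$, and for $\eta_t = \frac{2}{t+2}$ the standard Frank--Wolfe induction (the same one underlying Lemma~\ref{lemma:NonAccRel_FW}) gives $a_t\leq\frac{2\Gamma_{\mathcal{L}}}{t+2}$, i.e.\ $\mathbb{E}[h_T]\leq 2\sigma_n G_{\mathcal{C}} + \frac{2\Gamma_{\mathcal{L}}}{T+2}$. Plugging in $T = \big(n\epsilon\Gamma_{\mathcal{L}}/(L_2 G_{\mathcal{C}})\big)^{2/3}$ balances the two terms, each of order $\Gamma_{\mathcal{L}}^{1/3}(L_2 G_{\mathcal{C}})^{2/3}\log(n/\delta)/(n\epsilon)^{2/3}$, which is subsumed by the stated $\log^2(n/\delta)$ bound (rounding $T$ to an integer costs only constant factors). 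The only genuinely non-routine point is the treatment of the noise term: because $v_t$ depends on $\xi_t$, the expectation $\mathbb{E}[\xi_t^T(\theta^* - v_t)]$ is \emph{not} zero, so one cannot argue that the noise has no effect in expectation; instead one must dominate it by $\mathbb{E}[\sup_{u,w\in\mathcal{C}}\xi_t^T(u-w)] = 2\sigma_n G_{\mathcal{C}}$, which is precisely how the Gaussian width $G_{\mathcal{C}}$ enters the final rate. Everything else is the textbook Frank--Wolfe recursion together with the arithmetic of choosing $T$.
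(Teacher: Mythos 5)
This lemma is not proved in the paper; it is cited verbatim as Theorem~B.2 of Talwar et al.\ \cite{NOPL}, so there is no internal proof to compare against. Your blind reconstruction is the standard argument one would expect from that reference, and it is correct: a noisy Frank--Wolfe step inequality via the curvature constant, a suboptimality bound that isolates a data-dependent noise term, domination of that term by the expected supremum of a Gaussian linear functional over $\mathcal{C}$ (which correctly yields $2\sigma_n G_{\mathcal{C}}$ upon splitting $\sup_{u,w}\xi_t^T(u-w)$ and using the symmetry of the Gaussian), and the standard $O(1/T)$ induction applied to the shifted sequence $a_t = \mathbb{E}[h_t] - 2\sigma_n G_{\mathcal{C}}$. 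Plugging in $\sigma_n \asymp L_2\sqrt{T}\log(n/\delta)/(n\epsilon)$ from Algorithm~\ref{alg:PrivNon-accERM} and the stated $T$ balances the terms and produces the rate. Your calculation in fact delivers a single factor of $\log(n/\delta)$ rather than the stated $\log^2(n/\delta)$; this is a slight tightening and is consistent with the $O(\cdot)$ formulation. Two points of phrasing worth fixing: the data in this lemma are deterministic (the setting is ERM over a fixed $\mathcal{D}_n$), so the justification for $\Gamma_f \leq \Gamma_{\mathcal{L}}$ should not appeal to ``i.i.d.\ data''; it should instead note that $\Gamma_{\mathcal{L}}$ is implicitly taken as a uniform bound on the per-example curvature constants $\Gamma_{\mathcal{L}(\cdot,z_i)}$, after which your subadditivity-under-averaging step goes through. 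Also, since $\eta_0 = 1$, the base case of the induction is at $t=1$ with $a_1 \le \Gamma_{\mathcal{L}}/2 \le 2\Gamma_{\mathcal{L}}/3$, which is important because $a_0$ need not satisfy the inductive bound.
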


\subsection{Background on Differential Privacy}\label{sec:Appendix_DP_Prelim}

Let us present some technicalities regarding the notion of differential privacy. Firstly, one of the most common ways of making the output of a mechanism private is to add noise to the output. However, in order to do that, one generally requires the output of the mechanism on any dataset $X$ to not change too much if we change one of the $n$ elements of $X$. We call this notion bounded sensitivity, and we state it formally below:
\begin{definition}
A function $\mathcal{G} : \mathcal{E}^n \rightarrow \mathbb{R}^p$ has $\ell_2$-bounded sensitivity $sens(\mathcal{G})$ if $\mathop{\sup}\limits_{X \sim X'}||\mathcal{G}(X) - \mathcal{G}(X')||_2 = sens(\mathcal{G}) < \infty$. Here, $X \sim X'$ means that $X$ and $X'$ differ in one element.
\end{definition}

Note that generally, it is enough to work with an upper bound on the sensitivity. With this in mind, we present a way of making any vector-valued function differentially private by adding Gaussian noise:

\begin{lemma}[\cite{IMPROVE_DP}]
\label{lemma:mGDP}
Let $\epsilon, \delta \in (0, 1)$. Define the Gaussian mechanism that operates on a function $\mathcal{G} : \mathcal{E}^n \rightarrow \mathbb{R}^p$ with $\ell_2$-bounded sensitivity $sens(\mathcal{G}) = \mathop{\sup}\limits_{X \sim X'}||\mathcal{G}(X) - \mathcal{G}(x')||_2 < \infty$ as $\widehat{\theta}(X) = \mathcal{G}(X) + \xi$, where $\xi \sim N(0, \sigma^2I_p)$ and $\sigma^2 = \frac{2 sens(\mathcal{G})^2\log(1.25/\delta)}{\epsilon^2}$. Then $\widehat{\theta}$ is $(\epsilon, \delta)$-DP.
\end{lemma}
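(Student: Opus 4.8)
\textbf{Proof proposal for Lemma~\ref{lemma:mGDP}.}
The plan is to reduce the statement to a one-dimensional privacy computation by rotational symmetry, and then verify the $(\epsilon,\delta)$-inequality for the shifted Gaussian via a standard tail bound on the privacy loss random variable. First I would fix two neighbouring datasets $X\sim X'$, write $\mu = \mathcal{G}(X)$ and $\mu' = \mathcal{G}(X')$, and set $v = \mu-\mu'$, so that $\|v\|_2 \le sens(\mathcal{G})$ by the bounded-sensitivity hypothesis. The two output distributions are $N(\mu,\sigma^2 I_p)$ and $N(\mu',\sigma^2 I_p)$; since both covariances are spherical, I would apply an orthogonal change of coordinates aligning the first axis with $v$, which shows the privacy loss depends only on the scalar $\|v\|_2$ and reduces the problem to comparing $N(0,\sigma^2)$ and $N(\|v\|_2,\sigma^2)$ on $\mathbb{R}$. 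Monotonicity in $\|v\|_2$ then lets me replace $\|v\|_2$ by its upper bound $sens(\mathcal{G})$, so it suffices to treat the one-dimensional case with mean gap exactly $\Delta := sens(\mathcal{G})$.

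Next I would introduce the privacy loss random variable $L = \log\frac{f_{\mu}(Y)}{f_{\mu'}(Y)}$ where $Y\sim N(\mu,\sigma^2)$ in one dimension; a direct computation gives $L = \frac{\Delta(Y-\mu)}{\sigma^2} + \frac{\Delta^2}{2\sigma^2}$, which is Gaussian with mean $\frac{\Delta^2}{2\sigma^2}$ and variance $\frac{\Delta^2}{\sigma^2}$. The key analytic step is the standard lemma that $(\epsilon,\delta)$-DP is implied by $\mathbb{P}(L > \epsilon) \le \delta$ together with the analogous bound for the reverse loss (or, more cleanly, by the one-sided statement $\mathbb{P}_{Y\sim N(\mu,\sigma^2)}(L>\epsilon)\le\delta$ applied symmetrically, since the construction is symmetric in $\mu\leftrightarrow\mu'$). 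I would then bound this Gaussian tail: $\mathbb{P}(L>\epsilon) = \mathbb{P}\!\left(Z > \frac{\epsilon\sigma}{\Delta} - \frac{\Delta}{2\sigma}\right)$ for $Z\sim N(0,1)$, and plugging in $\sigma^2 = \frac{2\Delta^2\log(1.25/\delta)}{\epsilon^2}$ makes the threshold equal to $\frac{\epsilon\sigma}{\Delta} - \frac{\Delta}{2\sigma} = \sqrt{2\log(1.25/\delta)} - \frac{\epsilon}{2\sqrt{2\log(1.25/\delta)}}$; the Gaussian tail bound $\mathbb{P}(Z>t)\le \frac{1}{t\sqrt{2\pi}}e^{-t^2/2}$ together with $\epsilon<1$ then yields $\mathbb{P}(L>\epsilon)\le\delta$ after a short estimate using the constant $1.25$.

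Finally I would assemble the pieces: for any measurable $S$ in the range of $\widehat\theta$, split $S$ into the part where the (multivariate) privacy loss exceeds $\epsilon$ and its complement, bound the former by $\delta$ using the tail estimate lifted back through the rotation, and bound the latter by $e^\epsilon\,\mathbb{P}(\widehat\theta(X')\in S)$ using the pointwise definition of $L$; this gives $\mathbb{P}(\widehat\theta(X)\in S)\le e^\epsilon\mathbb{P}(\widehat\theta(X')\in S) + \delta$, which is exactly Definition~\ref{definition:DiffP}. The main obstacle is the explicit Gaussian tail estimate showing that the specific choice $\sigma^2 = \frac{2\,sens(\mathcal{G})^2\log(1.25/\delta)}{\epsilon^2}$ suffices — this is where the somewhat delicate constant $1.25$ enters and where one must be careful to use $\epsilon<1$ and $\delta<1$ — but it is a routine (if fiddly) one-variable computation; everything else is bookkeeping and the reduction to one dimension. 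Since this is a known result (due to Dwork--Roth, cited as \cite{IMPROVE_DP}), I would likely present only the reduction and the tail computation and refer to the source for the precise constant-chasing.
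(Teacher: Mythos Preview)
Your proof sketch is correct and follows the standard argument for the Gaussian mechanism (reduction to one dimension by spherical symmetry, computation of the privacy loss random variable, Gaussian tail bound on $\{L>\epsilon\}$, then the splitting argument over $S$). Note, however, that the paper does not actually prove this lemma: it is stated with a citation to \cite{IMPROVE_DP} and used as a black box throughout, so there is no ``paper's own proof'' to compare against. Your outline is exactly the Dwork--Roth derivation that underlies the cited result, and your final remark that you would defer the constant-chasing to the source is precisely what the paper itself does.
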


With a way of turning the output of any deterministic function with $\ell_2$-bounded sensitivity differentially private, it is natural to ask if an adaptive sequence of iterations of mechanisms that are themselves $(\epsilon, \delta)$-DP stays differentially private. The answer is affirmative. We present two results in this regard, namely the basic and advanced composition theorems. The basic composition is a pessimistic result that is tight, for example, if the sequence of algorithms consists of Gaussian mechanisms and the noise random variables are independent. The advanced composition is a tighter result when, for example, the noise does not add linearly. These results are useful when we make gradient methods private by noise addition and we have to ensure privacy of the whole iterative gradient algorithm. We state them below.

\begin{lemma}[Basic Composition \cite{Boost_DP}]
\label{lemma:Basic_Comp}
For every $\epsilon, \delta \geq 0$ and $T \in \mathbb{N}$, the family of $(\epsilon, \delta)$-DP mechanisms are $(T\epsilon, T\delta)$-DP under
$T$-fold adaptive composition.
\end{lemma}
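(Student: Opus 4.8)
The plan is to reduce the $T$-fold claim to an approximation lemma for a single $(\epsilon,\delta)$-DP mechanism and then compose Radon–Nikodym densities. First I would fix a pair of neighboring datasets $X \sim X'$ and unfold the adaptive composition: write the composed output as $Y = (Y_1,\dots,Y_T)$ with $Y_t = M_t(X; Y_1,\dots,Y_{t-1})$, where by hypothesis, for every fixed prefix $y_{<t} = (y_1,\dots,y_{t-1})$, the mechanism $M_t(\cdot\,; y_{<t})$ is $(\epsilon,\delta)$-DP. Let $P$ and $Q$ denote the laws of $M(X)$ and $M(X')$ on the product output space; the goal is $P(S) \le e^{T\epsilon} Q(S) + T\delta$ for every measurable $S$.

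The crux is a single-step approximation lemma: if two distributions satisfy $P_1(A) \le e^\epsilon Q_1(A) + \delta$ for all $A$, then there is a probability measure $P_1^{\dagger}$ with $\mathrm{TV}(P_1, P_1^{\dagger}) \le \delta$ and $P_1^{\dagger}(A) \le e^\epsilon Q_1(A)$ for all $A$, i.e. $dP_1^{\dagger}/dQ_1 \le e^\epsilon$. I would prove this by first peeling off the part of $P_1$ that is singular with respect to $Q_1$ (testing the hypothesis on the $Q_1$-null set carrying it shows its mass is at most $\delta$), and then, on the absolutely continuous part, truncating the density $dP_1/dQ_1$ at the level $e^\epsilon$; the mass removed by truncation is again at most $\delta$ by the hypothesis, and the inequality $e^\epsilon \ge 1$ guarantees enough headroom below the cap to redistribute that mass, producing a genuine probability measure $P_1^{\dagger}$ within $\delta$ of $P_1$ in total variation.

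With the lemma in hand, apply it at each step $t$ and prefix $y_{<t}$ to the pair $\big(\mathrm{Law}(M_t(X; y_{<t})),\, \mathrm{Law}(M_t(X'; y_{<t}))\big)$, obtaining kernels $P_t^{\dagger}(\cdot \mid y_{<t})$ whose density against $\mathrm{Law}(M_t(X'; y_{<t}))$ is at most $e^\epsilon$ and which lie within $\delta$ of the true conditional in TV. Let $P^{\dagger} = P_1^{\dagger} \otimes P_2^{\dagger} \otimes \cdots \otimes P_T^{\dagger}$. Its density against $Q$ factorizes into $T$ conditional densities each bounded by $e^\epsilon$, so $dP^{\dagger}/dQ \le e^{T\epsilon}$, hence $P^{\dagger}(S) \le e^{T\epsilon} Q(S)$ for all $S$. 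A chain-rule bound for the total variation distance of sequentially defined products, $\mathrm{TV}(P, P^{\dagger}) \le \sum_{t=1}^{T} \sup_{y_{<t}} \mathrm{TV}\big(\mathrm{Law}(M_t(X; y_{<t})), P_t^{\dagger}(\cdot\mid y_{<t})\big)$, then gives $\mathrm{TV}(P, P^{\dagger}) \le T\delta$. Combining, $P(S) \le P^{\dagger}(S) + \mathrm{TV}(P, P^{\dagger}) \le e^{T\epsilon} Q(S) + T\delta$; since $X \sim X'$ was arbitrary, $M$ is $(T\epsilon, T\delta)$-DP. (Alternatively one can iterate the two-fold version of this argument $T-1$ times.)

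The main obstacle is the single-step approximation lemma and its measure-theoretic bookkeeping: isolating the singular component cleanly, and checking that truncation at $e^\epsilon$ together with redistribution of the clipped mass keeps the total mass equal to one while never exceeding the density cap. I would also remark that the naive ``pure $\epsilon$-DP holds except on an event of probability $\delta$'' heuristic does not suffice here — composing in that way leaks a factor $e^\epsilon$ into the $\delta$-terms (producing $\sum_i e^{(i-1)\epsilon}\delta$ rather than $\sum_i \delta$) — which is precisely why the density-truncation lemma is needed to recover the stated bound $T\delta$.
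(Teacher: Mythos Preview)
The paper does not prove this lemma; it simply states it as a known result and cites \cite{Boost_DP}. So there is no ``paper's own proof'' to compare against.

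That said, your argument is essentially the standard proof of basic composition (the one found in the cited reference and in Dwork--Roth). The single-step approximation lemma you isolate --- given $P_1(A)\le e^\epsilon Q_1(A)+\delta$ for all $A$, produce $P_1^{\dagger}$ with $\mathrm{TV}(P_1,P_1^{\dagger})\le\delta$ and $dP_1^{\dagger}/dQ_1\le e^\epsilon$ --- is exactly the right tool, and your sketch of its proof (peel off the singular part, truncate the density, use $e^\epsilon\ge 1$ to redistribute the clipped mass) is correct. Your closing remark about why the naive ``good event'' heuristic leaks $e^\epsilon$ factors into the additive term is also on point and is precisely the reason the lemma is needed. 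The one place you are silently optimistic is in forming the product $P^{\dagger}=P_1^{\dagger}\otimes\cdots\otimes P_T^{\dagger}$: for this to define a bona fide probability measure on the product space you need the conditional measures $P_t^{\dagger}(\cdot\mid y_{<t})$ to be measurable in $y_{<t}$, which requires a measurable-selection argument in the construction of $P_t^{\dagger}$. On Polish spaces this goes through, but it is worth flagging.
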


\begin{lemma}[Advanced Composition \cite{Boost_DP}]
\label{lemma:compGDP}
For every $\epsilon > 0$, $\delta \in \left(0, 1\right)$ and $T \in \mathbb{N}$, the class of $\left(\frac{\epsilon}{2\sqrt{2T\log(2/\delta)}}, \frac{\delta}{2T}\right)$-DP mechanisms is $(\epsilon_{tot}, \delta_{tot})$-DP under $T$-fold adaptive composition, for
\begin{align*}
\epsilon_{tot} = \frac{\epsilon}{2} + \frac{\epsilon\sqrt{T}}{2\sqrt{2\log(2/\delta)}}(e^{\frac{\epsilon}{2\sqrt{2T\log(2/\delta)}}} - 1), \qquad \delta_{tot} = \delta.
\end{align*}
\end{lemma}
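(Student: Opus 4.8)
The plan is to follow the privacy-loss random variable plus martingale-concentration argument of \cite{Boost_DP}. Write $\epsilon' = \frac{\epsilon}{2\sqrt{2T\log(2/\delta)}}$ and $\delta' = \frac{\delta}{2T}$, so that each round applies an $(\epsilon',\delta')$-DP mechanism. Fix neighbouring datasets $X \sim X'$, let $M = (M_1,\dots,M_T)$ be the $T$-fold adaptive composition with output $y = (y_1,\dots,y_T)$ (where $M_i$ may be chosen based on $y_{<i}$), and define the privacy loss $\mathcal{L}(y) = \log\frac{\mathbb{P}[M(X)=y]}{\mathbb{P}[M(X')=y]}$. By the chain rule this decomposes as $\mathcal{L}(y) = \sum_{i=1}^T L_i$ with $L_i = \log\frac{\mathbb{P}[M_i(X)=y_i\mid y_{<i}]}{\mathbb{P}[M_i(X')=y_i\mid y_{<i}]}$, and conditioned on $y_{<i}$ the map $M_i(\cdot,y_{<i})$ is an $(\epsilon',\delta')$-DP mechanism in its data argument. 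It suffices to prove $\mathbb{P}_{y\sim M(X)}[\mathcal{L}(y) > \epsilon_{tot}] \le \delta$: for any measurable $S$, splitting according to whether $\mathcal{L}(y) \le \epsilon_{tot}$ gives $\mathbb{P}[M(X)\in S] \le e^{\epsilon_{tot}}\mathbb{P}[M(X')\in S] + \delta$.

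First I would reduce to the pure-DP case. Using the standard fact that an $(\epsilon',\delta')$-DP mechanism agrees with an $\epsilon'$-DP mechanism except on an event of probability at most $\delta'$ under the output law on $X$, an iterated union bound over the $T$ rounds produces an event $E$ with $\mathbb{P}[E] \le T\delta' = \delta/2$ off which every conditional loss satisfies $|L_i| \le \epsilon'$ almost surely. On $E^c$ we may treat each $M_i(\cdot,y_{<i})$ as $\epsilon'$-DP, and then the conditional mean of the loss is a KL divergence controlled by $\mathbb{E}[L_i \mid y_{<i}] = D\big(M_i(X,y_{<i}) \,\big\|\, M_i(X',y_{<i})\big) \le \epsilon'(e^{\epsilon'}-1)$, the last step from the pointwise bound $|p/q - 1| \le e^{\epsilon'}-1$.

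Next I would set up the martingale: with $B_i := L_i - \mathbb{E}[L_i\mid y_{<i}]$, the $B_i$ form a martingale difference sequence w.r.t.\ the filtration generated by $y_1,\dots,y_i$, each $B_i$ taking values in an interval of length at most $2\epsilon'$, so Azuma--Hoeffding gives $\mathbb{P}\big[\sum_{i=1}^T B_i \ge t\big] \le \exp\!\big(-\tfrac{t^2}{2T\epsilon'^2}\big)$. Choosing $t = \epsilon/2$ and substituting $\epsilon' = \frac{\epsilon}{2\sqrt{2T\log(2/\delta)}}$ makes the exponent exactly $-\log(2/\delta)$, so this probability is $\delta/2$; this is precisely why $\epsilon'$ is scaled this way. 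Hence off $E$ and off this deviation event, whose total probability is at most $\delta$, we get
\[
\mathcal{L}(y) = \sum_{i=1}^T B_i + \sum_{i=1}^T \mathbb{E}[L_i\mid y_{<i}] \le \frac{\epsilon}{2} + T\epsilon'(e^{\epsilon'}-1) = \frac{\epsilon}{2} + \frac{\epsilon\sqrt{T}}{2\sqrt{2\log(2/\delta)}}\big(e^{\epsilon'}-1\big) = \epsilon_{tot},
\]
using $T\epsilon' = \frac{\epsilon\sqrt{T}}{2\sqrt{2\log(2/\delta)}}$. Combined with the first paragraph's conversion of the privacy-loss tail bound into $(\epsilon_{tot},\delta)$-DP, and $\delta_{tot} = \delta/2+\delta/2 = \delta$, this finishes the argument.

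I expect the main obstacle to be the careful handling of adaptivity: one must verify that, conditioned on the entire (data-dependent) prefix $y_{<i}$, the round-$i$ mechanism is still $(\epsilon',\delta')$-DP, so that $|L_i|$ is bounded and $\mathbb{E}[L_i\mid y_{<i}]$ is a genuine per-round KL divergence, which is what legitimises both the Azuma step and the identification of the drift term. The secondary technical point is the reduction that turns each $(\epsilon',\delta')$-DP round into an $\epsilon'$-DP round up to a failure probability $\delta'$, and checking that the union bound over rounds costs exactly $T\delta' = \delta/2$ -- this is where the choice $\delta' = \delta/(2T)$ is consumed.
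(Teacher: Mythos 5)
The paper does not prove this lemma; it is stated in Appendix~\ref{sec:Appendix_DP_Prelim} as a citation to \cite{Boost_DP} (the Dwork--Rothblum--Vadhan advanced composition theorem), so there is no internal proof to compare against. Your proposal reconstructs the standard DRV argument, and the structure is right: privacy-loss decomposition $\mathcal{L}(y) = \sum_i L_i$, reduction of each round to pure DP up to a $\delta' = \delta/(2T)$ failure event (costing $\delta/2$ via union bound), a per-round drift bound, and Azuma--Hoeffding on the centered increments with deviation $t = \epsilon/2$. Substituting $\epsilon' = \frac{\epsilon}{2\sqrt{2T\log(2/\delta)}}$ into the exponent $t^2/(2T\epsilon'^2)$ does yield exactly $\log(2/\delta)$, and $T\epsilon'(e^{\epsilon'}-1) = \frac{\epsilon\sqrt{T}}{2\sqrt{2\log(2/\delta)}}(e^{\epsilon'}-1)$; the arithmetic and the $\delta_{tot} = \delta/2 + \delta/2 = \delta$ budget are correct. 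You also correctly identify the adaptivity bookkeeping and the $(\epsilon',\delta')$-to-$\epsilon'$ reduction as the two places requiring care.

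One step as written is not a valid justification, though it is a small fix. You bound the per-round drift by writing $\mathbb{E}[L_i \mid y_{<i}] = D(p\|q) \leq \epsilon'(e^{\epsilon'}-1)$ ``from the pointwise bound $|p/q - 1| \leq e^{\epsilon'}-1$.'' But combining that pointwise bound with $\log(1+x) \leq x$ gives only $D(p\|q) \leq \mathbb{E}_p[p/q - 1] = \chi^2(p\|q) \leq (e^{\epsilon'}-1)^2$, and $(e^{\epsilon'}-1)^2 > \epsilon'(e^{\epsilon'}-1)$ for every $\epsilon' > 0$, so this route yields a strictly weaker bound than claimed. The correct route (DRV's Lemma~III.2) is the symmetrization $D(p\|q) \leq D(p\|q) + D(q\|p) = \int (p-q)\log(p/q)$, then use $|\log(p/q)| \leq \epsilon'$ and $|p - q| \leq q(e^{\epsilon'}-1)$ pointwise to get $\leq \epsilon'(e^{\epsilon'}-1)\int q = \epsilon'(e^{\epsilon'}-1)$. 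With that repair, and with the $(\epsilon',\delta')$-to-pure-DP reduction made precise (it is not simply ``$\mathbb{P}[|L_i| > \epsilon'] \leq \delta'$''; one needs the coupling/truncation characterization, as you already flag), the argument is complete.
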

For $\epsilon \leq 0.9$, we obtain the following Corollary from Lemma \ref{lemma:compGDP}:
\begin{corollary}[\cite{Comp_Thm_Kairouz}]
\label{cor:ADV_COMP_0.9}
For every $\epsilon \in (0, 0.9]$, $\delta \in \left(0, 1\right)$ and $T \in \mathbb{N}$, the class of $\left(\frac{\epsilon}{2\sqrt{2T\log(2/\delta)}}, \frac{\delta}{2T}\right)$-DP mechanisms is $(\epsilon, \delta)$-DP under $T$-fold adaptive composition.
\end{corollary}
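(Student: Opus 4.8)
The plan is to obtain the corollary as an immediate consequence of the advanced composition bound in Lemma~\ref{lemma:compGDP}. That lemma already certifies that the $T$-fold adaptive composition of $\left(\frac{\epsilon}{2\sqrt{2T\log(2/\delta)}}, \frac{\delta}{2T}\right)$-DP mechanisms is $(\epsilon_{\mathrm{tot}},\delta)$-DP, where
$$\epsilon_{\mathrm{tot}} = \frac{\epsilon}{2} + \frac{\epsilon\sqrt{T}}{2\sqrt{2\log(2/\delta)}}\left(e^{\frac{\epsilon}{2\sqrt{2T\log(2/\delta)}}} - 1\right).$$
So it suffices to verify that $\epsilon_{\mathrm{tot}} \le \epsilon$ whenever $\epsilon \in (0,0.9]$ and $\delta \in (0,1)$: the defining inequality of $(\epsilon,\delta)$-DP only relaxes as $\epsilon$ grows, so any $(\epsilon_{\mathrm{tot}},\delta)$-DP mechanism with $\epsilon_{\mathrm{tot}}\le\epsilon$ is in particular $(\epsilon,\delta)$-DP.

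Writing $a := \frac{\epsilon}{2\sqrt{2T\log(2/\delta)}}$, the bound $\epsilon_{\mathrm{tot}}\le\epsilon$ is equivalent to the scalar inequality $\frac{\sqrt{T}}{\sqrt{2\log(2/\delta)}}\left(e^{a}-1\right)\le 1$. I would substitute $u = 1/\sqrt{T}\in(0,1]$ and $c = \frac{\epsilon}{2\sqrt{2\log(2/\delta)}}$, so that the left-hand side equals $\frac{1}{\sqrt{2\log(2/\delta)}}\cdot\frac{e^{cu}-1}{u}$, which is increasing in $u$ since $x\mapsto(e^{x}-1)/x$ is increasing on $(0,\infty)$. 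Hence the supremum over $T\ge 1$ is attained at $T=1$, i.e.\ at $u=1$, where the claim reduces to $2\sqrt{2\log(2/\delta)}\bigl(e^{a_1}-1\bigr)\le 4\log(2/\delta)$ with $a_1 = \frac{\epsilon}{2\sqrt{2\log(2/\delta)}}$, equivalently $e^{a_1}-1\le\sqrt{2\log(2/\delta)}$. Since $\delta<1$ forces $\log(2/\delta)>\log 2$, we get $a_1 \le \frac{0.9}{2\sqrt{2\log 2}} < \tfrac{2}{5}$, so $e^{a_1}-1 < e^{2/5}-1 < \tfrac12 < \sqrt{2\log 2} \le \sqrt{2\log(2/\delta)}$, which closes the argument.

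There is no genuine obstacle here: the whole content is the monotonicity reduction to $T=1$ followed by a crude exponential estimate, and the argument is precisely the one behind the cited result \cite{Comp_Thm_Kairouz}. The only point that needs a little care is that the bound on $a$ (hence on $e^{a}-1$) must be taken uniformly over all admissible $T$ and $\delta$; the reduction to the worst case $T=1$ together with $\delta<1$ handles this, and the hypothesis $\epsilon\le 0.9$ enters exactly at the final step to keep $e^{a_1}-1$ comfortably below $\sqrt{2\log(2/\delta)}$ — the conclusion would fail as $\epsilon$ approaches $1$.
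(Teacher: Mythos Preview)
Your argument is correct and follows exactly the route the paper indicates: the paper does not spell out a proof but simply states that the corollary follows from Lemma~\ref{lemma:compGDP} (citing \cite{Comp_Thm_Kairouz}), and your verification that $\epsilon_{\mathrm{tot}}\le\epsilon$ via the monotonicity reduction to $T=1$ is precisely that deduction. One small inaccuracy in your closing aside: the inequality does \emph{not} fail as $\epsilon$ approaches $1$ (your own bounds give $e^{a_1}-1<0.53<\sqrt{2\log 2}$ even at $\epsilon=1$); the actual breakdown is somewhat higher, but this does not affect the proof itself.
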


\subsection{Preliminaries on Concentration Inequalities}

Now we have a brief section on concentration inequalities. One crucial notion for our analysis and for the analysis of the performance of machine learning algorithms is sub-Gaussianity. We state it below in general for vectors in $\mathbb{R}^p$ for $p \geq 1$, with the understanding that for $p = 1$,
 we talk about one-dimensional variables.
 
\begin{definition}
A zero-mean random vector $X \in \mathbb{R}^p$ is sub-Gaussian with parameter $\sigma^2$ if
\begin{align*}
\mathbb{E}[e^{v^T X}] \leq e^\frac{||v||_2^2\sigma^2}{2}, \quad \forall v \in \mathbb{R}^p.
\end{align*}
We write equivalently that a zero-mean random vector $X$ is sub-Gaussian with parameter $\sigma^2$ if $X \in \mathcal{G}\left(\sigma^2\right)$.
\end{definition}
For sub-Gaussian random variables and vectors, we have the following concentration results:
\begin{lemma}[\cite{SG_DEF}]
\label{lemma:SG_conc_variable}
Let $X \in \mathbb{R}$ be zero-mean and $X \in \mathcal{G}(\sigma^2)$. Then, for all $t \geq 0$, we have
\begin{align*}
\max\left\{\mathbb{P}(X \geq t), \mathbb{P}(X \leq - t)\right\} \leq e^{\frac{-t^2}{2\sigma^2}}.
\end{align*}
\end{lemma}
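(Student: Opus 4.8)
\textbf{Proof plan for Lemma~\ref{lemma:SG_conc_variable}.} The plan is to prove the one-sided tail bound $\mathbb{P}(X \geq t) \leq e^{-t^2/(2\sigma^2)}$ by the standard Chernoff/exponential-moment (Cram\'er--Chernoff) argument, and then handle the other tail by symmetry of the sub-Gaussian definition. First I would fix $t \geq 0$ and an auxiliary parameter $\lambda > 0$, and apply Markov's inequality to the nonnegative random variable $e^{\lambda X}$: since $x \mapsto e^{\lambda x}$ is increasing, $\{X \geq t\} = \{e^{\lambda X} \geq e^{\lambda t}\}$, so
\begin{align*}
\mathbb{P}(X \geq t) \leq e^{-\lambda t}\,\mathbb{E}\bigl[e^{\lambda X}\bigr].
\end{align*}
Now I would invoke the sub-Gaussian hypothesis $X \in \mathcal{G}(\sigma^2)$, which in the one-dimensional case (taking $v = \lambda \in \mathbb{R}$, so $\|v\|_2^2 = \lambda^2$) gives $\mathbb{E}[e^{\lambda X}] \leq e^{\lambda^2 \sigma^2/2}$. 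Substituting, we obtain $\mathbb{P}(X \geq t) \leq \exp\bigl(-\lambda t + \lambda^2 \sigma^2/2\bigr)$ for every $\lambda > 0$.

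The next step is to optimize over $\lambda$. The exponent $-\lambda t + \lambda^2 \sigma^2 / 2$ is a convex quadratic in $\lambda$, minimized at $\lambda^\star = t/\sigma^2$ (which is $\geq 0$, hence admissible), with minimum value $-t^2/(2\sigma^2)$. Plugging in $\lambda = \lambda^\star$ yields $\mathbb{P}(X \geq t) \leq e^{-t^2/(2\sigma^2)}$. For the lower tail, note that if $X \in \mathcal{G}(\sigma^2)$ then $-X \in \mathcal{G}(\sigma^2)$ as well, since $\mathbb{E}[e^{v^T(-X)}] = \mathbb{E}[e^{(-v)^T X}] \leq e^{\|-v\|_2^2 \sigma^2/2} = e^{\|v\|_2^2\sigma^2/2}$ for all $v$; applying the upper-tail bound just proved to $-X$ gives $\mathbb{P}(X \leq -t) = \mathbb{P}(-X \geq t) \leq e^{-t^2/(2\sigma^2)}$. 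Taking the maximum of the two bounds completes the proof.

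There is no real obstacle here; the only point requiring minor care is the degenerate case $t = 0$, where the claimed bound is the trivial statement $\max\{\mathbb{P}(X \geq 0), \mathbb{P}(X \leq 0)\} \leq 1$, so one may assume $t > 0$ when carrying out the optimization over $\lambda$ (ensuring $\lambda^\star > 0$ lies in the admissible range). Everything else is the textbook Chernoff bound, and the symmetry of the sub-Gaussian definition under $X \mapsto -X$ is immediate from the quantifier ``for all $v \in \mathbb{R}^p$'' together with $\|-v\|_2 = \|v\|_2$.
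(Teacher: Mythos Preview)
Your proof is correct and is the standard Cram\'er--Chernoff argument for sub-Gaussian tails. The paper does not actually prove this lemma; it simply states it with a citation to \cite{SG_DEF}, so there is nothing to compare against beyond noting that your argument is exactly the textbook one that reference would contain.
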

\begin{lemma}[Lemma $1$ in \cite{CIGauss}]
\label{lemma:CIGV}
Let $X \in \mathbb{R}^p$, with $p \geq 2$, be zero-mean, such that $X \in \mathcal{G}(\sigma^2)$. Then for all $t > 0$, we have 
\begin{align*}
\mathbb{P}\left(||X||_2 > t\right) \leq 4^pe^{-\frac{t^2}{8\sigma^2}}.
\end{align*}
\end{lemma}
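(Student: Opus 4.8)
\textbf{Proof plan for Lemma~\ref{lemma:CIGV}.}
The plan is to reduce the tail bound on $\|X\|_2$ to a union bound over a finite $\varepsilon$-net of the unit sphere combined with a Chernoff bound on each one-dimensional marginal. First I would fix a $\frac{1}{2}$-net $\mathcal{N}$ of the unit sphere $\mathbb{S}^{p-1}$ in the $\ell_2$-norm; a standard volumetric argument gives $|\mathcal{N}| \le 5^p \le 4^{p+1}$ (or more simply, one uses that a $\frac{1}{2}$-net can be taken of size at most $4^p$ if one is willing to be slightly lossy, which matches the constant in the statement). The key deterministic fact is that for any vector $v$ and any $\frac{1}{2}$-net $\mathcal{N}$ of the sphere, $\|v\|_2 \le 2\max_{u \in \mathcal{N}} u^T v$: writing $v/\|v\|_2 = u + e$ with $u \in \mathcal{N}$ and $\|e\|_2 \le \frac{1}{2}$, we get $\|v\|_2 = v^T(v/\|v\|_2) = v^T u + v^T e \le v^T u + \frac{1}{2}\|v\|_2$, so $\|v\|_2 \le 2 v^T u$.

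Next I would apply this with $v = X$. Thus $\mathbb{P}(\|X\|_2 > t) \le \mathbb{P}\left(\max_{u \in \mathcal{N}} u^T X > t/2\right) \le \sum_{u \in \mathcal{N}} \mathbb{P}(u^T X > t/2)$. For each fixed unit vector $u$, the random variable $u^T X$ is a zero-mean scalar that inherits the sub-Gaussian bound: $\mathbb{E}[e^{s u^T X}] = \mathbb{E}[e^{(su)^T X}] \le e^{s^2 \|u\|_2^2 \sigma^2/2} = e^{s^2\sigma^2/2}$, so $u^T X \in \mathcal{G}(\sigma^2)$. By Lemma~\ref{lemma:SG_conc_variable}, $\mathbb{P}(u^T X > t/2) \le e^{-t^2/(8\sigma^2)}$. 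Summing over the net gives $\mathbb{P}(\|X\|_2 > t) \le |\mathcal{N}|\, e^{-t^2/(8\sigma^2)} \le 4^p e^{-t^2/(8\sigma^2)}$, which is the claimed bound.

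The only genuinely delicate point is matching the constants so that the covering number of the $\frac{1}{2}$-net is bounded by exactly $4^p$ rather than, say, $5^p$; the standard bound is $(1 + 2/\varepsilon)^p = 5^p$ for $\varepsilon = 1/2$, so to land on $4^p$ one either uses a slightly coarser net (e.g.\ an $\varepsilon$-net with $\varepsilon$ chosen so that $(1+2/\varepsilon)^p \le 4^p$ while the net-covering inequality $\|v\|_2 \le (1-\varepsilon)^{-1}\max_{u\in\mathcal{N}} u^T v$ still gives a clean factor) or absorbs the discrepancy into the exponent. I expect this bookkeeping with constants to be the main (though minor) obstacle; the rest is a routine net-plus-Chernoff argument, and since the lemma is quoted from \cite{CIGauss} one may simply cite it, but the above is the self-contained proof.
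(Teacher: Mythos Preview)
The paper does not actually prove this lemma; it is quoted as Lemma~1 from \cite{CIGauss} and used as a black box throughout. So there is no proof in the paper to compare your proposal against.

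That said, your net-plus-Chernoff argument is the standard route to such bounds and is structurally correct. You have already identified the only real wrinkle: the volumetric bound $(1+2/\varepsilon)^p$ with $\varepsilon = 1/2$ gives $5^p$, not $4^p$, while the factor $2$ in $\|v\|_2 \le 2\max_{u\in\mathcal{N}} u^T v$ is what produces the $8$ in the exponent. No single choice of $\varepsilon$ in the pair $\bigl((1+2/\varepsilon)^p,\,(1-\varepsilon)^{-2}\bigr)$ lands on exactly $(4^p, 8)$ simultaneously, so the stated constants are slightly better than what this bare argument delivers. Since the result is cited rather than proved in the paper, and since every application of the lemma in the paper only uses it up to constants (the $4^p$ is immediately absorbed into $\lesssim$ or into a rescaling of $t$), your $5^p e^{-t^2/(8\sigma^2)}$ version would serve equally well for all downstream uses; alternatively one simply cites \cite{CIGauss} as the paper does.
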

One important class of sub-Gaussian random variables ($p = 1$) is the one of bounded random variables. For this, we have Hoeffding's Lemma:

\begin{lemma}[Hoeffding's Lemma, \cite{Hoeff}]
Let $X \in [a, b]$ be zero-mean. Then $X \in \mathcal{G}\left(
\frac{(b - a)^2}{4}\right)$.
\end{lemma}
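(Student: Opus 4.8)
The plan is to establish the moment generating function bound $\mathbb{E}[e^{vX}] \le e^{v^2(b-a)^2/8}$ for all $v \in \mathbb{R}$, which is precisely the statement that $X \in \mathcal{G}\big((b-a)^2/4\big)$ in the one-dimensional case. If $a = b$ then $X = 0$ almost surely and there is nothing to prove, so assume $a < b$; since $\mathbb{E}[X] = 0$ and $X \in [a,b]$, we necessarily have $a \le 0 \le b$.

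First I would exploit convexity of $x \mapsto e^{vx}$ on $[a,b]$: writing any $x \in [a,b]$ as the convex combination $x = \frac{b-x}{b-a}\,a + \frac{x-a}{b-a}\,b$ gives the pointwise bound $e^{vx} \le \frac{b-x}{b-a}e^{va} + \frac{x-a}{b-a}e^{vb}$. Taking expectations and using $\mathbb{E}[X] = 0$ to annihilate the terms linear in $x$ leaves $\mathbb{E}[e^{vX}] \le \frac{b}{b-a}e^{va} - \frac{a}{b-a}e^{vb}$. Setting $p = \frac{-a}{b-a} \in [0,1]$ and $u = v(b-a)$, the right-hand side rewrites as $e^{\phi(u)}$ with $\phi(u) = -pu + \log\big(1 - p + p e^{u}\big)$.

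It then remains to show $\phi(u) \le u^2/8$ for every $u \in \mathbb{R}$. I would do this by Taylor's theorem with Lagrange remainder about $u = 0$: a direct computation gives $\phi(0) = 0$ and $\phi'(0) = 0$, while $\phi''(u) = t(u)\big(1 - t(u)\big)$ where $t(u) = \frac{p e^{u}}{1 - p + p e^{u}} \in [0,1]$. Since $t(1-t) \le 1/4$ for all $t \in [0,1]$, we obtain $\phi(u) = \tfrac12 \phi''(\xi)\, u^2 \le u^2/8$ for some $\xi$ between $0$ and $u$. Substituting back $u = v(b-a)$ yields $\mathbb{E}[e^{vX}] \le e^{v^2(b-a)^2/8}$, which is the claimed sub-Gaussianity with parameter $\sigma^2 = (b-a)^2/4$.

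The argument has no genuine obstacle; the only steps requiring care are the second-derivative computation and the recognition that $\phi''$ has the form $t(1-t)$, together with verifying $\phi'(0) = 0$ so that the first-order Taylor term vanishes — this is exactly what forces the bound to be quadratic in $u$ with the sharp constant $1/8$, and hence the variance proxy $(b-a)^2/4$.
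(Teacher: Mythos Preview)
Your proof is correct and is the standard argument for Hoeffding's Lemma. The paper does not provide its own proof of this statement; it simply cites it as a classical result from \cite{Hoeff}, so there is nothing to compare your approach against.
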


\subsection{Proof of Lemma \ref{lemma:GLM_prelim_lem}}\label{sec:Proof_GLM_prelim_lemma}

%\begin{proof}
Note that since $\|\Phi''\|_\infty \leq K_{\Phi''}$, we have for all $||v||_2 = 1$ that
\begin{align*}
v^T\nabla^2\mathcal{R}(\theta)v \leq K_{\Phi''}\mathbb{E}_x[(v^Tx)^2]\leq K_{\Phi''}v^T\Sigma v \leq K_{\Phi''}\lambda_{\max}(\Sigma), \quad \forall \theta \in \mathcal{C}.
\end{align*}
Hence, $\mathcal{R}$ is $K_{\Phi''}\lambda_{\max}(\Sigma)$-smooth over $\mathbb{R}^p$. 

Assume now that $||x||_2 \leq L_x$ and $||\theta||_2 \leq K_B$ for all $\theta \in \mathcal{C}$. By Cauchy-Schwarz and the assumptions on $\Phi''$ stated at the start of Section \ref{sec:Generalized Linear Models (GLM)}, we have $\Phi''(x^T\theta) \geq \Phi''(L_xK_B)$ for all $\theta \in \mathcal{C}$. Thus, for all $||v||_2 = 1$, we have
\begin{align*}
v^T\nabla^2\mathcal{R}(\theta)v \geq \Phi''(L_xK_B)v^T\Sigma v \geq \Phi''(L_xK_B)\lambda_{\min}(\Sigma), \quad \forall \theta \in \mathcal{C}.
\end{align*}
Thus, since $\Sigma \succ 0$ and $\Phi''(L_xK_B) > 0$, we see that $\mathcal{R}$ is $\Phi''(L_xK_B)\lambda_{\min}(\Sigma)$-strongly convex over $\mathcal{C}$, as required.

Finally, assume $\theta^* \in \mathcal{C}$. Note that since $\Phi$ is convex and $-yx^T\theta$ and $x^T\theta$ are linear in $\theta$, the functions $\mathcal{L}$ and $\mathcal{R}$ are convex. By equation~\eqref{eq:GLMder}, we see that $\nabla\mathcal{R}(\theta^*) = 0$. Hence, since $\mathcal{R}$ is convex over $\mathcal{C}$, we conclude that $\mathcal{R}$ is minimized at $\theta^*$.
%\end{proof}

%%%%%

\section{Simulation Details}
\label{AppSims}

We provide more implementation details for the figures in Section~\ref{SecSims}. Figures \ref{fig:plot1}, \ref{fig:plot2}, \ref{fig:plot6}, and \ref{fig:plot_l_bigger_0} are based on the Frank-Wolfe method and acceleration, while Figures \ref{fig:plot3}, \ref{fig:plot4}, and \ref{fig:plot5} consider projected gradient descent and Nesterov's AGD. Unless specified otherwise, whenever we deal with a GLM or a linear regression model, we take the true parameter $\theta^* = (1, \dots, 1)^T$, and for linear regression, we simulate $x \indep w$. All the implementations were done based on NumPy in Python.

\textbf{Figure \ref{fig:plot1}:} We compare Theorem \ref{theorem:UPRidge} with Lemma \ref{lemma:NOPLconvDPFW}, using Algorithms \ref{alg:PrivFWERM} (ACCFW) and \ref{alg:PrivNon-accERM} (FW). We simulate $n = 10,000$ linearly separable data points, with $p = 10$, $||x_i||_\infty\leq 1$, $y_i = \operatorname{sgn}(x_i^T v^*)$, $|x_i^Tv^*|\geq \frac{\sqrt{p}}{2}$, and $v^*=\frac{(1,\dots,1)^T}{\sqrt{p}}$, $\forall\, i\in[n]$. We optimize over $\mathcal{C} = \mathbb{B}_2\left(\frac{1}{4\sqrt{p}}\right)$ (hence, $S_1 = 1$), with $\delta = \frac{1}{3}$, and we pick $\Gamma_{\mathcal{L}}$ and $G_{\mathcal{C}}$ as described in Remark \ref{remark:compare_dist_free_ACCFW}. We initialize $\theta_0 = 0$. The plot shows the logarithm of the excess mean squared error loss (we take $L_2 = \sqrt{p} + pD$) versus $n$, for $\epsilon\in\{0.5,\,0.9\}$. In line with Remark \ref{remark:compare_dist_free_ACCFW}, Algorithm \ref{alg:PrivFWERM} (rate $\sqrt{p}/(n\epsilon)$) outperforms Algorithm \ref{alg:PrivNon-accERM} (rate $(\sqrt{p}/(n\epsilon))^{2/3}$), and larger $\epsilon$ leads to faster convergence.

\textbf{Figure \ref{fig:plot2}:} We compare Theorem \ref{theorem:ERM_UP_C_Inc} with Lemma \ref{lemma:NOPLconvDPFW}, using Algorithms \ref{alg:PrivFWERM} (ACCFW) and \ref{alg:PrivNon-accERM} (FW). We simulate $n = 5,500$ independent data points from a logistic regression model (see Section \ref{sec:Generalized Linear Models (GLM)}), with $p = 3$, $L_x = 1$, $C_1 = 1$, $\zeta = \frac{1}{3}$, $\delta = \frac{1}{3}$, $\lambda_{\min}(\Sigma) = \frac{1}{3p}$, and $D = \frac{12p}{\Phi''(\sqrt{p})n^{2/5}}$. Each entry of $x_i$ is drawn independently from $Unif\left[-\frac{1}{\sqrt{p}}, \frac{1}{\sqrt{p}}\right]$. Also, $\theta_0 = 0$. The plot shows the logarithm of the excess empirical risk (we take $L_2 = (K_{\Phi'} + K_y)L_x = 2$) versus $n$, for $\epsilon\in\{0.5,\,0.9\}$. We can see that Algorithm \ref{alg:PrivFWERM} (rate $1/(n^{4/5}\epsilon)$) does better than Algorithm \ref{alg:PrivNon-accERM} (rate $(1/(n\epsilon))^{2/3}$), as discussed in Remark \ref{remark:Best_q_ERM}, and larger $\epsilon$ leads to faster convergence.

\textbf{Figure \ref{fig:plot3}:} We compare Nesterov’s AGD (Theorem \ref{theorem:Priv_HT_AGD_Smooth}) with projected GD (Theorem \ref{theorem:Priv_HT_GD_Smooth}) using Algorithm \ref{alg:RobPGDNFW} and the pseudo-Huber loss (with $q = \frac{1}{5}$, see Appendix \ref{Appendix A}). Gradient estimators and learning rates are as specified in Theorem \ref{theorem:Priv_HT_GD_Smooth}. We simulate $n = 100,000$ data points from the model $y = x^T\theta^* + w$, $w \sim ST(3)$, with $p = 10$ and each entry of $x$ drawn independently from $Unif\left[-\tfrac{1}{\sqrt{p}},\tfrac{1}{\sqrt{p}}\right]$ (so $L_x = 1$). We initialize $\theta_0 = 0$ (and $\theta_1 = (1.1, \dots, 1.1)^T$ for Nesterov's AGD). We take $\tau_u = \frac{1}{3p}$ (see Lemma \ref{lemma:pHu_prelim}), and $\delta = \frac{1}{3}$. The plot displays $\log(\|\theta_T-\theta^*\|_2)$ (with $L_2 = qL_x = \frac{1}{5}$) versus $n$, for $\epsilon \in \{0.1,\,0.9\}$. Nesterov’s AGD (rate $1/n^{2/5} + 1/(n\epsilon^2)$) outperforms projected GD (rate $1/n^{1/5} + 1/(n^{1/2}\epsilon)$, cf.\ Remark \ref{remark:compare_AGD_GD_Smooth}), and larger $\epsilon$ accelerates convergence. Moreover, since $\nabla\mathcal{R}(\theta^*) = 0$ and by the smoothness of the risk (see Lemma \ref{lemma:pHu_prelim}), a bound on $\|\theta_T-\theta^*\|_2^2$ implies a bound on $\mathcal{R}(\theta_T)-\mathcal{R}(\theta^*)$ (up to a constant), so this figure also reflects excess risk upper bounds. Gradient estimators and learning rates are as specified in Theorem \ref{theorem:htAGD}. 

\textbf{Figure \ref{fig:plot4}:} We compare Nesterov’s AGD (Theorem \ref{theorem:htAGD}) with projected GD (Lemma \ref{lemma:GLMhtGD}) using Algorithm \ref{alg:RobPGDNFW} and the squared error loss. We simulate $n = 1,500$ data points from the model $y = x^T\theta^* + w$, $w \sim ST(3)$, with $p = 100$, $x \sim N(0, \Sigma)$, and $\Sigma$ is a diagonal matrix with $\tau_l = \lambda_{\min}(\Sigma) = \frac{\lambda_{\max}(\Sigma)}{1.5} = \frac{\tau_u}{1.5} = \frac{2}{3}$. We initialize $\theta_0 = 0$ (and $\theta_1 = 0$ for Nesterov's AGD) and we take $\zeta = \frac{1}{10}$. The plot shows $\log(\|\theta_t-\theta^*\|_2)$ versus $t \in \{0, \dots T\}$, with $T = 20$, for $n \in \{600, 900, 1200, 1500\}$. We can see a faster convergence of Nesterov's AGD in the exponentially decaying term with $t$ (cf.\ Remark~\ref{RemFaster}), while a larger $n$ leads to a smaller error term (independent of $t$), in line with the results of Theorem \ref{theorem:htAGD} and Lemma \ref{lemma:GLMhtGD}.

\textbf{Figure \ref{fig:plot5}:} With the setup for Figure \ref{fig:plot4}, but with $n = 60,000$, we compare Nesterov’s AGD (Theorem \ref{theorem:htAGD}) to projected GD (Lemma \ref{lemma:GLMhtGD}), as described in Section \ref{sec:Comparisons in the Heavy-Tailed Setting}. We take $T = \log_{\frac{\tau_u + \tau_l}{\tau_u}}(\sqrt{n})$ for projected GD, and $T = \frac{\log(\sqrt{n})}{\frac{1}{2}\log\left(\frac{1}{1 - \sqrt{\frac{\tau_l}{\tau_u}}}\right)}$ for Nesterov's AGD. We plot $\log||\theta_T-\theta^*||_2$ versus $n$. The results show that Nesterov’s AGD yields a slight improvement (its curve is essentially a constant translation of that for projected GD), supporting the finding that AGD’s advantage is up to an absolute constant, and not an improved rate in $n$ and $p$.

\textbf{Figure \ref{fig:plot6}:} We compare Theorem \ref{theorem:ACCFW_lambda=0} with Theorem \ref{theorem:NonACC_lambda=0}, using Algorithms \ref{alg:PrivFWERM} (ACCFW) and \ref{alg:PrivNon-accERM} (FW). We simulate $n = 50,000$ samples from the model $y = x^T\theta^* + w$, $w \sim ST(3)$, with $p = 10$, and $x \sim N(0, \Sigma)$. The true parameter is $\theta^* = \frac{(1, \dots, 1)^T}{\sqrt{p}}$, and $\Sigma$ is diagonal with $\Sigma_{ii} = 1$ for $i \leq m$, and $\Sigma_{ii} = 0$ for $i > m$, where $m \in \left\{3, 6, 9\right\}$. All other parameters follow the settings in Theorems \ref{theorem:ACCFW_lambda=0} and \ref{theorem:NonACC_lambda=0}. We initialize $\theta_0 = 0$ and set $\zeta = \frac{1}{10}$. For each $m$, we simulate $50,000$ data points. The plot shows $\|\theta_T-\theta^*\|_2$ versus $n$. As expected from the bounds in Theorems \ref{theorem:ACCFW_lambda=0} and \ref{theorem:NonACC_lambda=0}, the error plateaus at non-zero levels due to the $c_{\mathcal{K}}$ term. Notably, the non-accelerated version converges more slowly but ultimately incurs less error, while the accelerated version reaches its plateau faster, reflected in the flatter curves.

\textbf{Figure \ref{fig:plot_l_bigger_0}:} We compare Theorem \ref{theorem:ACCFWROB} with Theorem \ref{theorem:HT_NonAccFW}, using Algorithms \ref{alg:PrivFWERM} (ACCFW) and \ref{alg:PrivNon-accERM} (FW). We simulate $n = 20,000$ samples from the model $y = x^T\theta^* + w$, $w \sim ST(3)$, with $p = 10$, and $x \sim N(0, \Sigma)$, with $\Sigma = I_p$. For FW, we take $\mathcal{C}$ to be an $\ell_2$-ball centered at $0$ that contains $\theta^*$. For ACCFW, we take $C_1 = 0.5$. We initialize $\theta_0 = 0$ and set $\zeta = \frac{1}{10}$. All the other parameters are as specified in Theorems \ref{theorem:HT_NonAccFW} and \ref{theorem:ACCFWROB}. The plot shows $\log(\|\theta_T-\theta^*\|_2)$ versus $n$. We can observe that Algorithm \ref{alg:PrivFWERM} (rate $1/n^{1/5}$) outperforms Algorithm \ref{alg:PrivNon-accERM} (rate $1/n^{1/6}$).

%%%%%

\section{Auxiliary Results}
\label{Appendix A}

We begin with two technical lemmas about sequences of real numbers:
\begin{lemma}
\label{lemma:9}
For a sequence of real numbers $(x_n)_{n\geq 0}$ with initial points $x_0$ and $x_1$, defined by
\begin{align*}
x_{n + 2} = ax_{n + 1} + bx_n + c,
\end{align*}
with $a, b, c \in \mathbb{R} \backslash \{0\}$, such that $a + b \neq 1$ and the solutions $\{s_1, s_2\}$ of $x^2 - ax - b = 0$ are real and distinct, we have constants $C_1$ and $C_2$ such that
\begin{align*}
x_{n} = C_1s_1^n + C_2s_2^n + \frac{c}{1 - a - b}.
\end{align*}
\end{lemma}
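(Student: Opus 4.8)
The plan is the standard decomposition of a linear recurrence into a particular solution plus the general solution of the associated homogeneous recurrence. First I would look for a constant particular solution $x_n \equiv K$. Substituting into $x_{n+2} = ax_{n+1} + bx_n + c$ forces $K = aK + bK + c$, i.e.\ $K(1 - a - b) = c$; since $a + b \neq 1$ by hypothesis, this has the unique solution $K = \frac{c}{1 - a - b}$, which is well-defined.

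Next I would set $y_n := x_n - K$ for all $n \geq 0$. Subtracting the identity $K = aK + bK + c$ from the recurrence for $x_n$ shows that $y_n$ satisfies the homogeneous recurrence $y_{n+2} = a y_{n+1} + b y_n$, with initial data $y_0 = x_0 - K$ and $y_1 = x_1 - K$. The characteristic polynomial of this recurrence is exactly $x^2 - ax - b$, whose roots $s_1, s_2$ are real and distinct by assumption. A direct check shows that both $(s_1^n)_{n \geq 0}$ and $(s_2^n)_{n \geq 0}$ solve the homogeneous recurrence, hence so does any linear combination $C_1 s_1^n + C_2 s_2^n$.

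It then remains to show that the constants $C_1, C_2$ can be chosen so that $C_1 + C_2 = y_0$ and $C_1 s_1 + C_2 s_2 = y_1$; this is a $2 \times 2$ linear system whose determinant is $s_2 - s_1 \neq 0$, so it has a (unique) solution, explicitly $C_1 = \frac{y_1 - s_2 y_0}{s_1 - s_2}$ and $C_2 = \frac{s_1 y_0 - y_1}{s_1 - s_2}$. Since the sequence $(C_1 s_1^n + C_2 s_2^n)_{n \geq 0}$ agrees with $(y_n)_{n \geq 0}$ at $n = 0$ and $n = 1$ and both satisfy the same second-order recurrence, an immediate induction on $n$ gives $y_n = C_1 s_1^n + C_2 s_2^n$ for all $n$. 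Unwinding $x_n = y_n + K$ yields the claimed formula. The only point requiring any care — and the closest thing to an obstacle — is justifying that the two geometric sequences span the full solution space of the homogeneous recurrence, which is precisely where the hypothesis $s_1 \neq s_2$ (and their reality, so that $C_1, C_2$ are real) is used; everything else is routine substitution.
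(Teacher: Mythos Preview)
Your proposal is correct and follows essentially the same approach as the paper: subtract the constant particular solution $d = c/(1-a-b)$ to reduce to the homogeneous recurrence, then solve via the characteristic roots and match the two initial conditions using $s_1 \neq s_2$. The paper's proof is slightly terser but structurally identical.
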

\begin{proof}
By letting $x_n = y_n + d$, we obtain
\begin{align*}
y_{n + 2} + d = ay_{n + 1} + ad + by_n + bd + c,
\end{align*}
and for $d = \frac{c}{1 - a - b}$, we obtain
\begin{align*}
y_{n + 2} = ay_{n + 1} + by_n.
\end{align*}
We impose $y_0 = C_1s_1 + C_2s_2$ and $y_1 = C_1s_1^2 + C_2s_2^2$,
so $(C_1, C_2)$ solve this system uniquely, since $s_1 \neq s_2$. Therefore, by induction, we have
\begin{align*}
y_n = C_1s_1^n + c_2s_2^n \Rightarrow x_n = C_1s_1^n + c_2s_2^n + \frac{c}{1 - a - b}.
\end{align*}
\end{proof}

\begin{remark}
\label{remark:Second_Ord_Ineq_Rmk}
In fact, for completeness, we have $C_1 = \frac{\frac{s_2}{s_1}(x_0 - d) - \frac{1}{s_1}(x_1 - d)}{s_2 - s_1}$ and $C_2 = \frac{\frac{-s_1}{s_2}(x_0 - d) + \frac{1}{s_2}(x_1 - d)}{s_2 - s_1}$. Note that if in addition, we have $a, b > 0$ and we require $x_{n + 2} \leq ax_{n + 1} + bx_n + c$, then we can show inductively that $x_n \leq C_1s_1^n + C_2s_2^n + \frac{c}{1 - a - b}$.
\end{remark}

\begin{lemma}[\cite{Schmidt_2011_Conv}]
\label{lemma:Smooth_GD_Helper}
Assume that the non-negative sequence $\{u_t\}_{t \geq 0}$ satisfies the following recursion for $t \geq 1$:
\begin{align*}
u_t^2 \leq S_t + \sum_{i = 1}^t\lambda_iu_i,
\end{align*}
with $\{S_t\}$ a non-decreasing sequence, $S_0^2 \geq u_0$, and $\lambda_i \geq 0$ for all $i \geq 0$. Then for all $t \geq 1$ and $a_t = \frac{1}{2}\sum_{i = 1}^t\lambda_i$, we have
\begin{align*}
u_t \leq a_t + \left(S_t + a_t^2\right)^{1/2}.
\end{align*}
\end{lemma}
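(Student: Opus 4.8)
The plan is to collapse the chained recursion to a single scalar quadratic inequality by passing to the running maximum. Fix $t \ge 1$ and set $\bar u_t := \max_{1 \le i \le t} u_i$; since $t \ge 1$ we have $u_t \le \bar u_t$, so it suffices to bound $\bar u_t$. First I would pick an index $k \in \{1, \dots, t\}$ attaining this maximum, $\bar u_t = u_k$, and apply the hypothesised recursion at $k$ — legitimate precisely because $k \ge 1$. Bounding $S_k \le S_t$ by monotonicity of $\{S_t\}$, using $u_i \le \bar u_t$ for $1 \le i \le k$, and $\sum_{i=1}^k \lambda_i \le \sum_{i=1}^t \lambda_i = 2a_t$ since the $\lambda_i$ are nonnegative, one obtains
\begin{align*}
\bar u_t^2 = u_k^2 \le S_k + \sum_{i=1}^k \lambda_i u_i \le S_t + \bar u_t \sum_{i=1}^t \lambda_i = S_t + 2 a_t\, \bar u_t .
\end{align*}
Thus $\bar u_t$ satisfies $\bar u_t^2 - 2a_t \bar u_t - S_t \le 0$, and since $\bar u_t, a_t, S_t \ge 0$ the larger root of the associated quadratic forces $\bar u_t \le a_t + (a_t^2 + S_t)^{1/2}$, hence $u_t \le a_t + (S_t + a_t^2)^{1/2}$, which is the claim. (The hypothesis $S_0 \ge u_0^2$ enters only if one additionally wants the bound at $t=0$, where the recursion is unavailable; for the stated range $t \ge 1$ it plays no role.)

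There is no genuine obstacle here; the one subtlety worth flagging is that the recursion is posited only for indices $\ge 1$, so a naive induction on $u_t$ itself does not close — taking the maximum over $i \in \{1, \dots, t\}$ (rather than over $i \ge 0$) is exactly what sidesteps this. For comparison, the route closer to \cite{Schmidt_2011_Conv} is to set $\widetilde S_t := S_t + \sum_{i=1}^t \lambda_i u_i$, observe that $u_t^2 \le \widetilde S_t$ and that $\widetilde S_t$ is non-decreasing, write $x_t := \widetilde S_t^{1/2}$ so that $u_t \le x_t$ yields $x_t^2 \le x_{t-1}^2 + (S_t - S_{t-1}) + \lambda_t x_t$, complete the square to get $x_t \le \tfrac{\lambda_t}{2} + \bigl(x_{t-1}^2 + (S_t - S_{t-1}) + \tfrac{\lambda_t^2}{4}\bigr)^{1/2}$, and then induct with hypothesis $x_t \le a_t + (S_t + a_t^2)^{1/2}$; the inductive step reduces, after squaring, to the elementary inequality $(S_{t-1} + a_{t-1}^2)^{1/2} \le (S_t + a_t^2)^{1/2}$, which holds by monotonicity of $\{S_t\}$ and of $\{a_t\}$. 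I would present the running-maximum argument as the main proof, since it avoids the square-completion algebra entirely and makes transparent where each hypothesis is used.
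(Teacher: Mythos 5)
Your running-maximum argument is correct: applying the recursion at the maximising index $k \in \{1,\dots,t\}$, bounding $S_k \le S_t$ by monotonicity, each $u_i \le \bar u_t$ for $i\le k$, and $\sum_{i=1}^k\lambda_i \le \sum_{i=1}^t\lambda_i = 2a_t$ collapses everything to the scalar quadratic $\bar u_t^2 - 2a_t\bar u_t - S_t \le 0$, whose admissible root bound gives $\bar u_t \le a_t + (S_t + a_t^2)^{1/2}$, and $u_t \le \bar u_t$ finishes it. Note that the paper does not give a proof of this lemma — it is imported directly from \cite{Schmidt_2011_Conv} — so there is no in-paper argument to compare against; your running-max device is in any case the standard one for chained recursions of this type, and is essentially the argument in the cited reference, so your ``alternative'' inductive sketch is the less faithful of your two routes, not the more faithful. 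One small point worth recording: the paper writes the hypothesis as $S_0^2 \ge u_0$, which is almost certainly a typo for $S_0 \ge u_0^2$ (the form consistent with $u_t^2 \le S_t$); you silently corrected it in your parenthetical remark, and you are right that, whichever way it is written, it plays no role for $t \ge 1$ under your argument.
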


Let us also recall the form of a t-distribution and some aspects related to it.
\begin{definition}
\label{def:t_dist}
A random variable $X$ follows a t-distribution with $\nu$ degrees of freedom, denoted by $ST(\nu)$, if its pdf takes the form
\begin{align*}
p(x) = \frac{\Gamma\left(\frac{\nu + 1}{2}\right)}{\sqrt{\pi\nu}\Gamma\left(\frac{\nu}{2}\right)}\left(1 + \frac{x^2}{\nu}\right)^{-\frac{\nu + 1}{2}}, \quad \forall x \in \mathbb{R}.
\end{align*}
\end{definition}
\begin{lemma}
\label{lemma:t_dist_lemma}
Let $X \sim ST(\nu)$. The second moment of $X$ exists if and only if $\nu > 2$, and is equal to $\frac{\nu}{\nu - 2}$. Additionally, if $X$ has $r$ finite moments, then if $\nu \in \mathbb{N}$, we have $r = \nu - 1$ and $r = \lfloor \nu \rfloor$ otherwise.
\end{lemma}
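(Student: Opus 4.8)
The plan is to handle the two claims separately: the second-moment formula follows from a Beta-integral computation, while the count of finite moments comes from the polynomial tail decay of the density $p$. For the second moment, by the symmetry $p(-x) = p(x)$ I would write $\mathbb{E}[X^2] = 2\int_0^\infty x^2 p(x)\,dx$, a valid identity in $[0, +\infty]$ since the integrand is nonnegative. The substitution $u = x^2/\nu$ transforms $\int_0^\infty x^2 (1 + x^2/\nu)^{-(\nu+1)/2}\,dx$ into $\frac{\nu^{3/2}}{2}\int_0^\infty u^{1/2}(1 + u)^{-(\nu+1)/2}\,du = \frac{\nu^{3/2}}{2}\, B\left(\frac{3}{2}, \frac{\nu - 2}{2}\right)$, where $B$ denotes the Beta function. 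This integral is finite precisely when $\frac{\nu-2}{2} > 0$, i.e.\ $\nu > 2$; when $\nu \le 2$ the integrand decays like $u^{-\nu/2}$ as $u \to \infty$, so the integral, hence $\mathbb{E}[X^2]$, diverges. For $\nu > 2$, I would multiply by the normalizing constant $\frac{\Gamma((\nu+1)/2)}{\sqrt{\pi\nu}\,\Gamma(\nu/2)}$, use $B\left(\frac{3}{2}, \frac{\nu-2}{2}\right) = \frac{\Gamma(3/2)\,\Gamma((\nu-2)/2)}{\Gamma((\nu+1)/2)}$, and simplify with $\Gamma(3/2) = \frac{\sqrt\pi}{2}$ and $\Gamma(\nu/2) = \frac{\nu-2}{2}\,\Gamma((\nu-2)/2)$; the $\Gamma((\nu-2)/2)$ and $\Gamma((\nu+1)/2)$ factors cancel, leaving $\mathbb{E}[X^2] = \frac{\nu}{\nu-2}$. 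As a cross-check one may instead use that $X$ has the same law as $Z\sqrt{\nu/V}$ with $Z \sim N(0,1)$ and $V \sim \chi^2_\nu$ independent, so that $\mathbb{E}[X^2] = \mathbb{E}[Z^2]\cdot \nu\,\mathbb{E}[1/V] = \frac{\nu}{\nu-2}$ for $\nu > 2$ by the standard mean of the inverse chi-squared.

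For the number of finite moments, fix an integer $k \ge 1$; again by symmetry $\mathbb{E}[|X|^k] = 2\int_0^\infty x^k p(x)\,dx$. The integrand is continuous on $[0,\infty)$, hence integrable on any bounded interval, and writing $(1 + x^2/\nu)^{-(\nu+1)/2} = (x^2/\nu)^{-(\nu+1)/2}(1 + \nu/x^2)^{-(\nu+1)/2}$, with the last factor bounded above and below by positive constants for $x \ge 1$, shows that $c_1 x^{\,k - \nu - 1} \le x^k p(x) \le c_2 x^{\,k - \nu - 1}$ for $x \ge 1$ and suitable constants $c_1, c_2 > 0$. Hence $\mathbb{E}[|X|^k] < \infty$ if and only if $k - \nu - 1 < -1$, that is $k < \nu$; in particular the moment is infinite whenever $k \ge \nu$, including the borderline logarithmic divergence at $k = \nu$ when $\nu \in \mathbb{N}$. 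Consequently the largest integer $r$ for which all moments of order at most $r$ are finite is the largest integer strictly below $\nu$, namely $r = \nu - 1$ when $\nu \in \mathbb{N}$ and $r = \lfloor \nu \rfloor$ otherwise. Taking $k = 2$ recovers consistency with the first claim.

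The computations are elementary, so the only thing requiring care is bookkeeping: keeping the integrand nonnegative so that divergence of the improper integral genuinely rules out the moment when $\nu \le 2$; making the two-sided tail bound on $x^k p(x)$ precise enough to argue both convergence (for $k < \nu$) and divergence (for $k \ge \nu$); and tracking the Gamma-function identities in the second-moment evaluation. I do not anticipate any genuine obstacle beyond these routine steps.
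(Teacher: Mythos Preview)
Your proof is correct. The paper does not actually prove this lemma; it is stated in Appendix~\ref{Appendix A} as a background fact about the $t$-distribution and used elsewhere without justification, so there is no ``paper's own proof'' to compare against. Your Beta-integral computation for the second moment and the two-sided tail bound $x^k p(x) \asymp x^{k-\nu-1}$ for the moment count are the standard arguments one would give, and the bookkeeping (Gamma identities, borderline $k=\nu$ case) is handled carefully.
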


Now we have a useful lemma that allows us to pass from results with high probability to results in expectation:

\begin{lemma}
\label{lemma:HP_TO_EXP}
Let $Z \geq 0$ be a random variable. Suppose $Z \leq A + B\sqrt{\log\left(\frac{C}{\zeta}\right)}$ with probability at least $1 - \zeta$, for all $\zeta \in (0, 1)$, and $A, B, C > 0$ are constants independent of $\zeta$. Then
\begin{align*}
\mathbb{E}[Z] \leq A + \frac{\sqrt{\pi}}{2}BC.
\end{align*}
\end{lemma}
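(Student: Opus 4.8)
\textbf{Proof plan for Lemma \ref{lemma:HP_TO_EXP}.}

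The plan is to write the expectation as an integral of the tail probability and use the stated high-probability bound to control the tail. First I would observe that $Z - A$ may be negative; to keep things clean, I would instead bound $\mathbb{E}[(Z-A)_+]$, where $(\cdot)_+$ denotes the positive part, and note that $\mathbb{E}[Z] \le A + \mathbb{E}[(Z-A)_+]$. For $t > 0$, the event $\{(Z-A)_+ > t\}$ is contained in $\{Z > A + t\}$, and by the hypothesis (applied with the choice of $\zeta$ making $B\sqrt{\log(C/\zeta)} = t$, i.e. $\zeta = C e^{-t^2/B^2}$, valid as long as this $\zeta$ lies in $(0,1)$), we get $\mathbb{P}(Z > A + t) \le \zeta = C e^{-t^2/B^2}$. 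When $C e^{-t^2/B^2} \ge 1$ this bound is vacuous but still correct since probabilities are at most $1$; in that range I would just use the trivial bound $\mathbb{P}\le 1$, or more simply note that the clean Gaussian-tail computation below already handles everything because $\int_0^\infty \min\{1, Ce^{-t^2/B^2}\}\,dt \le \int_0^\infty Ce^{-t^2/B^2}\,dt$.

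Then I would compute
\begin{align*}
\mathbb{E}[(Z-A)_+] = \int_0^\infty \mathbb{P}\big((Z-A)_+ > t\big)\,dt \le \int_0^\infty C e^{-t^2/B^2}\,dt = C B \int_0^\infty e^{-s^2}\,ds = \frac{\sqrt{\pi}}{2}\,B C,
\end{align*}
using the substitution $s = t/B$ and the Gaussian integral $\int_0^\infty e^{-s^2}\,ds = \sqrt{\pi}/2$. Combining with $\mathbb{E}[Z] \le A + \mathbb{E}[(Z-A)_+]$ gives the claimed bound $\mathbb{E}[Z] \le A + \frac{\sqrt{\pi}}{2}BC$.

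There is essentially no hard step here; the only mild subtlety is the bookkeeping around the regime where the hypothesis gives a ``probability'' exceeding $1$, which is handled automatically by bounding $\mathbb{P}(Z > A+t)$ by $\min\{1, Ce^{-t^2/B^2}\} \le Ce^{-t^2/B^2}$ throughout, so that the single clean integral suffices. One should also note $Z \ge 0$ is only used to ensure the tail integral representation of $\mathbb{E}[(Z-A)_+]$ is legitimate and the expectation is well-defined (finite, since the integral converges).
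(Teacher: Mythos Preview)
Your proof is correct and follows essentially the same approach as the paper: write the expectation via the tail integral, invert the high-probability bound to get $\mathbb{P}(Z > A + t) \le C e^{-t^2/B^2}$, and evaluate the resulting Gaussian integral. The only cosmetic difference is that the paper splits $\int_0^\infty \mathbb{P}(Z>s)\,ds$ at $s=A$ rather than passing through $(Z-A)_+$, which amounts to the same substitution.
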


\begin{proof}
Since $Z \geq 0$, we have 
\begin{align*}
\mathbb{E}[Z] = \int_0^\infty\mathbb{P}(Z > s) \, ds = \int_0^A\mathbb{P}(Z > s) \, ds + \int_A^\infty \mathbb{P}(Z > s) \, ds \leq A + \int_A^\infty \mathbb{P}(Z > s) \, ds.
\end{align*}
Using the assumption, we then have
\begin{equation*}
\mathbb{E}[Z] \leq A + C\int_A^\infty e^{\left(\frac{s - A}{B}\right)^2}\, ds = A + C\int_0^\infty e^{\left(\frac{s}{B}\right)^2}\, ds
%= A + C\sqrt{2\pi}\frac{B}{\sqrt{2}}\int_0^\infty \frac{1}{\sqrt{2\pi}\frac{B}{\sqrt{2}}}e^{\frac{-s^2}{2\frac{B^2}{2}}}\, ds\\
= A + \frac{\sqrt{\pi}}{2}BC,
\end{equation*}
as required.
\end{proof}

We also state a concentration result for random matrices of the form $vv^T$, for $v \in \mathbb{R}^p$ and $p \geq 1$:

\begin{lemma}[\cite{Prob_Stats_Chen}]
\label{lemma:Concxx^T}
Let $x_1, \dots, x_n$ be independent, zero-mean random vectors in $\mathbb{R}^p$. Suppose that for all $i \in [n]$, we have $\mathrm{Var}(x_i) = \Sigma$ and $||x_i||_2 \leq \sqrt{C_1}$, for some $C_1 > 0$. Then for all $t \geq 0$, we have
\begin{align*}
\mathbb{P}\left(||\Sigma_n - \Sigma||_2 \geq t\right) \le 2pe^{\frac{-nt^2}{2C_1(||\Sigma||_2 + 2t/3)}},
\end{align*}
with $\Sigma_n = \frac{1}{n}\sum_{i = 1}^n x_ix_i^T$.
\end{lemma}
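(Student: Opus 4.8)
The plan is to recognize this as an instance of the matrix Bernstein inequality applied to the independent, symmetric, mean-zero summands $Y_i := \tfrac{1}{n}\left(x_ix_i^T - \Sigma\right)$, for which $\Sigma_n - \Sigma = \sum_{i=1}^n Y_i$. The tail will then be governed by a uniform almost-sure norm bound $R$ on $\|Y_i\|_2$ and by the variance proxy $\sigma^2 := \left\|\sum_{i=1}^n \mathbb{E}\!\left[Y_i^2\right]\right\|_2$.

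First I would pin down $R$. Since $x_ix_i^T$ and $\Sigma$ are both positive semidefinite, the eigenvalues of $x_ix_i^T - \Sigma$ lie in $\left[-\|\Sigma\|_2,\, \|x_i\|_2^2\right]$; combining $\|x_i\|_2^2 \le C_1$ with $\|\Sigma\|_2 = \|\mathbb{E}[x_ix_i^T]\|_2 \le \mathbb{E}[\|x_i\|_2^2] \le C_1$ gives $\|Y_i\|_2 \le \tfrac{1}{n}\left(\|x_i\|_2^2 + \|\Sigma\|_2\right) \le \tfrac{2C_1}{n}$, so one may take $R = \tfrac{2C_1}{n}$.

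Next, for the variance, the key step is the operator inequality $\left(x_ix_i^T\right)\left(x_ix_i^T\right) = \|x_i\|_2^2\, x_ix_i^T \preceq C_1\, x_ix_i^T$, which upon taking expectations yields $\mathbb{E}\!\left[(x_ix_i^T)^2\right] \preceq C_1\Sigma$. Hence $\mathbb{E}\!\left[Y_i^2\right] = \tfrac{1}{n^2}\!\left(\mathbb{E}\!\left[(x_ix_i^T)^2\right] - \Sigma^2\right) \preceq \tfrac{C_1}{n^2}\Sigma$, where we discarded $-\Sigma^2 \preceq 0$; summing the $n$ positive semidefinite terms gives $\sigma^2 \le \tfrac{C_1\|\Sigma\|_2}{n}$.

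Finally I would invoke matrix Bernstein in the form $\mathbb{P}\!\left(\|\sum_i Y_i\|_2 \ge t\right) \le 2p\exp\!\left(\tfrac{-t^2/2}{\sigma^2 + Rt/3}\right)$ --- the factor $2p$ coming from a union bound over the largest and smallest eigenvalues of the symmetric sum --- and substitute the values of $R$ and $\sigma^2$; clearing denominators in the exponent produces exactly $2p\exp\!\left(\tfrac{-nt^2}{2C_1(\|\Sigma\|_2 + 2t/3)}\right)$. The only genuinely substantive step is the variance bound, in particular justifying $\mathbb{E}[(x_ix_i^T)^2] \preceq C_1\Sigma$ via the rank-one identity above; everything else is routine bookkeeping, and one could alternatively just cite the scalar-to-matrix transfer in \cite{Prob_Stats_Chen} directly.
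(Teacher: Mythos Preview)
Your proof is correct and follows the standard route via the matrix Bernstein inequality. The paper does not actually prove this lemma; it is stated as a cited result from \cite{Prob_Stats_Chen} and used as a black box in the proofs of Theorem~\ref{theorem:Iter_Rate_Inc_C} and Proposition~\ref{prop:achievab_UP_ERM}. Your derivation --- in particular the rank-one identity $(x_ix_i^T)^2 = \|x_i\|_2^2\,x_ix_i^T$ for the variance proxy and the choice $R = 2C_1/n$ --- recovers the exponent exactly, so nothing further is needed.
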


Let us now recall the notion of a covering and covering number, and a corresponding result about $\ell_2$-balls:

\begin{definition}[Covering and covering number in the $\ell_2$-norm \cite{Stats_Info_Duchi}]
Let $\mathcal{D} \subseteq \mathbb{R}^p$, for $p \in \mathbb{N}$. An \emph{$\epsilon$-cover} of the set $\mathcal{D}$ with respect to the $\ell_2$-norm is a set $\{d_1, \dots, d_N\}$ such that for any point $d \in \mathcal{D}$, there exists some $v \in [N]$ such that $||d - d_v||_2 \leq \epsilon$. The \emph{$\epsilon$-covering number} of $\mathcal{D}$ is
\begin{align*}
N(\epsilon, \mathcal{D}, ||\cdot||_2) := \inf\left\{N \in \mathbb{N}: \text{ there exists an } \epsilon\text{-cover  } \{d_1, \dots, d_N\} \text{ of } \mathcal{D}\right\}.
\end{align*}
\end{definition}
\begin{lemma}[\cite{Stats_Info_Duchi}]
\label{lemma:Covering_Nr_Ball}
The $\epsilon$-covering number of $\mathbb{B}_2(r)$ in $\mathbb{R}^p$, for $r > 0$ and $p \in \mathbb{N}$, satisfies
\begin{align*}
\left(\frac{r}{\epsilon}\right)^p \leq N(\epsilon, \mathbb{B}_2(r), ||\cdot||_2) \leq \left(1 + \frac{2r}{\epsilon}\right)^p.
\end{align*}
\end{lemma}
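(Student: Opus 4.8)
The plan is to establish the two inequalities separately by the standard volumetric argument, exploiting the fact that in $\mathbb{R}^p$ the Lebesgue volume of $\mathbb{B}_2(r)$ scales as $r^p$ times a dimension-dependent constant, so that all such constants cancel in any ratio of volumes. Write $V_p := \mathrm{vol}\big(\mathbb{B}_2(1)\big)$, so that $\mathrm{vol}\big(\mathbb{B}_2(\rho)\big) = \rho^p V_p$ for every $\rho > 0$, and for a set $A$ and vector $x$ write $x + A$ for the translate.

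For the \emph{upper bound}, I would take a maximal set $\{d_1,\dots,d_N\} \subseteq \mathbb{B}_2(r)$ whose pairwise distances are all strictly greater than $\epsilon$ (such a maximal $\epsilon$-packing exists because $\mathbb{B}_2(r)$ is compact). Maximality forces $\{d_1,\dots,d_N\}$ to be an $\epsilon$-cover: if some $d \in \mathbb{B}_2(r)$ were at distance $> \epsilon$ from every $d_i$, we could add it, contradicting maximality. Hence $N(\epsilon,\mathbb{B}_2(r),\|\cdot\|_2) \le N$. To bound $N$, observe that the balls $d_i + \mathbb{B}_2(\epsilon/2)$ are pairwise disjoint (two centres are more than $\epsilon$ apart) and are all contained in $\mathbb{B}_2(r + \epsilon/2)$. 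Comparing volumes gives $N (\epsilon/2)^p V_p \le (r + \epsilon/2)^p V_p$, i.e. $N \le (1 + 2r/\epsilon)^p$, which is the claimed bound.

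For the \emph{lower bound}, let $\{d_1,\dots,d_N\}$ be any $\epsilon$-cover of $\mathbb{B}_2(r)$. Then $\mathbb{B}_2(r) \subseteq \bigcup_{i=1}^N \big(d_i + \mathbb{B}_2(\epsilon)\big)$, so by subadditivity of volume, $r^p V_p = \mathrm{vol}\big(\mathbb{B}_2(r)\big) \le N \epsilon^p V_p$, whence $N \ge (r/\epsilon)^p$. Taking the infimum over all $\epsilon$-covers yields $N(\epsilon,\mathbb{B}_2(r),\|\cdot\|_2) \ge (r/\epsilon)^p$, completing the argument.

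There is no serious obstacle here; the only points requiring a little care are (i) noting that a maximal packing exists and automatically yields a cover, and (ii) keeping track of which enlarged/translated balls contain or are contained in which, so that the volume comparison $( \epsilon/2)^p V_p \cdot N \le (r+\epsilon/2)^p V_p$ is set up correctly. Everything else is a one-line volume ratio, and the dimension constant $V_p$ never needs to be computed since it cancels.
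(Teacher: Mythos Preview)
Your argument is correct and is exactly the standard volumetric proof of this result. Note, however, that the paper does not actually prove this lemma: it is stated with a citation to \cite{Stats_Info_Duchi} and used as a black box, so there is no ``paper's own proof'' to compare against. Your write-up is what one would find in that reference (or any standard treatment), so nothing further is needed.
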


We now recall a classical result about consistency of the maximum likelihood estimator:

\begin{lemma}[\cite{Lehmann_Point_Estim}]
\label{lemma:Consis_MLE}
Let $p \in \mathbb{N}$ and let $\mathcal{B} \subseteq \mathbb{R}^p$ be a compact parameter space. Let $\mathcal{P} = \left\{P_\theta: \theta\in \mathcal{B} \right\}$ be a parametric model and let $f(z, \theta)$ be the likelihood function for the data point $z$ at $\theta$. Let $\theta^* \in \mathcal{B}$ be the true parameter and $\widehat{\theta}_n$ be an MLE based on the random sample $\{z_i\}_{i = 1}^n \overset{\iid}{\sim} P_{\theta^*}$. Writing $\mathcal{Z} = \left\{z:  f(z, \theta^*) > 0\right\}$ for the support of $f(\cdot, \theta^*)$, suppose $\theta \mapsto f(z, \theta)$ is continuous, for all $z \in \mathcal{Z}$. Assume $\mathbb{E}_{z \sim P_{\theta^*}}\left[\mathop{\sup}\limits_{\theta \in \mathcal{B}}|\log(f(z, \theta))|\right] < \infty$. Then $\widehat{\theta}_n$ converges in probability to $\theta^*$.
\end{lemma}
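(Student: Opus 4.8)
The plan is to run the classical Wald-type consistency argument: recast the MLE as the maximizer of a random criterion function on $\mathcal{B}$, show this criterion converges uniformly in probability to a deterministic limit that is uniquely maximized at $\theta^*$, and conclude by a well-separated-maximum argument.

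First I would introduce the population and empirical criteria
\[
M(\theta) := \mathbb{E}_{z \sim P_{\theta^*}}\!\left[\log f(z,\theta)\right], \qquad M_n(\theta) := \frac{1}{n}\sum_{i=1}^n \log f(z_i,\theta),
\]
which are finite on $\mathcal{B}$ by the envelope hypothesis $\mathbb{E}_{\theta^*}[\sup_{\theta \in \mathcal{B}}|\log f(z,\theta)|] < \infty$ (this hypothesis also forces, for $P_{\theta^*}$-a.e.\ $z$, that $f(z,\theta)>0$ for all $\theta$, so $\log f(z,\cdot)$ is a genuine real-valued continuous function on $\mathcal{B}$). Dominated convergence combined with the continuity of $\theta \mapsto f(z,\theta)$ shows that $M$ is continuous on $\mathcal{B}$. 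By definition $\widehat{\theta}_n \in \mathop{\arg\max}_{\theta \in \mathcal{B}} M_n(\theta)$. The structural point is that $\theta^*$ is the \emph{unique} maximizer of $M$: applying Jensen's inequality to the concave function $\log$ gives $M(\theta) - M(\theta^*) = \mathbb{E}_{\theta^*}[\log(f(z,\theta)/f(z,\theta^*))] = -D(f(\cdot,\theta^*)\,\|\,f(\cdot,\theta)) \le 0$, and equality would force $f(\cdot,\theta) = f(\cdot,\theta^*)$ $P_{\theta^*}$-a.e., i.e.\ $P_\theta = P_{\theta^*}$, hence $\theta = \theta^*$ by identifiability of the model.

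The main obstacle is the uniform law of large numbers $\sup_{\theta \in \mathcal{B}} |M_n(\theta) - M(\theta)| \to 0$ in probability; the ingredients are exactly the three hypotheses (integrable envelope, continuity of $\theta \mapsto \log f(z,\theta)$, compactness of $\mathcal{B}$). I would use a bracketing argument: for $\theta_0 \in \mathcal{B}$ and $\rho > 0$, set $g^+_\rho(z) := \sup_{\|\theta-\theta_0\| \le \rho}\log f(z,\theta)$ and $g^-_\rho(z) := \inf_{\|\theta-\theta_0\| \le \rho}\log f(z,\theta)$; by continuity $g^\pm_\rho(z) \to \log f(z,\theta_0)$ as $\rho \downarrow 0$, and the envelope dominates, so dominated convergence gives $\mathbb{E}_{\theta^*}[g^\pm_\rho] \to M(\theta_0)$. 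Covering the compact set $\mathcal{B}$ by finitely many balls of small enough radius, applying the ordinary strong law of large numbers to each of the finitely many integrable functions $g^\pm_\rho$, and sandwiching $M_n(\theta)$ and $M(\theta)$ between the corresponding empirical averages and expectations over each ball, yields the claimed uniform convergence.

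Finally I would deduce consistency. Fix $\varepsilon > 0$; by continuity of $M$, compactness of $\mathcal{B}$, and uniqueness of the maximizer, $\eta_\varepsilon := M(\theta^*) - \sup\{M(\theta) : \theta \in \mathcal{B},\ \|\theta-\theta^*\| \ge \varepsilon\} > 0$. On the event $\{\sup_{\theta \in \mathcal{B}}|M_n(\theta)-M(\theta)| < \eta_\varepsilon/2\}$, whose probability tends to $1$ by the previous step, we have $M(\widehat{\theta}_n) > M_n(\widehat{\theta}_n) - \eta_\varepsilon/2 \ge M_n(\theta^*) - \eta_\varepsilon/2 > M(\theta^*) - \eta_\varepsilon$, which by the definition of $\eta_\varepsilon$ forces $\|\widehat{\theta}_n - \theta^*\| < \varepsilon$. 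Hence $\mathbb{P}(\|\widehat{\theta}_n - \theta^*\| \ge \varepsilon) \to 0$, which is the desired convergence in probability.
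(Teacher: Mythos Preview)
The paper does not prove this lemma; it is stated as a classical result with a citation to \cite{Lehmann_Point_Estim} and used as a black box. Your proposal is the standard Wald-type argument (uniform law of large numbers on a compact parameter space via bracketing, uniqueness of the maximizer of the Kullback--Leibler criterion, then the well-separated-maximum step) and is correct, matching the proof one finds in the cited reference; the only implicit assumption you invoke that is not spelled out in the lemma statement is identifiability of the model ($P_\theta = P_{\theta^*} \Rightarrow \theta = \theta^*$), which is standard in this context.
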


Let us now state a result about the convergence of $M$-estimators:
\begin{lemma}[\cite{Weak_Conv_Van_Der}]
\label{lemma:M_Estim_Conv}
Let $\mathbb{M}_n$ be a real-valued stochastic processes indexed by a metric space $\left(\mathcal{B}, d\right)$. Let $\mathbb{M} : \mathcal{B} \rightarrow \mathbb{R}$ be a deterministic function. Assume $\theta^* = \arg\min\limits_{\theta \in \mathcal{B}}\mathbb{M(\theta)}$ and $\mathbb{M}(\theta) - \mathbb{M}(\theta^*) \gtrsim d(\theta, \theta^*)^2$, for every $\theta$ in a neighborhood of $\theta^*$. Let $\widehat{\theta}_n \in \arg\min\limits_{\theta \in \mathcal{B}}\mathbb{M}_n(\theta)$. Suppose that, for sufficiently large $n$ and sufficiently small $u > 0$, the centered process $\mathbb{U}_n = \mathbb{M}_n - \mathbb{M}$ satisfies
\begin{align*}
\mathbb{E}\left[\mathop{\sup}\limits_{d(\theta, \theta^*) \leq u}\left|\mathbb{U}_n(\theta) - \mathbb{U}_n(\theta^*)\right|\right] \lesssim \frac{\phi_n(u)}{\sqrt{n}},
\end{align*}
for functions $\phi_n$, such that $u \mapsto \frac{\phi_n(u)}{u^{\alpha}}$ is non-increasing for some $\alpha < 2$ (not depending on $n$). Let $r_n$ be such that 
\begin{align*}
r_n^2\phi_n\left(\frac{1}{r_n}\right) \leq \sqrt{n},
\end{align*}
for sufficiently large $n$. If $\widehat{\theta}_n$ converges in probability to $\theta^*$, then 
\begin{align*}
d(\widehat{\theta}_n, \theta^*) = O_{\mathbb{P}}(r_n^{-1}).
\end{align*}
\end{lemma}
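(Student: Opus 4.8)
The plan is to use the classical peeling (``slicing'') device from empirical-process theory: I would decompose the event that $r_n d(\hat\theta_n,\theta^*)$ is large into countably many dyadic ``shells'', on each of which $d(\theta,\theta^*)$ is of a fixed order, bound the probability that $\hat\theta_n$ falls in each shell, and then sum a geometric series in the shell index.

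First I would use consistency to localize the problem. Given $\eta>0$, pick a neighborhood $U$ of $\theta^*$ on which the curvature bound $\mathbb{M}(\theta)-\mathbb{M}(\theta^*)\ge c\, d(\theta,\theta^*)^2$ holds, and an $N$ so that $\mathbb{P}(\hat\theta_n\notin U)<\eta$ for $n\ge N$; it then suffices to control $\mathbb{P}(\{r_n d(\hat\theta_n,\theta^*)>2^{M}\}\cap\{\hat\theta_n\in U\})$. I would define the shells $S_{j,n}=\{\theta\in U:\ 2^{j-1}<r_n d(\theta,\theta^*)\le 2^{j}\}$ for $M\le j\le J_n$, where $2^{J_n}\asymp r_n\,\mathrm{diam}(U)$. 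On $\{\hat\theta_n\in S_{j,n}\}$, the defining property $\mathbb{M}_n(\hat\theta_n)\le\mathbb{M}_n(\theta^*)$, combined with $\mathbb{M}_n=\mathbb{M}+\mathbb{U}_n$ and the curvature bound, gives
\[
c\,2^{2(j-1)}r_n^{-2}\le \mathbb{M}(\hat\theta_n)-\mathbb{M}(\theta^*)\le |\mathbb{U}_n(\hat\theta_n)-\mathbb{U}_n(\theta^*)|\le \sup_{d(\theta,\theta^*)\le 2^{j}r_n^{-1}}|\mathbb{U}_n(\theta)-\mathbb{U}_n(\theta^*)|.
\]

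Next, Markov's inequality together with the modulus-of-continuity hypothesis yields $\mathbb{P}(\hat\theta_n\in S_{j,n})\lesssim \frac{r_n^{2}}{2^{2j}}\cdot\frac{\phi_n(2^{j}r_n^{-1})}{\sqrt n}$. I would then invoke the sub-homogeneity implied by $u\mapsto\phi_n(u)/u^{\alpha}$ being non-increasing, i.e.\ $\phi_n(cu)\le c^{\alpha}\phi_n(u)$ for $c\ge1$, to get $\phi_n(2^{j}r_n^{-1})\le 2^{j\alpha}\phi_n(r_n^{-1})$, and combine this with the defining inequality $r_n^{2}\phi_n(r_n^{-1})\le\sqrt n$ to obtain $\mathbb{P}(\hat\theta_n\in S_{j,n})\lesssim 2^{j(\alpha-2)}$. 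Summing over $j\ge M$ — convergent because $\alpha<2$ — gives a bound of order $2^{M(\alpha-2)}$. Hence $\limsup_n\mathbb{P}(r_n d(\hat\theta_n,\theta^*)>2^{M})\lesssim \eta+2^{M(\alpha-2)}$; choosing $M$ large for each $\eta$ and then letting $\eta\to0$ shows $r_n d(\hat\theta_n,\theta^*)=O_{\mathbb{P}}(1)$, i.e.\ $d(\hat\theta_n,\theta^*)=O_{\mathbb{P}}(r_n^{-1})$.

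The step I expect to be the main obstacle is the bookkeeping inside the peeling argument: one must apply the modulus-of-continuity bound at the \emph{outer} shell radius $2^{j}r_n^{-1}$ while evaluating the curvature lower bound at the \emph{inner} radius $2^{j-1}r_n^{-1}$, and then deploy the sub-homogeneity of $\phi_n$ precisely so that all the $j$-dependence collapses into a single summable geometric series with ratio $2^{\alpha-2}<1$. Getting the interaction of these three ingredients — minimality, quadratic curvature, and the $\phi_n$-modulus — correctly calibrated is the crux; the remaining details, such as handling the $n$-dependent upper cutoff $J_n$ of the shells and splicing the shell estimate onto the consistency statement, are routine once the localizing neighborhood $U$ is fixed.
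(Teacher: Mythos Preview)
Your proposal is correct and is precisely the standard peeling argument due to van der Vaart and Wellner. Note that the paper does not give its own proof of this lemma; it is stated as an auxiliary result cited directly from \cite{Weak_Conv_Van_Der}, so your argument essentially reproduces the proof in that reference.
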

Recall that a sequence $\left\{Z_n\right\}$ is $O_{\mathbb{P}}(x_n)$, where $x_n$ is a deterministic sequence of positive real numbers, if for every $\zeta \in (0, 1)$, there exists $T_\zeta$ and $N_\zeta > 0$, such that $\mathbb{P}\left(|Z_n| \leq T_\zeta x_n\right) \geq 1 - \zeta$, for all $n \geq N_\zeta$. Note that the version of Lemma \ref{lemma:M_Estim_Conv} in \cite{Weak_Conv_Van_Der} relies on the more general assumption that $\mathbb{M}_n(\widehat{\theta}_n) \geq \mathbb{M}_n(\theta) - O_{\mathbb{P}}\left(r_n^{-2}\right)$ (i.e., $\widehat{\theta}_n$ nearly minimizes $\mathbb{M}_n$), which is more general than the version we have stated. %Since we pick $\widehat{\theta}_n$ to minimize $\mathbb{M}_n$, the condition $\mathbb{M}_n(\widehat{\theta}_n) \geq \mathbb{M}_n(\theta) - O_{\mathbb{P}}\left(r_n^{-2}\right)$ is satisfied.
%In fact, for their proof, they assume that $\widehat{\theta}_n$ minimized $\mathbb{M}_n$, since the proof can easily be extended to the more general case with $\mathbb{M}_n(\widehat{\theta}_n) \geq \mathbb{M}_n(\theta) - O_{\mathbb{P}}\left(r_n^{-2}\right)$.   

For bounded random vectors, we have the vector Bernstein inequality. The advantage of this result, compared to a vector concentration result such as Lemma \ref{lemma:CIGV}, is the lack of dependency on the dimension $p$ in the concentration bound.

\begin{lemma}[\cite{Vect_Bern}]
\label{lemma:Bern_Vec}
Let $\{x_i\}_{i = 1}^n$ be independent vectors in $\mathbb{R}^p$, for $p \geq 1$, and assume $\mathbb{E}[x_i] = 0$, $||x_i||_2 \leq \mu$, and $\mathbb{E}\left[||x_i||_2^2\right] \leq \sigma^2$, for all $i \in [n]$. Then for $0 < t < \frac{\sigma^2}{\mu}$, we have
\begin{align*}
\mathbb{P}\left(\left\|\frac{\sum_{i = 1}^nx_i}{n}\right\|_2 \geq t\right) \leq e^{-\frac{nt^2}{8\sigma^2} + \frac{1}{4}} < 2e^{-\frac{nt^2}{8\sigma^2}}.
\end{align*}
\end{lemma}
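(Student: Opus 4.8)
The plan is to derive the inequality from the scalar tools in the excerpt by combining a sharp bound on $\mathbb{E}\|S\|_2$ (where $S=\sum_{i=1}^n x_i$) with a variance‑aware concentration of $\|S\|_2$ about its mean. Write $Z=\|S\|_2$, so the assertion is a tail bound for $Z$ at level $nt$. First, since the $x_i$ are independent and centered, all cross terms vanish and
\begin{align*}
\mathbb{E}[Z^2]=\mathbb{E}\Big\|\sum_{i=1}^n x_i\Big\|_2^2=\sum_{i=1}^n\mathbb{E}\|x_i\|_2^2\le n\sigma^2,
\end{align*}
hence $\mathbb{E}[Z]\le\sqrt{\mathbb{E}[Z^2]}\le\sqrt{n}\,\sigma$ by Jensen. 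If $nt\le\mathbb{E}[Z]$ then $t\le\sigma/\sqrt{n}$, so $e^{-nt^2/(8\sigma^2)+1/4}\ge e^{1/8}>1$ and the bound is trivial; thus I would assume $nt>\mathbb{E}[Z]$ from now on.

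Next I would establish the concentration of $Z$ around $\mathbb{E}[Z]$ via the Doob martingale $M_k=\mathbb{E}[Z\mid x_1,\dots,x_k]$, with $M_0=\mathbb{E}[Z]$, $M_n=Z$, and $\mathcal{F}_{k-1}=\sigma(x_1,\dots,x_{k-1})$. Because $v\mapsto\|v\|_2$ is $1$‑Lipschitz and $\|x_k\|_2\le\mu$, replacing $x_k$ by an independent copy changes $Z$ by at most $2\mu$, which gives bounded increments $|M_k-M_{k-1}|\le 2\mu$. For the conditional variances I would write $M_k-M_{k-1}=g_k(x_k)-\mathbb{E}[g_k(x_k)\mid\mathcal{F}_{k-1}]$ for an $\mathcal{F}_{k-1}$‑measurable family of $1$‑Lipschitz maps $g_k$ (obtained by integrating out the future coordinates), and center at the $\mathcal{F}_{k-1}$‑measurable value $g_k(0)$; Lipschitzness then yields $\mathrm{Var}(M_k-M_{k-1}\mid\mathcal{F}_{k-1})\le\mathbb{E}[\|x_k\|_2^2\mid\mathcal{F}_{k-1}]=\mathbb{E}\|x_k\|_2^2$, so $\sum_{k=1}^n\mathrm{Var}(M_k-M_{k-1}\mid\mathcal{F}_{k-1})\le\sum_{i=1}^n\mathbb{E}\|x_i\|_2^2\le n\sigma^2$. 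Freedman's martingale Bernstein inequality then delivers, for $s>0$,
\begin{align*}
\mathbb{P}\big(Z\ge\mathbb{E}[Z]+s\big)\le\exp\!\left(-\frac{s^2}{2n\sigma^2+\tfrac{4}{3}\mu s}\right).
\end{align*}

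Finally I would take $s=nt-\mathbb{E}[Z]>0$. The hypothesis $t<\sigma^2/\mu$ forces $\mu s\le\mu nt<n\sigma^2$, so the denominator is $\lesssim n\sigma^2$; expanding $s^2\ge(nt)^2-2nt\,\mathbb{E}[Z]$ and inserting $\mathbb{E}[Z]\le\sqrt{n}\,\sigma$ produces an exponent of the form $-c_1 nt^2/\sigma^2+c_2\sqrt{n}\,t/\sigma$, and after the (slightly delicate) bookkeeping — keeping Freedman's denominator proportional to $n\sigma^2$ and absorbing the lower‑order cross term into the absolute constant $e^{1/4}$ — this is bounded by $-nt^2/(8\sigma^2)+1/4$. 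Since $\{Z\ge nt\}=\{\|n^{-1}\sum_i x_i\|_2\ge t\}$, this gives $\mathbb{P}(\|n^{-1}\sum_{i=1}^n x_i\|_2\ge t)\le e^{-nt^2/(8\sigma^2)+1/4}<2e^{-nt^2/(8\sigma^2)}$. I expect the main obstacle to be the conditional‑variance estimate in the Doob decomposition: obtaining $\sum_k\mathrm{Var}(M_k-M_{k-1}\mid\mathcal{F}_{k-1})\le\sum_i\mathbb{E}\|x_i\|_2^2$ rather than the crude $\sum_i(2\mu)^2$ is exactly what upgrades a Hoeffding‑type estimate to the stated Bernstein‑type bound with $\sigma^2$ in the exponent, and it hinges on carefully exploiting the $1$‑Lipschitz nature of the norm in each summand; matching the precise constant $1/4$ under the condition $t<\sigma^2/\mu$ is the other fiddly point. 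If the martingale route proves awkward, an alternative is to view $Z=\sup_{\|u\|_2\le1}\sum_{i=1}^n\langle u,x_i\rangle$ as a supremum of a centered empirical process with envelope $\mu$ and weak variance uniformly at most $\sigma^2$, and to apply Bousquet's form of Talagrand's concentration inequality together with the same bound $\mathbb{E}[Z]\le\sqrt{n}\,\sigma$.
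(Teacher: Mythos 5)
Your overall strategy (controlling $\mathbb{E}\|S\|_2$ via second moments, then concentrating $\|S\|_2$ around its mean through a Doob martingale with Freedman's inequality, exploiting $1$-Lipschitzness of the norm to get conditional variances bounded by $\mathbb{E}\|x_k\|_2^2$ rather than the crude $(2\mu)^2$) is a sound and standard way to prove this inequality. The conditional-variance step — writing $M_k - M_{k-1} = g_k(x_k) - \mathbb{E}[g_k(x_k)\mid\mathcal{F}_{k-1}]$ with $g_k$ $1$-Lipschitz, then centering at $g_k(0)$ — is correct.

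The one genuine gap is in the final bookkeeping, and it is not merely fiddly: the step you sketch does not actually close. Setting $u=\sqrt{n}\,t/\sigma$, your Freedman bound with denominator $\le \tfrac{10}{3}n\sigma^2$ gives an exponent $\le -\tfrac{3s^2}{10n\sigma^2}$. If you then lower-bound $s^2$ by $(nt)^2 - 2nt\,\mathbb{E}[Z]\ge n^2t^2 - 2n^{3/2}t\sigma$ (i.e.\ throw away the $(\mathbb{E}[Z])^2$ term), the exponent becomes $-\tfrac{3}{10}u^2 + \tfrac{3}{5}u$, and the requirement $-\tfrac{3}{10}u^2 + \tfrac{3}{5}u \le -\tfrac{1}{8}u^2 + \tfrac{1}{4}$ reduces to $7u^2 - 24u + 10 \ge 0$, whose discriminant is $296>0$; this fails for $u$ roughly between $0.49$ and $2.94$, which is within the range allowed by $t<\sigma^2/\mu$. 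The fix is to retain the full square: for $u\ge 1$ you have $s\ge nt-\sqrt{n}\sigma\ge 0$ and hence $s^2\ge n\sigma^2(u-1)^2$, and the requirement $-\tfrac{3}{10}(u-1)^2\le -\tfrac{1}{8}u^2+\tfrac{1}{4}$ reduces to $14u^2-48u+44\ge 0$, whose discriminant is $-160<0$, so it holds for all $u$; for $u<\sqrt{2}$ the claimed bound exceeds $1$ and is vacuous (this covers the $u<1$ regime where the square cannot be used). With this correction the argument goes through.
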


Next, we discuss some aspects of linear regression using the pseudo-Huber loss with parameter $q > 0$ \cite{PSHUBER_BARRON}:
\begin{align*}
\rho_q(t) = q^2\left(\sqrt{1 + \left(\frac{t}{q}\right)^2} - 1\right).
\end{align*}
The first and second derivatives are given by
\begin{align*}
&\psi_q(t) := \rho'_q(t) = \frac{t}{\sqrt{1 + \left(\frac{t}{q}\right)^2}}, 
&\psi'_q(t) := \rho''_q(t) = \frac{1}{\left(1 + \left(\frac{t}{q}\right)^2\right)^{3/2}}.
\end{align*}
We can derive the following lemma about the pseudo-Huber loss and the corresponding risk, under a parametric linear model:

\begin{lemma}
\label{lemma:pHu_prelim}
Let $L_x, C''_1, q > 0$ and $C'_2 \geq C'_1 > 0$. On the domain $||x||_2 \leq L_x \ and \ y \in \mathbb{R}$, define the loss
\begin{align}
\label{eq:ps_Hub_loss}
\mathcal{L}(\theta, (x, y)) = \rho_q(y - x^T\theta), \quad \forall \theta \in \mathbb{R}^p.
\end{align}
Then:
\begin{enumerate}
\item $\mathcal{L}$ is $qL_x$-Lipschitz in $\theta$.

\item Consider the linear regression model $y = x^T\theta^* + w$, with $\mathbb{E}[x] = 0$, $\mathbb{E}[w] = 0$, $\Sigma = \mathbb{E}[xx^T]$, and $x \indep w$. Assume $\lambda_{\max}(\Sigma) \leq \frac{C'_2}{p}$. Then the corresponding risk $\mathcal{R}$ to \eqref{eq:ps_Hub_loss} is $\frac{C'_2}{p}$-smooth over $\mathbb{R}^p$. 

\item Additionally, let $\mathcal{C}$ be a convex set such that $\theta^* \in \mathcal{C}$ and $||\mathcal{C}||_2 \leq C''_1\sqrt{p}$. Assume $\frac{C'_1}{p} \leq \lambda_{\min}(\Sigma)$ and $x$ has bounded $4^{\text{th}}$ moments, i.e., there exists $\widetilde{C}_4 > 0$ such that $\mathbb{E}\left[(x^Tv)^4\right] \leq \widetilde{C}_4\mathbb{E}\left[(x^Tv)^2\right]^2$, for any $||v||_2 = 1$. Then the risk is 
\begin{align*}
\frac{q^3(C'_1)^4}{4p\left((C'_1)^2q^2 + 8(C''_1)^2(C'_2)^3\widetilde{C}_4 + 2(C'_1)^2\sigma_2^2\right)^{3/2}}\mbox{-strongly convex}
\end{align*}
over $\mathcal{C}$. 

\item $\nabla\mathcal{R}(\theta^*) = 0$, so $\theta_{*} = \theta^*$ is the minimizer of $\mathcal{R}$ over $\mathcal{C}$.
\end{enumerate}
\end{lemma}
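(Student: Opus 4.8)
The plan is to treat the four assertions separately, with almost all the work concentrated in the strong-convexity claim (part 3); parts 1, 2 and 4 follow from two elementary facts about the pseudo-Huber profile, namely that $\psi_q(t)=\rho_q'(t)=t/\sqrt{1+(t/q)^2}$ satisfies $|\psi_q(t)|<q$ for all $t$, and that $\psi_q'(t)=\rho_q''(t)=(1+(t/q)^2)^{-3/2}\in(0,1]$, so in particular $\rho_q$ is convex and hence $\mathcal{L}$ and $\mathcal{R}$ are convex. Since $\nabla_\theta\mathcal{L}(\theta,(x,y))=-\psi_q(y-x^T\theta)\,x$, we obtain $||\nabla_\theta\mathcal{L}(\theta,(x,y))||_2\le q||x||_2\le qL_x$ uniformly, which is part 1; boundedness of this gradient, and of the Hessian integrand $\psi_q'(y-x^T\theta)\,xx^T$ (bounded by $||x||_2^2\le L_x^2$), also justifies differentiating under the expectation, so that $\nabla\mathcal{R}(\theta)=-\mathbb{E}[\psi_q(y-x^T\theta)x]$ and $\nabla^2\mathcal{R}(\theta)=\mathbb{E}[\psi_q'(y-x^T\theta)\,xx^T]$. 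For part 2, $0<\psi_q'\le1$ gives $\nabla^2\mathcal{R}(\theta)\preceq\mathbb{E}[xx^T]=\Sigma\preceq\lambda_{\max}(\Sigma)I_p\preceq(C'_2/p)I_p$ for every $\theta$, which is exactly $(C'_2/p)$-smoothness. For part 4, at $\theta=\theta^*$ the residual is $y-x^T\theta^*=w$, independent of $x$ with $\mathbb{E}[x]=0$, so $\nabla\mathcal{R}(\theta^*)=-\mathbb{E}[\psi_q(w)]\,\mathbb{E}[x]=0$ (the expectation $\mathbb{E}[\psi_q(w)]$ being finite since $|\psi_q|<q$); together with convexity of $\mathcal{R}$ this makes $\theta^*$ a global minimizer, hence the minimizer over $\mathcal{C}$.

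For part 3 the goal is a lower bound on $v^T\nabla^2\mathcal{R}(\theta)v=\mathbb{E}\big[\psi_q'(r_\theta)(x^Tv)^2\big]$, uniform over unit vectors $v$ and over $\theta\in\mathcal{C}$, where $r_\theta:=y-x^T\theta=x^T(\theta^*-\theta)+w$. The obstacle is that $\psi_q'$ is not bounded away from zero: it decays like $|t|^{-3}$. The remedy is truncation. For a threshold $M>0$, on the event $\{|r_\theta|\le M\}$ we have $\psi_q'(r_\theta)\ge(1+(M/q)^2)^{-3/2}=q^3/(q^2+M^2)^{3/2}$, so
\[
v^T\nabla^2\mathcal{R}(\theta)v\ \ge\ \frac{q^3}{(q^2+M^2)^{3/2}}\;\mathbb{E}\!\left[(x^Tv)^2\,\mathbf{1}\{|r_\theta|\le M\}\right],
\]
and it suffices to show $\mathbb{E}[(x^Tv)^2\,\mathbf{1}\{|r_\theta|\le M\}]\ge\tfrac12 v^T\Sigma v\ge C'_1/(2p)$ for a suitable $M$. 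Writing this as $v^T\Sigma v-\mathbb{E}[(x^Tv)^2\,\mathbf{1}\{|r_\theta|>M\}]$, I would bound the deficit by splitting $\{|r_\theta|>M\}\subseteq\{|x^T(\theta^*-\theta)|>M/2\}\cup\{|w|>M/2\}$: on the first piece Cauchy--Schwarz gives $\mathbb{E}[(x^Tv)^2\,\mathbf{1}\{\cdots\}]\le\sqrt{\mathbb{E}[(x^Tv)^4]}\,\sqrt{\mathbb{P}(|x^T(\theta^*-\theta)|>M/2)}$, which we control using the bounded-fourth-moment hypothesis ($\mathbb{E}[(x^Tv)^4]\le\widetilde{C}_4(v^T\Sigma v)^2\le\widetilde{C}_4(C'_2/p)^2$) together with Chebyshev and $(\theta^*-\theta)^T\Sigma(\theta^*-\theta)\le\lambda_{\max}(\Sigma)||\mathcal{C}||_2^2\le C'_2(C''_1)^2$; on the second piece, independence $x\indep w$ turns it into $v^T\Sigma v\cdot\mathbb{P}(|w|>M/2)\le v^T\Sigma v\cdot 4\sigma_2^2/M^2$, so no fourth-moment factor enters there. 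The scaling is precisely what makes this go through: although $||\mathcal{C}||_2^2\asymp p$ and $v^T\Sigma v\asymp1/p$, the product $\lambda_{\max}(\Sigma)||\mathcal{C}||_2^2$ is an absolute constant. Taking $M$ of order $\big(\widetilde{C}_4(C'_2)^3(C''_1)^2/(C'_1)^2+\sigma_2^2\big)^{1/2}$ makes the deficit at most $\tfrac12 v^T\Sigma v$, and feeding this $M$ back into the displayed inequality produces a bound of exactly the advertised shape, namely $\dfrac{q^3(C'_1)^4}{p\big((C'_1)^2q^2+\text{const}\cdot(C''_1)^2(C'_2)^3\widetilde{C}_4+\text{const}\cdot(C'_1)^2\sigma_2^2\big)^{3/2}}$; matching the precise numerical constants $4$, $8$, $2$ is a matter of bookkeeping in the Chebyshev split.

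I expect the truncation step of part 3 to be the only genuine obstacle — not the idea, which is standard for robustified quadratic losses, but arranging the threshold $M$ to interact correctly with the dimension-dependent scaling $\lambda_{\min}(\Sigma)\asymp\lambda_{\max}(\Sigma)\asymp1/p$ alongside $||\mathcal{C}||_2\asymp\sqrt{p}$, and then pushing the chosen $M$ through the cubic factor $(q^2+M^2)^{-3/2}$ to land on the stated closed form. Everything else reduces to the bounds $|\psi_q|<q$, $0<\psi_q'\le1$, and $\mathbb{E}[x]=0$ with $x\indep w$.
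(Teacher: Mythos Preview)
Your proposal is correct and follows essentially the same approach as the paper: parts 1, 2, and 4 are handled identically via the elementary bounds $|\psi_q|<q$, $0<\psi_q'\le1$, and independence, while part 3 is obtained by truncating the residual to force $\psi_q'$ bounded below, then controlling the truncated-out mass via Cauchy--Schwarz, the bounded-fourth-moment assumption, and Chebyshev/Markov. The only cosmetic difference is that the paper conditions sequentially---first on $\{|w|<2\mathbb{E}|w|\}$ (Markov), then on $\{|x^T(\theta^*-\theta)|<C_3'\,\|\mathcal{C}\|_2\sqrt{\lambda_{\max}(\Sigma)}\}$---whereas you truncate on $\{|r_\theta|\le M\}$ in one shot and split the complement by a union bound; both routes land on the same constants after the same $(a+b)^2\le2a^2+2b^2$ and Jensen steps.
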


\begin{proof}
We first prove (1).
%Also, for now, $(x, y) \in \mathbb{B}_2(L_x) \times \mathbb{R}$, not necessarily from the linear regression model: $y = x^T\theta^* + w$, with $\mathbb{E}[x] = 0$, $\mathbb{E}[w] = 0$, $\Sigma = \mathbb{E}[xx^T]$, and $x \indep w$.
Note that
\begin{align*}
\nabla \mathcal{L}(\theta, (x, y)) = -\psi_q(y - x^T\theta)x, \ \forall \theta \in \mathbb{R}^p. 
\end{align*}
Hence, we clearly have $||\nabla \mathcal{L}(\theta, (x, y))||_2 \leq q L_x$ on the domain.
%So, $\mathcal{L}$ is $qL_x$-Lipschitz in $\theta \in \mathbb{R}^p$, for all $||x||_2 \leq L_x$ and $y \in \mathbb{R}$. This proves the first part, i.e., the Lipschitz property.

%Now, consider the linear regression model: $y = x^T\theta^* + w$, with $\mathbb{E}[x] = 0$, $\mathbb{E}[w] = 0$, $\Sigma = \mathbb{E}[xx^T]$, and $x \indep w$.
For (2), note that $\nabla\mathcal{R}(\theta) = -\mathbb{E}[\psi_q(y - x^T\theta)x]$, for all $\theta \in \mathbb{R}^p$. Since $\psi_q$ is bounded and differentiable, we can swap expectations and derivatives by the Dominated Convergence Theorem to obtain 
\begin{align*}
\nabla^2\mathcal{R}(\theta) = \mathbb{E}[\psi'_q(y - x^T\theta)xx^T].
\end{align*}
Take $\theta \in \mathbb{R}^p$. Note that $0 < \psi'_q(t) \leq 1$, for $t \in \mathbb{R}$. Hence, we have
\begin{align*}
\nabla^2\mathcal{R}(\theta) = \mathbb{E}[\psi'_q(y - x^T\theta)xx^T] \preceq \Sigma \preceq \lambda_{\max}(\Sigma)I_p \preceq \frac{C'_2}{p}I_p.
\end{align*}

%Now, additionally, take $\theta \in \mathcal{C}$, with $||\mathcal{C}||_2 \leq C''_1\sqrt{p}$, and assume $x$ has bounded $4^{th}$ moments as per Definition \ref{def:bd_moments}, and that $\frac{C'_1}{p} \leq \lambda_{\min}(\Sigma)$.
For (3), let $a := \theta^* - \theta$. By Markov's inequality, since $x \indep w$, we have
\begin{align*}
\nabla^2\mathcal{R}(\theta) &= \mathbb{E}\left[\frac{1}{\left(1 + \left(\frac{x^Ta + w}{q}\right)^2\right)^{3/2}}xx^T\right] \\
& \succeq \mathbb{E}\left[\frac{1}{\left(1 + \left(\frac{|x^Ta| + |w|}{q}\right)^2\right)^{3/2}}xx^T\bigg\rvert|w| < 2\mathbb{E}[|w|]\right]\mathbb{P}(|w| < 2\mathbb{E}[|w|])\\
&\succeq \frac{1}{2}\mathbb{E}\left[\frac{1}{\left(1 + \left(\frac{|x^Ta| + \mathbb{E}[|w|]}{q}\right)^2\right)^{3/2}}xx^T\right]\\ 
&= \frac{1}{2}\mathbb{E}\left[\frac{q^3}{\left(q^2 + \left(|x^Ta| + \mathbb{E}[|w|]\right)^2\right)^{3/2}}xx^T\right].
\end{align*}
Let $C'_3 = \frac{2C'_2\sqrt{\widetilde{C}_4}}{C'_1}$ and $A = \left\{|x^Ta| < C'_3||\mathcal{C}||_2\sqrt{\lambda_{\max}(\Sigma)}\right\}$. Again using Markov's inequality, we obtain
\begin{align*}
\mathbb{P}(A^c) \leq \frac{\mathbb{E}\left[(x^Ta)^2\right]}{(C'_3)^2||\mathcal{C}||_2^2\lambda_{\max}(\Sigma)} = \frac{a^T\Sigma a}{(C'_3)^2||\mathcal{C}||_2^2\lambda_{\max}(\Sigma)} \leq \frac{||a||_2^2\lambda_{\max}(\Sigma)}{(C'_3)^2||\mathcal{C}||_2^2\lambda_{\max}(\Sigma)} \leq \frac{1}{(C'_3)^2},
\end{align*}
where $A^c$ denotes the complement of $A$. Hence, we have
\begin{align*}
\nabla^2\mathcal{R}(\theta) &\succeq \frac{1}{2}\mathbb{E}\left[\frac{q^3}{\left(q^2 + \left(|x^Ta| + \mathbb{E}[|w|]\right)^2\right)^{3/2}}xx^T\mathbbm{1}_A\right]\\
&\succeq \frac{q^3}{2\left(q^2 + \left(C'_3||\mathcal{C}||_2\sqrt{\lambda_{\max}(\Sigma)} + \mathbb{E}[|w|]\right)^2\right)^{3/2}}\left(\mathbb{E}[xx^T] - \mathbb{E}[xx^T\mathbbm{1}_{A^c}]\right).
\end{align*}
Take $||v||_2 = 1$ arbitrary. We have by Cauchy-Schwarz that
\begin{align*}
v^T\left(\mathbb{E}[xx^T] - \mathbb{E}[xx^T\mathbbm{1}_{A^c}]\right)v &\geq \lambda_{\min}(\Sigma) - \mathbb{E}\left[(x^Tv)^2\mathbbm{1}_{A^c}\right]\\
&\geq \lambda_{\min}(\Sigma) - \sqrt{\mathbb{E}\left[(x^Tv)^4\right]\mathbb{P}(A^c)}\\
&\geq \lambda_{\min}(\Sigma) - \frac{1}{C'_3}\sqrt{\mathbb{E}\left[(x^Tv)^4\right]}.
\end{align*}
Since $x$ has bounded $4^{\text{th}}$ moments, we have $\mathbb{E}\left[(x^Tv)^4\right] \leq \widetilde{C}_4\mathbb{E}\left[(x^Tv)^2\right]^2\leq \widetilde{C}_4\lambda_{\max}(\Sigma)^2$. Hence, we obtain
\begin{align*}
v^T\left(\mathbb{E}[xx^T] - \mathbb{E}[xx^T\mathbbm{1}_{A^c}]\right)v \geq \lambda_{\min}(\Sigma) - \frac{\lambda_{\max}(\Sigma)\sqrt{\widetilde{C}_4}}{C'_3} \geq \frac{C'_1}{p} - \frac{C'_2\sqrt{\widetilde{C}_4}}{C'_3p} = \frac{C'_1}{2p}. 
\end{align*}
Since $||v||_2 = 1$ was arbitrary, and using $||\mathcal{C}||_2 \leq C''_1\sqrt{p}$ and $C'_3 = \frac{2C'_2\sqrt{\widetilde{C}_4}}{C'_1}$, Jensen's inequality then implies
\begin{align*}
\nabla^2\mathcal{R}(\theta) &\succeq \frac{q^3C'_1}{4p\left(q^2 + \left(C'_3||\mathcal{C}||_2\sqrt{\lambda_{\max}(\Sigma)} + \mathbb{E}[|w|]\right)^2\right)^{3/2}}I_p \\
& \succeq \frac{q^3C'_1}{4p\left(q^2 + \left(C'_3C''_1\sqrt{C'_2} + \mathbb{E}[|w|]\right)^2\right)^{3/2}}I_p\\
&= \frac{q^3(C'_1)^4}{4p\left((C'_1)^2q^2 + \left(2C''_1C'_2\sqrt{C'_2\widetilde{C}_4} + C'_1\mathbb{E}[|w|]\right)^2\right)^{3/2}}I_p\\
&\succeq \frac{q^3(C'_1)^4}{4p\left((C'_1)^2q^2 + 8(C''_1)^2(C'_2)^3\widetilde{C}_4 + 2(C'_1)^2\mathbb{E}[|w|]^2\right)^{3/2}}I_p\\
&\succeq \frac{q^3(C'_1)^4}{4p\left((C'_1)^2q^2 + 8(C''_1)^2(C'_2)^3\widetilde{C}_4 + 2(C'_1)^2\sigma_2^2\right)^{3/2}}I_p,
\end{align*}
as wanted.
%Since $\theta \in \mathcal{C}$ was arbitrary, we have the desired strong convexity over $\mathcal{C}$ as well.

Finally, for (4), since $\theta^* \in \mathcal{C}$, $x \indep w$, and $\mathbb{E}[x] = 0$, we have 
\begin{align*}
\nabla \mathcal{R}(\theta^*) = \mathbb{E}\left[\psi_q(x^T(\theta^* - \theta^*) + w)x\right] = \mathbb{E}[\psi_q(w)x] = \mathbb{E}[\psi_q(w)]\mathbb{E}[x] = 0,
\end{align*}
as required.
\end{proof}

\begin{remark}
\label{remark:pHu_prelim_rmk}
In line with the notation introduced in Section \ref{sec:Preliminaries on Optimization}, we have in Lemma \ref{lemma:pHu_prelim} that $\mathcal{R}$ is $\tau_u$-smooth over $\mathbb{R}^p$ and $\tau_l$-strongly convex over $\mathcal{C}$, with $\tau_u = \frac{C'_2}{p}$ and 
\begin{align*}
\tau_l = \frac{q^3(C'_1)^4}{4p\left((C'_1)^2q^2 + 8(C''_1)^2(C'_2)^3\widetilde{C}_4 + 2(C'_1)^2\sigma_2^2\right)^{3/2}}.
\end{align*}
\end{remark}

\begin{remark}
We want to give a practical example of a distribution on $x = \left(x^{(1)}, \dots, x^{(p)}\right)$ that satisfies the stated conditions, namely $\mathbb{E}[xx^T] = \Sigma \succ 0$, $||x||_2 \leq L_x$, $\frac{C'_1}{p} \leq \lambda_{\min}(\Sigma) \leq \lambda_{\max}(\Sigma) \leq \frac{C'_2}{p}$, and $x$ has bounded $4^{\text{th}}$ moments as per Definition \ref{def:bd_moments}.

Take $\left\{x^{(i)}\right\}_{i = 1}^p$ to be \iid from a truncated $N(0, 1/p)$ in the interval $\left[-\frac{1}{\sqrt{p}}, \frac{1}{\sqrt{p}}\right]$. Then $\mathbb{E}[x] = 0$ and $||x||_2 \leq 1$. Also, $\Sigma = \mathrm{Var}\left(x^{(1)}\right)I_p \succ 0$. For our truncated Gaussian, we have
\begin{align*}
\lambda_{\min}(\Sigma) = \lambda_{\max}(\Sigma) = \mathrm{Var}\left(x^{(1)}\right) = \frac{1}{p}\left(1 - \frac{2\phi(1)}{\Phi_0(1) - \Phi_0(-1)}\right),
\end{align*}
where $\phi$ and $\Phi_0$ denote the standard Gaussian pdf and cdf, respectively. Hence, we can take $C'_1 = C'_2 = 1 - \frac{2\phi(1)}{\Phi_0(1) - \Phi_0(-1)}$. For the bounded $4^{\text{th}}$ moments, take $||v||_2 = 1$, with $v = (v_1, \dots, v_p)$, arbitrary. Then
\begin{align*}
\mathbb{E}\left[(x^Tv)^2\right]^2 = \left(v^T\Sigma v\right)^2 = \mathrm{Var}\left(x^{(1)}\right)^2 = \frac{(C'_1)^2}{p^2}.
\end{align*}
Also, by independence, the fact that the coordinates of $x$ have mean 0, and the truncation in $\left[-\frac{1}{\sqrt{p}}, \frac{1}{\sqrt{p}}\right]$, we have
\begin{align*}
\mathbb{E}\left[(x^Tv)^4\right] &= \mathbb{E}\left[\sum_{i, j, l, k = 1}^pv_iv_jv_lv_kx^{(i)}x^{(j)}x^{(l)}x^{(k)}\right]\\
&= \sum_{i = 1}^pv_i^4\mathbb{E}\left[\left(x^{(i)}\right)^4\right] + 3\sum_{i \neq j}v_i^2v_j^2\mathbb{E}\left[\left(x^{(i)}\right)^2\right]\mathbb{E}\left[\left(x^{(j)}\right)^2\right]\\
&\leq \frac{1}{p^2}\sum_{i = 1}^pv_i^4 + \frac{3}{p^2}\sum_{i \neq j}v_i^2v_j^2 \leq \frac{3||v||_2^4}{p^2} = \frac{3}{p^2}.
\end{align*}
Hence, we have $\mathbb{E}\left[(x^Tv)^4\right] \leq \widetilde{C}_4\mathbb{E}\left[(x^Tv)^2\right]^2$, for some absolute constant $\widetilde{C}_4 > 0$. So all the conditions are satisfied.
\end{remark}

Note also that Lemma~\ref{lemma:pHu_prelim} establishes the Lipschitz property \emph{globally} over $\mathbb{B}_2(L_x) \times \mathbb{R}$. This is because, when dealing with privacy, we need the Lipschitz property to hold not just for the data drawn from the proposed model.

%%%%%

\section{Proofs for Section \ref{sec:Acc_FW_Method_All}}

In this appendix, we provide the proofs for the results in Section \ref{sec:Acc_FW_Method_All}. In Appendix \ref{sec:Proofs of the Main Results_ACC_FW}, we present the proofs of the main results, while in Appendix \ref{sec:Proofs of the Auxiliary Results_ACC_FW}, we present the proofs of the supporting results.

We begin by providing the general statement of Algorithm \ref{alg:RobPGDNFW}.

\begin{algorithm}
\caption{Robust Gradient Descent}
\label{alg:RobPGDNFW}
\begin{algorithmic}[1]
\Function{RobPGDNFW}{$g(\cdot)$, $\{z_1, \ldots, z_n\}$, $\eta$, $\lambda$, $T$, $\zeta$}
    \State Split samples into $T$ subsets $\{Z_t\}_{t=1}^T$ of size $\widetilde{n}$.
    \For{$t = 0$ to $T - 1$}
    \If{$\mathcal{C} = \mathbb{R}^p$}
        \If{Projected GD}
            \State $\theta_{t+1} =  \theta_t - \eta g(\theta_t; Z_t, \widetilde{\zeta})$.
        \EndIf
        \If{Nesterov}
            \State $\theta_{t+1} =  \theta_t + \lambda(\theta_t - \theta_{t-1}) - \eta g(\theta_t + \lambda(\theta_t - \theta_{t-1}); Z_t, \widetilde{\zeta})$.
        \EndIf
        \EndIf
    \If{$\mathcal{C}$ is compact and convex in $\mathbb{R}^p$}
    \If{Projected GD}
            \State $\theta_{t+1} = \mathop{\arg \min\limits_{\theta \in \mathcal{C}}} \| \theta - \left( \theta_t - \eta g(\theta_t; Z_t, \widetilde{\zeta}) \right) \|_2^2$.
        \EndIf 
        \If{Frank-Wolfe}
        \State $v_t = \mathop{\arg \min}\limits_{v \in \mathcal{C}} g(\theta_t; Z_t, \widetilde{\zeta})^Tv$
        \State $\theta_{t + 1} = (1 - \eta)\theta_t + \eta v_t$
        \EndIf
    
    \EndIf
    
    \EndFor
\EndFunction
\end{algorithmic}
\end{algorithm}

%%%%%

\subsection{Proofs of Main Results from Section \ref{sec:Acc_FW_Method_All}}\label{sec:Proofs of the Main Results_ACC_FW}

Here, we present the proofs of the main results from Section \ref{sec:Acc_FW_Method_All}.

%%%%%

\subsubsection{Proof of Theorem \ref{theorem:UPRidge}}
\label{AppThmUPR}

%\begin{proof}
The aim is to to apply Theorem \ref{theorem:ACCFWV3}. We want to bring Algorithm \ref{alg:PrivFWERM} in the form of Algorithm \ref{alg:ReAccFW}. For this, we need to verify the smoothness of the empirical loss, and we also need a lower bound on the $\ell_2$-norm of the gradient of the empirical risk. To ensure privacy, we need the Lipschitz property. Additionally, we need the strong convexity parameter of $\mathcal{C}$.

Note that the $\ell_2$-ball of radius $D$ is strongly convex with parameter $\frac{1}{D}$, by Lemma \ref{lemma:STR_CONV_BALL}, justifying our choice for $\alpha_{\mathcal{C}}$. For the Lipschitz property, we have for all $(x, y) \in \mathcal{E}$ and $\theta \in \mathcal{C}$ that
\begin{align*}
||yx - xx^T\theta||_2 \leq ||yx||_2 + ||xx^T||_2||\theta||_2 \leq \sqrt{p} + ||x||_2^2D\leq \sqrt{p} + pD,
\end{align*}
%Note also that $||xx^T||_2 = ||x||_2^2$ and $||x||_2 \leq \sqrt{p}$, since $||x||_{\infty} \leq 1$. So, this 
justifying our choice for $L_2 \leq \sqrt{p} + pD$.

Now consider a dataset $\mathcal{D}_n = \{(x_i, y_i)\}_{i = 1}^n$ as in the theorem hypothesis. The Hessian is $\frac{1}{n}\sum_{i = 1}^nx_ix_i^T$, justifying the choice of smoothness parameter $\beta_{\mathcal{L}} = \frac{1}{n}||\sum_{i = 1}^n x_ix_i^T||_2$. Regarding the lower bound on the $\ell_2$-norm of the gradient, the bound $\mathop{\inf}\limits_{\theta \in \mathcal{C}}\frac{\alpha_{\mathcal{C}}||\nabla \mathcal{L}(\theta, \mathcal{D}_n)||_2}{\beta_{\mathcal{L}}} \geq S_1$ immediately implies $||\nabla \mathcal{L}(\theta, \mathcal{D}_n)||_2 \geq \frac{S_1\beta_{\mathcal{L}}}{\alpha_{\mathcal{C}}} = r$, for all $\theta \in \mathcal{C}$.
%Again, these derivations so far justify our choices for $L_2$, $\beta_{\mathcal{L}}$, $\alpha_{\mathcal{C}}$ and $r$ in the theorem hypothesis. With these in mind, note that 
Also note that by the assumption $\frac{\alpha_{\mathcal{C}} r}{\beta_{\mathcal{L}}} = S_1 \asymp 1$, we have
%Therefore, since $\eta = \min\left\{1, \frac{\alpha_{\mathcal{C}}r}{4\beta_{\mathcal{L}}}\right\}$, we get
$\eta = \Theta\left(1\right)$. We have at step $t$ of Algorithm \ref{alg:PrivFWERM} that 
\begin{align*}
v_t^T(\nabla \mathcal{L}(\theta_t, \mathcal{D}_n) + \xi_t) \leq v^T(\nabla \mathcal{L}(\theta_t, \mathcal{D}_n) + \xi_t), \quad \forall v \in \mathcal{C},
\end{align*}
implying that
\begin{align*}
v_t^T\nabla \mathcal{L}(\theta_t, \mathcal{D}_n) \leq v^T\nabla \mathcal{L}(\theta_t, \mathcal{D}_n) + (v - v_t)^T\xi_t, \quad \forall v \in \mathcal{C},
\end{align*}
and
\begin{align*}
v_t^T\nabla \mathcal{L}(\theta_t, \mathcal{D}_n) \leq v^T\nabla \mathcal{L}(\theta_t, \mathcal{D}_n) + ||\mathcal{C}||_2||\xi_t||_2, \quad \forall v \in \mathcal{C}.
\end{align*}
Thus, by Lemma \ref{lemma:CIGV}, for $\zeta \in (0, 1)$ arbitrary and for the event
\begin{align*}
\Omega = \left\{||\xi_t||_2 \geq \sqrt{8\left(\frac{8L_2}{n}\right)^2\frac{T}{\epsilon^2} \log\left(\frac{5T}{2\delta}\right)\log\left(\frac{2}{\delta}\right)\log\left(\frac{4^pT}{\zeta}\right)}, \quad  \forall t \in [T] \right\},
\end{align*}
we have $\mathbb{P}(\Omega) \geq 1 - \zeta$. Note that we also took the variance of the Gaussian noise in Algorithm \ref{alg:PrivFWERM} into account.
Hence, on $\Omega$, we have
\begin{align*}
v_t^T\nabla \mathcal{L}(\theta_t, \mathcal{D}_n) \leq v^T\nabla \mathcal{L}(\theta_t, \mathcal{D}_n) + ||\mathcal{C}||_2\sqrt{8\left(\frac{8L_2}{n}\right)^2\frac{T}{\epsilon^2} \log\left(\frac{5T}{2\delta}\right)\log\left(\frac{2}{\delta}\right)\log(4^pT/\zeta)},
\end{align*}
for all $v \in \mathcal{C}$, implying that
%where we used the triangle inequality and the fact that $||\mathcal{C}||_2 = \mathop{\sup}\limits_{x, y \in \mathcal{C}}||x - y||_2$. Hence, we get 
\begin{align*}
v_t^T\nabla \mathcal{L}(\theta_t, \mathcal{D}_n)
%&\leq \mathop{\min}\limits_{v \in \mathcal{C}}v^T\nabla \mathcal{L}(\theta_t, \mathcal{D}_n) + ||\mathcal{C}||_2\sqrt{8\left(\frac{8L_2}{n}\right)^2\frac{T}{\epsilon^2} \log(5T/2\delta)\log(2/\delta)\log(4^pT/\zeta)}\\
& \le \mathop{\min}\limits_{v \in \mathcal{C}}v^T\nabla \mathcal{L}(\theta_t, \mathcal{D}_n) + O\left(\frac{L_2||\mathcal{C}||_2\log(T/\delta)\sqrt{T\log(4^pT/ \zeta)}}{n \epsilon}\right).
\end{align*}
%So, on $\Omega$, given the smoothness, the lower bound on the $\ell_2$-norm of the gradient of the empirical risk and the fact that $\mathcal{C}$ is compact and $\alpha_{\mathcal{C}}$-strongly convex by Lemma \ref{lemma:STR_CONV_BALL}, we are in the context of 
Thus, on $\Omega$, we may apply Theorem \ref{theorem:ACCFWV3} with $\Delta = O\left(\frac{L_2||\mathcal{C}||_2\log(T/\delta)\sqrt{T \log(4^pT/ \zeta)}}{n \epsilon}\right)$. Note also, using the same notation as in the proof of Theorem \ref{theorem:ACCFWV3}, that
$$h_0 = \mathcal{L}(\theta_0, \mathcal{D}_n) - \mathop{\min}\limits_{\theta \in \mathcal{C}}\mathcal{L}(\theta, \mathcal{D}_n) \leq L_2||\mathcal{C}||_2.$$
Recall that $\eta = \Theta\left(1\right)$, and similarly, we have $c = \max\left\{\frac{1}{2}, 1 - \frac{\alpha_{\mathcal{C}} r}{8\beta_{\mathcal{L}}}\right\} = \Theta\left(1\right)$. Therefore, with probability at least $1 - \zeta$, noting that $T = \log_{1/c}\left(n\right) \asymp \log(n)$ and  $\log(4^pT/\zeta) \lesssim p\log(T/\zeta)$, Theorem \ref{theorem:ACCFWV3} implies that
\begin{align}
\label{eq:HP_ERM_UP_NO_DISTR_GENERAL}
\mathcal{L}(\theta_T, \mathcal{D}_n) - \mathop{\min}\limits_{\theta \in \mathcal{C}}\mathcal{L}(\theta, \mathcal{D}_n) &\leq h_0c^T + \frac{3\Delta\eta}{2(1 - c)} \notag\\ 
&\lesssim L_2||\mathcal{C}||_2c^T + \frac{L_2||\mathcal{C}||_2\log(T/\delta)\sqrt{pT\log(T/ \zeta)}}{n \epsilon} \notag\\
&\lesssim \frac{L_2||\mathcal{C}||_2}{n} + \frac{L_2||\mathcal{C}||_2\log(\log(n)/\delta)\sqrt{p\log(n)\log(\log(n)/\zeta)}}{n\epsilon}.
\end{align}
Since $0 < \epsilon \lesssim 1$ and inequality~\eqref{eq:HP_ERM_UP_NO_DISTR_GENERAL} holds for $n$ large enough independent of $\zeta$, applying Lemma \ref{lemma:HP_TO_EXP} implies that
\begin{align*}
\mathbb{E}\left[\mathcal{L}(\theta_T, \mathcal{D}_n) - \mathop{\min}\limits_{\theta \in \mathcal{C}}\mathcal{L}(\theta, \mathcal{D}_n)\right] &\lesssim \frac{L_2||\mathcal{C}||_2}{n} + \frac{L_2||\mathcal{C}||_2\log(\log(n)/\delta)\log(n)\sqrt{p\log(n)}}{n\epsilon}\\
&\lesssim \frac{L_2||\mathcal{C}||_2\sqrt{p}\log^{3/2}(n)\log(\log(n)/\delta)}{n\epsilon}\\
&\lesssim \frac{(\sqrt{p} + p||\mathcal{C}||_2)||\mathcal{C}||_2\sqrt{p}\log^{3/2}(n)\log(\log(n)/\delta)}{n\epsilon},
\end{align*}
as required.

Finally, note that since $c \asymp 1$, we have $T \asymp \log(n)$. Thus, the conditions of Lemma~\ref{lemma:ACC_PRIV_FW_STEP} are satisfied, so $\theta_T$ is $(\epsilon, \delta)$-DP.
%\end{proof}

\subsubsection{Proof of Theorem \ref{theorem:LBRidge}}
\label{AppThmLBRidge}

%\begin{proof}
We use a modification of an argument by \cite{NOPL} based on fingerprinting codes (see also Chapter 5 of Vadhan~\cite{Vad17}). We begin by constructing a collection of datasets, at least one of which will lead to the desired lower bound.

First consider a matrix $Z \in \mathbb{R}^{k \times p}$ where the columns are mutually orthogonal vectors with entries in $\{-1, 1\}$, so that $Z^TZ = kI_p$ (note that this is possible because $k \gg p$). Denote the $i^{\text{th}}$ row of $Z$ by $z_i$.

We will also use the following construction and its corresponding DP guarantee:

\begin{lemma}[\cite{NOPL}]
\label{lemma:FingX}
Let $m$ be a sufficiently large integer, let $p = 1000m^2$, and let $w = \frac{m}{\log(m)}$. There exists a matrix $X \in \{-1, 1\}^{(w + 1) \times p}$ with the following property: For each $i \in [1, w + 1]$, there are at least $0.999p$ consensus columns $W_i$
in each $X_{(-i)}$. In addition, for algorithm $\hat{\theta}$ on input matrix $X_{(-i)}$ where $i \in [1, w + 1]$, if with probability at least $2/3$, $\hat{\theta}(X_{(-i)})$ produces a $p$-dimensional sign
vector which agrees with at least $3p/4$ columns in $W_i$, then $\hat{\theta}$ is not $(\epsilon, \delta)$-DP with respect to a single row change (to some other row in $X$). 
\end{lemma}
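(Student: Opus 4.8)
The plan is to instantiate the fingerprinting-code construction of Talwar et al.~\cite{NOPL} --- itself an adaptation of the ``price of approximate differential privacy'' machinery of Bun--Ullman--Vadhan --- at the parameters $p=1000m^2$, $w=m/\log m$, and to verify that the two asserted properties hold for all sufficiently large $m$. Concretely, $X$ will be the $(w+1)\times p$ matrix whose rows are the $w+1$ codewords of a fingerprinting code of length $p$, sampled from the code's generating distribution; I would then (i) establish the consensus-column bound by a probabilistic argument over the code's randomness, and (ii) derive the non-privacy conclusion by composing $\hat\theta$ with the code's tracing algorithm.

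For the consensus-column property, I would generate column $j$ as an i.i.d.\ $\{-1,1\}$ vector whose entries have bias $p_j$, with $p_j$ drawn from the code's bias distribution (which puts most of its mass near $\pm 1$). Conditioned on $p_j$, the probability that all $w$ rows of a fixed $X_{(-i)}$ agree on column $j$ is $p_j^{\,w}+(1-p_j)^{\,w}$; averaging over $p_j$ under the tuned distribution, the expected fraction of non-consensus columns in any $X_{(-i)}$ can be driven below $0.001$, and since there are only $w+1$ single-row deletions to control, a Chernoff bound together with a union bound over $i\in[w+1]$ shows that with positive probability \emph{every} $X_{(-i)}$ simultaneously has at least $0.999p$ consensus columns $W_i$. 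The exact constants (the $1000$ in $p=1000m^2$, the $0.999$, and $w=m/\log m$) are precisely what make this count work, so I would import that computation from \cite{NOPL} rather than re-derive it.

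For the non-privacy conclusion, I would invoke the code's security guarantee: there is a randomized tracer $\mathrm{Tr}$ such that, for a negligible-in-$m$ parameter $\xi$, (soundness) for any coalition $S\subseteq[w+1]$ and any randomized map $\mathcal P$, if $\mathcal P\big(\{x_s\}_{s\in S}\big)$ agrees with at least $3p/4$ of the columns on which $S$ is in consensus, then $\mathrm{Tr}$ applied to that word outputs a member of $S$ with probability $\ge 1-\xi$; and (no framing) for \emph{every} $S$ and \emph{every} $\mathcal P$, $\mathrm{Tr}\big(\mathcal P(\{x_s\}_{s\in S})\big)$ outputs an index outside $S$ with probability $\le\xi$. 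Now suppose for contradiction that $\hat\theta$ is $(\epsilon,\delta)$-DP in the regime of Theorem~\ref{theorem:LBRidge} (in particular $\epsilon=O(1)$ and $\delta=o(1/n^2)$) and that for every $i\in[w+1]$ the vector $\hat\theta(X_{(-i)})$ agrees with at least $3p/4$ of $W_i$ with probability $\ge 2/3$. By soundness, $\mathrm{Tr}\big(\hat\theta(X_{(-i)})\big)$ lands in the coalition $[w+1]\setminus\{i\}$ with probability $\ge 2/3-\xi$ for each $i$; summing over $i$ and swapping the order of summation produces an index $j^\ast$ and some $i^\ast\ne j^\ast$ with $\Pr\big[\mathrm{Tr}(\hat\theta(X_{(-i^\ast)}))=j^\ast\big]\ge (2/3-\xi)/w$. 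But $j^\ast$ lies outside the coalition of $X_{(-j^\ast)}$, so the no-framing bound gives $\Pr\big[\mathrm{Tr}(\hat\theta(X_{(-j^\ast)}))=j^\ast\big]\le\xi$; since $X_{(-i^\ast)}$ and $X_{(-j^\ast)}$ differ by replacing a single row ($x_{j^\ast}$ by $x_{i^\ast}$, another row of $X$), applying $(\epsilon,\delta)$-DP to the post-processed output $\mathrm{Tr}\circ\hat\theta$ yields $(2/3-\xi)/w\le e^\epsilon\xi+\delta$, which is false once $\delta=o(1/n^2)$ and $w=o(n)$ force $\delta=o(1/w)$. Hence no such $(\epsilon,\delta)$-DP $\hat\theta$ exists.

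The main obstacle is the tension inside the code construction itself: the budget of at most $0.001p$ non-consensus columns must be small enough to guarantee the consensus property across all $w+1$ deletions, yet the code restricted to exactly those columns must still carry enough marked structure that the tracer reliably accuses a true coalition member against an adversary of size $w$ while almost never framing an outsider. Pinning down the bias distribution, the $\xi$--$\epsilon$--$\delta$ bookkeeping, and the exact constants so that both requirements hold at $p=1000m^2$, $w=m/\log m$ is the delicate part; because the statement is quoted essentially verbatim from \cite{NOPL}, the realistic plan is to reproduce their verification at these parameters rather than to reconstruct the code from scratch.
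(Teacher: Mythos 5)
This lemma carries a citation to Talwar et al.~\cite{NOPL}; the paper does not prove it, only imports it, so there is no internal proof to compare your proposal against. What can be said is that you have correctly identified the underlying machinery (the Bun--Ullman--Vadhan fingerprinting-code construction as adapted in \cite{NOPL}), and your two-part plan --- probabilistic argument for the consensus-column count, tracer-plus-averaging-plus-DP for the non-privacy conclusion --- matches the shape of the original argument. The averaging step (sum over $i$, swap the order, find a framed innocent $j^\ast$) and the observation that $X_{(-i^\ast)}$ and $X_{(-j^\ast)}$ are neighbouring datasets under a single-row swap are both the right moves, and the final inequality $(2/3-\xi)/w \le e^{\epsilon}\xi + \delta$ is indeed the contradiction one extracts.

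The gap worth flagging is the fixed-versus-random $X$ issue, which you acknowledge as ``delicate'' but then elide. The tracer's soundness and no-framing guarantees are quantified over the \emph{random} codebook drawn from the code's generating distribution; they are not properties of any fixed $X$. Your argument establishes the consensus-column property for some realization of $X$ (probabilistic method), and then applies the tracer's guarantees to that same fixed $X$ as though they held pointwise. To get the lemma as stated --- a single deterministic $X$ on which \emph{every} accurate algorithm violates DP --- you need the FPC security statement in its quantifier-over-pirate-strategies form (``with high probability over $\mathrm{Gen}$, for every $\mathcal P$, tracing succeeds''), so that a single union/averaging step over the randomness of $X$ lands you on a fixed matrix enjoying both the consensus property and the universally-quantified tracing property simultaneously. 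Without making that quantifier structure explicit, the existence of the fixed $X$ asserted in the lemma does not actually follow from the two probabilistic facts you establish separately. This is precisely the bookkeeping the \cite{NOPL} proof carries out, and it is the part a complete write-up would need to reproduce rather than sketch.
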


Next, we construct $w + 1$ datasets $D^{(i)}$ for $i \in [w + 1]$ as follows: Each dataset contains the rows of $Z$ with the corresponding response value being $0$, i.e., each dataset contains $(z_j, 0)$ for $j \in [k]$. Taking the matrix $X$ from Lemma~\ref{lemma:FingX}, further include the rows of $X_{(-i)}$ with response values equal to $1$, i.e., if $x_{(-i)}^{j}$ is the $j^{th}$ row of $X_{(-i)}$, take $D^{(i)}$ to contain $(x_{(-i)}^{j}, 1)$ for $j \in [w]$. Note that $n = w + k$.

For simplicity, suppose $\mathcal{L}$ is un-normalized by $2n$. This does not affect the analysis, and in the end, we will normalize back by dividing by $2n$. We now have for all $i \in [w + 1]$ and $\theta \in \mathcal{C}$ that
\begin{align*}
\mathcal{L}\left(\theta, D^{(i)}\right) = \sum_{j = 1}^w \left(1 - \theta^Tx_{(-i)}^{j}\right)^2 + \sum_{j = 1}^k (z_j^T\theta)^2 = \sum_{j = 1}^w \left(1 - \theta^Tx_{(-i)}^{j}\right)^2 + k||\theta||_2^2,
\end{align*}
since $Z^TZ = kI_p$ and all entries in $Z$ are in $\{-1, 1\}$. Now set $\theta' \in \left\{-\frac{\alpha_2}{p}, \frac{\alpha_2}{p}\right\}^p$ such that the signs of the coordinates of $\theta'$ match the signs for the
consensus columns of $X_{(-i)}$. Plugging this into $\mathcal{L}$, we see for all $i \in [w]$ that
\begin{align*}
\mathcal{L}\left(\theta', D^{(i)}\right) &= \sum_{j = 1}^w \left(1 - \theta'^Tx_{(-i)}^{j}\right)^2 + \alpha_2^2\frac{k}{p} \leq \sum_{j = 1}^w \left(1 - \frac{(1 - \tau) p \alpha_2}{p} + \frac{\tau p \alpha_2}{p}\right)^2 + \alpha_2^2 \tau w\\
&= \left((1 - \alpha_2 + 2\tau \alpha_2)^2 + \alpha_2^2 \tau \right)w,
\end{align*}
where $\tau = 0.001$, and in the inequality step, we used the fact that  the number of non-consensus columns is at most $\tau p$. Thus, we have
\begin{align*}
\mathop{\min}\limits_{\theta \in \mathcal{C}}\mathcal{L}\left(\theta, D^{(i)}\right) \leq \left((1 - \alpha_2 + 2\tau \alpha_2)^2 + \alpha_2^2 \tau \right)w.
\end{align*}

Now we state and prove a lemma that will allow us to conclude that for a $\theta \in \mathcal{C}$, its sign has to agree with the sign of most of the consensus columns of $X_{(-i)}$. Its proof is again essentially the same as in \cite{NOPL}, except for the introduction of the quantity $\alpha_2$.

\begin{lemma}[Adapted from \cite{NOPL}]
\label{lemma:ERM_Dist_Free_LB_Helper}
Let $\mathcal{L}$ the mean squared error loss. Fix $i \in [w]$ and $\theta \in \mathbb{R}^p$. Suppose $\mathcal{L}\left(\theta, D^{(i)}\right) < 1.1 \tau \alpha_2^2 w$. For $j \in W_i$, let $s_j$ be the consensus sign of column $j$. Then
\begin{align*}
\left|\left\{j \in W_i \ | \ sgn(\theta_j) = s_j\right\}\right| \geq \frac{3p}{4}.
\end{align*}
\end{lemma}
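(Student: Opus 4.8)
The plan is to exploit the two-block structure of the (un-normalized) loss used in the construction. First I would record that, with $\mathcal{L}$ un-normalized as above,
\[
\mathcal{L}\bigl(\theta, D^{(i)}\bigr) = \sum_{j=1}^w \bigl(1 - \theta^T x^j_{(-i)}\bigr)^2 + k\|\theta\|_2^2,
\]
so the hypothesis $\mathcal{L}(\theta, D^{(i)}) < 1.1\,\tau \alpha_2^2 w$ forces \emph{both} summands to be below $1.1\,\tau \alpha_2^2 w$. From the second summand, together with $k = \tau w p$, I obtain the key norm bound $\|\theta\|_2^2 < 1.1\,\alpha_2^2/p$, i.e. $\|\theta\|_2 < \alpha_2\sqrt{1.1/p}$. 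This is the only place the orthogonal block $Z$ enters, and it is what makes the argument quantitative.

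Next I would isolate the consensus contribution. Set $A := \sum_{l \in W_i} s_l\theta_l$; since $x^j_{(-i),l} = s_l$ for every $l \in W_i$ and every row $j$, we have $\theta^T x^j_{(-i)} = A + \sum_{l \notin W_i}\theta_l x^j_{(-i),l}$, where the remainder is controlled \emph{uniformly in $j$} by Cauchy--Schwarz, $|W_i^c| \le \tau p$, and the fact that entries of $X$ are $\pm 1$:
\[
\Bigl|\sum_{l \notin W_i}\theta_l x^j_{(-i),l}\Bigr| \le \sqrt{\tau p}\,\|\theta\|_2 < \alpha_2\sqrt{1.1\,\tau}.
\]
To lower-bound $A$ I would then show some row makes $\theta^T x^j_{(-i)}$ close to $1$: applying Cauchy--Schwarz to $\sum_{j=1}^w(1 - \theta^T x^j_{(-i)})$ and using the bound on the first summand gives $\sum_{j=1}^w \theta^T x^j_{(-i)} > w\bigl(1 - \alpha_2\sqrt{1.1\,\tau}\bigr)$, so some $j^\star$ has $\theta^T x^{j^\star}_{(-i)} > 1 - \alpha_2\sqrt{1.1\,\tau}$; combining with the uniform remainder bound yields $A > 1 - 2\alpha_2\sqrt{1.1\,\tau}$. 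The conceptual point is precisely that $A$ does not depend on $j$, so one good row pins it down.

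Finally I would convert this into a count. Writing $N = \bigl|\{l \in W_i : \operatorname{sgn}(\theta_l) = s_l\}\bigr|$, the sign-aligned coordinates contribute $+|\theta_l|$ and the misaligned ones contribute $-|\theta_l|$ (and zeros nothing) to $A$, so
\[
A \le \sum_{l \in W_i,\ \operatorname{sgn}(\theta_l) = s_l}|\theta_l| \le \sqrt{N}\,\|\theta\|_2 < \sqrt{N}\,\alpha_2\sqrt{1.1/p}.
\]
Rearranging gives $N > \dfrac{(1 - 2\alpha_2\sqrt{1.1\,\tau})^2}{1.1\,\alpha_2^2}\,p$, and plugging in $\tau = 0.001$ and $\alpha_2 < 1$ makes the right-hand coefficient exceed $0.75$, so $N \ge 3p/4$. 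I expect no conceptual obstacle here; the only mildly delicate part is the bookkeeping of absolute constants, and I would note that the factor $1.1$ in the threshold and the choice $\tau = 0.001$ (with $\alpha_2$ close to $1$) are exactly what leave enough slack for the final numerical inequality to close.
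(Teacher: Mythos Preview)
Your argument is correct, and it reaches the same conclusion through a genuinely different route from the paper's proof. The paper argues by contradiction: assuming $|S_1| < 3p/4$, it uses Cauchy--Schwarz together with the norm bound $k\|\theta\|_2^2 < 1.1\tau\alpha_2^2 w$ to obtain $\|\theta^{(1)}\|_1 \le 0.91\alpha_2$ and $\|\theta^{(3)}\|_1 \le 0.04\alpha_2$, then writes $1 - \theta^T x^j_{(-i)} = 1 - \|\theta^{(1)}\|_1 + \|\theta^{(2)}\|_1 - \beta_j$ with $|\beta_j| \le \|\theta^{(3)}\|_1$ and observes that this is at least $1 - 0.95\alpha_2$ \emph{for every} row $j$, forcing the first block of the loss to exceed $1.1\tau\alpha_2^2 w$. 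Your approach is instead direct: you use Cauchy--Schwarz on $\sum_j(1 - \theta^T x^j_{(-i)})$ to locate a \emph{single} row with $\theta^T x^{j^\star}_{(-i)}$ close to $1$, transfer this to a lower bound on the row-independent quantity $A = \sum_{l\in W_i} s_l\theta_l$, and then upper-bound $A \le \sqrt{N}\,\|\theta\|_2$ to solve for $N$. The paper's version avoids the averaging step and yields a uniform per-row statement; your version has the advantage of producing an explicit inequality $N > \tfrac{(1 - 2\alpha_2\sqrt{1.1\tau})^2}{1.1\alpha_2^2}\,p$ that makes the dependence on the constants transparent and closes with the same numerical slack.
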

\begin{proof}
For notational purposes, for $S \subseteq [p]$, let $\theta |_{S}$ be the projection onto the coordinates in $S$. Now let
\begin{align*}
&S_1 = \left\{j \in W_i \ | \ sgn(\theta_j) = s_j\right\},\\
&S_2 = \left\{j \in W_i \ | \ sgn(\theta_j) \neq s_j\right\},\\ 
&S_3 = [p] \setminus W_i.
\end{align*}
Also, for $j \in [3]$, set $\theta^{(j)} := \theta |_{S_j}$. Suppose for the sake of contradiction that $|S_1| < \frac{3p}{4}$. Thus, since $|S_3| \leq \tau p$, we have by Cauchy-Schwarz that
\begin{align*}
||\theta^{(3)}||_2^2 \geq \frac{||\theta^{(3)}||_1^2}{|S_3|} \geq \frac{||\theta^{(3)}||_1^2}{\tau p}.
\end{align*}
Hence, $k||\theta^{(3)}||_2^2 \geq w||\theta^{(3)}||_1^2$. However, $k||\theta^{(3)}||_2^2 \leq k||\theta||_2^2 < 1.1 \tau \alpha_2^2 w$. This is because $\mathcal{L}\left(\theta, D^{(i)}\right) = \sum_{j = 1}^w \left(1 - \theta^Tx_{(-i)}^{j}\right)^2 + k||\theta||_2^2$ and $\mathcal{L}\left(\theta, D^{(i)}\right) < 1.1 \tau \alpha_2^2 w$. Thus, $||\theta^{(3)}||_1 \leq \alpha_2 \sqrt{1.1 \tau} \leq 0.04\alpha_2$. Also, since $|S_1| < \frac{3p}{4}$, we have
\begin{align*}
||\theta^{(1)}||_2^2 \geq \frac{||\theta^{(1)}||_1^2}{|S_1|} \geq \frac{4||\theta^{(1)}||_1^2}{3p}.
\end{align*}
But again, since $k||\theta||_2^2 < 1.1 \tau \alpha_2^2 w$, we have $||\theta^{(1)}||_1 \leq \alpha_2\sqrt{1.1\cdot 3/4} \leq 0.91\alpha_2$. We now have for $j \in [w]$ that $1 - \theta^Tx_{(-i)}^j = 1 -||\theta^{(1)}||_1 + ||\theta^{(2)}||_1 - \beta_j$, with $|\beta_j| \leq ||\theta^{(3)}||_1 \leq 0.04\alpha_2$. Since $0 < \alpha_2 < 1$, we obtain
\begin{align*}
\left|\theta^Tx_{(-i)}^j - 1\right| &= 1 - \theta^Tx_{(-i)}^j = 1 - ||\theta^{(1)}||_1 + ||\theta^{(2)}||_1 - \beta_j \geq 1 - ||\theta^{(1)}||_1 + ||\theta^{(2)}||_1 - |\beta_j|\\
&\geq 1 - ||\theta^{(1)}||_1 - ||\theta^{(3)}||_1 \geq 1 - \alpha_2(0.04 + 0.91) = 1 - 0.95\alpha_2.
\end{align*}
Since $\alpha_2 \in (0, 1)$, we have $(1 - 0.95\alpha_2)^2 \geq 1.1\alpha_2^2\tau$, so $\mathcal{L}\left(\theta, D^{(i)}\right) \geq (1 - 0.95\alpha_2)^2w \geq 1.1\tau\alpha_2^2w$. Therefore, we have a contradiction, implying that $|S_1| \geq \frac{3p}{4}$. This completes the proof of the lemma.
\end{proof}

Let us now continue with the proof of our theorem. We have that $\hat{\theta}$ is $(\epsilon, \delta)$-DP. Assume, for a constant $c$ small enough that will be determined later, that for all $i \in [w]$, we have
\begin{align*}
\mathbb{E}\left[\mathcal{L}\left(\hat{\theta}(D^{(i)}), D^{(i)}\right) - \mathop{\min}\limits_{\theta \in \mathcal{C}}\mathcal{L}\left(\theta, D^{(i)}\right)\right] \leq cw.
\end{align*}
By Markov's inequality we have with probability at least $2/3$ that
$$\mathcal{L}\left(\hat{\theta}(D^{(i)}), D^{(i)}\right) - \mathop{\min}\limits_{\theta \in \mathcal{C}}\mathcal{L}\left(\theta, D^{(i)}\right) \leq 3cw.
$$
But from before, we had $\mathop{\min}\limits_{\theta \in \mathcal{C}}\mathcal{L}\left(\theta, D^{(i)}\right) \leq \left((1 - \alpha_2 + 2\tau \alpha_2)^2 + \alpha_2^2 \tau \right)w$. Also, the function $1.1\tau x^2 - \left(\tau x^2 + (1 - x + 2\tau x)^2\right)$ is positive between the solution of the equation
\begin{align*}
1.1\tau x^2 = \tau x^2 + (1 - x + 2\tau x)^2
\end{align*}
in $x \in (0, 1)$, which is roughly $0.992063$, and $1$. Since $\alpha_2 \in (0.993, 1)$, the function $1.1\tau x^2 - \left(\tau x^2 + (1 - x + 2\tau x)^2\right)$ is positive at $x = \alpha_2$. Hence, for $c$ small enough, with probability at least $2/3$, we have
\begin{align*}
\mathcal{L}\left(\hat{\theta}(D^{(i)}), D^{(i)}\right) < \left((1 - \alpha_2 + 2\tau \alpha_2)^2 + \alpha_2^2 \tau + 3c \right)w \leq 1.1\tau \alpha_2^2w.
\end{align*}
Since $\hat{\theta}(D^{(i)}) \in \mathcal{C}$, we have by Lemma \ref{lemma:ERM_Dist_Free_LB_Helper} that $\hat{\theta}(D^{(i)})$ agrees with at least $\frac{3p}{4}$ consensus columns in $X_{(i)}$. This holds for all $i \in [w]$. But by Lemma \ref{lemma:FingX}, this contradicts the privacy of $\hat{\theta}$. Thus, there exists $i \in [w]$ such that 
\begin{align*}
\mathbb{E}\left[\mathcal{L}\left(\hat{\theta}(D^{(i)}), D^{(i)}\right) - \mathop{\min}\limits_{\theta \in \mathcal{C}}\mathcal{L}\left(\theta, D^{(i)}\right)\right] > cw.
\end{align*}
Hence, since $w = \frac{m}{\log(m)}$, $p = 1000m^2$, and $n = w + k \asymp \frac{m^3}{\log(m)}$, we obtain
\begin{align*}
\mathbb{E}\left[\mathcal{L}\left(\hat{\theta}(D^{(i)}), D^{(i)}\right) - \mathop{\min}\limits_{\theta \in \mathcal{C}}\mathcal{L}\left(\theta, D^{(i)}\right)\right] = \Omega\left(\frac{n^{1/3}}{\log^{2/3}(n)}\right).
\end{align*}
Normalizing back, i.e., dividing by $2n$, we obtain
\begin{align*}
\mathbb{E}\left[\mathcal{L}\left(\hat{\theta}(D^{(i)}), D^{(i)}\right) - \mathop{\min}\limits_{\theta \in \mathcal{C}}\mathcal{L}\left(\theta, D^{(i)}\right)\right] = \widetilde{\Omega}\left(\frac{1}{n^{2/3}}\right),
\end{align*}
as required.
%\end{proof}

%%%%%

\subsubsection{Proof of Theorem \ref{theorem:ERM_GLM_UP_General}}
\label{AppThmERM}

%\begin{proof}
Let $z_i = (x_i, y_i)$ for all $i \in [n]$. The goal is to apply Theorem \ref{theorem:ACCFWV3}. We need to establish the Lipschitz condition, smoothness, and the lower bound on the $\ell_2$-norm of the gradient of $\mathcal{L}(\theta, \mathcal{D}_n)$. The Lipschitz and smoothness properties will be established on the whole of $\mathcal{E}$. For the lower bound on the gradient, we will first obtain a lower bound on $\|\mathbb{E}[\nabla \mathcal{L}(\theta, z_i)]\|_2$ and then use a concentration result of $\nabla \mathcal{L}(\theta, \mathcal{D}_n)$ around $\mathbb{E}[\nabla \mathcal{L}(\theta, z_i)]$.
%In the end, we will apply Theorem \ref{theorem:ACCFWV3} once we are in the PAC framework and once we have rewritten Algorithm \ref{alg:PrivFWERM} in the form of Algorithm \ref{alg:ReAccFW}.

For any pair $z = (x, y) \in \mathcal{E}$, not necessarily from the GLM, we have
\begin{align*}
\nabla \mathcal{L}(\theta, z) & = (\Phi'(x^T\theta) - y)x,\\
||\nabla \mathcal{L}(\theta, z)||_2 & \leq (K_{\Phi'} + K_y)L_x,
\end{align*}
so $\mathcal{L}(\theta, z)$ is $(K_{\Phi'} + K_y)L_x$-Lipschitz in $\theta$. Furthermore, we have
\begin{align*}
\nabla^2 \mathcal{L}(\theta, z) = \Phi''(x^T\theta)xx^T,
\end{align*}
so for any $h \in \mathbb{R}^p$, we obtain
\begin{align*}
h^T\nabla^2 \mathcal{L}(\theta, z)h = \Phi''(x^T\theta)(h^Tx)^2 \leq K_{\Phi''}L_x^2||h||_2^2.
\end{align*} 
Thus, for any $z \in \mathcal{E}$, the loss $\mathcal{L}(\theta, z)$ is $K_{\Phi''}L_x^2$-smooth over $\mathbb{R}^p$, implying that $\mathcal{L}(\theta, \mathcal{D}_n)$ is $K_{\Phi''}L_x^2$-smooth over $\mathbb{R}^p$, as well.

Let us now proceed to lower-bound $||\mathbb{E}[\nabla \mathcal{L}(\theta, z)]||_2$. For $\mathcal{R}(\theta) := \mathbb{E}[\mathcal{L}(\theta, z)]$, we have by classical GLM theory that
\begin{align*}
\mathbb{E}[y|x] = \Phi^{'}(x^T\theta^*),
\end{align*}
so $\mathbb{E}[yx] = \mathbb{E}\left[\mathbb{E}[y|x]x\right] = \mathbb{E}[\Phi^{'}(x^T\theta^*)x]$. Thus, we have
\begin{align*}
\mathcal{R}(\theta) = -\theta^T\mathbb{E}[\Phi^{'}(x^T\theta^*)x] + \mathbb{E}[\Phi(x^T\theta)] = \mathbb{E}_{x}[\Phi(x^T\theta) - \Phi^{'}(x^T\theta^*)x^T\theta].
\end{align*}
Since the quantities inside the expectation are bounded, using the Dominated Convergence Theorem, we can swap expectations and gradients. Therefore, we have
\begin{align*}
\nabla \mathcal{R}(\theta) = \mathbb{E}_{x}[(\Phi^{'}(x^T\theta) - \Phi^{'}(x^T\theta^*))x].
\end{align*}
Thus, for $h \in \mathbb{R}^p$, we have
\begin{align*}
h^T\nabla^2\mathcal{R}(\theta)h = \mathbb{E}_x[\Phi''(x^T\theta)(h^Tx)^2].
\end{align*}
Since $x^T\theta \leq L_x||\theta^*||_2$ for all $\theta \in \mathbb{B}_2\left(||\theta^*||_2\right)$, and since $\Phi''$ is even and non-decreasing on $(-\infty, 0]$ and non-increasing on $[0, \infty)$, we have $\Phi''(x^T\theta) \geq \Phi''\left(L_x||\theta^*||_2\right) > 0$, for all $\theta \in \mathbb{B}_2(||\theta^*||_2)$. Therefore, we have
\begin{align*}
h^T\nabla^2\mathcal{R}(\theta)h &\geq \Phi''\left(L_x||\theta^*||_2\right)h^T\mathbb{E}[xx^T]h = \Phi''(L_x||\theta^*||_2)h^T\Sigma h\\
&\geq \Phi''\left(L_x||\theta^*||_2\right)\lambda_{\min}(\Sigma)||h||_2^2 > 0.
\end{align*}
Hence, $\mathcal{R}(\theta)$ is $\Phi''\left(L_x||\theta^*||_2\right)\lambda_{\min}(\Sigma)$-strongly convex over $\mathbb{B}_2\left(||\theta^*||_2\right)$. Also, since $\Phi$ is convex over $\mathbb{R}$ and $\nabla \mathcal{R}(\theta^{*}) = 0$, the function $\mathcal{R}$ is minimized over $\mathbb{B}_2\left(||\theta^*||_2\right)$ at $\theta^{*}$. Hence, for all $\theta \in \mathbb{B}_2\left(||\theta^*||_2\right)$, and thus for all $\theta \in \mathcal{C}$ since $\mathcal{C} \subseteq \mathbb{B}_2\left(||\theta^*||_2\right)$, we have by strong convexity that
\begin{align*}
||\mathbb{E}[\nabla \mathcal{L}(\theta, z)]||_2 &= ||\nabla\mathcal{R}(\theta)||_2 = ||\nabla\mathcal{R}(\theta) - \nabla\mathcal{R}(\theta^*)||_2\\
&\geq \frac{\Phi''\left(L_x||\theta^*||_2\right)\lambda_{\min}(\Sigma)}{2}||\theta - \theta^{*}||_2\\ &\geq \frac{\Phi''\left(L_x||\theta^*||_2\right)\lambda_{\min}(\Sigma)}{2}\left(||\theta^*||_2 - D\right) > 0,
\end{align*}
since $\theta^* \in \mathbb{R}^p \setminus \mathcal{C}$, so there is a strict separation between $\theta^*$ and $\mathcal{C}$.

Now, for all $i \in [n]$ and $\theta \in \mathbb{R}^p$, recall that $\nabla \mathcal{L}(\theta, z_i) = (\Phi'(x_i^T\theta) - y_i)x_i$. Also, for $h \in \mathbb{R}^p$, we have $|(\Phi'(x_i^T\theta) - y_i)x_i^Th| \leq (K_{\Phi'} + K_y)L_x||h||_2$, so clearly,
%$$(\Phi'(x_i^T\theta) - y_i)x_i^Th - \mathbb{E}[(\Phi'(x_i^T\theta) - y_i)x_i^Th] \in \mathcal{G}\left(((K_{\Phi'} + K_y)L_x||h||_2)^2\right),$$
%by Hoeffding's inequality. Thus, we have
%\begin{align*}
%\mathbb{E}\left[e^{(\Phi'(x_i^T\theta) - y_i)x_i^Th - \mathbb{E}[(\Phi'(x_i^T\theta) - y_i)x_i^Th]}\right] \leq e^{\frac{(K_{\Phi'} + K_y)^2L_x^2||h||_2^2}{2}},
%\end{align*}
\begin{align*}
& (\Phi'(x_i^T\theta) - y_i)x_i - \mathbb{E}[(\Phi'(x_i^T\theta) - y_i)x_i] \in \mathcal{G}\left((K_{\Phi'} + K_y)^2L_x^2\right), \quad \text{and} \\
& \frac{1}{n}\sum_{i = 1}^n\nabla \mathcal{L}(\theta, z_i) - \mathbb{E}[\nabla \mathcal{L}(\theta, z_1)] = \nabla \mathcal{L}(\theta, \mathcal{D}_n) - \mathbb{E}[\nabla \mathcal{L}(\theta, z_1)] \in \mathcal{G}\left(\frac{(K_{\Phi'} + K_y)^2L_x^2}{n}\right).
\end{align*}
Hence, by Lemma \ref{lemma:CIGV}, we have
\begin{align}
\label{eq:Conc_Grad_GLM_Eq}
\mathbb{P}\left(\left\|\frac{1}{n}\sum_{i = 1}^n\nabla \mathcal{L}(\theta, z_i) - \mathbb{E}[\nabla \mathcal{L}(\theta, z_1)]\right\|_2 \geq t \right) \leq 4^pe^{-\frac{t^2}{8s_n^2}}, \quad \forall \ t \geq 0 \ \mbox{and} \ \theta \in \mathbb{R}^p,
\end{align}
with $s_n^2 = \frac{(K_{\Phi'} + K_y)^2L_x^2}{n}$. Now take $\theta \in \mathcal{C}$ and let $Z_\theta = \left\|\frac{1}{n}\sum_{i = 1}^n\nabla \mathcal{L}(\theta, z_i) - \mathbb{E}[\nabla \mathcal{L}(\theta, z_1)]\right\|_2$. Note that, since $\mathcal{L}(\theta, z)$ is $K_{\Phi''}L_x^2$-smooth over $\mathbb{R}^p$ for all $z \in \mathcal{E}$, the function $Z_\theta$ is $2K_{\Phi''}L_x^2$-Lipschitz over $\mathbb{R}^p$. Now we use a covering argument to obtain a concentration result on $\mathop{\sup}\limits_{\theta \in \mathcal{C}}Z_\theta$. Let $t \geq 0$ and $\upsilon = \frac{t}{4K_{\Phi''}L_x^2}$. Take a $\upsilon$-cover $\{\theta^{1}, \dots, \theta^{N_\upsilon}\}$ of $\mathcal{C} = \mathbb{B}_2(D)$ with covering number $N_\upsilon := N(\upsilon, \mathcal{C}, ||\cdot||_2)$. Then, for $\theta \in \mathcal{C}$, there is some $k \in [N_\upsilon]$ such that $||\theta - \theta^k||_2 \leq \upsilon$, and by the Lipschitz property, we have  $|Z_\theta - Z_{\theta^k}| \leq 2K_{\Phi''}L_x^2\upsilon$. So, if $Z_\theta \geq t$, we have $Z_{\theta^k} \geq t - 2K_{\Phi''}L_x^2\upsilon = \frac{t}{2}$, since $Z_\theta, Z_{\theta^k} \geq 0$. Hence, by Lemma \ref{lemma:CIGV} and Lemma \ref{lemma:Covering_Nr_Ball}, we have
\begin{align*}
\mathbb{P}\left(\mathop{\sup}\limits_{\theta \in \mathcal{C}}Z_\theta \geq t\right) &\leq \mathbb{P}\left(\mathop{\sup}\limits_{k \in [N_\upsilon]}Z_{\theta^k} \geq \frac{t}{2}\right)\leq \sum_{k = 1}^{N_{\upsilon}}\mathbb{P}\left(Z_{\theta^k} \geq \frac{t}{2}\right) \leq \left(1 + \frac{2D}{\upsilon}\right)^p4^pe^{-\frac{t^2}{32s_n^2}}\\
&= \left(1 + \frac{8K_{\Phi''}L_x^2D}{t}\right)^p4^pe^{-\frac{t^2}{32s_n^2}} \leq \left(\frac{16K_{\Phi''}L_x^2D}{t}\right)^p4^pe^{-\frac{t^2}{32s_n^2}},
\end{align*}
for $t \leq 8K_{\Phi''}L_x^2D$. Since $D \leq ||\theta^*||_2 \asymp 1$, we have absolute constants $C_2$ and $C_3$ such that
\begin{align*}
\mathbb{P}\left(\mathop{\sup}\limits_{\theta \in \mathcal{C}}Z_\theta \geq t\right)\leq \frac{C_2}{t^p}4^pe^{-\frac{nt^2}{C_3}},
\end{align*}
and by rescaling $t$ with $4C_2^{1/p}t$, since $p$ is of constant order, we have 
\begin{align*}
\mathbb{P}\left(\mathop{\sup}\limits_{\theta \in \mathcal{C}}Z_\theta \geq t\right)\leq \frac{1}{t^p}e^{-\frac{nt^2}{C_4}},
\end{align*}
for $t \leq C_5$, with absolute constants $C_4$ and $C_5$. Fix $\zeta \in (0, 1)$. Thus, we want $t \leq C_5$ and $\frac{1}{t^p}e^{-\frac{nt^2}{C_4}} \leq \frac{\zeta}{2}$, or equivalently, $t^2 + \frac{pC_4}{n}\log(t) \geq \frac{C_4}{n}\log(2/\zeta)$. Pick $t = \sqrt{\frac{C_4\log(2/\zeta)}{n}} + \frac{1}{n^q}$. Then
\begin{align*}
\frac{1}{n^{2q}} + \frac{2}{n^q}\sqrt{\frac{C_4\log(2/\zeta)}{n}} + \frac{pC_4}{n}\log\left(\sqrt{\frac{C_4\log(2/\zeta)}{n}} + \frac{1}{n^q}\right) \geq 0,
\end{align*}
since if we pick $n$ greater than an absolute constant, the LHS scales like $\frac{1}{n^{2q}} + \frac{1}{n^{q + \frac{1}{2}}} - \frac{C''_1\log(n)}{n}$, which is greater than $0$, since $q < \frac{1}{2}$ and $C''_1$ is some absolute constant. Thus, there is an absolute constant $C'_1$ such that for $n \geq C'_1$,
%that $t \leq C_5$ and $t^2 + \frac{pC_4}{n}\log(t) \geq \frac{C_4}{n}\log(2/\zeta)$. So, there are absolute constants $C'_1$ and $C_1$, such that for $n \geq C'_1$
the required conditions are satisfied and we have $\mathbb{P}(\Omega_1) \geq 1 - \frac{\zeta}{2}$, with
\begin{align*}
\Omega_1 = \left\{\left\|\frac{1}{n}\sum_{i = 1}^n\nabla \mathcal{L}(\theta, z_i) - \mathbb{E}[\nabla \mathcal{L}(\theta, z_1)]\right\|_2 \leq \sqrt{\frac{C_1\log(2/\zeta)}{n}} + \frac{1}{n^q}, \quad \forall \theta \in \mathcal{C}\right\}.
\end{align*}
Shifting our attention to Algorithm \ref{alg:PrivFWERM}, we have at step $t$ that 
\begin{align*}
v_t^T(\nabla \mathcal{L}(\theta_t, \mathcal{D}_n) + \xi_t) \leq v^T(\nabla \mathcal{L}(\theta_t, \mathcal{D}_n) + \xi_t), \quad \forall v \in \mathcal{C},
\end{align*}
so 
\begin{align*}
v_t^T\nabla \mathcal{L}(\theta_t, \mathcal{D}_n) &\leq v^T\nabla \mathcal{L}(\theta_t, \mathcal{D}_n) + (v - v_t)^T\xi_t\\
&\leq v^T\nabla \mathcal{L}(\theta_t, \mathcal{D}_n) + 2D||\xi_t||_2, \quad \forall v \in \mathcal{C}.
\end{align*}
By Lemma \ref{lemma:CIGV}, for
\begin{align*}
\Omega_2 = \left\{||\xi_t||_2 \leq \sqrt{8\left(\frac{8L_2}{n}\right)^2\frac{T}{\epsilon^2} \log\left(\frac{5T}{2\delta}\right)\log\left(\frac{2}{\delta}\right)\log(4^pT/\zeta)},  \quad \forall t \in [T] \right\},
\end{align*}
we have $\mathbb{P}(\Omega_2) \geq 1 - \frac{\zeta}{2}$. Let us work on $\Omega = \Omega_1 \cap \Omega_2$, with $\mathbb{P}(\Omega) \geq 1 - \zeta$. On $\Omega$, we have
%\begin{align*}
%\sqrt{\frac{C_1\log(2/\zeta)}{n}} + \frac{1}{n^q} &\geq \left\|\frac{1}{n}\sum_{i = 1}^n\nabla \mathcal{L}(\theta, z_i) - \mathbb{E}[\nabla \mathcal{L}(\theta, z_1)]\right\|_2\\
%&\geq \frac{\Phi''(L_x||\theta^*||_2)\lambda_{\min}(\Sigma)}{2}(||\theta^*||_2 - D) - ||\nabla \mathcal{L}(\theta, \mathcal{D}_n)||_2 
%\end{align*}
\begin{align*}
||\nabla \mathcal{L}(\theta, \mathcal{D}_n)||_2 \geq \frac{\Phi''(L_x||\theta^*||_2)\lambda_{\min}(\Sigma)}{2}(||\theta^*||_2 - D) - \sqrt{\frac{C_1\log(2/\zeta)}{n}} - \frac{1}{n^q} \geq r > 0
\end{align*}
by the triangle inequality, so we have the desired lower bound on $||\nabla \mathcal{L}(\theta, \mathcal{D}_n)||_2$ for all $\theta \in \mathcal{C}$, with high probability. Next, note that 
\begin{align*}
v_t^T\nabla \mathcal{L}(\theta_t, \mathcal{D}_n) &\leq v^T\nabla \mathcal{L}(\theta_t, \mathcal{D}_n)\\
& \quad + 2D\sqrt{8\left(\frac{8L_2}{n}\right)^2\frac{T}{\epsilon^2} \log\left(\frac{5T}{2\delta}\right)\log\left(\frac{2}{\delta}\right)\log\left(\frac{4^pT}{\zeta}\right)}, \quad \forall v \in \mathcal{C},
\end{align*}
where we used the fact that $2D = \mathop{\sup}\limits_{x, y \in \mathcal{C}}||x - y||_2$. Thus, we have
\begin{align*}
v_t^T\nabla \mathcal{L}(\theta_t, \mathcal{D}_n) &\leq \mathop{\min}\limits_{v \in \mathcal{C}}v^T\nabla \mathcal{L}(\theta_t, \mathcal{D}_n)\\
& \quad + 2D\sqrt{8\left(\frac{8L_2}{n}\right)^2\frac{T}{\epsilon^2} \log\left(\frac{5T}{2\delta}\right)\log\left(\frac{2}{\delta}\right)\log\left(\frac{4^pT}{\zeta}\right)}.
\end{align*}
So on $\Omega$, we are in the context of Algorithm \ref{alg:ReAccFW} and Theorem \ref{theorem:ACCFWV3}, with
\begin{align*}
\Delta = 2D\sqrt{8\left(\frac{8L_2}{n}\right)^2\frac{T}{\epsilon^2} \log(5T/2\delta)\log(2/\delta)\log(4^pT/\zeta)}.
\end{align*}
Note also that $\mathcal{C}$ is compact and $\alpha_{\mathcal{C}}$-strongly convex by Lemma \ref{lemma:STR_CONV_BALL}, and that we established the smoothness condition and the lower bound on the $\ell_2$-norm of the gradient of the empirical risk. Therefore, with probability at least $1 - \zeta$, we have for $\eta = \min\left\{1, \frac{\alpha_{\mathcal{C}}r}{4K_{\Phi''}L_x^2}\right\}$ and $c = \max\left\{\frac{1}{2}, 1 - \frac{\alpha_{\mathcal{C}}r}{8K_{\Phi''}L_x^2}\right\}$ that
\begin{align*}
\mathcal{L}(\theta_T, \mathcal{D}_n) - \mathop{\min}\limits_{\theta \in \mathcal{C}}\mathcal{L}(\theta, \mathcal{D}_n) &\leq h_0c^T + \frac{3\Delta\eta}{2(1 - c)}\\
&= h_0c^T\\
& \quad + \frac{3\eta}{(1 - c)}D\sqrt{8\left(\frac{8L_2}{n}\right)^2\frac{T}{\epsilon^2} \log\left(\frac{5T}{2\delta}\right)\log\left(\frac{2}{\delta}\right)\log\left(\frac{4^pT}{\zeta}\right)}.
\end{align*}
Observe that $L_2 = (K_{\Phi'} + K_y)L_x \asymp 1$, $h_0 \leq 2L_2D \asymp 1$ by the Lipschitz property. Hence, we have
\begin{align*}
\mathcal{L}(\theta_T, \mathcal{D}_n) - \mathop{\min}\limits_{\theta \in \mathcal{C}}\mathcal{L}(\theta, \mathcal{D}_n) \lesssim \frac{1}{n} + \frac{\eta\log\left(\log_{1/c}(n)/\delta\right)\sqrt{\log_{1/c}(n)\log\left(\log_{1/c}(n)/\zeta\right)}}{(1 - c)n\epsilon},
\end{align*}
with probability at least $1 - \zeta$, as required.
%\end{proof}

Note that we needed the $L_2$-Lipschitz condition to hold for all datasets $\{(x_i, y_i)\}_{i = 1}^n$ in $\mathcal{E}$, not just for the data drawn \iid from the GLM. This is because we need $\theta_T$ to be private, and this is the case if the empirical risk is $L_2$-Lipschitz in $\theta$ for \emph{arbitrary} data.

%%%%%

\subsubsection{Proof of Theorem \ref{theorem:ERM_UP_C_Inc}}
\label{AppThmERM2}

%\begin{proof}
We will prove a more general statement. Let $\zeta \in (0, 1/3)$ and $q < \frac{1}{2}$. Assuming $||\theta^*||_2 - D \lesssim \frac{1}{n^q}$, there are absolute constants $C'_1, C_1, C_2$, and $C_3$ such that for $n > \max\left\{C_2\log^{\frac{1}{1 - 2q}}(2/\zeta), C'_1\right\}$, $D \leq ||\theta^*||_2 - \frac{C_3}{n^q}$, and
\begin{align*}
r = \frac{1}{n^q} - \sqrt{\frac{C_1\log(2/\zeta)}{n}},
\end{align*}
we have with probability at least $1 - 3\zeta$ that Algorithm \ref{alg:PrivFWERM} with $T = \log_{1/c}(n)$ returns $\theta_T$ such that
\begin{align}
\label{eq:General_q_Inc_C}
\mathcal{L}(\theta_T, \mathcal{D}_n) - \mathop{\min}\limits_{\theta \in \mathbb{B}_2(||\theta^*||_2)}\mathcal{L}(\theta, \mathcal{D}_n) \lesssim \log(n/\delta)\sqrt{\log(n)\log(n/\zeta)}\left(\frac{1}{{n^{1 - q/2}\epsilon}} + \frac{1}{n^{q + \frac{1}{2}}} + \frac{1}{n^{2q}}\right).
\end{align}
All the other quantities are as in the theorem hypothesis. Once we prove this, we will optimize the upper bound on the excess empirical risk over $q < \frac{1}{2}$ to obtain the desired result.

Let $C_3 = \frac{4}{\Phi''(L_x||\theta^*||_2)\lambda_{\min}(\Sigma)}$. By assumption, we have $||\theta^*||_2 - D \geq \frac{4}{\Phi''(L_x||\theta^*||_2)\lambda_{\min}(\Sigma)n^q}$. By Theorem~\ref{theorem:ERM_GLM_UP_General}, there exist absolute constants $C'_1$ and $C_1$ such that for $n \geq C'_1$, $r = \frac{1}{n^q} - \sqrt{\frac{C_1\log(2/\zeta)}{n}}$, and $T = \log_{1/c}(n)$, Algorithm \ref{alg:PrivFWERM} returns $\theta_T$ such that with probability at least $1 - \zeta$, we have
\begin{align*}
\mathcal{L}(\theta_T, \mathcal{D}_n) - \mathop{\min}\limits_{\theta \in \mathcal{C}}\mathcal{L}(\theta, \mathcal{D}_n) \lesssim \frac{1}{n} + \frac{\eta\log\left(\log_{1/c}(n)/\delta\right)\sqrt{\log_{1/c}(n)\log\left(\log_{1/c}(n)/\zeta\right)}}{(1 - c)n\epsilon}.
\end{align*}
This is because, firstly,
\begin{align*}
r &= \frac{1}{n^q} - \sqrt{\frac{C_1\log(2/\zeta)}{n}} \leq \frac{||\theta^*||_2 - D}{C_3} - \sqrt{\frac{C_1\log(2/\zeta)}{n}}\\
&= \frac{\Phi''(L_x||\theta^*||_2)\lambda_{\min}(\Sigma)}{4}(||\theta^*||_2 - D) - \sqrt{\frac{C_1\log(2/\zeta)}{n}}\\
&\leq \frac{\Phi''(L_x||\theta^*||_2)\lambda_{\min}(\Sigma)}{2}(||\theta^*||_2 - D) - \sqrt{\frac{C_1\log(2/\zeta)}{n}} - \frac{1}{n^q},
\end{align*}
where in both inequalities, we used the fact that $||\theta^*||_2 - D \geq \frac{C_3}{n^q}$. Secondly, for $C_2 = (4C_1)^{\frac{1}{1 - 2q}}$, we have $n > C_2\log^{\frac{1}{1 - 2q}}(2/\zeta)$, so $r > \frac{1}{2n^q}$. Implicitly, $r > 0$, hence we can use Theorem \ref{theorem:ERM_GLM_UP_General} with $r$ as above. Also, in the proof of Theorem \ref{theorem:ERM_GLM_UP_General}, we showed that $\mathcal{L}(\theta, \mathcal{D}_n)$ is $L_2$-Lipschitz and $K_{\Phi''}L_x^2$-smooth, and on an event $\Omega$ which occurs with probability at least $1 - \zeta$, we have $||\nabla \mathcal{L}(\theta, \mathcal{D}_n)||_2 > r$ for all $\theta \in \mathcal{C}$. Here, $L_2 = (K_y + K_{\Phi'})L_x$ and $\eta = \min\left\{1, \frac{\alpha_{\mathcal{C}}r}{4K_{\Phi''}L_x^2}\right\}$. Moreover, $r \asymp \frac{1}{n^q}$, since $\frac{1}{2n^q} < r < \frac{1}{n^q}$, implying that $\eta, 1 - c \asymp \frac{1}{n^q}$. Therefore, since $0 < \epsilon \lesssim 1$, we have
\begin{align}
\label{eq:Inc_C_intermed}
\mathcal{L}(\theta_T, \mathcal{D}_n) - \mathop{\min}\limits_{\theta \in \mathcal{C}}\mathcal{L}(\theta, \mathcal{D}_n) &\lesssim \frac{1}{n} + \frac{n^{q/2}\log(n/\delta)}{n\epsilon}\sqrt{\frac{\log(n)\log(n/\zeta)}{n^q\log\left(\frac{1}{1 - \frac{1}{n^q}}\right)}} \notag\\
&\asymp \frac{\log(n/\delta)\sqrt{\log(n)\log(n/\zeta)}}{n^{1 - q/2}\epsilon},
\end{align}
where we used the facts that $\log(\log_{1/c}(n)/\delta) \lesssim \log(n/\delta)$ and $n^q\log\left(\frac{1}{1 - \frac{1}{n^q}}\right) \asymp 1$ in the above calculations. To reiterate for clarity, for $C'_1$ and $C_1$ as in Theorem \ref{theorem:ERM_GLM_UP_General}, $C_2 = (4C_1)^{\frac{1}{1 - 2q}}, C_3 = \frac{4}{\Phi''(L_x||\theta^*||_2)\lambda_{\min}(\Sigma)}$, $n > \max\left\{C_2\log^{\frac{1}{1 - 2q}}(2/\zeta), C'_1\right\}$, $||\theta^*||_2 - D \geq \frac{C_3}{n^q}$, $r = \frac{1}{n^q} - \sqrt{\frac{C_1\log(2/\zeta)}{n}}$, and $T = \log_{1/c}(n)$, Algorithm \ref{alg:PrivFWERM} returns $\theta_T$ such that on $\Omega$ we have inequality \eqref{eq:Inc_C_intermed}, with $\mathbb{P}(\Omega) \geq 1 - \zeta$. On $\Omega$, we then have
\begin{align}
\label{EqnMinLoss}
\mathcal{L}(\theta_T, \mathcal{D}_n) - \mathop{\min}\limits_{\theta \in \mathbb{B}_2(||\theta^*||_2)}\mathcal{L}(\theta, \mathcal{D}_n) &= \mathcal{L}(\theta_T, \mathcal{D}_n) - \mathop{\min}\limits_{\theta \in \mathcal{C}}\mathcal{L}(\theta, \mathcal{D}_n) \notag\\
& \quad + \mathop{\min}\limits_{\theta \in \mathcal{C}}\mathcal{L}(\theta, \mathcal{D}_n) - \mathop{\min}\limits_{\theta \in \mathbb{B}_2(||\theta^*||_2)}\mathcal{L}(\theta, \mathcal{D}_n) \notag \\
& \lesssim \frac{\log(n/\delta)\sqrt{\log(n)\log(n/\zeta)}}{n^{1 - q/2}\epsilon} \notag\\
& \quad + \mathop{\min}\limits_{\theta \in \mathcal{C}}\mathcal{L}(\theta, \mathcal{D}_n) - \mathop{\min}\limits_{\theta \in \mathbb{B}_2(||\theta^*||_2)}\mathcal{L}(\theta, \mathcal{D}_n) \notag \\
&\leq \frac{\log(n/\delta)\sqrt{\log(n)\log(n/\zeta)}}{n^{1 - q/2}\epsilon} \notag\\
& \quad + \mathop{\min}\limits_{\theta \in \mathcal{C}}\mathcal{L}(\theta, \mathcal{D}_n) - \mathcal{L}(\theta_{B, n}, \mathcal{D}_n),
\end{align}
where $\theta_{B, n}$ is any minimizer of $\mathcal{L}(\theta, \mathcal{D}_n)$ over $\mathbb{B}_2(||\theta^*||_2)$. Note that $\theta_{B, n}$ exists since $\mathcal{L}$ is continuous and $\mathbb{B}_2(||\theta^*||_2)$ is compact. Now define $\mathcal{A} : [0, \infty) \rightarrow \mathbb{R}$ as $\mathcal{A}(\lambda) = \mathcal{L}(\lambda\theta_{B, n}, \mathcal{D}_n)$. Note that $\mathcal{A}$ is continuous and $\mathcal{A}(1) = \mathop{\min}\limits_{\theta \in \mathbb{B}_2(||\theta^*||_2)}\mathcal{L}(\theta, \mathcal{D}_n)$. Also, $\mathcal{A}(0) = \mathcal{L}(0, \mathcal{D}_n) \geq \mathop{\min}\limits_{\theta \in \mathcal{C}}\mathcal{L}(\theta, \mathcal{D}_n)$, since $0 \in \mathcal{C}$. Moreover, we have
$$\mathcal{A}(0) \geq \mathop{\min}\limits_{\theta \in \mathcal{C}}\mathcal{L}(\theta, \mathcal{D}_n) \geq \mathop{\min}\limits_{\theta \in \mathbb{B}_2(||\theta^*||_2)}\mathcal{L}(\theta, \mathcal{D}_n) = \mathcal{A}(1).$$ Thus, by the Intermediate Value Theorem, there exists $\lambda_n \in [0, 1]$ such that $\mathcal{A}(\lambda_n) = \mathop{\min}\limits_{\theta \in \mathcal{C}}\mathcal{L}(\theta, \mathcal{D}_n)$. Hence, we have
\begin{align*}
\mathcal{L}(\theta_T, \mathcal{D}_n) - \mathop{\min}\limits_{\theta \in \mathbb{B}_2(||\theta^*||_2)}\mathcal{L}(\theta, \mathcal{D}_n) &\lesssim \frac{\log(n/\delta)\sqrt{\log(n)\log(n/\zeta)}}{n^{1 - q/2}\epsilon}\\
& \quad + \mathcal{L}(\lambda_n\theta_{B, n}, \mathcal{D}_n) - \mathcal{L}(\theta_{B, n}, \mathcal{D}_n).
\end{align*}
Now, we have a few cases:
\begin{enumerate}
\item Case $1$: $\theta_{B, n}$ is at the boundary of $\mathbb{B}_2(||\theta^*||_2)$. If $\lambda_n\theta_{B, n}$ is at the boundary of $\mathcal{C}$, then $$||\lambda_n\theta_{B, n} - \theta_{B, n}||_2 = ||\theta^*||_2 - D \asymp \frac{1}{n^q}.$$
Now suppose $\lambda_n\theta_{B, n}$ is in the interior of $\mathcal{C}$. Recall that $\mathcal{L}(\lambda_n\theta_{B, n}, \mathcal{D}_n) = \mathop{\min}\limits_{\theta \in \mathcal{C}}\mathcal{L}(\theta, \mathcal{D}_n)$. Since $\mathcal{L}(\theta, \mathcal{D}_n)$ is convex in $\theta$, we must then have $\nabla \mathcal{L}(\lambda_n\theta_{B, n}, \mathcal{D}_n) = 0$, so $\lambda_n\theta_{B, n}$ is a global minizer of $\mathcal{L}(\theta, \mathcal{D}_n)$. Hence, we have $\mathop{\min}\limits_{\theta \in \mathcal{C}}\mathcal{L}(\theta, \mathcal{D}_n) \leq \mathop{\min}\limits_{\theta \in \mathbb{B}_2(||\theta^*||_2)}\mathcal{L}(\theta, \mathcal{D}_n)$, so $\mathop{\min}\limits_{\theta \in \mathcal{C}}\mathcal{L}(\theta, \mathcal{D}_n) - \mathop{\min}\limits_{\theta \in \mathbb{B}_2(||\theta^*||_2)}\mathcal{L}(\theta, \mathcal{D}_n) = 0$. If $\lambda_n\theta_{B, n}$ is outside $\mathcal{C}$, then
$$||\lambda_n\theta_{B, n} - \theta_{B, n}||_2 \leq ||\theta^*||_2 - D \lesssim \frac{1}{n^q}.$$
\item Case $2$: $\theta_{B, n}$ is in the interior of $\mathbb{B}_2(||\theta^*||_2)$. If $\lambda_n\theta_{B, n}$ is at the boundary of $\mathcal{C}$, then $$||\lambda_n\theta_{B, n} - \theta_{B, n}||_2 \leq ||\theta^*||_2 - D \asymp \frac{1}{n^q}.$$
Suppose now that $\lambda_n\theta_{B, n}$ is in the interior of $\mathcal{C}$. Then, like in Case $1$, $\lambda_n\theta_{B, n}$ is a global minimum of $\mathcal{L}(\theta, \mathcal{D}_n)$, so $\mathop{\min}\limits_{\theta \in \mathcal{C}}\mathcal{L}(\theta, \mathcal{D}_n) - \mathop{\min}\limits_{\theta \in \mathbb{B}_2(||\theta^*||_2)}\mathcal{L}(\theta, \mathcal{D}_n) = 0$. If $\lambda_n\theta_{B, n}$ is outside $\mathcal{C}$, then $$||\lambda_n\theta_{B, n} - \theta_{B, n}||_2 = ||\theta_{B, n}||_2 - ||\lambda_n\theta_{B, n}||_2 \leq ||\theta^*||_2 - D \lesssim \frac{1}{n^q}.$$
\end{enumerate}
By looking at the two cases above, we see that $||\lambda_n\theta_{B, n} - \theta_{B, n}||_2 \lesssim \frac{1}{n^q}$ or $\mathop{\min}\limits_{\theta \in \mathcal{C}}\mathcal{L}(\theta, \mathcal{D}_n) - \mathop{\min}\limits_{\theta \in \mathbb{B}_2(||\theta^*||_2)}\mathcal{L}(\theta, \mathcal{D}_n) = 0$. But now, note that $\mathcal{L}(\theta, \mathcal{D}_n)$ is $K_{\Phi''}L_x^2$-smooth, and using Cauchy-Schwarz, we obtain
\begin{align*}
\mathcal{L}(\lambda_n\theta_{B, n}, \mathcal{D}_n) - \mathcal{L}(\theta_{B, n}, \mathcal{D}_n) &\leq ||\nabla \mathcal{L}(\theta_{B, n}, \mathcal{D}_n)||_2||\lambda_n\theta_{B, n} - \theta_{B, n}||_2\\
& \quad + \frac{K_{\Phi''}L_x^2}{2}||\lambda_n\theta_{B, n} - \theta_{B, n}||_2^2.
\end{align*}
Therefore, since $||\lambda_n\theta_{B, n} - \theta_{B, n}||_2 \lesssim \frac{1}{n^q}$ or $\mathop{\min}\limits_{\theta \in \mathcal{C}}\mathcal{L}(\theta, \mathcal{D}_n) - \mathop{\min}\limits_{\theta \in \mathbb{B}_2(||\theta^*||_2)}\mathcal{L}(\theta, \mathcal{D}_n) = 0$, and referring back to inequality~\eqref{EqnMinLoss}, we have in all cases on $\Omega$ that
\begin{align}
\label{EqnLq}
\mathcal{L}(\theta_T, \mathcal{D}_n) - \mathop{\min}\limits_{\theta \in \mathbb{B}_2(||\theta^*||_2)}\mathcal{L}(\theta, \mathcal{D}_n) &\lesssim \frac{\log(n/\delta)\sqrt{\log(n)\log(n/\zeta)}}{n^{1 - q/2}\epsilon}\notag\\ 
& \quad + ||\nabla \mathcal{L}(\theta_{B, n}, \mathcal{D}_n)||_2\frac{1}{n^q} + \frac{1}{n^{2q}}.
\end{align}
We need to control $||\nabla \mathcal{L}(\theta_{B, n}, \mathcal{D}_n)||_2$. For all $i \in [n]$, recall that $\nabla \mathcal{L}(\theta^*, z_i) = (\Phi'(x_i^T\theta^*) - y_i)x_i$. For $h \in \mathbb{R}^p$, we have $|(\Phi'(x_i^T\theta^*) - y_i)x_i^Th| \leq (K_{\Phi'} + K_y)L_x||h||_2$, so
%so $(\Phi'(x_i^T\theta^*) - y_i)x_i^Th - \mathbb{E}[(\Phi'(x_i^T\theta^*) - y_i)x_i^Th] \in \mathcal{G}\left(((K_{\Phi'} + K_y)L_x||h||_2)^2\right)$ by Hoeffding. Thus,
%\begin{align*}
%\mathbb{E}\left[e^{(\Phi'(x_i^T\theta^*) - y_i)x_i^Th - \mathbb{E}[(\Phi'(x_i^T\theta^*) - y_i)x_i^Th]}\right] \leq e^{\frac{(K_{\Phi'} + K_y)^2L_x^2||h||_2^2}{2}},
%\end{align*}
%and thus,
\begin{align*}
& (\Phi'(x_i^T\theta^*) - y_i)x_i - \mathbb{E}[(\Phi'(x_i^T\theta^*) - y_i)x_i] \in \mathcal{G}\left((K_{\Phi'} + K_y)^2L_x^2\right), \\
& \frac{1}{n}\sum_{i = 1}^n\nabla \mathcal{L}(\theta^*, z_i) - \mathbb{E}[\nabla \mathcal{L}(\theta^*, z_1)] = \nabla \mathcal{L}(\theta^*, \mathcal{D}_n) \in \mathcal{G}\left(\frac{(K_{\Phi'} + K_y)^2L_x^2}{n}\right),
\end{align*}
and by Lemma \ref{lemma:CIGV}, we have $\mathbb{P}(\Omega_3) \geq 1 - \zeta$, where
\begin{align*}
\Omega_3 = \left\{\left\|\nabla \mathcal{L}(\theta^*, \mathcal{D}_n)\right\|_2  \leq \sqrt{\frac{8(K_{\Phi'} + K_y)^2L_x^2\log(4^p/\zeta)}{n}}\right\}.
\end{align*} 
Let $\Omega' = \Omega \cap \Omega_3$ and $\mathbb{P}(\Omega') \geq 1 - 2\zeta$. Now, using the $K_{\Phi''}L_x^2$-smoothness of $\mathcal{L}(\theta, \mathcal{D}_n)$ over $\mathbb{R}^p$, we have on $\Omega'$ that
\begin{align*}
||\nabla \mathcal{L}(\theta_{B, n}, \mathcal{D}_n)||_2 &\leq ||\nabla \mathcal{L}(\theta_{B, n}, \mathcal{D}_n) - \nabla \mathcal{L}(\theta^*, \mathcal{D}_n)||_2 + ||\nabla \mathcal{L}(\theta^*, \mathcal{D}_n)||_2\\
&\lesssim ||\theta_{B, n} - \theta^*||_2 + ||\nabla \mathcal{L}(\theta^*, \mathcal{D}_n)||_2 \lesssim ||\theta_{B, n} - \theta^*||_2 + \sqrt{\frac{\log(4/\zeta)}{n}}.
\end{align*}
Hence, we need to control $||\theta_{B, n} - \theta^*||_2$. To do that, we want  to use Lemma \ref{lemma:M_Estim_Conv}, with the metric space given by $(\mathbb{B}_2(||\theta^*||_2), ||\cdot||_2)$,  and we will check the conditions of that result. That is, we consider $\mathbb{B}_2(||\theta^*||_2)$ with the induced $\ell_2$-norm metric from $\mathbb{R}^p$. We have $\theta^* = \mathop{\arg\min}\limits_{\theta \in \mathbb{B}_2(||\theta^*||_2)}\mathcal{R}(\theta)$ and $\theta_{B, n} \in \mathop{\arg\min}\limits_{\theta \in \mathbb{B}_2(||\theta^*||_2)}\mathcal{L}(\theta, \mathcal{D}_n)$. Also, because of the strong convexity of $\mathcal{R}$ over a ball centered at $0$, as seen in Lemma \ref{lemma:GLM_prelim_lem}, and because $\theta^*$ is the minimizer of $\mathcal{R}$ over $\mathbb{R}^p$, as seen in Section \ref{sec:Generalized Linear Models (GLM)}, we have $\mathcal{R}(\theta) - \mathcal{R}(\theta^*) \gtrsim ||\theta - \theta^*||_2^2$, for all $\theta$ in a small enough neighborhood of $\theta^*$ in the metric space $(\mathbb{B}_2(||\theta^*||_2), ||\cdot||_2)$. Now, observe that $\theta_{B, n}$ is a maximum likelihood estimator (MLE) of $\mathcal{L}(\theta, \mathcal{D}_n)$ over $\mathbb{B}_2(||\theta^*||_2)$, since $\mathcal{L}$ is the negative log-likelihood loss. Note that we satisfy the conditions of Lemma \ref{lemma:Consis_MLE}, hence $\theta_{B, n}$ converges in probability to $\theta^*$. Let $\mathcal{K} = \mathbb{B}_2(||\theta^*||_2 + 1)$. As in the proof of Theorem \ref{theorem:ERM_GLM_UP_General}, using a covering argument and inequality~\eqref{eq:Conc_Grad_GLM_Eq}, we have for $Z_\theta = \left\|\frac{1}{n}\sum_{i = 1}^n\nabla \mathcal{L}(\theta, z_i) - \mathbb{E}[\nabla \mathcal{L}(\theta, z_1)]\right\|_2 = \left\|\nabla \mathcal{L}(\theta, \mathcal{D}_n) - \nabla \mathcal{R}(\theta)\right\|_2$ that
\begin{align*}
\mathbb{P}\left(\mathop{\sup}\limits_{\theta \in \mathcal{K}}Z_\theta \geq t\right) \leq \left(\frac{64K_{\Phi''}L_x^2(||\theta^*||_2  + 1)}{t}\right)^pe^{-\frac{t^2}{32s_n^2}}, \quad \forall \ t \leq 8K_{\Phi''}L_x^2(||\theta^*||_2  + 1).
\end{align*}
Since $||\theta^*||_2 + 1 \asymp 1$, and by rescaling $t$, there are absolute constants $C_4, C_5 > 0$ such that
\begin{align*}
\mathbb{P}\left(\mathop{\sup}\limits_{\theta \in \mathcal{K}}Z_\theta \geq t\right) \leq \frac{1}{t^p}e^{-\frac{nt^2}{C_4}}, \quad \forall \ t \leq C_5.
\end{align*}
We want $t \leq C_5$ and $\frac{1}{t^p}e^{-\frac{nt^2}{C_4}} \leq \frac{1}{n}$, or equivalently, $t^2 + \frac{pC_4}{n}\log(t) \geq \frac{C_4}{n}\log(n)$. Take $t = \sqrt{\frac{C_4\log(n)}{n}} + \frac{\log(n)}{\sqrt{n}}$. Hence, we have
\begin{align*}
t^2 + \frac{pC_4}{n}\log(t) &\geq \frac{\log^2(n)}{n} + \frac{2\log(n)\sqrt{C_4\log(n)}}{n} + \frac{C_4\log(n)}{n} + \frac{pC_4}{2n}\log\left(\frac{\log(n)}{n}\right)\\
&\geq \frac{C_4\log(n)}{n},
\end{align*}
for $n$ large enough. This is because, for $n$ large enough, we have
\begin{align*}
\frac{\log^2(n)}{n} \geq \frac{C_4}{2n}\log\left(\frac{n}{\log(n)}\right).
\end{align*}
Note also that, for $n$ large enough, we have $t \leq C_5$. Hence, there is an absolute constant $C_6 > 0$ such that for any $n \geq C_6$, we have $\mathbb{P}(\Omega_4) \geq 1 - \frac{1}{n}$, with
\begin{align*}
\Omega_4 = \left\{\left\|\nabla\mathcal{L}(\theta, \mathcal{D}_n) - \nabla\mathcal{R}(\theta)\right\|_2 \leq \sqrt{\frac{C_4\log(n)}{n}} + \frac{\log(n)}{\sqrt{n}}, \quad \forall \theta \in \mathcal{K}\right\}.
\end{align*}
Now take $u \leq 1$ and let $\mathbb{U}_n(\theta) = \mathcal{L}(\theta, \mathcal{D}_n) - \mathcal{R}(\theta)$. We have by the Mean Value Theorem that 
\begin{align*}
\mathop{\sup}_{\substack{||\theta - \theta^*||_2 \leq u\\ \theta \in \mathbb{B}_2(||\theta^*||_2)}}\left|\mathbb{U}_n(\theta) - \mathbb{U}_n(\theta^*)\right| \leq \mathop{\sup}\limits_{\theta \in \mathcal{K}}\left\|\nabla\mathcal{L}(\theta, \mathcal{D}_n) - \nabla\mathcal{R}(\theta)\right\|_2u,
\end{align*}
since the supremum only increases if we take it over $\mathcal{K} = \mathbb{B}_2(||\theta^*||_2 + 1)$. Therefore, we have
\begin{align*}
\mathbb{E}\left[\mathop{\sup}_{\substack{||\theta - \theta^*||_2 \leq u\\\theta \in \mathbb{B}_2(||\theta^*||_2)}}\left|\mathbb{U}_n(\theta) - \mathbb{U}_n(\theta^*)\right|\right] &\leq \mathbb{E}\left[\mathop{\sup}\limits_{\theta \in \mathcal{K}}\left\|\nabla\mathcal{L}(\theta, \mathcal{D}_n) - \nabla\mathcal{R}(\theta)\right\|_2u\right]\\
&= \mathbb{E}\left[\mathop{\sup}\limits_{\theta \in \mathcal{K}}\left\|\nabla\mathcal{L}(\theta, \mathcal{D}_n) - \nabla\mathcal{R}(\theta)\right\|_2u\mathbbm{1}_{\Omega_4}\right]\\
& \quad + \mathbb{E}\left[\mathop{\sup}\limits_{\theta \in \mathcal{K}}\left\|\nabla\mathcal{L}(\theta, \mathcal{D}_n) - \nabla\mathcal{R}(\theta)\right\|_2u\mathbbm{1}_{\Omega_4^c}\right],
\end{align*}
implying that
\begin{align*}
\mathbb{E}\left[\mathop{\sup}_{\substack{||\theta - \theta^*||_2 \leq u\\ \theta \in \mathbb{B}_2(||\theta^*||_2)}}\left|\mathbb{U}_n(\theta) - \mathbb{U}_n(\theta^*)\right|\right] \lesssim \frac{\log(n)u}{\sqrt{n}}\mathbb{P}(\Omega_4) + u\mathbb{P}(\Omega_4^c) \leq \frac{\log(n)u}{\sqrt{n}} + \frac{u}{n} \lesssim \frac{\log(n)u}{\sqrt{n}},
\end{align*}
for all $0 < u \leq 1$ and $n \geq C_6$, since $\mathcal{L}$ and $\mathcal{R}$ are $(K_{\Phi'} + K_y)L_x$-Lipschitz over $\mathbb{R}^p$ and $(K_{\Phi'} + K_y)L_x \asymp 1$, as seen in Theorem \ref{theorem:ERM_GLM_UP_General}. Take $\phi_n(u) = \log(n)u$ and $r_n = \frac{\sqrt{n}}{\log(n)}$. Note that $u \mapsto \frac{\phi_n(u)}{u} = \log(n)$ is non-increasing and $r_n^2\phi_n\left(\frac{1}{r_n}\right) = r_n\log(n) = \sqrt{n}$. Hence, all the conditions of Lemma \ref{lemma:M_Estim_Conv} are satisfied with $\alpha = 1 < 2$, so for $\zeta \in (0, 1/3)$, there are $T_\zeta, N_\zeta > 0$, such that $\mathbb{P}(\Omega_5) \geq 1 - \zeta$, for all $n \geq \max\left\{C_6, N_\zeta\right\}$, where 
\begin{align*}
\Omega_5 = \left\{||\theta_{B, n} - \theta^*||_2 \leq \frac{T_\zeta\log(n)}{\sqrt{n}}\right\}.
\end{align*}
Now we absorb $C'_1$ into $C_6$, i.e., relabel $\max\{C'_1, C_6\}$ by $C'_1$. Working on $\Omega'' = \Omega' \cap \Omega_5$, with $\mathbb{P}(\Omega'') \geq 1 - 3\zeta$, we have for $n > \max\left\{C_2\log^5(2/\zeta), N_\zeta, C'_1\right\}$ that 
\begin{align*}
\mathcal{L}(\theta_T, \mathcal{D}_n) - \mathop{\min}\limits_{\theta \in \mathbb{B}_2(||\theta^*||_2)}\mathcal{L}(\theta, \mathcal{D}_n) &\lesssim \frac{\log(n/\delta)\sqrt{\log(n)\log(n/\zeta)}}{n^{1 - q/2}\epsilon} + \frac{T_\zeta\log(n)}{n^{q + \frac{1}{2}}}\\
& \quad + \frac{\sqrt{\log(4/\zeta)}}{n^{q + \frac{1}{2}}} + \frac{1}{n^{2q}}\\
&\lesssim \log(n/\delta)T_\zeta\sqrt{\log(n)\log(n/\zeta)}\left(\frac{1}{{n^{1 - q/2}\epsilon}} + \frac{1}{n^{q + \frac{1}{2}}}\right)\\
& \quad + \frac{\log(n/\delta)T_\zeta\sqrt{\log(n)\log(n/\zeta)}}{n^{2q}},
\end{align*}
by plugging back into inequality~\eqref{EqnLq}. Now, for $q = \frac{2}{5}$, since $0 < \epsilon \lesssim 1$, we obtain the desired result.

Finally, using the assumption that $\epsilon \leq 0.9$, we have $\epsilon < 2\sqrt{2T\log(2/\delta)}$ and $\delta < 2T$, where $T \asymp n^q\log(n)$, which are needed in Lemma \ref{lemma:ACC_PRIV_FW_STEP} to ensure that the output of Algorithm \ref{alg:PrivFWERM} is $(\epsilon, \delta)$-DP.
%\end{proof}

\begin{remark}
We proved Theorem~\ref{theorem:ERM_UP_C_Inc} by deriving a more general statement with $q < \frac{1}{2}$: based on this approach, the best choice is $q = \frac{2}{5}$. Indeed, examining the RHS of inequality~\eqref{eq:General_q_Inc_C}, we can consider the lines $1 - \frac{q}{2}, q + \frac{1}{2}$ and $2q$. In order to obtain a rate better than $\frac{1}{n^{2/3}}$ up to logarithmic factors, we need $q > \frac{1}{3}$. Hence, to optimize the RHS of inequality~\eqref{eq:General_q_Inc_C} over $\frac{1}{3} < q < \frac{1}{2}$, we see that the best $q$ is at the intersection of $1 - \frac{q}{2}$ and $2q$, namely $q = \frac{2}{5}$.
\end{remark}

\subsubsection{Proof of Theorem \ref{theorem:Iter_Rate_Inc_C}}
\label{AppThmIterRate}

%\begin{proof}
The conditions in the theorem hypothesis are part of the ones in Theorem~\ref{theorem:ERM_UP_C_Inc}. Hence, by  Theorem~\ref{theorem:ERM_UP_C_Inc}, we have with probability at least $1 - 3\zeta$, for $n > \max\left\{C_2\log^5(2/\zeta), N_\zeta, C'_1\right\}$, that
\begin{align*}
\mathcal{L}(\theta_T, \mathcal{D}_n) - \mathop{\min}\limits_{\theta \in \mathbb{B}_2(||\theta^*||_2)}\mathcal{L}(\theta, \mathcal{D}_n) \lesssim \frac{T_\zeta\log(n/\delta)\sqrt{\log(n)\log(n/\zeta)}}{n^{4/5}\epsilon}.
\end{align*}
Let $\Omega''$ be the event with probability at least $1 - 3\zeta$ and $\mathbb{X} \in \mathbb{R}^{p \times n}$ be the matrix with $x_i$ as the $i^{th}$ row, for $i \in [n]$. Let $v \in \mathbb{R}^p$ be such that $||v||_2 = 1$. Then
\begin{align*}
v^T\frac{\mathbb{X}^T\mathbb{X}}{n}v &= v^T\Sigma v - v^T\left(\Sigma - \frac{\mathbb{X}^T\mathbb{X}}{n}\right)v \geq \lambda_{\min}\left(\Sigma\right) - ||v||_2^2\left\|\frac{\mathbb{X}^T\mathbb{X}}{n} - \ \Sigma\right\|_2\\
&\geq \lambda_{\min}\left(\Sigma\right) - \left\|\frac{\mathbb{X}^T\mathbb{X}}{n} - \ \Sigma\right\|_2.
\end{align*}
Recall also that, in the context of the GLM defined in Section \ref{sec:Generalized Linear Models (GLM)}, $\lambda_{\min}(\Sigma)$ and $\lambda_{\max}(\Sigma)$ are positive absolute constants. Let $C_4 = \frac{8L_x^2(\lambda_{\max}(\Sigma) + \lambda_{\min}(\Sigma)/3)}{\lambda_{\min}(\Sigma)^2}$. Since $\{x_i\}_{i = 1}^n$ are i.i.d., $\mathbb{E}[x_1] = 0$, $||\Sigma||_2 = \lambda_{\max}\left(\Sigma\right)$, and $||x_1||_2 \leq \sqrt{L_x^2}$, by Lemma \ref{lemma:Concxx^T}, we have
\begin{align*}
\mathbb{P}\left(\left\|\frac{\mathbb{X}^T\mathbb{X}}{n} - \ \Sigma\right\|_2 > \frac{\lambda_{\min}\left(\Sigma\right)}{2}\right) &= \mathbb{P}\left(\left\|\frac{1}{n}\sum_{i = 1}^nx_ix_i^T - \ \Sigma\right\|_2 > \frac{\lambda_{\min}\left(\Sigma\right)}{2}\right)\\
&\leq 2pe^{\frac{-n\lambda_{\min}\left(\Sigma\right)^2}{8L_x^2\left(\lambda_{\max}\left(\Sigma\right)+ \lambda_{\min}\left(\Sigma\right)/3\right)}} \leq 2pe^{-\frac{n}{C_4}} \leq \zeta,
\end{align*}
since $n > C_4\log(2p/\zeta)$. Therefore, on $\Omega_6 = \left\{\left\|\frac{\mathbb{X}^T\mathbb{X}}{n} - \ \Sigma\right\|_2 \leq \frac{\lambda_{\min}\left(\Sigma\right)}{2}\right\}$, we have for $n > C_4\log(2p/\zeta)$ that $\lambda_{\min}\left(\frac{\mathbb{X}^T\mathbb{X}}{n}\right) \geq \frac{\lambda_{\min}(\Sigma)}{2}$. Recall now from Theorem \ref{theorem:ERM_GLM_UP_General} that $\nabla^2\mathcal{L}(\theta, \mathcal{D}_n) = \frac{1}{n}\sum_{i = 1}^n\Phi''(x_i^T\theta)x_ix_i^T$. Using the properties of $\Phi''$ outlined in Section \ref{sec:Generalized Linear Models (GLM)}, we have $\Phi''(x^T\theta) \geq \Phi''(L_x||\theta^*||_2)$, for all $\theta \in \mathbb{B}_2(||\theta^*||_2)$. Hence, on $\Omega_6$, we see that for all $\theta \in \mathbb{B}_2(||\theta^*||_2)$ and $n > C_4\log(2p/\zeta)$, we have
\begin{align*}
\nabla^2\mathcal{L}(\theta, \mathcal{D}_n) = \frac{1}{n}\sum_{i = 1}^n\Phi''(x_i^T\theta)x_ix_i^T \succeq \frac{\Phi''(L_x||\theta^*||_2)\mathbb{X}^T\mathbb{X}}{n} \succeq \frac{\Phi''(L_x||\theta^*||_2)\lambda_{\min}(\Sigma)}{2}I_p.
\end{align*}
Thus, on $\Omega_6$, the function $\mathcal{L}(\theta, \mathcal{D}_n)$ is $\frac{\Phi''(L_x||\theta^*||_2)\lambda_{\min}(\Sigma)}{2}$-strongly convex over $\mathbb{B}_2(||\theta^*||_2)$, for $n > C_4\log(2p/\zeta)$. Note that $\frac{\Phi''(L_x||\theta^*||_2)\lambda_{\min}(\Sigma)}{2} \asymp 1$. Let us now work on $\Omega''' = \Omega'' \cap \Omega_6$, so that $\mathbb{P}(\Omega''') \geq 1 - 4\zeta$. Take $n > \max\left\{C_2\log^5(2/\zeta), C_4\log(2p/\zeta), N_\zeta, C'_1\right\}$.  We had, using the notation from Theorem \ref{theorem:ERM_UP_C_Inc}, i.e., $\theta_{B, n} \in \mathop{\arg\min}\limits_{\theta \in \mathbb{B}_2(||\theta^*||_2)}\mathcal{L}(\theta, \mathcal{D}_n)$, that
\begin{align*}
\mathcal{L}(\theta_T, \mathcal{D}_n) - \mathcal{L}(\theta_{B, n}, \mathcal{D}_n) \lesssim \frac{T_\zeta\log(n/\delta)\sqrt{\log(n)\log(n/\zeta)}}{n^{4/5}\epsilon}.
\end{align*}
Because of the strong convexity of $\mathcal{L}(\theta, \mathcal{D}_n)$ over $\mathbb{B}_2(||\theta^*||_2)$, and because $\theta_{B, n}$ is a minimizer, we obtain
$$||\theta_T - \theta_{B, n}||_2^2 \lesssim \mathcal{L}(\theta_T, \mathcal{D}_n) - \mathcal{L}(\theta_{B, n}, \mathcal{D}_n).$$
Recall now from the proof of Theorem \ref{theorem:ERM_UP_C_Inc} that $\Omega''$ is an intersection of three events, each with probability at least $1 - \zeta$, and on one of those we had $||\theta_{B, n} - \theta^*||_2 \leq \frac{T_\zeta\log(n)}{\sqrt{n}}$. So, putting all this together, we obtain 
\begin{align*}
||\theta_T - \theta^*||_2 &\leq ||\theta_T - \theta_{B, n}||_2 + ||\theta_{B, n} - \theta^*||_2\\
&\lesssim \frac{T_\zeta\log(n)}{\sqrt{n}} + \frac{T_\zeta^{1/2}\log^{1/2}(n/\delta)\log^{1/4}(n)\log^{1/4}(n/\zeta)}{n^{2/5}\epsilon^{1/2}},
\end{align*}
as required.
%\end{proof}

\subsubsection{Proof of Theorem \ref{theorem:ERM_GLM_UP}}
\label{ThmERMGLM}

%\begin{proof}
Let $\zeta \in (0, 1)$ be arbitrary. To start off, by Theorem \ref{theorem:ERM_GLM_UP_General} with $q = \frac{1}{4}$, there exist positive absolute constants $C'_1$ and $C_1$ such that for $C_2 = \frac{\Phi''(L_x||\theta^*||_2)\lambda_{\min}(\Sigma)(||\theta^*||_2 - D)}{4} > 0$, $n > \max\left\{\left(\frac{\sqrt{C_1\log(2/\zeta)} + 1}{C_2}\right)^{4}, C'_1\right\}$, $r \in\left(\frac{C_2}{2}, C_2\right]$, and $T = \log_{1/c}(n)$, Algorithm \ref{alg:PrivFWERM} returns $\theta_T$ such that with probability at least $1 - \zeta$, we have
\begin{align}
\label{eq:Fix_C_help_Main_Thm}
\mathcal{L}(\theta_T, \mathcal{D}_n) - \mathop{\min}\limits_{\theta \in \mathcal{C}}\mathcal{L}(\theta, \mathcal{D}_n) \lesssim \frac{1}{n} + \frac{\eta\log\left(\log_{1/c}(n)/\delta\right)\sqrt{\log_{1/c}(n)\log\left(\log_{1/c}(n)/\zeta\right)}}{(1 - c)n\epsilon}.
\end{align}
This is because if $n > \left(\frac{\sqrt{C_1\log(2/\zeta)} + 1}{C_2}\right)^{4}$, we have $$\frac{\Phi''(L_x||\theta^*||_2)\lambda_{\min}(\Sigma)}{4}(||\theta^*||_2 - D)n^{1/4} > \sqrt{C_1\log(2/\zeta)} + 1 \geq \frac{\sqrt{C_1\log(2/\zeta)}}{n^{1/4}} + 1.$$ Hence, we have
\begin{align*}
&\frac{\Phi''(L_x||\theta^*||_2)\lambda_{\min}(\Sigma)}{2}(||\theta^*||_2 - D) - \sqrt{\frac{C_1\log(2/\zeta)}{n}} - \frac{1}{n^{1/4}}\\
& \quad > \frac{\Phi''(L_x||\theta^*||_2)\lambda_{\min}(\Sigma)}{4}(||\theta^*||_2 - D) = C_2,
\end{align*}
and since $r \in\left(\frac{C_2}{2}, C_2\right]$, we have 
\begin{align*}
0 < r \leq \frac{\Phi''(L_x||\theta^*||_2)\lambda_{\min}(\Sigma)}{2}(||\theta^*||_2 - D) - \sqrt{\frac{C_1\log(2/\zeta)}{n}} - \frac{1}{n^{1/4}}.
\end{align*}
Moreover, $r > 0$, since $\theta^* \in \mathbb{R}^p \setminus \mathcal{C}$, so $||\theta^*||_2 - D > 0$. Thus, we can use Theorem \ref{theorem:ERM_GLM_UP_General} to conclude that inequality~\eqref{eq:Fix_C_help_Main_Thm} holds with probability at least $1 - \zeta$. Also, $\eta = \min\left\{1, \frac{\alpha_{\mathcal{C}}r}{4K_{\Phi''}L_x^2}\right\}$ and $c = \max\left\{\frac{1}{2}, 1 - \frac{\alpha_{\mathcal{C}}r}{8K_{\Phi''}L_x^2}\right\}$. Now, notice that
$$1 \asymp \frac{\Phi''(L_x||\theta^*||_2)\lambda_{\min}(\Sigma)(||\theta^*||_2 - D)}{8} < r \leq \frac{\Phi''(L_x||\theta^*||_2)\lambda_{\min}(\Sigma)(||\theta^*||_2 - D)}{4} \asymp 1.$$
Thus, $r = \Theta(1)$. Since $\alpha_{\mathcal{C}} \asymp 1$ as well, we have $\eta, c \asymp 1$. Hence, since $0 < \epsilon \lesssim 1$, with probability at least $1 - \zeta$, we have
\begin{align*}
\mathcal{L}(\theta_T, \mathcal{D}_n) - \mathop{\min}\limits_{\theta \in \mathcal{C}}\mathcal{L}(\theta, \mathcal{D}_n) &\lesssim \frac{1}{n} + \frac{\log\left(\log(n)/\delta\right)\sqrt{\log(n)\log\left(\log(n)/\zeta\right)}}{n\epsilon}\\
&\lesssim \frac{\log\left(\log(n)/\delta\right)\sqrt{\log(n)\log\left(\log(n)/\zeta\right)}}{n\epsilon}.
\end{align*}
Let $\Omega_\zeta$ denote the event where the preceding bound holds. Taking $\zeta = \frac{1}{n}$, we see that for $n > \max\left\{\left(\frac{\sqrt{C_1\log(2n)} + 1}{C_2}\right)^{4}, C'_1\right\}$, we have
\begin{align*}
\mathbb{E}\left[\mathcal{L}(\theta_T, \mathcal{D}_n) - \mathop{\min}\limits_{\theta \in \mathcal{C}}\mathcal{L}(\theta, \mathcal{D}_n)\right] &= \mathbb{E}\left[\left(\mathcal{L}(\theta_T, \mathcal{D}_n) - \mathop{\min}\limits_{\theta \in \mathcal{C}}\mathcal{L}(\theta, \mathcal{D}_n)\right)\mathbbm{1}_{\Omega_\zeta}\right]\\
& \quad + \mathbb{E}\left[\left(\mathcal{L}(\theta_T, \mathcal{D}_n) - \mathop{\min}\limits_{\theta \in \mathcal{C}}\mathcal{L}(\theta, \mathcal{D}_n)\right)\mathbbm{1}_{\Omega^c_\zeta}\right]\\
&\lesssim \frac{\log\left(\log(n)/\delta\right)\sqrt{\log(n)\log\left(n\log(n)\right)}}{n\epsilon}\\
& \quad + \frac{\mathbb{E}\left[\mathcal{L}(\theta_T, \mathcal{D}_n) - \mathop{\min}\limits_{\theta \in \mathcal{C}}\mathcal{L}(\theta, \mathcal{D}_n)\right]}{n}\\
&\lesssim \frac{\log\left(\log(n)/\delta\right)\sqrt{\log(n)\log\left(n\log(n)\right)}}{n\epsilon} + \frac{L_2||\mathcal{C}||_2}{n}\\
&\lesssim \frac{\log\left(\log(n)/\delta\right)\sqrt{\log(n)\log\left(n\log(n)\right)}}{n\epsilon},
\end{align*}
as required, where we used the $L_2$-Lipschitz property of the loss, together with the fact that $||\mathcal{C}||_2 \lesssim ||\theta^*||_2 \asymp 1$.

Note that $\epsilon < 2\sqrt{2T\log(2/\delta)}$ and $\delta < 2T$, since $T \asymp \log(n)$. Hence, by Lemma \ref{lemma:ACC_PRIV_FW_STEP}, $\theta_T$ is $(\epsilon, \delta)$-DP. 
%\end{proof}

%%%%%

\subsubsection{Proof of Theorem \ref{theorem:HT_NonAccFW}}\label{sec:ThmNonACCFWl>0}

%\begin{proof}
In this context, we are working with \iid samples $\mathcal{D}_n = \{z_i\}_{i = 1}^n$ and the squared error risk $\mathcal{R}(\theta) = \frac{1}{2}(\theta - \theta^*)^T\Sigma(\theta - \theta^*) + \frac{\sigma_2^2}{2}$. Fix $\theta \in \mathcal{C}$. Since we are using Algorithm \ref{alg:HTGE} as gradient estimator, we have by Lemma \ref{lemma:geglmht} a $g$ such that
\begin{align*}
&\alpha(\widetilde{n}, \widetilde{\zeta}) \asymp \sqrt{\frac{\log(1/\widetilde{\zeta})}{\widetilde{n}}},
&\beta(\widetilde{n}, \widetilde{\zeta}) \asymp \sqrt{\frac{\sigma_2^2\log(1/\widetilde{\zeta})}{\widetilde{n}}}.
\end{align*}
Note that since $\theta^* \in \mathcal{C}$, we have  $\theta_* = \theta^*$. At any  $t \in \{1, \dots T\}$, with probability at least $1 - \widetilde{\zeta}$, we have
\begin{align*}
||g(\theta_t, \mathcal{D}_n, \widetilde{\zeta}) - \nabla\mathcal{R}(\theta_t)||_2 \leq \alpha(\widetilde{n}, \widetilde{\zeta})||\theta_t - \theta^*||_2 + \beta(\widetilde{n}, \widetilde{\zeta}).
\end{align*}
Hence, by a union bound, we have
\begin{align*}
 &\mathbb{P}(\exists t \ \mbox{s.t.} \ ||g(\theta_t, \mathcal{D}_n, \widetilde{\zeta}) - \nabla\mathcal{R}(\theta_t)||_2 > \alpha(\widetilde{n}, \widetilde{\zeta})||\theta_t - \theta^*||_2 + \beta(\widetilde{n}, \widetilde{\zeta})) \leq \sum_{t = 1}^T \widetilde{\zeta} \leq \zeta,
\end{align*}
implying that
\begin{align*}
 \mathbb{P}(\forall t, \ ||g(\theta_t, \mathcal{D}_n, \widetilde{\zeta}) - \nabla\mathcal{R}(\theta_t)||_2 \leq \alpha(\widetilde{n}, \widetilde{\zeta})||\theta_t - \theta^*||_2 + \beta(\widetilde{n}, \widetilde{\zeta})) \geq 1 - \zeta.
\end{align*}
On the latter event, using the notation $\alpha = \alpha(\widetilde{n}, \widetilde{\zeta}), \beta = \beta(\widetilde{n}, \widetilde{\zeta})$ and ignoring the dependency in $g$ on the samples and $\widetilde{\zeta}$, the gradient error $e_t := g(\theta_t) -\nabla\mathcal{R}(\theta_t)$ satisfies
\begin{align*}
||e_t||_2 \leq \alpha||\theta_t - \theta^*||_2 + \beta,
\end{align*}
implying that
\begin{align*}
v_t^T\nabla\mathcal{R}(\theta_t) &\leq v^T\nabla\mathcal{R}(\theta_t) + (v - v_t)^Te_t \leq v^T\nabla\mathcal{R}(\theta_t) + ||v - v_t||_2||e_t||_2\\
&\leq v^T\nabla\mathcal{R}(\theta_t) + ||\mathcal{C}||_2(\alpha ||\mathcal{C}||_2 + \beta), \quad \forall v \in \mathcal{C}.
\end{align*}
Let $\Gamma_{\mathcal{R}}$ be the curvature constant of $\mathcal{R}$. We then have
\begin{align*}
v_t^T\nabla\mathcal{R}(\theta_t) &\leq \mathop{\min}\limits_{v \in \mathcal{C}}v^T\nabla\mathcal{R}(\theta_t) + \frac{1}{2}\frac{2}{t + 2}\Gamma_{\mathcal{R}}\frac{||\mathcal{C}||_2(\alpha ||\mathcal{C}||_2 + \beta)}{\Gamma_{\mathcal{R}}}(t + 2)\\
&\leq \mathop{\min}\limits_{v \in \mathcal{C}}v^T\nabla\mathcal{R}(\theta_t) + \frac{1}{2}\frac{2}{t + 2}\Gamma_{\mathcal{R}}\frac{||\mathcal{C}||_2(\alpha ||\mathcal{C}||_2 + \beta)}{\Gamma_{\mathcal{R}}}(T + 2).
\end{align*}
Thus, on the event with probability at least $1 - \zeta$, since $\mathcal{C}$ is compact and convex, by Lemma \ref{lemma:NonAccRel_FW}, we obtain
\begin{align*}
\mathcal{R}(\theta_T) - \mathcal{R}(\theta^*) &\leq \frac{2\Gamma_{\mathcal{R}}}{T + 2}\left(1 + \frac{||\mathcal{C}||_2(\alpha ||\mathcal{C}||_2 + \beta)}{\Gamma_{\mathcal{R}}}(T + 2)\right)\\
&= \frac{2\Gamma_{\mathcal{R}}}{T + 2} + 2||\mathcal{C}||_2(\alpha ||\mathcal{C}||_2 + \beta).
\end{align*}
Now note that $\mathcal{R}(\theta)$ is a quadratic in $\theta$ with second-order term $\frac{1}{2}\theta^T\Sigma\theta = \theta^T\Sigma^{1/2}\Sigma^{1/2}\theta$.
%and $\Sigma^{1/2}$ exists because $\Sigma \succ 0$ and we can write its SVD form.
By Remark $2$ in \cite{NOPL}, we have $\Gamma_{\mathcal{R}} \leq 4\mathop{\max}\limits_{\theta \in \mathcal{C}}\left\|\Sigma^{1/2}\theta\right\|_2^2 \lesssim 1$. Thus, since $||\mathcal{C}||_2 \lesssim 1$, we obtain
\begin{align*}
\mathcal{R}(\theta_T) - \mathcal{R}(\theta^*) \lesssim \frac{1}{T} + (1 + \sigma_2)\sqrt{\frac{T\log(T/\zeta)}{n}},
\end{align*}
and since $T = n^{1/3}$, this implies
\begin{align*}
\mathcal{R}(\theta_T) - \mathcal{R}(\theta^*) \lesssim \frac{(1 + \sigma_2)\sqrt{\log(n/\zeta)}}{n^{1/3}}.
\end{align*}
By $\lambda_{\min}(\Sigma)$-strong convexity of $\mathcal{R}$, because $\nabla\mathcal{R}(\theta^*) = 0$, $\lambda_{\min}(\Sigma) \asymp 1$, and $\lambda_{\min}(\Sigma) > 0$ we have
\begin{align*}
||\theta_T - \theta^*||_2 \lesssim \frac{(1 + \sigma_2)^{1/2}\log^{1/4}(n/\zeta)}{n^{1/6}},
\end{align*}
as required.
%\end{proof}

%%%%%

\subsubsection{Proof of Theorem \ref{theorem:ACCFWROB}}\label{sec:ThmACCFWl>0}

%\begin{proof}
Recall the notation $\theta_* = \mathop{\arg\min}\limits_{\theta \in \mathcal{C}}\mathcal{R}(\theta)$. Following the same steps as in the proof of Theorem \ref{theorem:HT_NonAccFW}, we have with probability at least $1-\zeta$ at the $t^{\text{th}}$ step of Algorithm \ref{alg:RobPGDNFW} that
\begin{align*}
v_t^T\nabla\mathcal{R}(\theta_t) &\leq v^T\nabla\mathcal{R}(\theta_t) + (v - v_t)^Te_t \leq v^T\nabla\mathcal{R}(\theta_t) + ||v - v_t||_2||e_t||_2\\
&\leq v^T\nabla\mathcal{R}(\theta_t) + ||\mathcal{C}||_2(\alpha ||\theta_t - \theta_*||_2 + \beta)\\
&\leq v^T\nabla\mathcal{R}(\theta_t) + ||\mathcal{C}||_2(\alpha ||\mathcal{C}||_2 + \beta), \ \quad \forall v \in \mathcal{C}.
\end{align*}
Thus, we have
\begin{align*}
v_t^T\nabla\mathcal{R}(\theta_t) \leq \mathop{\min}\limits_{v \in \mathcal{C}}v^T\nabla\mathcal{R}(\theta_t) + ||\mathcal{C}||_2(\alpha ||\mathcal{C}||_2 + \beta).
\end{align*}
Now note that for $\theta \in \mathcal{C}$, we have
\begin{align*}
||\nabla\mathcal{R}(\theta)||_2 &= ||\Sigma(\theta^* - \theta)||_2 \geq \lambda_{\min}(\Sigma)\left(||\theta^*||_2 - ||\theta||_2\right) \geq \lambda_{\min}(\Sigma)\left(||\theta^*||_2 - D\right)\\ 
&\geq \frac{C_1\lambda_{\min}(\Sigma)}{n^{1/5}} \geq u \gtrsim \frac{1}{n^{1/5}}.
\end{align*}
Thus, with probability at least $1 - \zeta$, since $\mathcal{C}$ is compact and $\frac{1}{D}$-strongly convex by Lemma \ref{lemma:STR_CONV_BALL} and $\mathcal{R}$ is $\lambda_{\max}(\Sigma)$-smooth, we are in the context of Theorem \ref{theorem:ACCFWV3}. For the choice of $\eta$ in the theorem hypothesis, Theorem \ref{theorem:ACCFWV3} then implies 
\begin{align*}
\mathcal{R}(\theta_t) - \mathcal{R}(\theta_*) \leq \left(\mathcal{R}(\theta_0) - \mathcal{R}(\theta_*)\right)c^t + \frac{3\eta ||\mathcal{C}||_2(\alpha ||\mathcal{C}||_2 + \beta)}{2(1 - c)},
\end{align*}
with $c = \max\left\{\frac{1}{2}, 1 - \frac{\alpha_{\mathcal{C}} u}{8\lambda_{\max}(\Sigma)}\right\}$. Note that since $u \asymp \frac{1}{n^{1/5}}$, $\lambda_{\max}(\Sigma) \asymp 1$, and $D \asymp 1$, we have $\eta \asymp \frac{1}{n^{1/5}}$ and $c \asymp 1 - \frac{1}{n^{1/5}}$, so $\frac{1}{1 - c} \asymp n^{1/5}$. Also, $\mathcal{R}(\theta_0) - \mathcal{R}(\theta_*), ||\mathcal{C}||_2 \lesssim 1$. Thus, at iteration $T$, we obtain
\begin{align*}
\mathcal{R}(\theta_T) - \mathcal{R}(\theta_*) \lesssim c^T + (1 + \sigma_2)\sqrt{\frac{\log(1/\widetilde{\zeta})}{\widetilde{n}}}.
\end{align*}
Note that now $\log(1/c) \asymp \frac{1}{n^{1/5}}\log\left(\left(1 - \frac{1}{n^{1/5}}\right)^{-n^{1/5}}\right) \asymp \frac{1}{n^{1/5}}$. Since $T = \log_{1/c}\left(n^{2/5}\right) \asymp n^{1/5}\log(n)$, we have
\begin{align*}
\mathcal{R}(\theta_T) - \mathcal{R}(\theta_*) \lesssim \frac{1}{n^{2/5}} + (1 + \sigma_2)\sqrt{\frac{\log(n)\log\left(n\log(n)/\zeta\right)}{n^{4/5}}}.
\end{align*}
Now define $\mathcal{A} : [0, 1] \rightarrow \mathbb{R}$, as $\mathcal{A}(\lambda) = \mathcal{R}(\lambda\theta^*)$. Note that $\mathcal{A}(0) = \mathcal{R}(0) \geq \mathcal{R}(\theta_*) \geq \mathcal{R}(\theta^*) = \mathcal{A}(1)$. So, by the continuity of $\mathcal{A}$, the Intermediate Value Theorem implies that there exists $\lambda_* \in [0, 1]$ such that $\mathcal{A}(\lambda_*) = \mathcal{R}(\lambda_*\theta^*) = \mathcal{R}(\theta_*) = \mathop{\min}\limits_{\theta \in \mathcal{C}}\mathcal{R}(\theta)$. If $\lambda_*\theta^*$ is in the interior of $\mathcal{C}$, then $\nabla\mathcal{R}(\lambda_*\theta^*) = 0$, so $\lambda_*\theta^*$ is a global minimizer. This is a contradiction, since $\theta^*$ is the unique global minimizer of $\mathcal{R}$ and thus lies strictly outside $\mathcal{C}$. If $\lambda_*\theta^*$ is at the boundary or outside $\mathcal{C}$, then $||\lambda_*\theta^* - \theta^*||_2 \leq ||\theta^*||_2 - D \lesssim \frac{1}{n^{1/5}}$. Hence, by the $\lambda_{\max}(\Sigma)$-smoothness of $\mathcal{R}$, using the fact that $\nabla \mathcal{R}(\theta^*) = 0$ and that $\lambda_{\max}(\Sigma) \asymp 1$, we have
\begin{align*}
\mathcal{R}(\theta_*) - \mathcal{R}(\theta^*) &= \mathcal{R}(\lambda_*\theta^*) - \mathcal{R}(\theta^*) \lesssim ||\lambda_*\theta_{*} - \theta_*||_2^2 \lesssim \frac{1}{n^{2/5}}.
\end{align*}
Hence, we have
\begin{align}
\label{eq:best_gamma_AccFW_HT}
\mathcal{R}(\theta_T) - \mathcal{R}(\theta^*) \lesssim \frac{1}{n^{2/5}} + (1 + \sigma_2)\sqrt{\frac{\log(n)\log\left(n\log(n)/\zeta\right)}{n^{4/5}}} + \frac{1}{n^{2/5}},
\end{align}
and by the $\lambda_{\min}(\Sigma)$-strong convexity of $\mathcal{R}$ over $\mathbb{R}^p$, together with $\nabla\mathcal{R}(\theta^*) = 0$ and $\lambda_{\min}(\Sigma) \asymp 1$, we obtain
\begin{align*}
||\theta_T - \theta^*||_2 \lesssim \frac{(1 + \sigma_2)^{1/2}\log^{1/4}(n)\log^{1/4}\left(n\log(n)/\zeta\right)}{n^{1/5}},
\end{align*}
as required.
%\end{proof}

\begin{remark}
The choice of the exponent $\frac{1}{5}$ in $||\theta^*||_2 - D \lesssim\frac{1}{n^{1/5}}$, $D \leq ||\theta^*||_2 - \frac{C_1}{n^{1/5}}$, $T = \log_{1/c}(n^{2/5}) \asymp n^{1/5}\log(n)$, and $\frac{1}{n^{1/5}} \lesssim u \leq \frac{C_1\lambda_{\min}(\Sigma)}{n^{1/5}}$ is not arbitrary. Assume we started with $||\theta^*||_2 - D \lesssim \frac{1}{n^{q}}$, $D \leq ||\theta^*||_2 - \frac{C_1}{n^{q}}$, $T = \log_{1/c}(n^{2q}) \asymp n^q\log(n)$, and $\frac{1}{n^{q}} \lesssim u \leq \frac{C_1\lambda_{\min}(\Sigma)}{n^{q}}$, for some $q > 0$. Then inequality~\eqref{eq:best_gamma_AccFW_HT} becomes 
\begin{align*}
\mathcal{R}(\theta_T) - \mathcal{R}(\theta^*) \lesssim \frac{1}{n^{2q}} + (1 + \sigma_2)\frac{\sqrt{\log(n)\log\left(n\log(n)/\zeta\right)}}{n^{\frac{1 - q}{2}}} + \frac{1}{n^{2q}}.
\end{align*}
To minimize the RHS over $q > 0$, we need to look at the intersection of the lines $\frac{1 - q}{2}$ and $2q$. This leads to the optimal value $q = \frac{1}{5}$. 
\end{remark}

%%%%%

\subsubsection{Proof of Theorem \ref{theorem:NonACC_lambda=0}}
\label{ThmNonACC0}

%\begin{proof}
Here, $\mathcal{R}_{\gamma_{\mathcal{C}}}(\theta) = \frac{1}{2}(\theta - \theta^*)^T\Sigma(\theta - \theta^*) + \frac{\sigma_2^2}{2} + \frac{\gamma_{\mathcal{C}}||\theta||_2^2}{2}$. Since the global minimum of $\mathcal{R}_{\gamma_{\mathcal{C}}}$ is $\theta_{*} = (\Sigma + \gamma_{\mathcal{C}}I_p)^{-1}\Sigma\theta^*$, minimizing $\mathcal{R}_{\gamma_{\mathcal{C}}}$ over $\mathbb{R}^p$ is equivalent to minimizing over $\mathcal{C} = \mathbb{B}_2\left(D\right)$, with $D \geq ||(\Sigma + \gamma_{\mathcal{C}}I_p)^{-1}\Sigma\theta^*||_2$. From Lemma \ref{lemma:geri}, we have a gradient estimator $g(\theta)$ with
\begin{align*}
&\alpha(\widetilde{n}, \widetilde{\zeta}) \asymp \sqrt{\frac{\log(1/\widetilde{\zeta})}{\widetilde{n}}},
&\beta(\widetilde{n}, \widetilde{\zeta}) \asymp \sqrt{\frac{(1 + \sigma_2^2)\log(1/\widetilde{\zeta})}{\widetilde{n}}},
\end{align*}
since $\lambda_{\min}(\Sigma) = 0$. Thus, by a union bound, we have 
\begin{align*}
 \mathbb{P}(\forall t, \ ||g(\theta_t, \mathcal{D}_n, \widetilde{\zeta}) - \nabla\mathcal{R}_{\gamma_{\mathcal{C}}}(\theta_t)||_2 \leq \alpha(\widetilde{n}, \widetilde{\zeta})||\theta_t - \theta_*||_2 + \beta(\widetilde{n}, \widetilde{\zeta})) \geq 1 - \zeta.
\end{align*}
On the latter event, using the notation $\alpha = \alpha(\widetilde{n}, \widetilde{\zeta})$ and $\beta = \beta(\widetilde{n}, \widetilde{\zeta})$ and ignoring the dependency in $g$ on the samples and $\widetilde{\zeta}$, we can bound the gradient $e_t := g(\theta_t)-\nabla\mathcal{R}_{\gamma_{\mathcal{C}}}(\theta_t)$ as
\begin{align*}
||g(\theta_t) - \nabla\mathcal{R}_{\gamma_{\mathcal{C}}}(\theta_t)||_2 = ||e_t||_2 \leq \alpha||\theta_t - \theta_*||_2 + \beta.
\end{align*}
Thus, we have
\begin{align*}
v_t^T\nabla\mathcal{R}_{\gamma_{\mathcal{C}}}(\theta_t) &\leq v^T\nabla\mathcal{R}_{\gamma_{\mathcal{C}}}(\theta_t) + (v - v_t)^Te_t \leq v^T\nabla\mathcal{R}_{\gamma_{\mathcal{C}}}(\theta_t) + ||v - v_t||_2||e_t||_2\\
&\leq v^T\nabla\mathcal{R}_{\gamma_{\mathcal{C}}}(\theta_t) + ||\mathcal{C}||_2(2\alpha D + \beta), \ \quad \forall v \in \mathcal{C}.
\end{align*}
Let $\Gamma_{\mathcal{R}_{\gamma_{\mathcal{C}}}}$ be the curvature constant of $\mathcal{R}_{\gamma_{\mathcal{C}}}$. We then have
\begin{align*}
v_t^T\nabla\mathcal{R}_{\gamma_{\mathcal{C}}}(\theta_t) &\leq \mathop{\min}\limits_{v \in \mathcal{C}}v^T\nabla\mathcal{R}_{\gamma_{\mathcal{C}}}(\theta_t) + \frac{1}{2}\frac{2}{t + 2}\Gamma_{\mathcal{R}_{\gamma_{\mathcal{C}}}}\frac{||\mathcal{C}||_2(2\alpha D + \beta)}{\Gamma_{\mathcal{R}_{\gamma_{\mathcal{C}}}}}(t + 2)\\
&\leq \mathop{\min}\limits_{v \in \mathcal{C}}v^T\nabla\mathcal{R}_{\gamma_{\mathcal{C}}}(\theta_t) + \frac{1}{2}\frac{2}{t + 2}\Gamma_{\mathcal{R}_{\gamma_{\mathcal{C}}}}\frac{||\mathcal{C}||_2(2\alpha D + \beta)}{\Gamma_{\mathcal{R}_{\gamma_{\mathcal{C}}}}}(T + 2).
\end{align*}
Thus, on the event with probability at least $1 - \zeta$, since $\mathcal{C}$ is compact and convex, by Lemma \ref{lemma:NonAccRel_FW}, we obtain
\begin{align*}
\mathcal{R}_{\gamma_{\mathcal{C}}}(\theta_T) - \mathcal{R}_{\gamma_{\mathcal{C}}}(\theta_{*}) &\leq \frac{2\Gamma_{\mathcal{R}_{\gamma_{\mathcal{C}}}}}{T + 2}\left(1 + \frac{||\mathcal{C}||_2(2\alpha D + \beta)}{\Gamma_{\mathcal{R}_{\gamma_{\mathcal{C}}}}}(T + 2)\right)\\
&= \frac{2\Gamma_{\mathcal{R}_{\gamma_{\mathcal{C}}}}}{T + 2} + 2||\mathcal{C}||_2(2\alpha D + \beta),
\end{align*}
with $\theta_{*} = (\Sigma + \gamma_{\mathcal{C}}I_p)^{-1}\Sigma\theta^*$. Now note that $\mathcal{R}_{\gamma_{\mathcal{C}}}(\theta)$ is a quadratic in $\theta$ with the second-order term given by $\frac{1}{2}\theta^T(\Sigma + \gamma_{\mathcal{C}}I_p)\theta = \frac{1}{2}\theta^T(\Sigma + \gamma_{\mathcal{C}}I_p)^{1/2}(\Sigma + \gamma_{\mathcal{C}}I_p)^{1/2}\theta$. By Remark $2$ in \cite{NOPL}, we have $\Gamma_{\mathcal{R}_{\gamma_{\mathcal{C}}}} \leq 4\mathop{\max}\limits_{\theta \in \mathcal{C}}\left\|(\Sigma + \gamma_{\mathcal{C}}I_p)^{1/2}\theta\right\|_2^2 \lesssim 1$. Thus, since $\gamma_{\mathcal{C}} \rightarrow 0$ and $2D = ||\mathcal{C}||_2, ||\theta^*||_2 \lesssim 1$, we obtain
\begin{align*}
\mathcal{R}_{\gamma_{\mathcal{C}}}(\theta_T) - \mathcal{R}_{\gamma_{\mathcal{C}}}(\theta_{*}) \lesssim \frac{1}{T} + (1 + \sigma_2)\sqrt{\frac{T\log(T/\zeta)}{n}},
\end{align*}
and since $T = n^{1/3}$, we have 
\begin{align*}
\mathcal{R}_{\gamma_{\mathcal{C}}}(\theta_T) - \mathcal{R}_{\gamma_{\mathcal{C}}}(\theta_{*}) \lesssim \frac{(1 + \sigma_2)\sqrt{\log(n/\zeta)}}{n^{1/3}}.
\end{align*}
Using the $\gamma_{\mathcal{C}}$-strong convexity of $\mathcal{R}_{\gamma_{\mathcal{C}}}$ and the fact that $\nabla\mathcal{R}_{\gamma_{\mathcal{C}}}(\theta_{*}) = 0$, we obtain
\begin{align*}
||\theta_T - \theta_{*}||_2 \lesssim \frac{(1 + \sigma_2)^{1/2}\log^{1/4}(n/\zeta)}{\gamma_{\mathcal{C}}^{1/2}n^{1/6}}.
\end{align*}
Now, note that since $\theta_{*} = (\Sigma + \gamma_{\mathcal{C}}I_p)^{-1}\Sigma\theta^*$, we obtain
\begin{align*}
||\theta_{*} - \theta^*||_2^2 = ||((S + \gamma_{\mathcal{C}}I_p)^{-1}S - I_p)P^T\theta^*||_2^2 \lesssim m\gamma_{\mathcal{C}}^2 + \left\|[P^T\theta^*]_{[(m+1):p]}\right\|_2^2,
\end{align*}
implying that
\begin{align}
\label{eq:NonACCFWl=0_bestgamma}
||\theta_T - \theta^*||_2 \lesssim \frac{(1 + \sigma_2)^{1/2}\log^{1/4}(n/\zeta)}{\gamma_{\mathcal{C}}^{1/2}n^{1/6}} + \sqrt{m}\gamma_{\mathcal{C}} + \left\|[P^T\theta^*]_{[(m+1):p]}\right\|_2.
\end{align}
For $\gamma_{\mathcal{C}} \gtrsim \frac{1}{n^{1/9}}$, obtain the desired bound.
%\end{proof}

\begin{remark}
Note that our choice for the value of $\gamma_{\mathcal{C}}$ in the bound on $||\theta_T - \theta^*||_2$ is based on the fact that the RHS quantity in inequality~\eqref{eq:NonACCFWl=0_bestgamma} is a decreasing function of $\gamma_{\mathcal{C}}$, for $\gamma_{\mathcal{C}}$ small enough, i.e., for $n$ large enough.

Additionally, we can comment on the choice of $\mathcal{C}$. We take $\mathcal{C}$ to be an $\ell_2$-ball with radius $D \geq ||(\Sigma + \gamma_{\mathcal{C}}I_p)^{-1}\Sigma\theta^*||_2$. In Theorem \ref{theorem:NonACC_lambda=0}, we take $\gamma_{\mathcal{C}} \gtrsim \frac{1}{n^{1/9}}$. In practice, if we pick $\gamma_{\mathcal{C}} = \frac{1}{n^{1/9}}$ and $D$ large enough, we can carry out the optimization from Theorem \ref{theorem:NonACC_lambda=0}.
\end{remark}

%%%%%

\subsubsection{Proof of Theorem \ref{theorem:ACCFW_lambda=0}}
\label{AppThmACCFW}

%\begin{proof}
Note that since $\gamma_{\mathcal{C}} \leq \frac{c_{\mathcal{K}}}{2} < c_{\mathcal{K}}$, we have  $\mathcal{K} \subseteq \mathbb{B}_2(||\theta_*||_2)$. Recall that $\tau_u = \lambda_{\max}(\Sigma) + \gamma_{\mathcal{C}}$, $\tau_l = \gamma_{\mathcal{C}}$, and $\theta_* = (\Sigma + \gamma_{\mathcal{C}}I_p)^{-1}\Sigma\theta^*$. Following the same steps as in the proof of Theorem \ref{theorem:NonACC_lambda=0}, with probability at least $1 - \zeta$, we have at the $t^{\text{th}}$ step of Algorithm \ref{alg:RobPGDNFW} that
\begin{align*}
v_t^T\nabla\mathcal{R}_{\gamma_{\mathcal{C}}}(\theta_t) &\leq v^T\nabla\mathcal{R}_{\gamma_{\mathcal{C}}}(\theta_t) + (v - v_t)^Te_t \leq v^T\nabla\mathcal{R}_{\gamma_{\mathcal{C}}}(\theta_t) + ||v - v_t||_2||e_t||_2\\
&\leq v^T\nabla\mathcal{R}_{\gamma_{\mathcal{C}}}(\theta_t) + ||\mathcal{K}||_2(2\alpha ||\theta^*||_2 + \beta), \quad \forall v \in \mathcal{K}.
\end{align*}
Thus, we have
\begin{align*}
v_t^T\nabla\mathcal{R}_{\gamma_{\mathcal{C}}}(\theta_t) \leq \mathop{\min}\limits_{v \in \mathcal{K}}v^T\nabla\mathcal{R}_{\gamma_{\mathcal{C}}}(\theta_t) + ||\mathcal{K}||_2(2\alpha ||\theta^*||_2 + \beta).
\end{align*}
Now note that for $\theta \in \mathcal{K}$, and by the $\gamma_{\mathcal{C}}$-strong convexity of $\mathcal{R}_{\gamma_{\mathcal{C}}}$, we obtain
\begin{align}
\label{eq:HT_ACC_l=0LB}
||\nabla\mathcal{R}_{\gamma_{\mathcal{C}}}(\theta)||_2 &= ||\nabla\mathcal{R}_{\gamma_{\mathcal{C}}}(\theta) - \nabla\mathcal{R}_{\gamma_{\mathcal{C}}}(\theta_{*})||_2 \geq \gamma_{\mathcal{C}}||\theta - \theta_{*}||_2 \geq \gamma_{\mathcal{C}}(||\theta_{*}||_2 - ||\theta||_2) \notag\\
&\geq \gamma_{\mathcal{C}}\left(||(\Sigma + \gamma_{\mathcal{C}}I_p)^{-1}\Sigma\theta^*||_2 - ||(\Sigma + c_{\mathcal{K}}I_p)^{-1}\Sigma\theta^*||_2\right),
\end{align}
since $||\theta||_2 \leq ||(\Sigma + c_{\mathcal{K}}I_p)^{-1}\Sigma\theta^*||_2$ for all $\theta \in \mathcal{K}$. Also, the RHS of inequality~\eqref{eq:HT_ACC_l=0LB} is positive, since $\mathcal{K} \subsetneq \mathcal{C}$. Hence, using the decomposition of $\Sigma$, we obtain
\begin{align*}
||\nabla\mathcal{R}_{\gamma_{\mathcal{C}}}(\theta)||_2 \geq \gamma_{\mathcal{C}}\left(||(S + \gamma_{\mathcal{C}}I_p)^{-1}SP^T\theta^*||_2 - ||(S + c_{\mathcal{K}}I_p)^{-1}SP^T\theta^*||_2\right).
\end{align*}
Define $f : (0, c_{\mathcal{K}}] \rightarrow \mathbb{R}$ such that $f(z) = ||(S + zI_p)^{-1}SP^T\theta^*||_2$. We have, for $[P^T\theta^*]_j$ being the $j^{\text{th}}$ entry in $P^T\theta^*$, that
\begin{align}
\label{eq:l=0_f'}
&f(z) = \sqrt{\sum_{j = 1}^m\frac{S_{jj}^2[P^T\theta^*]_j^2}{(S_{jj} + z)^2}}, \notag\\
&|f'(z)| = \frac{\sum_{j = 1}^m\frac{S_{jj}^2[P^T\theta^*]_j^2}{(S_{jj} + z)^3}}{\sqrt{\sum_{j = 1}^m\frac{S_{jj}^2[P^T\theta^*]_j^2}{(S_{jj} + z)^2}}} \geq \frac{\frac{S_{mm}^2}{(S_{mm} + c_{\mathcal{K}})^3}\left\|[P^T\theta^*]_{[1:m]}\right\|_2^2}{\left\|[P^T\theta^*]_{[1:m]}\right\|_2}\notag\\ 
& \qquad \quad = \frac{S_{mm}^2\left\|[P^T\theta^*]_{[1:m]}\right\|_2}{(S_{mm} + c_{\mathcal{K}})^3}, \quad \forall z \in (0, c_{\mathcal{K}}].
\end{align}
Hence, by the Mean Value Theorem, using the lower bound on $|f'|$ and the fact that $\gamma_{\mathcal{C}} \leq \frac{c_{\mathcal{K}}}{2}$, we obtain
\begin{align*}
||\nabla\mathcal{R}_{\gamma_{\mathcal{C}}}(\theta)||_2 &\geq \gamma_{\mathcal{C}}\frac{S_{mm}^2\left\|[P^T\theta^*]_{[1:m]}\right\|_2}{(S_{mm} + c_{\mathcal{K}})^3}(c_{\mathcal{K}} - \gamma_{\mathcal{C}})\\
&\geq \gamma_{\mathcal{C}}\frac{S_{mm}^2\left\|[P^T\theta^*]_{[1:m]}\right\|_2c_{\mathcal{K}}}{2(S_{mm} + c_{\mathcal{K}})^3} \geq u, \quad \forall \theta \in \mathcal{K}.
\end{align*}
Thus, with probability at least $1 - \zeta$, since $\mathcal{K}$ is compact and $\alpha_{\mathcal{K}}$-strongly convex by Lemma \ref{lemma:STR_CONV_BALL}, and $\mathcal{R}_{\gamma_{\mathcal{C}}}$ is $(\lambda_{\max}(\Sigma) + \gamma_{\mathcal{C}})$-smooth, we are in the context of Theorem \ref{theorem:ACCFWV3}. Let $\theta_{*, \mathcal{K}}$ be the minimum of $\mathcal{R}_{\gamma_{\mathcal{C}}}$ in $\mathcal{K}$. Thus, for the choice of $\eta$ in the theorem hypothesis, Theorem \ref{theorem:ACCFWV3} implies that 
\begin{align*}
\mathcal{R}_{\gamma_{\mathcal{C}}}(\theta_t) - \mathcal{R}_{\gamma_{\mathcal{C}}}(\theta_{*, \mathcal{K}}) \leq \left(\mathcal{R}_{\gamma_{\mathcal{C}}}(\theta_0) - \mathcal{R}_{\gamma_{\mathcal{C}}}(\theta_{*, \mathcal{K}})\right)c^t + \frac{3\eta ||\mathcal{K}||_2(2\alpha ||\theta^*||_2 + \beta)}{2(1 - c)},
\end{align*}
with $c = \max\left\{\frac{1}{2}, 1 - \frac{\alpha_{\mathcal{K}} u}{8(\lambda_{\max}(\Sigma) + \gamma_{\mathcal{C}})}\right\}$. Note that since $\left\|\left(\Sigma + C_1c_{\mathcal{K}}I_p\right)^{-1}\Sigma\theta^*\right\|_2 \leq K \leq \left\|\left(\Sigma + c_{\mathcal{K}}I_p\right)^{-1}\Sigma\theta^*\right\|_2$, $\alpha_{\mathcal{K}} = \frac{1}{K}$, and $\gamma_{\mathcal{C}}c_{\mathcal{K}} \lesssim u \leq \gamma_{\mathcal{C}}\frac{S_{mm}^2\left\|[P^T\theta^*]_{[1:m]}\right\|_2c_{\mathcal{K}}}{2(S_{mm} + c_{\mathcal{K}})^3}$, we have $\alpha_{\mathcal{K}} u \asymp \gamma_{\mathcal{C}}c_{\mathcal{K}}$. Thus, $\eta \asymp \gamma_{\mathcal{C}}c_{\mathcal{K}}$ and $\frac{1}{1 - c} \asymp \frac{1}{\gamma_{\mathcal{C}}c_{\mathcal{K}}}$. By smoothness, because $\nabla\mathcal{R}_{\gamma_{\mathcal{C}}}(\theta_{*}) = 0$ and $\lambda_{\max}(\Sigma) + \gamma_{\mathcal{C}} \lesssim 1$, we then obtain 
\begin{align*}
\mathcal{R}_{\gamma_{\mathcal{C}}}(\theta_0) - \mathcal{R}_{\gamma_{\mathcal{C}}}(\theta_{*, \mathcal{K}}) &\lesssim ||\nabla\mathcal{R}_{\gamma_{\mathcal{C}}}(\theta_{*, \mathcal{K}}) - \nabla\mathcal{R}_{\gamma_{\mathcal{C}}}(\theta_*)||_2||\theta_0 - \theta_{*, \mathcal{K}}||_2\\
& \quad + (\lambda_{\max}(\Sigma) + \gamma_{\mathcal{C}})||\theta_0 - \theta_{*, \mathcal{K}}||_2^2\\
&\lesssim (||\theta_{*, \mathcal{K}}||_2 + ||\theta_*||_2)||\mathcal{K}||_2 + ||\mathcal{K}||_2^2\\
&\lesssim ||\theta^*||_2||\mathcal{K}||_2 + ||\mathcal{K}||_2^2 \lesssim 1.
\end{align*}
Thus, at iteration $T$, we have
\begin{align*}
\mathcal{R}_{\gamma_{\mathcal{C}}}(\theta_T) - \mathcal{R}_{\gamma_{\mathcal{C}}}(\theta_{*, \mathcal{K}}) \lesssim c^T + \sqrt{\frac{\log(1/\widetilde{\zeta})}{\widetilde{n}}} + \sqrt{\frac{(1 + \sigma_2)^2\log(1/\widetilde{\zeta})}{\widetilde{n}}}.
\end{align*}
Note that $\log(1/c) \asymp \gamma_{\mathcal{C}}c_{\mathcal{K}}\log\left((1 - \gamma_{\mathcal{C}}c_{\mathcal{K}})^{1/\gamma_{\mathcal{C}}c_{\mathcal{K}}}\right) \asymp\gamma_{\mathcal{C}}c_{\mathcal{K}}$. Hence, since $T = \log_{1/c}\left(n\right)$, we obtain
\begin{align*}
\mathcal{R}_{\gamma_{\mathcal{C}}}(\theta_T) - \mathcal{R}_{\gamma_{\mathcal{C}}}(\theta_{*, \mathcal{K}}) \lesssim \frac{1}{n} + (1 + \sigma_2)\sqrt{\frac{\log(n)\log\left(\log(n)/\gamma_{\mathcal{C}}\zeta\right)}{\gamma_{\mathcal{C}}c_{\mathcal{K}}n}}.
\end{align*}
Now define $\mathcal{A} : [0, 1] \rightarrow \mathbb{R}$ by $\mathcal{A}(\lambda) = \mathcal{R}_{\gamma_{\mathcal{C}}}(\lambda\theta_*)$. Note that
$$\mathcal{A}(0) = \mathcal{R}_{\gamma_{\mathcal{C}}}(0) \geq \mathcal{R}_{\gamma_{\mathcal{C}}}(\theta_{*, \mathcal{K}}) \geq \mathcal{R}_{\gamma_{\mathcal{C}}}(\theta_*) = \mathcal{A}(1).$$
Hence, by the continuity of $\mathcal{A}$, the Intermediate Value Theorem implies that there exists $\lambda_* \in [0, 1]$ such that $\mathcal{A}(\lambda_*) = \mathcal{R}_{\gamma_{\mathcal{C}}}(\lambda_*\theta_*) = \mathcal{R}_{\gamma_{\mathcal{C}}}(\theta_{*, \mathcal{K}})$. If $\lambda_*\theta_*$ is in the interior of $\mathcal{K}$, then $\nabla\mathcal{R}_{\gamma_{\mathcal{C}}}(\lambda_*\theta_*) = 0$, so $\lambda_*\theta_*$ is a global minimizer. This is a contradiction, since $\theta_*$ is the unique global minimizer of $\mathcal{R}_{\gamma_{\mathcal{C}}}$ and this lies strictly outside $\mathcal{K}$. If $\lambda_*\theta_*$ is at the boundary or outside $\mathcal{K}$, then $$||\lambda_*\theta_* - \theta_*||_2 \leq ||\theta_*||_2 - K \leq \left\|\left(\Sigma + \gamma_{\mathcal{C}}I_p\right)^{-1}\Sigma\theta^*\right\|_2 - \left\|\left(\Sigma + C_1c_{\mathcal{K}}I_p\right)^{-1}\Sigma\theta^*\right\|_2.$$ By inequality \eqref{eq:l=0_f'}, the Mean Value Theorem, and the fact that $C_1c_{\mathcal{K}} > c_{\mathcal{K}} > \frac{c_{\mathcal{K}}}{2} \geq \gamma_{\mathcal{C}} \geq \frac{c_{\mathcal{K}}}{4}$, there exists some $z_* \in \left[\gamma_{\mathcal{C}}, C_1c_{\mathcal{K}}\right]$ such that
\begin{align*}
||\lambda_*\theta_* - \theta_*||_2 &\leq \frac{\sum_{j = 1}^m\frac{S_{jj}^2[P^T\theta^*]_j^2}{(S_{jj} + z_*)^3}}{\sqrt{\sum_{j = 1}^m\frac{S_{jj}^2[P^T\theta^*]_j^2}{(S_{jj} + z_*)^2}}}(C_1c_{\mathcal{K}} - \gamma_{\mathcal{C}}) \leq \frac{\frac{\left\|[P^T\theta^*]_{[1:m]}\right\|_2^2}{S_{mm}}}{\sqrt{\frac{S^2_{mm}\left\|[P^T\theta^*]_{[1:m]}\right\|_2^2}{(S_{11} + C_1c_{\mathcal{K}})^2}}}\left(C_1 - \frac{1}{4}\right)c_{\mathcal{K}}\\
&= \frac{(S_{11} + C_1c_{\mathcal{K}})\left\|[P^T\theta^*]_{[1:m]}\right\|_2}{S^2_{mm}}\left(C_1 - \frac{1}{4}\right)c_{\mathcal{K}} \asymp c_{\mathcal{K}}.
\end{align*}
Additionally, by the $(\lambda_{\max}(\Sigma) + \gamma_{\mathcal{C}})$-smoothness of $\mathcal{R}_{\gamma_{\mathcal{C}}}$, and using the facts that $\nabla \mathcal{R}_{\gamma_{\mathcal{C}}}(\theta_{*}) = 0$ and $\lambda_{\max}(\Sigma) + \gamma_{\mathcal{C}} \lesssim 1$, we have
\begin{align*}
\mathcal{R}_{\gamma_{\mathcal{C}}}(\theta_{*, \mathcal{K}}) - \mathcal{R}_{\gamma_{\mathcal{C}}}(\theta_{*}) &= \mathcal{R}_{\gamma_{\mathcal{C}}}(\lambda_*\theta_*) - \mathcal{R}_{\gamma_{\mathcal{C}}}(\theta_{*}) \lesssim ||\lambda_*\theta_{*} - \theta_*||_2^2 \lesssim c_{\mathcal{K}}^2.
\end{align*}
Therefore, we have
\begin{align*}
\mathcal{R}_{\gamma_{\mathcal{C}}}(\theta_T) - \mathcal{R}_{\gamma_{\mathcal{C}}}(\theta_{*}) &\lesssim \frac{1}{n} + (1 + \sigma_2)\sqrt{\frac{\log(n)\log\left(\log(n)/\gamma_{\mathcal{C}}\zeta\right)}{\gamma_{\mathcal{C}}c_{\mathcal{K}}n}} + c_{\mathcal{K}}^2.
\end{align*}
Using the $\gamma_{\mathcal{C}}$-strong convexity of $\mathcal{R}_{\gamma_{\mathcal{C}}}$ and the fact that $\nabla\mathcal{R}_{\gamma_{\mathcal{C}}}(\theta_{*}) = 0$, we obtain
\begin{align*}
||\theta_T - \theta_{*}||_2 \lesssim \frac{1}{\gamma_{\mathcal{C}}^{1/2}n^{1/2}} + (1 + \sigma_2)^{1/2}\frac{\log^{1/4}(n)\log^{1/4}\left(\log(n)/\gamma_{\mathcal{C}}\zeta\right)}{c_{\mathcal{K}}^{1/4}\gamma_{\mathcal{C}}^{3/4}n^{1/4}} + \frac{c_{\mathcal{K}}}{\gamma_{\mathcal{C}}^{1/2}},
\end{align*}
and since $||\theta_{*} - \theta^*||_2^2 \lesssim m\gamma_{\mathcal{C}}^2 + \left\|[P^T\theta^*]_{[(m+1):p]}\right\|_2^2$, we then have
\begin{align}
\label{eq:HT_ACCFWl=0_best_gc}
||\theta_T - \theta^*||_2 &\lesssim \frac{1}{\gamma_{\mathcal{C}}^{1/2}n^{1/2}} + (1 + \sigma_2)^{1/2}\frac{\log^{1/4}(n)\log^{1/4}\left(\log(n)/\gamma_{\mathcal{C}}\zeta\right)}{c_{\mathcal{K}}^{1/4}\gamma_{\mathcal{C}}^{3/4}n^{1/4}} + \frac{c_{\mathcal{K}}}{\gamma_{\mathcal{C}}^{1/2}} + \sqrt{m}\gamma_{\mathcal{C}} \notag\\
& \quad + \left\|[P^T\theta^*]_{[(m+1):p]}\right\|_2.
\end{align}
Then, for $\gamma_{\mathcal{C}} \geq \frac{c_{\mathcal{K}}}{4}$, we obtain
\begin{align*}
||\theta_T - \theta^*||_2 &\lesssim (1 + \sigma_2)^{1/2}\frac{\log^{1/4}(n)\log^{1/4}\left(n/\zeta\right)}{c_{\mathcal{K}}^{1/4}n^{1/4}}\\
& \quad + \left\|[P^T\theta^*]_{[(m+1):p]}\right\|_2 + \left\|[P^T\theta^*]_{[(m+1):p]}\right\|_2^{1/2},
\end{align*}
as required.
%\end{proof}

%%%%%%

\subsection{Proofs of the Auxiliary Results from Section \ref{sec:Acc_FW_Method_All}}\label{sec:Proofs of the Auxiliary Results_ACC_FW}

Here, we present the proofs of the auxiliary results from Section \ref{sec:Acc_FW_Method_All}.

%%%%%%

\subsubsection{Proof of Lemma \ref{lemma:ACC_PRIV_FW_STEP}}
\label{AppLemACC}

%\begin{proof}
Observe that for all $\theta$, $z_1, \dots, z_n$, $z'_1$, we have
\begin{align*}
\left\|\frac{1}{n}\sum_{j = 1}^{n - 1}\nabla \mathcal{L}(\theta, z_j) - \frac{1}{n}\sum_{j = 1}^{n - 1}\nabla \mathcal{L}(\theta, z_j) + \frac{1}{n}\nabla \mathcal{L}(z_1, \theta) - \frac{1}{n}\nabla \mathcal{L}(z'_1, \theta)\right\|_2 \leq \frac{2L_2}{n},
\end{align*}
since the loss is $L_2$-Lipschitz. Hence, the sensitivity is bounded above by $\frac{2L_2}{n}$, and by Lemma \ref{lemma:mGDP}, since $\epsilon < 2\sqrt{2T\log(2/\delta)}$ and $\delta < 2T$, each step of Algorithm \ref{alg:PrivFWERM} is $\left(\frac{\epsilon}{2\sqrt{2T\log(2/\delta)}}, \frac{\delta}{2T}\right)$-DP. Hence, using Lemma \ref{lemma:compGDP}, i.e., the advanced composition result, we obtain that $\theta_T$ is $\left(\frac{\epsilon}{2} + \frac{\sqrt{T}\epsilon}{2\sqrt{2\log(2/\delta)}}(e^{\epsilon/2\sqrt{2T\log(2/\delta)}} - 1), \delta\right)$-DP. Finally, for $\epsilon \leq 0.9$, using Corollary \ref{cor:ADV_COMP_0.9}, we conclude that $\theta_T$ is $(\epsilon, \delta)$-DP. 
%\end{proof}

%%%%%

\subsubsection{Proof of Proposition \ref{prop:achievab_UP_ERM}}
\label{AppPropERM}

%\begin{proof}
Let $\mathbb{X} \in \mathbb{R}^{n \times p}$ be the matrix with $i^{\text{th}}$ row being $x_i$, for all $i \in [n]$. Let $\mathbb{Y} = (y_1, \dots, y_n)^T$ and $\mathbb{W}^{(p)} = \left(w_1^{(p)}, \dots, w_p^{(p)}\right)^T$. Let $v \in \mathbb{R}^p$ be such that $||v||_2 = 1$. Then we have
\begin{align*}
v^T\frac{\mathbb{X}^T\mathbb{X}}{n}v &= v^T\Sigma v + v^T\left(\frac{\mathbb{X}^T\mathbb{X}}{n} - \Sigma\right)v \leq \lambda_{\max}\left(\Sigma\right) + \left\|\frac{\mathbb{X}^T\mathbb{X}}{n} - \ \Sigma\right\|_2\\
&\leq C_2 + \left\|\frac{\mathbb{X}^T\mathbb{X}}{n} - \ \Sigma\right\|_2,
\end{align*}
and
\begin{align*}
v^T\frac{\mathbb{X}^T\mathbb{X}}{n}v &= v^T\Sigma v - v^T\left(\Sigma - \frac{\mathbb{X}^T\mathbb{X}}{n}\right)v \geq \lambda_{\min}\left(\Sigma\right) - \left\|\frac{\mathbb{X}^T\mathbb{X}}{n} - \ \Sigma\right\|_2\\
&\geq C_1 - \left\|\frac{\mathbb{X}^T\mathbb{X}}{n} - \ \Sigma\right\|_2.
\end{align*}
Note that since $||x_1||_{\infty} \leq 1$, we have $||x_1||_2 \leq \sqrt{p}$.
%Also, recall that $\lambda_{\max}(\Sigma) = ||\Sigma||_2 \leq C_2$. Since $\{x_i\}_{i = 1}^n$ are i.i.d., $\mathbb{E}[x_1] = 0$, and $||x_1||_2 \leq \sqrt{p}$,
By Lemma \ref{lemma:Concxx^T}, we have
\begin{align*}
\mathbb{P}\left(\left\|\frac{\mathbb{X}^T\mathbb{X}}{n} - \ \Sigma\right\|_2 > \frac{C_1}{2}\right) = \mathbb{P}\left(\left\|\frac{1}{n}\sum_{i = 1}^nx_ix_i^T - \ \Sigma\right\|_2 > \frac{C_1}{2}\right) \leq 2pe^{\frac{-nC_1^2}{8p\left(C_2+ C_1/3\right)}} \rightarrow 0
\end{align*}
as $p \rightarrow \infty$, since $n = \widetilde{\Omega}(p^{c_1})$ and $c_1 > 1$. Hence,  with probability at least $1 - 2pe^{\frac{-nC_1^2}{8p\left(C_2+ C_1/3\right)}}$, we have $\left\|\frac{\mathbb{X}^T\mathbb{X}}{n} - \ \Sigma\right\|_2 \leq \frac{C_1}{2}$, so
$$\frac{C_1}{2} \leq \frac{\lambda_{\min}\left(\mathbb{X}^T\mathbb{X}\right)}{n} \leq \frac{\lambda_{\max}\left(\mathbb{X}^T\mathbb{X}\right)}{n} \leq \frac{2C_2 + C_1}{2},$$
since $||v||_2 = 1$ was arbitrary. Let $\Omega_1$ be this event which occurs with probability at least $1 - 2pe^{\frac{-nC_1^2}{8p\left(C_2+ C_1/3\right)}}$. Now recall that $\left\{w_i^{(p)}\right\}_{i = 1}^n$ are \iid and $w_i^{(p)} \in \mathcal{G}\left(\sigma^2(p)\right)$, for all $i \in [n]$. Hence, by a union bound, Lemma \ref{lemma:SG_conc_variable}, and the fact that $\mathbb{E}\left[\mathbb{W}^{(p)}\right] = 0$, we have
\begin{align*}
\mathbb{P}\left(\left\|\mathbb{W}^{(p)}\right\|_\infty > \frac{D(p)\sqrt{n}}{p^{5/8}}\right) &\leq \sum_{j = 1}^p\mathbb{P}\left(\left|w_j^{(p)}\right| > \frac{D(p)\sqrt{n}}{p^{5/8}}\right) \leq 2\sum_{j = 1}^pe^{-\frac{nD^2(p)}{2p^{5/4}\sigma^2(p)}}\\
&= 2pe^{-\frac{nD^2(p)}{2p^{5/4}\sigma^2(p)}} \rightarrow 0
\end{align*}
as $p \rightarrow \infty$, since $n = \widetilde{\Omega}\left(\frac{p^{c_2}\sigma^2(p)}{D^2(p)}\right)$ and $c_2 > \frac{5}{4}$. Hence, with probability at least $1 - 2pe^{-\frac{nD^2(p)}{2p^{5/4}\sigma^2(p)}}$, we have $\left\|\mathbb{W}^{(p)}\right\|_\infty \leq \frac{D(p)\sqrt{n}}{p^{5/8}}$. Let $\Omega_2$ be the event that the latter bound holds. Let $\Omega_3 = \Omega_1 \cap \Omega_2$, so $\mathbb{P}(\Omega_3) \geq 1 - 2pe^{\frac{-nC_1^2}{8p\left(C_2+ C_1/3\right)}} - 2pe^{-\frac{nD^2(p)}{2p^{5/4}\sigma^2(p)}}$.

Let us now work on $\Omega_3$. Note that $\beta_{\mathcal{L}} = \frac{\lambda_{\max}\left(\mathbb{X}^T\mathbb{X}\right)}{n} \leq \frac{2C_2 + C_1}{2}$. Fix $\theta \in \mathcal{C}$ arbitrary. Since $||\theta^*||_2 \geq (2S_1(2C_2/C_1 + 1) + 1)D(p)$, we have
\begin{align*}
\frac{\alpha_{\mathcal{C}}||\nabla \mathcal{L}(\theta, \mathcal{D}_n)||_2}{\beta_{\mathcal{L}}} &\geq \frac{2||\nabla \mathcal{L}(\theta, \mathcal{D}_n)||_2}{D(p)(2C_2 + C_1)} = \frac{2\left\|\mathbb{X}^T\mathbb{Y} - \mathbb{X}^T\mathbb{X}\theta\right\|_2}{D(p)(2C_2 + C_1)n}\\
&= \frac{2\left\|\mathbb{X}^T\mathbb{X}(\theta^* - \theta) - \mathbb{X}^T\mathbb{W}^{(p)}\right\|_2}{D(p)(2C_2 + C_1)n}\\
&\geq \frac{2\lambda_{\min}\left(\mathbb{X}^T\mathbb{X}\right)(||\theta^*||_2 - ||\theta||_2)}{D(p)(2C_2 + C_1)n} - \frac{2||\mathbb{X}/\sqrt{n}||_2\left\|\mathbb{W}^{(p)}\right\|_2}{D(p)(2C_2 + C_1)\sqrt{n}}\\
&\geq \frac{4S_1\lambda_{\min}\left(\mathbb{X}^T\mathbb{X}\right)}{C_1n} - \frac{2\sqrt{\beta_{\mathcal{L}}}\left\|\mathbb{W}^{(p)}\right\|_2}{D(p)(2C_2 + C_1)\sqrt{n}}\\
&\geq \frac{4S_1\lambda_{\min}\left(\mathbb{X}^T\mathbb{X}\right)}{C_1n} - \frac{\sqrt{2p}\left\|\mathbb{W}^{(p)}\right\|_\infty}{D(p)\sqrt{(2C_2 + C_1)n}},
\end{align*}
since the $\ell_2$-norm is less than $\sqrt{p}$ times the $\ell_{\infty}$-norm, and since $\beta_{\mathcal{L}} \leq \frac{2C_2 + C_1}{2}$. Again, since we are on $\Omega_3$, we obtain
\begin{align*}
\frac{\alpha_{\mathcal{C}}||\nabla \mathcal{L}(\theta, \mathcal{D}_n)||_2}{\beta_{\mathcal{L}}} &\geq 2S_1 - \frac{\sqrt{2}}{\sqrt{2C_2 + C_1}}\frac{\sqrt{p}}{D(p)\sqrt{n}}\frac{D(p)\sqrt{n}}{p^{5/8}} = 2S_1 - \frac{\sqrt{2}}{\sqrt{2C_2 + C_1}}\frac{1}{p^{1/8}} \geq S_1,
\end{align*}
as required, since $p \geq \left(\frac{\sqrt{2}}{S_1\sqrt{2C_2 + C_1}}\right)^8$.

Finally, let us prove that the conditions \eqref{eq:model_distr_free_data} can be satisfied if $w^{(p)}$ follows a $N(0, \sigma^2(p))$ distribution truncated in the interval $[-1 - \sqrt{p}K_1(p), 1 + \sqrt{p}K_1(p)]$. We then have $\mathbb{E}\left[w^{(p)}\right] = 0$ and $\left|w^{(p)}\right| \leq 1 + \sqrt{p}K_1(p)$, with $w^{(p)}$ having full support on $[-1 - \sqrt{p}K_1(p), 1 + \sqrt{p}K_1(p)]$. By Theorem $2.1$ in \cite{SUB_GAUSS_TRUNC}, we know that $w^{(p)}$ is sub-Gaussian with parameter
\begin{align*}
\sigma^2(p)\left(1 - \frac{2(1 + \sqrt{p}K_1(p))}{\sigma(p)}\frac{\phi\left(\frac{1 + \sqrt{p}K_1(p)}{\sigma(p)}\right)}{2\Phi_0\left(\frac{1 + \sqrt{p}K_1(p)}{\sigma(p)}\right) - 1}\right),
\end{align*}
which is less than $\sigma^2(p)$. Here, $\phi$ and $\Phi_0$ are the standard normal pdf and cdf, respectively. Hence, $w^{(p)} \in \mathcal{G}\left(\sigma^2(p)\right)$.
%\end{proof}

\begin{remark}\label{remark:DIST_FREE_HP_CONST}
In Proposition \ref{prop:achievab_UP_ERM}, we assumed that $C_1 \leq \lambda_{\min}(\Sigma) \leq \lambda_{\max}(\Sigma) \leq C_2 \leq 1$. Observe that since $||x||_{\infty} \leq 1$, the variance of each entry in $x$ is at most $\left(\frac{1 + 1}{2}\right)^2 = 1$, so the choice of $0 < C_1 \leq C_2 \leq 1$ ensures that the variance of each entry of $x$ stays below $1$.
%
%Note that in Proposition \ref{prop:achievab_UP_ERM}, we ask the noise $w^{(p)}$ to satisfy $\mathbb{E}\left[w^{(p)}\right] = 0$, $\left|w^{(p)}\right| \leq 1 + \sqrt{p}K_1(p)$ and $w^{(p)} \in \mathcal{G}\left(\sigma^2(p)\right)$. We also ask for $||\theta^*||_2 \leq K_1(p)$. Note that $|y - x^T\theta^*| \leq 1 + \sqrt{p}K_1(p)$. Hence, we ask specifically for $\left|w^{(p)}\right| \leq 1 + \sqrt{p}K_1(p)$, since $|y - x^T\theta^*| \leq 1 + \sqrt{p}K_1(p)$.
\end{remark}

\begin{remark}
In Proposition \ref{prop:achievab_UP_ERM}, we asked for
$$(2S_1(2C_2/C_2 + 1) + 1)D(p) \leq ||\theta^*||_2 \leq K_1(p),$$
while in Theorem \ref{theorem:UPRidge}, we optimize over $\mathcal{C} = \mathbb{B}_2\left(D(p)\right)$. Thus, the lower bound on $||\theta^*||_2$ scales as $D(p)$, even though the constants place $\theta^*$ slightly outside $\mathcal{C}$. However, since $K_1(p), D(p) \rightarrow 0$, we have $||\theta_T - \theta^*||_2 \lesssim D(p) + K_1(p) = O\left(\max\left\{D(p), K_1(p)\right\}\right) \rightarrow 0$ as $p \rightarrow \infty$. 
\end{remark}

%%%%%

\subsubsection{Proof of Proposition \ref{prop:MatchData}}
\label{AppPropMatch}

For all $j \in [n]$, we have $y_j \in \{0, 1\}$ and $x_j \in \left\{-1, 1\right\}^p$, so clearly, $|y_j| \leq 1$ and $||x_j||_{\infty} \leq 1$. We now show that $\left\|\sum_{j = 1}^n x_jx_j^T\right\|_2 \leq wp(1 + \tau)$. The matrix $X_{(-i)}$ with the $x^j_{(-i)}$'s as rows has at least $(1 - \tau)p$ consensus columns, implying that
\begin{align*}
\left\|\sum_{j = 1}^n x_jx_j^T\right\|_2 &\leq \left\|\sum_{j = 1}^w x^j_{(-i)} (x^j_{(-i)})^T\right\|_2 + \left\|\sum_{j = 1}^k z_jz_j^T\right\|_2\\
&\leq \sum_{j = 1}^w\left\|x^j_{(-i)}\right\|_2^2 + k||I_p||_2 = \sum_{j = 1}^w p + k = (1 + \tau)wp,
\end{align*}
as needed, where we used the facts that $x^j_{(-i)}$ has all entries equal to either $-1$ or $1$, and $Z^TZ = kI_p$. We now prove that $\left\|\sum_{j = 1}^n y_jx_j\right\|_2 \geq w\sqrt{(1 - \tau)p}$. Since the target variables of the $z_j$'s are $0$ and those of the $x^j_{(-i)}$'s are $1$, we have
\begin{align*}
\left\|\sum_{j = 1}^n y_jx_j\right\|_2 = \left\|\sum_{j = 1}^w x^j_{(-i)}\right\|_2.
\end{align*}
In the sum $\sum_{j = 1}^w x^j_{(-i)}$, we have either $-w$ or $w$ in the positions of the consensus columns. Since $X_{(-i)}$ has at least $(1 - \tau)$ consensus columns, we have
\begin{align*}
\left\|\sum_{j = 1}^n y_jx_j\right\|_2 \geq w\sqrt{\sum_{j = 1}^{(1 - \tau)p} 1} = w\sqrt{(1 - \tau)p}. 
\end{align*}
Now fix $\theta \in \mathcal{C}$. We have 
\begin{align*}
\frac{\alpha_{\mathcal{C}}||\nabla \mathcal{L}(\theta, \mathcal{D}_n)||_2}{\beta_{\mathcal{L}}} &= \frac{\sqrt{p}\left\|\sum_{j = 1}^ny_jx_j - x_jx_j^T\theta\right\|_2}{\alpha_1\left\|\sum_{j = 1}^nx_jx_j^T\right\|_2} \geq \frac{\sqrt{p}\left(w\sqrt{(1 - \tau)p} - (1 + \tau)wp||\theta||_2\right)}{\alpha_1(1 + \tau)wp}\\
&\geq \frac{\sqrt{p}\left(w\sqrt{(1 - \tau)p} - (1 + \tau)\alpha_1w\sqrt{p}\right)}{\alpha_1(1 + \tau)wp} = \frac{\sqrt{1 - \tau} - (1 + \tau)\alpha_1}{\alpha_1(1 + \tau)} \geq S_1.
\end{align*}
Since $\theta \in \mathcal{C}$ was arbitrary, we can take an infimum over all $\theta \in \mathcal{C}$ to obtain $\mathop{\inf}\limits_{\theta \in \mathcal{C}}\frac{\alpha_{\mathcal{C}}||\nabla \mathcal{L}(\theta, \mathcal{D}_n)||_2}{\beta_{\mathcal{L}}} \geq S_1$. Thus, the dataset in the hypothesis satisfies the inequalities \eqref{eq:MatchEq}, as required.
%\end{proof}

%%%%%

\subsubsection{Proof of Proposition \ref{prop:Iter_Rate_Inc_C_Non}}
\label{AppPropIter}

%\begin{proof}
Looking at the proof of Lemma \ref{lemma:NOPLconvDPFW} in \cite{NOPL}, they first obtain a result with high probability before passing to a result in expectation. Since in our setting, $p$ and $||\theta^*||_2$ are absolute constants, the curvature constant $\Gamma_{\mathcal{L}}$ of $\mathcal{L}$ and the Gaussian width $G_{\mathbb{B}_2(||\theta^*||_2)}$ of $\mathbb{B}_2(||\theta^*||_2)$ are also absolute constants. Hence, for $\zeta \in (0, 1)$, the arguments in \cite{NOPL} imply that with probability at least $1 - \zeta$, we have
\begin{align*}
\mathcal{L}(\theta_T, \mathcal{D}_n) - \mathcal{L}(\theta_{B, n}, \mathcal{D}_n) = \widetilde{O}\left(\frac{\log(T/\zeta)}{(n\epsilon)^{2/3}}\right) = \widetilde{O}\left(\frac{\log(n\epsilon/\zeta)}{(n\epsilon)^{2/3}}\right),
\end{align*}
where $\theta_{B, n} \in \mathop{\arg\min}\limits_{\theta \in \mathbb{B}_2(||\theta^*||_2)}\mathcal{L}(\theta_{B, n}, \mathcal{D}_n)$ and $\theta_T$ is the output of Algorithm \ref{alg:PrivNon-accERM}. Denote this high-probability event by $\Omega_7$. In the proof of Theorem \ref{theorem:Iter_Rate_Inc_C}, we showed the existence of an absolute constant $C_1 > 0$ and an event $\Omega_6$, such that $\mathbb{P}(\Omega_6) \geq 1 - \zeta$ and $\mathcal{L}(\theta, \mathcal{D}_n)$ is $\frac{\Phi''(L_x||\theta^*||_2)\lambda_{\min}(\Sigma)}{2}$-strongly convex over $\mathbb{B}_2(||\theta^*||_2)$, for $n > C_1\log(2p/\zeta)$. Moreover, in the proof of Theorem \ref{theorem:ERM_UP_C_Inc}, we showed the existence of $C_2, T_\zeta, N_\zeta > 0$ and an event
\begin{align*}
\Omega_5 = \left\{||\theta_{B, n} - \theta^*||_2 \leq \frac{T_\zeta\log(n)}{\sqrt{n}}\right\}
\end{align*}
such that $\mathbb{P}(\Omega_5) \geq 1 - \zeta$, for $n \geq \max\left\{C_2, N_\zeta\right\}$. Let $\Omega_8 = \Omega_5 \cap \Omega_6 \cap \Omega_7$, so $\mathbb{P}(\Omega_8) \geq 1 - 3\zeta$. On the event $\Omega_8$, for $n > \max\left\{C_1\log(2p/\zeta), C_2, N_\zeta\right\}$, we see that since $\frac{\Phi''(L_x||\theta^*||_2)\lambda_{\min}(\Sigma)}{2} \asymp 1$ and $\theta_{B, n}$ is a minimizer over $\mathbb{B}_2(||\theta^*||_2)$, strong convexity implies that
\begin{align*}
||\theta_T - \theta_{B, n}||_2 = \widetilde{O}\left(\frac{\log^{1/2}(n\epsilon/\zeta)}{(n\epsilon)^{1/3}}\right).
\end{align*}
%We also have that $||\theta_{B, n} - \theta^*||_2 \leq \frac{T_\zeta\log(n)}{\sqrt{n}}$. Hence, 
Using the triangle inequality, we then have
\begin{align*}
||\theta_T - \theta^*||_2 \leq ||\theta_T - \theta_{B, n}||_2 + ||\theta_{B, n} - \theta^*||_2 = \widetilde{O}\left(\frac{T_\zeta}{\sqrt{n}} + \frac{\log^{1/2}(n\epsilon/\zeta)}{(n\epsilon)^{1/3}}\right),
\end{align*}
as required.
%\end{proof}

%%%%%

\subsubsection{Gradient bound for heavy-tailed data}
\label{AppLemHT}

We now state and prove the main result about gradient estimators used in Algorithm \ref{alg:HTGE}. We provide a proof since we aim to correct the aspect related to the choice of $b$ in \cite{RE}, as discussed in Section \ref{sec:Biased Parameter Estimation HT}.

\begin{lemma}
\label{lemma:htlemma}
Let $\mathcal{L}$ be a generic loss. Suppose $\mathcal{D}_n = \{z_i\}_{i = 1}^n$ are \iid samples from a heavy-tailed distribution. Then Algorithm \ref{alg:HTGE}, with $S = \{\nabla \mathcal{L}(\theta; z_i)\}_{i=1}^n$ and $\zeta \in (0, 1)$ such that $b \leq n/2$, returns for a fixed $\theta \in \mathbb{R}^p$ an estimate $\widehat{\mu}$ such that with probability at least $1 - \zeta$, we have
\begin{align*}
||\widehat{\mu} - \nabla\mathcal{R}(\theta)||_2 \leq 11\sqrt{\frac{Tr(\mathrm{\mathrm{Cov}}(\nabla \mathcal{L}(\theta, z)))\log(1.4/ \zeta)}{n}}.
\end{align*}
\end{lemma}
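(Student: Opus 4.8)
The plan is to establish the concentration of the geometric median-of-means estimator by combining two ingredients: (1) a per-bucket concentration bound coming from Chebyshev's inequality applied to the block means $\widehat{\mu}_i$, and (2) a boosting-the-confidence argument that shows the geometric median of $b$ weakly-concentrated estimators concentrates strongly, provided a majority of the buckets land close to the true mean. First I would set $\mu := \nabla\mathcal{R}(\theta) = \mathbb{E}[\nabla\mathcal{L}(\theta,z)]$ and $\Sigma_{\mathcal{L}} := \mathrm{Cov}(\nabla\mathcal{L}(\theta,z))$, note that each bucket $B_i$ has size $\lfloor n/b\rfloor \ge n/(2b)$ (using $b \le n/2$, though actually we need $\lfloor n/b \rfloor \geq n/b - 1 \gtrsim n/b$ when $b \le n/2$), so that $\mathbb{E}[\widehat{\mu}_i] = \mu$ and $\mathbb{E}\|\widehat{\mu}_i - \mu\|_2^2 = \mathrm{Tr}(\Sigma_{\mathcal{L}})/|B_i| \le 2b\,\mathrm{Tr}(\Sigma_{\mathcal{L}})/n$. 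By Markov/Chebyshev, for a radius $\rho$ to be chosen, $\mathbb{P}(\|\widehat{\mu}_i - \mu\|_2 > \rho) \le 2b\,\mathrm{Tr}(\Sigma_{\mathcal{L}})/(n\rho^2)$; choosing $\rho$ so that this probability is at most $7/18$ (the value appearing in the definition of $\psi$ in Algorithm~\ref{alg:HTGE}, with the base distribution $\mathrm{Bern}(1/10)$, $\mathrm{Bern}(0.1)$ nomenclature) pins down $\rho \asymp \sqrt{b\,\mathrm{Tr}(\Sigma_{\mathcal{L}})/n}$.

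Next I would invoke the standard geometric-median boosting lemma (as in Minsker~\cite{Minsker}, or Hsu–Sabato, or Lemma-type statements used in \cite{RE, Lera}): if $\widehat{\mu}$ is the geometric median of points $\widehat{\mu}_1,\dots,\widehat{\mu}_b$ and strictly more than a $(1-\alpha)$-fraction — actually, the relevant threshold is that for any $\alpha < 1/2$, if at least $\lceil b(1-\alpha)\rceil$ (for suitable $\alpha$, here effectively $\alpha$ around $7/18$) of the $\widehat{\mu}_i$ satisfy $\|\widehat{\mu}_i - \mu\|_2 \le \rho$, then $\|\widehat{\mu} - \mu\|_2 \le C_\alpha\,\rho$ for an explicit constant $C_\alpha$ (e.g.\ $C_\alpha = (1-\alpha)/(1-2\alpha)$ or similar). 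Then I would bound the probability that fewer than the required number of buckets are good: letting $X_i = \mathbbm{1}\{\|\widehat{\mu}_i - \mu\|_2 > \rho\}$, these are i.i.d.\ Bernoulli with mean $p_0 \le 7/18$, and the bad event is $\sum_i X_i \ge $ threshold; a Chernoff/Hoeffding bound for the binomial — precisely the large-deviation rate function $\psi(x) = (1-x)\log\frac{1-x}{0.9} + x\log\frac{x}{0.1}$ evaluated at $x = 7/18$ against base probability $0.1$ — gives that this probability is at most $e^{-b\,\psi(7/18)} \le \zeta$, using the choice $b = \lfloor \log(1/\zeta)/\psi(7/18)\rfloor + 1$ from the algorithm. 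Tracking the constants ($C_\alpha$, the $7/18$ slack, and the numerical value $\psi(7/18)$, which is roughly $1/3.5$) should deliver the factor $11$ and the $\log(1.4/\zeta)$ in the statement, after absorbing $b \asymp \log(1/\zeta)$ into $\rho \asymp \sqrt{\log(1/\zeta)\,\mathrm{Tr}(\Sigma_{\mathcal{L}})/n}$.

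The main obstacle I anticipate is purely bookkeeping with constants: getting the exact numerical coefficient $11$ and the argument $1.4/\zeta$ (rather than, say, $2/\zeta$ or $e/\zeta$) requires care in (i) the exact geometric-median amplification constant for the particular good-fraction threshold used, (ii) not being wasteful when bounding $\lfloor n/b\rfloor$ from below and $b$ from above, and (iii) correctly computing the binomial tail via the rate function $\psi$ so that the $\lfloor\cdot\rfloor + 1$ in the definition of $b$ exactly cancels the $\log(1/\zeta)$ and leaves the stated slack. I would structure the write-up so that the geometric-median amplification lemma is cited verbatim (from \cite{Minsker} or \cite{Lera}) and only the concentration-of-buckets and binomial-tail computations are done explicitly. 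A secondary point worth flagging is the role of the hypothesis $b \le n/2$ (equivalently the implicit lower bound on $n$ in terms of $\log(1/\zeta)$): it is exactly what guarantees each bucket is non-degenerate, i.e.\ $|B_i| = \lfloor n/b\rfloor \ge 1$ and in fact $\ge n/(2b)$, which is needed for the variance bound; I would state this explicitly since \cite{RE} omitted it.
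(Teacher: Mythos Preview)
Your plan is essentially the paper's proof: Chebyshev on each block mean, the Minsker geometric-median amplification lemma (stated in the paper as Lemma~\ref{lemma:6}), and a Chernoff/binomial tail bound via the rate function $\psi$. One concrete fix: you have the roles of $0.1$ and $7/18$ swapped. The per-bucket failure probability coming out of Chebyshev should be set to $0.1$ (the paper takes $\phi^2 = \tfrac{2b}{0.1\,n}\mathrm{Tr}(\Sigma_{\mathcal L})$ so that $\mathbb{P}(\|\widehat\mu_j-\mu\|_2>\phi)\le 0.1$), while $\gamma_1 = 7/18$ is the threshold \emph{fraction of bad buckets} in the geometric-median lemma; thus $\psi(7/18)$ is exactly $D\bigl(\mathrm{Bern}(7/18)\,\|\,\mathrm{Bern}(0.1)\bigr)$, matching the algorithm's definition of $b$. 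With that swap, the amplification constant is $(1-\gamma_1)\sqrt{1/(1-2\gamma_1)}$, and the paper verifies numerically that $(1-\gamma_1)\sqrt{1/(1-2\gamma_1)}\cdot\sqrt{2/(0.1\,\psi(\gamma_1))}\le 11$ and $\log(1/\zeta)+\psi(\gamma_1)\le\log(1.4/\zeta)$, which is precisely the bookkeeping you flagged.
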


\begin{proof}

We will use the following geometric lemma:

\begin{lemma}[\cite{Minsker}]
\label{lemma:6}
Let $\{\mu_i\}_{i = 1}^b$ be points in $\mathbb{R}^p$ and let $\widehat{\mu} = \mathop{\arg \min\limits_{\mu}} \sum_{i=1}^{b} \| \mu - \mu_i \|_2$ be the geometric median of the points. For $\gamma_1 \in \left(0, \frac{1}{2}\right)$ and $r > 0$, if $||\widehat{\mu} - z||_2 > r(1 - \gamma_1)\sqrt{\frac{1}{1 - 2\gamma_1}}$, then there exists $J \subseteq \{1, \dots, b\}$ with $|J| > \gamma_1 b$ such that for all $j \in J$, we have $||\mu_j - z||_2 > r$.
\end{lemma}

In the context of Lemma \ref{lemma:6}, set $\gamma_1 = \frac{7}{18}$. For all $1 \leq b \leq n/2$ and $\theta \in \Theta$, we have
\begin{align*}
\mathbb{E}\left[||\widehat{\mu}_j - \nabla\mathcal{R}(\theta)||_2^2\right] \leq \frac{\mathbb{E}\left[||\nabla \mathcal{L}(\theta, z_i) - \nabla\mathcal{R}(\theta)||_2^2\right]}{|B_j|} \leq \frac{2b}{n}\mbox{tr}(\mathrm{\mathrm{Cov}}(\nabla \mathcal{L}(\theta, z))),
\end{align*}
so by Chebyshev's inequality, with $\phi >0$ such that $\phi^2 \geq \frac{2b}{0.1n}\mbox{tr}(\mathrm{\mathrm{Cov}}(\nabla \mathcal{L}(\theta, z)))$, we have
\begin{align*}
\mathbb{P}\left(||\widehat{\mu}_j - \nabla\mathcal{R}(\theta)||_2 \geq \phi \right) \leq \frac{2b}{n\phi^2}\mbox{tr}(\mathrm{\mathrm{Cov}}(\nabla \mathcal{L}(\theta, z))) \leq 0.1.
\end{align*}
Take $\phi^2 = \frac{2b}{0.1n}Tr(\mathrm{\mathrm{Cov}}(\nabla \mathcal{L}(\theta, z)))$ and suppose we are on the event
\begin{align*}
\Omega = \left\{||\widehat{\mu} - \nabla\mathcal{R}(\theta)||_2 > \phi (1 - \gamma_1)\sqrt{\frac{1}{1 - 2\gamma_1}}\right\}.
\end{align*}
By Lemma \ref{lemma:6}, we have $J \subseteq \{1, \dots, b\}$ such that $|J| > \gamma_1 b$ and $||\widehat{\mu}_j - \nabla\mathcal{R}(\theta)||_2 > \phi$ for all $j \in J$. Hence, we have
\begin{align*}
\mathbb{P}(\Omega) \leq \mathbb{P}\left(\sum_{j = 1}^b \mathbbm{1}_{\{||\widehat{\mu}_j - \nabla\mathcal{R}(\theta)||_2 > \phi\}} > \gamma_1 b\right).
\end{align*}
Using the fact that the $\widehat{\mu}_j's$ are i.i.d., we see that (cf.\ \cite{Minsker} and Lemma $23$ in \cite{Lera})
\begin{align*}
\mathbb{P}\left(\sum_{j = 1}^b \mathbbm{1}_{\{||\widehat{\mu}_j - \nabla\mathcal{R}(\theta)||_2 > \phi\}} > \gamma_1 b\right) \leq \mathbb{P}(Bin(b, 0.1) > \gamma_1 b) \leq e^{-b\psi(\gamma_1)},
\end{align*}
where the last inequality follows from a Chernoff bound. Thus, for all $\theta \in \Theta$, we have
\begin{align*}
\mathbb{P}\left(||\widehat{\mu} - \nabla\mathcal{R}(\theta)||_2 \leq \phi(1 - \gamma_1)\sqrt{\frac{1}{1 - 2\gamma_1}}\right) \geq 1 - e^{-b\psi(\gamma_1)}.
\end{align*}
Some calculations show that $(1 - \gamma_1)\sqrt{\frac{1}{1 - 2\gamma_1}}\sqrt{\frac{2}{0.1\psi(\gamma_1)}} \leq 11$ and $\log(\frac{1}{\zeta}) + \psi(\gamma_1) \leq \log(\frac{1.4}{\zeta})$. Thus, by noting that $b = 1 + \left\lfloor \frac{\log(1/\zeta)}{\psi(\gamma_1)} \right\rfloor$, which implies $b\psi(\gamma_1) \geq \log(1/\zeta)$ and $b\psi(\gamma_1) \leq \log(1.4/\zeta)$, we obtain
\begin{align*}
\mathbb{P}\left(||\widehat{\mu} - \nabla\mathcal{R}(\theta)||_2 \leq 11\sqrt{\frac{b\psi(\gamma_1)Tr(\mathrm{\mathrm{Cov}}(\nabla \mathcal{L}(\theta, z)))}{n}}\right) \geq 1 - e^{-b\psi(\gamma_1)},
\end{align*}
implying that
\begin{align*}
\mathbb{P}\left(||\widehat{\mu} - \nabla\mathcal{R}(\theta)||_2 \leq 11\sqrt{\frac{\log(1.4/\zeta)Tr(\mathrm{\mathrm{Cov}}(\nabla \mathcal{L}(\theta, z)))}{n}}\right) \geq 1 - \zeta,
\end{align*}
as required.
\end{proof}

%%%%%

\subsubsection{Proof of Lemma \ref{lemma:geri}}
\label{AppLemGeri}

%\begin{proof}
Applying Lemma \ref{lemma:htlemma}, we see that Algorithm \ref{alg:HTGE} returns a gradient estimate such that for all $\theta \in \mathcal{C}$, we have with probability at least $1 - \widetilde{\zeta}$ that
\begin{align}
\label{EqnGradErr}
||g(\theta) - \nabla\mathcal{R}_{\gamma_{\mathcal{C}}}(\theta)||_2 \lesssim \sqrt{\frac{p||\mathrm{\mathrm{Cov}}(\nabla \mathcal{L}_{\gamma_{\mathcal{C}}}(\theta, z))||_2\log(1/ \widetilde{\zeta})}{\widetilde{n}}},
\end{align}
where we also bounded the trace above by $p$ times the largest eigenvalue. We have suppressed the dependency on the data and $\widetilde{\zeta}$ in $g$, for simplicity.

We also use the following result:

\begin{lemma}[Adapted from \cite{RE}]
\label{lemma:cov_ri}
Consider the linear regression with $\ell_2$-regularized squared error loss model defined in Example~\ref{sec:LR_l2_Reg} with $z = (x, y)$. For $\theta \in \mathcal{C}$, we have
\begin{align*}
||\mathrm{\mathrm{Cov}}(\nabla \mathcal{L}_{\gamma_{\mathcal{C}}}(\theta, z))||_2 \lesssim \sigma_2^2 + ||\Delta||_2^2 + \frac{\gamma_{\mathcal{C}}^2}{(\lambda_{\min}(\Sigma) + \gamma_{\mathcal{C}})^2},
\end{align*}
with $\Delta = \theta - \theta_{*}$.
\end{lemma}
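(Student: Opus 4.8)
The plan is to bound the operator norm of the $p \times p$ covariance matrix of the gradient by a supremum of scalar second moments, and then control that supremum using the model decomposition $y = x^T\theta^* + w$ together with the bounded fourth-moment hypothesis on $x$ (Definition~\ref{def:bd_moments}). First, recall from Example~\ref{sec:LR_l2_Reg} that $\nabla\mathcal{L}_{\gamma_{\mathcal{C}}}(\theta, (x, y)) = (x^T\theta - y)x + \gamma_{\mathcal{C}}\theta$. Since the summand $\gamma_{\mathcal{C}}\theta$ is deterministic once $\theta$ is fixed, we have $\mathrm{Cov}(\nabla\mathcal{L}_{\gamma_{\mathcal{C}}}(\theta, z)) = \mathrm{Cov}\big((x^T\theta - y)x\big) \preceq \mathbb{E}\big[(x^T\theta - y)^2 xx^T\big]$, using $\mathrm{Cov}(V) \preceq \mathbb{E}[VV^T]$ for any random vector $V$. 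Taking operator norms then gives $||\mathrm{Cov}(\nabla\mathcal{L}_{\gamma_{\mathcal{C}}}(\theta, z))||_2 \leq \sup_{||v||_2 = 1}\mathbb{E}\big[(x^T\theta - y)^2(x^Tv)^2\big]$.

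Next, write $x^T\theta - y = x^T u - w$ with $u := \theta - \theta^*$, so that $(x^T\theta - y)^2 \leq 2(x^Tu)^2 + 2w^2$, and split the expectation into a noise part and a signal part. For the noise part, independence of $x$ and $w$ and $\mathbb{E}[w^2] = \sigma_2^2$ give $\mathbb{E}[w^2(x^Tv)^2] = \sigma_2^2\, v^T\Sigma v \leq \sigma_2^2 \lambda_{\max}(\Sigma) \lesssim \sigma_2^2$. For the signal part, Cauchy--Schwarz followed by Definition~\ref{def:bd_moments} (applied, via homogeneity, to the unit vectors $u/||u||_2$ and $v$) gives $\mathbb{E}[(x^Tu)^2(x^Tv)^2] \leq \sqrt{\mathbb{E}[(x^Tu)^4]\,\mathbb{E}[(x^Tv)^4]} \leq \widetilde{C}_4\,(u^T\Sigma u)(v^T\Sigma v) \leq \widetilde{C}_4\,\lambda_{\max}(\Sigma)^2 ||u||_2^2 \lesssim ||u||_2^2$.

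It remains to bound $||u||_2$. Split $u = \Delta + (\theta_* - \theta^*)$; since $\theta_* = (\Sigma + \gamma_{\mathcal{C}} I_p)^{-1}\Sigma\theta^*$, one has the identity $\theta_* - \theta^* = -\gamma_{\mathcal{C}}(\Sigma + \gamma_{\mathcal{C}} I_p)^{-1}\theta^*$, whence $||\theta_* - \theta^*||_2 \leq ||(\Sigma + \gamma_{\mathcal{C}} I_p)^{-1}||_2\,\gamma_{\mathcal{C}} ||\theta^*||_2 = \frac{\gamma_{\mathcal{C}}}{\lambda_{\min}(\Sigma) + \gamma_{\mathcal{C}}}||\theta^*||_2$. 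Hence $||u||_2^2 \leq 2||\Delta||_2^2 + 2\frac{\gamma_{\mathcal{C}}^2}{(\lambda_{\min}(\Sigma) + \gamma_{\mathcal{C}})^2}||\theta^*||_2^2$, and assembling the three estimates — while absorbing the absolute constants $\lambda_{\max}(\Sigma)$, $||\theta^*||_2$, and $\widetilde{C}_4$ into $\lesssim$ — yields $||\mathrm{Cov}(\nabla\mathcal{L}_{\gamma_{\mathcal{C}}}(\theta, z))||_2 \lesssim \sigma_2^2 + ||\Delta||_2^2 + \frac{\gamma_{\mathcal{C}}^2}{(\lambda_{\min}(\Sigma) + \gamma_{\mathcal{C}})^2}$, as claimed.

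The argument is a routine second-moment computation, so there is no single hard step; the two places that need attention are the use of the bounded fourth-moment assumption for the cross term $\mathbb{E}[(x^Tu)^2(x^Tv)^2]$ — it is what lets us pass from fourth moments of linear forms in $x$ back to quantities controlled by $\Sigma$, and without it this term cannot be bounded by a constant multiple of $||u||_2^2$ — and keeping the $\lambda_{\min}(\Sigma)$-dependence explicit in the bias $\theta_* - \theta^*$, since this is exactly what produces the third summand and matters because in the intended application ($\lambda_{\min}(\Sigma) = 0$) that term contributes an $O(1)$ rather than a vanishing amount.
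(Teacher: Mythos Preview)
Your proof is correct and follows essentially the same route as the paper: drop the deterministic term $\gamma_{\mathcal{C}}\theta$, decompose the residual $x^T\theta - y$ into a signal part $x^Tu$ (with $u=\theta-\theta^*$) and a noise part $-w$, control the signal cross-term via Cauchy--Schwarz together with the bounded fourth-moment assumption, and finally pass from $u$ to $\Delta$ using $\theta_*-\theta^* = -\gamma_{\mathcal{C}}(\Sigma+\gamma_{\mathcal{C}}I_p)^{-1}\theta^*$. The only cosmetic difference is that the paper first computes the covariance exactly as $\mathbb{E}[(xx^T-\Sigma)\Delta'(\Delta')^T(xx^T-\Sigma)]+\sigma_2^2\Sigma$ and then bounds its operator norm, whereas you use the cruder but cleaner inequality $\mathrm{Cov}(V)\preceq\mathbb{E}[VV^T]$ at the outset; both lead to the same bound up to absolute constants.
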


\begin{proof}
For a fixed $\theta \in \mathcal{C}$, denote $\Delta^{'} = \theta - \theta^{*}$. In the linear regression with $\ell_2$-regularized squared error loss model, as stated when we introduced it in Section \ref{sec:Linear Regression}, we have $\nabla \mathcal{L}_{\gamma_{\mathcal{C}}}(\theta, (x, y)) = xx^T\Delta^{'} - wx + \gamma_{\mathcal{C}}\theta$ and $\nabla\mathcal{R}_{\gamma_{\mathcal{C}}}(\theta) = \Sigma\Delta^{'} + \gamma_{\mathcal{C}}\theta$, because $x \indep w$ and $\mathbb{E}[w] = 0$. Then, for any $\theta \in \mathcal{C}$, we have
\begin{align*}
\mathrm{\mathrm{Cov}}(\nabla \mathcal{L}_{\gamma_{\mathcal{C}}}(\theta, z)) &= \mathbb{E}\left[(\nabla \mathcal{L}_{\gamma_{\mathcal{C}}}(\theta, z) - \nabla\mathcal{R}_{\gamma_{\mathcal{C}}}(\theta))(\nabla \mathcal{L}_{\gamma_{\mathcal{C}}}(\theta, z) - \nabla\mathcal{R}_{\gamma_{\mathcal{C}}}(\theta))^T\right]\\
&= \mathbb{E}[((xx^T - \Sigma)\Delta^{'} - wx)((xx^T - \Sigma)\Delta^{'} - wx)^T] \\
& = \mathbb{E}[(xx^T - \Sigma)\Delta^{'}(\Delta^{'})^T(xx^T - \Sigma)] + \sigma_2^2\Sigma,
\end{align*}
again since $x \indep w$ and $\mathbb{E}[w] = 0$. Using the fact that $\lambda_{\max}$ is subadditive, we obtain
\begin{align*}
||\mathrm{\mathrm{Cov}}(\nabla \mathcal{L}_{\gamma_{\mathcal{C}}}(\theta, z))||_2 &= \lambda_{\max}(\mathrm{\mathrm{Cov}}(\nabla \mathcal{L}_{\gamma_{\mathcal{C}}}(\theta, z))) \\
& \leq \sigma_2^2\lambda_{\max}(\Sigma) + \lambda_{\max}\left(\mathbb{E}[(xx^T - \Sigma)\Delta^{'}(\Delta^{'})^T(xx^T - \Sigma)]\right)\\
&= \sigma_2^2\lambda_{\max}(\Sigma) + \mathop{\sup}\limits_{||\xi||_2 = 1}\xi^T\mathbb{E}[(xx^T - \Sigma)\Delta^{'}(\Delta^{'})^T(xx^T - \Sigma)]\xi\\
&\leq \sigma_2^2\lambda_{\max}(\Sigma) + \mathop{\sup}\limits_{||\xi||_2, ||\omega||_2 = 1}\xi^T\mathbb{E}[(xx^T - \Sigma)\Delta^{'}(\Delta^{'})^T(xx^T - \Sigma)]\omega\\
&\leq \sigma_2^2\lambda_{\max}(\Sigma) + ||\Delta^{'}||_2^2\mathop{\sup}\limits_{||\xi||_2, ||\omega||_2 = 1}\mathbb{E}[(\xi^T(xx^T - \Sigma)\omega)^2]\\
&\leq \sigma_2^2\lambda_{\max}(\Sigma) + ||\Delta^{'}||_2^2\mathop{\sup}\limits_{||\xi||_2, ||\omega||_2 = 1}\mathbb{E}[2(\xi^Tx)^2(x^T\omega)^2 + 2(\xi^T\Sigma\omega)^2]\\
&\leq \sigma_2^2\lambda_{\max}(\Sigma) + 2||\Delta^{'}||_2^2\mathop{\sup}\limits_{||\xi||_2, ||\omega||_2 = 1}\left(\mathbb{E}[(\xi^Tx)^2(x^T\omega)^2] + \lambda_{\max}(\Sigma)^2\right)\\
&\leq \sigma_2^2\lambda_{\max}(\Sigma)\\
& \quad + 2||\Delta^{'}||_2^2\mathop{\sup}\limits_{||\xi||_2, ||\omega||_2 = 1}\left(\sqrt{\mathbb{E}[(\xi^Tx)^4]}\sqrt{\mathbb{E}[(\omega^Tx)^4]} + \lambda_{\max}(\Sigma)^2\right)\\
&\leq \sigma_2^2\lambda_{\max}(\Sigma) + 2||\Delta^{'}||_2^2\left(\widetilde{C}_4\lambda_{\max}(\Sigma)^2 + \lambda_{\max}(\Sigma)^2\right) \\
& = \sigma_2^2\lambda_{\max}(\Sigma) + C_1||\Delta^{'}||_2^2\lambda_{\max}(\Sigma)^2,
\end{align*}
for some absolute constant $C_1 > 0$, where we used the inequality $(a + b)^2 \leq 2(a^2 + b^2)$ in the fourth inequality, the Cauchy-Schwarz inequality in the penultimate inequality, and the bounded $4^{\text{th}}$ moments assumption in the last inequality.

Now recall that the minimizer of $\mathcal{R}_{\gamma_{\mathcal{C}}}$ is $\theta_{*} = (\Sigma + \gamma_{\mathcal{C}} I_p)^{-1}\Sigma\theta^*$, so $\Delta = \Delta^{'} + (I_p - (\Sigma + \gamma_{\mathcal{C}} I_p)^{-1}\Sigma)\theta^*$. Therefore, we have $$||\Delta^{'}||_2 \leq ||\Delta||_2 + ||I_p - (\Sigma + \gamma_{\mathcal{C}} I_p)^{-1}\Sigma||_2||\theta^*||_2 \leq ||\Delta||_2 + \frac{\gamma_{\mathcal{C}}}{\lambda_{\min}(\Sigma) + \gamma_{\mathcal{C}}}||\theta^*||_2,$$
since the largest eigenvalue of $I_p - (\Sigma + \gamma_{\mathcal{C}} I_p)^{-1}\Sigma$ is $\frac{\gamma_{\mathcal{C}}}{\lambda_{\min}(\Sigma) + \gamma_{\mathcal{C}}}$. Also note that $||\theta^*||_2$ depends on $p$ only, which we assumed to be constant. Thus, again using the inequality $(a + b)^2 \leq 2(a^2 + b^2)$, we obtain
\begin{align*}
||\mathrm{\mathrm{Cov}}(\nabla \mathcal{L}_{\gamma_{\mathcal{C}}}(\theta, z))||_2 \lesssim \sigma_2^2 + ||\Delta||_2^2 + \frac{\gamma_{\mathcal{C}}^2}{(\lambda_{\min}(\Sigma) + \gamma_{\mathcal{C}})^2},
\end{align*}
as required.
\end{proof}

Plugging Lemma \ref{lemma:cov_ri} into the bound~\eqref{EqnGradErr}, we then obtain
\begin{align*}
||g(\theta) - \nabla\mathcal{R}_{\gamma_{\mathcal{C}}}(\theta)||_2 \leq \sqrt{\frac{\log(1/\widetilde{\zeta})}{\widetilde{n}}}||\theta - \theta_*||_2 + \sqrt{\frac{\sigma_2^2\log(1/\widetilde{\zeta}) + \frac{\gamma_{\mathcal{C}}^2}{(\lambda_{\min}(\Sigma) + \gamma_{\mathcal{C}})^2}\log(1/\widetilde{\zeta})}{\widetilde{n}}},
\end{align*}
as required, implying that $g$ is a gradient estimator with
\begin{align*}
&\alpha(\widetilde{n}, \widetilde{\zeta}) \asymp \sqrt{\frac{\log(1/\widetilde{\zeta})}{\widetilde{n}}},
&\beta(\widetilde{n}, \widetilde{\zeta}) \asymp \sqrt{\frac{\sigma_2^2\log(1/\widetilde{\zeta}) + \frac{\gamma_{\mathcal{C}}^2}{(\lambda_{\min}(\Sigma) + \gamma_{\mathcal{C}})^2}\log(1/\widetilde{\zeta})}{\widetilde{n}}}.
\end{align*}
%\end{proof}

%%%%%

\section{Supplementary Results for Section~\ref{sec:Biased Parameter Estimation HT}}
\label{Appendix Extra}

In this appendix, we complement the analysis in Sections \ref{sec:Well_Cond_Sec_FW} and \ref{sec:Ill_Cond_Sec_FW} by analyzing projected gradient descent. In Appendix \ref{sec:The Case when l>0}, we examine the case when $\lambda_{\min}(\Sigma) > 0$; in Appendix \ref{sec:Projected Gradient Descent for l=0}, we consider the ill-conditioned setting.

We will use the following result about projected gradient descent from \cite{RE}, which furnishes an approximate convergence bound on $||\theta_t - \theta_*||_2$, where $\theta_*$ is the minimizer of a generic risk in some constraint set $\mathcal{C} \subseteq \mathbb{R}^p$.
We state it for a generic risk $\mathcal{R}$ and convex set $\mathcal{C}$ such that $\nabla\mathcal{R}(\theta_{*}) = 0$. \cite{RE} uses this with $\theta_* = \theta^*$.
%since in their analysis, $\theta^*$ is always the minimizer.

\begin{lemma}[\cite{RE}]
\label{lemma:GD}
Suppose $\theta_* \in \mathcal{C}$. Given a stable gradient estimator $g$, Algorithm \ref{alg:RobPGDNFW} for projected gradient descent initialized at $\theta_0 \in \mathcal{C}$, with $\eta = \frac{2}{\tau_l + \tau_u}$, returns iterates $\{\theta_t\}_{t = 1}^T$ such that with probability at least $1 - \zeta$, we have 
\begin{align*}
||\theta_t - \theta_{*}||_2 \leq ||\theta_0 - \theta_{*}||_2k^t + \frac{\eta\beta(\widetilde{n}, \widetilde{\zeta})}{1 - k},
\end{align*}
with $k = \frac{\tau_u - \tau_l + 2\alpha(\widetilde{n}, \widetilde{\zeta})}{\tau_u + \tau_l}$.
\end{lemma}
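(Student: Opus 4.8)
The plan is to combine the standard one-step contraction for projected gradient descent on smooth, strongly convex functions with the gradient-estimator error bound from Definition~\ref{def:ge}, and then iterate the resulting scalar recursion. First I would handle the randomness: since Algorithm~\ref{alg:RobPGDNFW} splits the data into $T$ subsets $\{Z_t\}_{t=1}^T$ of size $\widetilde{n}$ and uses $Z_t$ only at step $t$, the iterate $\theta_t$ is a function of $Z_1,\dots,Z_{t-1}$ alone, hence independent of $Z_t$. Thus the fixed-$\theta$ guarantee of the gradient estimator applies conditionally at $\theta=\theta_t$, and a union bound over $t\in[T]$ with per-step failure probability $\widetilde{\zeta}=\zeta/T$ shows that, with probability at least $1-\zeta$, the event
\begin{align*}
\Omega = \left\{||g(\theta_t;Z_t,\widetilde{\zeta}) - \nabla\mathcal{R}(\theta_t)||_2 \le \alpha(\widetilde{n},\widetilde{\zeta})||\theta_t - \theta_*||_2 + \beta(\widetilde{n},\widetilde{\zeta}), \ \forall t\in[T]\right\}
\end{align*}
holds. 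Everything below is deterministic on $\Omega$.

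Next I would analyze a single update on $\Omega$. Write $e_t := g(\theta_t;Z_t,\widetilde{\zeta}) - \nabla\mathcal{R}(\theta_t)$. Since $\theta_*\in\mathcal{C}$ and $\nabla\mathcal{R}(\theta_*)=0$, we have $\theta_* = \mathcal{P}_{\mathcal{C}}(\theta_* - \eta\nabla\mathcal{R}(\theta_*))$, so the nonexpansiveness of $\mathcal{P}_{\mathcal{C}}$ gives
\begin{align*}
||\theta_{t+1} - \theta_*||_2 \le \left\|(\theta_t - \theta_*) - \eta(\nabla\mathcal{R}(\theta_t) - \nabla\mathcal{R}(\theta_*))\right\|_2 + \eta||e_t||_2.
\end{align*}
For the first term I expand the square and invoke Lemma~\ref{lemma:auxilGD} with $\eta = \frac{2}{\tau_l+\tau_u}$; the cross term and the squared-gradient term combine so that this term is at most $\frac{\tau_u-\tau_l}{\tau_u+\tau_l}||\theta_t-\theta_*||_2$, which is the one-step version of Lemma~\ref{lemma:Optimiz_Proj_GD_Conv}. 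For the second term I substitute the bound defining $\Omega$, namely $||e_t||_2 \le \alpha||\theta_t-\theta_*||_2 + \beta$ (suppressing arguments). Adding the two pieces, and using $\eta\alpha = \frac{2\alpha}{\tau_u+\tau_l}$, yields
\begin{align*}
||\theta_{t+1} - \theta_*||_2 \le \left(\frac{\tau_u-\tau_l+2\alpha(\widetilde{n},\widetilde{\zeta})}{\tau_u+\tau_l}\right)||\theta_t-\theta_*||_2 + \eta\beta(\widetilde{n},\widetilde{\zeta}) = k\,||\theta_t-\theta_*||_2 + \eta\beta(\widetilde{n},\widetilde{\zeta}).
\end{align*}

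Finally, stability of $g$ forces $\alpha(\widetilde{n},\widetilde{\zeta}) < \tau_l/2$, so $k < \frac{\tau_u}{\tau_u+\tau_l} < 1$ and the geometric series converges. Iterating the scalar recursion from $\theta_0$ gives
\begin{align*}
||\theta_t - \theta_*||_2 \le k^t||\theta_0-\theta_*||_2 + \eta\beta(\widetilde{n},\widetilde{\zeta})\sum_{i=0}^{t-1}k^i \le k^t||\theta_0-\theta_*||_2 + \frac{\eta\beta(\widetilde{n},\widetilde{\zeta})}{1-k},
\end{align*}
which is the claimed bound. I expect the main obstacle to be the bookkeeping in the one-step contraction: getting the constant to come out exactly as $\frac{\tau_u-\tau_l}{\tau_u+\tau_l}$ requires applying Lemma~\ref{lemma:auxilGD} carefully, and this is precisely where the step size $\eta = \frac{2}{\tau_l+\tau_u}$ is used; one must also make sure the chunk-independence argument is what legitimizes applying the fixed-$\theta$ estimator guarantee at the random point $\theta_t$.
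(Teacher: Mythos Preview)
Your proposal is correct and follows essentially the same approach as the paper indicates (the lemma is cited from \cite{RE} without a self-contained proof, but the paper explicitly points to Lemma~\ref{lemma:auxilGD} and Lemma~\ref{lemma:Optimiz_Proj_GD_Conv} as the ingredients, and the analogous computation appears verbatim in the proof of Theorem~\ref{theorem:AGD}). The union bound over the $T$ chunks, the use of $\nabla\mathcal{R}(\theta_*)=0$ together with nonexpansiveness of the projection, the application of Lemma~\ref{lemma:auxilGD} with $\eta=\frac{2}{\tau_l+\tau_u}$ to kill the gradient-norm term and produce the factor $\frac{\tau_u-\tau_l}{\tau_u+\tau_l}$, and the final geometric-series iteration all match the intended argument.
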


\begin{remark}
\label{remark:GD_Iter_LB}
The fact that the gradient estimator is stable implies $k < 1$, so the first term in the bound in Lemma \ref{lemma:GD} is decreasing in $T$, while the second is increasing. Hence, for a fixed $n$ and $\zeta$, we wish to run projected gradient descent until the first term is smaller than the second one, i.e., $T \geq \log_{1/k}\left((1 - k)||\theta_0 - \theta^*||_2/\beta(\widetilde{n}, \widetilde{\zeta})\right)$.

Note that since our gradient estimator is stable, we have $\alpha < \tau_l/2$, so $k < \frac{\tau_u - \tau_l + \tau_l}{\tau_u + \tau_l} = \frac{\tau_u}{\tau_u + \tau_l} < 1$, so indeed, we obtain a bound involving a term converging exponentially to $0$ and an error term. Additionally, note that $1 - k > \frac{\tau_l}{\tau_u + \tau_l} \neq 0$. This allows us to bound $\frac{1}{1 - k}$ above by an absolute constant if $\tau_u$ and $\tau_l$ are regarded as absolute constants themselves. 
\end{remark}

We now derive a general bound on $||\theta_T - \theta_*||_2$, where $\theta_* = (\Sigma + \gamma_{\mathcal{C}}I_p)^{-1}\Sigma\theta^*$, based on ridge regression (to accommodate for the ill-conditioned case). Later, we will choose $\gamma_{\mathcal{C}}$ appropriately to obtain a bound on $||\theta_T - \theta^*||_2$.

\begin{prop}
\label{prop:ridgeGD}
Consider the linear regression with $\ell_2$-regularized squared error loss model from Example~\ref{sec:LR_l2_Reg} under the heavy-tailed setting. Let $\zeta \in (0, 1)$. There exists an absolute constant $C_1 > 0$ such that, if $\widetilde{n} > \frac{4C_1^2\log(1/\widetilde{\zeta})}{\tau_l^2}$, Algorithm \ref{alg:RobPGDNFW} for projected gradient descent, initialized at $\theta_0 \in \mathcal{C}$ with $\eta = \frac{2}{\tau_u + \tau_l}$, and using Algorithm \ref{alg:HTGE} as gradient estimator with $\alpha(\widetilde{n}, \widetilde{\zeta}) = C_1\sqrt{\frac{\log(1/\widetilde{\zeta})}{\widetilde{n}}}$, returns iterates $\{\theta_t\}_{t = 1}^T$ such that with probability at least $1 - \zeta$, with $\widetilde{\zeta}$ such that $b \leq \widetilde{n}/2$ and with $T = \log_\frac{\tau_u + \tau_l}{\tau_u}(\sqrt{n})$, we have
\begin{align*}
||\theta_T - \theta_{*}||_2 &\lesssim \frac{1}{\sqrt{n}} + \left(\frac{\lambda_{\max}(\Sigma) + \lambda_{\min}(\Sigma) + 2\gamma_{\mathcal{C}}}{\lambda_{\min}(\Sigma) + \gamma_{\mathcal{C}}}\right)\\
& \qquad \cdot \sqrt{\frac{\left(\sigma_2^2 + \frac{\gamma_{\mathcal{C}}^2}{(\lambda_{\min}(\Sigma) + \gamma_{\mathcal{C}})^2}\right)\log(n)\log\left(\frac{\log(n)}{\zeta\log\left(\frac{\lambda_{\max}(\Sigma) + \lambda_{\min}(\Sigma) + 2\gamma_{\mathcal{C}}}{\lambda_{\max}(\Sigma) + \gamma_{\mathcal{C}}}\right)}\right)}{n\log\left(\frac{\lambda_{\max}(\Sigma) + \lambda_{\min}(\Sigma) + 2\gamma_{\mathcal{C}}}{\lambda_{\max}(\Sigma) + \gamma_{\mathcal{C}}}\right)}}.
\end{align*}
\end{prop}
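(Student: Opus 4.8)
\textbf{Proof plan for Proposition \ref{prop:ridgeGD}.}

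The plan is to combine the gradient estimator guarantee for the $\ell_2$-regularized squared error loss (Lemma \ref{lemma:geri}) with the convergence result for robust projected gradient descent (Lemma \ref{lemma:GD}), and then to choose the iteration count $T$ so that the exponentially decaying term is matched by the statistical error term. First I would recall from Example \ref{sec:LR_l2_Reg} that for ridge regression $\tau_u = \lambda_{\max}(\Sigma) + \gamma_{\mathcal{C}}$ and $\tau_l = \lambda_{\min}(\Sigma) + \gamma_{\mathcal{C}}$, and that the minimizer of $\mathcal{R}_{\gamma_{\mathcal{C}}}$ over $\mathbb{R}^p$ (equivalently over $\mathcal{C} = \mathbb{B}_2(D)$ with $D$ large enough) is $\theta_* = (\Sigma + \gamma_{\mathcal{C}}I_p)^{-1}\Sigma\theta^*$, with $\nabla\mathcal{R}_{\gamma_{\mathcal{C}}}(\theta_*) = 0$. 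By Lemma \ref{lemma:geri}, Algorithm \ref{alg:HTGE} applied to $\{\nabla\mathcal{L}_{\gamma_{\mathcal{C}}}(\theta; z_i)\}$ yields a gradient estimator $g$ with, at any fixed $\theta \in \mathcal{C}$ and with probability at least $1-\widetilde{\zeta}$,
\begin{align*}
\|g(\theta;\mathcal{D}_n,\widetilde{\zeta}) - \nabla\mathcal{R}_{\gamma_{\mathcal{C}}}(\theta)\|_2 \lesssim \sqrt{\tfrac{\log(1/\widetilde{\zeta})}{\widetilde{n}}}\,\|\theta-\theta_*\|_2 + \sqrt{\tfrac{\left(\sigma_2^2 + \gamma_{\mathcal{C}}^2/(\lambda_{\min}(\Sigma)+\gamma_{\mathcal{C}})^2\right)\log(1/\widetilde{\zeta})}{\widetilde{n}}},
\end{align*}
so we may take $\alpha(\widetilde{n},\widetilde{\zeta}) = C_1\sqrt{\log(1/\widetilde{\zeta})/\widetilde{n}}$ for an absolute constant $C_1>0$, and $\beta(\widetilde{n},\widetilde{\zeta}) \asymp \sqrt{(\sigma_2^2 + \gamma_{\mathcal{C}}^2/(\lambda_{\min}(\Sigma)+\gamma_{\mathcal{C}})^2)\log(1/\widetilde{\zeta})/\widetilde{n}}$.

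Next I would verify stability: the hypothesis $\widetilde{n} > 4C_1^2\log(1/\widetilde{\zeta})/\tau_l^2$ is exactly what forces $\alpha(\widetilde{n},\widetilde{\zeta}) < \tau_l/2$, so $g$ is stable in the sense of Definition \ref{def:ge}, and the data-splitting convention ($\widetilde{n} = \lfloor n/T\rfloor$, $\widetilde{\zeta} = \zeta/T$) together with a union bound over the $T$ steps makes the per-step high-probability event hold simultaneously with probability at least $1-\zeta$ — this is the same union-bound step used in the proof of Theorem \ref{theorem:HT_NonAccFW}. On this event, Lemma \ref{lemma:GD} with $\eta = 2/(\tau_u+\tau_l)$ gives
\begin{align*}
\|\theta_T - \theta_*\|_2 \leq \|\theta_0-\theta_*\|_2\, k^T + \frac{\eta\,\beta(\widetilde{n},\widetilde{\zeta})}{1-k}, \qquad k = \frac{\tau_u - \tau_l + 2\alpha(\widetilde{n},\widetilde{\zeta})}{\tau_u+\tau_l}.
\end{align*}
By stability, $k < \tau_u/(\tau_u+\tau_l) = (\lambda_{\max}(\Sigma)+\gamma_{\mathcal{C}})/(\lambda_{\max}(\Sigma)+\lambda_{\min}(\Sigma)+2\gamma_{\mathcal{C}})$, and $1-k > \tau_l/(\tau_u+\tau_l)$, so $\eta/(1-k) \leq 2/\tau_l$ and $1/k \geq (\tau_u+\tau_l)/\tau_u$.

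Then I would substitute $T = \log_{(\tau_u+\tau_l)/\tau_u}(\sqrt n)$, so that $k^T \leq \left(\tau_u/(\tau_u+\tau_l)\right)^{T} = 1/\sqrt n$ (using the lower bound $1/k \geq (\tau_u+\tau_l)/\tau_u$, i.e., $k \leq \tau_u/(\tau_u+\tau_l)$); since $\theta_0,\theta_* \in \mathcal{C}$ and $\|\mathcal{C}\|_2 \lesssim 1$ — or more precisely since $\|\theta_0-\theta_*\|_2$ is controlled by the radius $D$, treated with $p$ — the first term is $\lesssim 1/\sqrt n$. For the second term, I plug in $\widetilde{n} = \lfloor n/T\rfloor \asymp n/\log_{(\tau_u+\tau_l)/\tau_u}(\sqrt n) = n\log((\tau_u+\tau_l)/\tau_u)/\log(\sqrt n)$ and $\widetilde{\zeta} = \zeta/T$, which produces exactly the claimed $\beta$-expression with $\log(n)$, $\log\left(\log(n)/(\zeta\log((\tau_u+\tau_l)/\tau_u))\right)$ in the numerator and $\log((\tau_u+\tau_l)/\tau_u)$ in the denominator, and $\eta/(1-k) \leq 2/\tau_l \asymp (\tau_u+\tau_l)/(\tau_l)$ contributes the prefactor $(\lambda_{\max}(\Sigma)+\lambda_{\min}(\Sigma)+2\gamma_{\mathcal{C}})/(\lambda_{\min}(\Sigma)+\gamma_{\mathcal{C}})$. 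Combining the two bounds yields the statement.

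The main obstacle is bookkeeping rather than conceptual: one must be careful that the logarithmic factors produced by the substitution $\widetilde{n} \asymp n/T$, $\widetilde{\zeta} = \zeta/T$ with $T \asymp \log(\sqrt n)/\log((\tau_u+\tau_l)/\tau_u)$ land exactly in the form stated — in particular keeping the base $\log((\tau_u+\tau_l)/\tau_u)$ of the logarithm explicit everywhere (it is a genuine factor that degenerates as the problem becomes ill-conditioned, i.e., $\gamma_{\mathcal{C}}\to 0$ with $\lambda_{\min}(\Sigma)=0$), and confirming that $k^T \le 1/\sqrt n$ survives the replacement of $k$ by its stability upper bound. A secondary point to check is that the condition $b \le \widetilde{n}/2$ required by Lemma \ref{lemma:htlemma}/\ref{lemma:geri} is compatible with the stated lower bound on $\widetilde{n}$; since $b \asymp \log(1/\widetilde{\zeta})$ and $\widetilde{n} \gtrsim \log(1/\widetilde{\zeta})/\tau_l^2$, this holds for $\tau_l$ bounded away from a constant, which is the regime of interest, and otherwise is simply absorbed into the hypothesis on $\widetilde{\zeta}$.
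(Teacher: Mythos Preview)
Your proposal is correct and follows essentially the same approach as the paper: invoke Lemma \ref{lemma:geri} to obtain the gradient estimator with its $\alpha,\beta$ functions, use the hypothesis on $\widetilde{n}$ to verify stability, apply Lemma \ref{lemma:GD}, bound $k < \tau_u/(\tau_u+\tau_l)$ and $1/(1-k) \leq (\tau_u+\tau_l)/\tau_l$, and then substitute $T = \log_{(\tau_u+\tau_l)/\tau_u}(\sqrt{n})$ together with $\widetilde{n} \asymp n/T$, $\widetilde{\zeta} = \zeta/T$. Your discussion of the bookkeeping around the explicit logarithmic base is appropriate and matches the paper's level of care.
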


\begin{proof}
From Lemma \ref{lemma:geri}, we obtain a gradient estimator $g(\theta)$ with corresponding functions $\alpha(\widetilde{n}, \widetilde{\zeta})$ and $\beta(\widetilde{n}, \widetilde{\zeta})$. The assumption on $n$ implies by inverting the expression that $\alpha(\widetilde{n}, \widetilde{\zeta}) < \tau_l/2$, i.e., that the gradient estimator is stable. Then, for $k = \frac{\tau_u - \tau_l + 2\alpha(\widetilde{n}, \widetilde{\zeta})}{\tau_u + \tau_l} < \frac{\tau_u}{\tau_u + \tau_l} < 1$ and by Lemma \ref{lemma:GD}, optimizing $\mathcal{R}_{\gamma_{\mathcal{C}}}$ over $\mathcal{C}$ using projected gradient descent yields iterates $\{\theta_t\}_{t = 1}^T$ such that with probability at least $1 - \zeta$, we have
\begin{align*}
||\theta_t - \theta_{*}||_2 &\leq ||\theta_0 - \theta_{*}||_2k^t + \frac{\eta\beta(\widetilde{n}, \widetilde{\zeta})}{1 - k} \lesssim k^t + \frac{\beta(\widetilde{n}, \widetilde{\zeta})}{1 - k}\\
&\leq \left(\frac{\tau_u}{\tau_u + \tau_l}\right)^t + \frac{\tau_u + \tau_l}{\tau_l}\beta(\widetilde{n}, \widetilde{\zeta}),
\end{align*}
since $k < \frac{\tau_u}{\tau_u + \tau_l}$. We now plug in the expression for $\beta(\widetilde{n}, \widetilde{\zeta})$ from Lemma \ref{lemma:geri} and at step $T$ to obtain
\begin{align*}
||\theta_T - \theta_{*}||_2 &\lesssim \frac{1}{\sqrt{n}} + \frac{\tau_u + \tau_l}{\tau_l}\sqrt{\frac{\left(\sigma_2^2 + \frac{\gamma_{\mathcal{C}}^2}{(\lambda_{\min}(\Sigma) + \gamma_{\mathcal{C}})^2}\right)\log_{\frac{\tau_u + \tau_l}{\tau_u}}(n)\log\left(\log_{\frac{\tau_u + \tau_l}{\tau_u}}(n)/\zeta\right)}{n}}\\
&\leq \frac{1}{\sqrt{n}} + \left(\frac{\lambda_{\max}(\Sigma) + \lambda_{\min}(\Sigma) + 2\gamma_{\mathcal{C}}}{\lambda_{\min}(\Sigma) + \gamma_{\mathcal{C}}}\right)\\
& \qquad \cdot \sqrt{\frac{\left(\sigma_2^2 + \frac{\gamma_{\mathcal{C}}^2}{(\lambda_{\min}(\Sigma) + \gamma_{\mathcal{C}})^2}\right)\log(n)\log\left(\frac{\log(n)}{\zeta\log\left(\frac{\lambda_{\max}(\Sigma) + \lambda_{\min}(\Sigma) + 2\gamma_{\mathcal{C}}}{\lambda_{\max}(\Sigma) + \gamma_{\mathcal{C}}}\right)}\right)}{n\log\left(\frac{\lambda_{\max}(\Sigma) + \lambda_{\min}(\Sigma) + 2\gamma_{\mathcal{C}}}{\lambda_{\max}(\Sigma) + \gamma_{\mathcal{C}}}\right)}},
\end{align*}
as required.
\end{proof}

%%%%%

\subsection{Projected Gradient Descent for $\lambda_{\min}(\Sigma) > 0$}
\label{sec:The Case when l>0}

Our aim is to apply Proposition \ref{prop:ridgeGD}. Recall that $\mathcal{C} = \mathbb{B}_2(D)$, with $D \geq ||(\Sigma + \gamma_{\mathcal{C}}I_p)^{-1}\Sigma\theta^*||_2$. In this case, when $\lambda_{\min}(\Sigma) > 0$, we have $\left\|[P^T\theta^*]_{[1:m]}\right\|_2 = ||\theta^*||_2$, since $m = p$. 
\begin{corollary}
\label{corollary:HT_Ridge_min>0}
Consider the linear regression with $\ell_2$-regularized squared error loss model from Example~\ref{sec:LR_l2_Reg} under the heavy-tailed setting. Let $\zeta \in (0, 1)$. Assume $\lambda_{\min}(\Sigma) > 0$ and $\gamma_{\mathcal{C}} = \frac{1}{\sqrt{n}}$. There exists an absolute constant $C_1 > 0$ such that if $\widetilde{n} > \frac{4C_1^2\log(1/\widetilde{\zeta})}{\tau_l^2}$, Algorithm \ref{alg:RobPGDNFW} for projected gradient descent, initialized at $\theta_0 \in \mathcal{C}$ with $\eta = \frac{2}{\tau_u + \tau_l}$, and using Algorithm \ref{alg:HTGE} as gradient estimator with $\alpha(\widetilde{n}, \widetilde{\zeta}) = C_1\sqrt{\frac{\log(1/\widetilde{\zeta})}{\widetilde{n}}}$, returns iterates $\{\theta_t\}_{t = 1}^T$ such that with probability at least $1 - \zeta$, with $\widetilde{\zeta}$ such that $b \leq \widetilde{n}/2$ and with $T = \log_{\frac{\tau_u + \tau_l}{\tau_u}}(\sqrt{n})$, we have
\begin{align}
\label{eq:RI_HT}
||\theta_T - \theta^*||_2 \lesssim (1 + \sigma_2)\sqrt{\frac{\log(n)\log(\log(n)/\zeta)}{n}}.
\end{align}
\end{corollary}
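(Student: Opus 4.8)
The plan is to deduce Corollary~\ref{corollary:HT_Ridge_min>0} directly from Proposition~\ref{prop:ridgeGD} by specializing the regularization parameter to $\gamma_{\mathcal{C}} = \frac{1}{\sqrt{n}}$ and using the well-conditioned assumption $\lambda_{\min}(\Sigma) > 0$. First I would invoke Proposition~\ref{prop:ridgeGD}: under the stated conditions on $\widetilde{n}$ and the choice $\alpha(\widetilde{n}, \widetilde{\zeta}) = C_1\sqrt{\log(1/\widetilde{\zeta})/\widetilde{n}}$, with probability at least $1 - \zeta$ and $T = \log_{\frac{\tau_u + \tau_l}{\tau_u}}(\sqrt{n})$, the iterates satisfy the bound on $\|\theta_T - \theta_*\|_2$ displayed there, where $\theta_* = (\Sigma + \gamma_{\mathcal{C}}I_p)^{-1}\Sigma\theta^*$. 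Recall from Example~\ref{sec:LR_l2_Reg} that $\tau_u = \lambda_{\max}(\Sigma) + \gamma_{\mathcal{C}}$ and $\tau_l = \lambda_{\min}(\Sigma) + \gamma_{\mathcal{C}}$.

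Next I would simplify the prefactors. Since $\lambda_{\min}(\Sigma), \lambda_{\max}(\Sigma) \asymp 1$ are absolute constants and $\gamma_{\mathcal{C}} = \frac{1}{\sqrt{n}} \to 0$, we have $\tau_u, \tau_l \asymp 1$, so the ratio $\frac{\lambda_{\max}(\Sigma) + \lambda_{\min}(\Sigma) + 2\gamma_{\mathcal{C}}}{\lambda_{\min}(\Sigma) + \gamma_{\mathcal{C}}} \asymp 1$, and likewise $\log\left(\frac{\lambda_{\max}(\Sigma) + \lambda_{\min}(\Sigma) + 2\gamma_{\mathcal{C}}}{\lambda_{\max}(\Sigma) + \gamma_{\mathcal{C}}}\right) \asymp 1$ (since this ratio is bounded strictly away from $1$ for $n$ large, because $\lambda_{\min}(\Sigma) > 0$). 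Then I would handle the noise-variance term inside the square root: $\frac{\gamma_{\mathcal{C}}^2}{(\lambda_{\min}(\Sigma) + \gamma_{\mathcal{C}})^2} = \frac{1/n}{(\lambda_{\min}(\Sigma) + 1/\sqrt n)^2} \lesssim \frac{1}{n}$, which is dominated by the $\sigma_2^2$ term (or simply absorbed, as $\frac1n \lesssim 1$), so $\sigma_2^2 + \frac{\gamma_{\mathcal{C}}^2}{(\lambda_{\min}(\Sigma) + \gamma_{\mathcal{C}})^2} \lesssim 1 + \sigma_2^2 \asymp (1 + \sigma_2)^2$. Plugging these simplifications into the Proposition~\ref{prop:ridgeGD} bound collapses the second term to $(1 + \sigma_2)\sqrt{\frac{\log(n)\log(\log(n)/\zeta)}{n}}$, and the $\frac{1}{\sqrt n}$ term is dominated by this, giving the bound on $\|\theta_T - \theta_*\|_2$.

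The final step is to pass from $\|\theta_T - \theta_*\|_2$ to $\|\theta_T - \theta^*\|_2$ via the triangle inequality, using $\|\theta_* - \theta^*\|_2 = \|((\Sigma + \gamma_{\mathcal{C}}I_p)^{-1}\Sigma - I_p)\theta^*\|_2 \leq \frac{\gamma_{\mathcal{C}}}{\lambda_{\min}(\Sigma) + \gamma_{\mathcal{C}}}\|\theta^*\|_2 \lesssim \gamma_{\mathcal{C}} = \frac{1}{\sqrt n}$, where I use $\lambda_{\min}(\Sigma) > 0$ and $\|\theta^*\|_2 \asymp 1$. Since $\frac{1}{\sqrt n} \lesssim (1 + \sigma_2)\sqrt{\frac{\log(n)\log(\log(n)/\zeta)}{n}}$, this bias is absorbed, yielding inequality~\eqref{eq:RI_HT}.

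I do not anticipate a genuine obstacle here — the result is essentially a corollary obtained by bookkeeping. The one point requiring mild care is confirming that all the $\gamma_{\mathcal{C}}$-dependent logarithmic and ratio factors in the Proposition~\ref{prop:ridgeGD} bound remain bounded above and below by absolute constants when $\gamma_{\mathcal{C}} = \frac{1}{\sqrt n} \to 0$; this hinges crucially on $\lambda_{\min}(\Sigma) > 0$ (so denominators like $\lambda_{\min}(\Sigma) + \gamma_{\mathcal{C}}$ stay bounded away from zero and the base-of-log ratio stays bounded away from $1$), which is exactly the well-conditioned hypothesis we are allowed to assume. The stability requirement $\alpha(\widetilde n, \widetilde\zeta) < \tau_l/2$ needed to apply Proposition~\ref{prop:ridgeGD} is guaranteed by the hypothesis $\widetilde n > 4C_1^2\log(1/\widetilde\zeta)/\tau_l^2$, which carries over verbatim.
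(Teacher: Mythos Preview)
Your proposal is correct and follows essentially the same route as the paper's proof: invoke Proposition~\ref{prop:ridgeGD}, use $\lambda_{\min}(\Sigma)>0$ and $\gamma_{\mathcal C}\to 0$ to reduce all the $\gamma_{\mathcal C}$-dependent ratios and logarithms to absolute constants, then handle the bias $\|\theta_*-\theta^*\|_2\lesssim \gamma_{\mathcal C}=1/\sqrt n$ via the triangle inequality and absorb it. The paper's argument is identical in structure and in each of the individual simplifications you list.
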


\begin{proof}
By Proposition \ref{prop:ridgeGD}, we see that with probability at least $1 - \zeta$, we have
\begin{align*}
||\theta_T - \theta_{*}||_2 &\lesssim \frac{1}{\sqrt{n}} + \left(\frac{\lambda_{\max}(\Sigma) + \lambda_{\min}(\Sigma) + 2\gamma_{\mathcal{C}}}{\lambda_{\min}(\Sigma) + \gamma_{\mathcal{C}}}\right)\\
& \qquad \cdot \sqrt{\frac{\left(\sigma_2^2 + \frac{\gamma_{\mathcal{C}}^2}{(\lambda_{\min}(\Sigma) + \gamma_{\mathcal{C}})^2}\right)\log(n)\log\left(\frac{\log(n)}{\zeta\log\left(\frac{\lambda_{\max}(\Sigma) + \lambda_{\min}(\Sigma) + 2\gamma_{\mathcal{C}}}{\lambda_{\max}(\Sigma) + \gamma_{\mathcal{C}}}\right)}\right)}{n\log\left(\frac{\lambda_{\max}(\Sigma) + \lambda_{\min}(\Sigma) + 2\gamma_{\mathcal{C}}}{\lambda_{\max}(\Sigma) + \gamma_{\mathcal{C}}}\right)}}.
\end{align*}
Note that $\gamma_{\mathcal{C}} \rightarrow 0$ as $n \rightarrow \infty$, so $\frac{\lambda_{\max}(\Sigma) + \gamma_{\mathcal{C}}}{\lambda_{\max}(\Sigma) + \lambda_{\min}(\Sigma) + 2\gamma_{\mathcal{C}}} < \frac{\lambda_{\max}(\Sigma)}{\lambda_{\max}(\Sigma) + \lambda_{\min}(\Sigma)} < 1$ for $n$ greater than an absolute constant and $\frac{\lambda_{\max}(\Sigma) + \lambda_{\min}(\Sigma) + 2\gamma_{\mathcal{C}}}{\lambda_{\min}(\Sigma) + \gamma_{\mathcal{C}}} \lesssim 1$. Furthermore, we have
\begin{align*}
||\theta_{*} - \theta^*||_2 &\lesssim \left\|\left((\Sigma + \gamma_{\mathcal{C}}I_p)^{-1}\Sigma - I_p\right)\theta^*\right\|_2 \leq \left\|(\Sigma + \gamma_{\mathcal{C}}I_p)^{-1}\Sigma - I_p\right\|_2||\theta^*||_2\\
&\leq \frac{\gamma_{\mathcal{C}}}{\lambda_{\min}(\Sigma) + \gamma_{\mathcal{C}}}||\theta^*||_2 \lesssim \gamma_{\mathcal{C}},
\end{align*}
since $\lambda_{\min}(\Sigma) > 0$ and $||\theta^*||_2 \asymp 1$. Therefore, we have
\begin{align}
\label{eq:HT_RI_MIN>0}
||\theta_T - \theta^*||_2 \lesssim \frac{1}{\sqrt{n}} + \sqrt{\frac{(1 + \sigma_2^2)\log(n)\log(\log(n)/\zeta)}{n}} + \gamma_{\mathcal{C}}.
\end{align}
Since $\gamma_{\mathcal{C}} = \frac{1}{\sqrt{n}}$, we obtain
\begin{align*}
||\theta_T - \theta^*||_2 \lesssim (1 + \sigma_2)\sqrt{\frac{\log(n)\log(\log(n)/\zeta)}{n}},
\end{align*}
as required.
\end{proof}

\begin{remark}
Recall that $T = \log_{\frac{\tau_u + \tau_l}{\tau_u}}(\sqrt{n})$ and $\frac{\tau_u}{\tau_u + \tau_l} < \frac{\lambda_{\max}(\Sigma)}{\lambda_{\max}(\Sigma) + \lambda_{\min}(\Sigma)}$, the latter of which is an absolute constant. Hence, the number of iterations required is sublogarithmic in $n$.

The upper bound \eqref{eq:HT_RI_MIN>0} is polynomial in $\gamma_{\mathcal{C}}$, so we could have chosen $\gamma_{\mathcal{C}}$ much smaller than $\frac{1}{\sqrt{n}}$. However, the result would not have changed because of the presence of the rate of $\frac{1}{\sqrt{n}}$ in inequality \eqref{eq:HT_RI_MIN>0} already. We chose $\frac{1}{\sqrt{n}}$ so that the last term $\gamma_{\mathcal{C}}$ in inequality \eqref{eq:HT_RI_MIN>0} scales like $\frac{1}{\sqrt{n}}$. Regardless of the choice of $\gamma_{\mathcal{C}}$, the best rate  we can hope for in this case is $\frac{1}{\sqrt{n}}$. Note also that if we take $\gamma_{\mathcal{C}} = 0$, i.e., we are in the case when $\theta_* = \theta^*$, we are back in the linear regression with squared error loss model and we minimize over $\mathcal{C}$. We obtain a rate of $\frac{1}{\sqrt{n}}$ for $||\theta_T - \theta^*||_2$, up to logarithmic factors. This is consistent with what we have in Section \ref{sec:Strongly Convex Risks}, because when $\gamma_{\mathcal{C}} = 0$, we are in the context of Lemma \ref{lemma:GLMhtGD}, where we have a rate of $\frac{1}{\sqrt{n}}$ for $||\theta_T - \theta^*||_2$, up to logarithmic factors.
\end{remark}

%\subsubsection{Comparisons}

We now compare the results of Corollary~\ref{corollary:HT_Ridge_min>0}, Theorem~\ref{theorem:HT_NonAccFW}, and Theorem~\ref{theorem:ACCFWROB}. Up to logarithmic factors, we see that the projected gradient descent approach is the best at rate $\frac{1}{\sqrt{n}}$ (cf.\ inequality \eqref{eq:RI_HT}), followed by the accelerated Frank-Wolfe approach at rate $\frac{1}{n^{1/5}}$ (cf.\ inequality \eqref{eq:ACCFWHTrate}). The worst rate of the three is the non-accelerated Frank-Wolfe approach at rate $\frac{1}{n^{1/6}}$ (cf.\ inequality \eqref{eq:NotAccFW_HT}). The $\frac{1}{\sqrt{n}}$ rate is minimax optimal for $w \sim N\left(0, \sigma_2^2\right)$ (see \cite{Stats_Info_Duchi}). Hence, the ridge regression approach in Corollary~\ref{corollary:HT_Ridge_min>0} is minimax optimal and robust to heavy-tails in the noise and covariates. Moreover, projected gradient descent outperforms the Frank-Wolfe methods in terms of iteration count: The iteration count in Corollary~\ref{corollary:HT_Ridge_min>0} is logarithmic in $n$, while the iteration counts in Theorem~\ref{theorem:HT_NonAccFW} and Theorem~\ref{theorem:ACCFWROB} are polynomial in $n$ ($n^{1/3}$ and $\widetilde{\Theta}\left(n^{1/5}\right)$, respectively).

However, there is a potential downside to using projected gradient descent rather than the Frank-Wolfe methods, in terms of robustness to heavy tails in the noise $w$. Suppose $w \sim ST(\nu)$ with $\nu > 2$ so that $\sigma_2^2 = \mathbb{E}[w^2] < \infty$. In this case, $\sigma_2 = \sqrt{\frac{\nu}{\nu - 2}} > 1$. The term $1 + \sigma_2$ appears in the upper bound on $||\theta_T - \theta^*||_2$ in inequality \eqref{eq:RI_HT}, whereas in the bounds \eqref{eq:NotAccFW_HT} and \eqref{eq:ACCFWHTrate}, we have an improved dependency of $\sigma_2$ in the form of $(1 + \sigma_2)^{1/2}$. Note that as $\nu$ increases, i.e., as the number of finite moments of $w$ increases, $\sigma_2$ decreases, so all the bounds become tighter. This makes intuitive sense, because as we gather more information about $w$, we can obtain a more precise bound.

%Furthermore, by using projected gradient descent in the context of squared error risk (i.e., without regularization) yields a rate of $\frac{1}{\sqrt{n}}$ up to logarithmic factors (see Lemma \ref{lemma:GLMhtGD} and inequality \eqref{eq:ht_gdd} in Section \ref{sec:Comparisons in the Heavy-Tailed Setting}). The projected gradient descent approach based on ridge regression in Corollary~\ref{corollary:HT_Ridge_min>0} has the same performance in terms of the rate with $n$ (up to logarithmic factors). Finally, ignoring the dependency on $p$, as mentioned in Section \ref{sec:Comparisons in the Heavy-Tailed Setting}, the $\frac{1}{\sqrt{n}}$ rate is minimax optimal for $w \sim N\left(0, \sigma_2^2\right)$ (see \cite{Stats_Info_Duchi}). Hence, the ridge regression approach Corollary~\ref{corollary:HT_Ridge_min>0} is minimax optimal and robust to heavy-tails in the noise and covariates.

\subsection{Projected Gradient Descent for $\lambda_{\min}(\Sigma) = 0$}
\label{sec:Projected Gradient Descent for l=0}

As in Section \ref{sec:Ill_Cond_Sec_FW}, we now assume that the top $m$ eigenvalues of $\Sigma$ are positive, with $0 < m < p$. In the following corollary, we keep track of the dependency on $\left\|[P^T\theta^*]_{[(m+1):p]}\right\|_2$, the only term that vanishes when in the well-conditioned case ($m = p$). 

\begin{corollary}
\label{corollary:HT_Ridge_min=0}
Consider the linear regression with $\ell_2$-regularized squared error loss model from Example~\ref{sec:LR_l2_Reg} under the heavy-tailed setting. Let $\zeta \in (0, 1)$. Assume that the top $m$ eigenvalues of $\Sigma$ are positive, with $0 < m < p$. Let $[P^T\theta^*]_{[(m + 1):p]}$ be the vector in $\mathbb{R}^{p - m}$ containing the bottom $p - m$ entries of $P^T\theta^*$. Assume $\frac{1}{n^{1/5}} \lesssim \gamma_{\mathcal{C}} \rightarrow 0$ as $n \rightarrow \infty$. There exists an absolute constant $C_1 > 0$ such that, if $\widetilde{n} > \frac{4C_1^2\log(1/\widetilde{\zeta})}{\tau_l^2}$, Algorithm \ref{alg:RobPGDNFW} for projected gradient descent, initialized at $\theta_0 \in \mathcal{C}$ with $\eta = \frac{2}{\tau_u + \tau_l}$ and using Algorithm \ref{alg:HTGE} as gradient estimator with $\alpha(\widetilde{n}, \widetilde{\zeta}) = C_1\sqrt{\frac{\log(1/\widetilde{\zeta})}{\widetilde{n}}}$, returns iterates $\{\theta_t\}_{t = 1}^T$ such that with probability at least $1 - \zeta$, with $\widetilde{\zeta}$ such that $b \leq \widetilde{n}/2$ and with $T = \log_{\frac{\tau_u + \tau_l}{\tau_u}}(\sqrt{n}) = \widetilde{O}\left(n^{1/5}\right)$, we have
\begin{align*}
||\theta_T - \theta^*||_2 \lesssim (1 + \sigma_2)\frac{\sqrt{\log(n)\log\left(n/\zeta\right)}}{n^{1/5}} + \left\|[P^T\theta^*]_{[(m+1):p]}\right\|_2.
\end{align*}
\end{corollary}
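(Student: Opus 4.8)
The plan is to reduce this to Proposition~\ref{prop:ridgeGD}, which already gives a general $\ell_2$-error bound for robust projected gradient descent on the regularized risk $\mathcal{R}_{\gamma_{\mathcal{C}}}$ with $T = \log_{(\tau_u+\tau_l)/\tau_u}(\sqrt{n})$, and then to optimize over the penalty $\gamma_{\mathcal{C}}$. First I would record that in the ill-conditioned regime the strong convexity of $\mathcal{R}_{\gamma_{\mathcal{C}}}$ comes entirely from the regularizer, so (cf.\ Example~\ref{sec:LR_l2_Reg}) $\tau_l = \lambda_{\min}(\Sigma) + \gamma_{\mathcal{C}} = \gamma_{\mathcal{C}}$ and $\tau_u = \lambda_{\max}(\Sigma) + \gamma_{\mathcal{C}}$, and that Lemma~\ref{lemma:geri} supplies the gradient estimator produced by Algorithm~\ref{alg:HTGE}, whose stability under the stated lower bound on $\widetilde{n}$ is exactly the hypothesis needed to invoke Proposition~\ref{prop:ridgeGD}.

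Next I would substitute $\lambda_{\min}(\Sigma) = 0$ into the bound of Proposition~\ref{prop:ridgeGD} and simplify using $\gamma_{\mathcal{C}} \to 0$: the factor $\gamma_{\mathcal{C}}^2/(\lambda_{\min}(\Sigma)+\gamma_{\mathcal{C}})^2$ collapses to $1$, so the second-moment term picks up $\sqrt{\sigma_2^2 + 1} \asymp 1 + \sigma_2$; the prefactor $(\lambda_{\max}(\Sigma)+\lambda_{\min}(\Sigma)+2\gamma_{\mathcal{C}})/(\lambda_{\min}(\Sigma)+\gamma_{\mathcal{C}})$ is $\asymp 1/\gamma_{\mathcal{C}}$; and the base-change logarithm $\log\!\big((\lambda_{\max}(\Sigma)+2\gamma_{\mathcal{C}})/(\lambda_{\max}(\Sigma)+\gamma_{\mathcal{C}})\big)$ appearing under the square root equals $\log(1 + \gamma_{\mathcal{C}}/(\lambda_{\max}(\Sigma)+\gamma_{\mathcal{C}})) \asymp \gamma_{\mathcal{C}}$, while the nested logarithm in the numerator reduces to $\log(n/\zeta)$. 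Collecting these, this should yield $\|\theta_T - \theta_*\|_2 \lesssim n^{-1/2} + (1+\sigma_2)\,\gamma_{\mathcal{C}}^{-3/2} n^{-1/2}\sqrt{\log(n)\log(n/\zeta)}$, where $\theta_* = (\Sigma + \gamma_{\mathcal{C}} I_p)^{-1}\Sigma\theta^*$.

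Then I would pass from $\theta_*$ to $\theta^*$ exactly as in the proof of Theorem~\ref{theorem:NonACC_lambda=0}: writing $\Sigma = PSP^T$ gives $\|\theta_* - \theta^*\|_2^2 = \|((S+\gamma_{\mathcal{C}}I_p)^{-1}S - I_p)P^T\theta^*\|_2^2 \lesssim m\gamma_{\mathcal{C}}^2 + \|[P^T\theta^*]_{[(m+1):p]}\|_2^2$, so (with $m$ an absolute constant) $\|\theta_* - \theta^*\|_2 \lesssim \gamma_{\mathcal{C}} + \|[P^T\theta^*]_{[(m+1):p]}\|_2$. By the triangle inequality, $\|\theta_T - \theta^*\|_2 \lesssim n^{-1/2} + (1+\sigma_2)\gamma_{\mathcal{C}}^{-3/2}n^{-1/2}\sqrt{\log(n)\log(n/\zeta)} + \gamma_{\mathcal{C}} + \|[P^T\theta^*]_{[(m+1):p]}\|_2$. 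The variance term is decreasing in $\gamma_{\mathcal{C}}$ and the regularization bias $\gamma_{\mathcal{C}}$ is increasing, and they balance when $\gamma_{\mathcal{C}}^{5/2} \asymp n^{-1/2}$, i.e.\ $\gamma_{\mathcal{C}} \asymp n^{-1/5}$ — the lower end of the admissible range in the hypothesis — at which point both are $\asymp n^{-1/5}$, giving the claimed bound. Finally I would verify the iteration count: $(\tau_u+\tau_l)/\tau_u = 1 + \gamma_{\mathcal{C}}/(\lambda_{\max}(\Sigma)+\gamma_{\mathcal{C}}) = 1 + \Theta(n^{-1/5})$, so $T = \log_{1+\Theta(n^{-1/5})}(\sqrt{n}) \asymp n^{1/5}\log n = \widetilde{O}(n^{1/5})$, whence $\widetilde{n} = \lfloor n/T\rfloor \asymp n^{4/5}/\log n$, consistent with the hypothesis $\widetilde{n} \gtrsim \log(1/\widetilde{\zeta})/\tau_l^2 \asymp n^{2/5}\log(1/\widetilde{\zeta})$.

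The main obstacle I anticipate is purely bookkeeping: correctly tracking how $\gamma_{\mathcal{C}}$ enters the logarithmic factors of Proposition~\ref{prop:ridgeGD}. The $\log((\lambda_{\max}(\Sigma)+2\gamma_{\mathcal{C}})/(\lambda_{\max}(\Sigma)+\gamma_{\mathcal{C}}))$ sitting in the denominator of the square root is the delicate point — it is $\Theta(\gamma_{\mathcal{C}})$, not $\Theta(1)$, and it is precisely this factor that upgrades the $\gamma_{\mathcal{C}}^{-1}$ coming from the prefactor to $\gamma_{\mathcal{C}}^{-3/2}$, which in turn shifts the optimal penalty from the $n^{-1/2}$ of Corollary~\ref{corollary:HT_Ridge_min>0} to $n^{-1/5}$, and hence the rate from $n^{-1/2}$ to $n^{-1/5}$. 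Beyond this, everything is routine: the covariance estimate feeding $\beta(\widetilde{n},\widetilde{\zeta})$ is Lemma~\ref{lemma:geri}, the bias computation duplicates the one already carried out in the proof of Theorem~\ref{theorem:NonACC_lambda=0}, and the stability of the estimator is checked exactly as in Proposition~\ref{prop:ridgeGD}.
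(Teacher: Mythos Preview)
Your proposal is correct and follows essentially the same route as the paper: invoke Proposition~\ref{prop:ridgeGD} with $\lambda_{\min}(\Sigma)=0$, use $\log\!\big((\lambda_{\max}(\Sigma)+2\gamma_{\mathcal{C}})/(\lambda_{\max}(\Sigma)+\gamma_{\mathcal{C}})\big)\asymp\gamma_{\mathcal{C}}$ to obtain the $\gamma_{\mathcal{C}}^{-3/2}n^{-1/2}$ variance term, add the bias $\|\theta_*-\theta^*\|_2\lesssim\gamma_{\mathcal{C}}+\|[P^T\theta^*]_{[(m+1):p]}\|_2$ exactly as in Theorem~\ref{theorem:NonACC_lambda=0}, and balance at $\gamma_{\mathcal{C}}\asymp n^{-1/5}$. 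The paper's proof additionally records the subdominant $\gamma_{\mathcal{C}}^{-1/2}n^{-1/2}$ term coming from expanding $(\lambda_{\max}(\Sigma)+2\gamma_{\mathcal{C}})/\gamma_{\mathcal{C}}$, but this is harmless since $\gamma_{\mathcal{C}}\to 0$; your absorption of it into the leading term is justified.
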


\begin{proof}
We have $\theta_{*} = (\Sigma + \gamma_{\mathcal{C}}I_p)^{-1}\Sigma\theta^*$, and by Proposition \ref{prop:ridgeGD}, with probability at least $1 - \zeta$, we have
\begin{align*}
||\theta_T - \theta_{*}||_2 \lesssim \frac{1}{\sqrt{n}} + \frac{\lambda_{\max}(\Sigma) + 2\gamma_{\mathcal{C}}}{\gamma_{\mathcal{C}}}\sqrt{\frac{(1 + \sigma_2^2)\log(n)\log\left(\frac{\log(n)}{\zeta\log\left(\frac{\lambda_{\max}(\Sigma) + 2\gamma_{\mathcal{C}}}{\lambda_{\max}(\Sigma) + \gamma_{\mathcal{C}}}\right)}\right)}{n\log\left(\frac{\lambda_{\max}(\Sigma) + 2\gamma_{\mathcal{C}}}{\lambda_{\max}(\Sigma) + \gamma_{\mathcal{C}}}\right)}}.
\end{align*}
Since $\gamma_{\mathcal{C}} \rightarrow 0$ as $n \rightarrow \infty$, we have $\frac{1}{\log\left(\frac{\lambda_{\max}(\Sigma) + 2\gamma_{\mathcal{C}}}{\lambda_{\max}(\Sigma) + \gamma_{\mathcal{C}}}\right)} = \frac{1}{\gamma_{\mathcal{C}}\log\left(\left(1 + \frac{\gamma_{\mathcal{C}}}{\lambda_{\max}(\Sigma) + \gamma_{\mathcal{C}}}\right)^{1/\gamma_{\mathcal{C}}}\right)} \asymp \frac{1}{\gamma_{\mathcal{C}}}$. Thus, we have
\begin{align*}
||\theta_T - \theta_{*}||_2 \lesssim \frac{1}{\sqrt{n}} + \frac{\lambda_{\max}(\Sigma) + 2\gamma_{\mathcal{C}}}{\gamma_{\mathcal{C}}}\sqrt{\frac{(1 + \sigma_2^2)\log(n)\log\left(\frac{\log(n)}{\zeta\gamma_{\mathcal{C}}}\right)}{n\gamma_{\mathcal{C}}}}.
\end{align*}
Since $\theta_{*} = (\Sigma + \gamma_{\mathcal{C}}I_p)^{-1}\Sigma\theta^*$, we have
\begin{align*}
||\theta_{*} - \theta^*||_2^2 = ||((S + \gamma_{\mathcal{C}}I_p)^{-1}S - I_p)P^T\theta^*||_2^2 \lesssim m\gamma_{\mathcal{C}}^2 + \left\|[P^T\theta^*]_{[(m+1):p]}\right\|_2^2.
\end{align*}
Hence, we obtain
\begin{align}
\label{eq:HT_RI_MIN=0}
||\theta_T - \theta^*||_2 &\lesssim \frac{1}{\sqrt{n}} + \frac{\sqrt{(1 + \sigma_2^2)\log(n)\log\left(\frac{\log(n)}{\zeta\gamma_{\mathcal{C}}}\right)}}{\gamma_{\mathcal{C}}^{3/2}n^{1/2}} + \frac{\sqrt{(1 + \sigma_2^2)\log(n)\log\left(\frac{\log(n)}{\zeta\gamma_{\mathcal{C}}}\right)}}{\gamma_{\mathcal{C}}^{1/2}n^{1/2}} \notag\\
& \quad + \sqrt{m}\gamma_{\mathcal{C}} + \left\|[P^T\theta^*]_{[(m+1):p]}\right\|_2,
\end{align}
so for $\gamma_{\mathcal{C}} \gtrsim \frac{1}{n^{1/5}}$, we obtain
\begin{align*}
||\theta_T - \theta^*||_2 \lesssim (1 + \sigma_2)\frac{\sqrt{\log(n)\log\left(n/\zeta\right)}}{n^{1/5}} + \left\|[P^T\theta^*]_{[(m+1):p]}\right\|_2,
\end{align*}
as required.

Furthermore, note that $T \asymp \log_{\frac{\tau_u + \tau_l}{\tau_u}}(n)$ and $\log\left(\frac{\tau_u + \tau_l}{\tau_u}\right) \lesssim \frac{1}{\gamma_{\mathcal{C}}} \lesssim n^{1/5}$, implying that $T \lesssim n^{1/5}\log(n) = \widetilde{O}(n^{1/5})$.
\end{proof}

\begin{remark}
Observe that the upper bound for $||\theta_T - \theta^*||_2$ in Corollary \ref{corollary:HT_Ridge_min=0} is of the form $\widetilde{O}\left(\frac{1}{n^{1/5}}\right) + \left\|[P^T\theta^*]_{[(m + 1):p]}\right\|_2$. In other words, we have one term that vanishes with $n$, and one term that decreases with $m$.
%under the constraint that the last $p - m$ eigenvalues of $\Sigma$ are $0$, we can get close to $\theta^*$ up to a rate of $\frac{1}{n^{1/5}}$ (up to logarithmic factors) plus an error that depends on $m, \theta^*$ and $P$. This error decreases as we decrease $m$ and it vanishes when $m = p$.

Moreover, note that the choice of $\gamma_{\mathcal{C}} \gtrsim \frac{1}{n^{1/5}}$ is not arbitrary and the rate of $\frac{1}{n^{1/5}}$ is the best possible using our analysis: In inequality~\eqref{eq:HT_RI_MIN=0}, the best rate we can hope for is polynomial in $n$, and if we take $\gamma_{\mathcal{C}} = \frac{1}{n^q}$, the best rate is obtained by taking the intersection between the lines $\frac{1 - 3q}{2}, \frac{1 - q}{2}$, and $q$. Also, we choose $\gamma_{\mathcal{C}} \gtrsim \frac{1}{n^{1/5}}$, since the bound \eqref{eq:HT_RI_MIN=0} is decreasing for $\gamma_{\mathcal{C}}$ small enough, i.e., for $n$ large enough.

Additionally, to interpret the result of Corollary \ref{corollary:HT_Ridge_min=0} based on our introduction of the $\ell_2$-regularization in Example~\ref{sec:LR_l2_Reg}, note that the method is equivalent to optimizing the squared error risk $\mathcal{R}$ over an $\ell_2$-ball $\mathcal{V}$ centered at $0$ that increases with $n$ towards $\mathbb{B}_2\left(\left\|[P^T\theta^*]_{[1:m]}\right\|_2\right)$.
%This is equivalent to solving a penalized squared error risk problem with the penalty scaling at least like $\frac{1}{n^{1/5}}$ and in this case
Then we can learn $\theta^*$ at rate $\frac{1}{n^{1/5}}$ and up to an error that vanishes if $m = p$.
\end{remark}

%\begin{remark}
%For $w \sim ST(\nu)$ with $\nu > 2$, the bound in Corollary \ref{corollary:HT_Ridge_min=0} will have a factor of $\sigma_2 = \sqrt{\frac{\nu}{\nu - 2}}$, and as we increase $\nu$, we decrease the bound, i.e., as we have more moments for the noise, we obtain a tighter bound.
%\end{remark}

%\subsubsection{Comparisons}

We now compare the result of Corollary \ref{corollary:HT_Ridge_min=0} with that of Theorem \ref{theorem:ACCFW_lambda=0}. In Corollary \ref{corollary:HT_Ridge_min=0}, we have a bound of the form $\widetilde{O}\left(\frac{1}{n^{1/5}}\right) + \left\|[P^T\theta^*]_{[(m+1):p]}\right\|_2$; in Theorem \ref{theorem:ACCFW_lambda=0}, the bound is of the form 
\begin{align*}
\widetilde{O}\left(\frac{1}{\left\|[P^T\theta^*]_{[(m+1):p]}\right\|_2^{1/4}n^{1/4}}\right) + \left\|[P^T\theta^*]_{[(m+1):p]}\right\|_2 + \left\|[P^T\theta^*]_{[(m+1):p]}\right\|_2^{1/2}.
\end{align*}
If $\left\|[P^T\theta^*]_{[(m+1):p]}\right\|_2 \geq 1$, the bound in Theorem \ref{theorem:ACCFW_lambda=0} is $\widetilde{O}\left(\frac{1}{n^{1/4}}\right) + \left\|[P^T\theta^*]_{[(m+1):p]}\right\|_2$. In this case, the result in Theorem \ref{theorem:ACCFW_lambda=0} is tighter in terms of the rate with $n$ and the constant $\left\|[P^T\theta^*]_{[(m+1):p]}\right\|_2$. We do wish to point out that the other suppressed constants multiplying $\left\|[P^T\theta^*]_{[(m+1):p]}\right\|_2$ in Theorem \ref{theorem:ACCFW_lambda=0} can be much larger compared to Corollary \ref{corollary:HT_Ridge_min=0} due to the nature of our derivations. Hence, if $m$ is not close to $p$, the result of Corollary \ref{corollary:HT_Ridge_min=0} could be better because its constant error could be much smaller.
%But, like we said, the only constant that vanishes as we set $m = p$ is $\left\|[P^T\theta^*]_{[(m+1):p]}\right\|_2$.

Additionally, observe that if $\left\|[P^T\theta^*]_{[(m+1):p]}\right\|_2 < 1$, the upper bound in Theorem \ref{theorem:ACCFW_lambda=0} scales like \\ $\widetilde{O}\left(\frac{1}{\left\|[P^T\theta^*]_{[(m+1):p]}\right\|_2^{1/4}n^{1/4}}\right) + \left\|[P^T\theta^*]_{[(m+1):p]}\right\|_2^{1/2}$. We obtain the best rate with $n$ again, but with a slightly higher term of $c_{\mathcal{K}}^{1/2} = \left\|[P^T\theta^*]_{[(m+1):p]}\right\|_2^{1/2}$, compared to $c_{\mathcal{K}}$, in the bound based on Corollary \ref{corollary:HT_Ridge_min=0}. However, as we explained in Section \ref{sec:Biased Parameter Estimation HT}, the term $c_{\mathcal{K}}^{1/2}$ can indeed be small in practice. In general, Corollary \ref{corollary:HT_Ridge_min=0} is the most practical, since for the Frank-Wolfe methods, we have to impose further restrictions on some parameters, such as lower or upper bounds involving $||\theta^*||_2$ or $\Sigma$.

Also, we remark that the method in Corollary \ref{corollary:HT_Ridge_min=0} targets $||\theta_T - \theta^*||_2$ directly, and going to the excess regularized risk is at the cost of a constant factor due to the fact that the smoothness parameter is a constant factor. The Frank-Wolfe methods target the excess regularized risk, and going to $||\theta_T - \theta^*||_2$ is at the cost of a $\gamma_{\mathcal{C}}$ term due to strong convexity. This influences the convergence rate with $n$ for the non-accelerated version, while the rate in the accelerated version is not affected by this multiplication with $\gamma_{\mathcal{C}}$, since $\gamma_{\mathcal{C}} \in \left[\frac{c_{\mathcal{K}}}{4}, \frac{c_{\mathcal{K}}}{2}\right]$. Moreover, the approach in Corollary \ref{corollary:HT_Ridge_min=0} takes into account the strong convexity of the risk in the proof of the convergence rate for projected gradient descent, as we can see in Lemma \ref{lemma:GD}. The proofs of convergence of the Frank-Wolfe methods (Lemma \ref{lemma:NonAccRel_FW} and Theorem \ref{theorem:ACCFWV3}) do not take strong convexity of the risk into account. Hence, a more fair comparison could be between the performance of the Frank-Wolfe methods and a projected gradient descent approach that only takes the smoothness of the risk into account. Then the performance of the projected gradient descent approach would guarantee a worse rate than the one in Lemma \ref{lemma:GD}.

Finally, in terms of the second moment of the noise, assume $w \sim ST(\nu)$, with $\nu > 2$. Then, $\sigma_2^2 = \frac{\nu}{\nu - 2} > 1$. The bound in Corollary \ref{corollary:HT_Ridge_min=0} has a $1 + \sigma_2$ factor, while Theorem \ref{theorem:NonACC_lambda=0} and Theorem \ref{theorem:ACCFW_lambda=0} have a $(1 + \sigma_2)^{1/2}$ factor. Thus, the Frank-Wolfe methods have tighter bounds in terms of $\sigma_2$, for $||\theta_T - \theta^*||_2$.

%%%%%

\section{Proofs for Section \ref{sec:Unbiased Parameter Estimation}}

In this appendix, we present the proofs of the results in Section \ref{sec:Unbiased Parameter Estimation}. In Appendix \ref{sec:Proofs of the Main Results_Nester}, we provide the proofs of the main results in Section \ref{sec:Unbiased Parameter Estimation}; in Appendix \ref{sec:Proofs of the Auxiliary Results_Nester}, we present the proofs of the auxiliary statements.

%%%%%

\subsection{Proofs of the Main Results from Section \ref{sec:Unbiased Parameter Estimation}}\label{sec:Proofs of the Main Results_Nester}

Here, we present the proofs of the main theorems from Section \ref{sec:Unbiased Parameter Estimation}.
For reference, we also include a statement regarding the convergence of robust projected gradient descent:

\begin{lemma}[\cite{RE}]
\label{lemma:GLMhtGD}
Let $\mathcal{C} \subseteq \mathbb{R}^p$ and $\zeta \in (0, 1)$. Consider the linear regression with squared error loss model from Example~\ref{sec:LR_Squared_Loss} under the heavy-tailed setting. Assume $\theta^* \in \mathcal{C}$. Then there is an absolute constant $C_1 > 0$ such that, if $\widetilde{n} > \frac{4C_1^2p \log(1/\widetilde{\zeta})}{\tau_l^2}$, Algorithm \ref{alg:RobPGDNFW} for projected gradient descent, initialized at $\theta_0 \in \mathcal{C}$ with $\eta = \frac{2}{\tau_u + \tau_l}$ and using Algorithm \ref{alg:HTGE} as gradient estimator, with $\alpha(\widetilde{n}, \widetilde{\zeta}) = C_1\sqrt{\frac{p \log(1/\widetilde{\zeta})}{\widetilde{n}}}$, returns iterates $\{\theta_t\}_{t = 1}^T$ such that with probability at least $1 - \zeta$, with $\widetilde{\zeta}$ such that $b \leq \widetilde{n}/2$, we have for some $k < 1$ that
\begin{align}
\label{eq:GLMhtGD}
||\theta_t - \theta^*||_2 \lesssim ||\theta_0 - \theta^*||_2k^t + \frac{\sigma_2}{1 - k}\sqrt{\frac{p \log (1/\widetilde{\zeta})}{\widetilde{n}}}.
\end{align}
\end{lemma}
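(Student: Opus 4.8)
The plan is to apply Lemma \ref{lemma:GD}, which already furnishes an approximate convergence bound of the desired form $\|\theta_t - \theta^*\|_2 \le \|\theta_0 - \theta^*\|_2 k^t + \tfrac{\eta \beta(\widetilde n, \widetilde\zeta)}{1-k}$ for projected gradient descent driven by a \emph{stable} gradient estimator. So the work reduces to two things: (i) identifying the $\alpha$ and $\beta$ functions associated with the $G_{\mathrm{MOM}}$ estimator (Algorithm \ref{alg:HTGE}) applied to the squared error loss, and (ii) checking that, under the stated lower bound on $\widetilde n$, the estimator is stable in the sense of Definition \ref{def:ge}, i.e.\ $\alpha(\widetilde n, \widetilde\zeta) < \tau_l/2$.

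First I would invoke Lemma \ref{lemma:htlemma}: for a fixed $\theta$, Algorithm \ref{alg:HTGE} returns $\widehat\mu$ with $\|\widehat\mu - \nabla\mathcal{R}(\theta)\|_2 \lesssim \sqrt{\mathrm{tr}(\mathrm{Cov}(\nabla\mathcal{L}(\theta,z)))\log(1/\widetilde\zeta)/\widetilde n}$ with probability $1-\widetilde\zeta$, provided $b \le \widetilde n/2$. Next, following the computation in the proof of Lemma \ref{lemma:cov_ri} (specialized to $\gamma_{\mathcal C} = 0$, i.e.\ the unregularized squared error loss with $\theta_* = \theta^*$), I would bound $\|\mathrm{Cov}(\nabla\mathcal{L}(\theta,z))\|_2 \lesssim \sigma_2^2 + \|\theta - \theta^*\|_2^2$, using $x \indep w$, $\mathbb{E}[w]=0$, the bounded fourth moments of $x$, and $\lambda_{\max}(\Sigma) \asymp 1$; bounding the trace by $p$ times the top eigenvalue then gives, after the usual $(a+b)^2 \le 2(a^2+b^2)$ split, a gradient estimator with $\alpha(\widetilde n, \widetilde\zeta) \asymp \sqrt{p\log(1/\widetilde\zeta)/\widetilde n}$ and $\beta(\widetilde n, \widetilde\zeta) \asymp \sigma_2\sqrt{p\log(1/\widetilde\zeta)/\widetilde n}$. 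Writing $\alpha(\widetilde n,\widetilde\zeta) = C_1\sqrt{p\log(1/\widetilde\zeta)/\widetilde n}$ for the appropriate absolute constant $C_1$, the hypothesis $\widetilde n > 4C_1^2 p\log(1/\widetilde\zeta)/\tau_l^2$ rearranges exactly to $\alpha(\widetilde n,\widetilde\zeta) < \tau_l/2$, so the estimator is stable.

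Then I would run the per-step estimate over all $T$ iterations with a union bound to convert the fixed-$\theta$ guarantee into one holding simultaneously along the trajectory (as in the proofs of Theorems \ref{theorem:HT_NonAccFW} and \ref{theorem:ACCFWROB}), which is legitimate because the data are split into $T$ independent chunks and at step $t$ the randomness of $\theta_t$ is independent of that of the $t$-th gradient estimator. Applying Lemma \ref{lemma:GD} with $\eta = \tfrac{2}{\tau_u+\tau_l}$ then yields, with probability at least $1-\zeta$, $\|\theta_t - \theta^*\|_2 \le \|\theta_0 - \theta^*\|_2 k^t + \tfrac{\eta\beta(\widetilde n,\widetilde\zeta)}{1-k}$ with $k = \tfrac{\tau_u - \tau_l + 2\alpha(\widetilde n,\widetilde\zeta)}{\tau_u + \tau_l} < 1$; since stability gives $1-k > \tfrac{\tau_l}{\tau_u+\tau_l}$, the prefactor $\tfrac{\eta}{1-k}$ is an absolute constant (because $\tau_u, \tau_l \asymp 1$), and substituting the expression for $\beta$ gives the claimed bound. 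I do not expect any genuine obstacle here — the only mildly delicate point is the covariance bound, where one must be careful that $\|\theta - \theta^*\|_2$ stays controlled over the relevant region; but since the bound on $\|\theta-\theta^*\|_2^2$ enters $\beta$ only through a term that is dominated once the iterates have contracted, and Lemma \ref{lemma:GD} is stated precisely to absorb this, the argument goes through cleanly by reusing the $\gamma_{\mathcal C}=0$ case of Lemma \ref{lemma:cov_ri}.
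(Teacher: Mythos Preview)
Your proposal is correct and matches the intended argument. The paper does not give its own proof of this lemma (it is imported from \cite{RE}), but the proof you sketch is precisely the one obtained by combining the pieces the paper already records: Lemma~\ref{lemma:geglmht} packages exactly your $\gamma_{\mathcal C}=0$ covariance computation into the $\alpha,\beta$ form, the hypothesis on $\widetilde n$ is equivalent to stability $\alpha<\tau_l/2$, and Lemma~\ref{lemma:GD} (together with the sample-splitting/union-bound step you describe) then yields the stated bound. Your closing worry about controlling $\|\theta-\theta^*\|_2$ is a non-issue, as you anticipate: the $\|\theta-\theta^*\|_2^2$ term in the covariance bound becomes the $\alpha\|\theta-\theta^*\|_2$ term of Definition~\ref{def:ge}, which is exactly what Lemma~\ref{lemma:GD} is designed to absorb.
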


%%%%%

\subsubsection{Proof of Theorem \ref{theorem:Priv_HT_GD_Smooth}}
\label{AppThmPriv}

%\begin{proof}
By Lemma \ref{lemma:Avg_Grad_Priv_GD_vs_AGD}, we know that $g$ is a gradient estimator with $\alpha(\widetilde{n}, \widetilde{\zeta}) = 0$ and
\begin{align*}
\beta(\widetilde{n}, \widetilde{\zeta}) \asymp \sqrt{\frac{T\log(T/\zeta)}{n}} + \frac{T\sqrt{T\log(T/\zeta)\log^2(T/\delta)}}{n\epsilon},
\end{align*}
implying that
\begin{align*}
 \mathbb{P}(\forall t, \ ||g(\theta_t, \mathcal{D}_n, \widetilde{\zeta}) - \nabla\mathcal{R}(\theta_t)||_2 \leq \beta(\widetilde{n}, \widetilde{\zeta})) \geq 1 - \zeta.
\end{align*}
On this high-probability event, using the notation $\beta = \beta(\widetilde{n}, \widetilde{\zeta})$ and ignoring the dependency in $g$ on the samples and $\widetilde{\zeta}$, the error term $e_t := g(\theta_t) - \nabla\mathcal{R}(\theta_t)$ is bounded as $\|e_t\|_2 \le \beta$. Consider the $t^{\text{th}}$ step in Algorithm \ref{alg:RobPGDNFW}. Recall that $\mathcal{R}$ is $\tau_u$-smooth over $\mathbb{R}^p$. Since $\eta = \frac{1}{\tau_u}$, we have for $t \in \left\{0, \dots, T - 1\right\}$ that 
\begin{align*}
\theta_{t + 1} = \theta_t - \frac{1}{\tau_u}(\nabla \mathcal{R}(\theta_t) + e_t).
\end{align*}
Thus, by Lemma \ref{lemma:Inexact_GD_lemma}, for $a_T = \sum_{i = 1}^T\frac{||e_{i - 1}||_2}{\tau_u} \leq \frac{T\beta}{\tau_u} \asymp T\beta$, we have 
\begin{align}
\label{eq:Smooth_GD_Best_T}
\mathcal{R}(\theta_T) - \mathcal{R}(\theta_*) &\leq \frac{\frac{\tau_u}{2}||\theta_0 - \theta_*||_2^2 + (2a_T + ||\theta_0 - \theta_*||_2)\left(\tau_ua_T + 2\sum_{i = 2}^T(i - 1)||e_{i - 1}||_2\right)}{T} \notag\\
&\lesssim \frac{||\theta_0 - \theta_*||_2^2}{T} + (T\beta + ||\theta_0 - \theta_*||_2)\left(1 + T\right)\beta \notag\\
&\lesssim \frac{1}{T} + T^2\beta^2 + T\beta \notag\\
&\lesssim \frac{1}{T} + \frac{T^3\log(T/\zeta)}{n} + \frac{T^5\log(T/\zeta)\log^2(T/\delta)}{n^2\epsilon^2} \notag\\
& \quad + \frac{T\sqrt{T}\sqrt{\log(T/\zeta)}}{\sqrt{n}} + \frac{T^2\log(T/\delta)\sqrt{T\log(T/\zeta)}}{n\epsilon}.
\end{align} 
Since $T = n^{1/5}$, we obtain
\begin{align*}
\mathcal{R}(\theta_T) - \mathcal{R}(\theta_*) &\lesssim \frac{1}{n^{1/5}} + \frac{\sqrt{\log(n/\zeta)}}{n^{1/5}} + \frac{\log(n/\delta)\sqrt{\log(n/\zeta)}}{n^{1/2}\epsilon}\\
&\lesssim \frac{\sqrt{\log(n/\zeta)}}{n^{1/5}} + \frac{\log(n/\delta)\sqrt{\log(n/\zeta)}}{n^{1/2}\epsilon},
\end{align*}
as required.

Finally, using the assumption that $\epsilon \leq 0.9$, we have $\epsilon < 2\sqrt{2T\log(2/\delta)}$ and $\delta < 2T$, where $T = n^{1/5}$. Since each step of the gradient descent algorithm is $\left(\frac{\epsilon}{2\sqrt{2T\log(2/\delta)}}, \frac{\delta}{2T}\right)$-DP by Lemma \ref{lemma:Avg_Grad_Priv_GD_vs_AGD}, we have by Lemma \ref{lemma:compGDP} that $\theta_T$ is $(\epsilon, \delta)$-DP.
%\end{proof}

%%%%%

\subsubsection{Proof of Theorem \ref{theorem:Priv_HT_AGD_Smooth}}
\label{AppThmPrivSmooth}

%\begin{proof}
By Lemma \ref{lemma:Avg_Grad_Priv_GD_vs_AGD}, we know that $g$ is a gradient estimator with $\alpha(\widetilde{n}, \widetilde{\zeta}) = 0$ and
\begin{align*}
\beta(\widetilde{n}, \widetilde{\zeta}) \asymp \sqrt{\frac{T\log(T/\zeta)}{n}} + \frac{T\sqrt{T\log(T/\zeta)\log^2(T/\delta)}}{n\epsilon},
\end{align*}
implying that
\begin{align*}
 \mathbb{P}(\forall t, \ ||g(\theta_t, \mathcal{D}_n, \widetilde{\zeta}) - \nabla\mathcal{R}(\theta_t)||_2 \leq \beta(\widetilde{n}, \widetilde{\zeta})) \geq 1 - \zeta.
\end{align*}
On this high-probability event, using the notation $\beta = \beta(\widetilde{n}, \widetilde{\zeta})$ and ignoring the dependency in $g$ on the samples and $\widetilde{\zeta}$, the error $e_t := g(\theta_t) - \nabla\mathcal{R}(\theta_t)$ satisfies $\|e_t\|_2 \le \beta$. Consider the $t^{\text{th}}$ step in Algorithm \ref{alg:RobPGDNFW}. Recall that $\mathcal{R}$ is $\tau_u$-smooth over $\mathbb{R}^p$. Since $\eta = \frac{1}{\tau_u}$ and $\lambda = \frac{t - 1}{t + 2}$, we have for $t \in \left\{1, \dots, T - 1\right\}$ that 
\begin{align*}
y_t & = \theta_t + \frac{t - 1}{t + 2}(\theta_t - \theta_{t - 1}), \\
\theta_{t + 1} & = y_t - \frac{1}{\tau_u}(\nabla \mathcal{R}(y_t) + e_t).
\end{align*}
Thus, by Lemma \ref{lemma:Inexact_AGD_lemma}, we have at iteration $T$ that 
\begin{align}
\label{eq:Smooth_AGD_Best_T}
\mathcal{R}(\theta_T) - \mathcal{R}(\theta_*) &\leq \frac{2\tau_u}{(T + 1)^2}\left(||\theta_0 - \theta_*||_2 + 2\sum_{i = 1}^Ti\frac{||e_{i - 1}||_2}{\tau_u}\right)^2 \notag\\
&\lesssim \frac{1}{T^2} + \frac{T^4\beta^2}{T^2} = \frac{1}{T^2} + T^2\beta^2 \notag\\
&\asymp \frac{1}{T^2} + \frac{T^3\log(T/\zeta)}{n} + \frac{T^5\log(T/\zeta)\log^2(T/\delta)}{n^2\epsilon^2}.
\end{align}
Since $T = n^{1/5}$, we obtain
\begin{align*}
\mathcal{R}(\theta_T) - \mathcal{R}(\theta_*) &\lesssim \frac{1}{n^{2/5}} + \frac{\log(n/\zeta)}{n^{2/5}} + \frac{\log(n/\zeta)\log^2(n/\delta)}{n\epsilon^2}\\
&\lesssim \frac{\log(n/\zeta)}{n^{2/5}} + \frac{\log(n/\zeta)\log^2(n/\delta)}{n\epsilon^2},
\end{align*}
as required. 

Finally, using the assumption that $\epsilon \leq 0.9$, we have $\epsilon < 2\sqrt{2T\log(2/\delta)}$ and $\delta < 2T$, where $T = n^{1/5}$. Since each step of the gradient descent algorithm is $\left(\frac{\epsilon}{2\sqrt{2T\log(2/\delta)}}, \frac{\delta}{2T}\right)$-DP by Lemma \ref{lemma:Avg_Grad_Priv_GD_vs_AGD}, we have by Lemma \ref{lemma:compGDP} that $\theta_T$ is $(\epsilon, \delta)$-DP.
%\end{proof}

%%%%%

\subsubsection{Proof of Theorem \ref{theorem:AGD}}
\label{SecThmAGD}

%\begin{proof}
Here, we have $\mathcal{C} = \mathbb{R}^p$, so $\theta_{*} = \theta^*$ and $\nabla\mathcal{R}(\theta^*) = 0$. We have \iid samples $\mathcal{D}_n = \{z_i\}_{i = 1}^n$ satisfying
\begin{align*}
&\mathbb{P}\Bigg(\forall t, \ ||g(\theta_t + \lambda(\theta_t - \theta_{t - 1}), \mathcal{D}_n, \widetilde{\zeta}) - \nabla\mathcal{R}(\theta_t + \lambda(\theta_t - \theta_{t - 1}))||_2\\
&\qquad \qquad \leq \alpha(\widetilde{n}, \widetilde{\zeta})||\theta_t + \lambda(\theta_t - \theta_{t - 1}) - \theta^*||_2 + \beta(\widetilde{n}, \widetilde{\zeta})\Bigg) \\
& \qquad \geq 1 - \zeta.
\end{align*}
Working on this event of probability at least $1 - \zeta$, we have, using the notation $\alpha = \alpha(\widetilde{n}, \widetilde{\zeta})$ and $\beta = \beta(\widetilde{n}, \widetilde{\zeta})$, and ignoring the dependency in $g$ on the samples and $\widetilde{\zeta}$, that $g(\theta_t + \lambda(\theta_t - \theta_{t - 1})) = \nabla\mathcal{R}(\theta_t + \lambda(\theta_t - \theta_{t - 1})) + e_t$, where
\begin{align*}
||e_t||_2 \leq \alpha||\theta_t + \lambda(\theta_t - \theta_{t - 1}) - \theta^*||_2 + \beta.
\end{align*}
Since $\nabla\mathcal{R}(\theta^*) = 0$, and by letting $y_t := \theta_t + \lambda(\theta_t - \theta_{t - 1 })$, we obtain
\begin{align*}
||\theta_{t + 1} - \theta^*||_2 &= ||y_t - \eta g(y_t)- \theta^* - \eta \nabla\mathcal{R}(\theta^*)||_2\\
&= ||y_t - \theta^* -\eta(\nabla\mathcal{R}(y_t) - \nabla\mathcal{R}(\theta^*)) - \eta e_t||_2\\
&\leq ||y_t - \theta^* - \eta(\nabla\mathcal{R}(y_t) - \nabla\mathcal{R}(\theta^*))||_2 + \eta||e_t||_2.
\end{align*}
Note that $L_u = 2\tau_u - \tau_l > \tau_u$, so $\mathcal{R}$ is $L_u$-smooth. Hence, since $\eta = \frac{1}{\tau_u} = \frac{2}{L_u + \tau_l}$, using Lemma \ref{lemma:auxilGD}, we obtain
\begin{align*}
&||y_t - \theta^* - \eta(\nabla\mathcal{R}(y_t) - \nabla\mathcal{R}(\theta^*))||_2^2 = ||y_t - \theta^*||_2^2 + \eta^2||\nabla\mathcal{R}(y_t) - \nabla\mathcal{R}(\theta^*)||_2^2\\
& \qquad \qquad \qquad \qquad \qquad \qquad \qquad \qquad \qquad - 2\eta(\nabla\mathcal{R}(y_t) - \nabla\mathcal{R}(\theta^*))^T(y_t - \theta^*)\\
& \qquad \quad\leq ||y_t - \theta^*||_2^2 + \eta^2||\nabla\mathcal{R}(y_t) - \nabla\mathcal{R}(\theta^*)||_2^2\\
& \qquad \qquad - 2\eta\left(\frac{\tau_lL_u}{\tau_l + L_u}||y_t - \theta^*||^2 + \frac{1}{\tau_l + L_u}||\nabla\mathcal{R}(y_t) - \nabla\mathcal{R}(\theta^*)||^2\right)\\
& \qquad \quad = \left(1 - \frac{2\eta\tau_lL_u}{\tau_l + L_u}\right)||y_t - \theta^*||_2^2 + \eta\left(\eta - \frac{2}{\tau_l + L_u}\right)||\nabla\mathcal{R}(y_t) - \nabla\mathcal{R}(\theta^*)||_2^2\\
& \qquad \quad = \left(1 - \frac{2\eta\tau_lL_u}{\tau_l + L_u}\right)||y_t - \theta^*||_2^2 = \left(\frac{L_u - \tau_l}{\tau_l + L_u}\right)^2||y_t - \theta^*||_2^2.
\end{align*}
Thus, using the bound on $||e_t||_2$, we obtain
\begin{align*}
||\theta_{t + 1} - \theta^*||_2 & \leq \frac{L_u - \tau_l}{\tau_l + L_u}||y_t - \theta^*||_2 + \eta||e_t||_2 \\
& \leq \frac{L_u - \tau_l + 2\alpha}{\tau_l + L_u}||y_t - \theta^*||_2 + \eta\beta \\
& = k||y_t - \theta^*||_2 + \eta\beta \\
& = k||(1 + \lambda)(\theta_t - \theta^*) - \lambda(\theta_{t - 1} - \theta^*)||_2 + \eta\beta\\
&\leq (1 + \lambda)k||\theta_t - \theta^*||_2 + \lambda k||\theta_{t - 1} - \theta^*||_2 + \eta\beta,
\end{align*}
with $k = \frac{L_u - \tau_l + 2\alpha}{L_u + \tau_l}$. Also, $\lambda k > 0$ and $(1 + 2\lambda)k \neq 1$, and the solutions of the equation $x^2 - (1 + \lambda)kx - \lambda k = 0$ are $\frac{(1 + \lambda)k + \sqrt{(1 + \lambda)^2k^2 + 4\lambda k}}{2}$ and $\frac{(1 + \lambda)k - \sqrt{(1 + \lambda)^2k^2 + 4\lambda k}}{2}$, which are distinct.

Since $\frac{\alpha}{\tau_l} < f_2\left(\frac{\tau_u}{\tau_l}\right)$, we have
\begin{align*}
\frac{(1 + \lambda)k + \sqrt{(1 + \lambda)^2k^2 + 4\lambda k}}{2} < 1 &\iff (1 + \lambda)^2k^2 + 4\lambda k < 4 - 4(1 + \lambda)k + (1 + \lambda)^2k^2\\
&\iff \lambda < \frac{1 - k}{2k} \iff \frac{\sqrt{\tau_u} - \sqrt{\tau_l}}{\sqrt{\tau_u} + \sqrt{\tau_l}} < \frac{\tau_l - \alpha}{L_u - \tau_l + 2\alpha}\\
&\iff \frac{\alpha}{\tau_l} < \frac{1 - 2\lambda(\tau_u/\tau_l - 1)}{2\lambda - 1} \iff \frac{\alpha}{\tau_l} < 2f_2\left(\frac{\tau_u}{\tau_l}\right),
\end{align*}
which is true since $\frac{\alpha}{\tau_l} < f_2\left(\frac{\tau_u}{\tau_l}\right)$, and we also have 
\begin{align*}
-1 < \frac{(1 + \lambda)k - \sqrt{(1 + \lambda)^2k^2 + 4\lambda k}}{2} &\iff (1 + \lambda)^2k^2 + 4\lambda k\\
& \qquad \quad < 4 + 4(1 + \lambda)k + (1 + \lambda)^2k^2\\
&\iff \lambda k < 1 + k + \lambda k,
\end{align*}
which is true, as well. By Lemma \ref{lemma:9} and Remark \ref{remark:Second_Ord_Ineq_Rmk}, we have constants $C_1$ and $C_2$ such that for all $t \in \{1, \dots T\}$, we have
\begin{equation}
\label{eq:initb}
\begin{aligned}
||\theta_t - \theta^*||_2 \leq &C_1\left(\frac{(1 + \lambda)k + \sqrt{(1 + \lambda)^2k^2 + 4\lambda k}}{2}\right)^t\\
&+ C_2\left(\frac{(1 + \lambda)k - \sqrt{(1 + \lambda)^2k^2 + 4\lambda k}}{2}\right)^t + \frac{\eta\beta}{1 - (1 + \lambda)k - \lambda k}.
\end{aligned}
\end{equation}
Now that we have established an initial bound for $||\theta_t - \theta^*||_2$, we can move on to improve it. For $\rho^2 = 1 - \sqrt{\frac{\tau_l}{\tau_u}}$, $\widetilde{\theta_t} := \theta_t - \theta^*$, $\widetilde{y_t} := y_t - \theta^*$, $u_t:= \frac{1}{\tau_u}\nabla\mathcal{R}(y_t)$, and $\widetilde{u_t} := u_t - \nabla\mathcal{R}(\theta^*) = u_t$, consider the following quantities:
\begin{align*}
&V_0 := \mathcal{R}(\theta_0) - \mathcal{R}(\theta^*) + \frac{\tau_u}{2}||\widetilde{\theta}_0 - \rho^2\widetilde{\theta}_0||_2^2 = \mathcal{R}(\theta_0) - \mathcal{R}(\theta^*) + \frac{\tau_l}{2}||\theta_0 -\theta^*||_2^2, \\
&V_t := \mathcal{R}(\theta_t) - \mathcal{R}(\theta^*) + \frac{\tau_u}{2}||\widetilde{\theta}_t - \rho^2\widetilde{\theta}_{t - 1}||_2^2, \quad \forall 1 \le t \le T.
\end{align*}
Using $\tau_u$-smoothness, $\eta = \frac{1}{\tau_u}$, and the iterative step in Algorithm \ref{alg:RobPGDNFW} for Nesterov's method, we obtain
\begin{align*}
V_{t + 1} &= \mathcal{R}(\theta_{t + 1}) - \mathcal{R}(\theta^*) + \frac{\tau_u}{2}||\widetilde{\theta}_{t + 1} - \rho^2\widetilde{\theta}_t||_2^2\\
&\leq \mathcal{R}(y_t) - \mathcal{R}(\theta^*) + \frac{\tau_u}{2}||\widetilde{\theta}_{t + 1} - \rho^2\widetilde{\theta}_t||_2^2 + \nabla\mathcal{R}(y_t)^T(\theta_{t + 1} - y_t) + \frac{\tau_u}{2}||\theta_{t + 1} - y_t||_2^2\\
&= \mathcal{R}(y_t) - \mathcal{R}(\theta^*) + \frac{\tau_u}{2}||\widetilde{\theta}_{t + 1} - \rho^2\widetilde{\theta}_t||_2^2 - \tau_u||\widetilde{u}_t||_2^2 + \frac{\tau_u}{2}\left\|\frac{1}{\tau_u}g(y_t)\right\||_2^2 - \frac{1}{\tau_u}\nabla{R}(y_t)^Te_t\\
&= \mathcal{R}(y_t) - \mathcal{R}(\theta^*) + \frac{\tau_u}{2}||\widetilde{\theta}_{t + 1} - \rho^2\widetilde{\theta}_t||_2^2 - \tau_u||\widetilde{u}_t||_2^2 + \frac{\tau_u}{2}||\widetilde{u}_t||_2^2 + \frac{1}{\tau_u}\nabla\mathcal{R}(y_t)^Te_t\\
& \quad + \frac{\tau_u}{2}\left\|\frac{1}{\tau_u}e_t\right\|_2^2 - \frac{1}{\tau_u}\nabla{R}(y_t)^Te_t\\
&= \mathcal{R}(y_t) - \mathcal{R}(\theta^*) + \frac{\tau_u}{2}||\widetilde{\theta}_{t + 1} - \rho^2\widetilde{\theta}_t||_2^2 - \frac{\tau_u}{2}||\widetilde{u}_t||_2^2 + \frac{1}{2\tau_u}||e_t||_2^2\\
&= \rho^2(\mathcal{R}(y_t) - \mathcal{R}(\theta^*) + \tau_u\widetilde{u}_t^T(\widetilde{\theta}_t - \widetilde{y}_t)) - \rho^2\tau_u\widetilde{u}_t^T(\widetilde{\theta_t} - \widetilde{y}_t)\\
& \quad + (1 - \rho^2)(\mathcal{R}(y_t) - \mathcal{R}(\theta^*) - \tau_u\widetilde{u}_t^T\widetilde{y}_t)\\
& \quad + (1 - \rho^2)\tau_u\widetilde{u}_t^T\widetilde{y}_t -\frac{\tau_u}{2}||\widetilde{u}_t||_2^2 + \frac{\tau_u}{2}||\widetilde{\theta}_{t + 1} - \rho^2\widetilde{\theta}_t||_2^2 + \frac{1}{2\tau_u}||e_t||_2^2,
\end{align*}
where in the last equality, we added and subtracted the same terms multiple times. Using the definition of $\tau_l$-strong convexity, we obtain
\begin{align*}
\mathcal{R}(y_t) &\leq \mathcal{R}(\theta_t) - \nabla\mathcal{R}(y_t)^T(\theta_t - y_t) - \frac{\tau_l}{2}||\theta_t - y_t||_2^2\\
&= \mathcal{R}(\theta_t) - \tau_u\widetilde{u}_t^T(\widetilde{\theta_t} - \widetilde{y}_t) - \frac{\tau_l}{2}||\widetilde{\theta_t} - \widetilde{y}_t||_2^2
\end{align*}
and
\begin{align*}
\mathcal{R}(\theta^*) \geq \mathcal{R}(y_t) - \tau_u\widetilde{u}_t^T\widetilde{y}_t + \frac{\tau_l}{2}||\widetilde{y}_t||_2^2 \Rightarrow \mathcal{R}(y_t) - \mathcal{R}(\theta^*) \leq \tau_u\widetilde{u}_t^T\widetilde{y}_t - \frac{\tau_l}{2}||\widetilde{y}_t||_2^2.
\end{align*}
Plugging these two bounds into the inequality involving $V_{t + 1}$, we then obtain
\begin{align*}
V_{t + 1} &\leq \rho^2\left(\mathcal{R}(\theta_t) - \mathcal{R}(\theta^*) - \frac{\tau_l}{2}||\widetilde{\theta}_t - \widetilde{y}_t||_2^2\right) - \frac{\tau_l(1 - \rho^2)}{2}||\widetilde{y}_t||_2^2 - \rho^2\tau_u\widetilde{u}_t^T(\widetilde{\theta}_t - \widetilde{y}_t)\\
& \quad + (1 - \rho^2)\tau_u\widetilde{u}_t^T\widetilde{y}_t -\frac{\tau_u}{2}||\widetilde{u}_t||_2^2 + \frac{\tau_u}{2}||\widetilde{\theta}_{t + 1} - \rho^2\widetilde{\theta}_t||_2^2 + \frac{1}{2\tau_u}||e_t||_2^2\\
&=\rho^2V_t + R_t,
\end{align*}
with
\begin{align}
\label{EqnRt}
R_t &:= -\frac{\tau_l\rho^2}{2}||\widetilde{\theta}_t - \widetilde{y}_t||_2^2 - \frac{\tau_l(1 - \rho^2)}{2}||\widetilde{y}_t||_2^2 + \tau_u\widetilde{u}_t^T(\widetilde{y}_t - \rho^2\widetilde{\theta}_t) - \frac{\tau_u}{2}||\widetilde{u}_t||_2^2 \notag\\
& \qquad + \frac{\tau_u}{2}||\widetilde{\theta}_{t + 1} - \rho^2\widetilde{\theta}_t||_2^2 - \frac{\rho^2\tau_u}{2}||\widetilde{\theta}_t - \rho^2\widetilde{\theta}_{t - 1}||_2^2 + \frac{1}{2\tau_u}||e_t||_2^2.
\end{align}
Let us now examine $R_t$ more closely and bound it above by a finite quantity so that we will be able to iterate the recursive inequality involving $V_t$. We shall use the inequality that we have derived on $||\theta_t - \theta^*||_2$ in inequality \eqref{eq:initb}. First, we have
\begin{align*} 
\frac{\tau_u}{2}||\widetilde{\theta}_{t + 1} - \rho^2\widetilde{\theta}_t||_2^2 &= \frac{\tau_u}{2}||\widetilde{y}_t - \eta\nabla\mathcal{R}(y_t) -\rho^2\widetilde{\theta}_t - \eta e_t||_2^2\\
&= \frac{\tau_u}{2}||\widetilde{y_t} - \eta\nabla\mathcal{R}(y_t) -\rho^2\widetilde{\theta}_t||_2^2 + \frac{\tau_u}{2}\eta^2||e_t||_2^2\\
& \quad - 2\frac{\tau_u}{2}(\widetilde{y}_t - \eta\nabla\mathcal{R}(y_t) -\rho^2\widetilde{\theta}_t)^Te_t\\
&= \frac{\tau_u}{2}||\widetilde{y}_t - \widetilde{u}_t - \rho^2\widetilde{\theta}_t||_2^2 + \frac{1}{2\tau_u}||e_t||_2^2 - \tau_u(\widetilde{y}_t - \widetilde{u}_t - \rho^2\widetilde{\theta}_t)^Te_t.
\end{align*}
Putting this into equation~\eqref{EqnRt}, we obtain an expression which does not involve $e_t$ and one that does. Let us look at the one that does not involve $e_t$ and recall that $\rho^2 = 1 - \sqrt{\frac{\tau_l}{\tau_u}}$:
\begin{align*}
&-\frac{\tau_l\rho^2}{2}||\widetilde{\theta}_t - \widetilde{y}_t||_2^2 - \frac{\tau_l(1 - \rho^2)}{2}||\widetilde{y}_t||_2^2 + \tau_u\widetilde{u}_t^T(\widetilde{y}_t - \rho^2\widetilde{\theta}_t) - \frac{\tau_u}{2}||\widetilde{u}_t||_2^2 + \frac{\tau_u}{2}||\widetilde{y}_t - \widetilde{u}_t - \rho^2\widetilde{\theta}_t||_2^2\\
&- \frac{\rho^2\tau_u}{2}||\widetilde{\theta}_t - \rho^2\widetilde{\theta}_{t - 1}||_2^2\\
& \quad = -\frac{\tau_l\rho^2}{2}||\widetilde{\theta}_t - \widetilde{y}_t||_2^2 - \frac{\tau_l(1 - \rho^2)}{2}||\widetilde{y}_t||_2^2 + \tau_u\widetilde{u}_t^T(\widetilde{y}_t - \rho^2\widetilde{\theta}_t) - \frac{\tau_u}{2}||\widetilde{u}_t||_2^2\\
& \qquad + \frac{\tau_u}{2}||\widetilde{y}_t - \rho^2\widetilde{\theta}_t||_2^2 + \frac{\tau_u}{2}||\widetilde{u}_t||_2^2 - \tau_u\widetilde{u}_t^T(\widetilde{y}_t - \rho^2\widetilde{\theta}_t) - \frac{\rho^2\tau_u}{2}||\widetilde{\theta}_t - \rho^2\widetilde{\theta}_{t - 1}||_2^2\\
& \quad = -\frac{\tau_l\rho^2}{2}||\widetilde{\theta}_t - \widetilde{y}_t||_2^2 - \frac{\tau_l(1 - \rho^2)}{2}||\widetilde{y}_t||_2^2 + \frac{\tau_u}{2}||\widetilde{y}_t - \rho^2\widetilde{\theta}_t||_2^2 - \frac{\rho^2\tau_u}{2}||\widetilde{\theta}_t - \rho^2\widetilde{\theta}_{t - 1}||_2^2.
\end{align*}
By adding and subtracting $\rho^2\widetilde{y}_t$ and expanding the square, we then obtain
\begin{align*}
\frac{\tau_u}{2}||\widetilde{y}_t - \rho^2\widetilde{\theta}_t||_2^2 = \frac{\tau_u\rho^4}{2}||\widetilde{y}_t - \widetilde{\theta}_t||_2^2 + \frac{\tau_u(1 - \rho^2)^2}{2}||\widetilde{y}_t||_2^2 + \tau_u\rho^2(1 - \rho^2)(\widetilde{y}_t - \widetilde{\theta}_t)^T\widetilde{y}_t.
\end{align*}
For the term $- \frac{\rho^2\tau_u}{2}||\widetilde{\theta}_t - \rho^2\widetilde{\theta}_{t - 1}||_2^2$, using the definitions of $\lambda$ and $\rho$, we have $\frac{2\lambda}{1 + \lambda} = \rho^2$ and $\lambda = \frac{\rho^2}{2 - \rho^2}$. Thus, using $\widetilde{\theta}_{t - 1} = \frac{(1 + \lambda)\widetilde{\theta}_t - \widetilde{y}_t}{\lambda}$ we obtain
\begin{align*}
- \frac{\rho^2\tau_u}{2}||\widetilde{\theta}_t - \rho^2\widetilde{\theta}_{t - 1}||_2^2 &= - \frac{\rho^2\tau_u}{2}||\widetilde{\theta}_t - (2 - \rho^2)\widetilde{y}_t||_2^2 = - \frac{\rho^2\tau_u}{2}||\widetilde{\theta}_t - \widetilde{y}_t - (1 - \rho^2)\widetilde{y}_t||_2^2\\
&= - \frac{\rho^2\tau_u}{2}||\widetilde{\theta}_t - \widetilde{y}_t||_2^2 - \frac{\rho^2\tau_u(1 - \rho^2)^2}{2}||\widetilde{y}_t||_2^2\\
& \quad + \tau_u\rho^2(1 - \rho^2)(\widetilde{\theta}_t - \widetilde{y}_t)^T\widetilde{y}_t.
\end{align*}
Putting these together, the term in the expression for $R_t$ not involving $e_t$ becomes
\begin{align*}
&-\frac{\tau_l\rho^2}{2}||\widetilde{\theta}_t - \widetilde{y}_t||_2^2 - \frac{\tau_l(1 - \rho^2)}{2}||\widetilde{y}_t||_2^2 + \frac{\tau_u}{2}||\widetilde{y}_t - \rho^2\widetilde{\theta}_t||_2^2 - \frac{\rho^2\tau_u}{2}||\widetilde{\theta}_t - \rho^2\widetilde{\theta}_{t - 1}||_2^2 \\
& \quad = \left(\frac{-\tau_l\rho^2}{2} + \frac{\tau_u\rho^4}{2} - \frac{\tau_u\rho^2}{2}\right)||\widetilde{\theta}_t - \widetilde{y}_t||_2^2\\
& \qquad +\left(\frac{\tau_u(1 - \rho^2)^2}{2} - \frac{\rho^2\tau_u(1 - \rho^2)^2}{2} - \frac{\tau_l(1 - \rho^2)}{2}\right)||\widetilde{y}_t||_2^2 \\
& \quad = -\frac{1}{2}\tau_u\rho^2\left(\frac{\tau_l}{\tau_u} + \sqrt{\frac{\tau_l}{\tau_u}}\right)||\widetilde{\theta}_t - \widetilde{y}_t||_2^2,
\end{align*}
since the term multiplying $||\widetilde{y}_t||_2^2$ is $0$, and we again used the definition of $\rho$. Importantly, this expression is negative. Thus, we can write $R_t$ in a more compact form, and applying Cauchy-Schwarz and the triangle inequality repeatedly, we obtain
\begin{align*}
R_t &= -\frac{1}{2}\tau_u\rho^2\left(\frac{\tau_l}{\tau_u} + \sqrt{\frac{\tau_l}{\tau_u}}\right)||\widetilde{\theta}_t - \widetilde{y}_t||_2^2 + \frac{1}{2\tau_u}||e_t||_2^2 - \tau_u(\widetilde{y}_t - \eta\nabla\mathcal{R}(y_t) -\rho^2\widetilde{\theta}_t)^Te_t\\
& \quad + \frac{1}{2\tau_u}||e_t||_2^2\\
&\leq \frac{1}{\tau_u}||e_t||_2^2 + \tau_u||\widetilde{y}_t - \eta\nabla\mathcal{R}(y_t) -\rho^2\widetilde{\theta}_t||_2||e_t||_2 \leq \eta||e_t||_2^2 + \tau_u(||y_t - \theta^*||_2\\
& \quad + \eta||\nabla\mathcal{R}(y_t)||_2 + \rho^2||\theta_t - \theta^*||_2)||e_t||_2\\
&\leq \eta||e_t||_2^2 + \tau_u((1 + \lambda)||\widetilde{\theta}_t||_2 + \lambda||\widetilde{\theta}_{t - 1}||_2 + \eta||\widetilde{y}_t||_2 + \rho^2||\widetilde{\theta}_t||_2)||e_t||_2\\
&\leq \eta||e_t||_2^2\\
& \quad + \tau_u((1 + \lambda)||\widetilde{\theta}_t||_2 + \lambda||\widetilde{\theta}_{t - 1}||_2 + \eta((1 + \lambda)||\widetilde{\theta}_t||_2 + \lambda||\widetilde{\theta}_{t - 1}||_2) + \rho^2||\widetilde{\theta}_t||_2)||e_t||_2\\
&= \eta||e_t||_2^2 + \tau_u\left[(1 + \eta)((1 + \lambda)||\widetilde{\theta}_t||_2 + \lambda||\widetilde{\theta}_{t - 1}||_2) + \rho^2||\widetilde{\theta}_t||_2\right]||e_t||_2\\
&\leq \eta\left(\alpha||\widetilde{\theta}_t||_2 + \beta\right)^2 + \tau_u\left[(1 + \eta)((1 + \lambda)||\widetilde{\theta}_t||_2 + \lambda||\widetilde{\theta}_{t - 1}||_2) + \rho^2||\widetilde{\theta}_t||_2\right](\alpha||\widetilde{\theta}_t||_2 + \beta).
\end{align*}
Define $x^* \approx 1.76759$ to be the solution to the equation $f_1(x) = f_2(x)$ for $x \geq 1$. Since $f_1\left(\frac{\tau_u}{\tau_l}\right) < \frac{\alpha}{\tau_l}$ and $\frac{\tau_u}{\tau_l} < x^*$, we have $\tau_u < \frac{x^*\alpha}{f_1\left(\frac{\tau_u}{\tau_l}\right)}$ and $f_1\left(\frac{\tau_u}{\tau_l}\right) \neq 0$, because $\tau_u \neq \tau_l$. Thus, there exists a constant $C_3^{'}$ depending on $\tau_u$ and $\tau_l$ such that $k = \frac{L_u - \tau_l + 2\alpha}{L_u + \tau_l} < C_3^{'}\alpha$. Therefore, there is a constant $C_3^{''}$ depending on $\tau_u$ and $\tau_l$ such for any $t$, since $\left|\frac{(1 + \lambda)k + \sqrt{(1 + \lambda)^2k^2 + 4\lambda k}}{2}\right| < 1$, we have
\begin{align*}
\left|\frac{(1 + \lambda)k + \sqrt{(1 + \lambda)^2k^2 + 4\lambda k}}{2}\right|^t < C_3^{''}\alpha,
\end{align*}
since under the square root, we take out a $k^2$ and bound below $k \ge \frac{L_u - \tau_l}{L_u + \tau_l}$. Thus, there is a constant $C_3$ depending on $\tau_u$ and $\tau_l$ such that for all $t$, we have$||\widetilde{\theta}_t||_2 \leq C_3\alpha + \frac{\eta\beta}{1 - (1 + \lambda)k - \lambda k}$, using inequality \eqref{eq:initb}. Thus, using the last bound on $R_t$ and the bound on $||\widetilde{\theta_t}||_2$ involving $C_3$, we obtain
\begin{align*}
R_t &\leq \left(\alpha^2 C_3 + \frac{\eta\alpha\beta}{1 - (1 + \lambda)k - \lambda k} + \beta\right)\\
& \qquad \cdot \left[\eta\beta + \left(\eta\alpha + \tau_u(1 + \eta)(1 + 2\lambda + \rho^2)\right)\left(C_3\alpha + \frac{\eta\beta}{1 - (1 + \lambda)k - \lambda k}\right)\right]\\
&= \left(\alpha^2 C_3 + \frac{\eta\alpha\beta}{1 - (1 + \lambda)k - \lambda k} + \beta\right)\\
& \qquad \cdot \left[\eta\beta + \left(\eta\alpha + \tau_u(1 + \eta)\left(2\lambda + \sqrt{\frac{\tau_l}{\tau_u}}\right)\right)\left(C_3\alpha + \frac{\eta\beta}{1 - (1 + \lambda)k - \lambda k}\right)\right].
\end{align*}
Call this RHS term $\frac{R}{2}$. We will add this quantity at the end so that the calculations are not too messy. Thus, we have
\begin{align*}
V_{t + 1} \leq \rho^2V_t + R \Rightarrow V_t \leq \rho^{2t}V_0 + \frac{R}{2(1 - \rho^2)},
\end{align*}
implying that
\begin{align*}
\mathcal{R}(\theta_t) - \mathcal{R}(\theta^*) \leq V_t \leq \rho^{2t}V_0 + \frac{R}{2(1 - \rho^2)}.
\end{align*}
Using the $\tau_l$-strong convexity of $\mathcal{R}$, we then obtain
\begin{align*}
||\theta_t - \theta^*||_2^2 \leq \frac{2}{\tau_l}V_0\rho^{2t} + \frac{R}{\tau_l(1 - \rho^2)}.
\end{align*}
Using the fact that for $x, y \geq 0$, we have $\sqrt{x + y} \leq \sqrt{x} + \sqrt{y}$, we obtain
\begin{align*}
||\theta_t - \theta^*||_2 &\leq \sqrt{\frac{2}{\tau_l}V_0}\left(1 - \sqrt{\frac{\tau_l}{\tau_u}}\right)^{t/2} + \sqrt{\frac{R}{\tau_l(1 - \rho^2)}}\\ 
&= \sqrt{\frac{2}{\tau_l}\left(\mathcal{R}(\theta_0) - \mathcal{R}(\theta^*)\right) + ||\theta_0 - \theta^*||_2^2}\left(1 - \sqrt{\frac{\tau_l}{\tau_u}}\right)^{t/2} + \left(\frac{\tau_u}{\tau_l}\right)^{1/4}\sqrt{\frac{R}{\tau_l}},
\end{align*}
as required. Finally, note that if $\tau_u, \tau_l, \sigma \asymp 1$, then $R = O\left(\alpha(\widetilde{n}, \widetilde{\zeta})^2\right)$.
%\end{proof}

\begin{remark}
One can also carry out calculations to see that the initial bound on $||\theta_t - \theta^*||_2$ that we derived in inequality \eqref{eq:initb} having two exponential terms is worse than the one for the projected gradient descent, since $\frac{(1 + \lambda)k + \sqrt{(1 + \lambda)^2k^2 + 4\lambda k}}{2}$ with $k = \frac{L_u - \tau_l + 2\alpha}{L_u + \tau_l}$ is greater than $\frac{\tau_u - \tau_l + 2\alpha}{\tau_u + \tau_l}$. 

Note also that the inequality involving $f_1$ and the assumption that $\tau_u \neq \tau_l$ could be dropped. These assumptions are used only to bound $k = \frac{\tau_u - \tau_l + \alpha}{\tau_u} = \frac{L_u - \tau_l + 2\alpha}{L_u + \tau_l}$ above by a constant multiple of $\alpha$ and to obtain faster rates than the projected gradient descent method. If we do not ask for these assumptions, we cannot guarantee faster rates, and also, we could only hope to bound $||\theta_t - \theta^*||_2$ by a constant plus $\frac{\eta\beta}{1 - (1 + \lambda)k - \lambda k}$. This is important because, as one can see in Lemma \ref{lemma:Hu_eps_PGD} in Appendix \ref{sec:Huber epsilon-Contamination Robustness}, the error term in the Huber $\epsilon$-contamination setting for projected gradient descent applied to linear regression is asymptotically $O\left(\sqrt{\epsilon \log(p)}\right)$. If we only have that $||\theta_t - \theta^*||_2$ is less than a constant plus $\frac{\eta\beta}{1 - (1 + \lambda)k - \lambda k}$, the error term in Nesterov's AGD is potentially worse.
\end{remark}

%%%%%

\subsubsection{Proof of Theorem \ref{theorem:htAGD}}
\label{AppThmHTAGD}

%\begin{proof}
We use the following result:
\begin{lemma}[\cite{RE}]
\label{lemma:geglmht}
Consider the linear regression with squared error loss model from Section \ref{sec:Linear Regression} with i.i.d.\ samples $\mathcal{D}_n = \{z_i\}_{i = 1}^n = \{(x_i, y_i)\}_{i = 1}^n$ from a heavy-tailed distribution. Then Algorithm \ref{alg:HTGE} returns, for a fixed $\theta \in \mathbb{R}^p$, a gradient estimator $g$ such that
\begin{align*}
||g(\theta; \mathcal{D}_n, \widetilde{\zeta}) - \nabla\mathcal{R}(\theta)||_2 \lesssim \sqrt{\frac{\ p \log(1/\widetilde{\zeta})}{\widetilde{n}}}||\theta - \theta^*||_2 + \sqrt{\frac{\sigma_2^2 p \log(1/\widetilde{\zeta})}{\widetilde{n}}},
\end{align*}
with probability at least $1 - \widetilde{\zeta}$, and for $\widetilde{\zeta}$ such that $b \leq \widetilde{n}/2$ with $b$ as in Algorithm \ref{alg:HTGE}. Hence, $g$ is a gradient estimator with
\begin{align*}
&\alpha(\widetilde{n}, \widetilde{\zeta}) \asymp \sqrt{\frac{p \log(1/\widetilde{\zeta})}{\widetilde{n}}},
&\beta(\widetilde{n}, \widetilde{\zeta}) \asymp \sqrt{\frac{p\sigma_2^2\log(1/\widetilde{\zeta})}{\widetilde{n}}}.
\end{align*}
\end{lemma}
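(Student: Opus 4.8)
The plan is to feed the squared error loss into the generic concentration bound for the geometric-median-of-means estimator, Lemma \ref{lemma:htlemma}, and then control the one problem-specific quantity it leaves open, namely the trace of the covariance of the loss gradient. Applying Lemma \ref{lemma:htlemma} with $S = \{\nabla\mathcal{L}(\theta, z_i)\}_{i=1}^{\widetilde n}$, sample size $\widetilde n$, and confidence parameter $\widetilde\zeta$ (so that the bucket count $b$ of Algorithm \ref{alg:HTGE} satisfies $b \le \widetilde n/2$), we obtain that the output $g(\theta; \mathcal{D}_n, \widetilde\zeta)$ satisfies, with probability at least $1 - \widetilde\zeta$,
\begin{align*}
\|g(\theta; \mathcal{D}_n, \widetilde\zeta) - \nabla\mathcal{R}(\theta)\|_2 \le 11\sqrt{\frac{\operatorname{tr}\!\big(\mathrm{Cov}(\nabla\mathcal{L}(\theta, z))\big)\log(1.4/\widetilde\zeta)}{\widetilde n}}.
\end{align*}
Here we use that, for the squared error loss of Example \ref{sec:LR_Squared_Loss} with $\Sigma \succ 0$, the population minimizer over $\mathbb{R}^p$ is $\theta_* = \theta^*$, and that $\nabla\mathcal{R}(\theta) = \Sigma(\theta - \theta^*)$, so the geometric median concentrates around $\nabla\mathcal{R}(\theta)$.

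The next step is to bound the covariance. Under the model $y = x^T\theta^* + w$ with $x \indep w$ and $\mathbb{E}[w] = 0$, the gradient is $\nabla\mathcal{L}(\theta, (x,y)) = (x^T\theta - y)x = (xx^T)(\theta - \theta^*) - wx$, so, writing $\Delta = \theta - \theta^*$, the centered gradient is $(xx^T - \Sigma)\Delta - wx$ and
\begin{align*}
\mathrm{Cov}(\nabla\mathcal{L}(\theta, z)) = \mathbb{E}\big[(xx^T - \Sigma)\Delta\Delta^T(xx^T - \Sigma)\big] + \sigma_2^2\Sigma,
\end{align*}
the cross terms vanishing because $x \indep w$ and $\mathbb{E}[w] = 0$. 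This is precisely the $\gamma_{\mathcal{C}} = 0$ specialization of Lemma \ref{lemma:cov_ri}, whose proof bounds the operator norm by splitting off $\sigma_2^2\lambda_{\max}(\Sigma)$, applying $\lambda_{\max}$-subadditivity, the inequality $(a+b)^2 \le 2(a^2+b^2)$, Cauchy--Schwarz, and finally the bounded-fourth-moment hypothesis on $x$ together with $\lambda_{\max}(\Sigma) \asymp 1$, yielding $\|\mathrm{Cov}(\nabla\mathcal{L}(\theta, z))\|_2 \lesssim \sigma_2^2 + \|\theta - \theta^*\|_2^2$. Bounding the trace by $p$ times the largest eigenvalue then gives $\operatorname{tr}(\mathrm{Cov}(\nabla\mathcal{L}(\theta, z))) \lesssim p(\sigma_2^2 + \|\theta - \theta^*\|_2^2)$.

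Finally, substituting this bound and using $\sqrt{a+b} \le \sqrt a + \sqrt b$ yields, with probability at least $1 - \widetilde\zeta$,
\begin{align*}
\|g(\theta; \mathcal{D}_n, \widetilde\zeta) - \nabla\mathcal{R}(\theta)\|_2 \lesssim \sqrt{\frac{p\log(1/\widetilde\zeta)}{\widetilde n}}\,\|\theta - \theta^*\|_2 + \sqrt{\frac{p\sigma_2^2\log(1/\widetilde\zeta)}{\widetilde n}},
\end{align*}
and comparing with Definition \ref{def:ge} identifies $\alpha(\widetilde n, \widetilde\zeta) \asymp \sqrt{p\log(1/\widetilde\zeta)/\widetilde n}$ and $\beta(\widetilde n, \widetilde\zeta) \asymp \sqrt{p\sigma_2^2\log(1/\widetilde\zeta)/\widetilde n}$. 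I expect the operator-norm bound on $\mathrm{Cov}(\nabla\mathcal{L}(\theta, z))$ to be the only substantive step: it is where the heavy-tailed-with-bounded-fourth-moments structure is genuinely used, since controlling $\mathbb{E}[(\xi^Tx)^2(x^T\omega)^2]$ without that assumption is impossible; the rest is Cauchy--Schwarz bookkeeping and the transfer of the $b \le \widetilde n/2$ condition from Lemma \ref{lemma:htlemma}.
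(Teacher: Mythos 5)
Your proposal is correct and follows essentially the same route the paper uses for the ridge analogue, Lemma~\ref{lemma:geri} in Appendix~\ref{AppLemGeri}: apply Lemma~\ref{lemma:htlemma} to get the $\operatorname{tr}(\mathrm{Cov})$ bound, control $\|\mathrm{Cov}(\nabla\mathcal{L}(\theta, z))\|_2 \lesssim \sigma_2^2 + \|\theta - \theta^*\|_2^2$ via the centered-gradient decomposition $(xx^T - \Sigma)\Delta - wx$ and the bounded-fourth-moment hypothesis (this is exactly the $\gamma_{\mathcal{C}} = 0$ specialization of Lemma~\ref{lemma:cov_ri}), then bound the trace by $p$ times the operator norm and use $\sqrt{a+b}\le\sqrt a+\sqrt b$. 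The paper cites \cite{RE} rather than reproving Lemma~\ref{lemma:geglmht}, but the argument you give is precisely the one embedded in its proof of the regularized version, so the reconstruction is faithful.
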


From Lemma \ref{lemma:geglmht}, we obtain $g(\theta)$ with the corresponding $\alpha(\widetilde{n}, \widetilde{\zeta})$ and $\beta(\widetilde{n}, \widetilde{\zeta})$. The assumption on $n$ ensures that we have $f_1\left(\frac{\tau_u}{\tau_l}\right) < \frac{\alpha(\widetilde{n}, \widetilde{\zeta})}{\tau_l} < f_2\left(\frac{\tau_u}{\tau_l}\right)$, so stability is achieved. Using Theorem \ref{theorem:AGD}, we obtain the desired result, with $R = O\left(\alpha(\widetilde{n}, \widetilde{\zeta})^2\right)$, if $\tau_u, \tau_l, \sigma \asymp 1$.
%\end{proof}

%%%%%

\subsection{Auxiliary Results from Section \ref{sec:Unbiased Parameter Estimation}}
\label{sec:Proofs of the Auxiliary Results_Nester}

Here, we present statements and proofs of auxiliary results used in Section \ref{sec:Unbiased Parameter Estimation}.

%%%%%

%\subsubsection{Statement and Proof of Lemma \ref{lemma:Avg_Grad_Priv_GD_vs_AGD}}

\begin{lemma}
\label{lemma:Avg_Grad_Priv_GD_vs_AGD}
Let $T, \epsilon > 0$ and $\delta \in (0, 1)$ be such that $\epsilon < 2\sqrt{2T\log(2/\delta)}$ and $\delta < 2T$. Consider a data space $\mathcal{E}$ and a dataset $\mathcal{D}_n = \{z_i\}_{i = 1}^n \subseteq \mathcal{E}^n$ drawn \iid from some distribution $P$. Let $\mathcal{L} : \mathbb{R}^p \times \mathcal{E} \rightarrow \mathbb{R}$ be a loss that is convex in $\theta$ over the whole of $\mathbb{R}^p$. Moreover, assume that $\mathcal{L}$ is $L_2$-Lipschitz over $\mathbb{R}^p$, for all $z \in \mathcal{E}$. Consider the corresponding risk $\mathcal{R(\theta)} = \mathbb{E}_{z \sim P}[\mathcal{L}(\theta, z)]$. For $\theta \in \mathbb{R}^p$ fixed, $\zeta \in (0, 1)$, and $n > 8\log(4/\zeta)$, we have with probability at least $1 - \zeta$ that
\begin{align*}
||\widehat{\mu} - \nabla\mathcal{R}(\theta)||_2 \leq \sqrt{\frac{32L_2^2\log(4/\zeta)}{n}} + \frac{8L_2\sqrt{8pT\log(8/\zeta)\log(5T/2\delta)\log(2/\delta)}}{n\epsilon},
\end{align*}
where $\widehat{\mu} = \frac{1}{n}\sum_{i = 1}^n\nabla \mathcal{L}(\theta, z_i) + \xi$ and $\xi \sim N\left(0, \frac{64L_2^2T\log(5T/2
\delta)\log(2/\delta)}{n^2 \epsilon^2}I_p\right)$. Moreover, $\widehat{\mu}$ is $\left(\frac{\epsilon}{2\sqrt{2T\log(2/\delta)}}, \frac{\delta}{2T}\right)$-DP.
\end{lemma}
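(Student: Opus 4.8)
The plan is to split the statement into two parts: (i) the high-probability concentration bound on $\|\widehat\mu - \nabla\mathcal{R}(\theta)\|_2$, and (ii) the $\bigl(\tfrac{\epsilon}{2\sqrt{2T\log(2/\delta)}},\tfrac{\delta}{2T}\bigr)$-DP guarantee for $\widehat\mu$. The two parts are essentially independent and each reduces to invoking tools already available in the excerpt.

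For the privacy part, I would first bound the $\ell_2$-sensitivity of the map $\mathcal{D}_n \mapsto \frac{1}{n}\sum_{i=1}^n \nabla\mathcal{L}(\theta,z_i)$: replacing one $z_i$ by $z_i'$ changes this average by $\frac{1}{n}\bigl(\nabla\mathcal{L}(\theta,z_i) - \nabla\mathcal{L}(\theta,z_i')\bigr)$, which by the $L_2$-Lipschitz property has norm at most $\frac{2L_2}{n}$. Then I would apply the Gaussian mechanism, Lemma~\ref{lemma:mGDP}, with target privacy parameters $\epsilon' = \frac{\epsilon}{2\sqrt{2T\log(2/\delta)}}$ and $\delta' = \frac{\delta}{2T}$. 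The noise variance prescribed there is $\frac{2\,\mathrm{sens}^2\log(1.25/\delta')}{(\epsilon')^2}$; plugging in $\mathrm{sens} = \frac{2L_2}{n}$, $\delta' = \frac{\delta}{2T}$, and $(\epsilon')^2 = \frac{\epsilon^2}{8T\log(2/\delta)}$ gives $\frac{2\cdot 4L_2^2/n^2 \cdot \log(1.25\cdot 2T/\delta)\cdot 8T\log(2/\delta)}{\epsilon^2} = \frac{64 L_2^2 T \log(2.5T/\delta)\log(2/\delta)}{n^2\epsilon^2}$, which is $\le \frac{64L_2^2 T\log(5T/2\delta)\log(2/\delta)}{n^2\epsilon^2}$ since $2.5T/\delta \le 5T/(2\delta)$ (equality, in fact). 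The conditions $\epsilon < 2\sqrt{2T\log(2/\delta)}$ and $\delta < 2T$ ensure $\epsilon' < 1$ and $\delta' \in (0,1)$, so Lemma~\ref{lemma:mGDP} applies. This yields the DP claim.

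For the concentration part, I would decompose $\widehat\mu - \nabla\mathcal{R}(\theta) = \bigl(\tfrac1n\sum_i \nabla\mathcal{L}(\theta,z_i) - \nabla\mathcal{R}(\theta)\bigr) + \xi$ and bound each piece on an event of probability $\ge 1-\zeta/2$. For the empirical-average term: since $\|\nabla\mathcal{L}(\theta,z_i)\|_2 \le L_2$ for all $z_i$, the centered summands $\nabla\mathcal{L}(\theta,z_i) - \nabla\mathcal{R}(\theta)$ are bounded by $2L_2$ and have second moment at most $4L_2^2$, so the vector Bernstein inequality, Lemma~\ref{lemma:Bern_Vec}, gives $\mathbb{P}\bigl(\|\tfrac1n\sum_i(\nabla\mathcal{L}(\theta,z_i)-\nabla\mathcal{R}(\theta))\|_2 \ge t\bigr) \le 2e^{-nt^2/(32L_2^2)}$; setting this to $\zeta/4$ yields $t = \sqrt{32L_2^2\log(8/\zeta)/n}$, and the hypothesis $n > 8\log(4/\zeta)$ guarantees $t$ lies in the admissible range $t < \sigma^2/\mu$ of Lemma~\ref{lemma:Bern_Vec} (with $\sigma^2 = 4L_2^2$, $\mu = 2L_2$, so the constraint is $t < 2L_2$, which holds once $n$ is large relative to $\log(1/\zeta)$). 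This matches the first term up to the $\log(8/\zeta)$ vs.\ $\log(4/\zeta)$ slack, which is absorbable. For the Gaussian term: $\xi \sim N(0,\sigma_\xi^2 I_p)$ with $\sigma_\xi^2 = \frac{64L_2^2 T\log(5T/2\delta)\log(2/\delta)}{n^2\epsilon^2}$, so $\xi \in \mathcal{G}(\sigma_\xi^2)$ and Lemma~\ref{lemma:CIGV} gives $\mathbb{P}(\|\xi\|_2 > s) \le 4^p e^{-s^2/(8\sigma_\xi^2)}$; choosing $s$ so this is $\le \zeta/2$ gives $s = \sqrt{8\sigma_\xi^2\log(2\cdot 4^p/\zeta)} \lesssim \sigma_\xi\sqrt{p\log(8/\zeta)}$, which after substituting $\sigma_\xi$ is $\frac{8L_2\sqrt{8pT\log(8/\zeta)\log(5T/2\delta)\log(2/\delta)}}{n\epsilon}$, matching the second term. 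A union bound over the two events and the triangle inequality complete the proof.

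The only mild subtlety — not really an obstacle — is matching the numerical constants and logarithmic arguments exactly as stated (e.g.\ whether the $\log(4/\zeta)$ in the first term should be $\log(8/\zeta)$, and whether one uses $\zeta/2$ or $\zeta/4$ splits). Since the statement is an inequality with explicit constants, I would allocate the failure probability as $\zeta/4$ to Bernstein and $\zeta/2$ to the Gaussian tail (leaving slack), and note that $\log(8/\zeta) \le 3\log(4/\zeta)$ type bounds let one rewrite the constants; alternatively one simply carries $\log(8/\zeta)$ throughout and observes it is still $\le$ the stated bound after inflating the leading constant $32$ slightly — but checking the excerpt, the cleanest route is to keep the $\zeta/4$--$\zeta/2$ split and verify the stated constants hold verbatim. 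The structural content is entirely an application of Lemma~\ref{lemma:mGDP}, Lemma~\ref{lemma:Bern_Vec}, and Lemma~\ref{lemma:CIGV}; no new ideas are needed.
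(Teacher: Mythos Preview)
Your proposal is correct and follows essentially the same route as the paper: the paper decomposes $\widehat\mu - \nabla\mathcal{R}(\theta)$ into the empirical-average deviation and the Gaussian noise $\xi$, applies the vector Bernstein inequality (Lemma~\ref{lemma:Bern_Vec}) to the former and Lemma~\ref{lemma:CIGV} to the latter with a $\zeta/2$--$\zeta/2$ split, and derives the DP claim from the $2L_2/n$ sensitivity bound via the Gaussian mechanism (Lemma~\ref{lemma:mGDP}). The only difference is that the paper uses the symmetric $\zeta/2$--$\zeta/2$ allocation (which directly produces the stated $\log(4/\zeta)$ and $\log(8/\zeta)$ constants) rather than your $\zeta/4$--$\zeta/2$ split, but as you note this is cosmetic.
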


\begin{proof}
Let $\widehat{w} = \frac{1}{n}\sum_{i = 1}^n\nabla \mathcal{L}(\theta, z_i)$, so $\widehat{\mu} = \widehat{w} + \xi$. We have by Lemma \ref{lemma:CIGV} that $\mathbb{P}(\Omega_1) \geq 1 - \zeta/2$, where
\begin{align*}
\Omega_1 = \left\{\left\|\xi\right\|_2 \leq \frac{8L_2\sqrt{8pT\log(8/\zeta)\log(5T/2\delta)\log(2/\delta)}}{n\epsilon} \right\}.
\end{align*}
Now observe that $\mathbb{E}[\nabla \mathcal{L}(\theta, z_i) - \nabla \mathcal{R}(\theta)] = 0$ and $||\nabla \mathcal{L}(\theta, z_i) - \nabla \mathcal{R}(\theta)||_2 \leq 2L_2$, for all $i \in [n]$, and the data are independent. Also note that $\mathbb{E}\left[||\nabla \mathcal{L}(\theta, z_i) - \nabla \mathcal{R}(\theta)||_2^2\right] \leq 4L_2^2$. Since $n > 8\log(4/\zeta)$, we have  $\sqrt{\frac{32L_2^2\log(4/\zeta)}{n}} < \frac{4L_2^2}{2L_2}$. Hence, by Lemma \ref{lemma:Bern_Vec}, we have $\mathbb{P}(\Omega_2) \geq 1 - \zeta/2$, where
\begin{align*}
\Omega_2 = \left\{||\widehat{w} - \nabla\mathcal{R}(\theta)||_2 \leq \sqrt{\frac{32L_2^2\log(4/\zeta)}{n}}\right\}.
\end{align*}
Thus, for $n > 8\log(4/\zeta)$, with probability at least $1 - \zeta$, we have
\begin{align*}
||\widehat{\mu} - \nabla\mathcal{R}(\theta)||_2 \leq \sqrt{\frac{32L_2^2\log(4/\zeta)}{n}} + \frac{8L_2\sqrt{8pT\log(8/\zeta)\log(5T/2\delta)\log(2/\delta)}}{n\epsilon},
\end{align*}
as required.

Regarding privacy, the sensitivity of the gradients is bounded above by $\frac{2L_2}{n}$. Since $\epsilon < 2\sqrt{2T\log(2/\delta)}$ and $\delta < 2T$, and by the choice of the variance of the noise $\xi$, we have by Lemma \ref{lemma:mGDP} that $\widehat{\mu}$ is $\left(\frac{\epsilon}{2\sqrt{2T\log(2/\delta)}}, \frac{\delta}{2T}\right)$-DP.
\end{proof}

%\subsubsection{Inexact Convergence Rates for Gradient Descent and Nesterov's Accelerated Method}\label{sec:Inexact Convergence Rates for Gradient Descent and Nesterov's Accelerated Method}

%We now present the inexact convergence results for projected gradient descent and Nesterov's AGD that are used in Section \ref{sec:Smooth_Priv_HT_GD_AGD}. 

\begin{lemma}[Adapted from \cite{Schmidt_2011_Conv}]
\label{lemma:Inexact_GD_lemma}
Let $p \in \mathbb{N}$. Assume $F : \mathbb{R}^p \rightarrow \mathbb{R}$ is convex and $\beta_F$-smooth over $\mathbb{R}^p$, with $x_* \in \mathop{\arg\min}\limits_{x \in \mathbb{R}^p}F(x)$. Consider the gradient descent procedure initialized at $x_0$, such that
\begin{align*}
x_{t + 1} = x_t - \frac{1}{\beta_F}(\nabla F(x_t) + e_t), \quad \forall t \ge 0,
\end{align*}
with the sequence of errors $\{e_{t}\}_{t \geq 1}$ being arbitrary. For all $t \geq 1$ and $a_t = \sum_{i = 1}^t\frac{||e_{i - 1}||_2}{\beta_F}$, we have 
\begin{align*}
F(x_t) - F(x_*) \leq \frac{\frac{\beta_F}{2}||x_0 - x_*||_2^2 + (2a_t + ||x_0 - x_*||_2)\left(\beta_Fa_t + 2\sum_{i = 2}^t(i - 1)||e_{i - 1}||_2\right)}{t}.
\end{align*}
\end{lemma}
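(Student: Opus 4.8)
The plan is to follow the standard inexact gradient descent analysis (in the spirit of \cite{Schmidt_2011_Conv}): first control the iterate radius $\|x_t - x_*\|_2$ using the auxiliary recursion lemma (Lemma~\ref{lemma:Smooth_GD_Helper}), and then convert that into a function-value bound by summing a per-step inequality. Throughout, write $g_t := \nabla F(x_t) + e_t$ so that $x_{t+1} = x_t - \frac{1}{\beta_F}g_t$, and set $\Delta_t := F(x_t) - F(x_*) \ge 0$ and $a_t := \frac{1}{\beta_F}\sum_{i=1}^t\|e_{i-1}\|_2$. The first step is to derive a three-point inequality: combining $\beta_F$-smoothness applied to $(x_t, x_{t+1})$, the convexity bound $F(x_t) \le F(x_*) + \nabla F(x_t)^T(x_t - x_*)$, the substitution $\nabla F(x_t) = \beta_F(x_t - x_{t+1}) - e_t$, the polarization identity for $(x_t - x_{t+1})^T(x_{t+1} - x_*)$, and Cauchy--Schwarz, one obtains
\begin{align*}
\Delta_{t+1} \le \frac{\beta_F}{2}\|x_t - x_*\|_2^2 - \frac{\beta_F}{2}\|x_{t+1} - x_*\|_2^2 + \|e_t\|_2\,\|x_{t+1} - x_*\|_2.
\end{align*}

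Dropping the nonnegative term $\Delta_{t+1}$ and summing this from $t=0$ to $k-1$ gives $\|x_k - x_*\|_2^2 \le \|x_0 - x_*\|_2^2 + \frac{2}{\beta_F}\sum_{t=1}^k\|e_{t-1}\|_2\|x_t - x_*\|_2$. I would then invoke Lemma~\ref{lemma:Smooth_GD_Helper} with $u_t = \|x_t - x_*\|_2$, $S_t \equiv \|x_0 - x_*\|_2^2$, and $\lambda_t = \frac{2}{\beta_F}\|e_{t-1}\|_2$, whose half-sum $\tfrac12\sum_{i=1}^t\lambda_i$ coincides with the $a_t$ defined above; together with $\sqrt{x + y} \le \sqrt{x} + \sqrt{y}$ this yields the uniform radius bound $\|x_t - x_*\|_2 \le 2a_t + \|x_0 - x_*\|_2 =: R_t$. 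Next, summing the three-point inequality a second time but now telescoping the squared-norm terms (instead of discarding $\Delta_{t+1}$), and using $R_t \le R_k$, gives $\sum_{t=1}^k\Delta_t \le \frac{\beta_F}{2}\|x_0 - x_*\|_2^2 + R_k\sum_{t=1}^k\|e_{t-1}\|_2 = \frac{\beta_F}{2}\|x_0 - x_*\|_2^2 + \beta_F a_k R_k$.

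It remains to pass from $\sum_{t=1}^k\Delta_t$ to $k\Delta_k$. In the exact-gradient case $\{\Delta_t\}$ is non-increasing and this is immediate; here it is not, so I would control the increments via the descent-with-error estimate $\Delta_{i+1} - \Delta_i \le \frac{1}{2\beta_F}\|e_i\|_2^2$, itself a consequence of $\beta_F$-smoothness and the update rule (complete the square). Telescoping gives $\Delta_k - \Delta_t \le \frac{1}{2\beta_F}\sum_{i=t}^{k-1}\|e_i\|_2^2$, and summing over $t = 1,\dots,k$ yields $k\Delta_k \le \sum_{t=1}^k\Delta_t + \frac{1}{2\beta_F}\sum_{i=1}^{k-1}i\,\|e_i\|_2^2$. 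The crucial observation is that each $\|e_i\|_2$ with $i \le k-1$ is one of the summands of $\beta_F a_k$, hence $\|e_i\|_2 \le \beta_F a_k \le \beta_F R_k$; this downgrades the quadratic correction $\sum i\|e_i\|_2^2$ to the linear expression $2R_k\sum_{i=2}^k(i-1)\|e_{i-1}\|_2$. Adding the three contributions, factoring out $R_k = 2a_k + \|x_0 - x_*\|_2$, and dividing by $k$ gives the claimed bound.

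I expect the main obstacle, relative to the textbook exact-gradient proof, to be precisely this final conversion: the lack of monotonicity of $\{F(x_t) - F(x_*)\}$ under inexact gradients forces a correction term, and the only way to keep it linear rather than quadratic in the $\|e_i\|_2$'s is the self-bounding inequality $\|e_i\|_2 \le \beta_F a_k$, which is also what dictates the particular weighted sum $2\sum_{i=2}^t(i-1)\|e_{i-1}\|_2$ appearing in the statement. Everything else is routine bookkeeping with smoothness, convexity, polarization, and Lemma~\ref{lemma:Smooth_GD_Helper}.
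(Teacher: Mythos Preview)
Your proposal is correct and follows the same skeleton as the paper's proof: derive the three-point inequality, telescope it to control $\|x_i-x_*\|_2$ via Lemma~\ref{lemma:Smooth_GD_Helper}, telescope it again to bound $\sum_{i=1}^t\Delta_i$, and finally convert $\sum_{i=1}^t\Delta_i$ to $t\Delta_t$.

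The one substantive difference is in that last conversion. You use the descent estimate $\Delta_{i+1}-\Delta_i\le\frac{1}{2\beta_F}\|e_i\|_2^2$ and then invoke the self-bounding inequality $\|e_i\|_2\le\beta_F a_t$ to linearize the resulting quadratic sum. The paper instead uses the slightly weaker step inequality $F(x_i)-F(x_{i-1})\le\|e_{i-1}\|_2\,\|x_i-x_{i-1}\|_2$ (obtained by dropping $-\frac{\beta_F}{2}\|x_i-x_{i-1}\|_2^2$ rather than completing the square), telescopes to produce $\sum_{i=2}^t(i-1)\|e_{i-1}\|_2\,\|x_i-x_{i-1}\|_2$, and then bounds $\|x_i-x_{i-1}\|_2\le\|x_i-x_*\|_2+\|x_{i-1}-x_*\|_2\le 2R_t$ directly from the radius estimate. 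This route avoids the self-bounding trick entirely and makes the factor $2$ in the statement fall out naturally, whereas your route actually gives the tighter coefficient $\tfrac{a_t}{2}$ before you weaken it to $2R_t$. Both are valid; the paper's argument is closer to the original analysis in \cite{Schmidt_2011_Conv}.
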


\begin{proof}
By the convexity and $\beta_F$-smoothness of $F$, we have for $i \leq t$ that
\begin{align*}
F(x_i) &\leq F(x_{i - 1}) + \nabla F(x_{i - 1})^T(x_i - x_{i - 1}) + \frac{\beta_F}{2}||x_i - x_{i - 1}||_2^2\\
&\leq F(x_*) + \nabla F(x_{i - 1})^T(x_{i - 1} - x_*) + \nabla F(x_{i - 1})^T(x_i - x_{i - 1}) + \frac{\beta_F}{2}||x_i - x_{i - 1}||_2^2\\
&= F(x_*) + \nabla F(x_{i - 1})^T(x_i - x_*) + \frac{\beta_F}{2}||x_i - x_{i - 1}||_2^2.
\end{align*}
Since $\nabla F(x_{i - 1}) = \beta_F(x_{i - 1} - x_i) - e_{i - 1}$, we obtain
\begin{align*}
F(x_i) &\leq F(x_*) + \frac{\beta_F}{2}||x_i - x_{i - 1}||_2^2 + \beta_F(x_{i - 1} - x_i)^T(x_i - x_*) - e_{i - 1}^T(x_i - x_*)\\
& \leq F(x_*) + \frac{\beta_F}{2}(x_i - x_{i - 1})^T(x_i - x_{i - 1} - 2x_i + 2x_*) + ||e_{i - 1}||_2||x_i - x_*||_2\\
&= F(x_*) - \frac{\beta_F}{2}||x_i - x_*||_2^2 + \frac{\beta_F}{2}||x_{i - 1} - x_*||_2^2 + ||e_{i - 1}||_2||x_i - x_*||_2.
\end{align*}
Hence, we have
\begin{align}
\label{eq:Smooth_GD_Sum}
\sum_{i = 1}^t(F(x_i) - F(x_*)) + \frac{\beta_F}{2}||x_t - x_*||_2^2 \leq \frac{\beta_F}{2}||x_0 - x_*||_2^2 + \sum_{i = 1}^t||e_{i - 1}||_2||x_i - x_*||_2.
\end{align}
Since $F(x_i) \leq F(x_{i - 1}) + \nabla F(x_{i - 1})^T(x_i - x_{i - 1}) + \frac{\beta_F}{2}||x_i - x_{i - 1}||_2^2$ and $\nabla F(x_{i - 1}) = \beta_F(x_{i - 1} - x_i) - e_{i - 1}$, we have for all $i \geq 1$ that
\begin{align*}
F(x_i) \leq F(x_{i - 1}) - \frac{\beta_F}{2}||x_i - x_{i - 1}||_2^2 -e_{i - 1}^T(x_i - x_{i - 1}) \leq F(x_{i - 1}) + ||e_{i - 1}||_2||x_i - x_{i - 1}||_2.
\end{align*}
Thus, using this in the RHS of inequality \eqref{eq:Smooth_GD_Sum}, we obtain for $i \leq t$ that
\begin{align}
\label{eq:Smooth_GD_Sum2}
& t(F(x_t) - F(x_*)) + \frac{\beta_F}{2}||x_t - x_*||_2^2 \notag\\
& \qquad \leq \frac{\beta_F}{2}||x_0 - x_*||_2^2 + \sum_{i = 1}^t||e_{i - 1}||_2||x_i - x_*||_2 + \sum_{i = 2}^t(i - 1)||e_{i - 1}||_2||x_i - x_{i - 1}||_2 \notag\\
& \qquad \leq \frac{\beta_F}{2}||x_0 - x_*||_2^2 + \sum_{i = 1}^t||e_{i - 1}||_2||x_i - x_*||_2 \notag\\
& \qquad \quad + \sum_{i = 2}^t(i - 1)||e_{i - 1}||_2(||x_i - x_*||_2 + ||x_{i - 1} - x_*||_2).
\end{align}
Hence, we need to control $||x_i - x_*||_2$ for $i \leq t$. By inequality \eqref{eq:Smooth_GD_Sum}, since $x_*$ is a minimizer, we have for all $t \geq 1$ that
\begin{align*}
||x_t - x_*||_2^2 \leq ||x_0 - x_*||_2^2 + \frac{2}{\beta_F}\sum_{i = 1}^t||e_{i - 1}||_2||x_i - x_*||_2. 
\end{align*}
Using Lemma \ref{lemma:Smooth_GD_Helper} with $S_t = ||x_0 - x_*||_2^2$,  $\lambda_i = \frac{2||e_{i - 1}||_2}{\beta_F}$, and $a_t = \sum_{i = 1}^t\frac{||e_{i - 1}||_2}{\beta_F}$, we obtain
\begin{align*}
||x_t - x_*||_2 \leq a_t + \left(||x_0 - x_*||_2^2 + a_t^2\right)^{1/2}.
\end{align*}
Since the sequence $\{a_i\}$ is increasing in $i$, we have for all $i \leq t$ that
\begin{align*}
||x_i - x_*||_2 &\leq a_i + \left(||x_0 - x_*||_2^2 + a_i^2\right)^{1/2} \leq a_t + \left(||x_0 - x_*||_2^2 + a_t^2\right)^{1/2}\\
&\leq 2a_t + ||x_0 - x_*||_2.
\end{align*}
Plugging this into inequality \eqref{eq:Smooth_GD_Sum2} and dropping the $\frac{\beta_F}{2}||x_t - x_*||_2^2$ term on the RHS, we obtain
\begin{align*}
& t(F(x_t) - F(x_*)) \\
& \qquad \leq \frac{\beta_F}{2}||x_0 - x_*||_2^2 + \sum_{i = 1}^t||e_{i - 1}||_2||x_i - x_*||_2\\
& \qquad \quad + \sum_{i = 2}^t(i - 1)||e_{i - 1}||_2(||x_i - x_*||_2 + ||x_{i - 1} - x_*||_2)\\
& \qquad \leq \frac{\beta_F}{2}||x_0 - x_*||_2^2 + (2a_t + ||x_0 - x_*||_2)\left(\beta_Fa_t + 2\sum_{i = 2}^t(i - 1)||e_{i - 1}||_2\right).
\end{align*}
Dividing by $t$, we obtain the desired result.
\end{proof}

\begin{lemma}[\cite{Schmidt_2011_Conv}]
\label{lemma:Inexact_AGD_lemma}
Let $p \in \mathbb{N}$. Assume $F : \mathbb{R}^p \rightarrow \mathbb{R}$ is convex and $\beta_F$-smooth over $\mathbb{R}^p$, with $x_* \in \mathop{\arg\min}\limits_{x \in \mathbb{R}^p}F(x)$. Consider Nesterov's accelerated gradient method initialized at $x_0$ and $x_1$, such that for $t \geq 1$:
\begin{align*}
&y_t = x_t + \frac{t - 1}{t + 2}(x_t - x_{t - 1}),\\
&x_{t + 1} = y_t - \frac{1}{\beta_F}(\nabla F(y_t) + e_t),
\end{align*}
with the sequence of errors $\{e_{t}\}_{t \geq 1}$ being arbitrary. For all $t \geq 1$, we have
\begin{align*}
F(x_t) - F(x_*) \leq \frac{2\beta_F}{(t + 1)^2}\left(||x_0 - x_*||_2 + 2\sum_{i = 1}^ti\frac{||e_{i - 1}||_2}{\beta_L}\right)^2.
\end{align*}
\end{lemma}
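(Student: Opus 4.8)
The plan is to mirror the analysis of the inexact non-accelerated method in Lemma~\ref{lemma:Inexact_GD_lemma}, adapted to Nesterov's momentum through an auxiliary ``mirror'' sequence and an estimate-sequence (Lyapunov) argument, closing with the same scalar-recursion lemma used there. First I would rewrite the two-sequence recursion $y_t = x_t + \frac{t-1}{t+2}(x_t - x_{t-1})$, $x_{t+1} = y_t - \frac{1}{\beta_F}(\nabla F(y_t) + e_t)$ in an equivalent three-sequence form: set $v_0 = x_0$, $v_{t+1} = v_t - \frac{t+2}{2\beta_F}(\nabla F(y_t) + e_t)$, and $y_t = \frac{t}{t+2}x_t + \frac{2}{t+2}v_t$, verifying by induction that these produce the same iterates $\{x_t\}$ (the exact index shift between $t+1$ and $t+2$ in the $v$-step is to be pinned down at this stage). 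This isolates $v_t$ as the sequence whose distance to $x_*$ must be controlled.

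Next I would derive the one-step progress inequality. By $\beta_F$-smoothness applied to the inexact step, $F(x_{t+1}) \le F(y_t) - \frac{1}{2\beta_F}\|\nabla F(y_t)\|_2^2 + \frac{1}{2\beta_F}\|e_t\|_2^2$; combining this with the convexity bounds $F(y_t) \le F(x_t) + \langle \nabla F(y_t), y_t - x_t\rangle$ and $F(y_t) \le F(x_*) + \langle \nabla F(y_t), y_t - x_*\rangle$, weighted to reproduce the convex combination defining $y_t$, and completing the square in the $v_{t+1}$ update, I expect to reach an inequality of the form
\[
a_{t+1}\big(F(x_{t+1}) - F(x_*)\big) + \tfrac{\beta_F}{2}\|v_{t+1} - x_*\|_2^2 \le a_t\big(F(x_t) - F(x_*)\big) + \tfrac{\beta_F}{2}\|v_t - x_*\|_2^2 + (t+1)\langle e_t, x_* - v_t\rangle + \tfrac{(t+1)^2}{4\beta_F}\|e_t\|_2^2,
\]
with $a_t$ of order $t^2$ (its precise value, together with the coefficients of $\langle e_t,\cdot\rangle$ and $\|e_t\|_2^2$, to be fixed so that the telescoping below is exact). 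Telescoping from $0$ to $t-1$ with $a_0 = 0$, $v_0 = x_0$ then gives $a_t\big(F(x_t) - F(x_*)\big) \le \frac{\beta_F}{2}\|x_0 - x_*\|_2^2 + \sum_{i=1}^t i\,\langle e_{i-1}, x_* - v_{i-1}\rangle + \sum_{i=1}^t \frac{i^2}{4\beta_F}\|e_{i-1}\|_2^2$.

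To finish, I would extract a bound on $\|v_i - x_*\|_2$ from the \emph{same} telescoped inequality: bounding the cross terms by $i\|e_{i-1}\|_2\|v_{i-1} - x_*\|_2$ and discarding the nonnegative $F$-gap on the left (legitimate at each truncation $i \le t$, since the one-step inequality holds for every intermediate index, just as in the non-accelerated case), one obtains a self-referential bound $\frac{\beta_F}{2}\|v_i - x_*\|_2^2 \le \frac{\beta_F}{2}\|x_0 - x_*\|_2^2 + \sum_{j\le i} \lambda_j \|v_{j-1} - x_*\|_2$ with $\lambda_j \asymp j\|e_{j-1}\|_2$. Applying Lemma~\ref{lemma:Smooth_GD_Helper} (exactly as in the proof of Lemma~\ref{lemma:Inexact_GD_lemma}) yields $\|v_i - x_*\|_2 \le \|x_0 - x_*\|_2 + 2\sum_{j=1}^i \frac{j\|e_{j-1}\|_2}{\beta_F}$ for all $i \le t$. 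Substituting this back into the telescoped bound and collapsing every term into a single square gives $F(x_t) - F(x_*) \le \frac{2\beta_F}{(t+1)^2}\big(\|x_0 - x_*\|_2 + 2\sum_{i=1}^t i\,\frac{\|e_{i-1}\|_2}{\beta_F}\big)^2$, as claimed.

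The main obstacle will be the bookkeeping in the one-step inequality: making the weights $a_t$, the coefficient of the linear error term $\langle e_t, \cdot\rangle$, the coefficient of $\|e_t\|_2^2$, and the constant in the $v$-update mutually consistent, so that the telescoping is clean and the residual error sums carry precisely the weights $i$ and $i^2$ that recombine into the stated square. A secondary subtlety, shared with Lemma~\ref{lemma:Inexact_GD_lemma}, is that the distance bound on $\|v_i - x_*\|_2$ must be bootstrapped from the already-telescoped inequality rather than proved separately, so one must track carefully that the $F$-gap terms being discarded are genuinely nonnegative at every index used.
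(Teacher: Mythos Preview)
The paper does not give its own proof of this lemma: it is stated with a direct citation to \cite{Schmidt_2011_Conv} and no argument is supplied (in contrast to the companion Lemma~\ref{lemma:Inexact_GD_lemma}, which the paper does prove). Your outlined approach---the three-sequence reformulation with the auxiliary point $v_t$, the one-step Lyapunov inequality, telescoping, and then bootstrapping a bound on $\|v_i - x_*\|_2$ via Lemma~\ref{lemma:Smooth_GD_Helper}---is precisely the method of \cite{Schmidt_2011_Conv}, so there is nothing to compare against in the paper itself; your plan is the standard one and is correct.
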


%%%%%

\section{Supplementary Results for Section~\ref{sec:Strongly Convex Risks}}
\label{sec:Huber epsilon-Contamination Robustness}

\subsection{Huber Contamination Robustness} 

We now discuss the notion of robustness in the Huber $\epsilon$-contamination setting, when the risk is strongly convex. The analysis will follow the logic used in Section \ref{sec:Strongly Convex Risks}. In the setting of Huber’s $\epsilon$-contamination model, instead of having observations directly from a distribution $F$, we observe data from a contaminated distribution with a proportion of expected outliers equal to $\epsilon$:
\begin{align*}
P = (1-\epsilon)F + \epsilon Q,
\end{align*}
for an arbitrary distribution $Q$ that allows us to model the outliers themselves. Several authors \cite{RE, CompIntr, Balakrish} considered noisy gradient methods, which can be seen as applications of robust mean estimators, to obtain robust estimators for various learning problems, such as estimation in parametric models \cite{DP_ROB_HIGH_DIM, NO_ROB_LR}.

Let us now discuss our approach in detail. Similar to the $G_{MOM}$ estimator in the heavy-tailed setting from Section \ref{sec:Strongly Convex Risks}, we have the $HuberGradientEstimator$ algorithm (Algorithm \ref{alg:HubGradTrunc}) from \cite{RE}. This comes together with another algorithm, namely the $HuberOutlierGraidientTruncation$ algorithm (Algorithm \ref{alg:HuOutlier}).
\begin{algorithm}
\caption{Huber Gradient Estimator}\label{alg:HubGradTrunc}
\begin{algorithmic}[1]
\Function{HuberGradientEstimator}{$\text{Sample Gradients } S = \{\nabla \mathcal{L}(\theta; z_i)\}_{i=1}^n$, Corruption Level $\epsilon$, Dimension $p$, $\delta$}
    \State $\widetilde{S} = \text{HuberOutlierGradientTruncation}(S, \epsilon, p, \delta)$.
    \If{$p=1$}
        \State \Return $\text{mean}(\widetilde{S})$
    \Else
        \State Compute $\Sigma_{\widetilde{S}}$, the covariance matrix of $\widetilde{S}$.
        \State Let $V$ be the span of the top $p/2$ principal components of $\Sigma_{\widetilde{S}}$ and $W$ be its complement.
        \State Compute $S_1 := P_V(\widetilde{S})$ where $P_V$ is the projection operation onto $V$.
        \State Let $\widehat{\mu}_{V} := \text{HuberGradientEstimator}(S_1, \epsilon, p/2, \delta)$.
        \State Set $\widehat{\mu}_{W} := \text{mean}(P_W(\widetilde{S}))$.
        \State Let $\widehat{\mu} \in \mathbb{R}^p$ be such that $P_V(\widehat{\mu}) = \widehat{\mu}_V$, and $P_W(\widehat{\mu}) = \widehat{\mu}_W$.
        \State \Return $\widehat{\mu}$.
    \EndIf
\EndFunction
\end{algorithmic}
\end{algorithm}

\begin{algorithm}
\caption{Huber Outlier Gradients Truncation}\label{alg:HuOutlier}
\begin{algorithmic}[1]
\Function{HuberOutlierGradientTruncation}{$S$, $\epsilon$, $p$, $\delta$}
    \If{$p=1$}
        \State Let $[a, b]$ be the smallest interval containing $1 - \epsilon - C\sqrt{\frac{\log(|S|/\delta)}{|S|}}(1 - \epsilon)$ fraction of points.
        \State $\widetilde{S} \gets S \cap [a, b]$.
        \State \Return $\widetilde{S}$
    \Else
    \State Let $[S]_i$ be the samples with $i^{\text{th}}$ coordinates only, $[S]_i = \{x^Te_i | x \in S\}$.
        \For{$i = 1$ to $p$}
            \State $a[i] = \text{HuberGradientEstimator}([S]_i, \epsilon, 1, \delta/p)$.
        \EndFor
        \State Let $B(r, a)$ be the ball of smallest radius centered at $a$ containing a $\left(1 - \epsilon - C_p\left(\sqrt{\frac{p}{|S|}\log\left(\frac{|S|}{p\delta}\right)}\right)\right)(1 - \epsilon)$ fraction of points in $S$.
        \State $\widetilde{S} \gets S \cap B(r, a)$.
        \State \Return $\widetilde{S}$
    \EndIf
\EndFunction
\end{algorithmic}
\end{algorithm}

For Algorithm \ref{alg:HubGradTrunc}, we have the following theoretical guarantee from \cite{RE}, which crucially makes a bounded $4^{\text{th}}$ moments assumption as per Definition \ref{def:bd_moments}:

\begin{lemma}[\cite{RE}]
\label{lemma:5}
For $\mathcal{D}_n = \{z_i\}_{i = 1}^n$ \iid samples from the Huber $\epsilon$-contaminated distribution, with the distribution of the true gradients $\nabla \mathcal{L}(\theta, z)$ having bounded $4^{\text{th}}$ moments, Algorithm \ref{alg:HubGradTrunc} returns, for any fixed $\theta \in \mathbb{R}^p$, an estimate $\widehat{\mu}$ such that with probability at least $1 - \zeta$, we have
\begin{align*}
||\widehat{\mu} - \nabla\mathcal{R}(\theta)||_2 \lesssim (\sqrt{\epsilon} + \gamma(n, p, \zeta, \epsilon))\sqrt{||\mathrm{\mathrm{Cov}}(\nabla \mathcal{L}(\theta, z))||_2\log(p)},
\end{align*}
where 
\begin{align*}
\gamma(n, p, \zeta, \epsilon) = \left(\frac{p\log(p)\log(n/(p\zeta))}{n}\right)^{3/8} + \left(\frac{\epsilon p^2 \log(p)\log(p \log(p)/\zeta)}{n}\right)^{1/4}.
\end{align*}
\end{lemma}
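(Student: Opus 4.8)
The plan is to follow the argument of Prasad et al.~\cite{RE}, which adapts the recursive spectral filtering approach of Lai et al.~\cite{Vpala}. The estimator proceeds in two stages: a coordinate-wise truncation step (Algorithm~\ref{alg:HuOutlier}) that discards gross outliers, followed by the dimension-halving recursion in Algorithm~\ref{alg:HubGradTrunc} that estimates the mean directly on a low-variance subspace while recursing on the complementary high-variance subspace. Writing $\mu = \nabla\mathcal{R}(\theta) = \mathbb{E}[\nabla\mathcal{L}(\theta,z)]$ and $\Sigma_* = \mathrm{Cov}(\nabla\mathcal{L}(\theta,z))$, the goal is to bound $\|\widehat\mu - \mu\|_2$ by summing per-level errors over the $\log_2 p$ recursion levels.

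First I would establish the concentration facts for the uncontaminated gradients. On the $(1-\epsilon)$ fraction of clean samples, one shows that with probability at least $1-\zeta/2$ the empirical mean concentrates around $\mu$ at rate $\sqrt{\mathrm{tr}(\Sigma_*)/n}$, the empirical covariance concentrates around $\Sigma_*$ in operator norm, and — crucially — that for \emph{any} subset retaining a $(1-O(\epsilon))$ fraction of the clean points, the restricted empirical covariance is still within $O(\epsilon\|\Sigma_*\|_2 + \gamma^2\|\Sigma_*\|_2)$ of $\Sigma_*$. The bounded $4^{\text{th}}$ moment assumption of Definition~\ref{def:bd_moments} is what makes this last claim possible: it gives $\mathbb{E}[(v^T(X-\mu))^4] \lesssim (v^T\Sigma_* v)^2$ uniformly in $\|v\|_2=1$, which (via a covering argument over the sphere, as in Lemma~\ref{lemma:Covering_Nr_Ball}) yields a uniform deviation bound for the quadratic form $\frac1n\sum (v^T(X_i-\mu))^2$, and hence control on how much removing or adding an $O(\epsilon)$ fraction of mass can perturb the covariance in any direction.

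Next I would analyze the truncation: the interval or ball in Algorithm~\ref{alg:HuOutlier} is calibrated to keep a $(1-\epsilon-O(\gamma))(1-\epsilon)$ fraction of points, so it retains essentially all clean points near $\mu$ while discarding far-away adversarial ones; what survives has empirical covariance of operator norm $O(\|\Sigma_*\|_2)$ even after the adversary's remaining $\epsilon$ fraction (confined inside the truncation region) is included. Then the recursion: decompose $\mathbb{R}^p = V \oplus W$ with $V$ the span of the top $p/2$ eigenvectors of the truncated sample covariance. On $W$ every eigenvalue is at most twice the median, hence $O(\mathrm{tr}(\Sigma_*)/p)$, and the standard bounded-covariance robust-mean bound gives a $W$-error of order $(\sqrt\epsilon+\gamma)\sqrt{\|\Sigma_W\|_2\,(p/2)\log p}$, which telescopes across levels; on $V$ one recurses with $p\to p/2$ and $\zeta$ replaced by $\zeta/p$ (and $\zeta/(p\log p)$ inside the outlier step). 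Unrolling the recursion and summing the per-level errors, each of the form $(\sqrt\epsilon+\gamma(n,p,\zeta,\epsilon))\sqrt{\|\Sigma_*\|_2\log p}$ up to constants, absorbs the geometric sum into the constant and, using $\|\Sigma_*\|_2 \le \|\mathrm{Cov}(\nabla\mathcal{L}(\theta,z))\|_2$, gives the claimed bound.

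The main obstacle is the covariance-stability claim together with its persistence under the recursion: one must show that after truncation the sample covariance is genuinely $O(\|\Sigma_*\|_2)$ in operator norm no matter what the adversary does, and that projecting onto the top or bottom eigenspace at each level preserves this property so that the $W$-subspace eigenvalues really do decay like $\mathrm{tr}(\Sigma_*)/p$. Without the $4^{\text{th}}$ moment hypothesis an $\epsilon$ fraction of heavy clean gradients could already inflate the empirical covariance and the adversary could hide behind them; with it, a Cauchy--Schwarz plus truncation argument closes the gap. The remaining work is bookkeeping: tracking how the failure probability and the finite-sample term $\gamma$ compound across the $\log p$ levels with the $\delta/p$ and $\delta/(p\delta)$-type splits, which is precisely what produces the exponents $3/8$ and $1/4$ in $\gamma(n,p,\zeta,\epsilon)$.
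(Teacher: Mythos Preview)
The paper does not prove this lemma: it is stated with attribution to \cite{RE} and used as a black-box input (e.g., in the proof of Lemma~\ref{lemma:geglmhb}), so there is no in-paper proof to compare your proposal against. Your sketch is a faithful outline of the argument in \cite{RE}, which in turn adapts the recursive spectral procedure of \cite{Vpala}; in particular, you correctly identify that the bounded $4^{\text{th}}$ moment hypothesis is what drives the covariance-stability step, and that the $\log p$-level recursion with per-level confidence splitting is what produces the specific exponents in $\gamma(n,p,\zeta,\epsilon)$. Since the paper treats the result as cited, your write-up already goes beyond what the paper itself supplies.
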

This tells us that under mild assumptions on the risk, we can hope to achieve $O\left(\sqrt{\epsilon \log(p)}\right)$ accuracy if $n \rightarrow \infty$, since $\gamma(n, p, \zeta, \epsilon) \rightarrow 0$. 

In order to apply Lemma \ref{lemma:5} to gradients, we need bounded $4^{\text{th}}$ moments for the gradients. Unfortunately, the applications in \cite{RE} in the Huber $\epsilon$-contamination setting are not entirely correct, since they do not check the bounded $4^{\text{th}}$ moments condition. We fix this problem in the linear regression setting by making some mild assumptions on the moments of $x$. In the context of linear regression with squared error loss, assume additionally that for the vector of covariates $x = \left(x^{(1)}, \dots, x^{(p)}\right)^T \in \mathbb{R}^p$, we have for all $i, j, k, l \in [p]$:
\begin{align}
\label{eq:add_x}
&\mathrm{Var}\left(x^{(i)}x^{(j)}\right) > C_1, \qquad C_2 \leq \sigma_2^2, \notag\\
& \mathrm{Cov}\left(x^{(k)}x^{(i)}, x^{(l)}x^{(j)}\right) = \begin{cases} 
      0 & \text{if any two indexes from $\{i, j, k, l\}$ are distinct}
      \\
      0 & \text{if $k = l$ and $i \neq j$}
      \\
      \mathrm{Var}\left(x^{(k)}x^{(i)}\right) & \text{if $k = l$ and $i = j$},
   \end{cases}
\end{align}
for absolute constants $C_1, C_2 > 0$.

For example, if $x \sim N\left(0, I_p\right)$ and $\sigma_2^2$ is an absolute constant, the conditions \eqref{eq:add_x} are satisfied. We now have a covariance bound lemma:

\begin{lemma}[Corrected from \cite{RE}]
\label{lemma:covglm}
Consider the linear regression with squared error loss model from Example~\ref{sec:LR_Squared_Loss}, with $z = (x, y)$. Assume additionally the conditions \eqref{eq:add_x}.
Then
\begin{align*}
||\mathrm{\mathrm{Cov}}(\nabla \mathcal{L}(\theta, z))||_2 \lesssim ||\Delta||_2^2 + \sigma_2^2,
\end{align*}
with $\Delta = \theta - \theta^*$, and we have bounded $4^{\text{th}}$ moments for the gradient distribution, i.e., for all $||v||_2 = 1$ and $\theta \in \mathbb{R}^p$, we have
\begin{align*}
\mathbb{E}\left[\left((\nabla \mathcal{L}(\theta, z) - \nabla\mathcal{R}(\theta))^Tv\right)^4\right] \leq  \widetilde{C}_4(\mathrm{Var}(\nabla \mathcal{L}(\theta, z)^Tv))^2.
\end{align*}
\end{lemma}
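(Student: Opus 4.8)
The plan is to compute $\mathrm{Cov}(\nabla\mathcal{L}(\theta,z))$ explicitly, bound its operator norm, and then bound the fourth moment of one-dimensional projections of the centered gradient, in both cases invoking the extra structural conditions \eqref{eq:add_x}. Recall from Example~\ref{sec:LR_Squared_Loss} that $\nabla\mathcal{L}(\theta,(x,y)) = (x^T\theta - y)x$, and since $y = x^T\theta^* + w$ with $x \indep w$ and $\mathbb{E}[w]=0$, we have $\nabla\mathcal{L}(\theta,z) = xx^T\Delta - wx$ and $\nabla\mathcal{R}(\theta) = \Sigma\Delta$, where $\Delta = \theta - \theta^*$. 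First I would write the centered gradient as $\nabla\mathcal{L}(\theta,z) - \nabla\mathcal{R}(\theta) = (xx^T - \Sigma)\Delta - wx$ and expand $\mathrm{Cov}(\nabla\mathcal{L}(\theta,z)) = \mathbb{E}[((xx^T-\Sigma)\Delta - wx)((xx^T-\Sigma)\Delta - wx)^T]$; the cross terms vanish because $x \indep w$ and $\mathbb{E}[w]=0$, leaving $\mathbb{E}[(xx^T-\Sigma)\Delta\Delta^T(xx^T-\Sigma)] + \sigma_2^2\Sigma$. This is exactly the structure already handled in the proof of Lemma~\ref{lemma:cov_ri} (with $\gamma_{\mathcal{C}}=0$), so I would reuse that argument: bound $\lambda_{\max}$ of the first term via $\sup_{\|\xi\|_2,\|\omega\|_2=1}\mathbb{E}[(\xi^T(xx^T-\Sigma)\omega)^2]$, use $(a+b)^2 \le 2(a^2+b^2)$ and Cauchy--Schwarz to reduce to $\sqrt{\mathbb{E}[(\xi^Tx)^4]\mathbb{E}[(\omega^Tx)^4]}$, and then control these with the bounded $4^{\text{th}}$-moment property of $x$ (or directly via \eqref{eq:add_x}, which constrains the relevant fourth mixed moments). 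This yields $\|\mathrm{Cov}(\nabla\mathcal{L}(\theta,z))\|_2 \lesssim \|\Delta\|_2^2 \lambda_{\max}(\Sigma)^2 + \sigma_2^2\lambda_{\max}(\Sigma) \lesssim \|\Delta\|_2^2 + \sigma_2^2$, since $\lambda_{\max}(\Sigma) \asymp 1$.

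For the bounded fourth moments claim of the gradient distribution, I would fix a unit vector $v$ and expand $(\nabla\mathcal{L}(\theta,z) - \nabla\mathcal{R}(\theta))^Tv = \Delta^T(xx^T - \Sigma)v - w(x^Tv)$. Call this $A - B$ with $A = \Delta^T(xx^T-\Sigma)v$ and $B = w(x^Tv)$. Then $\mathbb{E}[(A-B)^4] \le 8(\mathbb{E}[A^4] + \mathbb{E}[B^4])$ by the inequality $(a-b)^4 \le 8(a^4+b^4)$. For $\mathbb{E}[B^4] = \mathbb{E}[w^4]\mathbb{E}[(x^Tv)^4]$ by independence --- here I would need a bounded $4^{\text{th}}$-moment-type control on $w$ as well, or more precisely, since the statement is comparison to $(\mathrm{Var}(\nabla\mathcal{L}(\theta,z)^Tv))^2$, I would lower-bound the variance using the conditions \eqref{eq:add_x}: the first line of \eqref{eq:add_x} gives $\mathrm{Var}(x^{(i)}x^{(j)}) > C_1$ and $\sigma_2^2 \ge C_2$, which ensures the variance is bounded below by an absolute constant times $\|\Delta\|_2^2 + \sigma_2^2$ (matching the upper bound on $\|\mathrm{Cov}\|_2$ up to constants, giving a two-sided estimate). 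The vanishing covariance conditions in the second part of \eqref{eq:add_x} are what make $\mathrm{Var}(\Delta^T(xx^T-\Sigma)v)$ decompose cleanly as a sum $\sum_{i,j}\Delta_i^2 v_j^2 \mathrm{Var}(x^{(i)}x^{(j)})$ without cross terms, so it is comparable to $\|\Delta\|_2^2$; symmetrically $\mathbb{E}[A^4]$ can be bounded by a constant times $(\mathbb{E}[A^2])^2$ using the fourth-moment structure. Combining the pieces, $\mathbb{E}[(A-B)^4] \lesssim (\mathbb{E}[A^2])^2 + (\mathbb{E}[B^2])^2 \lesssim (\mathbb{E}[A^2] + \mathbb{E}[B^2])^2 = (\mathrm{Var}(\nabla\mathcal{L}(\theta,z)^Tv))^2$, as desired, for some absolute constant $\widetilde{C}_4$.

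The main obstacle, I expect, is making the fourth-moment bookkeeping for $\mathbb{E}[A^4] = \mathbb{E}[(\Delta^T(xx^T-\Sigma)v)^4]$ precise: this is a degree-four polynomial in the coordinates of $x$, so its expansion involves eighth mixed moments of $x$, and one must either invoke a suitable higher-moment hypothesis on $x$ or carefully exploit \eqref{eq:add_x} together with whatever moment bounds are already assumed (the bounded $4^{\text{th}}$ moments of $x$ alone control $\mathbb{E}[(x^Tv)^4]$ but not obviously $\mathbb{E}[A^4]$). I anticipate that the conditions \eqref{eq:add_x} are designed precisely so that the relevant higher-order cross terms either vanish or telescope, reducing $\mathbb{E}[A^4]$ to a manageable sum; verifying this reduction --- and pinning down exactly which moment assumptions on $x$ and $w$ are needed (e.g., that $w$ has a bounded fourth moment or is itself light-tailed enough) --- is the delicate step, whereas the operator-norm bound on the covariance is essentially a transcription of the Lemma~\ref{lemma:cov_ri} argument with $\gamma_{\mathcal{C}} = 0$.
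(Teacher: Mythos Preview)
Your approach is essentially the paper's: the covariance bound is exactly Lemma~\ref{lemma:cov_ri} with $\gamma_{\mathcal{C}}=0$ (the paper just cites Lemma~4 of \cite{RE} for this), and your variance lower bound via \eqref{eq:add_x} --- diagonalizing $\mathrm{Var}(A\Delta)$ so that $v^T\mathrm{Var}(A\Delta)v=\sum_{i,k}\Delta_i^2 v_k^2\,\mathrm{Var}(x^{(i)}x^{(k)}) \ge C_1\|\Delta\|_2^2$, hence $\mathrm{Var}(v^T\nabla\mathcal{L})\ge C_1\|\Delta\|_2^2+C_2\tau_l$ --- matches the paper's computation verbatim.

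The one noteworthy difference is how the fourth-moment \emph{upper} bound is handled. You propose proving $\mathbb{E}[A^4]\lesssim(\mathbb{E}[A^2])^2$ and $\mathbb{E}[B^4]\lesssim(\mathbb{E}[B^2])^2$ separately and then combining, and you correctly flag the eighth-moment bookkeeping in $\mathbb{E}[A^4]$ as the delicate step. The paper sidesteps this entirely: it invokes the proof of Lemma~4 in \cite{RE} for the ready-made bound $\mathbb{E}\bigl[((\nabla\mathcal{L}-\nabla\mathcal{R})^Tv)^4\bigr]\le C_7\|\Delta\|_2^4+C_6$, and then simply compares this against the squared variance lower bound $(C_1\|\Delta\|_2^2+C_2\tau_l)^2\gtrsim \|\Delta\|_2^4+1$. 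So the two-sided estimate in $\|\Delta\|_2$ that you sketch is exactly the mechanism, but the paper never re-derives the upper half --- it inherits it (along with whatever higher-moment hypotheses on $x$ and $w$ are needed) from \cite{RE}. Your obstacle is real, just already absorbed into the cited reference.
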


\begin{proof}
Recall that for the linear regression with squared error loss model, we have $\tau_l = \lambda_{\min}(\Sigma)$ and $\tau_u = \lambda_{\max}(\Sigma)$, both assumed to be absolute constants in  Section \ref{sec:Linear Regression}, unless stated otherwise. The bound on $||\mathrm{\mathrm{Cov}}(\nabla \mathcal{L}(\theta, z))||_2$ follows from Lemma $4$ in \cite{RE}. We prove the bounded $4^{\text{th}}$ moments statement. For any $||v||_2 = 1$ and $\theta \in \mathbb{R}^p$, we have
\begin{align*}
\mathrm{Var}\left(v^T\nabla \mathcal{L}(\theta, z)\right) &= \mathbb{E}\left[\left(v^T(xx^T - \Sigma)(\theta - \theta^*) - wv^Tx\right)^2\right]\\
&= \mathbb{E}\left[\left(v^T(xx^T - \Sigma)(\theta - \theta^*)\right)^2\right] + \sigma_2^2v^T\Sigma v\\
&\geq \mathbb{E}\left[\left(v^TA\Delta\right)^2\right] + \sigma_2^2\tau_l = v^T\mathbb{E}[A\Delta\Delta^TA]v + \sigma_2^2\tau_l\\
&= v^T\mathrm{Var}(A\Delta)v + \sigma_2^2\tau_l,
\end{align*}
where $A = xx^T - \Sigma$, and we used the fact that $x \indep w$ and $\mathbb{E}[A] = 0$. Write $A$ in row form, i.e., $A = [A_1, \dots, A_p]^T$, with $A_i$ being the $i^{\text{th}}$ row of $A$, and $i \in [p]$. Then $\mathrm{Var}(A\Delta) = \left(\mathrm{Cov}\left(A_i^T\Delta, A_j^T\Delta\right)\right)_{i, j = 1}^p$. Thus, for $v = (v_1, \dots, v_p)^T$, we obtain
\begin{align*}
v^T\mathrm{Var}(A\Delta)v &= \sum_{i, j = 1}^pv_iv_j\mathrm{Cov}\left(A_i^T\Delta, A_j^T\Delta\right) = \sum_{i, j = 1}^p v_iv_j\Delta^T\mathrm{Cov}(A_i, A_j)\Delta\\
&= \Delta^T \mathrm{Var}\left(\sum_{i = 1}^pv_iA_i\right)\Delta.
\end{align*}
Since the $A_i$ terms are in a variance and $A = xx^T - \Sigma$, we can drop $\Sigma$, since it is a constant. By relabeling, we can take $A = xx^T$, to obtain $\sum_{i = 1}^p v_i A_i = \left(x^{(1)}v^Tx, \dots, x^{(p)}v^Tx\right)^T$. Hence, we have $\mathrm{Var}\left(\sum_{i = 1}^pv_iA_i\right) = \left(v^T\mathrm{Cov}\left(x^{(i)}x, x^{(j)}x\right)v\right)_{i, j = 1}^p$. This denotes the matrix with $(i, j)$ entry given by $v^T\mathrm{Cov}\left(x^{(i)}x, x^{(j)}x\right)v$. Now for $i, k, l \in [p]$, using the assumptions on $x$ in \eqref{eq:add_x}, we have
\[
\mathrm{Var}\left(x^{(i)}x\right)_{kl} = \mathrm{Cov}\left(x^{(i)}x^{(k)}, x^{(i)}x^{(l)}\right) = \begin{cases} 
      0 & \text{if $k \neq l$} \\
      \mathrm{Var}\left(x^{(i)}x^{(k)}\right) & \text{if $k = l$},
   \end{cases}
\]
so $v^T\mathrm{Var}\left(x^{(i)}x\right)v = \sum_{k = 1}^pv_k^2\mathrm{Var}\left(x^{(i)}x^{(k)}\right)$. Also, for $i, j, k, l$, with $i \neq j$, we have 
\begin{align*}
\mathrm{Cov}\left(x^{(i)}x, x^{(j)}x\right)_{kl} = \mathrm{Cov}\left(x^{(i)}x^{(k)}, x^{(j)}x^{(l)}\right) = 0,
\end{align*}
where the subscript here denotes the $(k, l)$ entry. Therefore, we have $v^T\mathrm{Cov}\left(x^{(i)}x, x^{(j)}x\right)v = 0$ for $i \neq j$, so
\begin{align*}
\Delta^T\mathrm{\mathrm{Var}}\left(\sum_{i = 1}^pv_i A_i\right)\Delta = \sum_{i, k = 1}^p\Delta_i^2v_k^2\mathrm{Var}\left(x^{(i)}x^{(k)}\right) \geq \mathop{\min}\limits_{i, k}\mathrm{Var}\left(x^{(i)}x^{(k)}\right)||\Delta||_2^2 \geq C_1||\Delta||_2^2,
\end{align*}
Since by \eqref{eq:add_x}, we have $\sigma_2^2 \geq C_2 > 0$, we obtain $\mathrm{Var}(v^T \mathcal{L}(\theta), z)) \geq C_1||\Delta||_2^2 + C_2\tau_l$. From the proof of Lemma $4$ in \cite{RE}, we have
\begin{align*}
\mathbb{E}\left[\left((\nabla \mathcal{L}(\theta, z) - \nabla\mathcal{R}(\theta))^Tv\right)^4\right] \leq C_5\tau_u^2||\Delta||_2^4 + C_6 \leq C_7||\Delta||_2^4 + C_6,
\end{align*}
for some absolute constants $C_5, C_6, C_7 > 0$, since $\tau_u = \lambda_{\max}(\Sigma) \asymp 1$. Therefore, since $\tau_l = \lambda_{\min}(\Sigma) \asymp 1$, there is an absolute constant $\widetilde{C}_4$ such that for all unit vectors $v$ and $\theta \in \mathbb{R}^p$, we have
\begin{align*}
\mathbb{E}\left[\left((\nabla \mathcal{L}(\theta, z) - \nabla\mathcal{R}(\theta))^Tv\right)^4\right] \leq  \widetilde{C}_4(\mathrm{Var}(\nabla \mathcal{L}(\theta, z)^Tv))^2,
\end{align*}
which completes the proof.
\end{proof}

We shall use this result to bound $||\widehat{\mu} - \nabla\mathcal{R}(\theta)||_2$ using $\mathrm{\mathrm{Cov}}(\nabla \mathcal{L}(\theta, z))$, in order to explicitly construct the functions $\alpha$ and $\beta$ for our gradient estimators, i.e., to turn the output $\widehat{\mu}$ of Algorithm \ref{alg:HubGradTrunc} into a gradient estimator. We obtain this from the next lemma from \cite{RE}. We present its proof to show explicitly that we use Lemma \ref{lemma:covglm} and the bounded $4^{\text{th}}$ moments condition.
\begin{lemma}[\cite{RE}]
\label{lemma:geglmhb}
Consider the linear regression with squared error loss model from Example~\ref{sec:LR_Squared_Loss} with the conditions \eqref{eq:add_x}, with i.i.d.\ data $\mathcal{D}_n = \{z_i\}_{i = 1}^n = \{(x_i, y_i)\}_{i = 1}^n$ drawn from the Huber $\epsilon$-contamination model. Then Algorithm \ref{alg:HubGradTrunc} returns, for a fixed $\theta \in \mathbb{R}^p$, a gradient estimator $g$ such that
\begin{align*}
||g(\theta; \mathcal{D}_n, \widetilde{\zeta}) - \nabla\mathcal{R}(\theta)||_2 &\lesssim (\sqrt{\epsilon} + \gamma(\widetilde{n}, p, \widetilde{\zeta}, \epsilon))\sqrt{\log(p)}||\theta - \theta^*||_2\\
& \quad + (\sqrt{\epsilon} + \gamma(\widetilde{n}, p, \widetilde{\zeta}, \epsilon))\sigma_2\sqrt{\log(p)},
\end{align*}
with probability at least $1 - \widetilde{\zeta}$. Thus, $g$ is a gradient estimator with
\begin{align}
\label{eq:hua}
&\alpha(\widetilde{n}, \widetilde{\zeta}) \asymp (\sqrt{\epsilon} + \gamma(\widetilde{n}, p, \widetilde{\zeta}, \epsilon))\sqrt{\log(p)}, \\
\label{eq:hub}
&\beta(\widetilde{n}, \widetilde{\zeta}) \asymp (\sqrt{\epsilon} + \gamma(\widetilde{n}, p, \widetilde{\zeta}, \epsilon))\sigma_2\sqrt{\log(p)}.
\end{align}
\end{lemma}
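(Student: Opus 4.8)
The plan is to combine the concentration guarantee for the Huber gradient estimator (Lemma~\ref{lemma:5}) with the structural covariance bound for linear regression with squared error loss (Lemma~\ref{lemma:covglm}), exactly as in the heavy-tailed analogue (Lemma~\ref{lemma:geglmht} plus Lemma~\ref{lemma:cov_ri}). First I would note that Lemma~\ref{lemma:5} is applicable here: it requires the distribution of the true gradients $\nabla\mathcal{L}(\theta, z)$ to have bounded $4^{\text{th}}$ moments in the sense of Definition~\ref{def:bd_moments}, and Lemma~\ref{lemma:covglm} establishes precisely this under the extra assumptions~\eqref{eq:add_x}. So for a fixed $\theta \in \mathbb{R}^p$, applying Algorithm~\ref{alg:HubGradTrunc} to the chunk of $\widetilde{n}$ sample gradients with confidence parameter $\widetilde{\zeta}$ yields an estimate $g(\theta; \mathcal{D}_n, \widetilde{\zeta})$ such that, with probability at least $1 - \widetilde{\zeta}$,
\begin{align*}
\|g(\theta; \mathcal{D}_n, \widetilde{\zeta}) - \nabla\mathcal{R}(\theta)\|_2 \lesssim \left(\sqrt{\epsilon} + \gamma(\widetilde{n}, p, \widetilde{\zeta}, \epsilon)\right)\sqrt{\|\mathrm{Cov}(\nabla\mathcal{L}(\theta, z))\|_2 \log(p)}.
\end{align*}

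Next I would plug in the covariance bound $\|\mathrm{Cov}(\nabla\mathcal{L}(\theta, z))\|_2 \lesssim \|\Delta\|_2^2 + \sigma_2^2$ from Lemma~\ref{lemma:covglm}, where $\Delta = \theta - \theta^*$. Using the elementary inequality $\sqrt{a + b} \leq \sqrt{a} + \sqrt{b}$ for $a, b \geq 0$, the right-hand side splits as
\begin{align*}
\left(\sqrt{\epsilon} + \gamma(\widetilde{n}, p, \widetilde{\zeta}, \epsilon)\right)\sqrt{\log(p)}\|\Delta\|_2 + \left(\sqrt{\epsilon} + \gamma(\widetilde{n}, p, \widetilde{\zeta}, \epsilon)\right)\sigma_2\sqrt{\log(p)},
\end{align*}
which is exactly the form $\alpha(\widetilde{n}, \widetilde{\zeta})\|\theta - \theta^*\|_2 + \beta(\widetilde{n}, \widetilde{\zeta})$ required by Definition~\ref{def:ge}, with $\alpha$ and $\beta$ as in~\eqref{eq:hua} and~\eqref{eq:hub}. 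Since in the linear regression setting $\theta_* = \theta^*$, this confirms $g$ is a valid gradient estimator in the sense of Definition~\ref{def:ge}. I would also keep track of the side condition on $\widetilde{n}$ implicit in the applicability of Lemma~\ref{lemma:5} (so that the high-probability event is nonvacuous), though since the statement only claims the bound ``with probability at least $1 - \widetilde{\zeta}$'' this can be absorbed into the hypotheses.

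There is no serious obstacle here — the proof is essentially a substitution of one lemma into another. The only point requiring care is the verification that Lemma~\ref{lemma:covglm} genuinely supplies the bounded $4^{\text{th}}$ moments hypothesis that Lemma~\ref{lemma:5} needs; this is the gap that the paper explicitly flags as an error in~\cite{RE}, and it is why the extra moment conditions~\eqref{eq:add_x} are imposed in the hypotheses of Lemma~\ref{lemma:geglmhb}. So the ``hard part'' is really just bookkeeping: making sure the covariance bound and the $4^{\text{th}}$-moment bound are both invoked and that the constants suppressed in $\lesssim$ depend only on $\lambda_{\min}(\Sigma)$, $\lambda_{\max}(\Sigma)$, and the absolute constants $C_1, C_2, \widetilde{C}_4$ from~\eqref{eq:add_x}, all of which are treated as constants in this section. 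With those checks in place, the displayed two-term bound and the asymptotic identifications of $\alpha$ and $\beta$ follow immediately.
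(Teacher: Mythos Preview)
Your proposal is correct and follows essentially the same approach as the paper's proof: invoke Lemma~\ref{lemma:covglm} to certify the bounded $4^{\text{th}}$ moments hypothesis, apply Lemma~\ref{lemma:5} to get the $(\sqrt{\epsilon}+\gamma)\sqrt{\|\mathrm{Cov}(\nabla\mathcal{L}(\theta,z))\|_2\log p}$ bound, then substitute the covariance bound from Lemma~\ref{lemma:covglm} and split via $\sqrt{a+b}\le\sqrt{a}+\sqrt{b}$. The paper's version is terser (it leaves the square-root splitting implicit), but the logic is identical.
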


\begin{proof}
By Lemma \ref{lemma:covglm}, the gradients have bounded $4^{\text{th}}$ moments, so we can use Lemma \ref{lemma:5}. Thus, there is an algorithm that returns for $(\widetilde{n}, \widetilde{\zeta})$ a $g(\theta)$, such that for $\theta \in \mathbb{R}^p$, with probability at least $1 - \widetilde{\zeta}$, we have
\begin{align*}
||g(\theta) - \nabla\mathcal{R}(\theta)||_2 \lesssim (\sqrt{\epsilon} + \gamma(\widetilde{n}, p, \widetilde{\zeta}, \epsilon))\sqrt{||\mathrm{\mathrm{Cov}}(\nabla \mathcal{L}(\theta, z))||_2\log(p)}.
\end{align*}
Using Lemma \ref{lemma:covglm} and bounding $||\mathrm{\mathrm{Cov}}(\nabla \mathcal{L}(\theta, z))||_2$, we obtain the desired result.
\end{proof}

Now we can finally present our applications to linear regression with squared error loss using projected gradient descent and Nesterov's method. We present the proof of the latter, since the projected gradient descent case is from \cite{RE}. Although the expressions will be tedious, we will care about scaling behaviors with $p$ and $\epsilon$ when $n \rightarrow \infty$.

\begin{lemma}[\cite{RE}]\label{lemma:Hu_eps_PGD}
Let $\mathcal{C} \subseteq \mathbb{R}^p$ and $\zeta \in (0, 1)$. Consider the linear regression with squared error loss model from Example~\ref{sec:LR_Squared_Loss} under the Huber $\epsilon$-contamination setting, assuming the conditions \eqref{eq:add_x}. Suppose $\theta^* \in \mathcal{C}$. Then there are absolute constants $C_1$ and $C_2$ such that, if $\gamma(\widetilde{n}, p, \widetilde{\zeta}, \epsilon) < \frac{\tau_l/C_1}{2\sqrt{\log(p)}}$ and $\epsilon < \left(\frac{\tau_l/C_1}{2\sqrt{\log(p)}} - \gamma(\widetilde{n}, p, \widetilde{\zeta}, \epsilon)\right)^2$, Algorithm \ref{alg:HubGradTrunc} generates a gradient estimator such that Algorithm \ref{alg:RobPGDNFW} for projected gradient descent, initialized at $\theta_0 \in \mathcal{C}$ with $\eta = \frac{2}{\tau_u + \tau_l}$, returns iterates $\{\theta_t\}_{t = 1}^T$ such that with probability at least $1 - \zeta$, we have for some $k < 1$ that
\begin{align}
\label{eq:glmreshubGD}
||\theta_t - \theta^*||_2 \lesssim||\theta_0 - \theta^*||_2k^t + \frac{\sigma_2\sqrt{\log(p)}}{1 - k}(\sqrt{\epsilon} + \gamma(\widetilde{n}, p, \widetilde{\zeta}, \epsilon)),
\end{align}
with
\begin{align*}
&\alpha(\widetilde{n}, \widetilde{\zeta}) = C_1(\sqrt{\epsilon} + \gamma(\widetilde{n}, p, \widetilde{\zeta}, \epsilon))\sqrt{\log(p)},\\
%\label{eq:hub}
&\beta(\widetilde{n}, \widetilde{\zeta}) = C_2(\sqrt{\epsilon} + \gamma(\widetilde{n}, p, \widetilde{\zeta}, \epsilon))\sigma_2\sqrt{\log(p)}.
\end{align*}
\end{lemma}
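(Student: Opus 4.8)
The plan is to obtain Lemma~\ref{lemma:Hu_eps_PGD} by feeding the gradient-estimator guarantee of Lemma~\ref{lemma:geglmhb} into the convergence bound for robust projected gradient descent in Lemma~\ref{lemma:GD}. First I would apply Lemma~\ref{lemma:geglmhb}: under the additional moment conditions \eqref{eq:add_x}, which by Lemma~\ref{lemma:covglm} ensure that the gradient distribution $\nabla\mathcal{L}(\theta, z)$ has bounded $4^{\text{th}}$ moments in the sense of Definition~\ref{def:bd_moments} (this is exactly the correction to \cite{RE}), Algorithm~\ref{alg:HubGradTrunc} run on $\widetilde{n}$ samples returns, for each fixed $\theta$, a gradient estimator $g$ with associated functions as in \eqref{eq:hua}--\eqref{eq:hub}. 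I would then \emph{define} the constants $C_1$ and $C_2$ in the statement to be the implied constants there, so that $\alpha(\widetilde{n},\widetilde{\zeta}) = C_1(\sqrt{\epsilon} + \gamma(\widetilde{n},p,\widetilde{\zeta},\epsilon))\sqrt{\log(p)}$ and $\beta(\widetilde{n},\widetilde{\zeta}) = C_2(\sqrt{\epsilon} + \gamma(\widetilde{n},p,\widetilde{\zeta},\epsilon))\sigma_2\sqrt{\log(p)}$.

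Next I would verify that $g$ is stable, i.e.\ $\alpha(\widetilde{n},\widetilde{\zeta}) < \tau_l/2$. The two hypotheses combine to give $\sqrt{\epsilon} + \gamma(\widetilde{n},p,\widetilde{\zeta},\epsilon) < \frac{\tau_l/C_1}{2\sqrt{\log(p)}}$: indeed $\epsilon < \bigl(\frac{\tau_l/C_1}{2\sqrt{\log(p)}} - \gamma(\widetilde{n},p,\widetilde{\zeta},\epsilon)\bigr)^2$ together with $\gamma(\widetilde{n},p,\widetilde{\zeta},\epsilon) < \frac{\tau_l/C_1}{2\sqrt{\log(p)}}$ forces $\sqrt{\epsilon} < \frac{\tau_l/C_1}{2\sqrt{\log(p)}} - \gamma(\widetilde{n},p,\widetilde{\zeta},\epsilon)$, and multiplying through by $C_1\sqrt{\log(p)}$ yields $\alpha(\widetilde{n},\widetilde{\zeta}) < \tau_l/2$, which is the stability condition of Definition~\ref{def:ge}.

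With stability in hand, I would invoke Lemma~\ref{lemma:GD}. In the squared-error model of Example~\ref{sec:LR_Squared_Loss} we have $\nabla\mathcal{R}(\theta^*) = 0$ and, since $\theta^*\in\mathcal{C}$, the constrained minimizer is $\theta_* = \theta^*$; hence Algorithm~\ref{alg:RobPGDNFW} for projected gradient descent initialized at $\theta_0\in\mathcal{C}$ with $\eta = \frac{2}{\tau_u+\tau_l}$ returns, with probability at least $1-\zeta$ (the sample-splitting and union bound over the $T$ steps being already absorbed into Lemma~\ref{lemma:GD}), iterates satisfying $\|\theta_t - \theta^*\|_2 \le \|\theta_0 - \theta^*\|_2 k^t + \frac{\eta\beta(\widetilde{n},\widetilde{\zeta})}{1-k}$ with $k = \frac{\tau_u - \tau_l + 2\alpha(\widetilde{n},\widetilde{\zeta})}{\tau_u+\tau_l}$; stability gives $k < 1$ as in Remark~\ref{remark:GD_Iter_LB}. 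Since $\tau_u,\tau_l$ are absolute constants, $\eta \asymp 1$, so $\frac{\eta\beta(\widetilde{n},\widetilde{\zeta})}{1-k} \lesssim \frac{\beta(\widetilde{n},\widetilde{\zeta})}{1-k} \asymp \frac{\sigma_2\sqrt{\log(p)}\,(\sqrt{\epsilon}+\gamma(\widetilde{n},p,\widetilde{\zeta},\epsilon))}{1-k}$, which is exactly \eqref{eq:glmreshubGD}.

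I do not expect a serious obstacle, since the result is a bookkeeping combination of two quoted lemmas; the only point needing care is that $\gamma(\widetilde{n},p,\widetilde{\zeta},\epsilon)$ itself depends on $\epsilon$, so the second hypothesis is implicit. Because $\gamma$ is increasing in $\epsilon$ for $\widetilde{n},p,\widetilde{\zeta}$ fixed while the threshold $\frac{\tau_l/C_1}{2\sqrt{\log(p)}} - \gamma(\widetilde{n},p,\widetilde{\zeta},\epsilon)$ is decreasing in $\epsilon$, I would record that the set of admissible $\epsilon$ is a genuine interval, nonempty once $\widetilde{n}$ is large enough that $\gamma(\widetilde{n},p,\widetilde{\zeta},0) < \frac{\tau_l/C_1}{2\sqrt{\log(p)}}$ — but this needs nothing beyond monotonicity of $\gamma$.
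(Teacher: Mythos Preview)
Your proposal is correct and follows exactly the approach the paper takes (implicitly, since the paper defers the projected-gradient-descent case to \cite{RE} and only writes out the analogous Nesterov proof in Theorem~\ref{theorem:Hu_eps_Nester}): obtain $\alpha,\beta$ from Lemma~\ref{lemma:geglmhb}, verify stability $\alpha<\tau_l/2$ from the two hypotheses, and then invoke Lemma~\ref{lemma:GD}. The remark after Lemma~\ref{lemma:Hu_eps_PGD} confirms that the factor of $2$ in the stability threshold is precisely the point you check.
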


\begin{remark}
Note that \cite{RE} ask for $\frac{\tau_l}{\sqrt{\log(p)}}$, since they need $\alpha < \tau_l$. Since we ask for $\alpha < \tau_l/2$, we only affect the lower bound on $\widetilde{n}$ by a factor of $2$. So, up to absolute constants, nothing changes.
\end{remark}

\begin{theorem}
\label{theorem:Hu_eps_Nester}
Let $\mathcal{C} = \mathbb{R}^p$ and $\zeta \in (0, 1)$. Consider the linear regression with squared error loss model from Example~\ref{sec:LR_Squared_Loss} under the Huber $\epsilon$-contamination setting, assuming the conditions \eqref{eq:add_x}. Suppose $1 < \frac{\tau_u}{\tau_l} < x^*$, where $x^* \approx 1.76759$ is the solution of the equation $f_1(x) = f_2(x)$ for $x \geq 1$, with these functions defined as before. Then there are absolute constants $C_1, C_2$, and $C_3$ such that, if \begin{align*}
\gamma(\widetilde{n}, p, \widetilde{\zeta}, \epsilon) < \frac{f_1\left(\frac{\tau_u}{\tau_l}\right)\tau_l/C_1}{\sqrt{\log(p)}}
\end{align*}
and
\begin{align*}
\left(\frac{f_1\left(\frac{\tau_u}{\tau_l}\right)\tau_l/C_1}{\sqrt{\log(p)}} - \gamma(\widetilde{n}, p, \widetilde{\zeta}, \epsilon)\right)^2 < \epsilon < \left(\frac{f_2\left(\frac{\tau_u}{\tau_l}\right)\tau_l/C_1}{\sqrt{\log(p)}} - \gamma(\widetilde{n}, p, \widetilde{\zeta}, \epsilon)\right)^2,
\end{align*}  
Algorithm \ref{alg:HubGradTrunc} generates a gradient estimator such that Algorithm \ref{alg:RobPGDNFW} for Nesterov's AGD initialized at $\theta_0, \theta_1 \in \mathcal{C}$, with $\eta = \frac{2}{\tau_u}$ and $\lambda = \frac{\sqrt{\tau_u} - \sqrt{\tau_l}}{\sqrt{\tau_u} + \sqrt{\tau_l}}$, returns iterates $\{\theta_t\}_{t = 1}^T$ such that with probability at least $1 - \zeta$, we have
\begin{align}
\label{eq:glmreshubAGD}
||\theta_t - \theta^*||_2 \leq \sqrt{\frac{2}{\tau_l}\left(\mathcal{R}(\theta_0) - \mathcal{R}(\theta^*)\right) + ||\theta_0 - \theta^*||_2^2}\left(1 - \sqrt{\frac{\tau_l}{\tau_u}}\right)^{t/2} + \left(\frac{\tau_u}{\tau_l}\right)^{1/4}\sqrt{\frac{R}{\tau_l}},
\end{align}
where 
\begin{align*}
&R = 2\left(\alpha^2 C_3 + \frac{\eta\alpha\beta}{1 - (1 + \lambda)k - \lambda k} + \beta\right)\\
& \qquad \cdot \left[\eta\beta + \left(\eta\alpha + \tau_u(1 + \eta)\left(2\lambda + \sqrt{\frac{\tau_l}{\tau_u}}\right)\right)\left(C_3\alpha + \frac{\eta\beta}{1 - (1 + \lambda)k - \lambda k}\right)\right],\\
&\alpha = \alpha(\widetilde{n}, \widetilde{\zeta}) = C_1(\sqrt{\epsilon} + \gamma(\widetilde{n}, p, \widetilde{\zeta}, \epsilon))\sqrt{\log(p)},\\
&\beta = \beta(\widetilde{n}, \widetilde{\zeta}) = C_2(\sqrt{\epsilon} + \gamma(\widetilde{n}, p, \widetilde{\zeta}, \epsilon))\sigma_2\sqrt{\log(p)},\\
&k = \frac{\tau_u - \tau_l + \alpha(\widetilde{n}, \widetilde{\zeta})}{\tau_u}.
\end{align*}
\end{theorem}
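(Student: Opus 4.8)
The plan is to follow the proof of Theorem~\ref{theorem:htAGD} essentially verbatim, swapping the heavy-tailed $G_{MOM}$ estimator for the Huber gradient estimator of Algorithm~\ref{alg:HubGradTrunc} and then feeding the result into the generic accelerated-convergence statement of Theorem~\ref{theorem:AGD}. The first step is to produce a stable gradient estimator. By Lemma~\ref{lemma:covglm}, under the structural assumptions~\eqref{eq:add_x} the stochastic gradients $\nabla\mathcal{L}(\theta, z) = (x^T\theta - y)x$ of the squared error loss have bounded $4^{\text{th}}$ moments in the sense of Definition~\ref{def:bd_moments}, and $||\mathrm{Cov}(\nabla\mathcal{L}(\theta, z))||_2 \lesssim ||\theta - \theta^*||_2^2 + \sigma_2^2$. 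With bounded $4^{\text{th}}$ moments available, Lemma~\ref{lemma:5} applies to Algorithm~\ref{alg:HubGradTrunc}, and combining it with the covariance bound is exactly Lemma~\ref{lemma:geglmhb}: for a fixed $\theta$, with probability at least $1 - \widetilde{\zeta}$ the output $g(\theta; \mathcal{D}_n, \widetilde{\zeta})$ satisfies the inequality of Definition~\ref{def:ge} with $\alpha(\widetilde{n}, \widetilde{\zeta}) = C_1(\sqrt{\epsilon} + \gamma(\widetilde{n}, p, \widetilde{\zeta}, \epsilon))\sqrt{\log(p)}$ and $\beta(\widetilde{n}, \widetilde{\zeta}) = C_2(\sqrt{\epsilon} + \gamma(\widetilde{n}, p, \widetilde{\zeta}, \epsilon))\sigma_2\sqrt{\log(p)}$.

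Next I would check the hypotheses of Theorem~\ref{theorem:AGD}. Since $\mathcal{C} = \mathbb{R}^p$ we have $\theta_* = \theta^*$ and $\nabla\mathcal{R}(\theta^*) = 0$ (Example~\ref{sec:LR_Squared_Loss}), and the condition-number requirement $1 < \tau_u/\tau_l < 1.76$ is precisely the assumed $1 < \tau_u/\tau_l < x^*$, since $x^* \approx 1.76759$. The remaining hypothesis is $f_1\left(\frac{\tau_u}{\tau_l}\right) < \frac{\alpha(\widetilde{n}, \widetilde{\zeta})}{\tau_l} < f_2\left(\frac{\tau_u}{\tau_l}\right)$. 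Substituting $\alpha(\widetilde{n}, \widetilde{\zeta}) = C_1(\sqrt{\epsilon} + \gamma(\widetilde{n}, p, \widetilde{\zeta}, \epsilon))\sqrt{\log(p)}$, this two-sided bound is equivalent to
\[
\frac{f_1\left(\frac{\tau_u}{\tau_l}\right)\tau_l/C_1}{\sqrt{\log(p)}} < \sqrt{\epsilon} + \gamma(\widetilde{n}, p, \widetilde{\zeta}, \epsilon) < \frac{f_2\left(\frac{\tau_u}{\tau_l}\right)\tau_l/C_1}{\sqrt{\log(p)}},
\]
which rearranges to give both the stated upper bound on $\gamma(\widetilde{n}, p, \widetilde{\zeta}, \epsilon)$ and the two-sided constraint on $\epsilon$. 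The left inequality forces $\alpha > \tau_l f_1\left(\frac{\tau_u}{\tau_l}\right) > 0$, while the right one, together with $f_2(x) \le \frac{1}{2}$ for $x \ge 1$ (Remark~\ref{remark:Nester_Iter_LB}), gives $\alpha < \tau_l/2$, i.e.\ the gradient estimator is stable, as Theorem~\ref{theorem:AGD} requires.

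With stability confirmed, I would pass to the $T$ iterates of Algorithm~\ref{alg:RobPGDNFW} for Nesterov's method by a union bound. Algorithm~\ref{alg:RobPGDNFW} splits the sample into $T$ independent chunks of size $\widetilde{n}$ and, at step $t$, applies Lemma~\ref{lemma:geglmhb} at the fresh point $\theta_t + \lambda(\theta_t - \theta_{t-1})$ with its own chunk and confidence $\widetilde{\zeta}$; since the randomness of $\theta_t$ is independent of the chunk used at step $t$ (the reason the data are chunked, as discussed before Section~\ref{sec:Well_Cond_Sec_FW}), the fixed-$\theta$ guarantee of Lemma~\ref{lemma:geglmhb} may be used, and the bad events at different steps lie inside a set of probability at most $T\widetilde{\zeta} \le \zeta$. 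On the complementary event, Theorem~\ref{theorem:AGD} applies exactly as in the proof of Theorem~\ref{theorem:htAGD} with $\eta$, $\lambda$ as in the statement and $k = (\tau_u - \tau_l + \alpha(\widetilde{n}, \widetilde{\zeta}))/\tau_u$, yielding inequality~\eqref{eq:glmreshubAGD}; reading the error term $R$ off the proof of Theorem~\ref{theorem:AGD} and substituting the explicit $\alpha$, $\beta$, $k$ above produces the displayed expression for $R$.

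The Nesterov recursion itself is not the obstacle---it is entirely absorbed into Theorem~\ref{theorem:AGD}. The delicate points are, first, the bounded $4^{\text{th}}$ moment verification through Lemma~\ref{lemma:covglm} and~\eqref{eq:add_x}, which is the correction over \cite{RE} flagged in this appendix and without which Lemma~\ref{lemma:5} does not apply; and second, the bookkeeping around the stability window, where one must check that the admissible interval for $\epsilon$ in the hypothesis is genuinely non-degenerate (for $\widetilde{n}$ large, $\gamma(\widetilde{n}, p, \widetilde{\zeta}, \epsilon) \to 0$, so the window for $\sqrt{\epsilon}$ has width of order $\tau_l/\sqrt{\log p}$ once the $\gamma$ offset is subtracted) and that the constant $C_1$ from Lemma~\ref{lemma:geglmhb} is used consistently both in defining $\alpha(\widetilde{n}, \widetilde{\zeta})$ and in the thresholds.
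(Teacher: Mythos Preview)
Your proposal is correct and follows essentially the same approach as the paper: obtain the gradient estimator with its $\alpha$ and $\beta$ via Lemma~\ref{lemma:geglmhb}, verify that the assumed constraints on $\gamma$ and $\epsilon$ translate into $f_1(\tau_u/\tau_l) < \alpha/\tau_l < f_2(\tau_u/\tau_l)$ (hence stability, since $f_2 \le 1/2$), and then invoke Theorem~\ref{theorem:AGD} to read off the bound~\eqref{eq:glmreshubAGD} and the expression for $R$. Your additional commentary on the bounded $4^{\text{th}}$ moments verification and the non-degeneracy of the $\epsilon$-window is helpful detail beyond the paper's terse proof, but the logical skeleton is identical.
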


\begin{proof}
From Lemma \ref{lemma:geglmhb}, we have a gradient estimator $g(\theta)$ with functions $\alpha(\widetilde{n}, \widetilde{\zeta})$ and $\beta(\widetilde{n}, \widetilde{\zeta})$ as in the theorem hypothesis. What we assumed about $n$ and $\epsilon$ implies $f_1\left(\frac{\tau_u}{\tau_l}\right) < \frac{\alpha(\widetilde{n}, \widetilde{\zeta})}{\tau_l} < f_2\left(\frac{\tau_u}{\tau_l}\right)$, and we have mentioned after the end of Theorem \ref{theorem:AGD} that the stability assumption is satisfied, i.e., $\alpha(\widetilde{n}, \widetilde{\zeta}) < \tau_l/2$, since $\frac{\alpha(\widetilde{n}, \widetilde{\zeta})}{\tau_l} < f_2\left(\frac{\tau_u}{\tau_l}\right) \leq \frac{1}{2}$, as $\tau_u > \tau_l$. Then, for $R$ as in the theorem hypothesis, by Theorem \ref{theorem:AGD}, we obtain iterates $\{\theta_t\}_{t = 1}^T$ such that with probability at least $1 - \zeta$, we have
\begin{align*}
||\theta_t - \theta^*||_2 \leq \sqrt{\frac{2}{\tau_l}\left(\mathcal{R}(\theta_0) - \mathcal{R}(\theta^*)\right) + ||\theta_0 - \theta^*||_2^2}\left(1 - \sqrt{\frac{\tau_l}{\tau_u}}\right)^{t/2} + \left(\frac{\tau_u}{\tau_l}\right)^{1/4}\sqrt{\frac{R}{\tau_l}},
\end{align*}
with $C_3$ an absolute constant.
\end{proof}

%%%%%

\subsection{Comments and Comparisons in the Huber $\epsilon$-Contamination Setting}

Let us assume that $\sigma_2$ is an absolute constant. We already assumed in Section \ref{sec:Linear Regression} that $\lambda_{\min}(\Sigma)$ and $\lambda_{\max}(\Sigma)$ are absolute constants. We look at the linear regression with squared error loss model in Example~\ref{sec:LR_Squared_Loss}. We take the rate of convergence of the exponential term and the dependency of the error term on $p$ and $\epsilon$ into consideration. For projected gradient descent, the first term in inequality \eqref{eq:glmreshubGD} decays exponentially in $t$, with the contraction parameter $k$ that we defined before. The error term scales as $O\left(\sqrt{\epsilon \log(p)}\right)$, as $n \rightarrow \infty$, since in this case, $\gamma(\widetilde{n}, p, \widetilde{\zeta}, \epsilon) \rightarrow 0$. Also, we have a restriction on how small $n$ can be, given the upper bound on $\gamma(\widetilde{n}, p, \widetilde{\zeta}, \epsilon)$, and our contamination level has to be below a given threshold. The way this depends logarithmically on $p$ is due to the estimator from Lai et al.~\cite{Vpala}. As \cite{RE} states, the algorithm used is the only practical one for robust estimation in the case of general statistical models. Of course, for specific models, this error term could be brought down, but in the general setting, it appears that the best one can hope for is $O\left(\sqrt{\epsilon \log(p)}\right)$.

In contrast, Nesterov's AGD achieves a faster convergence rate, as stated in Remark~\ref{RemFaster}, but under the restriction that the smoothness and strong convexity parameters cannot be equal, and the smoothness parameter cannot exceed roughly $1.76$ times the strong convexity parameter. However, with this assumption, not only is the exponential decay with $t$ faster in inequality \eqref{eq:glmreshubAGD}, but the error term is as in the case of projected gradient descent when $n \rightarrow \infty$. To see this, the error term in our bound \eqref{eq:glmreshubAGD} scales like $\sqrt{R}$, with
\begin{align*}
R &= 2\left(\alpha^2 C_1 + \frac{\eta\alpha\beta}{1 - (1 + \lambda)k - \lambda k} + \beta\right)\\
& \quad \cdot \left[\eta\beta + \left(\eta\alpha + \tau_u(1 + \eta)\left(2\lambda + \sqrt{\frac{\tau_l}{\tau_u}}\right)\right)\left(C_1\alpha + \frac{\eta\beta}{1 - (1 + \lambda)k - \lambda k}\right)\right].
\end{align*}
Recall that $\alpha < \tau_l/2$. In the first term in the product, we have $\alpha^2 \le \tau_l\alpha$ and $\alpha\beta \le \tau_l\beta$. Hence, the first term is $O\left(\sqrt{\epsilon \log(p)}\right)$. In the second term, we have $\alpha\eta \le \tau_l\eta$, so the second term is also $O\left(\sqrt{\epsilon \log(p)}\right)$. Thus, we have $\sqrt{R} = O\left(\sqrt{\epsilon \log(p)}\right)$, and we perform the same as in the projected gradient descent method. 

Our method used in deriving the robust Nesterov's AGD in Theorem \ref{theorem:AGD} was an adaptation of the proof in \cite{Recht}. Other approaches might reduce the exponential decay further or relax the assumption on the smoothness and strong convexity parameters. Moreover, Nesterov's AGD case imposes more restrictions for the choices of $\epsilon$ and $n$. We are also asking for a lower bound on $\epsilon$. Also, its upper bound, without the square at least, is smaller than the projected gradient descent one (i.e., more restrictive), since $f_2(\frac{\tau_u}{\tau_l}) < \frac{1}{2}$. This is because $\tau_u > \tau_l$. Also, we have to choose a higher $n$ for Nesterov's AGD, again since $f_1(\frac{\tau_u}{\tau_l}) < \frac{1}{2}$. Overall, we trade off freedom of choosing some parameters for faster decay toward or close to $\theta^*$ in the AGD setting.

We can also analyze the effect of acceleration from an iteration complexity point of view. By \emph{iteration complexity} \cite{Wang_Iter_Comp, Richtarik_Iter_Comp}, we mean the iteration count $T$ as a function of $a > 0$, where $a$ is a desired upper bound error on $||\theta_T - \theta^*||$. Since the upper bounds in Lemma \ref{lemma:Hu_eps_PGD} and Theorem \ref{theorem:Hu_eps_Nester} have an error term that becomes $O\left(\sqrt{\epsilon \log(p)}\right)$ when $n \rightarrow \infty$, we run projected gradient descent and Nesterov's AGD so that the exponentially decaying term is $O\left(\sqrt{\epsilon \log(p)}\right)$, in the limit with $n$. This is in line with the reasoning in Remark \ref{remark:Nester_Iter_LB}, where we chose $T$ so that the exponentially decaying term is below the inescapable error. Hence, for projected gradient descent, we can choose $T = \log_{1/k}\left(1/\sqrt{\epsilon \log(p)}\right)$, and for Nesterov's AGD, we can choose $T = \log_{1/\rho}\left(1/\sqrt{\epsilon \log(p)}\right)$, where $\rho = \sqrt{1 - \sqrt{\frac{\tau_l}{\tau_u}}}$. Since, as explained in Remark~\ref{RemFaster}, we have $\sqrt{1 - \sqrt{\frac{\tau_l}{\tau_u}}} < k$, we see that acceleration translates into a better iteration complexity at the inescapable error level.

%%%%%

\section{Comparisons to Private SGD}
\label{AppSGD}

In this appendix, we compare our private accelerated Frank-Wolfe method to private SGD. Appendix \ref{sec:Comparison_SGD_Dist_Free} focuses on the distribution-free setting from Section \ref{sec:{Private ERM for Distribution-Free Data: Upper and Lower Bounds}}, while Appendix \ref{sec:Comparison_SGD_GLM} addresses the GLM setting from Section \ref{sec:Private Biased Parameter Estimation in GLMs: Upper and Lower Bounds for Excess Risk}.

We first introduce the private SGD approaches we will discuss. One will be from \cite{Bassily_SGD_l2}, and we will also consider the more efficient version for smooth (but not necessarily strongly convex) losses from \cite{DP_ERM_Faster}, despite the fact that they look at a regularized version of the problem. The main advantage of SGD is to reduce the number of gradient calls at each iteration, so it makes sense to not only compare convergence rates on the excess empirical risk, but also \emph{gradient complexities}, i.e., the total number of gradient calls in the whole iterative procedure. 

\begin{algorithm}
\caption{$\mathcal{A}_{\text{Noise - GD}}$: Differentially Private SGD (General Bounded Convex
Case)}
\label{alg:Priv_SGD}
\begin{algorithmic}[1]
\Function{$\mathcal{A}_{\text{Noise - GD)}}$}{Data space $\mathcal{E}$, $\mathcal{D}_n = \{z_1, \ldots, z_n\}$, loss function $\mathcal{L}(\theta, \mathcal{D}_n) = \frac{\sum_{i = 1}^n\mathcal{L}(\theta, z_i)}{n}$ (with $L_2$-Lipschitz constant for $\mathcal{L}$), $\epsilon, \delta$, bounded and convex set $\mathcal{C}$, learning rate function $\eta : [n^2] \rightarrow \mathbb{R}$}
\State Set noise variance $\sigma^2 = \frac{32L_2^2\log(n/\delta)\log(1/\delta)}{\epsilon^2}$.
\State Choose $\theta_0 \in \mathcal{C} \subset \mathbb{R}^p$ arbitrary.
    \For{$t = 0$ to $n^2 - 1$}
    \State Pick $d^{(t)}$ uniformly without replacement from $\mathcal{D}_n$.
        \State $\theta_{t + 1} = \mathcal{P}_{\mathcal{C}}\left(\theta_t - \eta_t\left(\nabla\mathcal{L}\left(\theta_t, d^{(t)}\right) + \xi_t\right)\right)$, where $\xi_t \sim N\left(0, \sigma^2I_p\right)$ and $\mathcal{P}_{\mathcal{C}}$ is the projection operator in the $\ell_2$-norm onto $\mathcal{C}$.
    \EndFor
    \State \textbf{return} $\theta_{n^2}$.
\EndFunction
\end{algorithmic}
\end{algorithm}

The private SGD algorithm from \cite{Bassily_SGD_l2} is provided in Algorithm~\ref{alg:Priv_SGD}. Note that it is $(\epsilon, \delta)$-DP for $\epsilon \in (0, 0.9]$ and $\delta \in (0, 1)$. The following result provides its utility guarantee:

\begin{lemma}[\cite{Bassily_SGD_l2}]\label{lemma:Util_Priv_SGD}
Let $p \geq 1$ and $0 < \epsilon \lesssim 1$, and let $\mathcal{C} \subseteq \mathbb{R}^P$ be a bounded, convex set. Let $\mathcal{E}$ be a data space and let $\mathcal{L}(\theta, z)$ be convex and $L_2$-Lipschitz in $\theta$, i.e., $\mathcal{L}(\theta_1, z) - \mathcal{L}(\theta_2, z)\leq L_2||\theta_1 - \theta_2||_2$, for any $\theta_1, \theta_2 \in \mathcal{C}$ and $z \in \mathcal{E}$. Then Algorithm \ref{alg:Priv_SGD}, with $\eta_t = \frac{||\mathcal{C}||_2}{\sqrt{t(n^2L_2 + p\sigma^2)}}$, returns $\theta_{n^2}$ such that
\begin{align*}
\mathbb{E}\left[\mathcal{L}(\theta_{n^2}, \mathcal{D}_n) - \mathop{\min}\limits_{\theta \in \mathcal{C}}\mathcal{L}(\theta, \mathcal{D}_n)\right] = O\left(\frac{L_2||\mathcal{C}||_2\log^{3/2}(n/\delta)\sqrt{p\log(1/\delta)}}{n\epsilon}\right).
\end{align*}
\end{lemma}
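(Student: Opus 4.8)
The statement asserts only a utility bound, so the privacy of Algorithm~\ref{alg:Priv_SGD} (which, as noted, is $(\epsilon,\delta)$-DP for $\epsilon\in(0,0.9]$) can be taken as given—it follows from the Gaussian mechanism (Lemma~\ref{lemma:mGDP}) applied at each of the $n^2$ steps together with advanced composition (Corollary~\ref{cor:ADV_COMP_0.9}), which is exactly why $\sigma^2$ is calibrated as stated. I would therefore prove the utility bound by the standard analysis of projected stochastic gradient descent for convex Lipschitz objectives over a bounded set. Write $F(\theta):=\mathcal{L}(\theta,\mathcal{D}_n)$, fix $\theta_*\in\mathop{\arg\min}\limits_{\theta\in\mathcal{C}}F(\theta)$, and let $g_t:=\nabla\mathcal{L}(\theta_t,d^{(t)})+\xi_t$ be the noisy stochastic gradient used at step $t$, with $\mathcal{F}_t$ the $\sigma$-algebra generated by $\theta_0,\dots,\theta_t$. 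The first step is two elementary observations: (i) uniform sampling of $d^{(t)}$ gives $\mathbb{E}[\nabla\mathcal{L}(\theta_t,d^{(t)})\mid\mathcal{F}_t]=\nabla F(\theta_t)$, and since $\xi_t$ is independent and zero-mean, $\mathbb{E}[g_t\mid\mathcal{F}_t]=\nabla F(\theta_t)$; (ii) by $L_2$-Lipschitzness of $\mathcal{L}(\cdot,z)$ and $\mathbb{E}\|\xi_t\|_2^2=p\sigma^2$, the second-moment bound $\mathbb{E}[\|g_t\|_2^2\mid\mathcal{F}_t]\le L_2^2+p\sigma^2=:G^2$ holds.

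Next I would run the one-step SGD recursion: nonexpansiveness of $\mathcal{P}_{\mathcal{C}}$ and expanding the square give
\begin{align*}
\mathbb{E}\left[\|\theta_{t+1}-\theta_*\|_2^2\mid\mathcal{F}_t\right]\le\|\theta_t-\theta_*\|_2^2-2\eta_t\nabla F(\theta_t)^T(\theta_t-\theta_*)+\eta_t^2G^2,
\end{align*}
and convexity of $F$ turns $\nabla F(\theta_t)^T(\theta_t-\theta_*)$ into a lower bound for $F(\theta_t)-F(\theta_*)$. Taking total expectations, rearranging, summing over $t=1,\dots,n^2$, telescoping the distance terms (each at most $\|\mathcal{C}\|_2^2$), and applying Jensen's inequality to the $\eta_t$-weighted average $\bar\theta:=\big(\sum_t\eta_t\theta_t\big)/\big(\sum_t\eta_t\big)$ yields the generic guarantee
\begin{align*}
\mathbb{E}\left[F(\bar\theta)-F(\theta_*)\right]\le\frac{\|\mathcal{C}\|_2^2+G^2\sum_{t=1}^{n^2}\eta_t^2}{2\sum_{t=1}^{n^2}\eta_t}.
\end{align*}
With the prescribed step sizes, scaling as $\eta_t\asymp\|\mathcal{C}\|_2/(G\sqrt{t})$ (so that the per-step second moment $G^2$ is the normalizing constant), one has $\sum_{t=1}^{n^2}\eta_t\asymp\|\mathcal{C}\|_2 n/G$ and $\sum_{t=1}^{n^2}\eta_t^2\asymp\|\mathcal{C}\|_2^2\log n/G^2$, so the bound collapses to $O(\|\mathcal{C}\|_2 G\log n/n)$. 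Substituting $\sigma^2=32L_2^2\log(n/\delta)\log(1/\delta)/\epsilon^2$ and using $0<\epsilon\lesssim1$, so that $p\sigma^2$ dominates $L_2^2$ and $G\asymp L_2\sqrt{p\log(n/\delta)\log(1/\delta)}/\epsilon$, together with $(\log n)\sqrt{\log(n/\delta)}\lesssim\log^{3/2}(n/\delta)$, gives the claimed rate $O\!\left(L_2\|\mathcal{C}\|_2\log^{3/2}(n/\delta)\sqrt{p\log(1/\delta)}/(n\epsilon)\right)$.

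The main obstacle is that Algorithm~\ref{alg:Priv_SGD} returns the last iterate $\theta_{n^2}$, whereas the clean bound above is for the averaged iterate $\bar\theta$; bridging this requires a last-iterate convergence argument for SGD with $1/\sqrt{t}$ step sizes (in the spirit of Shamir--Zhang), which is responsible for the logarithmic factors in the final rate being $\log^{3/2}$ rather than $\log$. A secondary point to address is reconciling the ``without replacement'' sampling in the algorithm's description with observation (i): since the number of iterations $n^2$ far exceeds the sample size, the intended reading is sampling with replacement (as is standard for noisy SGD), under which (i) is immediate; otherwise one would need to control the bias introduced by sampling without replacement across the $n^2$ steps.
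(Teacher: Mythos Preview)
The paper does not prove this lemma: it is quoted verbatim from \cite{Bassily_SGD_l2} and used only as a baseline for comparison in Appendix~\ref{AppSGD}, with no proof given. Your sketch is a faithful reconstruction of the standard noisy-SGD analysis from that reference (unbiased noisy gradients, second-moment bound $G^2=L_2^2+p\sigma^2$, one-step recursion, telescoping, step-size tuning), and you correctly flag the two genuine technicalities---the last-iterate vs.\ averaged-iterate gap (handled in \cite{Bassily_SGD_l2} via a Shamir--Zhang-type argument, which is indeed where the extra log factor enters) and the sampling-with-replacement reading needed for the conditional unbiasedness.
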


Note that, similar to \cite{NOPL}, Lemma~\ref{lemma:Util_Priv_SGD} assumes the data to be non-random. Additionally, \cite{Bassily_SGD_l2} present the optimality of their approach with a lower bound based on datasets $\mathcal{D}_n = \{d_i\}_{i = 1}^n$, with $d_i \in \{-1, 1\}^p$, for all $i \in [n]$. The upper bound rate in Lemma \ref{lemma:Util_Priv_SGD} is $\widetilde{O}\left(\frac{L_2||\mathcal{C}||_2\sqrt{p}}{n\epsilon}\right)$, with a gradient complexity of $n^2$.

Let us now turn our attention to the more efficient method from \cite{DP_ERM_Faster}, which also assumes the data are non-random. We do not include the algorithm here, because of the more extensive setup needed for it, but we provide its utility guarantees and gradient complexity. In \cite{DP_ERM_Faster}, the efficient version of the private SGD algorithm from \cite{Bassily_SGD_l2} is called \emph{DP-SVRG++}. It is important to note that they target a regularized version of the loss, namely $\mathcal{L}(\theta, \mathcal{D}_n) + reg(\theta)$, where $reg(\theta)$ is a regularizer, and they optimize over the whole of $\mathbb{R}^p$. They obtain a rate of $\widetilde{O}
\left(\frac{L_2\sqrt{p}}{n\epsilon}\right)$ on the expected excess regularized empirical risk, with a gradient complexity of $O\left(\frac{n\beta_{\mathcal{L}}\epsilon}{L_2\sqrt{p}} + n\log\left(\frac{n\epsilon}{L_2\sqrt{p}}\right)\right)$. Here, $L_2$ and $\beta_{\mathcal{L}}$ are the Lipschitz and smoothness parameters of $\mathcal{L}(\theta, \mathcal{D}_n)$, respectively. We will compare this to our accelerated Frank-Wolfe method, where we target the empirical risk and we minimize over an $\ell_2$-ball $\mathcal{C}$ centered at $0$. Hence, in order to use \emph{DP-SVRG++} for our purposes, we need to write the problem in our context using a ridge regularizer $reg(\theta) = \gamma_{\mathcal{C}}||\theta||_2^2$. Since we start from the constrained optimization over $\mathcal{C}$, we need to compute $\gamma_{\mathcal{C}}$ explicitly in order to use \emph{DP-SVRG++}, which cannot be done in practice. Also, because the bounds we obtain are on the excess empirical risk, while \cite{DP_ERM_Faster} derives theirs on the regularized version, it is not straightforward to compare the rates for the excess objectives. Our goal in the case of \cite{DP_ERM_Faster} is to look at gradient complexities. 

Some authors apply private SGD directly to population risk minimization rather than empirical risk minimization. In this setting, one wishes to minimize the excess risk $\mathcal{R}(\theta_T) - \mathop{\min}\limits_{\theta \in \mathcal{C}}\mathcal{R}(\theta)$, either with high probability or in expectation, where $\theta_T$ is the output of some $(\epsilon, \delta)$-DP procedure and $\mathcal{R}(\theta) = \mathbb{E}_z\left[\mathcal{L}(\theta, z)\right]$, for all $\theta$ in some convex set $\mathcal{C} \subseteq \mathbb{R}^p$. Bassily et al.~\cite{BasEtal19} consider the setting of differentiable, smooth, $L_2$-Lipschitz losses and convex sets $\mathcal{C}$ of bounded radius $M = \mathop{\max}\limits_{\theta \in \mathcal{C}}||\theta||_2$ (all in the $\ell_2$-norm). Using a private SGD method based initially on an empirical risk minimization approach and later taken to a population risk setting using the notion of uniform stability, they obtain a rate of $\widetilde{O}\left(\frac{1}{\sqrt{n}} + \frac{\sqrt{p}}{n\epsilon}\right)$ on the expected excess risk, with the expectation taken over $\theta_T$. This is also shown to be tight, but their method requires $O\left(\min\left\{n^{3/2}, n^{5/2}/p\right\}\right)$ gradient computations. Later, Feldman et al.~\cite{SCO_Feldman_2020} achieved the same optimal bound with $O\left(\min\left\{n, n^2/p\right\}\right)$ gradient computations, using a similar private SGD approach based on noisy empirical risk gradients, as in \cite{BasEtal19}. Additionally, Bassily et al.~\cite{BasEtal21} considered the setting of $\ell_q$-norms for $q \in (1, \infty]\setminus \{2\}$. Using the variance-reduced stochastic Frank-Wolfe method based on variance reduction from \cite{Var_Red_FW}, they obtain an upper bound of $\widetilde{O}\left(\sqrt{\frac{\kappa}{n}} + \frac{\kappa\sqrt{q}}{n\epsilon}\right)$, when $q \in (0, 2)$ (for which they also provide a lower bound). When $q \in (2, \infty)$, they obtain an upper bound of $\widetilde{O}\left(\frac{p^{1/2 - 1/q}}{\sqrt{n}} + \frac{p^{1 - 1/q}}{n\epsilon}\right)$. Here, $\kappa = \min\left\{1/(q - 1), 2\log(p)\right\}$.

%%%%%

\subsection{Comparisons in Section \ref{sec:{Private ERM for Distribution-Free Data: Upper and Lower Bounds}}}\label{sec:Comparison_SGD_Dist_Free}

We can compare the result of Lemma \ref{lemma:Util_Priv_SGD} with Theorem \ref{theorem:UPRidge}.
%Note that this comparison does not affect our idea achieving faster rates and iteration counts via accelerated gradient methods, since the the Frank-Wolfe method is different from SGD. Our accelerated Frank-Wolfe method was an accelerated variant of the classical Frank-Wolfe method with learning rate $\eta_t = \frac{2}{t + 2}$. But, nevertheless, we can still compare our accelerated Frank-Wolfe variant with the SGD approach.
Consider the setting of Lemma \ref{lemma:Util_Priv_SGD}, with $\mathcal{C}$ being an $\ell_2$-ball of diameter $||\mathcal{C}||_2 = 2D > 0$, with $\mathcal{E} = \mathbb{B}_\infty(1) \times [-1, 1]$ and $\mathcal{D}_n = \{(x_i, y_i)\}_{i = 1}^n \subseteq \mathcal{E}^n$, where $\mathcal{L}$ is the squared error loss.

Firstly, note that Lemma \ref{lemma:Util_Priv_SGD} makes fewer assumptions than Theorem \ref{theorem:UPRidge}: Lemma \ref{lemma:Util_Priv_SGD} does not assume the loss to be smooth and does not have any conditions on the dataset $\mathcal{D}_n$. Under the particular setting involving the squared error loss mentioned above, as explained in the proof of Theorem \ref{theorem:UPRidge}, we have $L_2 \asymp \sqrt{p} + p||\mathcal{C}||_2$. Hence, in the setting mentioned above, the upper bound in Lemma \ref{lemma:Util_Priv_SGD} becomes $\widetilde{O}\left(\frac{(\sqrt{p} + p||\mathcal{C}||_2)||\mathcal{C}||_2\sqrt{p}}{n\epsilon}\right)$, the same as in Theorem \ref{theorem:UPRidge}. However, note that the overall gradient complexity of our accelerated Frank-Wolfe method in Algorithm \ref{alg:PrivFWERM} is better than the SGD approach in Algorithm \ref{alg:Priv_SGD}. This is because Algorithm \ref{alg:Priv_SGD} takes $T = n^2$ iterations, and at each iteration, they use one gradient call. Hence, their gradient complexity is $n^2$. In contrast, Algorithm \ref{alg:PrivFWERM} takes $T = O(\log(n))$ iterations to achieve the utility guarantee in Theorem \ref{theorem:UPRidge}, with $n$ gradient calls at each iteration. Hence, the gradient complexity of our method is $O(n\log(n))$.

%Therefore, Algorithm \ref{alg:Priv_SGD} can be used to achieve the same upper bound error performance as Algorithm \ref{alg:PrivFWERM} under fewer restrictions on the loss and data. However, by making a smoothness assumption on the loss and by making a technical assumption on the data (lower bound on the $\ell_2$-norm of the gradients), as we saw in Theorem \ref{theorem:UPRidge}, we can guarantee a better gradient complexity than Algorithm \ref{alg:Priv_SGD}. Finally, the assumption on the data can be satisfied with high probability, i.e., with probability tending to $1$ as $n, p \rightarrow \infty$, for a particular lower bound on $n$ in terms of $p$ and under an appropriate parametric linear model, as we saw in Proposition \ref{prop:achievab_UP_ERM}. Overall, under that appropriate linear model, the accelerated Frank-Wolfe approach guarantees the same upper bound error performance as Algorithm \ref{alg:Priv_SGD}, but with a better gradient complexity.

Lastly, one can also consider the result in \cite{DP_ERM_Faster}. Compared to \cite{Bassily_SGD_l2}, they assume additionally that the loss is smooth. As mentioned earlier, it is not fair to consider a comparison of the convergence rates since \cite{DP_ERM_Faster} targets the excess regularized empirical risk (where the regularizer would be a ridge regularizer). Instead, we look at gradient complexities. Considering our setting in Theorem \ref{theorem:UPRidge}, note that a general tight bound for $\beta_{\mathcal{L}}$ would be $p$, since for $||x||_\infty \leq 1$, we have $||xx^T||_2^2 = ||x||_2^2 \leq p||x||_\infty \leq 1$. Since $L_2 \asymp \sqrt{p} + p||\mathcal{C}||_2$, the gradient complexity in \cite{DP_ERM_Faster} becomes $O\left(\frac{n\epsilon}{1 + \sqrt{p}||\mathcal{C}||_2} + n\log\left(\frac{n\epsilon}{(\sqrt{p} + p||\mathcal{C}||_2)\sqrt{p}}\right)\right)$.

Now assume $\epsilon$ is an absolute constant. If $p, ||\mathcal{C}||_2 \asymp 1$, the gradient complexity becomes $O\left(n + n\log\left(n\right)\right)$, which asymptotically is the same as the one in Theorem \ref{theorem:UPRidge}, i.e., $O(n\log(n))$. If we consider the context of the high-probability statement in Proposition~\ref{prop:achievab_UP_ERM}, with $n \geq \widetilde{\Omega}\left(p^{c_2}\right)$, $D^2(p) \asymp \sigma^2(p) \asymp \frac{1}{p}$, and $c_2 > \frac{5}{4}$, the gradient complexity in \cite{DP_ERM_Faster} is $O\left(n + n\log(n)\right)$ again. Hence, for $n$ and $p$ as in the context of Proposition~\ref{prop:achievab_UP_ERM}, our gradient complexity matches the one in \cite{DP_ERM_Faster}. Note again that the scaling of $n$ and $p$ in terms of $m \in \mathbb{N}$, with $m \rightarrow \infty$, that was used to achieve the lower bound in Theorem~\ref{theorem:LBRidge}, is a particular instance of the choice of $n$ in terms of $p$ in Proposition~\ref{prop:achievab_UP_ERM}. Hence, our method has the same asymptotic gradient efficiency as \cite{DP_ERM_Faster} in the context of the lower bound result, as well.

%%%%%

\subsection{Comparisons in Section \ref{sec:Private Biased Parameter Estimation in GLMs: Upper and Lower Bounds for Excess Risk}}\label{sec:Comparison_SGD_GLM}

Similar to our comparison in Appendix \ref{sec:Comparison_SGD_Dist_Free}, we can compare our upper bound results and the gradient complexities in Sections \ref{sec:Increasing C} and \ref{sec:Fixed C and the Lower Bound} with Algorithm \ref{alg:Priv_SGD} and its utility in Lemma \ref{lemma:Util_Priv_SGD}. We will analyze the results of Theorem \ref{theorem:ERM_UP_C_Inc} and Theorem \ref{theorem:ERM_GLM_UP}, which are also based on the accelerated Frank-Wolfe method in Algorithm \ref{alg:PrivFWERM}. Consider the setting of Lemma \ref{lemma:Util_Priv_SGD} with $\mathcal{C} = \mathbb{B}_2(D)$, $D > 0$, $||\theta^*||_2 - D > 0$, $\mathcal{E} = \mathbb{B}_2(L_x) \times [-K_y, K_y]$, $L_x, K_y \asymp 1$, $\mathcal{D}_n = \{(x_i, y_i)\}_{i = 1}^n \subseteq \mathcal{E}^n$, and $\mathcal{L}$ being the negative log likelihood loss. We only care about the scaling with $n$, and everything else involving $p$, $||\theta^*||_2$, and $c(\sigma)$ is treated as an absolute constant. We will consider the GLM setting from Section \ref{sec:Generalized Linear Models (GLM)}.

We start with Theorem \ref{theorem:ERM_UP_C_Inc}. Under the condition that $||\theta^*||_2 - D \asymp \frac{1}{n^{2/5}}$, and assuming the data follow a parametric GLM defined in Section \ref{sec:Generalized Linear Models (GLM)}, we can guarantee an upper bound on the excess empirical risk at rate $\widetilde{O}\left(\frac{1}{n^{4/5}\epsilon}\right)$, with high probability and for $n$ large enough. On the other hand, Lemma \ref{lemma:Util_Priv_SGD} guarantees an upper bound on the expected excess empirical risk at rate $\widetilde{O}\left(\frac{1}{n\epsilon}\right)$. Hence, if we only care about the upper bound rate, SGD  performs better. Note also that Lemma \ref{lemma:Util_Priv_SGD} makes fewer assumptions than Theorem \ref{theorem:ERM_UP_C_Inc}, in the sense that Lemma \ref{lemma:Util_Priv_SGD} does not assume the loss to be smooth and does not have any conditions on the dataset $\mathcal{D}_n$. However, we can also take the overall gradient complexity of Algorithm \ref{alg:Priv_SGD} and Algorithm \ref{alg:PrivFWERM} into account. The guarantee in Theorem \ref{theorem:ERM_UP_C_Inc} is based on $T = O\left(n^{2/5}\log(n)\right)$ iterations. Since at each iteration, we use $n$ gradient calls, the overall gradient complexity becomes $O\left(n^{7/5}\log(n)\right)$. The result in Lemma \ref{lemma:Util_Priv_SGD} is based on $T = n^2$ iterations, and one gradient call at each iteration. Hence, the gradient complexity becomes $n^2$. If we want a fair comparison that takes both the convergence rate and the gradient complexity into account, we can ask for the required number of samples needed in order to obtain an error below some fixed $a \in (0, 1)$, and then compare the gradient complexities in terms of $a$. The gradient complexity in Theorem \ref{theorem:ERM_UP_C_Inc} is accordingly $\widetilde{O}\left(\frac{1}{a^{7/4}}\right)$, while the one for Lemma \ref{lemma:Util_Priv_SGD} is $\widetilde{O}\left(\frac{1}{a^2}\right)$. Therefore, under a parametric GLM, provided the sample size is large enough and we optimize over an $\ell_2$-ball that increases toward $\theta^*$ at rate $O\left(\frac{1}{n^{4/5}}\right)$, the accelerated Frank-Wolfe approach has a better gradient efficiency than SGD. Note that one result is in expectation, while the other holds with high probability, but we ignore this difference in our comparison. 

Moving to Theorem \ref{theorem:ERM_GLM_UP}, suppose $||\theta^*||_2 - D \asymp 1$ and the data follow a parametric GLM defined in Section \ref{sec:Generalized Linear Models (GLM)}. We can guarantee an upper bound on the expected excess empirical risk at rate $\widetilde{O}\left(\frac{1}{n\epsilon}\right)$, for $n$ large enough. The same rate is guaranteed by Lemma \ref{lemma:Util_Priv_SGD}. We reiterate that Lemma \ref{lemma:Util_Priv_SGD} does not make any smoothness or distributional assumptions, as in Theorem \ref{theorem:ERM_GLM_UP}. If we instead consider gradient complexities of the two algorithms, Algorithm \ref{alg:PrivFWERM} takes $T \asymp \log(n)$ iterations in the context of Theorem \ref{theorem:ERM_GLM_UP}, and requires $n$ gradient computations at each iteration, resulting in a gradient complexity of $\Theta(n\log(n))$. The gradient complexity of Algorithm \ref{alg:Priv_SGD} is $n^2$. Thus, under a parametric GLM, provided the sample size is large enough and that we optimize over an $\ell_2$-ball $\mathcal{C}$ with absolute constant radius such that $\theta^* \notin \mathcal{C}$, the accelerated Frank-Wolfe method performs at the same rate in terms of $n$ as the SGD approach, up to logarithmic factors, but with a better gradient complexity.

We can also establish a comparison with \cite{DP_ERM_Faster}, which also assumes the loss is smooth. We only take the dependency on $n$ into account, and we take $\epsilon \asymp 1$. Once again, we only compare gradient complexities. Regardless of whether $D$ increases with $n$ toward $||\theta^*||_2$ or not, the gradient complexity in \cite{DP_ERM_Faster} is $O\left(n + n\log(n)\right)$. The gradient complexity in Theorem \ref{theorem:ERM_UP_C_Inc} is $O\left(n^{7/5}\log(n)\right)$, which is slightly worse than the one in \cite{DP_ERM_Faster}. The one in Theorem \ref{theorem:ERM_GLM_UP} is $O\left(n\log(n)\right)$, which is asymptotically the same as the one in \cite{DP_ERM_Faster}.

\bibliographystyle{plain}
\bibliography{REFS_HT_DP_REG.bib}

\end{document}